\numberwithin{equation}{section}
\theoremstyle{plain}
\newtheorem{theorem}[equation]{Theorem}
\newtheorem{lemma}[equation]{Lemma}
\newtheorem{corollary}[equation]{Corollary}
\newtheorem{proposition}[equation]{Proposition}
\theoremstyle{definition}
\newtheorem{definition}[equation]{Definition}
\newtheorem{definition-theorem}[equation]{Definition-Theorem}
\newtheorem{example}[equation]{Example}
\newtheorem{construction}[equation]{Construction}
\theoremstyle{remark}
\newtheorem{remark}[equation]{Remark}
\newtheorem{notation}[equation]{Notation}
\newtheorem{problem}[equation]{Problem}
\newcommand{\tr}{\operatorname{tr}}
\newcommand{\id}{{\operatorname{id}}}
\newcommand{\Aut}{{\operatorname{Aut}}}
\newcommand{\CC}{{\mathbb C}}
\newcommand{\RR}{{\mathbb R}}
\newcommand{\NN}{{\mathbb N}}
\newcommand{\ZZ}{{\mathbb Z}}
\newcommand{\vol}{\operatorname{vol}}
\newcommand{\surj}{\to\kern-1.8ex\to}
\newcommand{\cG}{\mathcal{G}}
\newcommand{\cA}{\mathcal{A}}
\newcommand{\cL}{\mathcal{L}}
\newcommand{\cU}{\mathcal{U}}
\newcommand{\cV}{\mathcal{V}}
\newcommand{\cW}{\mathcal{W}}
\newcommand{\Diff}{\mathrm{Diff}}
\newcommand{\Conf}{\mathrm{Conf}}
\newcommand{\Fix}{{\mathrm{Fix}}}
\newcommand{\scCon}{{\mathbb{C}\hspace{-0.02cm}\mathrm{on}}}
\newcommand{\Sol}{\mathrm{Sol}}
\newcommand{\scSol}{{\mathbb{S}\mathrm{ol}}}
\newcommand{\Met}{\mathrm{Met}}
\newcommand{\G}{\mathrm{G}}
\newcommand{\bbH}{\mathbb{H}}
\newcommand{\bbX}{{\mathbb{X}}}
\newcommand{\bbA}{{\mathbb{A}}}
\newcommand{\cE}{\mathcal{E}}
\newcommand{\cI}{\mathcal{I}}
\newcommand{\dd}{\mathrm{d}}
\newcommand{\scC}{\mathscr{C}}
\newcommand{\scS}{\mathscr{S}}
\newcommand{\scD}{\mathscr{D}}
\newcommand{\scA}{{\mathscr{A}}}
\newcommand{\scB}{{\mathscr{B}}}
\newcommand{\scP}{\mathscr{P}}
\newcommand{\Equivar}{{\operatorname{Equivar}}}
\newcommand{\dslash}{{/\hspace{-0.1cm}/}}
\newcommand{\Gpd}{{\mathscr{G}\mathrm{pd}}}
\newcommand{\Cat}{{\mathscr{C}\mathrm{at}}}
\newcommand{\opp}{{\mathrm{op}}}
\newcommand{\rev}{{\mathrm{rev}}}
\newcommand{\hol}{{\operatorname{hol}}}
\newcommand{\curv}{\mathrm{curv}}
\newcommand{\cl}{{\mathrm{cl}}}
\newcommand{\rmfam}{{\mathrm{fam}}}
\newcommand{\HLB}{{\mathrm{HLB}}}
\newcommand{\Grb}{{\mathrm{Grb}}}
\newcommand{\bbGrb}{{\mathbb{G}\hspace{-0.01cm}\mathrm{rb}}}
\newcommand{\scG}{{\mathscr{G}}}
\newcommand{\pr}{{\mathrm{pr}}}
\newcommand{\U}{\mathrm{U}}
\newcommand{\Con}{\operatorname{Con}}
\newcommand{\Mdl}{\operatorname{Mdl}}
\newcommand{\scMdl}{{\mathbb{M}\hspace{-0.015cm}\mathrm{dl}}}
\newcommand{\rmH}{\mathrm{H}}
\newcommand{\rmB}{{\mathrm{B}}}
\newcommand{\rmb}{{\mathrm{b}}}
\newcommand{\Mfd}{{\mathscr{M}\mathrm{fd}}}
\newcommand{\Cart}{{\mathscr{C}\mathrm{art}}}
\newcommand{\Cartfam}{{\mathscr{C}\mathrm{art}_{\mathrm{fam}}}}
\newcommand{\Ch}{{\mathscr{C}\mathrm{h}}}
\newcommand{\Ab}{{\mathscr{A}\mathrm{b}}}
\newcommand{\sSet}{{\mathscr{S}\mathrm{et}_\Delta}}
\newcommand{\Kan}{{\mathscr{K}\mathrm{an}}}
\newcommand{\Set}{{\mathscr{S}\mathrm{et}}}
\newcommand{\Fun}{{\operatorname{Fun}}}
\newcommand{\scFun}{{\mathscr{F}\mathrm{un}}}
\newcommand{\scH}{{\mathscr{H}}}
\newcommand{\scK}{{\mathscr{K}}}
\newcommand{\scX}{{\mathscr{X}}}
\newcommand{\Ho}{\operatorname{Ho}}
\newcommand{\holim}{{\operatorname{holim}}}
\newcommand{\hocolim}{{\operatorname{hocolim}}}
\newcommand{\colim}{{\operatorname{colim}}}
\renewcommand{\lim}{{\operatorname{lim}}}
\newcommand{\Lan}{{\operatorname{Lan}}}
\newcommand{\hoLan}{{\operatorname{hoLan}}}
\newcommand{\Tot}{\operatorname{Tot}}
\newcommand{\sfc}{\operatorname{\mathsf{c}}}
\newcommand{\sfe}{{\mathsf{e}}}
\newcommand{\GCov}{{\operatorname{GCov}}}
\newcommand{\scSym}{{\mathbb{S}\hspace{-0.015cm}\mathrm{ym}}}
\newcommand{\SYM}{{\operatorname{SYM}}}
\newcommand{\scAut}{{\mathbb{A}\hspace{-0.015cm}\mathrm{ut}}}
\newcommand{\AUT}{{\operatorname{AUT}}}
\newcommand{\Grp}{{\operatorname{Grp}}}
\newcommand{\res}{{\mathrm{res}}}
\newcommand{\bbS}{{\mathbb{S}}}
\newcommand{\bbString}{{\mathbb{S}\mathrm{tring}}}
\newcommand{\ECA}{{\mathrm{ECA}}}
\newcommand{\GMet}{{\mathrm{GMet}}}
\newcommand{\CalECA}{{\mathcal{E}\hspace{-0.01cm}\mathcal{C}\hspace{-0.1cm}\mathcal{A}}}
\newcommand{\AtCA}{{\mathrm{AtCA}}}
\newcommand{\DD}{{\operatorname{DD}}}
\newcommand{\SC}{{\check{\mathrm{S}}\mathrm{C}}}
\newcommand{\textint}{\textstyle{\int}}
\newcommand{\coll}{\operatorname{coll}}
\newcommand{\bbGamma}{{\mathbbe{\Gamma}}}
\newcommand{\wtG}{{\widetilde{G}}}
\newcommand{\wtF}{{\widetilde{F}}}
\newcommand{\bas}{{\mathit{bas}}}
\newcommand{\Ric}{\operatorname{Ric}}
\newcommand{\bbG}{{\mathbb{G}}}
\newcommand{\bbP}{{\mathbb{P}}}
\newcommand{\bbF}{{\mathbb{F}}}
\newcommand{\SmGrp}{{\mathscr{S}\mathrm{m}\mathscr{G}\mathrm{rp}}}
\newcommand{\qqandqq}{\qquad \text{and} \qquad}
\newcommand{\qandq}{\quad \text{and} \quad}
\renewcommand{\ul}[1]{\underline{#1}}
\newcommand{\qen}{\hfill$\triangleleft$}
\newcommand{\wequiv}{\overset{\sim}{\longrightarrow}}
\newcommand{\<}{\langle}
\renewcommand{\>}{\rangle}
\DeclareMathAlphabet{\mathbbe}{U}{bbold}{m}{n}
\newcommand{\bbDelta}{{\mathbbe{\Delta}}}
\setlist[itemize]{leftmargin=*}
\newcolumntype{P}[1]{>{\centering\arraybackslash}p{#1}}
\begin{document}

\title[Higher geometric structures on manifolds]{Higher Geometric Structures on Manifolds\\and the Gauge Theory of Deligne Cohomology}
 
\author[S. Bunk]{S. Bunk} \address{Mathematical Institute, University of Oxford, United Kingdom}
\email{severin.bunk@maths.ox.ac.uk}

\author[C. S. Shahbazi]{C. S. Shahbazi} \address{Departamento de Matem\'aticas, Universidad UNED - Madrid, Reino de Espa\~na}
\email{cshahbazi@mat.uned.es} 
\address{Fakult\"at f\"ur Mathematik, Universit\"at Hamburg, Bundesrepublik Deutschland.}
\email{carlos.shahbazi@uni-hamburg.de}

\thanks{2020 MSC. Primary: 53C08. Secondary: 18N50, 18N60, 53Z05, 58D27.}
\keywords{Higher geometry; gerbes; higher gauge theory; moduli stacks; Courant algebroids; supergravity}

\begin{abstract}
We study smooth higher symmetry groups and moduli $\infty$-stacks of generic higher geometric structures on manifolds.
Symmetries are automorphisms which cover non-trivial diffeomorphisms of the base manifold.
We construct the smooth higher symmetry group of any geometric structure on $M$ and show that this completely classifies---via a universal property---equivariant structures on the higher geometry.
We construct moduli stacks of higher geometric data as $\infty$-categorical quotients by the action of the higher symmetries, extract information about the homotopy types of these moduli $\infty$-stacks, and prove a helpful sufficient criterion for when two such higher moduli stacks are equivalent.

In the second part of the paper we study higher $\U(1)$-connections.
First, we observe that higher connections come organised into higher groupoids, which further carry affine actions by Baez-Crans-type higher vector spaces.
We compute a presentation of the higher gauge actions for $n$-gerbes with $k$-connection, comment on the relation to higher-form symmetries, and present a new String group model.
We construct smooth moduli $\infty$-stacks of higher Maxwell and Einstein-Maxwell solutions, correcting previous such considerations in the literature, and compute the homotopy groups of several moduli $\infty$-stacks of higher $\U(1)$-connections.
Finally, we show that a discrepancy between two approaches to the differential geometry of NSNS supergravity (via generalised and higher geometry, respectively) vanishes at the level of moduli $\infty$-stacks of NSNS supergravity solutions.
\end{abstract}

\maketitle

\vspace{-0.5cm}
\tableofcontents

%%%%%%%%%%%%%%%%%%%%%%%%%%%%%%%%%%%%%%%%%%%%%%%%%%%%%%%%%%%%%%%%%%%%%%%%%%%%

\section{Introduction}
\label{sec:intro}

%%%%%%%%%%%%%%%%%%%%%%%%%%%%%%%%%%%%%%%%%%%%%%%%%%%%%%%%%%%%%%%%%%%%%%%%%%%%

In~\cite{Gajer:Geo_of_Deligne_coho} Gajer presented simultaneously a geometric description and a categorification of the Deligne cohomology groups $\rmH^n(M; \scD(n))$ of any manifold $M$, for each $n \in \NN$.
The seminal insight was that these groups arise as isomorphism classes of $\rmB^{n-1} \U(1)$-bundles with $n$-connection~\cite[Thm.~C]{Gajer:Geo_of_Deligne_coho}.
For $k \in \NN_0$, a $k$-connection on a $\rmB^{n-1} \U(1)$-bundle consists of a tower of local connection forms of degrees $1,2, \ldots, k$ on the base manifold%
\footnote{Note that we have adopted a slightly different terminology from~\cite{Gajer:Geo_of_Deligne_coho}.}.
It was soon realised that this structure can be resolved further:
there are Kan complexes (in fact symmetric monoidal $n$-groupoids) of $\rmB^{n-1} \U(1)$-bundles with $k$-connection on any manifold $M$.
They can be obtained via the Dold-Kan correspondence from \v{C}ech resolutions of the Deligne complex $\scD(n)$---see, for instance,~\cite{FSS:Cech_for_diff_classes}---or modelled in terms of differential function spectra~\cite{HS:Quadratic_functions} (which is more in line with the Cheeger-Simons, rather than Deligne, description of differential cohomology).
In the modern literature, a $\rmB^n \U(1)$-bundle usually goes by the term \textit{$n$-gerbe}.
Shifting indices accordingly, we can describe $\rmH^{n+2}(M; \scD(n{+}2))$ as the connected components of a simplicial abelian group of $n$-gerbes with $(n{+}1)$-connection on the manifold $M$.

Connections on (higher) gerbes play important roles in string theory and the various supergravity theories arising as its low-energy limits~\cite{Freed:Dirac_charge_quantisation, DFM:Spin_strs_and_superstrings, FMS:Uncertainty_of_fluxes, Szabo:Quant_of_Higher_Ab_GT, ABEHSN:SymTFTs_from_String_thy, Schreiber:DCCT_v2, FMS:Heisenberg_and_NC_fluxes}.
Nevertheless, despite the success of connections on classical principal bundles in geometry, topology and physics, very little is currently known about the mathematical gauge theory of higher connections and, in particular, their moduli theory (for first steps, see, for instance,~\cite{MR:YM_for_BGrbs, FRS:Higher_gerbe_connections, FSS:Cech_for_diff_classes, Szabo:Quant_of_Higher_Ab_GT}).
In part, this is due to the fact that treating moduli of higher geometric structures genuinely requires higher-categorical technology, whose development is only a recent---and still ongoing---achievement in mathematics.

In the present paper we move beyond the pure structural theory of higher bundles with connections (such as how they organise into higher groupoids, their local-to-global properties, classification, parallel transport, etc.) and initiate the study of their actual \textit{gauge} theory.
This requires us, in the first place, to develop a general formalism for constructing smooth higher symmetry groups and moduli $\infty$-(pre)stacks of generic higher geometric structures on manifolds%
\footnote{We emphasise that the focus of the present paper lies on \textit{finite}, or \textit{global}, aspects of higher gauge theoretic moduli stacks, rather than infinitesimal moduli in the sense of deformation theory; the latter will be explored in later works.}.
Importantly, the action of symmetries includes the action of diffeomorphisms on the higher geometric data.
We show that these smooth higher symmetry groups completely classify equivariant structures of higher geometric structures via a universal property, extract information about the homotopy types of the spaces underlying our higher moduli stacks, and prove a helpful sufficient criterion for when two moduli stacks are equivalent.

This theory is applicable, for instance, to higher bundles with connection in full generality, but in the second part of this paper, we focus on the geometry of higher $\U(1)$-connections, and thus Deligne differential cocycles.
This produces numerous insights into the gauge theory of higher connections, including that higher connections come organised into higher groupoids, rather than sets, and carry an affine action by higher Baez-Crans vector spaces.
We compute an explicit presentation of the higher gauge actions for $n$-gerbes with $k$-connection arising from the formalism in Part~\ref{part:Higher Geometry}, comment on the relation of these smooth higher gauge groups to higher-form symmetries, and show that a particular higher symmetry group of a $1$-gerbe with $1$-connection produces a new String group model (in the sense of~\cite{Killingback:Anomalies_and_loop_geometry, Stolz:Conj_on_pos_Ric, Bunk:Pr_ooBdls_and_String}).
The proof of the latter uses our theorem on equivalences of moduli stacks together with further results on the topology of moduli stacks.
We present and analyse smooth higher stacks of higher $\U(1)$-connections, including moduli stacks of higher Maxwell and Einstein-Maxwell theory, on fixed background data in a fully homotopical manner, correcting previous such considerations in the literature.
Finally, we address two approaches to the geometry of NSNS supergravity, originating in higher and generalised geometry, respectively.
We show that a discrepancy between two approaches to the B-field in the differential geometry of NSNS supergravity vanishes at the level of higher moduli stacks.
Resolving this discrepancy was one of the main original motivations for this paper.

%%%%%%%%%%%%%%%%%%%%%%%%%%%%%%%%%%%%%%%%%%%%%%%%%%%%%%%%%%%%%%%%%%%%%%%%%%%%

\section{Overview of main results}
\label{sec:main results}

%%%%%%%%%%%%%%%%%%%%%%%%%%%%%%%%%%%%%%%%%%%%%%%%%%%%%%%%%%%%%%%%%%%%%%%%%%%%

%%%%%%%%%%%%%%%%%%%%%%%%%%%%%%%%%%%%%%%%%%%%%%%%%%%%%%%%%%%%%%%%%%%%%%%%%%%%

\subsection{Moduli $\infty$-(pre-)stacks of higher geometric structures on manifolds}
\label{sec:main results: Part I}

%%%%%%%%%%%%%%%%%%%%%%%%%%%%%%%%%%%%%%%%%%%%%%%%%%%%%%%%%%%%%%%%%%%%%%%%%%%%

This paper is split into two main parts, plus a third part of appendices.
In Part~\ref{part:Higher Geometry} we treat higher geometric structures on manifolds, as well as their smooth higher symmetries and moduli at a general level.
In Part~\ref{part:Higher U(1) gauge fields} we analyse in detail various geometric and gauge theoretic problems which feature higher $\U(1)$-connections.

%%%%%%%%%%%%%%%%%%%%%%%%%%%%%%%%%%%%%%%%%%%%%%%%%%%%%%%%%%%%%%%%%%%%%%%%%%%%

\subsubsection{Higher geometric data: smooth families and diffeomorphism actions}

%%%%%%%%%%%%%%%%%%%%%%%%%%%%%%%%%%%%%%%%%%%%%%%%%%%%%%%%%%%%%%%%%%%%%%%%%%%%

We begin by developing techniques which produce global higher geometric structures on manifolds from local data and incorporate both smooth families of geometric structures and smooth diffeomorphism actions.
We briefly sketch the key constructions here, before describing our main results in the following sections.
At the most general level, higher geometric structures are modelled as morphisms of $\infty$-presheaves or $\infty$-sheaves~\cite{Lurie:HTT, Schreiber:DCCT_v2}.
In our case, these are functors out of nerves of 1-categories, so that we can describe them in terms of model categories of simplicial presheaves.
Having this explicit description available is beneficial for computational purposes.

Concretely, let $\Cart$ denote the category of cartesian spaces%
\footnote{A \textit{cartesian} space is a manifold $c$ which is diffeomorphic to $\RR^n$ for some $n \in \NN_0$.} and smooth maps; all smooth families of geometric data in this paper are parameterised over objects of $\Cart$.
A local model for \textit{smooth families} of a higher geometric structure is a simplicial homotopy sheaf on a certain category $\Cartfam$ (Definition~\ref{def:Cartfam}), whose objects are smooth families of cartesian spaces%
\footnote{This approach enhances the formalism from, for instance,~\cite{Schreiber:DCCT_v2, FSS:Cech_for_diff_classes} to describe smooth families of higher geometric data on a given manifold.}.
In order to also treat the smooth action of the diffeomorphism group $\Diff(M)$ on smooth families of geometric data, we introduce the following category $\rmB\scD$:
its objects are cartesian spaces, and its morphisms are commutative squares of smooth maps
\begin{equation}
\label{eq:intro:BD square}
\begin{tikzcd}
	c_0 \times M \ar[r, "\hat{\varphi}"] \ar[d, "\pr"']
	& c_1 \times M \ar[d, "\pr"]
	\\
	c_0 \ar[r, "f"']
	& c_1
\end{tikzcd}
\end{equation}
where $\hat{\varphi}$ restricts to a diffeomorphism on all fibres and satisfies that its fibrewise inverse is also smooth.
We understand functors $\rmB\scD^\opp \to \sSet$ as describing smooth families of higher geometric data on $M$, subject to the smooth action of $\Diff(M)$ via pullback.
We present a functor
\begin{equation}
\label{eq:intro: (-)^scD}
	\Fun(\Cartfam^\opp, \sSet) \longrightarrow \Fun(\rmB\scD^\opp, \sSet)\,,
\end{equation}
which takes a local model for higher geometric structures and produces simplicial presheaves on $\rmB\scD$ in a particularly well-behaved way (Theorem~\ref{st:fctrs from H_fam to sPSh(rmBscD)}).
As in the non-family case, the idea is to resolve $M$ by a good open covering by cartesian spaces and then evaluate a simplicial presheaf on the \v{C}ech nerve of this covering (compare, for instance,~\cite{FSS:Cech_for_diff_classes}).
The difficulty in the enhanced formalism here lies in combining this idea with smooth parameterisations of coverings and geometric data, as well as the action of diffeomorphisms of $M$:
we resolve this by passing through an intermediate category of families of coverings of $M$, which includes families of diffeomorphisms in its morphisms, and then average over all good open coverings by means of a left Kan extension (Proposition~\ref{st:Lan_varpi}).
Our construction can also be used to produce explicit concretifications in the sense of~\cite{Schreiber:DCCT_v2, BSS:Stack_of_YM_fields, FRS:Higher_gerbe_connections} (see, in particular, Section~\ref{sec:smooth fams of higher U(1)-conns}).

We devote a section to showing that this averaging construction has a deeper homotopical meaning:
on the left fibrations associated to our simplicial presheaves, the above left Kan extension amounts precisely to an $\infty$-categorical localisation at all refinements of open coverings (Theorem~\ref{st:Lan_(varpi^D) wtG as a localisation}).
We are thus able to produce, in a computable fashion, smooth families of geometric structures on $M$ from purely local data, and these families carry a natural smooth action of the diffeomorphism group of $M$.

%%%%%%%%%%%%%%%%%%%%%%%%%%%%%%%%%%%%%%%%%%%%%%%%%%%%%%%%%%%%%%%%%%%%%%%%%%%%

\subsubsection{Smooth higher groups of automorphisms and symmetries}

%%%%%%%%%%%%%%%%%%%%%%%%%%%%%%%%%%%%%%%%%%%%%%%%%%%%%%%%%%%%%%%%%%%%%%%%%%%%

With this construction set up we move on to our main goal of investigating the smooth higher symmetry groups and moduli stacks of higher geometric data on $M$.
First, we study their symmetry groups:
let $G^\scD \colon \rmB\scD^\opp \to \sSet$ be a simplicial presheaf.
We fix a vertex $\cG \in G^\scD(\RR^0)$; for instance, if $G$ classifies higher bundles on cartesian spaces, then $\cG$ is the datum of a higher bundle on $M$.
We let $\Diff_{[\cG]}(M) \subset \Diff(M)$ denote the subgroup of all those diffeomorphisms whose action on $M$ admits a lift to $\cG$; that is, it consists of those diffeomorphisms $\varphi$ for which there is an equivalence $\cG \simeq \varphi^* \cG$ between $\cG$ and the pullback of $\cG$ along the action of $\varphi$.
Let $\rmB\scD[\cG] \subset \rmB\scD$ denote the category objects are cartesian spaces and whose morphisms are commutative squares~\eqref{eq:intro:BD square} where $\hat{\varphi}$ restricts to an element of $\Diff_{[\cG]}(M)$ on each fibre.

A smooth $\infty$-group is a group object $\bbGamma$ in the $\infty$-category $\scP(N\Cart)$ of presheaves of spaces%
\footnote{In this introduction we will freely pass between the $\infty$-category $\scP(N\scC)$ of presheaves of spaces on the nerve $N\scC$ of a 1-category $\scC$ and its presentations by the projective model category of simplicial presheaves on $\scC$ and the covariant model category $\sSet_{/N\scC^\opp}$.
In the main text, we employ the technology in Appendix~\ref{app:rectification} to transition between these models.}
on $\Cart$; smooth $\infty$-groups form an $\infty$-category $\SmGrp$.
Any smooth group action $\Phi \colon \bbGamma \to \Diff(M)$ on $M$ for which there exists an equivariant structure on $\cG$ must factor through $\Diff_{[\cG]}(M)$.
For any such action, we construct a smooth $\infty$-group $\scSym_\Phi(\cG)$, consisting of all possible ways to lift the action of smooth families in $\bbGamma$ from $M$ to $\cG$.
In particular, if $\bbGamma = *$ is the trivial group, this produces the smooth $\infty$-group $\scAut(\cG)$ of automorphisms of $\cG$.
For instance, if $\cG$ is a higher principal bundle on $M$, then $\scAut(\cG)$ is its smooth higher gauge group.
We show (Corollaries~\ref{st:Aut-Sym-Diff[cG] extension} and~\ref{st:Aut-Sym_phi-Gamma extension}):

\begin{theorem}
\label{st:intro:Aut-Sym SES}
Given any section $\cG \in G^\scD(\RR^0)$ and any smooth $\infty$-group action $\Phi \colon \bbGamma \to \Diff_{[\cG]}(M)$, there is an extension of group objects in $\scP(N\Cart)$
\begin{equation}
\label{eq:intro:Aut-Sym SES}
\begin{tikzcd}
	\scAut(\cG) \ar[r]
	& \scSym_\Phi(\cG) \ar[r]
	& \bbGamma\,.
\end{tikzcd}
\end{equation}
\end{theorem}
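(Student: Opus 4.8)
The plan is to prove the statement in two stages, matching its derivation from Corollaries~\ref{st:Aut-Sym-Diff[cG] extension} and~\ref{st:Aut-Sym_phi-Gamma extension}: first I would establish the \emph{universal} extension, for $\bbGamma = \Diff_{[\cG]}(M)$ and $\Phi = \id$, and then obtain the general case by base change along $\Phi$. For the universal case, I would construct $\scSym(\cG) \defeq \scSym_{\id}(\cG)$ directly as a homotopy fibre product of presheaves of spaces on $\Cart$. Over a cartesian space $c$, a point of $\scSym(\cG)(c)$ is a pair $(\hat\varphi, \eta)$ consisting of a fibrewise $c$-family $\hat\varphi$ of elements of $\Diff_{[\cG]}(M)$ together with an equivalence $\eta \colon \pr^*\cG \equiv \hat\varphi^* \pr^*\cG$ in the Kan complex $G^\scD(c)$, where $\pr^*\cG$ is the constant $c$-family on the fixed vertex $\cG$. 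Equivalently, $\scSym(\cG)(c)$ is the homotopy pullback of the diagonal $G^\scD(c) \to G^\scD(c) \times G^\scD(c)$ along the map $\Diff_{[\cG]}(M)(c) \to G^\scD(c) \times G^\scD(c)$, $\hat\varphi \mapsto (\pr^*\cG,\, \hat\varphi^* \pr^*\cG)$. This is functorial in $c$ by naturality of $G^\scD$ on $\rmB\scD$, since the morphisms~\eqref{eq:intro:BD square} supply exactly the requisite pullback maps along smooth families of diffeomorphisms.

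The projection $(\hat\varphi, \eta) \mapsto \hat\varphi$ then defines a morphism $\scSym(\cG) \to \Diff_{[\cG]}(M)$ whose homotopy fibre over the unit $\id_M$ is, by construction, the space of self-equivalences $\pr^*\cG \equiv \pr^*\cG$ covering the identity, that is, $\scAut(\cG)$. The group structure is supplied by composition of symmetries, $(\hat\varphi_1,\eta_1)\cdot(\hat\varphi_2,\eta_2) \defeq \big(\hat\varphi_2 \circ \hat\varphi_1,\ (\hat\varphi_1^*\eta_2)\circ\eta_1\big)$, which makes $\scSym(\cG)$ a group object, renders the projection to $\Diff_{[\cG]}(M)$ a homomorphism, and restricts on the fibre to the composition law of $\scAut(\cG)$. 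I would package this coherently by passing to deloopings and verifying that $\rmB\scAut(\cG) \to \rmB\scSym(\cG) \to \rmB\Diff_{[\cG]}(M)$ is a fibre sequence of pointed connected objects in $\scP(N\Cart)$, which is precisely the datum of the extension~\eqref{eq:intro:Aut-Sym SES} in the universal case.

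For an arbitrary action $\Phi \colon \bbGamma \to \Diff_{[\cG]}(M)$ I would then define $\scSym_\Phi(\cG)$ as the homotopy pullback $\bbGamma \times^h_{\Diff_{[\cG]}(M)} \scSym(\cG)$ of group objects along $\Phi$. Because homotopy base change preserves fibres, the homotopy fibre of $\scSym_\Phi(\cG) \to \bbGamma$ over the unit agrees with the fibre of $\scSym(\cG) \to \Diff_{[\cG]}(M)$, namely $\scAut(\cG)$, which yields the sequence~\eqref{eq:intro:Aut-Sym SES}. The one point that elevates this fibre sequence to a genuine \emph{extension} --- with $\bbGamma$ the true quotient $\scSym_\Phi(\cG)/\scAut(\cG)$ --- is that the projection be an effective epimorphism of group objects. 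This is exactly where the restriction to $\Diff_{[\cG]}(M)$, the diffeomorphisms that \emph{admit} a lift to $\cG$, is essential: it guarantees that $\scSym(\cG) \to \Diff_{[\cG]}(M)$ is locally surjective on parameter spaces (using the local-to-global structure of $G^\scD$ from Theorem~\ref{st:fctrs from H_fam to sPSh(rmBscD)}), and effective epimorphisms are stable under the base change defining $\scSym_\Phi(\cG)$.

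The main obstacle is homotopy coherence. The pointwise constructions above are most naturally presented by explicit simplicial-presheaf data (equivalently, left fibrations in $\sSet_{/N\Cart^\opp}$), where the composition law is defined only up to coherent homotopy, whereas the statement asserts an extension of \emph{group objects} in the $\infty$-category $\scP(N\Cart)$. I would bridge this gap with the rectification technology of Appendix~\ref{app:rectification}, using it both to transport the strictly-defined multiplication to a coherent group-object structure and to identify the levelwise homotopy pullbacks with $\infty$-categorical fibre sequences. The remaining, more computational, difficulty is to establish the effective-epimorphism property in smooth families rather than merely pointwise over $\RR^0$, for which the descent presentation of $G^\scD$ is the key input.
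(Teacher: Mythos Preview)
Your overall two-stage strategy---first the universal case $\bbGamma = \Diff_{[\cG]}(M)$, then base change along $\Phi$---is exactly how the paper proceeds (Corollary~\ref{st:Aut-Sym-Diff[cG] extension} followed by Definition~\ref{def:Sym_(Gamma, Phi)(cG)} and Corollary~\ref{st:Aut-Sym_phi-Gamma extension}), and your pointwise description of $\scSym(\cG)(c)$ as pairs $(\hat\varphi,\eta)$ matches the paper's unpacking in Example~6.2.

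The paper differs from your proposal chiefly in how it handles the two obstacles you flag. For coherence, the paper never writes down a multiplication at all: it defines $\rmB\SYM^\rev(\cG)$ and $\rmB\AUT^\rev(\cG)$ as full sub-simplicial-sets of $\textint G^{\scD[\cG]}$ and $\textint G^M$ on the image of the section $\sigma\cG$ (Definition~\ref{def:BGAU(G), BSYM(G)}), checks directly that these are left fibrations with \emph{reduced} fibres over $N\rmB\scD[\cG]^\opp$ and $N\Cart^\opp$ (Proposition~\ref{st:BSYM and BGAU properties}), and then invokes the equivalence $\Omega \dashv \rmB$ between pointed connected objects and group objects (Remark~\ref{rmk:oo-groups and ptd conn objs}) to obtain the smooth $\infty$-groups $\scSym(\cG)$ and $\scAut(\cG)$ without any explicit composition law. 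The fibre sequence of deloopings is then immediate from the (homotopy) cartesian square of left fibrations, with the passage from $(-)^\rev$ to the non-reversed groups handled by Lemma~\ref{st:|revX| = |X|}. This is lighter than routing through rectification of a hand-built multiplication.

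Your effective-epimorphism concern is unnecessary under the paper's definition of extension (quoted just before Corollary~\ref{st:Aut-Sym-Diff[cG] extension}, from~\cite{NSS:Pr_ooBdls_I}): one only needs that $\rmB\scAut(\cG) \to \rmB\scSym_\Phi(\cG) \to \rmB\bbGamma$ is a fibre sequence in $\scP(N\Cart)$. Since all three objects are deloopings and hence pointed connected, no separate surjectivity argument is required; the cartesian square of Proposition~\ref{st:BSYM and BGAU properties}(3) (and its $\Phi$-pullback, Lemma~\ref{st:BAUT BSYM_phi fib seq}) gives this directly, and the pasting law finishes the general case.
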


We derive this fact using that smooth $\infty$-groups can be presented as left fibrations whose fibres are reduced simplicial sets~\cite{NSS:Pr_ooBdls_I, Cisinski:HiC_and_HoA}.
The significance of the smooth $\infty$-group $\scSym_\Phi(\cG)$ lies in the fact that it completely controls equivariant structures on the geometric object $\cG$.
To see this, we define the following functor
\begin{equation}
	\Equivar(\cG) \colon (\SmGrp_{/\Diff(M)} )^\opp \longrightarrow \scS\,.
\end{equation}
Its domain is the $\infty$-category of smooth $\infty$-group actions on $M$, i.e.~of morphisms of smooth $\infty$-groups $\Phi \colon \bbGamma \to \Diff(M)$.
Its target is the $\infty$-category of spaces.
The value of $\Equivar(\cG)$ on a pair $(\bbGamma, \Phi)$ is the space of equivariant structures on $\cG$ for the action $(\bbGamma, \Phi)$ on $M$.
We prove a significant generalisation of the main results in~\cite{BMS:2-Grp_Ext} (Theorem~\ref{st:scSym represents Equivar} and Corollary~\ref{st:char of equivar structures}):

\begin{theorem}
\label{st:intro:Equivar presentation}
The functor $\Equivar(\cG)$ is representable.
It is represented by the morphism
\begin{equation}
	\scSym_\iota(\cG) \longrightarrow \Diff_{[\cG]}(M)
\end{equation}
from~\eqref{eq:intro:Aut-Sym SES}, where $\bbGamma = \Diff_{[\cG]}(M)$ and $\iota \colon \Diff_{[\cG]}(M) \hookrightarrow \Diff(M)$ is the canonical inclusion.
Moreover, the are natural equivalences between the following spaces:
\begin{enumerate}
\item the space $\Equivar_\Phi(\cG)$ of $(\bbGamma, \Phi)$-equivariant structures on $\cG$,

\item the space of lifts of $\Phi \colon \bbGamma \to \Diff(M)$ through the morphism $\scSym_\iota(\cG) \longrightarrow \Diff_{[\cG]}(M)$,

\item the space of splittings of the smooth $\infty$-group extension $\scAut(\cG) \longrightarrow \scSym_\Phi(\cG) \longrightarrow \bbGamma$, i.e.~of sections of the morphism $\scSym_\Phi(\cG) \longrightarrow \bbGamma$ in $\SmGrp$.
\end{enumerate}
\end{theorem}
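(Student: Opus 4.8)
The plan is to prove representability by exhibiting a universal element and then reading off the three equivalences. Write $q \colon \scSym_\iota(\cG) \to \Diff_{[\cG]}(M)$ for the extension morphism from~\eqref{eq:intro:Aut-Sym SES} in the case $\bbGamma = \Diff_{[\cG]}(M)$, $\Phi = \iota$, so that the representing object is $\scSym_\iota(\cG)$ equipped with the composite $\iota \circ q \colon \scSym_\iota(\cG) \to \Diff(M)$. The universal element is the \emph{tautological} equivariant structure carried by $\cG$ under the action of its own symmetry group: by the very construction of $\scSym_\iota(\cG)$, the $\Diff_{[\cG]}(M)$-action on $M$ lifts coherently to $\cG$ once one passes to $\scSym_\iota(\cG)$, and this canonical lift is a point $u \in \Equivar(\cG)\bigl( \scSym_\iota(\cG), \iota \circ q \bigr)$. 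I would first record the auxiliary fact---either read off from the construction of $\scSym_\Phi(\cG)$ or proved by a short argument---that for every action $\Phi \colon \bbGamma \to \Diff_{[\cG]}(M)$ there is a natural pullback square of smooth $\infty$-groups
\begin{equation}
\begin{tikzcd}
	\scSym_\Phi(\cG) \ar[r] \ar[d]
	& \scSym_\iota(\cG) \ar[d, "q"]
	\\
	\bbGamma \ar[r, "\Phi"']
	& \Diff_{[\cG]}(M)\,.
\end{tikzcd}
\end{equation}
Granting this, the equivalence between items~(2) and~(3) is purely formal: by the universal property of the pullback, a lift of $\Phi$ through $q$ is the same datum as a section of $\scSym_\Phi(\cG) \to \bbGamma$, naturally in $(\bbGamma, \Phi)$, and these sections are precisely the splittings of the extension in~\eqref{eq:intro:Aut-Sym SES}.

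The core of the argument is the equivalence between~(1) and~(2), which is representability itself. Pulling back the universal structure $u$ along a morphism $(\bbGamma, \Phi) \to (\scSym_\iota(\cG), \iota \circ q)$ in $\SmGrp_{/\Diff(M)}$ defines, by the Yoneda lemma, a natural transformation
\begin{equation}
	\Map_{\SmGrp_{/\Diff(M)}}\bigl( (\bbGamma, \Phi),\, (\scSym_\iota(\cG), \iota \circ q) \bigr) \lto \Equivar_\Phi(\cG)\,,
\end{equation}
and the task is to show that it is an equivalence. I would carry this out in the left-fibration presentation of $\SmGrp$, where $\scSym_\iota(\cG)$ is modelled as a left fibration with reduced simplicial-set fibres whose vertices over a cartesian space $c$ are the coherent lifts of the $\Diff_{[\cG]}(M)$-action on $c$-families to $\cG$. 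Since $\Phi$ factors through $\Diff_{[\cG]}(M)$ by hypothesis, a morphism in the slice over $\Diff(M)$ is exactly a lift of $\Phi$ through $q$; on the other hand, unwinding the definition of $\Equivar(\cG)$ as the space of coherent lifts of the $\bbGamma$-action from $M$ to $\cG$ identifies $\Equivar_\Phi(\cG)$ with the space of sections of the pulled-back left fibration. Both descriptions compute the same space fibrewise over $\Cart$ and compatibly in $c$, so the natural transformation above is an equivalence; representability and the identification of the representing object follow.

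The main obstacle I anticipate is making this identification fully coherent rather than merely on connected components: one must verify that pulling back $u$ induces an equivalence of the \emph{entire} mapping space with the \emph{entire} space of equivariant structures, including all higher homotopies. This is where the reduced-simplicial-set model and the left-fibration localisation of Theorem~\ref{st:Lan_(varpi^D) wtG as a localisation} do the real work, and it is precisely the point at which the argument genuinely generalises the $2$-categorical bookkeeping of~\cite{BMS:2-Grp_Ext}: there the coherence data is finite and checkable by hand, whereas here it must be organised homotopy-coherently through the fibration models for smooth $\infty$-groups. Once~(1)$\,\simeq\,$(2) is established at the level of spaces and shown natural in $(\bbGamma, \Phi)$, the representing statement is immediate, and the equivalence with~(3) has already been secured by the pullback square.
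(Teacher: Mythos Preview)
Your outline for items~(2) and~(3) is exactly what the paper does: the pullback square you record is the paper's Definition~\ref{def:Sym_(Gamma, Phi)(cG)}, and the formal equivalence of lifts and splittings is precisely how Corollary~\ref{st:char of equivar structures} deduces (2)$\Leftrightarrow$(3).

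For (1)$\Leftrightarrow$(2), however, your route diverges from the paper and leaves the crux underspecified. The paper does \emph{not} construct a universal element and invoke Yoneda; instead it works directly with the explicit pullback Definition~\ref{def:Equivar_(H,Phi)(G,A^(k))} of $\Equivar_\Phi(\cG)$ and establishes a chain of equivalences of mapping spaces. The key technical input is Lemma~\ref{st:BSym^rev --> pi_M! G^D[G]_G}, which says that $\rmB\scSym^\rev(\cG) \to (N\pi_M)_! \bbG^{\scD[\cG]}_{[\cG]}$ is an equivalence over $\rmB\Diff_{[\cG]}^\rev(M)$. Combined with the $\Omega \dashv \rmB$ equivalence between smooth $\infty$-groups and pointed connected objects (Remark~\ref{rmk:oo-groups and ptd conn objs}) and the $(-)^\rev$ involution, this turns the slice mapping space in $\Grp(\scP(N\Cart))_{/\Diff(M)}$ into the pointed-slice mapping space whose fibre over the basepoint $\cG$ is, by inspection, the defining pullback of $\Equivar_\Phi(\cG)$. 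No fibrewise-over-$\Cart$ argument appears: the mapping space in $\SmGrp_{/\Diff(M)}$ is a single space, not a presheaf, so your proposed ``fibrewise check'' does not have an obvious meaning here and would need to be replaced by the global identification the paper carries out.

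Two smaller points. First, your invocation of Theorem~\ref{st:Lan_(varpi^D) wtG as a localisation} is misplaced: that result concerns the averaging-over-coverings construction and plays no role in this proof. Second, the coherence worry you flag is handled in the paper not by the localisation theorem but by working entirely at the level of $\infty$-categorical mapping spaces and using that $\rmB\scSym^\rev(\cG)$ is by construction the full subcategory on the image of $\sigma\cG$---this is what makes Lemma~\ref{st:BSym^rev --> pi_M! G^D[G]_G} available and what packages all higher homotopies at once.
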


%%%%%%%%%%%%%%%%%%%%%%%%%%%%%%%%%%%%%%%%%%%%%%%%%%%%%%%%%%%%%%%%%%%%%%%%%%%%

\subsubsection{Moduli $\infty$-(pre)stacks of higher geometric structures}

%%%%%%%%%%%%%%%%%%%%%%%%%%%%%%%%%%%%%%%%%%%%%%%%%%%%%%%%%%%%%%%%%%%%%%%%%%%%

The smooth higher automorphism and symmetry groups of higher geometric structures are key ingredients in the study of moduli:
they encode the redundancies we wish to divide out in our systems.
We put geometric moduli problems on the following general footing, which is motivated by physical field theories.
This formalism lies at the heart of the present paper.
Again, to maintain maximum availability of computational tools, we begin our presentation using simplicial presheaves, and only then pass to left fibrations and $\infty$-categorical language in order to formulate and prove our main results.

Consider a pair of morphisms of simplicial presheaves on $\rmB\scD$,
\begin{equation}
\label{eq:intro:Sol --> Conf --> Fix}
\begin{tikzcd}
	\Sol^\scD \ar[r, hookrightarrow]
	& \Conf^\scD \ar[r]
	& \Fix^\scD\,.
\end{tikzcd}
\end{equation}
We think of $\Conf^\scD$ as describing all possible configurations of the higher geometric data we are interested in.
For example, this could consist of smooth families of higher bundles with connection on $M$, such as the higher $\U(1)$-bundles with connection that underlie the geometry of Deligne cohomology.
The morphism $\Sol^\scD \hookrightarrow \Conf^\scD$ is an inclusion of a full simplicial subpresheaf, which we think of as describing smooth families of only those configurations whose moduli we wish to study.
For instance, these could be families of solutions to a set of field equations (see Section~\ref{sec:main results: Part II} for more on this perspective), such as higher bundles with connections on $M$ solving higher Yang-Mills equations.
Finally, the morphism $\Conf^\scD \to \Fix^\scD$ is an objectwise Kan fibration; we think of this as forgetting part of the geometric data, leaving only the part which we wish to keep fixed in the moduli problem.
An important example is where $\Conf^\scD$ describes smooth families of higher bundles with connection on $M$, $\Fix^\scD$ encodes smooth families of higher bundles without connection on $M$, and the morphism $\Conf^\scD \to \Fix^\scD$ forgets the connections.

For each section $\cG \in \Fix^\scD(\RR^0)$ we obtain an $\infty$-presheaf on $N\Cart$ which encodes smooth families of solutions on $\cG$ (such as connections on a fixed higher bundle which solve a certain set of field equations), together with the morphisms of such solutions which are intrinsic to the geometric set-up; we denote this $\infty$-presheaf by $\scSol(\cG)$.
Recall that, given a smooth higher group action $\Phi \colon \bbGamma \to \Diff_{[\cG]}(M)$, we obtain an associated smooth higher symmetry group $\scSym_\Phi(\cG)$.
We show that the $\infty$-group $\scSym_\Phi(\cG)$ naturally acts on $\scSol(\cG)$.

\begin{definition}
The moduli $\infty$-prestack of solutions on $\cG$ modulo the action of $\scSym_\Phi(\cG)$ is the $\infty$-categorical quotient
\begin{equation}
	\scMdl_\Phi(\cG) \coloneqq \scSol(\cG) \dslash \scSym_\Phi(\cG)
	\quad \in \scP(N\Cart)\,.
\end{equation}
\end{definition}

We construct this quotient explicitly as an $\infty$-categorical left Kan extension and provide a presentation as a left fibration over $N\Cart^\opp$ (Definition~\ref{def:scMdl_phi(cG)}).
Further, we show (Theorem~\ref{st:descent for spl pshs on tint BH}):

\begin{theorem}
Whenever $\bbGamma$ is a sheaf of ordinary groups on $\Cart$ (satisfying descent with respect to good open coverings) and the solution subpresheaf satisfies a certain descent condition, then $\scMdl_\Phi(\cG) \in \scP(N\Cart)$ is an $\infty$-stack (or $\infty$-sheaf), i.e.~it satisfies descent with respect to good open covers.
\end{theorem}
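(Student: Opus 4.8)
The plan is to verify descent for $\scMdl_\Phi(\cG)$ directly against \v{C}ech nerves of good open coverings, reducing the problem to descent of simpler objects through the fibration structure of the quotient. Since the $\infty$-categorical quotient is a colimit, it is computed objectwise in $\scP(N\Cart)$, so for every cartesian space $c$ one has $\scMdl_\Phi(\cG)(c) \simeq \scSol(\cG)(c) \dslash \scSym_\Phi(\cG)(c)$, and the bar construction of the action exhibits an objectwise---hence global---fibre sequence in $\scP(N\Cart)$:
\begin{equation}
\begin{tikzcd}
\scSol(\cG) \ar[r] & \scMdl_\Phi(\cG) \ar[r] & \rmB\scSym_\Phi(\cG)\,.
\end{tikzcd}
\end{equation}
The strategy is then to show that both the fibre $\scSol(\cG)$ and the base $\rmB\scSym_\Phi(\cG)$ satisfy descent, and to conclude via the principle that descent is inherited by the total object of a fibre sequence whose base and fibre are $\infty$-sheaves.

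The fibre $\scSol(\cG)$ is an $\infty$-sheaf by the descent hypothesis on the solution subpresheaf. For the gluing principle I would record the following lemma: given a fibre sequence $F \to E \to B$ of simplicial presheaves with $F$ and $B$ satisfying descent, $E$ satisfies descent as well. Its proof compares, for a good covering with \v{C}ech nerve $U_\bullet \to c$, the commutative square whose right-hand vertical $B(c) \to \holim_\Delta B(U_\bullet)$ is an equivalence; evaluation at each $U_n$ preserves the fibre sequence, and a homotopy limit of fibre sequences is again a fibre sequence, so the fibre of $\holim_\Delta E(U_\bullet) \to \holim_\Delta B(U_\bullet)$ over any point is $\holim_\Delta F(U_\bullet) \simeq F(c)$. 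Thus $E(c) \to \holim_\Delta E(U_\bullet)$ covers an equivalence of bases and restricts on fibres to the equivalence $F(c) \xrightarrow{\simeq} F(c)$, whence it is itself an equivalence.

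It remains to show that the base $\rmB\scSym_\Phi(\cG)$ is an $\infty$-sheaf, which is where the hypothesis on $\bbGamma$ enters. Delooping the extension of Theorem~\ref{st:intro:Aut-Sym SES} yields a further fibre sequence
\begin{equation}
\begin{tikzcd}
\rmB\scAut(\cG) \ar[r] & \rmB\scSym_\Phi(\cG) \ar[r] & \rmB\bbGamma\,,
\end{tikzcd}
\end{equation}
so by the same gluing lemma it suffices to treat $\rmB\bbGamma$ and $\rmB\scAut(\cG)$ separately. For $\rmB\bbGamma$ I would use that $\bbGamma$ is a sheaf of ordinary groups: its objectwise classifying presheaf is connected with $\pi_1 = \bbGamma$, and a low-degree analysis of $\holim_\Delta \rmB\bbGamma(U_\bullet)$ shows that the only obstruction to descent is the nonabelian \v{C}ech cohomology $\check{\rmH}^1(U_\bullet; \bbGamma)$ classifying $\bbGamma$-torsors trivialised over the cover; this vanishes because the covers are good and cartesian spaces are contractible, so the naive classifying presheaf already coincides with the torsor stack. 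The sheaf property of $\rmB\scAut(\cG)$ follows from the descent already built into the construction of the automorphism group---via the averaging left Kan extension and its interpretation as a localisation, Theorem~\ref{st:Lan_(varpi^D) wtG as a localisation}---together with the standard \v{C}ech--Deligne descent for the higher $\U(1)$-data underlying $\cG$.

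The main obstacle is precisely the interchange of the colimit defining the quotient (and the classifying objects $\rmB\bbGamma$ and $\rmB\scAut(\cG)$) with the descent homotopy limit: the indexing diagram of a good cover is infinite, so this interchange is not formal and cannot follow merely from finite limits commuting with sifted colimits. The fibre-sequence reformulation isolates the genuine geometric input---the vanishing of $\check{\rmH}^1$ for $\bbGamma$ on good covers of contractible bases, and the descent of the higher $\U(1)$-automorphism data---so that everything else reduces to the formal gluing lemma. Care is also needed to ensure that the delooped extension is a fibre sequence of \emph{presheaves} and that the quotient is formed at the presheaf level before any comparison with its sheafification.
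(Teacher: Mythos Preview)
Your strategy—reducing descent of the quotient via nested fibre sequences—is sound in outline, and your gluing lemma is essentially correct, but the route diverges from the paper's and carries a genuine gap.

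The paper does not separate $\rmB\scSym_\Phi(\cG)$ into $\rmB\scAut(\cG)$ and $\rmB\bbGamma$. Instead it works with a \emph{single} fibration whose base is $N\rmB H^\rev$ (for $H = \bbGamma$ the sheaf of ordinary groups) and whose fibre is the presheaf $F = (\rmB\phi')^* \Sol^{\scD[\cG]}_{[\cG]}$ on $\textint\rmB H$; this $F$ already carries the higher automorphism data in its simplicial structure. The ``certain descent condition'' in the hypothesis is precisely $\tau_H$-descent of this bundled $F$. The proof models $\scMdl_\Phi(\cG)(c)$ as the bar construction $F(c)\dslash H(c)$, lifts a good cover $\cU$ of $c$ to a $\tau_H$-cover $\widehat{\cU}$ via the section $e_H$, and analyses the commutative square whose bottom row $N\rmB H^\rev(c) \to \ul{\scH}(\check{C}\cU, N\rmB H^\rev)$ is an equivalence by the sheaf hypothesis on $H$ (Remark~\ref{rmk:H sheaf of groups and NBH}) and whose vertical fibres over the unique vertex reduce to the identity on $\ul{\scK}(\check{C}\widehat{\cU}, F)$. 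The conclusion follows from a fibrewise equivalence criterion (Lemmas~\ref{st:comm squares and equivs on fibs} and~\ref{st:comm squares and weak Cat equivs}), never isolating $\rmB\scAut(\cG)$.

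The gap in your argument is the claim that $\rmB\scAut(\cG)$ is an $\infty$-sheaf. Your appeal to the averaging Kan extension and Theorem~\ref{st:Lan_(varpi^D) wtG as a localisation} does not give this: those results concern the homotopical meaning of the construction of $G^\scD$, not descent of the resulting presheaf over $\Cart$. The object $\rmB\scAut(\cG)$ is equivalent to the connected component $\Fix^M_{[\cG]}$ of $\cG$ inside $\Fix^M$; even when $\Fix^M$ is an $\infty$-sheaf, passing to a connected component need not preserve this unless $\pi_0\Fix^M$ is locally constant. That happens for $n$-gerbes (where $\pi_0 \cong \rmH^{n+2}(M;\ZZ)$ is independent of the test space), but it is not part of the hypotheses of the general theorem. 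The paper sidesteps the issue entirely by keeping the $\scAut(\cG)$-action bundled inside $F$ and quotienting only by the \emph{ordinary} group $H$; the single descent hypothesis on $F$ then absorbs whatever is needed about automorphisms, whereas your decomposition forces you to verify it separately.
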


A very important situation is the case where $(\bbGamma, \Phi) = (*, e)$ is the trivial group action on $M$, and thus $\scSym_e(\cG) = \scAut(\cG)$ is the smooth $\infty$-group of automorphisms of $\cG$ (if $\cG$ is a higher principal bundle on $M$, then $\scAut(\cG)$ is its smooth higher gauge group).
The following statement shows that this special case already encodes a large amount of information about the moduli stack for \textit{any} choice of action $(\bbGamma, \Phi)$ (Theorem~\ref{st:Mod-Mdl-Diff prBun}):

\begin{theorem}
For each smooth $\infty$-group action $\Phi \colon \bbGamma \to \Diff_{[\cG]}(M)$ on $M$, there is a principal $\infty$-bundle
\begin{equation}
	\scMdl_e(\cG) \longrightarrow \scMdl_\Phi(\cG)
\end{equation}
in $\scP(N\Cart)$ with structure group $\bbGamma$.
\end{theorem}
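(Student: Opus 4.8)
The plan is to exhibit $\scMdl_\Phi(\cG)$ as a quotient of $\scMdl_e(\cG)$ by a residual action of $\bbGamma$, and then to invoke the recognition principle for principal $\infty$-bundles in the $\infty$-topos $\scP(N\Cart)$. The structural input that drives everything is the group extension of Theorem~\ref{st:intro:Aut-Sym SES},
\begin{equation}
\scAut(\cG) \longrightarrow \scSym_\Phi(\cG) \longrightarrow \bbGamma\,,
\end{equation}
which realises $\scAut(\cG)$ as a normal sub-$\infty$-group of $\scSym_\Phi(\cG)$ with quotient $\bbGamma$. Delooping, this is a fibre sequence $\rmB\scAut(\cG) \to \rmB\scSym_\Phi(\cG) \to \rmB\bbGamma$ of objects in $\scP(N\Cart)$, with fibre $\rmB\scAut(\cG)$.

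First I would produce the residual $\bbGamma$-action on $\scMdl_e(\cG) = \scSol(\cG) \dslash \scAut(\cG)$. Since $\scAut(\cG)$ is normal in $\scSym_\Phi(\cG)$, the action of $\scSym_\Phi(\cG)$ on $\scSol(\cG)$ descends to an action of the quotient $\scSym_\Phi(\cG)/\scAut(\cG) \simeq \bbGamma$ on the $\infty$-categorical quotient $\scSol(\cG) \dslash \scAut(\cG)$. In the presentation used in the paper, where these quotients are built as left Kan extensions and presented as left fibrations over $N\Cart^\opp$ (Definition~\ref{def:scMdl_phi(cG)}), this amounts to tracking the $\scSym_\Phi(\cG)$-action through the bar construction computing the $\scAut(\cG)$-quotient and observing that it factors through $\bbGamma$.

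The central step is the Fubini, or iterated-quotient, equivalence
\begin{equation}
\scMdl_\Phi(\cG) = \scSol(\cG) \dslash \scSym_\Phi(\cG) \;\simeq\; \bigl( \scSol(\cG) \dslash \scAut(\cG) \bigr) \dslash \bbGamma = \scMdl_e(\cG) \dslash \bbGamma\,.
\end{equation}
I would prove this by viewing an action as a functor out of a delooping and computing the quotient as a colimit: the fibre sequence $\rmB\scAut(\cG) \to \rmB\scSym_\Phi(\cG) \to \rmB\bbGamma$ decomposes the colimit over $\rmB\scSym_\Phi(\cG)$ into an iterated colimit, first along the fibre $\rmB\scAut(\cG)$ (a left Kan extension along $\rmB\scSym_\Phi(\cG) \to \rmB\bbGamma$, which yields $\scMdl_e(\cG)$ as a $\bbGamma$-object) and then along $\rmB\bbGamma$. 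Concretely, in the simplicial-presheaf and left-fibration models this is the assertion that the two-step bar construction agrees with the one-step bar construction of $\scSym_\Phi(\cG)$, which is exactly what the extension above supplies. With the equivalence in hand, the recognition principle for principal $\infty$-bundles (as developed in~\cite{NSS:Pr_ooBdls_I}) applies in the $\infty$-topos $\scP(N\Cart)$: for any smooth $\infty$-group $\bbGamma$ acting on an object $Y$, the quotient map $Y \to Y \dslash \bbGamma$ is a principal $\bbGamma$-bundle. Taking $Y = \scMdl_e(\cG)$ identifies $\scMdl_e(\cG) \to \scMdl_\Phi(\cG)$ as the asserted principal $\bbGamma$-bundle.

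The hard part will be the Fubini equivalence and its compatibility with the explicit models. One must verify that the extension of Theorem~\ref{st:intro:Aut-Sym SES} genuinely deloops to a fibre sequence, so that the iterated-colimit decomposition is available, and then carry the residual $\bbGamma$-action coherently through the left Kan extension defining $\scMdl_e(\cG)$. A secondary subtlety is checking that the $\scSym_\Phi(\cG)$-action on $\scSol(\cG)$ restricts to $\scAut(\cG)$ and descends to $\bbGamma$ together with all higher coherences; this is precisely the content of the normal-subgroup structure encoded by the extension.
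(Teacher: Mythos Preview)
Your approach is correct but takes a genuinely different route from the paper. You argue via an iterated-quotient (Fubini) equivalence $\scSol(\cG)\dslash\scSym_\Phi(\cG) \simeq (\scSol(\cG)\dslash\scAut(\cG))\dslash\bbGamma$ and then invoke the fact that quotient maps are principal. The paper instead goes through the \emph{classifying map}: it first establishes (Proposition~\ref{st:Mod-Mdl-BDiff pullback}) a pullback square
\[
\begin{tikzcd}
\scMdl_{Ne_M}(\cG) \ar[r]\ar[d] & * \ar[d]\\
\scMdl_\Phi(\cG) \ar[r] & \rmB\bbGamma
\end{tikzcd}
\]
in $\scP(N\Cart)$, and then invokes~\cite[Prop.~3.13]{NSS:Pr_ooBdls_I} that the homotopy fibre of any map into $\rmB\bbGamma$ is a principal $\bbGamma$-bundle. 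The pullback square itself is almost a tautology in the left-fibration model: by Proposition~\ref{st:left fibration classd by Mdl_phi} the two moduli prestacks are classified by the composed left fibrations $\textint\Sol^M_\res(\cG)\to N\Cart^\opp$ and $\textint\Sol^{\scD[\cG]}_\phi(\cG)\to N\Cart^\opp$, and Corollary~\ref{st:SOL fibseq in sSet_(/NCart^op)} shows these sit in a (homotopy) cartesian square over $N\Cart^\opp \to Q$, where $Q$ presents $\rmB\bbGamma$. This bypasses entirely the step you identify as ``the hard part'': no residual $\bbGamma$-action on $\scMdl_e(\cG)$ needs to be constructed by hand, and no Fubini-type decomposition of the bar construction is needed. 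What your approach buys in exchange is a more intrinsic description of the bundle structure (as an explicit quotient rather than a fibre), which could be useful if one later wants to manipulate the $\bbGamma$-action directly; the paper's approach buys brevity and avoids the coherence bookkeeping you flag in your final paragraph.
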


Let $S \colon \scP(N\Cart) \to \scS$ denote the functor which takes the \textit{underlying space} of an $\infty$-presheaf on $\Cart$~\cite{Bunk:R-loc_HoThy, Bunk:Pr_ooBdls_and_String}.
It also goes by the name of \textit{smooth singular complex}, or \textit{concordance space}~\cite{BEBdBP:Classifying_spaces_of_oo-sheaves} and is equivalent to the $\infty$-categorical colimit functor.
Since $S$ preserves group objects and principal $\infty$-bundles~\cite{Bunk:Pr_ooBdls_and_String}, we obtain insights into the topology of our moduli stacks (Corollaries~\ref{st:moduli prBun in spaces} and~\ref{st:LES for S(Mdl_phi)}):

\begin{corollary}
For each smooth $\infty$-group action $\Phi \colon \bbGamma \to \Diff_{[\cG]}(M)$ on $M$, there is a fibre sequence
\begin{equation}
	S\bbGamma
	\longrightarrow S \big( \scMdl_e(\cG) \big)
	\longrightarrow S \big( \scMdl_\Phi(\cG) \big)
\end{equation}
of spaces.
In particular, this induces a long exact sequence of homotopy groups.
\end{corollary}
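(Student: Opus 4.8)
The plan is to obtain the fibre sequence by pushing the principal $\infty$-bundle from the preceding theorem through the underlying-space functor $S$ and then reading off the fibre sequence intrinsically in $\scS$. First I would invoke the preceding theorem, which provides a principal $\infty$-bundle
\begin{equation*}
	\scMdl_e(\cG) \longrightarrow \scMdl_\Phi(\cG)
\end{equation*}
in $\scP(N\Cart)$ with structure group $\bbGamma$. Applying $S$ and using that $S$ preserves group objects and principal $\infty$-bundles~\cite{Bunk:Pr_ooBdls_and_String}, I would conclude that $S\bbGamma$ is a group object in $\scS$ and that $S(\scMdl_e(\cG)) \to S(\scMdl_\Phi(\cG))$ is a principal bundle in $\scS$ with structure $\infty$-group $S\bbGamma$.

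Next I would exploit the fact that every principal $\infty$-bundle in the $\infty$-topos $\scS$ deloops. Writing $G \coloneqq S\bbGamma$, being a group object in $\scS$ makes $G$ an $\infty$-group with a delooping $\rmB G$, and the classifying morphism $S(\scMdl_\Phi(\cG)) \to \rmB G$ exhibits the total space $S(\scMdl_e(\cG))$ as its homotopy fibre. This produces the delooping fibre sequence
\begin{equation*}
	G \longrightarrow S\big( \scMdl_e(\cG) \big) \longrightarrow S\big( \scMdl_\Phi(\cG) \big) \longrightarrow \rmB G\,,
\end{equation*}
whose first three terms are precisely the claimed fibre sequence. Finally, choosing a basepoint in the base and passing to homotopy groups of this fibre sequence of spaces would give the asserted long exact sequence, which behaves as expected in low degrees since $G = S\bbGamma$ is grouplike.

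The subtle point---and the step I would be most careful about---is that $S$ is (equivalent to) the colimit functor and is therefore \emph{not} left exact, so the fibre sequence cannot be produced by simply applying $S$ to a fibre sequence already present in $\scP(N\Cart)$. The whole argument hinges instead on transporting only the principal-bundle structure through $S$, where preservation is guaranteed by~\cite{Bunk:Pr_ooBdls_and_String}, and then regenerating the fibre sequence on the target side, using that principal $\infty$-bundles in $\scS$ automatically come equipped with their delooping fibre sequence. I would also double-check that the chosen basepoint is compatible with the grouplike structure on $S\bbGamma$, so that the long exact sequence closes off correctly at $\pi_0$.
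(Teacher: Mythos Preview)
Your proposal is correct and follows essentially the same route as the paper: apply the preceding theorem to obtain the $\bbGamma$-principal $\infty$-bundle $\scMdl_e(\cG) \to \scMdl_\Phi(\cG)$ in $\scP(N\Cart)$, use that $S$ preserves principal $\infty$-bundles~\cite{Bunk:Pr_ooBdls_and_String} to obtain an $S\bbGamma$-principal $\infty$-bundle in $\scS$, and then read off the fibre sequence and its long exact sequence. Your remark that one must transport the principal-bundle structure rather than a fibre sequence directly (since $S$ is not left exact) is exactly the point, and the paper treats this step as implicit in the statement that the homotopy fibre of the resulting map in $\scS$ is $S\bbGamma$.
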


One of the main results in the first part of this paper is a simple sufficient criterion for when two higher moduli stacks are equivalent.
It applies to situations where we have an augmentation of the sequence~\eqref{eq:intro:Sol --> Conf --> Fix} by a further morphism, i.e.~to a sequence
\begin{equation}
\label{eq:intro:Sol --> Conf --> Fix_0 --> Fix_1}
\begin{tikzcd}
	\Sol^\scD \ar[r, hookrightarrow]
	& \Conf^\scD \ar[r]
	& \Fix_0^\scD \ar[r, "p"]
	& \Fix_1^\scD
\end{tikzcd}
\end{equation}
of simplicial presheaves on $\rmB\scD$.
We say that $p$ is an \textit{0-connected} if, for each object $c \in \rmB\scD$, the morphism $p_{|c} \colon \Fix_0^\scD(c) \longrightarrow \Fix_1^\scD(c)$ induces a bijection of connected components.
Let $\cG_0 \in \Fix_0^\scD(\RR^0)$, and let $\cG_1 \coloneqq p(\cG_0)$ be its image in $\Fix_1^\scD(\RR^0)$.
In this case, the presheaves $\scSol^\scD(\cG_0)$ and $\scSol^\scD(\cG_1)$ of solutions with respect to the fixed data $\cG_0$ and $\cG_1$, respectively, are not equivalent, and neither are the associated smooth higher symmetry groups whose action we divide out.
A crucial example is where $\Fix_0^\scD$ describes $n$-gerbes on $M$ with $k$-connection, $\Fix_1^\scD$ describes $n$-gerbes with $l$-connection, for $l < k$, and the morphism $p$ forgets part of the connection data (see Section~\ref{sec:main results: Part II} below for more detail on this example).
We show (Theorem~\ref{st:equiv result for Mdl oo-stacks}):

\begin{theorem}
\label{st:intro:equiv result for Mdl oo-stacks}
In the situation~\eqref{eq:intro:Sol --> Conf --> Fix_0 --> Fix_1}, suppose that $p$ is 0-connected.
Then, for each smooth $\infty$-group action $\Phi \colon \bbGamma \to \Diff_{[\cG]}(M)$, there is a canonical equivalence in $\scP(N\Cart)$ between moduli $\infty$-prestacks,
\begin{equation}
	\scMdl_\Phi(\cG_0) \simeq \scMdl_\Phi(\cG_1)\,.
\end{equation}
\end{theorem}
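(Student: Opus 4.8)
The plan is to reduce the comparison of the two moduli $\infty$-prestacks to a single fibre-sequence identity and then to apply a general induction equivalence for homotopy quotients. I work throughout with the presentations as presheaves of spaces on $N\Cart$, so that all homotopy fibres, fibre sequences and quotients $\dslash$ may be computed objectwise over $\Cart$; first I replace $p$ by an objectwise Kan fibration, which changes neither the homotopy types nor the $0$-connectedness hypothesis. Write $F$ for the presheaf $c \mapsto \operatorname{hofib}_{\cG_1}\big(\Fix_0^\scD(c) \to \Fix_1^\scD(c)\big)$ of smooth families of $\Fix_0^\scD$-lifts of $\cG_1$, pointed at $\cG_0$. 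The comparison is well-posed because $\Diff_{[\cG_0]}(M) \subseteq \Diff_{[\cG_1]}(M)$: applying $p$ to an equivalence $\varphi^*\cG_0 \simeq \cG_0$ gives $\varphi^*\cG_1 \simeq \cG_1$, so the same action $\Phi$ produces both symmetry groups. Since $\scSol(\cG_i)$ is the objectwise homotopy fibre of $\Sol^\scD \to \Fix_i^\scD$ over $\cG_i$, and these sit in the composite $\Sol^\scD \to \Fix_0^\scD \xrightarrow{p} \Fix_1^\scD$, the fibre sequence of a composite fibration yields a fibre sequence of presheaves $\scSol(\cG_0) \to \scSol(\cG_1) \to F$; concretely, a solution on $\cG_1$ equipped with a trivialisation of its underlying $\Fix_0^\scD$-datum down to $\cG_0$ is the same as a solution on $\cG_0$.

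Next I treat the symmetries, which is where the hypothesis enters. I identify $\scAut(\cG_i)$ with the smooth based loop $\infty$-group $\Omega_{\cG_i}\Fix_i^\scD$ (the $\bbGamma = *$ case of the symmetry construction, automorphisms of an object being its self-equivalences). Looping the fibration $F \to \Fix_0^\scD \to \Fix_1^\scD$ then produces a fibre sequence $\scAut(\cG_0) \to \scAut(\cG_1) \xrightarrow{\partial} F$, in which $\partial$ is the monodromy action of $\scAut(\cG_1) = \Omega_{\cG_1}\Fix_1^\scD$ on the fibre $F$. The long exact sequence of $F \to \Fix_0^\scD(c) \to \Fix_1^\scD(c)$ together with the $0$-connectedness of $p$, i.e.\ the bijectivity of $\pi_0(p_{|c})$, forces the boundary map $\pi_1\big(\Fix_1^\scD(c), \cG_1\big) \to \pi_0 F(c)$ to be surjective for every $c$; hence $\partial$ is surjective on $\pi_0$ and so an effective epimorphism, exhibiting $F \simeq \scAut(\cG_1)\dslash\scAut(\cG_0)$ as a homogeneous space. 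Because both symmetry groups are extensions of the same $\bbGamma$ by $\scAut(\cG_i)$ and $\scSym_\Phi(\cG_0) \to \scSym_\Phi(\cG_1)$ covers the identity on $\bbGamma$, its homotopy fibre coincides with that of $\scAut(\cG_0) \to \scAut(\cG_1)$; rotating the fibre sequence gives $\scSym_\Phi(\cG_0) \to \scSym_\Phi(\cG_1) \to F$ and, as above, $F \simeq \scSym_\Phi(\cG_1)\dslash\scSym_\Phi(\cG_0)$.

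It remains to assemble the quotient. The forgetful map $\scSol(\cG_1) \to F$ of the first step is $\scSym_\Phi(\cG_1)$-equivariant, the action on $F$ being exactly the monodromy action $\partial$; this is the natural action of $\scSym_\Phi(\cG_1)$ on solutions and on their underlying $\Fix_0^\scD$-lifts. I then invoke the general induction equivalence: for a smooth $\infty$-group $\G$ acting on a presheaf $X$ with a $\G$-equivariant map $X \to \G\dslash\rmH$ to a homogeneous space, there is a natural equivalence $X\dslash\G \simeq X_e\dslash\rmH$, where $X_e$ is the homotopy fibre over the basepoint and $\rmH$ acts as its stabiliser. This is the bar-construction identity $(\G\times_\rmH X_e)\dslash\G \simeq X_e\dslash\rmH$, which holds objectwise and hence in $\scP(N\Cart)$. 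Taking $\G = \scSym_\Phi(\cG_1)$, $\rmH = \scSym_\Phi(\cG_0)$, $X = \scSol(\cG_1)$ and $X_e = \scSol(\cG_0)$ delivers the canonical equivalence $\scMdl_\Phi(\cG_1) = \scSol(\cG_1)\dslash\scSym_\Phi(\cG_1) \simeq \scSol(\cG_0)\dslash\scSym_\Phi(\cG_0) = \scMdl_\Phi(\cG_0)$.

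The step I expect to be the main obstacle is the promotion of the pointwise $\pi_0$-bijection encoding $0$-connectedness to the assertion that $\partial$ is an effective epimorphism of presheaves of spaces, so that $F$ is genuinely the homogeneous space $\scSym_\Phi(\cG_1)\dslash\scSym_\Phi(\cG_0)$ rather than merely agreeing with it on components. This, together with the clean identification $\scAut(\cG_i) \simeq \Omega_{\cG_i}\Fix_i^\scD$ inside the left-fibration model used in the paper (via the rectification of the appendix), are the points demanding genuine care; the remaining steps are formal consequences of the fibre-sequence and induction identities, all computable objectwise over $\Cart$.
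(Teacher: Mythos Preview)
Your argument is correct in outline and takes a genuinely different route from the paper. You work abstractly with fibre sequences and homogeneous-space/induction identities in $\scP(N\Cart)$: you exhibit $F$ as the homotopy fibre of $p$, show it is a homogeneous space $\scSym_\Phi(\cG_1)\dslash\scSym_\Phi(\cG_0)$ via the $0$-connectedness hypothesis, and then invoke the bar-construction equivalence $(G\times_H X_e)\dslash G \simeq X_e\dslash H$. The paper instead works entirely at the level of explicit left fibrations presenting the moduli prestacks: it shows that the full simplicial subsets $\textint\Sol^{\scD[\cG_0]}(\cG_0)$ and $\textint\Sol^{\scD[\cG_1]}(\cG_1)$ of $\textint\Sol^{\scD[\cG_0]}$ are \emph{literally equal} (Proposition~\ref{st:SOL^(n,D_cG(M))(cG,cA^(k)) depends only on [cG]}, using that they are full on vertices and that $0$-connectedness identifies the vertex conditions), and that restricting each to the pullback along $\rmB\SYM^\rev_\phi(\cG_i) \hookrightarrow \phi^*\textint\Fix^{\scD[\cG_i]}_{[\cG_i]}$ is a covariant weak equivalence (Proposition~\ref{st:SOL_res --> SOL is equivalence}). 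This yields a concrete zig-zag of weak equivalences between the presenting left fibrations (Theorem~\ref{st:Solution zig-zag}), whence the equivalence of the classified $\infty$-prestacks. Your approach is more conceptual and portable to settings without such concrete presentations, at the cost of having to justify the equivariance and induction lemmas; the paper's approach is more elementary and avoids any appeal to homogeneous-space identities, but leans on the specific left-fibration machinery built in Part~\ref{part:Higher Geometry}. Note also that under the $0$-connectedness hypothesis one actually has $\Diff_{[\cG_0]}(M)=\Diff_{[\cG_1]}(M)$ (Remark~\ref{rmk:(-)_[G] and isos on conn comps}), not merely an inclusion, though your weaker observation suffices for your purposes.
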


This insight allows us to compare and reconcile different moduli problems which are a priori different (such as in NSNS supergravity; see below).
At the same time it allows us to treat certain moduli problems in several different, but equivalent ways, some of which may be particularly well adapted to the problems we wish to solve.

%%%%%%%%%%%%%%%%%%%%%%%%%%%%%%%%%%%%%%%%%%%%%%%%%%%%%%%%%%%%%%%%%%%%%%%%%%%%

\subsection{Applications to higher $\U(1)$-connections and supergravity}
\label{sec:main results: Part II}

%%%%%%%%%%%%%%%%%%%%%%%%%%%%%%%%%%%%%%%%%%%%%%%%%%%%%%%%%%%%%%%%%%%%%%%%%%%%

In Part~\ref{part:Higher U(1) gauge fields} of this paper we apply the general formalism and results from Part~\ref{part:Higher Geometry} to the geometry of higher $\U(1)$-connections on higher gerbes, and thus of Deligne differential cocycles.

%%%%%%%%%%%%%%%%%%%%%%%%%%%%%%%%%%%%%%%%%%%%%%%%%%%%%%%%%%%%%%%%%%%%%%%%%%%%

\subsubsection{Properties of higher $\U(1)$-connections}

%%%%%%%%%%%%%%%%%%%%%%%%%%%%%%%%%%%%%%%%%%%%%%%%%%%%%%%%%%%%%%%%%%%%%%%%%%%%

We enhance the well-known construction of classifying $\infty$-stacks of $n$-gerbes with $k$-connection (see~\cite{FSS:Cech_for_diff_classes} for a review) to obtain simplicial presheaves
\begin{equation}
	\Grb^{n,\scD}_{\nabla|k} \colon \rmB\scD^\opp \longrightarrow \sSet\,.
\end{equation}
These simplicial presheaves describe smooth families of $n$-gerbes with $k$-connections on a fixed manifold $M$, which are further acted on by the smooth diffeomorphism group of $M$.
They arise by means of the formalism in Part~\ref{part:Higher Geometry} and the Dold-Kan correspondence, applied to a family-version of the Deligne complex.
Given an $n$-gerbe on $M$ without connection, $\cG \in \Grb^{n,M}(\RR^0)$, a $k$-connection on $\cG$ is a tuple $\cA^{(k)} = (A_1, \ldots, A_k)$ of locally defined forms of degrees $1, 2, \ldots, k$, satisfying certain compatibility conditions.
On the level of $\infty$-sheaves on $N\Cart$, similar objects were constructed already by Schreiber in~\cite{Schreiber:DCCT_v2} by means of the abstract formalism of \textit{(differential) concretification} (see, for instance,~\cite[Def.~5.2.8, Def.~5.2.105, Rmk.~5.2.106, Def.~6.4.118]{Schreiber:DCCT_v2} and~\cite[Def.~3.3, Prop.~3.4]{BSS:Stack_of_YM_fields}).
Here we employ explicit constructions of the $\infty$-sheaves describing smooth families of $n$-gerbes with $k$-connection%
\footnote{Note that our use of the term \textit{moduli $\infty$-stacks} differs from that in~\cite{Schreiber:DCCT_v2}:
there, it refers to the higher stacks describing smooth families of higher geometric structures on manifolds (or on other $\infty$-stacks), whereas here we reserve the term to describe quotients of solution stacks by the action of smooth higher symmetry groups.}.

We make the following observations (Theorem~\ref{st:forgetting conns is a principal map} and Section~\ref{sec:Con_k on fixed n-gerbe}):

\begin{theorem}
\label{st:intro:k-conns on n-gerbes}
The $k$-connections on any $n$-gerbe on $M$ form a smooth $(k{-}1)$-groupoid.
Moreover, this $(k{-}1)$-groupoid carries a smooth affine action by a smooth simplicial vector space.
\end{theorem}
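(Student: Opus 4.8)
The plan is to identify the $(k{-}1)$-groupoid of $k$-connections on a fixed $n$-gerbe $\cG$ as the fibre of the connection-forgetting morphism and then read off both claims from this description. Writing $\Grb^{n,\scD}_{\nabla|0}$ for the simplicial presheaf of bare $n$-gerbes, the bare gerbe underlying $\cG$ is a vertex $\cG \in \Grb^{n,\scD}_{\nabla|0}(\RR^0)$, and the $k$-connections on $\cG$ are by definition (Section~\ref{sec:Con_k on fixed n-gerbe}) the fibre $\Con_k(\cG)$ of the forgetful map $\pi \colon \Grb^{n,\scD}_{\nabla|k} \to \Grb^{n,\scD}_{\nabla|0}$ over $\cG$. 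By Theorem~\ref{st:forgetting conns is a principal map} the map $\pi$ is a principal map; its structure group is the smooth $\infty$-group $\cV$ classifying the connection data alone. As the fibre of a principal map, $\Con_k(\cG)$ is therefore a $\cV$-torsor, which already supplies a free and transitive --- hence affine --- action of $\cV$ and shows $\Con_k(\cG) \simeq \cV$ as smooth $\infty$-groupoids. It thus suffices to prove that $\cV$ is (i) a simplicial vector space and (ii) a $(k{-}1)$-groupoid.

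For (i) I would trace through the construction of $\pi$ from Part~\ref{part:Higher Geometry}. The map $\pi$ is induced, via the Dold--Kan correspondence and the family functor of Theorem~\ref{st:fctrs from H_fam to sPSh(rmBscD)}, by the projection of the family Deligne complex onto its degree-zero term $\underline{\U(1)}$, which records the bare gerbe. Its kernel is the subcomplex of sheaves of \emph{real} differential forms
\begin{equation}
	K^\bullet = \big[\, \Omega^1 \xrightarrow{\dd} \Omega^2 \xrightarrow{\dd} \cdots \xrightarrow{\dd} \Omega^k \,\big]\,,
\end{equation}
placed in cohomological degrees $1, \ldots, k$ with the de Rham differential. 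Because Dold--Kan and the functor of Theorem~\ref{st:fctrs from H_fam to sPSh(rmBscD)} are additive and $\RR$-linear, and because $\dd$ is $\RR$-linear, the structure group $\cV = \mathrm{DK}(K^\bullet)$ is a simplicial object in sheaves of real vector spaces, i.e.\ a smooth simplicial vector space in the sense of Baez--Crans. Non-emptiness of $\Con_k(\cG)$, so that the torsor is genuine, follows from fineness of the sheaves $\Omega^i$ via a partition-of-unity argument.

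For (ii) I would compute the smooth homotopy \emph{sheaves} $\underline\pi_j(\cV)$ --- which is what the phrase ``$(k{-}1)$-groupoid'' refers to --- rather than the homotopy groups of the value on $\RR^0$. Under the concretification of Part~\ref{part:Higher Geometry}, a $c$-family of connection differences on $\cG$ is a tuple of forms on $M$ depending smoothly on $c$ and carrying no legs along $c$; the relevant differential is thus the de Rham differential $\dd$ on $M$, and the governing complex is the length-$k$ complex $\Gamma(M; K^\bullet) = \big[\Omega^1(M) \xrightarrow{\dd} \cdots \xrightarrow{\dd} \Omega^k(M)\big]$. Reindexing into homological degrees $0, \ldots, k{-}1$ with $\Omega^k(M)$ in degree $0$, Dold--Kan yields that $\underline\pi_j(\cV)$ is the sheaf $c \mapsto C^\infty\big(c,\, \rmH^{k-j}(\Gamma(M; K^\bullet))\big)$, which vanishes for $j \ge k$ \emph{solely} because the complex has length $k$. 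Hence $\cV$, and therefore $\Con_k(\cG)$, is a smooth $(k{-}1)$-groupoid. The truncation degree is $k{-}1$ independently of $n$: fixing the bare gerbe removes the topological $\underline{\U(1)}$-summand together with the accompanying \v{C}ech/gerbe shift --- the only part sensitive to $n$ --- leaving exactly the $k$-term form tower.

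The main obstacle I anticipate is securing, inside the concretified family formalism, the precise identification of the structure group of $\pi$ (furnished abstractly by Theorem~\ref{st:forgetting conns is a principal map}) with $\mathrm{DK}(K^\bullet)$ and the complex $\Gamma(M; K^\bullet)$ --- in particular confirming that concretification leaves the family differential equal to the $M$-de Rham differential and does not alter the length of the complex. A closely related point, essential for the $n$-independence of the truncation, is to keep explicit the distinction between the smooth homotopy \emph{sheaves} of $\Con_k(\cG)$ (which control its truncation as a stack and are governed by the length-$k$ tower) and the homotopy \emph{groups} of its value on $\RR^0$ (which additionally see the \v{C}ech/gerbe shift and hence the cohomology of $M$ in a range depending on $n$). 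Once these identifications are in place, the linear enhancement in (i) and the length count in (ii) are routine.
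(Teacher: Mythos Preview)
Your plan matches the paper's approach: identify $\Con_k(\cG)$ as the fibre of $p^k_0$ and invoke Theorem~\ref{st:forgetting conns is a principal map} for the torsor structure. For the affine action (your part~(i)) this is exactly what the paper does in the proof of Theorem~\ref{st:forgetting conns is a principal map}, and your argument is correct.

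There is, however, a genuine gap in your argument for~(ii). Your claim that fixing the bare gerbe ``removes \dots the accompanying \v{C}ech/gerbe shift --- the only part sensitive to~$n$'' is not correct: in the Deligne complex $\rmb^{n+1-k}\rmb^k_\nabla\U(1)$ of Definition~\ref{def:b^n_(nabla) U(1)} the term $\Omega^{i,v}$ sits in chain degree $n{+}1{-}i$, so the kernel of $p^k_0$ retains the $n$-dependent grading and cannot be freely reindexed into degrees $0,\dots,k{-}1$. After the \v{C}ech totalisation built into $\Grb^{n,M}_{\nabla|k}$ the structure group is the Dold--Kan image of a complex supported in degrees $0,\dots,n$, so $\Con_k(\cG)$ is an $n$-groupoid in general. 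For instance, for a $2$-gerbe and $k=1$ one computes that $\Con_1(\cG)$ is a connected $2$-groupoid with $\pi_2 \cong \Omega^1(M)$, not a set; your distinction between homotopy sheaves and homotopy groups at $\RR^0$ does not rescue this, since both are nonzero in degree~$n$. The paper itself does not prove the $(k{-}1)$-truncation in general: Section~\ref{sec:Con_k on fixed n-gerbe} works out only the case $n=1$, $k=2$, where $k{-}1$ and $n$ coincide, so you should not expect your length-count argument to yield the literal $(k{-}1)$ bound for arbitrary $k\leq n$.
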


These are genuine features of \textit{higher} connections.
The first result in particular is in stark contrast to classical gauge theory, where the connections on a given principal bundle simply form a \textit{set} and do not come with an intrinsic notion of morphisms between them.
The second statement categorifies the fact that connections on principal bundles form affine spaces over the vector space of 1-forms valued in the adjoint bundle.

%%%%%%%%%%%%%%%%%%%%%%%%%%%%%%%%%%%%%%%%%%%%%%%%%%%%%%%%%%%%%%%%%%%%%%%%%%%%

\subsubsection{Symmetries of higher $\U(1)$-bundles and connections}

%%%%%%%%%%%%%%%%%%%%%%%%%%%%%%%%%%%%%%%%%%%%%%%%%%%%%%%%%%%%%%%%%%%%%%%%%%%%

Let $(\cG, \cA^{(k)})$ be a fixed $n$-gerbe with fixed $k$-connection $\cA^{(k)}$ on $M$, for $k \leq n$; we can think of this as a principal $\infty$-bundle for the smooth structure group $\rmB^{n-k}\rmB_\nabla^k \U(1)$, where $\rmB$ denotes the delooping, or classifying space, functor, and where $\rmB_\nabla^k \U(1)$ classifies principal $\infty$-bundles with connection for the group $\rmB^{k-1}\U(1)$.
We are interested in the higher groupoids of connections on $(\cG, \cA^{(k)})$, i.e.~the ways of extending $\cA^{(k)}$ to a full connection on $\cG$, modulo the actions of $\scAut(\cG, \cA^{(k)})$ and $\scSym_\Phi(\cG, \cA^{(k)})$ (for any smooth action $\Phi \colon \bbGamma \to \Diff_{[\cG]}(M)$).

So far, we have written the action of the smooth higher automorphism group $\scAut(\cG, \cA^{(k)})$ only in terms of an $\infty$-functor, or a left fibration.
We devote a section to unravelling, or strictifying, this higher group action in terms of an action of a presheaf of simplicial groups.
We show that this smooth $\infty$-group action is presented precisely by a higher form of the standard action of $\U(1)$-gauge transformations, adapted to the structure group $\rmB^{n-k}\rmB_\nabla^k \U(1)$.
The strictification heavily relies on the Dold-Kan correspondence and is proven in Theorem~\ref{st:Cech pres of gauge action}.
The smooth higher gauge actions on higher groupoids of connections has not been considered before (though truncated versions appear in, for instance,~\cite{Freed:Dirac_charge_quantisation, FMS:Heisenberg_and_NC_fluxes, FMS:Uncertainty_of_fluxes, Szabo:Quant_of_Higher_Ab_GT}); here we derive its full form from first principles.

There is a canonical equivalence
\begin{equation}
	\scAut(\cG, \cA^{(k)})
	\simeq \Grb^{n-1,M}_{\nabla,k}
\end{equation}
of smooth $\infty$-groups, where the right-hand side is the smooth higher group of $(n{-}1)$-gerbes with $k$-connection on $M$ and their canonical (abelian) tensor product.
In particular, these smooth higher groups categorify the $\infty$-dimensional Lie groups considered as the gauge groups in higher abelian gauge theories in~\cite{FMS:Heisenberg_and_NC_fluxes, FMS:Uncertainty_of_fluxes, Szabo:Quant_of_Higher_Ab_GT}, where only the zero-truncations of $\scAut(\cG, \cA^{(k)})$ were considered.

The smooth higher gauge and symmetry groups have interesting relations to higher-form symmetries in physics~\cite{GKSW:Gen_global_syms, FMT:Top_sym_in_QFT}.
In particular, they describe and formalise the higher symmetries in QFTs investigated in~\cite{Sharpe:Gen_global_syms}.
We point out that, for each fixed $n$-gerbe with $n$-connection on $M$ and each smooth map $\sigma \colon \Sigma^{(n-p)} \to M$ from a closed $(n{-}p)$-manifold $\Sigma^{(n-p)}$, higher gerbe holonomy provides a morphism of smooth groups (Remark~\ref{rmk:higher form syms}, see also Remark~\ref{rmk:hol and Mdl(G,A^1)})
\begin{equation}
	U_{(-)}(\Sigma^{(n-p)}, \sigma) \colon \pi_p \scAut(\cG, \cA^{(n)}) \longrightarrow \U(1)\,,
\end{equation}
for $p \geq 0$, where the homotopy groups are formed in $\scP(N\Cart)$.
These morphisms only depend on the homotopy class of $\sigma$, for $p > 0$, and further descend to moduli stacks of connections on $(\cG, \cA^{(n)})$ modulo gauge transformations (Remark~\ref{rmk:hol and Mdl(G,A^1)}).
If we consider instead an $(n{+}1)$-connection $\cA$ on $\cG$, then homotopy-invariance also holds for $p = 0$.

Finally, we use the theory developed thus far to provide a new smooth model for the String group (as introduced in~\cite{Stolz:Conj_on_pos_Ric, Killingback:Anomalies_and_loop_geometry}).
Let $H$ be a compact, simple, simply connected Lie group, and let $\Phi = L \colon H \to H$ be the action of $H$ on itself via left multiplication.
Further, let $\cG_\bas$ be a basic gerbe on $H$ (i.e.~one whose class in $\rmH^3(H;\ZZ)$ is a generator).
This admits a distinguished connection $\cA_\bas$~\cite{Meinrenken:The_basic_gerbe}.
It follows from~\cite{FRS:Higher_gerbe_connections} that the smooth 2-group $\scSym_L(\cG_\bas, \cA_\bas)$ of symmetries of $(\cG_\bas, \cA_\bas)$ which lift the action $L$ is a smooth model for the String group $\bbString(H)$ of $H$ (note that $\bbString(H)$ is defined only up to weak homotopy equivalence).
Further, it was shown in~\cite{BMS:2-Grp_Ext, Bunk:Pr_ooBdls_and_String} that also $\scSym_L(\cG_\bas)$ is a smooth higher group model for $\bbString(H)$.
Here, we prove the following intermediate case (Theorem~\ref{st:String^1(H)}):

\begin{theorem}
Let $\cA^{(1)}$ be any 1-connection on the basic gerbe $\cG_\bas$ over a compact, simple, simply connected Lie group $H$.
Then, the extension of smooth $\infty$-groups (compare Theorem~\ref{st:intro:Aut-Sym SES})
\begin{equation}
	\scAut(\cG_\bas, \cA^{(1)}) \longrightarrow \scSym_L(\cG_\bas, \cA^{(1)}) \longrightarrow H
\end{equation}
is a smooth String group extension.
\end{theorem}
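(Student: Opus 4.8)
The plan is to deduce the statement from the already-established \emph{no-connection} case---namely, that $\scSym_L(\cG_\bas) \to H$ is a smooth String group extension (as shown in~\cite{BMS:2-Grp_Ext, Bunk:Pr_ooBdls_and_String})---by transporting this property along the map that forgets the $1$-connection. The bridge is Theorem~\ref{st:intro:equiv result for Mdl oo-stacks}: dropping a single layer of connection data is $0$-connected, hence induces an equivalence of the associated moduli stacks, and I would arrange the moduli problem so that this equivalence loops back to an equivalence of the symmetry-group extensions themselves. First I would record that the extension exists at all: left translations $L_g$ are smoothly isotopic to $\id_H$ (as $H$ is connected), hence act trivially on $\rmH^3(H;\ZZ)$ and preserve the class of $\cG_\bas$; together with the fact that $1$-connections on a fixed gerbe form a connected affine groupoid (Theorem~\ref{st:intro:k-conns on n-gerbes}), this shows $L$ factors through $\Diff_{[(\cG_\bas, \cA^{(1)})]}(H)$, which here is all of $H$. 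Theorem~\ref{st:intro:Aut-Sym SES} then supplies the extension $\scAut(\cG_\bas, \cA^{(1)}) \to \scSym_L(\cG_\bas, \cA^{(1)}) \to H$ in question.

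Next I would set up the moduli comparison. Take the sequence~\eqref{eq:intro:Sol --> Conf --> Fix_0 --> Fix_1} with $\Fix_0^\scD = \Grb^{1,\scD}_{\nabla|1}$, $\Fix_1^\scD = \Grb^{1,\scD}$, and $p$ the map forgetting the $1$-connection, and choose $\Sol^\scD = \Conf^\scD = \Fix_0^\scD$, so that the solution space over any fixed datum is contractible. With this choice $\scSol(\cG) \simeq *$, hence $\scMdl_\Phi(\cG) \simeq \scSol(\cG) \dslash \scSym_\Phi(\cG) \simeq \rmB \scSym_\Phi(\cG)$ for every action $\Phi$, and the principal $H$-bundle $\scMdl_e(\cG) \to \scMdl_\Phi(\cG)$ of Theorem~\ref{st:Mod-Mdl-Diff prBun} becomes the delooping of the extension $\scAut(\cG) \to \scSym_\Phi(\cG) \to H$. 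The map $p$ is $0$-connected: over a cartesian space every $1$-gerbe is trivialisable and, by Theorem~\ref{st:intro:k-conns on n-gerbes}, its $1$-connections form a connected affine space, so forgetting them is a bijection on components. Writing $\cG_0 = (\cG_\bas, \cA^{(1)})$ and $\cG_1 = p(\cG_0) = \cG_\bas$, Theorem~\ref{st:intro:equiv result for Mdl oo-stacks}, applied naturally in $\Phi$ for both $\Phi = L$ and $\Phi = e$, produces a commuting square of equivalences relating the principal $H$-bundles $\rmB\scAut(\cG_i) \to \rmB\scSym_L(\cG_i)$.

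Finally I would loop. All four corners are connected and pointed, so applying $\Omega$ turns this square into an equivalence of extensions of $H$ carrying $\scAut(\cG_\bas, \cA^{(1)}) \to \scSym_L(\cG_\bas, \cA^{(1)}) \to H$ to $\scAut(\cG_\bas) \to \scSym_L(\cG_\bas) \to H$, with the identity on the quotient $H$. Since the latter is a smooth String group extension, so is the former; as a cross-check one may apply the underlying-space functor $S$ and the fibre sequence of Corollary~\ref{st:LES for S(Mdl_phi)} to confirm $S\scAut(\cG_\bas, \cA^{(1)}) \simeq \rmB\U(1)$ and $S\scSym_L(\cG_\bas, \cA^{(1)}) \simeq \bbString(H)$, recovering the String fibration $\rmB\U(1) \to \bbString(H) \to H$.

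The main obstacle will be justifying that the purely formal equivalence of moduli $\infty$-prestacks refines to an equivalence of \emph{group extensions} compatible with the projections to $H$. This requires care on three fronts: realising $\rmB\scSym_\Phi$ as a moduli stack with genuinely contractible solution space; checking that the equivalence of Theorem~\ref{st:intro:equiv result for Mdl oo-stacks} is natural in the action $\Phi$, so that the $\Phi = e$ and $\Phi = L$ equivalences assemble into a morphism of principal $H$-bundles; and verifying that looping in $\scP(N\Cart)$ is valid and recovers the extensions up to equivalence. A secondary subtlety is the verification, for an \emph{arbitrary} $1$-connection, that every left translation preserves $(\cG_\bas, \cA^{(1)})$ up to equivalence, which is precisely what makes the base of the extension all of $H$.
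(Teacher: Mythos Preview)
Your overall strategy---reduce to the known no-connection case $\scSym_L(\cG_\bas) \to H$ via the moduli equivalence theorem applied to the forgetful map $p^1_0$---is exactly the paper's. But there is a genuine gap in the execution. With your choice $\Sol^\scD = \Conf^\scD = \Fix_0^\scD = \Grb^{1,\scD}_{\nabla|1}$, the solution space over $\cG_0 = (\cG_\bas,\cA^{(1)})$ is indeed a point, but over $\cG_1 = \cG_\bas \in \Fix_1^\scD = \Grb^{1,\scD}$ it is $\scCon_1(\cG_\bas)$, the presheaf of $1$-connections on $\cG_\bas$. This is a $0$-truncated affine space, not the terminal object of $\scP(N\Cart)$, so your claimed identification $\scMdl_\Phi(\cG_1) \simeq \rmB\scSym_\Phi(\cG_1)$ fails there, and with it the ``commuting square of equivalences relating $\rmB\scAut(\cG_i) \to \rmB\scSym_L(\cG_i)$''. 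Indeed, $\scAut(\cG_\bas,\cA^{(1)}) \simeq \bbGrb^{0,H}_\nabla$ and $\scAut(\cG_\bas) \simeq \bbGrb^{0,H}$ are \emph{not} equivalent as smooth $\infty$-groups (they have different $\pi_0$ over $\RR^0$), so no equivalence of smooth extensions can exist. The obstacle you flag at the end is therefore not a technical subtlety to be discharged but an actual obstruction.

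The repair is to carry out the comparison only after applying the underlying-space functor $S$, which is all the String condition requires. The paper does precisely this (via Corollary~\ref{st:equivs of SSym(cG_0) and SSym(cG_1)} and Proposition~\ref{st:SMdl for SSol = *}): since $\scCon_1(\cG_\bas)$ is affine over an $\ul{\RR}$-module, its underlying space is contractible (Lemma~\ref{st:spl smooth R-mods have trivial HoType}), so $S\scMdl_\Phi(\cG_1) \simeq \rmB\,S\scSym_\Phi(\cG_1)$, and the moduli equivalence then yields $S\scSym_L(\cG_\bas,\cA^{(1)}) \simeq S\scSym_L(\cG_\bas)$ as group objects in $\scS$, compatibly with the projection to $H$. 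One then reads off that $S\scSym_L(\cG_\bas,\cA^{(1)}) \to H$ represents a generator of $\rmH^3(H;\ZZ)$ from the commutative triangle over $H$. What you call a ``cross-check'' is in fact the argument; the looping step in $\scP(N\Cart)$ should be dropped.
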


%%%%%%%%%%%%%%%%%%%%%%%%%%%%%%%%%%%%%%%%%%%%%%%%%%%%%%%%%%%%%%%%%%%%%%%%%%%%

\subsubsection{Gauge theory of higher $\U(1)$-connections}

%%%%%%%%%%%%%%%%%%%%%%%%%%%%%%%%%%%%%%%%%%%%%%%%%%%%%%%%%%%%%%%%%%%%%%%%%%%%

We present various important examples of higher gauge theoretic moduli stacks for higher $\U(1)$-connections, including higher Maxwell and Einstein-Maxwell theory, higher self-dual connections and higher $\U(1)$-BF theory, as recently explored in the context of Turaev-Viro models in~\cite{HMT:Gen_Ab_TV_and_U(1)-BF} and references therein.
In particular, we extend arguments from~\cite{MR:YM_for_BGrbs} to show (Theorem~\ref{st:MW existence (vacuum)}):

\begin{theorem}
For each Riemannian metric $g$ on a closed manifold $M$, and each $n$-gerbe with $k$-connection $(\cG, \cA^{(k)})$ on $M$ with $k \leq n$, there exists an extension of the $k$-connection $\cA^{(k)}$ on $\cG$ to a higher (vacuum) Maxwell solution with respect to $g$.
\end{theorem}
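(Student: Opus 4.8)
The plan is to reduce the statement to Hodge theory, using that the curvature of a full connection on $\cG$ is constrained only through its de Rham class, which is fixed by the topology of $\cG$. Recall that the vacuum higher Maxwell action assigns to a full $(n{+}1)$-connection $\cA$ on $\cG$ the number $\tfrac{1}{2}\int_M \curv(\cA) \wedge \star \curv(\cA)$, where $\star$ is the Hodge operator of $g$ and $F \coloneqq \curv(\cA) \in \Omega^{n+2}(M)$. Varying the top connection form by an arbitrary global $(n{+}1)$-form and integrating by parts over the closed manifold $M$ shows that the vacuum Euler--Lagrange equation is $d \star F = 0$. Since $F$ automatically satisfies the Bianchi identity $dF = 0$, a full connection is a vacuum Maxwell solution precisely when its curvature is a harmonic $(n{+}2)$-form with respect to $g$. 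This mirrors and generalises the $1$-gerbe case treated in~\cite{MR:YM_for_BGrbs}.

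First I would fix the target curvature. The de Rham class $[\curv(\cA)] \in \rmH^{n+2}_\dR(M)$ is independent of the connection $\cA$ on $\cG$: it equals the image $c$ in real cohomology of the integral characteristic class of the $n$-gerbe $\cG$ in $\rmH^{n+2}(M;\ZZ)$. Because $M$ is closed and equipped with the Riemannian metric $g$, Hodge theory furnishes a unique harmonic representative $F_g$ of $c$; by construction $dF_g = 0 = d\star F_g$, so $F_g$ is the curvature of any vacuum Maxwell solution we can hope to produce. It therefore remains to realise $F_g$ as the curvature of some full connection that restricts to the prescribed $k$-connection $\cA^{(k)}$.

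For this realisation step I would proceed in two moves. First, I invoke the surjectivity built into the connection-forgetting map (Theorem~\ref{st:forgetting conns is a principal map}), which guarantees that $\cA^{(k)}$ extends to at least one full $(n{+}1)$-connection $\cA_0$ on $\cG$; write $F_0 \coloneqq \curv(\cA_0)$, a closed form lying in the class $c$. Second, since $F_g$ and $F_0$ represent the same class, we have $F_g - F_0 = d\eta$ for some global $\eta \in \Omega^{n+1}(M)$. I then modify only the top local connection form of $\cA_0$ by adding $\eta$ on every patch of a good cover, leaving the lower forms --- and in particular the fixed data $A_1, \dots, A_k$ --- untouched. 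As $\eta$ is globally defined, this preserves all \v{C}ech--Deligne descent conditions and shifts the curvature by exactly $d\eta$. The resulting full connection $\cA$ thus restricts to $\cA^{(k)}$ and satisfies $\curv(\cA) = F_0 + d\eta = F_g$, which is harmonic and hence solves the vacuum Maxwell equation.

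The main obstacle is the realisation step, and in particular the existence of at least one extension of $\cA^{(k)}$ to a full connection: once such an extension is in hand, adjusting the curvature within its fixed de Rham class is routine, because the top connection form of an $n$-gerbe absorbs global $(n{+}1)$-forms and the curvature depends on it only through its differential. I expect the existence of the initial extension to follow from the principal-bundle structure of the forgetting maps (Theorem~\ref{st:forgetting conns is a principal map}) together with the affine action on higher connections recorded in Theorem~\ref{st:intro:k-conns on n-gerbes}; the one point requiring care is to perform the curvature-shifting argument at the level of \v{C}ech--Deligne data rather than merely for the globally defined curvature form, so that the output is genuinely a connection extending $\cA^{(k)}$ and not only a closed form in the class $c$.
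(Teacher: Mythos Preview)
Your proof is correct and follows essentially the same approach as the paper: pick the harmonic representative of the de Rham class of $\cG$ via Hodge theory, choose any full connection extending $\cA^{(k)}$, and then shift its top local form by a global $(n{+}1)$-form $\eta$ to realise the harmonic curvature. The only minor imprecision is that Theorem~\ref{st:forgetting conns is a principal map} concerns $p^k_0$ rather than $p^{n+1}_k$, so it does not directly furnish the initial extension you need; that existence follows more simply from the acyclicity of the sheaves $\Omega^i$ in positive \v{C}ech degree (the paper itself just asserts ``let $\cA'$ be an arbitrary connection on $(\cG,\cA^{(k)})$'' without further comment).
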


We then turn to the relation between different moduli stacks of connections on $n$-gerbes:
for $l, k \in \NN_0$ with $l < k$, let
\begin{equation}
\label{eq:intro:forgetting connections}
	p^k_l \colon \Grb^{n,\scD}_{\nabla|k} \to \Grb^{n,\scD}_{\nabla|l}
\end{equation}
denote the morphism which forgets the highest form parts $(A_{l+1}, \ldots, A_k)$ of a $k$-connection on $\cG$.
Using the various properties of \v{C}ech-Deligne cohomology, we show the following technical result (Corollary~\ref{st:Grb conn comps}), which allows us to apply Theorem~\ref{st:intro:equiv result for Mdl oo-stacks} to higher $\U(1)$-connections:

\begin{proposition}
\label{st:intro:forgetting connections}
The morphism~\eqref{eq:intro:forgetting connections} is a Kan fibration and 0-connected whenever $k \leq n$.
\end{proposition}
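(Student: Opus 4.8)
The plan is to reduce both claims to elementary homological properties of the (family) Čech--Deligne complex via the Dold--Kan correspondence, worked out objectwise over $\rmB\scD$. First I would recall from the construction of $\Grb^{n,\scD}_{\nabla|k}$ in Part~\ref{part:Higher Geometry} that, for each $c \in \rmB\scD$, the simplicial set $\Grb^{n,\scD}_{\nabla|k}(c)$ is in fact a simplicial abelian group: it is the image under the inverse Dold--Kan functor of a chain complex $C^{(k)}(c)$ computing the family Čech--Deligne hypercohomology over $M$ of the truncated Deligne complex
\begin{equation}
	\scD(n;k) = \big[\, \underline{\U(1)} \xrightarrow{\dd\log} \Omega^1 \xrightarrow{\dd} \Omega^2 \to \cdots \xrightarrow{\dd} \Omega^k \,\big]
\end{equation}
placed in degrees $0, \ldots, k$, in the indexing for which $\pi_0 \Grb^{n,\scD}_{\nabla|k}(c) \cong \rmH^{n+1}\big(\scD(n;k)\big)$ recovers the isomorphism classes of $n$-gerbes with $k$-connection (evaluated on $c$-families), and more generally $\pi_j \cong \rmH^{n+1-j}$. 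Under this identification the forgetful morphism $(p^k_l)_{|c}$ is the Dold--Kan image of the quotient chain map $C^{(k)}(c) \to C^{(l)}(c)$ that discards the form-components of degrees $l+1, \ldots, k$.

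For the fibration claim I would observe that this quotient chain map is degreewise surjective, being the projection that kills a set of summands. Since the inverse Dold--Kan functor carries degreewise surjections of chain complexes to levelwise surjective homomorphisms of simplicial abelian groups, and any levelwise surjective homomorphism of simplicial groups is a Kan fibration, $(p^k_l)_{|c}$ is a Kan fibration for every $c$; note that this part needs no restriction on $n$.

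For 0-connectedness I would use that the kernel of the quotient chain map is the (family version of the) truncated de Rham complex
\begin{equation}
	\cK = \big[\, \Omega^{l+1} \xrightarrow{\dd} \Omega^{l+2} \to \cdots \xrightarrow{\dd} \Omega^k \,\big]
\end{equation}
concentrated in degrees $l+1, \ldots, k$, sitting in a short exact sequence $0 \to \cK \to \scD(n;k) \to \scD(n;l) \to 0$ of complexes of sheaves. Each $\Omega^j$ is a fine, hence soft and acyclic, sheaf---also in the family setting over the cartesian space $c$, where partitions of unity on $M$ are available fibrewise---so the hypercohomology of $\cK$ collapses to the cohomology of its global-sections complex and is therefore concentrated in degrees $l+1, \ldots, k$. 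The segment
\begin{equation}
	\rmH^{n+1}(\cK) \longrightarrow \rmH^{n+1}\big(\scD(n;k)\big) \xrightarrow{(p^k_l)_{|c}} \rmH^{n+1}\big(\scD(n;l)\big) \longrightarrow \rmH^{n+2}(\cK)
\end{equation}
of the long exact hypercohomology sequence then finishes the argument: the hypothesis $k \leq n$ gives $n+1 > k$ and $n+2 > k$, so both outer groups vanish and the middle map, which is exactly $\pi_0$ of $(p^k_l)_{|c}$, is an isomorphism. As this holds for all $c \in \rmB\scD$, the morphism $p^k_l$ is 0-connected.

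The main obstacle I anticipate lies not in the homological degree count, which is robust, but in the equivariant/family bookkeeping needed to justify the very first step: one must verify that the objectwise simplicial abelian group produced by the Part~\ref{part:Higher Geometry} machinery---the left Kan extension that averages over good open covers of $M$, together with the concretification encoding the smooth dependence on $c$---genuinely agrees, objectwise and compatibly with $p^k_l$, with the Dold--Kan image of a family Čech--Deligne complex as above, and that the concretification disturbs neither the short exact sequence nor the softness of the form sheaves underlying the vanishing. Pinning down this compatibility, so that the clean chain-level computation can be run uniformly over $\rmB\scD$, is where the bulk of the work will be.
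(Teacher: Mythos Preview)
Your proposal is correct and follows essentially the same route as the paper. The Kan fibration claim is handled identically (degreewise surjection under Dold--Kan, valid for all $k$); for $0$-connectedness the paper also reduces to the \v{C}ech--Deligne complex over a fixed good cover and invokes acyclicity of the vertical form sheaves $\Omega^{j,v}$, though it argues surjectivity and injectivity separately (surjectivity by factoring $p^k_0$ through $p^{n+1}_0$ and Lemma~\ref{st:DiffCoho to Coho}, injectivity by the explicit iterative lift you would recognise as unwinding the connecting map) rather than packaging both into your long exact hypercohomology sequence for the kernel $\cK$. Your anticipated obstacle---passing from the objectwise \v{C}ech--Deligne computation to the actual simplicial presheaf $\Grb^{n,\scD}_{\nabla|k}$ produced by the Part~\ref{part:Higher Geometry} machinery---is exactly what the paper addresses in Corollary~\ref{st:Grb conn comps}: since $\Grb^{n,\scD}_{\nabla|k}(c)$ is a filtered colimit over $\GCov^\scD_{|c}$ of the \v{C}ech models $\widetilde{\Grb}^{n,\scD}_{\nabla|k}(X)$, and each cocone map is a weak equivalence (Lemma~\ref{st:wtG ess const on fibres of varpi^D}), the computation over a single good cover suffices and the concretification introduces no complication.
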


It follows that $\Diff_{[\cG, \cA^{(k)}]}(M) = \Diff_{[\cG]}(M)$ depends only on the equivalence class of $\cG$ whenever $k \leq n$.
As a direct application of Theorem~\ref{st:intro:equiv result for Mdl oo-stacks} and Proposition~\ref{st:intro:forgetting connections}, we obtain that various pairs of moduli stacks of higher $\U(1)$-connections are equivalent.
For instance, we obtain:

\begin{theorem}
Let $(\cG, \cA^{(k)})$ be an $n$-gerbe with $k$-connection on $M$, and let $\cA^{(l)}$ be the $l$-connection on $\cG$ obtained by forgetting part of the connection data (by means of the morphism~\eqref{eq:intro:forgetting connections}).
For each smooth higher group action $\Phi \colon \bbGamma \to \Diff_{[\cG]}(M)$, there is a canonical equivalence of moduli $\infty$-(pre)stacks of Maxwell solutions on $(\cG, \cA^{(k)})$, and $(\cG, \cA^{(l)})$, respectively,
\begin{equation}
	\scMdl_{MW, \Phi}(\cG, \cA^{(k)}) \simeq \scMdl_{MW, \Phi}(\cG, \cA^{(l)})\,.
\end{equation}
\end{theorem}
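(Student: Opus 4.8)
The plan is to obtain this equivalence as a direct instance of the general criterion in Theorem~\ref{st:intro:equiv result for Mdl oo-stacks}, with the connection-forgetting morphism $p^k_l$ of~\eqref{eq:intro:forgetting connections} playing the role of the augmentation $p$, and with its 0-connectedness supplied by Proposition~\ref{st:intro:forgetting connections}. No new analysis is needed beyond assembling the Maxwell moduli data into the shape of~\eqref{eq:intro:Sol --> Conf --> Fix_0 --> Fix_1} and checking that the two objects produced by the augmentation are exactly $(\cG, \cA^{(k)})$ and $(\cG, \cA^{(l)})$.

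First I would fix the augmented sequence of simplicial presheaves on $\rmB\scD$. I take $\Conf^\scD$ to be the presheaf of full connections on $n$-gerbes on $M$ relevant to higher Maxwell theory, $\Sol_{MW}^\scD \hookrightarrow \Conf^\scD$ the full simplicial subpresheaf cut out by the higher Maxwell equations, and I set $\Fix_0^\scD := \Grb^{n,\scD}_{\nabla|k}$, $\Fix_1^\scD := \Grb^{n,\scD}_{\nabla|l}$ and $p := p^k_l$. The structural forgetting map $\Conf^\scD \to \Grb^{n,\scD}_{\nabla|k}$ retaining only the $k$-connection data is, objectwise, a further instance of the connection-forgetting morphisms built from the Dold-Kan image of the family Deligne complex, and I would check that it is an objectwise Kan fibration in the same way. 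By construction of $p^k_l$, which discards the top components $(A_{l+1}, \ldots, A_k)$ of a $k$-connection, one has $p^k_l(\cG, \cA^{(k)}) = (\cG, \cA^{(l)})$; thus with $\cG_0 := (\cG, \cA^{(k)})$ the augmentation produces exactly $\cG_1 := (\cG, \cA^{(l)})$. The solution $\infty$-presheaves attached to the two fixed data are then $\scSol_{MW}(\cG, \cA^{(k)})$ and $\scSol_{MW}(\cG, \cA^{(l)})$, and the associated moduli $\infty$-prestacks $\scMdl_\Phi(\cG_0)$ and $\scMdl_\Phi(\cG_1)$ are by definition $\scMdl_{MW, \Phi}(\cG, \cA^{(k)})$ and $\scMdl_{MW, \Phi}(\cG, \cA^{(l)})$.

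Next I would discharge the single hypothesis of Theorem~\ref{st:intro:equiv result for Mdl oo-stacks}: that the augmentation $p = p^k_l$ be 0-connected. This is exactly Proposition~\ref{st:intro:forgetting connections}, which gives that $p^k_l$ is a Kan fibration and 0-connected whenever $k \leq n$---the regime of the higher Maxwell set-up (cf.~Theorem~\ref{st:MW existence (vacuum)}). Because $k \leq n$ also forces $\Diff_{[\cG, \cA^{(k)}]}(M) = \Diff_{[\cG]}(M) = \Diff_{[\cG, \cA^{(l)}]}(M)$, the single action $\Phi \colon \bbGamma \to \Diff_{[\cG]}(M)$ of the hypothesis legitimately defines both symmetry groups $\scSym_\Phi(\cG, \cA^{(k)})$ and $\scSym_\Phi(\cG, \cA^{(l)})$ whose quotients appear on the two sides. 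Applying Theorem~\ref{st:intro:equiv result for Mdl oo-stacks} to the instance above, with $\cG_0 = (\cG, \cA^{(k)})$ and $\cG_1 = (\cG, \cA^{(l)})$, then delivers the canonical equivalence $\scMdl_\Phi(\cG_0) \simeq \scMdl_\Phi(\cG_1)$ in $\scP(N\Cart)$, which is precisely $\scMdl_{MW, \Phi}(\cG, \cA^{(k)}) \simeq \scMdl_{MW, \Phi}(\cG, \cA^{(l)})$.

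The only genuinely non-formal point---and the step I expect to be the main obstacle---is the compatibility claim underlying the whole set-up: that $\Conf^\scD$ and $\Sol_{MW}^\scD$ may be chosen once and for all, independently of the level at which the connection is held fixed, so that the identical pair $\Sol_{MW}^\scD \hookrightarrow \Conf^\scD$ sits over both $\Grb^{n,\scD}_{\nabla|k}$ and $\Grb^{n,\scD}_{\nabla|l}$ compatibly with $p^k_l$. This rests on the Maxwell equations being conditions on the top curvature of the full connection, hence insensitive to which intermediate form-degree components are declared fixed; for $l < k \leq n$ the forgetting of levels from $k$ to $l$ therefore commutes with the inclusion of the Maxwell solution locus, and the factorisation $\Conf^\scD \to \Grb^{n,\scD}_{\nabla|k} \xra{p^k_l} \Grb^{n,\scD}_{\nabla|l}$ holds objectwise. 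I would spend the bulk of the work verifying this factorisation and the objectwise Kan-fibration property of $\Conf^\scD \to \Grb^{n,\scD}_{\nabla|k}$; once these structural compatibilities are in place, the asserted equivalence follows formally from the two cited results.
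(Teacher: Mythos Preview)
Your proposal is correct and follows essentially the same approach as the paper: the result is obtained as a direct application of Theorem~\ref{st:intro:equiv result for Mdl oo-stacks} with $\Fix_0^\scD = \Grb^{n,\scD}_{\nabla|k}$, $\Fix_1^\scD = \Grb^{n,\scD}_{\nabla|l}$, and $p = p^k_l$, the 0-connectedness hypothesis being supplied by Proposition~\ref{st:intro:forgetting connections}. The paper's proof (see Theorem~\ref{st:equiv of hYM Mds stacks}) is in fact even terser than yours---it simply cites the two results---and the compatibility checks you flag as the ``main obstacle'' are absorbed into the general framework; the only refinement worth noting is that in the paper's precise setup the configuration presheaf is $\Conf^\scD = \Grb^{n,\scD}_\nabla \times \Met_{r,s}^\scD$ (the metric is part of the data), so that $\Fix_i^\scD$ is $\Grb^{n,\scD}_{\nabla|\bullet} \times \Met_{r,s}^\scD$ or $\Grb^{n,\scD}_{\nabla|\bullet}$ depending on whether the metric is held fixed, but in either case the 0-connectedness of $p^k_l$ immediately yields that of $p^k_l \times \id$.
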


In other words, we can fix an arbitrary part of a higher $\U(1)$-connection on an $n$-gerbe in any moduli problem, as long as we use the correct notion of gauge transformation in each case.
We clarify which truncation of the full moduli stack of higher Maxwell theory for $n=1$ was investigated in~\cite{MR:YM_for_BGrbs} and compute the homotopy type of the higher stacks of $n$-gerbes with $k$-connection (Theorem~\ref{st:spaces of n-gerbes}) and of higher Maxwell solutions on $M$ (Theorem~\ref{st:HoType of Sol_Mw}).

%%%%%%%%%%%%%%%%%%%%%%%%%%%%%%%%%%%%%%%%%%%%%%%%%%%%%%%%%%%%%%%%%%%%%%%%%%%%

\subsubsection{Moduli stacks of NSNS supergravity solutions}

%%%%%%%%%%%%%%%%%%%%%%%%%%%%%%%%%%%%%%%%%%%%%%%%%%%%%%%%%%%%%%%%%%%%%%%%%%%%

One motivation for the present paper is to achieve a comparison of two higher moduli stacks of solutions to NSNS supergravity on a fixed manifold $M$, as we now explain.
Through Gajer's insight, together with the relevance of differential cohomology theories in string theory~\cite{Freed:Dirac_charge_quantisation, DFM:Spin_strs_and_superstrings, FMS:Uncertainty_of_fluxes, Szabo:Quant_of_Higher_Ab_GT, ABEHSN:SymTFTs_from_String_thy, Schreiber:DCCT_v2, FMS:Heisenberg_and_NC_fluxes}, (higher) gerbes have come to play important roles in string theory and the various supergravity theories arising as its low-energy limits.
In particular, it has been argued that the B-field in string theory is described by a 2-connection on a 1-gerbe~\cite{Kapustin:D-branes_in_non-triv_B-fields}.
The investigation of solutions to supergravity equations and their moduli has long been an open problem in differential geometry (see, for instance, the introduction of~\cite{GFGM:Futaki_invars_and_Yaus_conjecture} for a nice overview and further references), but only little is known in situations where the 1-gerbe underlying the B-field has non-trivial topology (i.e.~the class $[\cG] \in \rmH^3(M;\ZZ)$ is non-zero).

Recently, the introduction of generalised geometry has led to significant progress both in differential geometry in general and mathematical supergravity in particular~\cite{Hitchin:Generalised_CY_manifolds, Gualtieri:PhD_thesis} (see also~\cite{GFS:Gen_Ricci_flow} for a textbook introduction).
In this approach, the B-field (in NSNS supergravity) is described as part of a generalised metric on an exact Courant algebroid on $M$.
Such a generalised metric can always be decomposed into a Riemannian metric on $M$ and an isotropic splitting of the exact Courant algebroid; the latter then corresponds to the B-field in NSNS supergravity.

The question which thus arises is how these two models for the geometry of NSNS supergravity are related and, even more importantly, how the solutions to NSNS supergravity in the two approaches compare.
As we explain below, there is no equivalence between the (higher) stacks of field configurations which the two models predict for NSNS supergravity.
It is therefore a highly important question whether and, if so, how the discrepancy between these two approaches to the B-field (Problem~\ref{prob:mouli of NSNS B-fields}) and to moduli of NSNS supergravity solutions on $M$ (Problem~\ref{prob:moduli of NSNS solutions}) can be reconciled.

To answer these questions, we introduce models for exact Courant algebroids on $M$ (without and with isotropic splittings, respectively) in our formalism, i.e.~as simplicial-presheaves,
\begin{equation}
	\ECA^\scD,\, \ECA_\nabla^\scD \colon \rmB\scD^\opp \longrightarrow \sSet\,.
\end{equation}
Our description has the following pleasant feature:
\v{S}evera's classification of exact Courant algebroids produces a canonical bijection between elements of $\rmH^3(M;\RR)$ and isomorphism classes of exact Courant algebroids on $M$.
Thus, the groupoid of exact Courant algebroids categorifies the degree-three de Rham cohomology of $M$.
A natural question is whether this also categorifies the abelian group structure on $\rmH^3(M;\RR)$.
For generic exact Courant algebroids, this appears to be unknown, but our model provides a groupoid which is canonically equivalent to that of exact Courant algebroids on $M$ and has a canonical symmetric monoidal structure which achieves this refined categorification (Theorem~\ref{st:AtCA_nabla and categorification of AbGrp H^3}).

We enhance Hitchin's construction of the generalised tangent bundle to a morphism of simplicial presheaves
\begin{equation}
	\AtCA_\nabla \colon \Grb^{1,\scD}_{\nabla|1} \longrightarrow \ECA^\scD
\end{equation}
on $\rmB\scD$.
It associates to a smooth family of gerbes with connective structure on $M$ a smooth family of exact Courant algebroids on $M$.
Then, as already observed by Hitchin~\cite{Hitchin:Brackets_forms_and_functionals}, for each smooth family of 1-gerbes $(\cG, \cA^{(1)})$ with 1-connection on $M$, there is a canonical bijection between curvings on $(\cG, \cA^{(1)})$ (i.e.~extensions of $\cA^{(1)}$ to a full connection on $\cG$) and smooth families of isotropic splittings of $\AtCA(\cG, \cA^{(1)})$.

In supergravity, (semi-classical) charge quantisation~\cite{DFM:Spin_strs_and_superstrings, FMS:Heisenberg_and_NC_fluxes, FMS:Uncertainty_of_fluxes, Szabo:Quant_of_Higher_Ab_GT} forces the \v{S}evera class of the exact Courant algebroid describing the B-field to lie in the image of the rationalisation map $\rmH^3(M;\ZZ) \to \rmH^3(M;\RR)$, and thus the exact Courant algebroid to be a generalised tangent bundle of a 1-gerbe with connective structure $(\cG, \cA^{(1)})$ on $M$.
Viewing the latter as a principal 2-bundle on $M$ with structure 2-group $\rmB_\nabla \U(1)$, the associated exact Courant algebroid $\AtCA_\nabla(\cG, \cA^{(1)})$ is its higher Atiyah algebroid~\cite{Collier:Inf_Syms_of_DD_Gerbes}.
From this perspective, the correct notion of automorphisms acting on the B-field configurations are those of $(\cG, \cA^{(1)})$.
This is in analogy to how one can describe connections on a principal $G$-bundle $P$ as splittings of its Atiyah sequence, but the gauge transformations acting on the connections remain the automorphisms of $P$.
Describing connections on $P$ modulo automorphisms of the Atiyah algebroid of $P$ instead is a different interesting question.

We then still arrive at the following puzzle:
if the NSNS B-field is modelled as a connection on a fixed 1-gerbe $\cG$, its configurations form a smooth groupoid (see Theorem~\ref{st:intro:k-conns on n-gerbes} above), and the relevant smooth symmetry 2-group is $\scSym_\Phi(\cG)$.
In contrast, suppose the NSNS B-field is modelled as an isotropic splitting of an exact Courant algebroid (or as part of a generalised metric), which---by charge quantisation---we may assume to be of the form $\AtCA_\nabla(\cG, \cA^{(1)})$ for some 1-connection $\cA^{(1)}$ on $\cG$.
In that case, the configurations of the B-field form a smooth set, and the relevant smooth symmetry 2-group is $\scSym_\Phi(\cG, \cA^{(1)})$.
Neither the two objects describing configurations, nor the two symmetry 2-groups are equivalent.
Nevertheless, combining Theorem~\ref{st:intro:equiv result for Mdl oo-stacks} and Proposition~\ref{st:intro:forgetting connections} allows us to prove (Theorems~\ref{st:equiv of B-field moduli} and~\ref{st:equiv of NSNS SuGra moduli}):

\begin{theorem}
Let $\cG$ be a 1-gerbe on $M$, and let $\cA^{(1)}$ be any 1-connection on $\cG$.
Further, let $\Phi \colon \bbGamma \to \Diff_{[\cG]}(M)$ be a smooth higher group action on $M$.
\begin{enumerate}
\item There is a canonical equivalence of moduli $\infty$-stacks,
\begin{equation}
	\scMdl_{B, \Phi}(\cG)
	\simeq \scMdl_{B, \Phi}(\cG, \cA^{(1)})
\end{equation}
of NSNS B-fields described by $\cG$ and $\AtCA_\nabla(\cG, \cA^{(1)})$, respectively.

\item There is a canonical equivalence of moduli $\infty$-prestacks of NSNS supergravity solutions  
\begin{equation}
	\scMdl_{NS, \Phi}(\cG)
	\simeq \scMdl_{NS, \Phi}(\cG, \cA^{(1)})\,,
\end{equation}
modelled on $\cG$ and $\AtCA_\nabla(\cG, \cA^{(1)})$, respectively.
\end{enumerate}
\end{theorem}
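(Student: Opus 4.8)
The plan is to recognise both moduli stacks as instances of the abstract quotient construction $\scMdl_\Phi(-)$ applied to one and the same configuration presheaf, differing only in how much of the geometric data is held fixed, and then to invoke the equivalence criterion of Theorem~\ref{st:intro:equiv result for Mdl oo-stacks} for the connection-forgetting morphism, whose 0-connectedness is supplied by Proposition~\ref{st:intro:forgetting connections}.

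First I would fix the common framework. Take $\Conf^\scD = \Grb^{1,\scD}_{\nabla|2}$, the simplicial presheaf on $\rmB\scD$ of smooth families of $1$-gerbes with full ($2$-)connection on $M$, whose top form datum $A_2$ is precisely the B-field. The two models then correspond to two choices of fixed data, linked by the morphisms of the augmented sequence~\eqref{eq:intro:Sol --> Conf --> Fix_0 --> Fix_1}: for the gerbe model one fixes only the topological gerbe, $\Fix_1^\scD = \Grb^{1,\scD}_{\nabla|0}$ with $\cG_1 = \cG$, while for the Courant-algebroid model one fixes the gerbe together with its $1$-connection, $\Fix_0^\scD = \Grb^{1,\scD}_{\nabla|1}$ with $\cG_0 = (\cG, \cA^{(1)})$. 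These are joined by the connection-forgetting morphism $p = p^1_0 \colon \Grb^{1,\scD}_{\nabla|1} \to \Grb^{1,\scD}_{\nabla|0}$, for which $p(\cG_0) = \cG_1$. For part~(2) I would enlarge $\Conf^\scD$ by the presheaves of Riemannian metrics and dilatons on $M$; the essential structural point is that $p$ acts as the identity on these extra factors, so the augmented sequence above persists verbatim, with $\Sol^\scD \hookrightarrow \Conf^\scD$ now the full subpresheaf cut out by the NSNS supergravity equations.

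The one genuinely geometric input is matching the Courant-algebroid description to this picture. Here I would promote the pointwise bijection of Hitchin—between curvings on $(\cG, \cA^{(1)})$, i.e.\ extensions of $\cA^{(1)}$ to a full connection on $\cG$, and isotropic splittings of $\AtCA_\nabla(\cG, \cA^{(1)})$—to an equivalence of the associated solution $\infty$-presheaves $\scSol(-)$ on $N\Cart$, naturally in smooth families and compatibly with the $\Diff(M)$-action encoded by $\rmB\scD$. This is what lets me read $\scMdl_{B, \Phi}(\cG, \cA^{(1)})$, defined via isotropic splittings and the symmetry group $\scSym_\Phi(\cG, \cA^{(1)})$, as the abstract quotient $\scMdl_\Phi(\cG_0) = \scSol(\cG_0) \dslash \scSym_\Phi(\cG_0)$ for $\cG_0 = (\cG, \cA^{(1)}) \in \Grb^{1,\scD}_{\nabla|1}(\RR^0)$, and dually $\scMdl_{B, \Phi}(\cG)$ as $\scMdl_\Phi(\cG_1)$ for $\cG_1 = \cG \in \Grb^{1,\scD}_{\nabla|0}(\RR^0)$. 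I expect this family-and-equivariant upgrade to be the main obstacle, because the content of the theorem lies exactly here: the two configuration objects (a smooth groupoid of connections versus a smooth set of splittings) and the two symmetry $2$-groups $\scSym_\Phi(\cG)$ and $\scSym_\Phi(\cG, \cA^{(1)})$ are genuinely inequivalent, and it is only after forming the $\infty$-categorical quotients that they are reconciled.

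With the setup in place the conclusion is immediate. Applying Proposition~\ref{st:intro:forgetting connections} with $n = 1$, $k = 1$, $l = 0$ (so that $k \leq n$ holds) shows that $p^1_0$ is a Kan fibration and $0$-connected; in particular $\Diff_{[\cG, \cA^{(1)}]}(M) = \Diff_{[\cG]}(M)$, so the prescribed action $\Phi \colon \bbGamma \to \Diff_{[\cG]}(M)$ is meaningful for both moduli problems simultaneously. Theorem~\ref{st:intro:equiv result for Mdl oo-stacks} then yields the canonical equivalence $\scMdl_\Phi(\cG_0) \simeq \scMdl_\Phi(\cG_1)$ in $\scP(N\Cart)$, which is the asserted equivalence in both parts. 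The only difference between the two parts is the choice of solution subpresheaf $\Sol^\scD$, to which the abstract theorem is insensitive; that part~(1) lands among $\infty$-stacks while part~(2) lands only among $\infty$-prestacks reflects whether the relevant $\Sol^\scD$ meets the descent hypothesis, and plays no role in establishing the equivalence itself.
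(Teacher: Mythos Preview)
Your proposal is correct and follows essentially the same route as the paper: set up both moduli problems inside the framework of Section~\ref{sec:moduli oo-prestacks of hgeo strs on M} with $\Conf^\scD = \Grb^{1,\scD}_\nabla$ (augmented by $\Met_{0,d}^\scD \times \Omega^{1,v,\scD}_\cl$ for part~(2)), $\Fix_0^\scD = \Grb^{1,\scD}_{\nabla|1}$, $\Fix_1^\scD = \Grb^{1,\scD}$, invoke the 0-connectedness of $p^1_0$ from Corollary~\ref{st:Grb conn comps}, and conclude via Theorem~\ref{st:equiv result for Mdl oo-stacks}. The ``family-and-equivariant upgrade'' of Hitchin's bijection that you flag as the main obstacle is exactly what the paper establishes as Lemma~\ref{st:Grb-ECA square is (ho)cartesian}, showing that the square~\eqref{eq:Grb-ECA square forg conns} is (homotopy) cartesian in $\Fun(\rmB\scD^\opp, \sSet)$; once this is in hand, the identification of the Courant-algebroid moduli problem with $\scMdl_\Phi(\cG_0)$ is immediate.
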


\begin{remark}
Solutions of NSNS supergravity are in particular generalised Ricci solitons \cite{GFS:Gen_Ricci_flow}. In fact, most of our later statement regarding the moduli space of NSNS supergravity solutions apply \emph{mutatis mutandis} to the moduli space of generalised Ricci solitons.
\qen
\end{remark}

This resolves the apparent discrepancy between the two approaches to NSNS supergravity moduli:
while the two models are not equivalent at the level of configurations, and also do not have equivalent symmetry 2-groups, these differences cancel each other in the passage to the moduli $\infty$-prestacks.
It will be highly interesting to explore similar results in the setting of heterotic supergravity in the future, where we expect analogous links between connections on Chern-Simons 2-gerbes~\cite{Waldorf:String_Cons, Bunke:String_strs_and_Pfaffians} and structures on String algebroids~\cite{GFRT:Holomorphic_String_Algds, GF:Lectures_on_Strominger}.

%%%%%%%%%%%%%%%%%%%%%%%%%%%%%%%%%%%%%%%%%%%%%%%%%%%%%%%%%%%%%%%%%%%%%%%%%%%%

\subsection*{Notation}

%%%%%%%%%%%%%%%%%%%%%%%%%%%%%%%%%%%%%%%%%%%%%%%%%%%%%%%%%%%%%%%%%%%%%%%%%%%%

\begin{itemize}
\item $\Mfd$ denotes the category of smooth manifolds and smooth maps.

\item $\Cart \subset \Mfd$ is the full subcategory of cartesian spaces.

\item $\Cartfam$ denotes the category whose objects are families of cartesian spaces, parameterised by objects $c \in \Cart$ (Definition~\ref{def:Cartfam}).

\item $\Gpd$ denotes the 2-category of (small) groupoids, and $\Cat$ the 2-category of (small) categories.

\item $\scS$ is the $\infty$-category of spaces (or, equivalently, $\infty$-groupoids).

\item By an $\infty$-category, we mean a simplicial set satisfying the inner horn-lifting conditions in the sense of, for instance,~\cite{Lurie:HTT, Cisinski:HiC_and_HoA}.

\item We write $\Fun(-,-)$ to denote a category of functors between ordinary categories and $\scFun(-,-)$ to denote $\infty$-categories of functors between $\infty$-categories.

\item We write $\sSet = \Fun(\bbDelta^\opp, \Set)$ for the category of simplicial sets.

\item If $\scC$ is a simplicially enriched category, we write $\ul{\scC}(-,-) \colon \scC^\opp \times \scC \to \sSet$ for the simplicially enriched hom functor.

\item For an $\infty$-category $A$, we abbreviate the $\infty$-category of $\infty$-presheaves on $A$ as $\scP(A) \coloneqq \scFun(A^\opp, \scS)$.

\item For a category $\scC$ and object $c \in \scC$, we write $\scC_{/c}$ and $\scC_{c/}$ for the slice categories over and under $c$, respectively.
The same applies if $\scC$ is an $\infty$-category.

\item $\Diff(M)$ is the diffeomorphism group of a manifold $M$.
We write $\Diff_{[\cG]}(M)$ for the subgroup of $\Diff(M)$ which preserves the equivalence class of a given geometric structure $\cG$ on $M$ (Definition~\ref{def:Diff_[cG] and D_[cG]}).

\item Given a group $H$, we let $\rmB H \in \Gpd$ denote its delooping:
this is the groupoid with a single object, whose group of automorphisms is $H$.

\item We write $\rmB\scD$ for the Grothendieck construction of the functor $\Cart^\opp \to \Gpd$, $c \mapsto \rmB \Diff(M)$.
Analogously, we write $\rmB\scD[\cG]$ for the Grothendieck construction of $\Cart^\opp \to \Gpd$, $c \mapsto \rmB \Diff_{[\cG]}(M)$.

\item We introduce a category $\GCov^\scD$ whose objects are smooth families of good open coverings of $c \times M$, where $c \in \Cart$ is a cartesian space.
Its morphisms are a combination of pulling coverings back along fibrewise diffeomorphisms and refining the result by a good open covering (Definition~\ref{def:GCov^D(M)}).

\item We write $\Ch_{\geq 0}$ for the category of non-negatively graded chain complexes of abelian groups.

\item $\Ab_\Delta$ denotes the category of simplicial abelian groups.

\item We write $\varGamma \colon \Ch_{\geq 0} \to \Ab_\Delta$ for (one of the two functors in the) Dold-Kan correspondence.

\item We use bold-face font to denote $\infty$-presheaves and roman font to denote ordinary or simplicial presheaves.

\item We write $\textint F \to N\scC$ to denote the left fibration associated to a (1-categorical) functor $F \colon \scC \to \sSet$.

\end{itemize}

%%%%%%%%%%%%%%%%%%%%%%%%%%%%%%%%%%%%%%%%%%%%%%%%%%%%%%%%%%%%%%%%%%%%%%%%%%%%

\subsection*{Acknowledgements}

%%%%%%%%%%%%%%%%%%%%%%%%%%%%%%%%%%%%%%%%%%%%%%%%%%%%%%%%%%%%%%%%%%%%%%%%%%%%

The authors would like to thank André Henriques, Lukas Müller, Dmitri Pavlov, Sakura Schäfer-Nameki, and Richard Szabo for helpful discussions. SB's research was funded by the Deutsche Forschungsgemeinschaft (DFG, German Research Foundation) under the project number 468806966.
The work of C.S.S. is funded by the Germany Excellence Strategy \emph{Quantum Universe} - 390833306 and the 2022 Leonardo Grant for Researchers and Cultural Creators of the BBVA Foundation.

%%%%%%%%%%%%%%%%%%%%%%%%%%%%%%%%%%%%%%%%%%%%%%%%%%%%%%%%%%%%%%%%%%%%%%%%%%%%

\part{Moduli $\infty$-stacks of higher geometric structures on manifolds}
\label{part:Higher Geometry}

%%%%%%%%%%%%%%%%%%%%%%%%%%%%%%%%%%%%%%%%%%%%%%%%%%%%%%%%%%%%%%%%%%%%%%%%%%%%

This first part of the paper develops a general formalism for constructing smooth families of higher geometric structures on manifolds, taking into account the smooth action of the diffeomorphisms of the manifold.
Both of these aspects are key to obtaining a full picture of the global moduli $\infty$-stacks of higher geometric structures.

Higher geometric structures are, in full generality, described in terms of $\infty$-sheaves%
\footnote{We use the term $\infty$-sheaf and $\infty$-stack interchangeably.}
with respect to (good) open coverings.
As we work on 1-categorical sites, these $\infty$-sheaves can always be presented in terms of simplicial presheaves which satisfy homotopy descent (see, for instance,~\cite[Thm.~7.9.8, Cor.~7.9.9]{Cisinski:HiC_and_HoA}).
Since such presentations provide additional computational tools, we start our set-up from simplicial homotopy sheaves on an appropriate category with Grothendieck coverage (Sections~\ref{sec:sites for families}, \ref{sec:families of hgeo structures} and~\ref{sec:averaging good open coverings}).

We then construct the smooth higher automorphism and symmetry groups of higher geometric structures and prove their universal property (Sections~\ref{sec:restricting to scD[cG]} and~\ref{sec:higher symmetry groups of hgeo strs}).
In order to encode the smooth and homotopy coherent actions of these groups which underlie the definition of moduli $\infty$-stacks, we pass to the language of left-fibrations (another model for $\infty$-presheaves~\cite[Thm.~7.8.9]{Cisinski:HiC_and_HoA}).

Finally, using the formalism developed so far, we construct moduli $\infty$-prestacks of higher geometric data (Section~\ref{sec:moduli oo-prestacks of hgeo strs on M}) and prove a useful criterion for when two different moduli $\infty$-prestacks are equivalent.
The set-up we present is motivated by field-theoretic problems, but applies in more general situations.
In Section~\ref{sec:descent for moduli oo-prestacks} we show that under certain conditions our moduli $\infty$-prestacks satisfy descent with respect to good open coverings of parameter spaces, i.e.~form $\infty$-stacks.

%%%%%%%%%%%%%%%%%%%%%%%%%%%%%%%%%%%%%%%%%%%%%%%%%%%%%%%%%%%%%%%%%%%%%%%%%%%%

\section{Sites for smooth families of manifolds and diffeomorphisms}
\label{sec:sites for families}

%%%%%%%%%%%%%%%%%%%%%%%%%%%%%%%%%%%%%%%%%%%%%%%%%%%%%%%%%%%%%%%%%%%%%%%%%%%%

In this section we set the stage for our study of higher geometric structures on manifolds.
We introduce a particular category with a Grothendieck coverage, whose sheaves encode smooth families of geometric data on a fixed manifold $M$, parameterised by cartesian spaces, and the smooth action of the diffeomorphism group $\Diff(M)$.

%%%%%%%%%%%%%%%%%%%%%%%%%%%%%%%%%%%%%%%%%%%%%%%%%%%%%%%%%%%%%%%%%%%%%%%%%%%%

\subsection{Smooth families of cartesian spaces}
\label{sec:Families of cartesian spaces}

%%%%%%%%%%%%%%%%%%%%%%%%%%%%%%%%%%%%%%%%%%%%%%%%%%%%%%%%%%%%%%%%%%%%%%%%%%%%

We begin by constructing a (Grothendieck) site which we use to parameterise smooth families of higher geometric structures.

\begin{definition}
Let $\Mfd$ be the category of smooth manifolds and smooth maps.
Let $\Cart \subset \Mfd$ be the full subcategory on those manifolds $c$ which are diffeomorphic to $\RR^n$, where $n$ ranges over all non-negative integers.
\end{definition}

\begin{definition}
\label{def:Cartfam}
Let $\Cartfam$ denote the following category:
its objects are smooth, locally trivial fibre bundles $\hat{c} \to c$, where $c \in \Cart$, and whose fibre is a cartesian space%
\footnote{Note that the bundles $(\hat{c} \to c)$ are also globally trivial by the fact that $c$ is smoothly contractible and~\cite[Thm.~44.24]{KM:Convenient_global_analysis}.
It follows that $\hat{c}$ is itself a cartesian space.}.
The morphisms $(\hat{c} \to c) \longrightarrow (\hat{d} \to d)$ are commutative squares
\begin{equation}
\begin{tikzcd}
	\hat{c} \ar[r, "\hat{f}"] \ar[d]
	& \hat{d} \ar[d]
	\\
	c \ar[r, "f"']
	& d
\end{tikzcd}
\end{equation}
of morphisms in $\Cart$.
\end{definition}

In particular, for each object $(\hat{c} \to c)$ in $\Cartfam$ there is a cartesian space $d$ and some isomorphism in $\Cartfam$ of the form
\begin{equation}
\begin{tikzcd}
	\hat{c} \ar[r, "\cong"] \ar[d]
	&c \times d \ar[d, "\pr_c"]
	\\
	c \ar[r, equal]
	& c
\end{tikzcd}
\end{equation}

\begin{definition}
We say an open covering $\cU = \{U_a\}_{a \in \Lambda}$ of a manifold $M$ is \textit{good} if each finite intersection $U_{a_0 \cdots a_n} \coloneqq U_{a_0} \cap \cdots, \cap U_{a_n}$, for $a_0, \ldots, a_n \in \Lambda$, is either empty or a cartesian space.
\end{definition}

\begin{remark}
Every open covering of each manifold $M$ admits a refinement by a good open covering~\cite[Appendix~A]{FSS:Cech_for_diff_classes}.
\qen
\end{remark}

\begin{definition}
\label{def:coverings in Cart_fam}
A \textit{covering} of an object $(\hat{c} \to c) \in \Cartfam$ is a family $(\hat{\cU} \to \cU) = \{(\hat{U}_a \to U_a)\}_{a \in \Lambda}$ of subsets
\begin{equation}
\begin{tikzcd}
	\hat{U}_a \ar[r, hookrightarrow] \ar[d]
	& \hat{c} \ar[d]
	\\
	U_a \ar[r, hookrightarrow]
	& c
\end{tikzcd}
\end{equation}
such that
\begin{enumerate}
\item $\hat{\cU}$ and $\cU$ are good open coverings of $\hat{c}$ and $c$, respectively, and

\item Every finite intersection $(\hat{U}_{a_0 \cdots a_n} \to U_{a_0 \cdots a_n})$ is either empty or an object in $\Cartfam$.
\end{enumerate}
\end{definition}

\begin{example}
Let $c,d \in \Cart$ with good open coverings $\cU = \{U_a\}_{a \in \Lambda}$ of $c$ and $\cV = \{V_b\}_{b \in \Xi}$ of $d$.
Then, the product covering $\{U_a \times V_b\}_{a \in \Lambda, b \in \Xi}$ is a covering of $(\pr_c \colon c \times d \to c)$ in $\Cartfam$.
\qen
\end{example}

\begin{example}
The second condition in Definition~\ref{def:coverings in Cart_fam} is not implied by the first:
for instance, let x$c = d = \RR$ and consider the object $(\pr_c \colon c \times d \to c) \in \Cartfam$.
The covering of $c \times d$ and $c$ depicted here:
\begin{center}
	\includegraphics[scale=0.2]{Cartfam_cov_counter_eg}
\end{center}
satisfies condition (1), but violates condition (2) as the projection $\hat{U}_{abc} \to U_{abc}$ is not surjective.
\qen
\end{example}

\begin{remark}
Consider a covering $(\hat{\cU} \to \cU)$ of an object $(\hat{c} \to c) \cong (c \times c' \to c) \in \Cartfam$.
For each $x \in c$, we obtain an induced open covering $\hat{\cU}_{|x}$ of the fibre $d$.
This covering can vary from point to point, as illustrated in the following example for $c = c' = \RR$:
\begin{center}
	\includegraphics[scale=0.2]{Cartfam_cov_eg}
\end{center}
Here each rectangle of single boundary colour in the top space $c \times c' \cong \RR^2$ is a patch in the good open covering $\hat{\cU} = \{\hat{U}_a, \hat{U}_b, \hat{U}_c, \hat{U}_d\}$ of $c \times c'$, and we have displayed in each rectangle which arises as an intersection of the patches all indices of the patches which enter in the intersection.
In particular, this will allow us to describe smooth families of geometric data on $M$ without having to fix a good open covering of $c \times M$ for each family first.
\qen
\end{remark}

\begin{definition}
Let $\scC$ be a category.
A \textit{Grothendieck coverage} on $\scC$ is a collection $\tau$ of \textit{coverings}, for each object $c \in \scC$; a covering is a family $\cU = \{f_a \colon c_a \to c\}_{a \in \Lambda}$ of morphisms with codomain $c$.
These coverings satisfy the following property:
if $\cU$ is a covering of $c \in \scC$ and $h \in \scC(d,c)$ is any morphism, then there exists a covering $\cV = \{g_b \colon d_b \to d\}_{b \in \Xi} \in \tau$ of $d$ and a map $\lambda \colon \Xi \to \Lambda$ such that, for each $b \in \Xi$, there is a commutative diagram in $\scC$:
\begin{equation}
\begin{tikzcd}
	d_b \ar[r, "h_b"] \ar[d, "g_b"']
	& c_{\lambda(b)} \ar[d, "f_{\lambda(b)}"]
	\\
	d \ar[r, "h"']
	& c
\end{tikzcd}
\end{equation}
\end{definition}

\begin{remark}
The good open coverings endow the category $\Cart$ with a Grothendieck coverage (see~\cite[Cor.~A.1]{FSS:Cech_for_diff_classes}).
\qen
\end{remark}

\begin{lemma}
\label{st:tau_rmfam is coverage}
The coverings on $\Cartfam$ define a Grothendieck coverage on $\Cartfam$, which we denote by $\tau_\rmfam$.
\end{lemma}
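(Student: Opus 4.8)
The plan is to verify directly that the collection of coverings specified in Definition~\ref{def:coverings in Cart_fam} satisfies the single stability axiom required of a Grothendieck coverage: namely, that coverings pull back (in the weak sense, up to refinement) along arbitrary morphisms of $\Cartfam$. So I fix a covering $(\hat{\cU} \to \cU) = \{(\hat{U}_a \to U_a)\}_{a \in \Lambda}$ of an object $(\hat{c} \to c)$ and an arbitrary morphism $(\hat{h}, h) \colon (\hat{d} \to d) \longrightarrow (\hat{c} \to c)$, given by a commutative square with horizontal maps $\hat{h} \colon \hat{d} \to \hat{c}$ over $h \colon d \to c$. I must produce a covering $(\hat{\cV} \to \cV)$ of $(\hat{d} \to d)$ together with an index map $\lambda$ and the requisite commutative diagrams.

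First I would form the naive preimages $\hat{h}^{-1}(\hat{U}_a) \subset \hat{d}$ and $h^{-1}(U_a) \subset d$. These are genuine open coverings of $\hat{d}$ and $d$, and the square structure guarantees $\hat{h}^{-1}(\hat{U}_a) \to h^{-1}(U_a)$ sits over the given maps. The obstruction is that these preimages need \emph{not} be good open coverings in the sense required: individual patches and their finite intersections may fail to be cartesian spaces, and condition~(2) of Definition~\ref{def:coverings in Cart_fam} (fibrewise surjectivity onto a cartesian base, so that intersections are again objects of $\Cartfam$) can likewise fail. So the essential move is to refine. I would invoke the fact, recorded in the Remark after the definition of good coverings (citing~\cite[Appendix~A]{FSS:Cech_for_diff_classes}), that every open covering of a manifold admits a good refinement, and apply it to the covering $\{\hat{h}^{-1}(\hat{U}_a)\}$ of $\hat{d}$ and $\{h^{-1}(U_a)\}$ of $d$. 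The genuinely delicate point—which I expect to be the main obstacle—is that the refinements upstairs and downstairs must be chosen \emph{compatibly}, so that the refined family still forms an object-wise fibre-bundle covering $(\hat{\cV} \to \cV)$ satisfying condition~(2), not merely two independent good coverings of total space and base.

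To handle this I would refine in two stages, working with the bundle structure explicitly. Using the triviality noted in Definition~\ref{def:Cartfam}, I may assume $\hat{d} \cong d \times d'$ with $\hat{d} \to d$ the projection, and similarly $\hat{c} \cong c \times c'$. I would first choose a good open refinement $\cV = \{V_b\}_{b \in \Xi}$ of $\{h^{-1}(U_a)\}$ on the base $d$, with an index map $\lambda \colon \Xi \to \Lambda$ so that $V_b \subset h^{-1}(U_{\lambda(b)})$; over each such $V_b$ the morphism restricts into a single patch $\hat{U}_{\lambda(b)}$, which (being an object of $\Cartfam$) is a trivial bundle over $U_{\lambda(b)}$. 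I would then refine fibrewise over each $V_b$ by a good covering of the fibre cartesian space $d'$, pulling back the fibre-covering of $\hat{U}_{\lambda(b)}$, and arrange (shrinking as needed) that each resulting patch $\hat{V}_{b,j}$ projects surjectively onto $V_b$ with cartesian fibres. This makes each $(\hat{V}_{b,j} \to V_b)$ and each nonempty finite intersection an object of $\Cartfam$, securing condition~(2). The required commutative diagrams then hold by construction, since each refined patch maps into the patch indexed by $\lambda$. I would close by remarking that the analysis of the earlier counterexample (the Example following Definition~\ref{def:coverings in Cart_fam}) is exactly what forces this fibrewise refinement step and shows that a careless choice of base-only refinement would not suffice; the content of the lemma is precisely that such a compatible refinement always exists.
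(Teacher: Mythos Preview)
Your overall strategy is on the right track, but the two-stage refinement has a gap at the point you yourself flag as delicate. After choosing a good refinement $\{V_b\}$ of $\{h^{-1}(U_a)\}$ on the base with $V_b \subset h^{-1}(U_{\lambda(b)})$, you assert that ``over each such $V_b$ the morphism restricts into a single patch $\hat{U}_{\lambda(b)}$''. This is true for the base map $h$, but it is \emph{not} true for the total-space map $\hat{h}$: the inclusion $V_b \subset h^{-1}(U_{\lambda(b)})$ does not imply $\hat{h}(V_b \times d') \subset \hat{U}_{\lambda(b)}$, since $\hat{U}_{\lambda(b)}$ need not contain the full fibre over any point of $U_{\lambda(b)}$. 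Consequently there is no well-defined ``fibre-covering of $\hat{U}_{\lambda(b)}$'' to pull back, and any attempt to cover $d'$ fibrewise must use the preimages $\hat{h}^{-1}(\hat{U}_a)$ restricted to the fibre over each $x \in V_b$---but this fibre covering genuinely varies with $x$, so a single good covering of $d'$ working uniformly over $V_b$ cannot be extracted this way. Your parenthetical ``shrinking as needed'' hides exactly the difficulty you set out to solve.

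The paper's argument avoids this by reversing the order of operations. After trivialising the domain as a product $c \times \tilde{c}$, it pulls back $\hat{\cU}$ to an open covering of the product and then invokes the basis property of the product topology to refine this to a covering by \emph{product rectangles} $V'_b \times \tilde{V}'_b$. Only after this step does it project to obtain open coverings of $c$ and of $\tilde{c}$ separately, and take independent good refinements $\cW$ and $\tilde{\cW}$ of each factor; the resulting product family $\{W_i \times \tilde{W}_j\}$ is then automatically a $\tau_\rmfam$-covering satisfying condition~(2). The key idea you are missing is precisely this product-topology refinement: it is what converts an arbitrary pulled-back covering into something separable into base and fibre parts, and it must come \emph{before} any attempt to refine base and fibre independently.
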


\begin{proof}
Consider a covering $(\hat{\cU} \to \cU)$ of $(\hat{d} \to d)$ and a morphism $(\hat{f},f) \colon (\hat{c} \to c) \longrightarrow (\hat{d} \to d)$.
Since $\hat{c} \to c$ is a trivialisable bundle, we find an isomorphism $(\varphi, 1_c) \colon (c \times \tilde{c} \to c) \longrightarrow (\hat{c} \to c)$, where the domain object is the trivial $\tilde{c}$-bundle over $c$.
The open covering $\hat{\cU}$ of $\hat{d}$ pulls back to an open covering $(\hat{f} \varphi)^{-1}(\hat{\cU})$ of $c \times \tilde{c}$.
By the properties of the product topology, this admits a refinement of the form $\hat{\cV}' = \{V'_b \times \tilde{V}'_b\}_{b \in \Xi}$, where $V'_b$ and $\tilde{V}'_b$ are open subsets of $c$ and $\tilde{c}$, respectively.
Projecting onto each factor, we obtain open coverings $\cV' = \{V'_b\}_{b \in \Xi}$ of $c$ and $\tilde{\cV}' = \{ \tilde{V}'_b\}_{b \in \Xi}$ of $\tilde{c}$.
We find good refinements $\cW = \{W_i\}_{i \in I}$ and $\tilde{\cW} = \{W_j\}_{j \in J}$ of these coverings (see the proof of~\cite[Prop.~A.1]{FSS:Cech_for_diff_classes}).
Then, the families $\hat{\cW} = \{\varphi(W_i \times \tilde{W}_j)\}_{i \in I, j \in J}$ and $\cW$ fit together to give a covering $(\hat{\cW} \to \cW)$ of $(\hat{c} \to c)$ with the desired properties.
\end{proof}

%%%%%%%%%%%%%%%%%%%%%%%%%%%%%%%%%%%%%%%%%%%%%%%%%%%%%%%%%%%%%%%%%%%%%%%%%%%%

\subsection{Smooth families of diffeomorphisms of $M$}

%%%%%%%%%%%%%%%%%%%%%%%%%%%%%%%%%%%%%%%%%%%%%%%%%%%%%%%%%%%%%%%%%%%%%%%%%%%%

Next, we include the action of the diffeomorphisms group $\Diff(M)$ of $M$.
We view $\Diff(M)$ as a group object in $\Fun(\Cart^\opp, \Set)$:
it assigns to each $c \in \Cart$ the set of all smooth maps $\varphi^\dashv \colon c \times M \to c \times M$ which commute with the projection to $c$, restrict to a diffeomorphism of $M$ at each $x \in c$, and which admit an inverse map which also has these properties.
Equivalently, we can describe the data $\varphi^\dashv$ as a smooth map $\varphi \colon c \to \Diff(M)$.
Let $\Gpd$ denote the 2-category of groupoids.
Given a group object $H \in \Fun(\Cart^\opp, \Set)$, we let $\rmB H \colon \Cart^\opp \to \Gpd$ denote the strict functor which assigns to $c \in \Cart$ the groupoid $\rmB(H(c))$ consisting of a single object, which has the group $H(c)$ as its automorphisms.

Fix a manifold $M$.
We introduce the shorthand notation
\begin{equation}
\begin{tikzcd}
	\rmB \scD \coloneqq \smallint \big( \rmB \Diff(M) \big) \ar[r, "\pi_M"]
	& \Cart
\end{tikzcd}
\end{equation}
for the the Grothendieck construction of $\rmB \Diff(M)$.
Concretely, the objects of $\rmB \scD$ are the same as those of $\Cart$, but a morphism $c \to d$ in $\rmB \scD$ is a pair $(f, \varphi)$ of a morphism $f \in \Cart(c,d)$ and a smooth map $\varphi \colon c \to \Diff(M)$; these data induce a commutative square in $\Mfd$,
\begin{equation}
\begin{tikzcd}
	c \times M \ar[r, "f \wr \varphi"] \ar[d]
	& d \times M \ar[d]
	\\
	c \ar[r, "f"']
	& d
\end{tikzcd}
\end{equation}
Here, for $x \in c$ and $z \in M$, we set
\begin{equation}
\label{eq:wr notation}
	(f \wr \varphi)(x,z) = \big( f(x), \varphi^\dashv(x,z) \big)\,.
\end{equation}
Applying the Grothendieck construction to the contravariant functor $\rmB \Diff(M) \colon \Cart^\opp \to \Gpd$ and taking nerves produces a \textit{right} fibration
\begin{equation}
	N\pi_M \colon N \rmB \scD \longrightarrow N \Cart\,.
\end{equation}

\begin{remark}
\label{rmk:BD(M)^opp and Diff(M)^rev}
In this article the opposite category $\rmB\scD^\opp$ plays an important role.
Its nerve is the domain of the left fibration $N\pi_M \colon N\rmB\scD^\opp \longrightarrow N\Cart^\opp$ (where we suppress the $(-)^\opp$ on the morphism for ease of notation; this will be clear from context).
We remark that, for each $c \in \Cart$, the fibre $(\rmB\scD^\opp)_{|c}$ is the delooping of the \textit{opposite group} $\Diff(M)^\rev$---i.e.~$\Diff(M)(c)$ with its multiplication reversed---rather than $\Diff(M)(c)$.
That is, there is a canonical isomorphism
\begin{equation}
	N\rmB\scD
	\cong r_{\Cart^\opp}^* \rmB \big( \Diff(M)^\rev \big)
\end{equation}
of simplicial sets over $N\Cart^\opp$, where $r_{\Cart^\opp}^* \colon \Fun(\Cart^\opp, \sSet) \longrightarrow \sSet_{/N \Cart^\opp}$ is the rectification functor for presheaves of simplicial sets from Definition~\ref{def:r_C^*}.
This reversal is indeed natural for us to consider since the $\Diff(M)$-actions we will encounter are via pullback of geometric structures; therefore, they naturally arise as \textit{right} actions, but we will often encounter them as left actions of $\Diff(M)^\rev$.
\qen
\end{remark}

Observe that there is a canonical functor
\begin{equation}
	e_M \colon \Cart \longrightarrow \rmB \scD\,,
\end{equation}
which is the identity on objects and sends a morphism $f \colon c \to d$ in $\Cart$ to the morphism $(f, 1_M)$ in $\rmB \scD$.
It further satisfies the identity
\begin{equation}
	\pi_M \circ e_M = 1_{\Cart}\,,
\end{equation}
i.e.~$e_M$ is a section of $\pi_M$.

Next, we associate to $M$ a category which encodes smooth families of good open coverings of $M$ and the action of $\Diff(M)$ on these coverings.

\begin{definition}
\label{def:GCov^D(M)}
We define a category $\GCov^\scD$, whose \textit{objects} are pairs $(c, \hat{\cU} \to \cU)$ of the following data: $c \in \Cart$ is a cartesian space, and $(\hat{\cU} \to \cU)$ is a family
\begin{equation}
	(\hat{\cU} \to \cU) = \{(\hat{U}_a \to U_a) \in \Cartfam\}_{a \in \Lambda}
\end{equation}
of objects $(\hat{U}_a \to U_a) \in \Cartfam$ satisfying the following properties:
\begin{enumerate}
\item $\hat{U}_a \subset c \times M$ is an open subset,

\item $U_a \subset c$ is an open subset,

\item for each $a \in \Lambda$, the canonical diagram
\begin{equation}
\begin{tikzcd}
	\hat{U}_a \ar[r, hookrightarrow] \ar[d]
	& c \times M \ar[d]
	\\
	U_a \ar[r, hookrightarrow]
	& c
\end{tikzcd}
\end{equation}
of manifolds and smooth maps commutes.

\item $\{\hat{U}_a\}_{a \in \Lambda}$ and $\{U_a\}_{a \in \Lambda}$ form good open coverings of $c \times M$ and $c$, respectively, and

\item each pair $(\hat{U}_{a_0 \cdots a_l} \to U_{a_0 \cdots a_l})$ of finite intersections is again an object in $\Cartfam$.
\end{enumerate}
A \textit{morphism} in $\GCov^\scD$ is defined as follows:
consider two objects $(c, \hat{\cU} \to \cU)$ and $(d, \hat{\cV} \to \cV)$ in $\GCov^\scD$, whose coverings read as $(\hat{\cU} \to \cU) = \{\hat{U}_a \to U_a\}_{a \in \Lambda}$ and $(\hat{\cV} \to \cV) = \{\hat{V}_b \to V_b\}_{b \in \Xi}$.
A morphism $(c, \hat{\cU} \to \cU) \longrightarrow (d, \hat{\cV} \to \cV)$ in $\GCov^\scD$ consists of a morphism $(f, \varphi) \colon c \to d$ in $\rmB \scD$ and a map $\lambda \colon \Lambda \to \Xi$ of indexing sets such that the restriction of $(f \wr \varphi \to f) \colon (c {\times} M \to c) \longrightarrow (d {\times} M \to d)$ to $(\hat{U}_a \to U_a)$ factors through $(\hat{V}_{\lambda(a)} \to V_{\lambda(a)})$, for each $a \in \Lambda$.
\end{definition}

\begin{example}
Let $\cW = \{W_i \hookrightarrow M\}_{i \in \Xi}$ be a good open covering of $M$, and let $\cU = \{U_a \hookrightarrow c\}_{a \in \Lambda}$ be a good open covering of $c \in \Cart$.
We obtain the product covering
\begin{equation}
	\hat{\cU} = \cW {\times} \cU = \{U_a {\times} W_i \hookrightarrow c \times M\}_{a \in \Lambda, i \in \Xi}\,.
\end{equation}
Then, the canonical projection $(\hat{\cU} \to \cU)$ is an object in $\GCov^\scD$.
Let $\varphi \colon c \to \Diff(M)$ be a smooth family of diffeomorphisms and $f \colon c \to c$ a diffeomorphism.
Set $\hat{U}'_{ai} \coloneqq (f \wr \varphi)^{-1}(U_a \times W_i) \subset c \times M$ and $U'_a \coloneqq f^{-1}(U_a) \subset c$.
We obtain a new object $(\hat{\cU}' \to \cU') \in \GCov^\scD$, where $\hat{\cU}' = \{\hat{U}'_{ai}\}_{a \in \Lambda, i \in \Xi}$ and $\cU' = \{U'_a\}_{a \in \Lambda}$.
This comes with a canonical isomorphism
\begin{equation}
	(\lambda = \id, f \wr \varphi \to f) \colon (\hat{\cU}' \to \cU') \longrightarrow (\hat{\cU} \to \cU)
\end{equation}
in $\GCov^\scD$.
Note that the new covering $(\hat{\cU}' \to \cU')$ is, in general, no longer of product form; rather, it has been twisted by the diffeomorphism $f \wr \varphi \colon c {\times} M \to c {\times} M$.
\qen
\end{example}

\begin{remark}
We may depict morphisms in $\GCov^\scD$ as commutative cubes
\begin{equation}
\begin{tikzcd}[column sep={1.5cm,between origins}, row sep={1.cm,between origins}]
	\hat{\cU} \ar[dr] \ar[rr] \ar[dd]
	& & \hat{\cV} \ar[dr] \ar[dd]
	&
	\\
	& c \times M \ar[rr, "f \wr \varphi" {pos=0.25}, crossing over]
	& & d \times M \ar[dd]
	\\
	\cU \ar[rr] \ar[dr]
	& & \cV \ar[dr]
	&
	\\
	& c \ar[rr, "f"'] \ar[from=uu, crossing over]
	& & d
\end{tikzcd}
\end{equation}
leaving the map $\lambda$ implicit.
\qen
\end{remark}

\begin{remark}
\label{rmk:varpi and its fibres}
There is a canonical projection functor
\begin{equation}
\label{eq:varpi^scD}
	\varpi^\scD \colon \GCov^\scD \longrightarrow \rmB \scD\,,
\end{equation}
whose fibre $(\varpi^\scD)^{-1}(c) = \GCov^\scD_{|c}$ over $c \in \rmB\scD$ is the cofiltered category of good open coverings of $(c {\times} M \to c)$ (the cofilteredness is seen analogously to the proof of Lemma~\ref{st:tau_rmfam is coverage}).
\qen
\end{remark}

\begin{lemma}
\label{st:fibres and slices of GCov(M) --> BD(M)}
Let $c \in \rmB \scD$.
The inclusion $\jmath^\scD_c \colon \GCov^\scD_{|c} \hookrightarrow \GCov^\scD_{c/}$ of the fibre into the under-category is homotopy cofinal.
\end{lemma}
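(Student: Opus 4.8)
The plan is to prove homotopy cofinality by means of Quillen's Theorem A, reducing the statement to the weak contractibility of a single comma category for each object of the coslice, and then to identify that comma category with a cofiltered poset of good open refinements. Throughout I use that $\GCov^\scD_{c/}$ is the comma category $c \downarrow \varpi^\scD$, so that an object is a triple $b = \big( (d, \hat{\cV} \to \cV),\, (f,\varphi) \big)$ consisting of an object of $\GCov^\scD$ together with a morphism $(f,\varphi) \colon c \to d$ in $\rmB\scD$, and that $\jmath^\scD_c$ sends a covering $(\hat{\cU} \to \cU) \in \GCov^\scD_{|c}$ to the pair $\big( (c, \hat{\cU} \to \cU),\, 1_c \big)$.

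First I would unwind the comma category $\jmath^\scD_c \downarrow b$. A morphism $\jmath^\scD_c(\hat{\cU} \to \cU) \to b$ in $\GCov^\scD_{c/}$ must lie over $(f,\varphi)$, since its image $\eta$ under $\varpi^\scD$ is forced to satisfy $\varpi^\scD(\eta) \circ 1_c = (f,\varphi)$. By Definition~\ref{def:GCov^D(M)} such a morphism is therefore nothing but an index map $\lambda \colon \Lambda \to \Xi$ for which $(f \wr \varphi)$ carries each patch $\hat{U}_a$ into $\hat{V}_{\lambda(a)}$. Hence $\jmath^\scD_c \downarrow b$ is canonically the category whose objects are good open coverings of $(c \times M \to c)$ refining the pulled-back covering $(f \wr \varphi)^{-1}(\hat{\cV} \to \cV)$, with refinement morphisms over $1_c$ between them.

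It then remains to show that this category is cofiltered, hence has weakly contractible nerve. For nonemptiness I would observe that $(f \wr \varphi)^{-1}(\hat{\cV})$ is an open covering of $c \times M$ lying over the open covering $f^{-1}(\cV)$ of $c$, and apply the good-refinement construction used in the proof of Lemma~\ref{st:tau_rmfam is coverage} (ultimately~\cite[Prop.~A.1]{FSS:Cech_for_diff_classes}) to produce a genuine object of $\GCov^\scD_{|c}$ refining it, together with the requisite $\lambda$. Down-directedness follows because a good common refinement of any two such coverings again refines the pullback and admits refinement morphisms to both; the treatment of parallel arrows is identical to the argument that the fibre $\GCov^\scD_{|c}$ itself is cofiltered (Remark~\ref{rmk:varpi and its fibres}). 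With all comma categories $\jmath^\scD_c \downarrow b$ weakly contractible, Quillen's Theorem A (see, e.g.,~\cite{Cisinski:HiC_and_HoA, Lurie:HTT}) gives that $\jmath^\scD_c$ is homotopy cofinal, which is exactly what feeds into the pointwise computation of the left Kan extension in Proposition~\ref{st:Lan_varpi}.

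The hard part will be the nonemptiness/cofilteredness input, and specifically the verification that the pulled-back covering $(f \wr \varphi)^{-1}(\hat{\cV} \to \cV)$ admits good refinements that are \emph{genuine} objects of $\GCov^\scD_{|c}$. The subtlety is that this pullback is twisted by the fibrewise diffeomorphism $f \wr \varphi$ and is in general no longer of product form, so one must check that a refinement can be chosen whose patches and all finite intersections simultaneously constitute good coverings of $c \times M$ and of $c$ and land in $\Cartfam$ in the sense of Definition~\ref{def:GCov^D(M)}. This is the only place where the diffeomorphism action interacts nontrivially with the family structure; the remainder is the formal cofinality machinery.
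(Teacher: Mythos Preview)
Your proposal is correct and follows essentially the same route as the paper: apply Quillen's Theorem~A, identify the comma category $\jmath^\scD_c \downarrow b$ with the category of good open coverings of $(c \times M \to c)$ refining the pulled-back covering $((f \wr \varphi)^{-1}(\hat{\cV}) \to f^{-1}(\cV))$, and conclude by observing that this category is cofiltered. Your write-up is in fact more explicit than the paper's, which simply asserts cofilteredness without spelling out the nonemptiness step or the reference to Lemma~\ref{st:tau_rmfam is coverage}.
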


\begin{proof}
An object in $X \in \GCov^\scD_{/c}$ consists of a morphism $(f, \varphi) \colon c \to d$ in $\rmB\scD$, for some object $d \in \rmB \scD$, together with an object $(\hat{\cV} \to \cV)$ in $\GCov^\scD_{|d}$.
We have to show that, for each $X \in \GCov_{/c}$, the nerve
\begin{equation}
	N(\jmath_c/X)
	\cong N \big( \GCov^\scD_{|c} \big) \underset{N (\GCov^\scD_{c/})}{\times} N \big( (\GCov^\scD_{|c})_{/X} \big)
\end{equation}
is a weakly contractible simplicial set.
However, the comma category $\jmath_c/X$ can be described as the category of all those good open coverings of $(c {\times} M \to c)$ which refine the open covering $((f \wr \varphi)^{-1}(\hat{\cV}) \to f^{-1}(\cV))$ of $(c {\times} M \to c)$ (the latter is, in general, not differentiably good).
This category is cofiltered, so that its nerve is indeed contractible.
\end{proof}

\begin{lemma}
\label{st:varphi^scD is smooth}
The $\infty$-functor $N\varpi^\scD \colon N \GCov^\scD \longrightarrow N \rmB \scD$ satisfies the following:
\begin{enumerate}
\item It is an isofibration.

\item It is smooth (in the sense of~\cite[Defs.~4.4.1, 4.4.15]{Cisinski:HiC_and_HoA}).
\end{enumerate}
\end{lemma}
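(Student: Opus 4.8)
The plan is to establish the two assertions separately, relying throughout on the elementary observation that $N\varpi^\scD$ is the nerve of a functor between ordinary categories and is therefore automatically an inner fibration: every inner horn in $N\GCov^\scD$ has a unique filler compatible with $N\varpi^\scD$. Consequently, for~(1) it remains only to produce lifts of invertible $1$-morphisms, and for~(2) I would invoke the fibrewise cofinality characterisation of smoothness, which reduces the statement precisely to Lemma~\ref{st:fibres and slices of GCov(M) --> BD(M)}.

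For the isofibration property in~(1), I would first identify the isomorphisms of $\rmB\scD$: a morphism $(f,\varphi) \colon c \to c'$ is invertible if and only if $f$ is a diffeomorphism of cartesian spaces, since the $\Diff(M)$-component $\varphi$ is automatically invertible in the group $\Diff(M)(c)$. Given an object $X = (c, \hat{\cU} \to \cU)$ with $\varpi^\scD(X) = c$ together with such an isomorphism $(f,\varphi) \colon c \to c'$, I would transport the covering data forward along the fibrewise diffeomorphism $f \wr \varphi \colon c \times M \to c' \times M$, setting $X' \coloneqq \big( c',\, (f \wr \varphi)(\hat{\cU}) \to f(\cU) \big)$, dually to the pullback construction in the example following Definition~\ref{def:GCov^D(M)}. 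The one point requiring verification is that $X'$ again lies in $\GCov^\scD$: since $f \wr \varphi$ is a diffeomorphism covering the diffeomorphism $f$, it carries good open coverings to good open coverings and preserves the property that each finite intersection is an object of $\Cartfam$, so all conditions of Definition~\ref{def:GCov^D(M)} persist. The induced morphism $(\id, (f,\varphi)) \colon X \to X'$ then lies over $(f,\varphi)$ and is invertible (transport back along the inverse diffeomorphism), giving the desired lift.

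For smoothness in~(2), I would use the standard characterisation that an isofibration $p \colon X \to S$ is smooth in the sense of~\cite[Defs.~4.4.1, 4.4.15]{Cisinski:HiC_and_HoA} precisely when, for each object $c$ of $S$, the inclusion of the fibre into the comma $\infty$-category
\begin{equation}
	N\big( \GCov^\scD_{|c} \big) \hookrightarrow N(\GCov^\scD) \underset{N\rmB\scD}{\times} \big( N\rmB\scD \big)_{c/}
\end{equation}
is homotopy cofinal. By part~(1) the map $N\varpi^\scD$ is an isofibration, and the comma $\infty$-category on the right is exactly $N(\GCov^\scD_{c/})$ in the notation of Lemma~\ref{st:fibres and slices of GCov(M) --> BD(M)}; hence the required cofinality is literally the content of that lemma, and smoothness follows at once.

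The analysis above is short because the genuinely substantial work is already isolated in the preceding results. The hard part is really the bookkeeping of variance: I must match Cisinski's criterion so as to prove smoothness rather than its dual, properness, which forces the use of the coslice $(\rmB\scD)_{c/}$ together with cofinality (Cisinski's \emph{final} maps) --- exactly the direction established by Lemma~\ref{st:fibres and slices of GCov(M) --> BD(M)} through the cofiltered comma categories $\jmath_c / X$. The only geometric input, used in~(1), is the stability of good coverings and of the $\Cartfam$-structure of finite intersections under fibrewise diffeomorphisms, which is the same phenomenon already exploited in Lemma~\ref{st:tau_rmfam is coverage}.
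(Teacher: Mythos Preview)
Your proposal is correct and follows essentially the same approach as the paper. The only cosmetic differences are that for~(1) you transport the covering \emph{forward} along $(f,\varphi)$ to lift an isomorphism with prescribed source, whereas the paper pulls it \emph{back} to lift an isomorphism with prescribed target (both are equivalent for isofibrations), and for~(2) you spell out the fibre-to-coslice cofinality criterion while the paper simply cites it as~\cite[Thm.~4.4.36]{Cisinski:HiC_and_HoA}.
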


\begin{proof}
The nerve of any functor between (1-)categories is an inner fibration~\cite[Rmk.~3.3.15]{Cisinski:HiC_and_HoA}; thus, $N\varpi^\scD$ is an inner fibration.
To see that it is even an isofibration, let $(c_1, \hat{\cU} \to \cU)$ be an object in $\GCov^\scD$, where $\cU = \{U_i\}_{i \in \Lambda}$ and $\hat{\cU} = \{\hat{U}_i\}_{i \in \Lambda}$ (compare Definition~\ref{def:GCov^D(M)}).
Let $(f, \varphi) \colon c_0 \to c_1$ be an isomorphism in $\rmB\scD$; that is, $f \colon c_0 \to c_1$ is a diffeomorphism of cartesian spaces.
Let $V_i \coloneqq f^{-1}(U_i)$ and $\hat{V}_i \coloneqq (f \wr \varphi)^{-1}(\hat{U}_i)$, for each $i \in \Lambda$, and set $\cV = \{V_i\}_{i \in \Lambda}$ and $\hat{\cV} = \{\hat{V}_i\}_{i \in \Lambda}$.
Then, the triple $(f, \varphi, \lambda = 1_\Lambda)$ provides an isomorphism $(c_0, \hat{\cV} \to \cV) \to (c_0, \hat{\cU} \to \cU)$ in $\GCov^\scD$ which maps to $(f, \varphi)$ under $\varpi^\scD$.
This shows that $N\varpi^\scD$ is and isofibration.
The second claim now follows from Lemma~\ref{st:fibres and slices of GCov(M) --> BD(M)} and~\cite[Thm.~4.4.36]{Cisinski:HiC_and_HoA}.
\end{proof}

\begin{definition}
Let $\GCov \subset \GCov^\scD$ be the wide%
\footnote{A subcategory is called \textit{wide} if it contains all objects of its ambient category.}
subcategory whose morphisms are only those morphisms in $\GCov^\scD$ which are of the form $(f, \varphi) = (f,1_M)$.
Thus, $\GCov$ comes with a canonical projection functor $\varpi \colon \GCov \to \Cart$.
\end{definition}

\begin{remark}
The $\infty$-functor $N\varpi \colon N\GCov \to N\Cart$ is a smooth isofibration by the same arguments as in Lemma~\ref{st:varphi^scD is smooth}.
\qen
\end{remark}

We obtain a commutative diagram of categories
\begin{equation}
\begin{tikzcd}[column sep=1.25cm, row sep=1cm]
	\GCov \ar[r, hookrightarrow, "\hat{e}_M"] \ar[d, "\varpi"']
	& \GCov^\scD \ar[d, "\varpi^\scD"]
	\\
	\Cart \ar[r, shift left=0.1cm, "e_M"]
	& \rmB \scD \ar[l, shift left=0.1cm, "\pi_M"]
\end{tikzcd}
\end{equation}

By definition of the subcategory $\GCov \subset \GCov^\scD$, we have the following lemma:

\begin{lemma}
\label{st:GCov cartesian diagram}
The commutative square of simplicial sets
\begin{equation}
\begin{tikzcd}[column sep=1.25cm, row sep=1cm]
	N\GCov \ar[r, hookrightarrow, "N\hat{e}_M"] \ar[d, "N\varpi"']
	& N\GCov^\scD \ar[d, "N\varpi^\scD"]
	\\
	N\Cart \ar[r, "Ne_M"]
	& N\rmB \scD
\end{tikzcd}
\end{equation}
is cartesian.
\end{lemma}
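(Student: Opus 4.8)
The plan is to reduce the statement to a purely $1$-categorical claim and then invoke that the nerve preserves limits. The nerve functor $N \colon \Cat \to \sSet$ is a right adjoint (to the fundamental-category functor), so it preserves all limits, and in particular pullbacks. Hence it suffices to prove that the square of $1$-categories underlying the one in the statement, with vertices $\GCov$, $\GCov^\scD$, $\Cart$ and $\rmB\scD$ and with functors $\hat{e}_M, \varpi^\scD, \varpi, e_M$, is cartesian in $\Cat$; applying $N$ then yields the desired cartesian square of simplicial sets.

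To establish this, I would compute the pullback $P \coloneqq \Cart \times_{\rmB\scD} \GCov^\scD$ directly and identify it with $\GCov$. On objects: an object of $P$ is a pair $(c, Y)$ with $c \in \Cart$ and $Y \in \GCov^\scD$ satisfying $e_M(c) = \varpi^\scD(Y)$. Since $e_M$ is the identity on objects and $\varpi^\scD$ (Remark~\ref{rmk:varpi and its fibres}) sends an object to its underlying cartesian space, this condition simply exhibits $c$ as the underlying space of $Y$. Thus objects of $P$ correspond bijectively to objects of $\GCov^\scD$, hence to objects of $\GCov$, compatibly with $\hat{e}_M$.

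The crux, and the only point requiring any bookkeeping, is the matching of morphisms. A morphism $(c, Y) \to (c', Y')$ in $P$ is a pair $(g, h)$ with $g \in \Cart(c, c')$ and $h \in \GCov^\scD(Y, Y')$ such that $e_M(g) = \varpi^\scD(h)$. Writing $h$ via its underlying $\rmB\scD$-morphism and its index map $\lambda$, we have $\varpi^\scD(h) = (f, \varphi)$, whereas $e_M(g) = (g, 1_M)$. The equality therefore forces $f = g$ and $\varphi = 1_M$. Consequently, the morphisms of $P$ are precisely those morphisms of $\GCov^\scD$ whose underlying $\rmB\scD$-morphism has trivial diffeomorphism component $(f, 1_M)$, which is exactly the defining condition for the wide subcategory $\GCov \subset \GCov^\scD$; moreover $\hat{e}_M$ is the inclusion realising this identification.

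Combining the object and morphism analyses gives an isomorphism $\GCov \cong P$ of categories, compatible with the two projections to $\Cart$ and $\GCov^\scD$, so the square of $1$-categories is cartesian. I do not expect any genuine obstacle here: the argument is essentially a definitional unwinding, and the only things to be careful about are that the nerve genuinely commutes with the pullback (which follows from its being a right adjoint) and the reversal conventions of Remark~\ref{rmk:BD(M)^opp and Diff(M)^rev}, which do not affect the identification since both $e_M$ and $\varpi^\scD$ are taken before passing to opposites.
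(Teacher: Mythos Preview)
Your proof is correct and is precisely the definitional unwinding that the paper leaves implicit: the paper simply states that the lemma holds ``by definition of the subcategory $\GCov \subset \GCov^\scD$'' without further argument. Your reduction via the right-adjoint property of the nerve and the explicit identification of the $1$-categorical pullback with $\GCov$ is exactly what underlies that remark; the aside about reversal conventions is unnecessary here since the square in the statement involves no opposites.
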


\begin{lemma}
\label{st:cofinality on GCov slices}
For each $c \in \Cart$, the following statements hold true:
\begin{enumerate}
\item The inclusion $\jmath^\scD_c \colon \GCov_{|c} \hookrightarrow \GCov_{c/}$ is homotopy cofinal.

\item The canonical inclusion $\imath_c \colon \GCov_{/c} \hookrightarrow \GCov^\scD_{c/}$ is homotopy cofinal
\end{enumerate}
\end{lemma}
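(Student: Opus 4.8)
The plan is to prove both statements by the mechanism already used in the proof of Lemma~\ref{st:fibres and slices of GCov(M) --> BD(M)}: following that criterion, an inclusion is homotopy cofinal as soon as, for every object $X$ of its target, the associated comma category has weakly contractible nerve, and in each case I would identify this comma category with a category of good open coverings refining a fixed (generally non-good) open covering of $(c\times M\to c)$. The single geometric input is the observation exploited in Lemma~\ref{st:tau_rmfam is coverage} and Remark~\ref{rmk:varpi and its fibres}, namely that the good open coverings of $(c\times M\to c)$ form a \emph{cofiltered} category under refinement; cofiltered categories have contractible nerves, so this will close each argument.

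For part (1), I would transcribe the argument of Lemma~\ref{st:fibres and slices of GCov(M) --> BD(M)} verbatim, specialised to the wide subcategory $\GCov\subset\GCov^\scD$ in which every morphism carries the trivial fibrewise diffeomorphism $\varphi=1_M$. Concretely, an object $X\in\GCov_{c/}$ is a morphism $f\colon c\to d$ in $\Cart$ together with a covering $(\hat\cV\to\cV)$ of $(d\times M\to d)$, and the comma category $\jmath_c/X$ is then the category of good open coverings of $(c\times M\to c)$ refining the pulled-back covering $\big((f\times 1_M)^{-1}(\hat\cV)\to f^{-1}(\cV)\big)$. As the latter is an honest, if non-good, open covering of $(c\times M\to c)$, this refinement category is cofiltered, hence has contractible nerve, giving the claim.

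For part (2), $\imath_c$ is the canonical inclusion induced by the cartesian square of Lemma~\ref{st:GCov cartesian diagram}, i.e.\ by $\hat e_M\colon\GCov\hookrightarrow\GCov^\scD$ lying over $e_M\colon\Cart\hookrightarrow\rmB\scD$. An object of its target is again a good covering over a cartesian space $e$ together with data relating it to $c$ in $\rmB\scD$, and the associated comma category is once more a refinement category — but now the morphisms of $\GCov^\scD$ may carry a nontrivial fibrewise diffeomorphism $\psi\colon c\to\Diff(M)$, so the refinement problem is twisted by the associated map $g\wr\psi$ of $c\times M$. Since $g\wr\psi$ restricts to a diffeomorphism on each fibre, pushing (or pulling) a good open covering along it again yields a good open covering, exactly as in the proof of Lemma~\ref{st:tau_rmfam is coverage}; this gives non-emptiness, and taking common good refinements gives cofilteredness, hence contractibility. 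Alternatively, I would derive part (2) formally from part (1) together with the fact that $\varpi^\scD$ is a smooth isofibration (Lemma~\ref{st:varphi^scD is smooth}) and the cartesianness of Lemma~\ref{st:GCov cartesian diagram}, via the smooth base-change statement \cite[Thm.~4.4.36]{Cisinski:HiC_and_HoA} and the two-out-of-three property of cofinal maps.

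The main obstacle, and the only place where part (2) genuinely goes beyond part (1) and Lemma~\ref{st:fibres and slices of GCov(M) --> BD(M)}, is controlling the fibrewise diffeomorphism $\psi$ occurring in the morphisms of $\GCov^\scD$: one must verify that twisting the refinement problem by $g\wr\psi$ neither obstructs the existence of a refining good covering nor destroys cofilteredness of the comma category. This reduces precisely to the stability of the class of good open coverings of $(c\times M\to c)$ under fibrewise diffeomorphisms, which is the mechanism already established in the proof of Lemma~\ref{st:tau_rmfam is coverage}; once this is in hand, the cofilteredness argument of part (1) applies \emph{mutatis mutandis}.
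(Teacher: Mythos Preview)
Your argument for part~(1) is correct and matches the paper's proof exactly.

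For part~(2), your direct approach~(a) oversimplifies the comma category. An object of $\imath_c/X$, for $X = ((d,\hat\cV\to\cV),(g,\psi)) \in \GCov^\scD_{c/}$, is not just a good covering of $(c\times M\to c)$ refining something: it consists of a cartesian space $e$, a good covering of $(e\times M\to e)$, a map $f\colon c\to e$ in $\Cart$, together with a morphism $((h,\chi),\lambda)$ in $\GCov^\scD$ to $(d,\hat\cV\to\cV)$ satisfying $hf=g$ and $f^*\chi=\psi$. The varying base $e$ and the factorisation constraints mean this is not literally a refinement poset, so the cofilteredness claim needs additional justification you have not supplied. Your alternative~(b) has the right spirit (two-out-of-three for cofinal maps) but invokes smooth base change unnecessarily.

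The paper's route for~(2) is considerably simpler and bypasses both issues. One observes that the fibres coincide, $\GCov_{|c} = \GCov^\scD_{|c}$, since a morphism in either fibre has $f=1_c$ and $\varphi=1_M$. Hence the composite
\[
\GCov_{|c} \xrightarrow{\ \jmath_c\ } \GCov_{c/} \xrightarrow{\ \imath_c\ } \GCov^\scD_{c/}
\]
is precisely the inclusion $\jmath^\scD_c\colon \GCov^\scD_{|c}\hookrightarrow\GCov^\scD_{c/}$ of Lemma~\ref{st:fibres and slices of GCov(M) --> BD(M)}. Now $\jmath_c$ is cofinal by part~(1) and the composite is cofinal by Lemma~\ref{st:fibres and slices of GCov(M) --> BD(M)}; right cancellation for cofinal functors (\cite[Cor.~4.1.9]{Cisinski:HiC_and_HoA}) then gives that $\imath_c$ is cofinal. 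No analysis of $\imath_c/X$ and no base-change theorem is needed.
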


\begin{proof}
Claim 1 follows analogously to Lemma~\ref{st:fibres and slices of GCov(M) --> BD(M)}, noting that there is a coincidence of fibres
\begin{equation}
	\GCov_{|c} = \GCov^\scD_{|c}\,,
\end{equation}
for each $c \in \Cart$.
Claim 2 is then a combination of Claim 1, Lemma~\ref{st:fibres and slices of GCov(M) --> BD(M)} and~\cite[Cor.~4.1.9]{Cisinski:HiC_and_HoA}.
\end{proof}

%%%%%%%%%%%%%%%%%%%%%%%%%%%%%%%%%%%%%%%%%%%%%%%%%%%%%%%%%%%%%%%%%%%%%%%%%%%%

\section{Smooth families of higher geometric structures}
\label{sec:families of hgeo structures}

%%%%%%%%%%%%%%%%%%%%%%%%%%%%%%%%%%%%%%%%%%%%%%%%%%%%%%%%%%%%%%%%%%%%%%%%%%%%

In this section we present a functorial construction which produces simplicial presheaves on $\rmB\scD$ from simplicial homotopy sheaves on $\Cartfam$.
This allows us to build smooth families of global geometric data on $M$ from local models.

\begin{definition}
\label{def:scH and scH_rmfam}
We make the following definitions:
\begin{enumerate}
\item Let $\scH \coloneqq \Fun(\Cart^\opp, \sSet)$ denote the category of simplicial presheaves on $\Cart$.
We view this as endowed with the projective model structure.

\item Let $\scH^{loc}$ denote the left Bousfield localisation of $\scH$ at the \v{C}ech nerves of good open coverings of cartesian spaces.

\item Let $\scH_\rmfam \coloneqq \Fun(\Cartfam^\opp, \sSet)$ denote the category of simplicial presheaves on $\Cartfam$, endowed with the projective model structure.

\item By $\scH_\rmfam^{loc}$ we denote the left Bousfield localisation of $\scH_\rmfam$ at the \v{C}ech nerves of $\tau_\rmfam$-coverings (see Definition~\ref{def:coverings in Cart_fam}).
\end{enumerate}
\end{definition}

\begin{remark}
Each of the above model structures is simplicial.
We denote the simplicially enriched hom functor in a simplicial category $\scC$ by $\ul{\scC}(-,-) \colon \scC^\opp \times \scC \to \sSet$.
\qen
\end{remark}

There is a functor $\v{C} \colon \GCov^\scD \longrightarrow \scH_\rmfam$ defined as follows:
consider a good open covering $(\hat{\cU} \to \cU)$ of $(c \times M \to c)$, consisting of patches $(\hat{\cU} \to \cU) = \{(\hat{U}_a \to U_a)\}_{a \in \Lambda}$.
The functor $\v{C}$ sends this to the simplicial presheaf on $\Cartfam$ whose $l$-th level is
\begin{equation}
	\v{C}(c, \hat{\cU} \to \cU) = \coprod_{a_0 \cdots a_l \in \Lambda} h_{(\hat{U}_{a_0 \cdots a_l} \to U_{a_0 \cdots a_l})}\,,
\end{equation}
where $h$ denotes the Yoneda embedding of $\Cartfam$.
The action of $\v{C}$ on morphisms is canonically induced by the Yoneda embedding of $\Cartfam$.
Since $\v{C}(c, \hat{\cU} \to \cU)$ is levelwise a coproduct of representables, it is cofibrant in $\scH_\rmfam$.

\begin{remark}
Consider the presheaf \smash{$\ul{(c \times M \to c)}$} on $\Cartfam$ which sends an object $(\hat{d} \to d)$ to the set of commutative square of smooth maps
\begin{equation}
\begin{tikzcd}
	\hat{d} \ar[d] \ar[r]
	& c \times M \ar[d, "\pr_c"]
	\\
	d \ar[r]
	& c
\end{tikzcd}
\end{equation}
The above \v{C}ech nerve is even a cofibrant approximation in $\scH_\rmfam^{loc}$ of this presheaf:
this follows, for instance, from the fact that the canonical morphism
\begin{equation}
	\v{C}(c, \hat{\cU} \to \cU)_0 = \coprod_{a \in \Lambda} h_{(\hat{U}_a \to U_a)}
	\longrightarrow \ul{(c \times M \to c)}
\end{equation}
is a local epimorphism with respect to the coverage $\tau_\rmfam$ on $\Cartfam$ (see Section~\ref{sec:sites for families}).
Its \v{C}ech nerve is then a weak equivalence in the local model structure by~\cite[Cor.~A.3]{DHI:Hypercovers_and_sPSHs}).
\qen
\end{remark}

\begin{construction}
\label{cstr:sPShs on GCov^D from sPShs on Cart_fam}
There exists a functor
\begin{align}
\label{eq:sPShs on GCov^D from sPShs on Cart_fam}
	&\check{C}^{\scD*} \colon \scH_\rmfam^{loc} \longrightarrow \Fun \big( (\GCov^\scD)^\opp, \sSet \big)\,,
	\qquad
	G \longmapsto \check{C}^{\scD*} G\,,
	\\
	&\check{C}^{\scD*} G (c, \hat{\cU} \to \cU)
	\coloneqq \ul{\scH_\rmfam} \big( \v{C}(c, \hat{\cU} \to \cU), G \big)
\end{align}
Endowing $\Fun ((\GCov^\scD)^\opp, \sSet)$ with the projective model structure, this functor preserves fibrations, trivial fibrations, limits, as well as weak equivalences between fibrant objects.
In particular, it maps fibrant objects in $\scH_\rmfam^{loc}$ to fibrant objects of $\Fun ((\GCov^\scD)^\opp, \sSet)$.

Analogously, we define the functor
\begin{align}
	&\check{C}^{M*} \colon \scH_\rmfam^{loc} \longrightarrow \Fun \big( \GCov^\opp, \sSet \big)\,,
	\qquad
	G \longmapsto \check{C}^{M*} G\,,
	\\
	&\check{C}^{M*} G (c, \hat{\cU} \to \cU)
	\coloneqq \ul{\scH_\rmfam} \big( \v{C}(c, \hat{\cU} \to \cU), G \big)
\end{align}
This functor has the same properties as $\check{C}^{\scD*}$ above.
\qen
\end{construction}

\begin{lemma}
\label{st:wtG ess const on fibres of varpi^D}
Let $\check{C}^{\scD*} G$ be a simplicial presheaf on $\GCov^\scD$ obtained via Construction~\ref{cstr:sPShs on GCov^D from sPShs on Cart_fam} from a fibrant object $G \in \scH_\rmfam^{loc}$.
For each $c \in \rmB\scD$, the functor \smash{$\check{C}^{\scD*} G$} restricts to an essentially constant functor \smash{$(\GCov^\scD)_{|c}^\opp \longrightarrow \sSet$} on the fibre over $c \in \Cart$, i.e.~\smash{$\check{C}^{\scD*} G$} sends each morphism in \smash{$\GCov^\scD_{|c}$} to a weak equivalence.
The same applies to the functor $\check{C}^{M*}$.
\end{lemma}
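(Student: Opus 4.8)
The plan is to reduce the statement to the homotopy invariance of simplicial mapping spaces out of cofibrant objects into a fixed fibrant target. First I would unwind the morphisms of the fibre: by Remark~\ref{rmk:varpi and its fibres}, $\GCov^\scD_{|c}$ is the cofiltered category whose objects are good open coverings $(\hat{\cU} \to \cU)$ of $(c \times M \to c)$ and whose morphisms have underlying morphism of $\rmB\scD$ fixed at the identity $(1_c, 1_M)$, so that such a morphism $(\hat{\cU} \to \cU) \to (\hat{\cV} \to \cV)$ is an index map $\lambda$ recording inclusions $\hat{U}_a \hookrightarrow \hat{V}_{\lambda(a)}$ of patches over the fixed family $(c \times M \to c)$. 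In other words, morphisms in the fibre are exactly refinements of coverings of this one object.

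The key observation is that, by the Remark following Construction~\ref{cstr:sPShs on GCov^D from sPShs on Cart_fam}, each \v{C}ech nerve $\v{C}(c, \hat{\cU} \to \cU)$ is a cofibrant approximation, in the local model structure $\scH_\rmfam^{loc}$, of the \emph{single} presheaf $\ul{(c \times M \to c)}$. A refinement morphism in the fibre induces, via the covariant functor $\v{C}$, a morphism of \v{C}ech nerves $\v{C}(c, \hat{\cU} \to \cU) \to \v{C}(c, \hat{\cV} \to \cV)$ which is compatible with the canonical augmentations to $\ul{(c \times M \to c)}$. Since both augmentations are weak equivalences in $\scH_\rmfam^{loc}$, the two-out-of-three property forces this induced map to be a weak equivalence between cofibrant objects.

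To conclude, I would apply the simplicial mapping functor $\ul{\scH_\rmfam}(-, G)$, using that $G$ is fibrant in $\scH_\rmfam^{loc}$. In any simplicial model category, for fibrant target $G$ the functor $\ul{\scH_\rmfam}(-, G)$ sends weak equivalences between cofibrant objects to weak equivalences of simplicial sets (by the pushout-product axiom $\ul{\scH_\rmfam}(-,G)$ carries cofibrations to fibrations and trivial cofibrations to trivial fibrations, whence Ken Brown's lemma gives the claim). Applied to the weak equivalence of \v{C}ech nerves above, this shows that $\check{C}^{\scD*} G$ sends every morphism of $\GCov^\scD_{|c}$ to a weak equivalence of simplicial sets, which is the assertion. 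Finally, the identical argument yields the statement for $\check{C}^{M*}$, since the fibres coincide, $\GCov_{|c} = \GCov^\scD_{|c}$ (Lemma~\ref{st:cofinality on GCov slices}), and the two functors are defined by the same mapping-space formula.

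The main obstacle is the verification that the refinement-induced map of \v{C}ech nerves is a genuine weak equivalence; this rests entirely on identifying both \v{C}ech nerves as cofibrant replacements of the common target $\ul{(c \times M \to c)}$ together with the compatibility of the refinement with the augmentations, after which two-out-of-three and the homotopy invariance of the derived mapping space do the remaining work.
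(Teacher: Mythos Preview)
Your proposal is correct and matches the paper's approach. The paper's proof is a single sentence (``This is a direct consequence of the fact that $G$ satisfies descent with respect to $\tau_\rmfam$''), and what you have written is precisely the unpacking of that sentence: descent is local fibrancy of $G$, the refinement of coverings gives a local weak equivalence between the cofibrant \v{C}ech nerves (both being replacements of $\ul{(c \times M \to c)}$ by the remark after Construction~\ref{cstr:sPShs on GCov^D from sPShs on Cart_fam}), and the simplicial mapping space into a locally fibrant $G$ takes this to a weak equivalence by Ken Brown.
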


\begin{proof}
This is a direct consequence of the fact that $G$ satisfies descent with respect to $\tau_\rmfam$.
\end{proof}

Our goal now is to rid ourselves of the dependence on choices of good open coverings.
We achieve this by averaging over all such coverings by means of the following left Kan extensions:

\begin{proposition}
\label{st:Lan_varpi}
The (1-categorical) left Kan extensions
\begin{align}
	\Lan_{\varpi^\scD} \colon \Fun \big( (\GCov^\scD)^\opp, \sSet \big) &\longrightarrow \Fun \big( \rmB\scD^\opp, \sSet \big)
	\quad \text{and}
	\\
	\Lan_\varpi \colon \Fun \big( \GCov^\opp, \sSet \big) &\longrightarrow \Fun \big( \Cart^\opp, \sSet \big)
\end{align}
preserve finite limits, as well as projective weak equivalences and fibrations (with respect to the Kan-Quillen model structure on $\sSet$).
In particular, they preserves projectively fibrant objects.
\end{proposition}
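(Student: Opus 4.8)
The plan is to compute each left Kan extension by the pointwise colimit formula and to reduce it, objectwise, to a \emph{filtered} colimit over a fibre of $\varpi^\scD$ (resp.\ $\varpi$); once this is done, all three exactness claims follow from the standard good behaviour of filtered colimits of simplicial sets. I would treat $\Lan_{\varpi^\scD}$ in detail, the case of $\Lan_\varpi$ being identical. Since $\sSet$ is cocomplete, the left Kan extension along $(\varpi^\scD)^\opp$ is computed pointwise: for $F \in \Fun((\GCov^\scD)^\opp, \sSet)$ and $c \in \rmB\scD$, identifying morphisms $c \to \varpi^\scD(X)$ in $\rmB\scD$ shows that the comma category $(\varpi^\scD)^\opp \downarrow c$ is the opposite of the coslice $\GCov^\scD_{c/}$ of Lemma~\ref{st:fibres and slices of GCov(M) --> BD(M)}, so that
\begin{equation}
	(\Lan_{\varpi^\scD} F)(c) \cong \colim_{(\GCov^\scD_{c/})^\opp} F\,.
\end{equation}
Because projective weak equivalences, projective fibrations, and finite limits in $\Fun(\rmB\scD^\opp, \sSet)$ are all detected objectwise, it suffices to control this colimit for each fixed $c$.

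Next I would reduce the indexing category to the fibre. The proof of Lemma~\ref{st:fibres and slices of GCov(M) --> BD(M)} establishes that the comma categories $\jmath^\scD_c / X$ are weakly contractible; upon passing to opposite categories these are precisely the comma categories governing cofinality of the inclusion $(\GCov^\scD_{|c})^\opp \hookrightarrow (\GCov^\scD_{c/})^\opp$ for colimits, and contractibility yields in particular the nonemptiness and connectedness needed to collapse the \emph{strict} colimit onto the fibre. Hence
\begin{equation}
	(\Lan_{\varpi^\scD} F)(c) \cong \colim_{(\GCov^\scD_{|c})^\opp} F\,.
\end{equation}
By Remark~\ref{rmk:varpi and its fibres} the fibre $\GCov^\scD_{|c}$ is cofiltered, so its opposite is filtered and the right-hand side is a filtered colimit in $\sSet$.

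Finally I would invoke the elementary exactness of filtered colimits of simplicial sets: they commute with finite limits (computed levelwise from the corresponding fact in $\Set$), and with respect to the Kan--Quillen model structure they preserve both Kan fibrations and weak equivalences. Combined with the objectwise nature of the projective model structure, this gives preservation of finite limits, of projective weak equivalences, and of projective fibrations at once. Preservation of projectively fibrant objects is then immediate, since for fibrant $F$ each value $(\Lan_{\varpi^\scD} F)(c)$ is a filtered colimit of Kan complexes and hence itself a Kan complex. The argument for $\Lan_\varpi$ is verbatim the same, using Lemma~\ref{st:cofinality on GCov slices}(1) together with the coincidence of fibres $\GCov_{|c} = \GCov^\scD_{|c}$ recorded there, which are again cofiltered.

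The only genuinely nontrivial step is the cofinality reduction: one must know that the strict colimit over the comma category agrees with the colimit over the fibre, with no homotopy-colimit correction. This is exactly what Lemma~\ref{st:fibres and slices of GCov(M) --> BD(M)} supplies, and the essential point is that homotopy cofinality (contractibility of the comma categories) is stronger than the mere connectedness required for ordinary $1$-categorical colimits. Everything downstream of that is formal and relies only on the standard properties of filtered colimits.
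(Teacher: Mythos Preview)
Your proposal is correct and follows essentially the same route as the paper: reduce the pointwise left Kan extension to a colimit over the fibre via the cofinality established in Lemma~\ref{st:fibres and slices of GCov(M) --> BD(M)} (resp.\ Lemma~\ref{st:cofinality on GCov slices}(1)), observe that this fibre is cofiltered so its opposite is filtered, and then invoke the standard exactness properties of filtered colimits in $\sSet$. The paper additionally cites \cite[Lemma~3.1.24]{Cisinski:HiC_and_HoA} for stability of Kan fibrations under filtered colimits and \cite[Prop.~7.3]{Dugger:Combinatorial_Mocats} for the identification of filtered colimits with homotopy colimits, and it makes explicit that the fibre-vs-slice isomorphism, while not natural in $c$, is natural in the diagram $F$---which is exactly what your ``detected objectwise'' remark uses implicitly.
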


\begin{proof}
We present the proof of the statement for $\Lan_{\varpi^\scD}$; the proof for $\Lan_\varpi$ is entirely analogous.
Let $\wtG^\scD \colon (\GCov^\scD)^\opp \to \sSet$ be a functor.
By Lemmas~\ref{st:fibres and slices of GCov(M) --> BD(M)} and~\ref{st:cofinality on GCov slices}, for each $c \in \Cart$, the canonical morphism of simplicial sets
\begin{equation}
\label{eq:Lan comparison mp}
	\colim \big( \wtG^\scD \colon (\GCov^\scD_{/c})^\opp \to \sSet \big)
	\longrightarrow \colim \big( \wtG^\scD \colon (\GCov^\scD_{|c})^\opp \to \sSet \big)
\end{equation}
is an isomorphism.
The isomorphisms~\eqref{eq:Lan comparison mp} are not natural in $c$ since the fibres \smash{$\GCov^\scD_{|c}$} do not depend on $c$ functorially (the assignment \smash{$c \mapsto \GCov^\scD_{|c}$} is not a functor $\rmB \scD^\opp \to \Cat$).
For fixed $c \in \Cart$, they are, however, natural in the diagram \smash{$\wtG^\scD$}.
We thus obtain, for each $c \in \Cart$, a canonical isomorphism
\begin{align}
	\big( \Lan_{\varpi^\scD} \wtG^\scD \big) (c) &\cong \colim \Big( \wtG^\scD \colon (\GCov^\scD)_{|c}^\opp \to \sSet \Big)\,,
\end{align}
natural in \smash{$\wtG^\scD$}.
Since all the properties we need to check are objectwise and stable under isomorphism, it suffices to show that, for each $c \in \Cart$, the functor
\begin{equation}
	\Fun \big( (\GCov^\scD)^\opp, \sSet \big) \longrightarrow \sSet\,,
	\qquad
	\wtG^\scD \longmapsto \colim \big( \wtG^\scD \colon (\GCov^\scD_{|c})^\opp \to \sSet \big)
\end{equation}
preserves finite limits, fibrations and weak equivalences.

The indexing category \smash{$\GCov^\scD_{|c}$} is cofiltered (see Remark~\ref{rmk:varpi and its fibres}), so that the above colimit commutes with finite limits.
In particular, it follows that the colimit of a constant diagram \smash{$(\GCov^\scD)_{|c}^\opp \to \sSet$} with value $K \in \sSet$ is canonically isomorphic to $K$.
Thus, $\Lan_{\varpi^\scD}$ preserves final objects.

Next, since Kan fibrations are stable under filtered colimits~\cite[Lemma~3.1.24]{Cisinski:HiC_and_HoA} and closed under isomorphism, it follows that \smash{$\Lan_{\varpi^\scD}$} preserves Kan fibrations.
As $\Lan_{\varpi^\scD}$ also preserves final objects, we obtain that it preserves projectively fibrant objects.

Finally, again because $(\GCov^\scD)_{|c}^\opp$ is filtered, it follows from~\cite[Prop.~7.3]{Dugger:Combinatorial_Mocats} that the above colimit is, in fact, a homotopy colimit.
Consequently, the left Kan extension $\Lan_{\varpi^\scD}$ is even a homotopy left Kan extension and thus preserves objectwise weak equivalences.
\end{proof}

The two left Kan extensions in Proposition~\ref{st:Lan_(varpi^D) wtG as a localisation} are related by a base change formula:

\begin{lemma}
\label{st:base change for 1Cat Lan_varpi}
There is a canonical natural isomorphism
\begin{equation}
	e_M^* \circ \Lan_{\varpi^\scD}
	\cong 
	\Lan_\varpi \circ \hat{e}_M^*
\end{equation}
of functors $\Fun((\GCov^\scD)^\opp, \sSet) \longrightarrow \Fun(\Cart^\opp, \sSet)$.
\end{lemma}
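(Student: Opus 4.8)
The plan is to realise the claimed isomorphism as the canonical base change (Beck--Chevalley) transformation attached to the commutative square $\varpi^\scD \circ \hat{e}_M = e_M \circ \varpi$, and then to verify that it is an isomorphism objectwise by reducing both sides, over each $c \in \Cart$, to a colimit over the common fibre. The key structural inputs are the cartesianness of this square (Lemma~\ref{st:GCov cartesian diagram}), the fibrewise colimit formula for the Kan extensions established in the proof of Proposition~\ref{st:Lan_varpi}, and the coincidence of fibres from Lemma~\ref{st:cofinality on GCov slices}.

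First I would construct the comparison map. Since the target is $\sSet$, both left Kan extensions are pointwise, so they are computed as colimits over the relevant comma categories. The strict equality $\varpi^\scD \circ \hat{e}_M = e_M \circ \varpi$ gives the identification of restriction functors $\hat{e}_M^* \circ (\varpi^\scD)^* = \varpi^* \circ e_M^*$, and taking the mate of this identity $2$-cell under the adjunctions $\Lan_\varpi \dashv \varpi^*$ and $\Lan_{\varpi^\scD} \dashv (\varpi^\scD)^*$ yields the canonical base change transformation $\mathrm{BC}_{\wtG^\scD} \colon \Lan_\varpi(\hat{e}_M^* \wtG^\scD) \to e_M^*(\Lan_{\varpi^\scD} \wtG^\scD)$, which is natural in $\wtG^\scD$. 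Equivalently, using that $e_M$ is the identity on objects (so $e_M(c) = c$), the functor $\hat{e}_M$ induces a map of comma categories over each $c$ compatible with the diagrams $\hat{e}_M^* \wtG^\scD = \wtG^\scD \circ \hat{e}_M^\opp$, and these assemble into $\mathrm{BC}_{\wtG^\scD}$. Being a morphism in $\Fun(\Cart^\opp, \sSet)$, this map is automatically natural in the object variable $c$.

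Next I would prove $\mathrm{BC}_{\wtG^\scD}$ is an isomorphism, which, being a morphism of presheaves, may be tested objectwise in $c \in \Cart$; here I am free to use the fibrewise description even though it is not natural in $c$. By the formula derived in the proof of Proposition~\ref{st:Lan_varpi} and its verbatim analogue for $\Lan_\varpi$, there are canonical identifications $(e_M^* \Lan_{\varpi^\scD} \wtG^\scD)(c) \cong \colim\big(\wtG^\scD \colon (\GCov^\scD_{|c})^\opp \to \sSet\big)$ and $(\Lan_\varpi \hat{e}_M^* \wtG^\scD)(c) \cong \colim\big((\hat{e}_M^* \wtG^\scD) \colon (\GCov_{|c})^\opp \to \sSet\big)$, both following from the cofinality of the fibre inclusions in Lemmas~\ref{st:fibres and slices of GCov(M) --> BD(M)} and~\ref{st:cofinality on GCov slices}. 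By the coincidence of fibres $\GCov_{|c} = \GCov^\scD_{|c}$ recorded in Lemma~\ref{st:cofinality on GCov slices}, and because $\hat{e}_M$ restricts to the identity on this common fibre (all of whose morphisms have $\varphi = 1_M$), the restricted diagram $(\hat{e}_M^* \wtG^\scD)|_{(\GCov_{|c})^\opp}$ equals $\wtG^\scD|_{(\GCov^\scD_{|c})^\opp}$. Hence the two colimits literally agree, and $\mathrm{BC}_{\wtG^\scD}$ becomes the identity under these identifications, so it is an isomorphism at each $c$, and therefore a natural isomorphism of presheaves.

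The main obstacle is the bookkeeping in the last step: I must ensure that the base change map, which is the natural-in-$c$ gadget, is precisely the one that becomes the identity between the two equal fibre colimits. The subtlety is exactly the one flagged in the proof of Proposition~\ref{st:Lan_varpi}, namely that the reduction from the comma (slice) category $\GCov^\scD_{/c}$ to the fibre $\GCov^\scD_{|c}$ is natural in the diagram $\wtG^\scD$ but \emph{not} in $c$, since $c \mapsto \GCov^\scD_{|c}$ is not functorial. The argument must therefore be organised so that naturality in $c$ is carried entirely by the comma-category colimits defining $\mathrm{BC}$, and the passage to fibres is invoked only pointwise, solely to certify that $\mathrm{BC}$ is an isomorphism. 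Once this separation is respected the identification is forced to be the canonical one, which is what makes the resulting natural isomorphism canonical.
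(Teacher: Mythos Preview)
Your proposal is correct and follows essentially the same approach as the paper. The only difference in packaging is that the paper invokes directly the cofinality of the induced functor between the under-categories $\GCov_{c/} \hookrightarrow \GCov^\scD_{c/}$ (Lemma~\ref{st:cofinality on GCov slices}(2)) to conclude that the Beck--Chevalley map is an isomorphism, whereas you inline that argument by factoring through the common fibre $\GCov_{|c} = \GCov^\scD_{|c}$; since Lemma~\ref{st:cofinality on GCov slices}(2) is itself proved by this very factorisation, the two arguments coincide.
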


\begin{proof}
For each $c \in \Cart$, the functor $e_M$ induces a functor $\GCov_{c/} \to (\GCov^\scD)_{c/}$; this is precisely the functor $\jmath^\scD_c$ considered in Lemma~\ref{st:cofinality on GCov slices}.
By that lemma, the functor $\jmath^\scD_c$ is homotopy cofinal and hence, in particular, cofinal.
\end{proof}

We also have the following $\infty$-categorical version of Lemma~\ref{st:base change for 1Cat Lan_varpi}.
Let $\scS$ denote the $\infty$-category of spaces.
For $\scC$ an $\infty$-category, we write $\scP(\scC) \coloneqq \scFun(\scC^\opp, \scS)$ for the $\infty$-category of $\infty$-presheaves on $\scC$.

\begin{proposition}
\label{st:proper base change for wtG^D}
Let \smash{$\widetilde{\bbG}^\scD \in \scP(N\GCov^\scD)$} be \textit{any} $\infty$-presheaf on \smash{$N\GCov^\scD$}.
The canonical base change morphism
\begin{equation}
	N \varpi_! \circ N\hat{e}_M^* \widetilde{\bbG}^\scD \longrightarrow
	N e_M^* \circ N\varpi^\scD_! \widetilde{\bbG}^\scD
\end{equation}
is an equivalence in $\scP(N\GCov^\scD)$.
\end{proposition}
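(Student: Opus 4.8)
The plan is to recognise the displayed arrow as the Beck--Chevalley (exchange) transformation attached to the cartesian square of Lemma~\ref{st:GCov cartesian diagram} and to obtain its invertibility from the \emph{smoothness} of $N\varpi^\scD$ proved in Lemma~\ref{st:varphi^scD is smooth}. Recall that $N\varpi^\scD_!$ and $N\varpi_!$ are the $\infty$-categorical left Kan extensions, i.e.\ the left adjoints of the restriction functors $(N\varpi^\scD)^*$ and $(N\varpi)^*$ on $\infty$-presheaf categories. Since $N\varpi^\scD \circ N\hat e_M = Ne_M \circ N\varpi$, the pullbacks compose to an identification $N\hat e_M^* \circ (N\varpi^\scD)^* \simeq (N\varpi)^* \circ Ne_M^*$, and the base-change morphism
\[
	N\varpi_! \circ N\hat e_M^* \widetilde{\bbG}^\scD \longrightarrow Ne_M^* \circ N\varpi^\scD_! \widetilde{\bbG}^\scD
	\quad \text{in } \scP(N\Cart)
\]
is its mate under the two Kan-extension adjunctions. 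The claim is then purely formal given the geometry of the square: it is cartesian (Lemma~\ref{st:GCov cartesian diagram}) and its right-hand vertical map $N\varpi^\scD$ is smooth (Lemma~\ref{st:varphi^scD is smooth}), which is exactly the hypothesis in Cisinski's base-change theorem for smooth functors~\cite{Cisinski:HiC_and_HoA} guaranteeing that such an exchange transformation for $(-)_!$ is an equivalence for \emph{every} presheaf $\widetilde{\bbG}^\scD$. It is precisely this universality in $\widetilde{\bbG}^\scD$, with no fibrancy or descent hypotheses, that makes the smoothness route the natural one, in contrast to the $1$-categorical Lemma~\ref{st:base change for 1Cat Lan_varpi}.

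To make the argument self-contained and to exhibit it as the exact $\infty$-categorical counterpart of Lemma~\ref{st:base change for 1Cat Lan_varpi}, I would verify invertibility objectwise. Equivalences in $\scP(N\Cart)$ are detected at each $c \in N\Cart$, where the pointwise formula for $\infty$-categorical left Kan extensions presents both sides as colimits in $\scS$ indexed by comma $\infty$-categories. Unwinding the definitions, $Ne_M^* \circ N\varpi^\scD_! \widetilde{\bbG}^\scD$ evaluates at $c$ to the colimit of $\widetilde{\bbG}^\scD$ over (the opposite of) the coslice comma $\GCov^\scD_{c/} = N\GCov^\scD \times_{N\rmB\scD} (N\rmB\scD)_{c/}$, using that $e_M$ is the identity on objects, while $N\varpi_! \circ N\hat e_M^* \widetilde{\bbG}^\scD$ evaluates at $c$ to the colimit of $\widetilde{\bbG}^\scD \circ N\hat e_M$ over $\GCov_{c/}$. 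The base-change morphism is the induced map of colimits along the functor $\GCov_{c/} \to \GCov^\scD_{c/}$ coming from $\hat e_M$.

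It then remains to reduce both colimits to the common fibre. By Lemma~\ref{st:fibres and slices of GCov(M) --> BD(M)} the inclusion $\jmath^\scD_c \colon \GCov^\scD_{|c} \hookrightarrow \GCov^\scD_{c/}$ is homotopy cofinal, and by Lemma~\ref{st:cofinality on GCov slices} so is $\GCov_{|c} \hookrightarrow \GCov_{c/}$; moreover one has the coincidence of fibres $\GCov_{|c} = \GCov^\scD_{|c}$, under which $N\hat e_M$ restricts to the identity and $\widetilde{\bbG}^\scD \circ N\hat e_M$ restricts to $\widetilde{\bbG}^\scD$. Hence both colimits compute, compatibly with the comparison arrow, the colimit of $\widetilde{\bbG}^\scD$ over the common fibre, and the base-change map is an equivalence at $c$; naturality in $c$ finishes the argument.

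I expect the only real difficulty to be the variance bookkeeping. Because $\widetilde{\bbG}^\scD$ is a presheaf rather than a covariant functor, the pointwise Kan-extension formula produces colimits over \emph{opposites} of coslice (not slice) comma categories, and one must confirm that the cofinality statements of Lemmas~\ref{st:fibres and slices of GCov(M) --> BD(M)} and~\ref{st:cofinality on GCov slices} are invoked in the correct direction. The decisive point, which I would stress, is that the comma categories witnessing these cofinalities are cofiltered with weakly contractible nerves (as in the proof of Lemma~\ref{st:fibres and slices of GCov(M) --> BD(M)}); since weak contractibility is preserved under passing to opposite categories, the cofinality is robust enough to compute the colimits on both sides, and no genuine smooth-versus-proper discrepancy arises. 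This is also why the abstract smooth-base-change packaging and the explicit fibrewise computation agree.
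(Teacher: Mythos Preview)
Your proposal is correct, and your first paragraph is exactly the paper's proof: the paper simply invokes Lemma~\ref{st:varphi^scD is smooth} (smoothness of $N\varpi^\scD$), Lemma~\ref{st:GCov cartesian diagram} (cartesianness of the square), and Cisinski's Proper Base Change Theorem~\cite[Prop.~6.4.3, Thm.~6.4.13]{Cisinski:HiC_and_HoA} in a single sentence. Your additional objectwise unpacking via the cofinality Lemmas~\ref{st:fibres and slices of GCov(M) --> BD(M)} and~\ref{st:cofinality on GCov slices} is not in the paper but is a valid alternative route, essentially reproving the relevant instance of smooth base change by hand; your remark that weak contractibility of the comma categories survives passage to opposites is precisely what makes the variance bookkeeping harmless.
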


\begin{proof}
This follows from Lemmas~\ref{st:varphi^scD is smooth} and~\ref{st:GCov cartesian diagram}, together with the Proper Base Change Theorem~\cite[Prop.~6.4.3, Thm.~6.4.13]{Cisinski:HiC_and_HoA}.
\end{proof}

We finish this section by summarising our main constructions so far:

\begin{theorem}
\label{st:fctrs from H_fam to sPSh(rmBscD)}
We have the following functors:
\begin{enumerate}
\item The functor
\begin{equation}
	\check{C}^{\scD*} \colon \scH_\rmfam^{loc} \longrightarrow \Fun \big( (\GCov^\scD)^\opp, \sSet \big)\,,
	\qquad
	G \longmapsto \check{C}^{\scD*} G = \ul{\scH}_\rmfam \big( \v{C}(-), G \big)\,,
\end{equation}
which preserves limits, fibrations, trivial fibrations and weak equivalences between fibrant objects.
Further, it maps fibrant objects to functors which send refinements of coverings to weak equivalences.

\item The functor
\begin{equation}
	\Lan_{\varpi^\scD} \colon \Fun \big( (\GCov^\scD)^\opp, \sSet \big) \longrightarrow \Fun(\rmB\scD^\opp, \sSet)\,,
	\qquad
	\wtG^\scD \longmapsto \Lan_{\varpi^\scD} \wtG^\scD\,,
\end{equation}
preserves colimits, finite limits, fibrations and weak equivalences between fibrant objects.

\item The composition of the above functors,
\begin{equation}
	\Lan_{\varpi^\scD} \circ \check{C}^{\scD*} \colon \scH_\rmfam^{loc} \longrightarrow \Fun(\rmB\scD^\opp, \sSet)\,,
	\qquad
	G \longmapsto \Lan_{\varpi^\scD} \check{C}^{\scD*} G\,,
\end{equation}
preserves finite limits and fibrations (hence also fibrant objects), as well as weak equivalences between fibrant objects.
\end{enumerate}
Analogous statements hold true for the functors $\check{C}^{M*}$ and $\Lan_\varpi$.
\end{theorem}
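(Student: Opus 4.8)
The plan is to recognise that this theorem is a summary statement whose three parts simply repackage results already established in the preceding development; accordingly, the proof will consist of citing the relevant constructions and lemmas while carefully tracking which model-structure hypotheses are needed at each stage. I expect no genuine obstacle here, only the bookkeeping care of matching up the ``between fibrant objects'' qualifiers.

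First I would dispatch part (1). The preservation of limits, fibrations, trivial fibrations, and of weak equivalences between fibrant objects, together with the fact that $\check{C}^{\scD*}$ sends fibrant objects to fibrant objects, is exactly the content recorded in Construction~\ref{cstr:sPShs on GCov^D from sPShs on Cart_fam}. The one remaining assertion---that fibrant objects are sent to functors inverting refinements of coverings---is precisely Lemma~\ref{st:wtG ess const on fibres of varpi^D}: for fibrant $G$ the presheaf $\check{C}^{\scD*}G$ is essentially constant on each fibre $\GCov^\scD_{|c}$, and hence sends every refinement morphism to a weak equivalence.

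Next I would treat part (2), which is essentially Proposition~\ref{st:Lan_varpi}. That proposition already records that $\Lan_{\varpi^\scD}$ preserves finite limits, fibrations, and all projective weak equivalences, and in particular fibrant objects. Colimit preservation is automatic, since $\Lan_{\varpi^\scD}$ is a left Kan extension and hence a left adjoint; and preservation of weak equivalences between fibrant objects is a special case of preservation of \emph{all} weak equivalences. Thus part (2) is immediate.

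Finally, for part (3) I would simply compose the two functors and thread the hypotheses through. Finite limits are preserved because $\check{C}^{\scD*}$ preserves all limits while $\Lan_{\varpi^\scD}$ preserves finite ones; fibrations are preserved because both functors preserve fibrations; and fibrant objects because both preserve fibrant objects. The only point requiring care---and the closest thing to a subtlety---is the weak-equivalence clause: one checks that $\check{C}^{\scD*}$ carries a weak equivalence between fibrant objects of $\scH_\rmfam^{loc}$ to a weak equivalence between fibrant objects of $\Fun((\GCov^\scD)^\opp, \sSet)$ (this uses both clauses of part (1)), after which $\Lan_{\varpi^\scD}$ carries it to a weak equivalence by part (2). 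The analogous statements for $\check{C}^{M*}$ and $\Lan_\varpi$ then follow verbatim, the fibre coincidence $\GCov_{|c} = \GCov^\scD_{|c}$ from Lemma~\ref{st:cofinality on GCov slices} ensuring that the refinement-inversion clause transfers to the $M$-indexed setting. The honest verdict is that all the analytic substance lives in Construction~\ref{cstr:sPShs on GCov^D from sPShs on Cart_fam}, Lemma~\ref{st:wtG ess const on fibres of varpi^D}, and Proposition~\ref{st:Lan_varpi}, and this theorem merely consolidates them.
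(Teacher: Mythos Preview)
Your proposal is correct and matches the paper's treatment exactly: the theorem is explicitly introduced as a summary of the constructions just developed, and the paper provides no separate proof. You have correctly identified that the substance resides in Construction~\ref{cstr:sPShs on GCov^D from sPShs on Cart_fam}, Lemma~\ref{st:wtG ess const on fibres of varpi^D}, and Proposition~\ref{st:Lan_varpi}, with colimit preservation in part~(2) coming for free from left adjointness, and part~(3) following by straightforward composition.
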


%%%%%%%%%%%%%%%%%%%%%%%%%%%%%%%%%%%%%%%%%%%%%%%%%%%%%%%%%%%%%%%%%%%%%%%%%%%%

\section{Averaging over open coverings as an $\infty$-categorical localisation}
\label{sec:averaging good open coverings}

%%%%%%%%%%%%%%%%%%%%%%%%%%%%%%%%%%%%%%%%%%%%%%%%%%%%%%%%%%%%%%%%%%%%%%%%%%%%

In Proposition~\ref{st:Lan_varpi} we saw that averaging over open coverings in $\GCov^\scD$ has nice technical properties.
Here we provide a deeper interpretation of this left Kan extension:
in Theorem~\ref{st:Lan_(varpi^D) wtG as a localisation} we show that on associated left fibrations it amounts precisely to an $\infty$-categorical localisation at the morphisms arising from refinements of good open coverings.
This shows that the construction in Theorem~\ref{st:fctrs from H_fam to sPSh(rmBscD)} is indeed a presentation of the correct way to pass from local presentations of geometric data to families of global objects on $M$.
While that is conceptually important and gives credence to our formalism, the reader interested mainly in applications may skip this section.

\begin{proposition}
\label{st:good InnFibs are localisations}
\emph{\cite[Prop.~7.1.12]{Cisinski:HiC_and_HoA}}
Let $q \colon C \to D$ be a morphism of $\infty$-categories.
Suppose that $q$ has the following properties:
\begin{enumerate}
\item it is an inner fibration,

\item it is smooth or proper, and

\item for each vertex $d \in D$, the fibre $C_{|d} = q^{-1}(d) \subset C$ is a weakly contractible simplicial set.
\end{enumerate}
Then, for each cartesian square
\begin{equation}
\begin{tikzcd}
	A \ar[r, "u"] \ar[d, "q'"']
	& C \ar[d, "q"]
	\\
	B \ar[r, "v"']
	& D
\end{tikzcd}
\end{equation}
of $\infty$-categories, the $\infty$-functor $q'$ exhibits $\scB$ as the $\infty$-categorical localisation of $\scA$ at those morphisms in $\scA$ which become identities under $q'$.
\end{proposition}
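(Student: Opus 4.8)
The plan is to verify directly the universal property defining a localisation. Write $\Sigma$ for the collection of edges of $A$ that $q'$ sends to degenerate edges of $B$. By the characterisation of localisations, it suffices to show that for \emph{every} $\infty$-category $X$ the restriction functor $(q')^* \colon \scFun(B, X) \to \scFun(A, X)$ is fully faithful with essential image the full subcategory of those functors sending every edge in $\Sigma$ to an equivalence in $X$ (the $\Sigma$-inverting functors). I would first reduce to the absolute case $A = C$, $B = D$, $q' = q$. Inner fibrations are stable under pullback, smooth and proper functors are stable under base change~\cite{Cisinski:HiC_and_HoA}, and for each vertex $b \in B$ there is a canonical identification $A_{|b} \cong C_{|v(b)}$ of fibres, which is weakly contractible by hypothesis~(3). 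Hence $q'$ again satisfies (1)--(3), and the claim for the square follows by applying the absolute claim to $q'$; note also that $q$ is essentially surjective, as its fibres are nonempty.

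For a single $q$ satisfying (1)--(3), I would argue first under the simplifying assumption that $X$ admits all small (co)limits, so that the Kan extensions $q_!$ and $q_*$ exist. In the proper case one establishes full faithfulness of $q^*$ by showing the unit $\id \Rightarrow q_* q^*$ is an equivalence: proper base change~\cite[Prop.~6.4.3, Thm.~6.4.13]{Cisinski:HiC_and_HoA} computes, for $G \colon D \to X$ and $d \in D$,
\begin{equation}
	(q_* q^* G)(d) \simeq \lim_{C_{|d}} (q^* G)_{|C_{|d}} \simeq \lim_{C_{|d}} \mathrm{const}_{G(d)} \simeq G(d)\,,
\end{equation}
the last step because the limit of a constant diagram indexed by the weakly contractible $C_{|d}$ is its value. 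The smooth case is dual, using $q_!$, the counit $q_! q^* \Rightarrow \id$, smooth base change, and the colimit of a constant diagram. For the essential image, one inclusion is immediate, as $q^* G = G \circ q$ sends $\Sigma$ to identities, hence to equivalences. Conversely, given a $\Sigma$-inverting $F \colon C \to X$, I would set $G \coloneqq q_* F$ (resp.\ $q_! F$). Since every edge of a fibre $C_{|d}$ lies in $\Sigma$, the restriction $F_{|C_{|d}}$ inverts all morphisms of the weakly contractible $C_{|d}$ and is therefore essentially constant; proper (resp.\ smooth) base change then identifies the counit $q^* q_* F \to F$ (resp.\ the unit $F \to q^* q_! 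F$) objectwise with an equivalence $(q_* F)(d) \simeq F(c)$ for any $c \in C_{|d}$, so that $F \simeq q^* G$.

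The remaining, and I expect hardest, step is to remove the completeness assumption on $X$: the universal property is quantified over all $\infty$-categories $X$, including those lacking the (co)limits needed to form $q_!$ and $q_*$, so the previous argument does not apply verbatim. Here I would pass through the Yoneda embedding $y \colon X \hra \scP(X)$ into the complete and cocomplete presheaf $\infty$-category. Post-composition with the fully faithful $y$ yields a commutative square of restriction functors whose horizontal legs are fully faithful, so full faithfulness of $q^*$ with target $\scP(X)$ descends to full faithfulness of $q^*$ with target $X$ by two-out-of-three for fully faithful functors. For the essential image, a $\Sigma$-inverting $F \colon C \to X$ produces a $\Sigma$-inverting $y \circ F$, which by the complete case equals $q^* H$ for some $H \colon D \to \scP(X)$; essential surjectivity of $q$ then forces every value $H(d) \simeq (q^* H)(c) \simeq y(F(c))$ to lie in the essential image of $y$, so $H$ factors through $X$ and exhibits $F$ in the image of $q^*$. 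Assembling full faithfulness and the essential-image computation over all $X$ gives the asserted localisation property, completing the proof.
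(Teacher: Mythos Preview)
The paper does not prove this proposition; it is quoted as an external input from Cisinski's book~\cite[Prop.~7.1.12]{Cisinski:HiC_and_HoA}, so there is no argument in the paper to compare against.

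Your proposal is correct and gives a clean self-contained proof. The reduction to the absolute case is sound: inner fibrations, smoothness and properness are all stable under base change, and the fibre of $q'$ over $b$ is canonically identified with the fibre of $q$ over $v(b)$, so $q'$ inherits (1)--(3). The core argument via Kan extensions is the standard one: smooth (resp.\ proper) base change, applied to the square with $B = \{d\}$, lets you compute $q_!$ (resp.\ $q_*$) fibrewise, and the weak contractibility of the fibres collapses the resulting (co)limits of essentially constant diagrams. For the essential image, the key point that a $\Sigma$-inverting $F$ restricts to an essentially constant functor on each fibre is exactly right, since $C_{|d}$ is weakly contractible and hence has contractible groupoid completion. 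The Yoneda step removing the (co)completeness hypothesis on $X$ is also valid: full faithfulness of $q^*$ with target $X$ follows from full faithfulness with target $\scP(X)$ by two-out-of-three for fully faithful functors, and the essential-image argument goes through because every fibre $C_{|d}$ is nonempty.

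One small remark: the references~\cite[Prop.~6.4.3, Thm.~6.4.13]{Cisinski:HiC_and_HoA} you cite for ``proper base change'' are stated in Cisinski (and used in the paper, e.g.\ Proposition~\ref{st:proper base change for wtG^D}) for $q_!$ along smooth maps. The dual statement you need for $q_*$ along proper maps follows by applying the smooth version to $q^\opp$, but it would be cleaner to make that duality explicit rather than cite the smooth formulation directly.
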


\begin{corollary}
\label{st:good InnFibs are localisations; id case}
Let $q \colon C \to D$ be a morphism of $\infty$-categories.
Suppose $q$ is an inner fibration which is also smooth or proper  and all of whose fibres are weakly contractible.
Then, $q$ exhibits $D$ as the $\infty$-categorical localisation of $C$ at those morphisms in $C$ whose image under $q$ is an identity morphism in $D$.
\end{corollary}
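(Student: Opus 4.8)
The plan is to observe that this Corollary is precisely the special case of Proposition~\ref{st:good InnFibs are localisations} in which the bottom horizontal map of the cartesian square is taken to be an identity. Concretely, I would apply that Proposition to the square
\begin{equation}
\begin{tikzcd}
	C \ar[r, equal] \ar[d, "q"']
	& C \ar[d, "q"]
	\\
	D \ar[r, equal]
	& D
\end{tikzcd}
\end{equation}
with $B = D$, $v = 1_D$, $A = C \times_D D$, and $u$ the induced map. The first step is to note that this square is cartesian, and that pulling $q$ back along the identity $1_D$ reproduces $q$ up to canonical isomorphism; hence there is a canonical identification $A \cong C$ under which $q' \cong q$ and $u \cong 1_C$. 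This is the only genuinely geometric input, and it is immediate.

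Next I would check that the three hypotheses demanded by Proposition~\ref{st:good InnFibs are localisations} hold. These are exactly that $q$ is an inner fibration, that $q$ is smooth or proper, and that every fibre $C_{|d} = q^{-1}(d)$ is weakly contractible. But these are precisely the standing assumptions of the Corollary, so the Proposition applies verbatim. It then exhibits $B = D$ as the $\infty$-categorical localisation of $A = C$ at the class of those morphisms of $C$ which $q' = q$ sends to identities.

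The final step is to match the two descriptions of the localising class. Under the identification $q' \cong q$, a morphism $\alpha$ of $C$ \emph{becomes an identity under} $q'$ exactly when $q(\alpha)$ is a degenerate edge, i.e.\ an identity morphism, of $D$. This is word-for-word the class ``morphisms in $C$ whose image under $q$ is an identity morphism in $D$'' named in the Corollary, so the universal property produced by the Proposition is literally the one asserted. I do not expect any real obstacle here: the argument is a pure specialisation, and the only point requiring any care is the (routine) verification that the pullback of $q$ along $1_D$ returns $q$, so that the localisation statement transports correctly along the canonical equivalence $A \simeq C$.
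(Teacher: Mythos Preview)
Your proposal is correct and matches the paper's own proof exactly: the paper simply states that this is Proposition~\ref{st:good InnFibs are localisations} in the case where $u$ and $v$ are identities. Your write-up is more explicit about the identifications, but the approach is identical.
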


\begin{proof}
This is the Proposition~\ref{st:good InnFibs are localisations} in the case where $u$ and $v$ are identities.
\end{proof}

In the situation of Corollary~\ref{st:good InnFibs are localisations; id case}, let $W \subset C_1$ be the class of edges which become identity morphisms under $q$.
Let $\overline{W} \subset C_1$ denote the saturation of $W$ in the sense of~\cite[Rmk.~7.1.5]{Cisinski:HiC_and_HoA}, i.e.~the class of edges $f$ in $C$ such that $q(f)$ is an equivalence in the $\infty$-category $D$.
Then, $q$ equivalently exhibits $D$ as the localisation of $C$ at $\overline{W}$.

\begin{corollary}
\label{st:varpi(^scD) as a localisation functor}
The following statements hold true:
\begin{enumerate}
\item The morphism of $\infty$-categories
\begin{equation}
	N \varpi^\scD \colon N \GCov^\scD \longrightarrow N\rmB \scD
\end{equation}
is an $\infty$-categorical localisation at the class $W^\scD$ of refinements of $\tau_\rmfam$-coverings, or, equivalently at its saturation $\overline{W}{}^\scD$; this is the class of morphisms $((\hat{f}, f), \varphi)$ in $\GCov^\scD$ whose image in $\rmB\scD$ is an isomorphism.
This is equivalent to $f$ being a diffeomorphism of cartesian spaces.

\item The morphism of $\infty$-categories
\begin{equation}
	N \varpi \colon N \GCov \longrightarrow N\Cart
\end{equation}
is an $\infty$-categorical localisation at the class $W$ of refinements of $\tau_\rmfam$-coverings, or, equivalently at its saturation $\overline{W}$; this is the class of morphisms in $\GCov$ whose image in $\Cart$ is a diffeomorphism of cartesian spaces.
\end{enumerate}
\end{corollary}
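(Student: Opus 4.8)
The plan is to deduce both parts directly from Corollary~\ref{st:good InnFibs are localisations; id case}, since its three hypotheses for the functor in question have essentially all been checked in the preceding lemmas.

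For part~(1), I would first invoke Lemma~\ref{st:varphi^scD is smooth}, which records that $N\varpi^\scD \colon N\GCov^\scD \to N\rmB\scD$ is an isofibration---hence an inner fibration---and is smooth. The only remaining hypothesis is weak contractibility of the fibres. By Remark~\ref{rmk:varpi and its fibres}, the fibre $(\varpi^\scD)^{-1}(c) = \GCov^\scD_{|c}$ over any $c \in \rmB\scD$ is the cofiltered category of good open coverings of $(c \times M \to c)$; as the nerve of a cofiltered category is weakly contractible (the same fact already used in Lemma~\ref{st:fibres and slices of GCov(M) --> BD(M)}), all fibres are weakly contractible and Corollary~\ref{st:good InnFibs are localisations; id case} applies.

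Corollary~\ref{st:good InnFibs are localisations; id case} then exhibits $N\rmB\scD$ as the localisation of $N\GCov^\scD$ at the class $W^\scD$ of morphisms sent to an identity. A morphism of $\GCov^\scD$ lands on an identity of $\rmB\scD$ exactly when its underlying pair $(f,\varphi)$ is trivial, i.e.~$f = 1_c$ and $\varphi$ is the constant family at $1_M$, so that only the index map $\lambda$ and the choice of covering vary; this identifies $W^\scD$ with the refinements of $\tau_\rmfam$-coverings. By the remark following Corollary~\ref{st:good InnFibs are localisations; id case}, one may equally localise at the saturation $\overline{W}{}^\scD$ of morphisms sent to equivalences. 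Since $\rmB\scD$ is a $1$-category these are the morphisms sent to isomorphisms, and a morphism $(f,\varphi)$ of $\rmB\scD$ is invertible precisely when $f$ is a diffeomorphism of cartesian spaces---the $\Diff(M)$-component being automatically invertible---which yields the stated description of $\overline{W}{}^\scD$.

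For part~(2) I would run the same argument for $N\varpi \colon N\GCov \to N\Cart$: the Remark following Lemma~\ref{st:varphi^scD is smooth} records that $N\varpi$ is again a smooth isofibration, and the coincidence of fibres $\GCov_{|c} = \GCov^\scD_{|c}$ observed in the proof of Lemma~\ref{st:cofinality on GCov slices} shows its fibres are the same cofiltered categories, hence weakly contractible. A final application of Corollary~\ref{st:good InnFibs are localisations; id case}, together with the identification of isomorphisms in $\Cart$ with diffeomorphisms of cartesian spaces, gives the result. Since every ingredient is already in place, I do not anticipate a genuine obstacle; the only step needing care is the bookkeeping matching the abstract class of ``morphisms inverted by $N\varpi^\scD$'' with the concrete refinement morphisms and their saturation.
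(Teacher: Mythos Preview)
Your proposal is correct and follows essentially the same approach as the paper: invoke Lemma~\ref{st:varphi^scD is smooth} for the smooth isofibration property, Remark~\ref{rmk:varpi and its fibres} for weakly contractible (cofiltered) fibres, and apply Corollary~\ref{st:good InnFibs are localisations; id case}. The paper's proof is in fact terser than yours, omitting the explicit unpacking of $W^\scD$ and $\overline{W}{}^\scD$ and the separate treatment of part~(2), which you supply correctly.
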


\begin{proof}
The functor $N\varpi^\scD$ is a smooth inner fibration by Lemma~\ref{st:varphi^scD is smooth}.
Remark~\ref{rmk:varpi and its fibres} implies that it also has weakly contractible fibres.
\end{proof}

\begin{lemma}
\label{st:locs and special cartesian squares}
Consider a cartesian diagram of $\infty$-categories
\begin{equation}
\begin{tikzcd}
	A \ar[r, "u"] \ar[d, "p"']
	& C \ar[d, "q"]
	\\
	B \ar[r, "v"']
	& D
\end{tikzcd}
\end{equation}
Suppose that $q$ has the properties in Proposition~\ref{st:good InnFibs are localisations} and that, additionally, $v$ is conservative (i.e.~reflects equivalences).
Let $\overline{W} \subset C_1$ denote the saturation of the class of morphisms at which $q$ localises $C$ (compare Corollary~\ref{st:good InnFibs are localisations; id case}).
Then, $p$ exhibits $B$ as the $\infty$-categorical localisation of $A$ at those edges in $A$ whose image under $u$ lies in $\overline{W}$.
That is,
\begin{equation}
	B \simeq L_{u^{-1}(\overline{W})} A\,.
\end{equation}
\end{lemma}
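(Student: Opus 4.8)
The plan is to reduce immediately to Proposition~\ref{st:good InnFibs are localisations} and then to identify the localising class it produces with $u^{-1}(\overline{W})$, using conservativity of $v$ as the sole extra ingredient.

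First, I would apply Proposition~\ref{st:good InnFibs are localisations} directly to the given cartesian square: its hypotheses on $q$ are exactly the ones assumed here, so the proposition tells us that $p$ exhibits $B$ as the $\infty$-categorical localisation $L_V A$, where $V \subset A_1$ is the class of edges of $A$ which become identities under $p$. Since an $\infty$-categorical localisation at a class of edges depends only on its saturation, I may replace $V$ by the saturation $\overline{V} \subset A_1$, namely the class of edges $g$ of $A$ with $p(g)$ an equivalence in $B$; thus $B \simeq L_{\overline{V}} A$.

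The remaining task is to show $\overline{V} = u^{-1}(\overline{W})$. By the description of $\overline{W}$ recorded after Corollary~\ref{st:good InnFibs are localisations; id case}, an edge $f$ of $C$ lies in $\overline{W}$ exactly when $q(f)$ is an equivalence in $D$. Hence an edge $g$ of $A$ lies in $u^{-1}(\overline{W})$ if and only if $q(u(g))$ is an equivalence in $D$, which by the commutativity $q \circ u = v \circ p$ of the square is equivalent to $v(p(g))$ being an equivalence in $D$. One inclusion is then immediate: if $p(g)$ is an equivalence then so is $v(p(g))$, giving $\overline{V} \subseteq u^{-1}(\overline{W})$.

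The only real point is the reverse inclusion, and this is precisely where the hypothesis that $v$ is conservative enters: if $v(p(g))$ is an equivalence in $D$, then conservativity of $v$ forces $p(g)$ to be an equivalence in $B$, so $u^{-1}(\overline{W}) \subseteq \overline{V}$. Combining the two inclusions yields $\overline{V} = u^{-1}(\overline{W})$, and therefore $B \simeq L_{\overline{V}} A = L_{u^{-1}(\overline{W})} A$, as desired. I do not expect any serious obstacle beyond bookkeeping with saturations; the entire content of the lemma is the interplay between Proposition~\ref{st:good InnFibs are localisations} and conservativity of $v$, the latter being exactly what promotes ``$v(p(g))$ is an equivalence'' back to ``$p(g)$ is an equivalence''.
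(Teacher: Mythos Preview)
Your proof is correct and follows essentially the same approach as the paper's own argument: apply Proposition~\ref{st:good InnFibs are localisations} to obtain $B \simeq L_{\overline{V}} A$, then identify $\overline{V}$ with $u^{-1}(\overline{W})$ via the commutativity $qu = vp$ and the conservativity of $v$. The paper packages the identification as a single chain of biconditionals rather than two inclusions, but the content is identical.
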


\begin{proof}
By Proposition~\ref{st:good InnFibs are localisations} we have that $p$ exhibits $B$ as the localisation of $A$ at the class $V$ of morphisms whose image under $p$ are identities.
The saturation $\overline{V} \subset A_1$ of this class consists of those morphisms in $A$ which are mapped to equivalences in $B$ under $p$.
Thus,
\begin{equation}
	B \simeq L_{\overline{V}} A\,.
\end{equation}
We claim that an edge $f \in A_1$ lies in $\overline{V}$ precisely if $u(f) \in \overline{W}$.
Since $v$ is conservative by assumption, $\overline{V}$ is the class of edges in $A$ whose image under $v \circ p$ is an equivalence in $D \simeq L_{\overline{W}}C$.
That is, $f \in \overline{V}$ if and only if $vp(f) \in D_1$ is an equivalence.
By the commutativity of the diagram, $vp(f) = qu(f)$, and $qu(f)$ is an equivalence if and only if $u(f) \in \overline{W}$.
\end{proof}

\begin{lemma}
\label{st:cov weqs between LFibs are exactly cat weqs}
Let $S \in \sSet$ be a simplicial set, and consider a commutative triangle
\begin{equation}
\begin{tikzcd}
	A \ar[rr, "f"] \ar[dr, "p"']
	& & B \ar[dl, "q"]
	\\
	& S &
\end{tikzcd}
\end{equation}
Suppose that $p$ and $q$ are left fibrations.
Then, $f$ is a covariant weak equivalence in $\sSet_{/S}$ if and only if it is a categorical weak equivalence, i.e.~a weak equivalence in the Joyal model structure on $\sSet$.
\end{lemma}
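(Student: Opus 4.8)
The plan is to realise the covariant model structure on $\sSet_{/S}$ as a left Bousfield localisation of the slice Joyal model structure, and then to invoke the general principle that, \emph{between local (that is, fibrant) objects}, the localised and unlocalised weak equivalences coincide. This routes the comparison of the two notions of weak equivalence entirely through foundational model-categorical facts, with no need for an explicit fibrewise analysis.

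First I would record the relevant comparison of model structures on $\sSet_{/S}$. Both the slice Joyal model structure $(\sSet_{/S})_J$ and the covariant model structure $(\sSet_{/S})_{\mathrm{cov}}$ have the monomorphisms as their cofibrations; indeed, $(\sSet_{/S})_{\mathrm{cov}}$ is a left Bousfield localisation of $(\sSet_{/S})_J$ whose fibrant objects are exactly the left fibrations $X \to S$ (see~\cite{Cisinski:HiC_and_HoA}). In particular, the fibrant objects of the covariant structure are precisely the local objects of this localisation, and each of them is in particular fibrant---an isofibration---in $(\sSet_{/S})_J$, since every left fibration is an isofibration. I would also note that weak equivalences in a slice model structure are created by the forgetful functor to $\sSet$, so that a morphism of $\sSet_{/S}$ is a $(\sSet_{/S})_J$-weak equivalence precisely when its underlying map of simplicial sets is a categorical (Joyal) weak equivalence.

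Next, the two implications. Since $p$ and $q$ are left fibrations, both $(A \to S)$ and $(B \to S)$ are local objects. The forward implication---categorical weak equivalence implies covariant weak equivalence---holds for an arbitrary morphism of $\sSet_{/S}$ and requires no fibrancy: a left Bousfield localisation only enlarges the class of weak equivalences, so every $(\sSet_{/S})_J$-weak equivalence is a covariant one, and by the previous paragraph these are exactly the morphisms over $S$ that are underlying categorical equivalences. For the converse I would apply the standard localisation lemma (cf.\ the homotopical algebra in~\cite{Cisinski:HiC_and_HoA}): a morphism between local objects of a left Bousfield localisation is a local weak equivalence if and only if it is a weak equivalence in the ambient model structure. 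Applied to $f$, a covariant weak equivalence between the local objects $A$ and $B$, this yields that $f$ is a $(\sSet_{/S})_J$-weak equivalence, hence a categorical weak equivalence of the underlying simplicial sets.

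The main obstacle is purely at the level of foundations: one must pin down that the covariant structure really is a left Bousfield localisation of the slice Joyal structure with fibrant objects exactly the left fibrations, and that left fibrations are isofibrations and so qualify as local objects---both standard and available in~\cite{Cisinski:HiC_and_HoA}. An entirely equivalent route, should one prefer to avoid the localisation language, is to combine the fibrewise detection of covariant weak equivalences between left fibrations with the fact that a map of left fibrations is a categorical equivalence exactly when it is a fibrewise weak homotopy equivalence of the (Kan-complex) fibres; the localisation argument above is simply a packaging of these facts.
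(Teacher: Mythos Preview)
Your proof is correct and follows essentially the same strategy as the paper: both identify the covariant model structure on $\sSet_{/S}$ as a left Bousfield localisation of the slice Joyal model structure (the paper cites~\cite{HM:Left_fibs_and_hocolims_I} for this, localising at the left horn inclusions over $S$), and then invoke the general fact that between fibrant objects of a localisation the two classes of weak equivalences coincide. Your write-up is slightly more expansive---spelling out the easy direction and mentioning the fibrewise alternative---but the core argument is the same.
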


\begin{proof}
Let $T$ denote a class of morphisms in a model category $\scC$ such that the left Bousfield localisation $L_T \scC$ exists.
Then, a morphism between fibrant objects in $L_T \scC$ is a weak equivalence precisely if it is a weak equivalence in the original model category $\scC$.
We apply this to the following setting:
the Joyal model structure on $\sSet$ induces a model structure on the slice category $\sSet_{/S}$ (see, for instance,~\cite[Thm.~7.6.5]{Hirschhorn:MoCats_and_localisations}).
The covariant model structure on $\sSet_{/S}$ is the left Bousfield localisation of this induced model structure at all morphisms~\cite[p.~5]{HM:Left_fibs_and_hocolims_I}
\begin{equation}
\begin{tikzcd}
	\Lambda^n_0 \ar[rr, hookrightarrow] \ar[dr]
	& & \Delta^n \ar[dl]
	\\
	& S &
\end{tikzcd}
\end{equation}
where $n$ ranges over $\NN_0$ and, for each $n$, we consider all possible morphisms $\Delta^n \to S$.
This shows the claim.
\end{proof}

We now apply the above statements to the left fibrations associated to the simplicial presheaves from Section~\ref{sec:families of hgeo structures}.
These left fibrations are computed using the rectification functor (see Appendix~\ref{app:rectification}, as well as~\cite{HM:Left_fibs_and_hocolims_I, Bunk:Localisation_of_sSet} for more background):
given a category $\scC$, rectification is a functor
\begin{equation}
	r_\scC^* \colon \Fun(\scC, \sSet) \longrightarrow \sSet_{/N\scC}\,.
\end{equation}
Endowing $\Fun(\scC, \sSet)$ with the projective model structure and $\sSet_{/N\scC}$ with the covariant model structure, $r_\scC^*$ is even a right Quillen equivalence (see Theorem~\ref{st:r_C^* as Quillen equivalence}).
Given a simplicial presheaf $F^\scD \colon \rmB \scD^\opp \to \sSet$ or $F^M \colon \Cart^\opp \to \sSet$, we write
\begin{equation}
\label{eq:base change on left fibs tint F^scD}
	\textint F^\scD \coloneqq r_{\rmB\scD^\opp}^* F^\scD
	\qquad \text{and} \qquad
	\textint F^\scD \coloneqq r_{\Cart^\opp}^* F^M\,,
\end{equation}
respectively.
By Lemma~\ref{st:r_C^* and pullbacks} there is a canonical isomorphism
\begin{equation}
	Ne_M^* \textint F^\scD
	\cong \textint e_M^*F^\scD
\end{equation}
in $\sSet_{/N \Cart^\opp}$.

For a category $\scC$, we denote the $\infty$-categorical localisation of $\Fun(\scC, \sSet)$ at the objectwise weak homotopy equivalences by
\begin{equation}
	\gamma_\scC \colon N\Fun(\scC, \sSet) \longrightarrow \scFun(N\scC, \scS)\,.
\end{equation}
Recall further that $r_\scC^*$ produces the left fibrations classified by the $\infty$-presheaves obtained via the localisation functor $\gamma_\scC$ (Theorem~\ref{st:r_C^* and oo-categorical localisation}).
In other words, for $F \colon \scC \to \sSet$, the $\infty$-functor $\gamma_\scC F \colon N\scC \to \scS$ classifies the left fibration $r_\scC^*F \to N\scC$.
Given this insight, we see that the isomorphism~\eqref{eq:base change on left fibs tint F^scD} is a manifestation on the level of left fibrations of the base change equivalence in Proposition~\ref{st:proper base change for wtG^D}.

Recall the projection functor $\varpi^\scD \colon \GCov^\scD \to \rmB\scD$ from~\eqref{eq:varpi^scD}.

\begin{lemma}
\label{st:wtG^D sim varpi^*Lan_(w^D) wtG^D}
Let \smash{$\wtG^\scD \colon (\GCov^\scD)^\opp \to \sSet$} be a simplicial presheaf such that, for each $c \in \rmB\scD$ the restriction of \smash{$\wtG^\scD$} to the fibre \smash{$\GCov^\scD_{|c} = (\varpi^\scD){}^{-1}(c)$} to the fibre at $c$ is an essentially constant functor (i.e.~it sends each morphism in the domain to a weak equivalence in its codomain).
Then, there is a canonical covariant equivalence
\begin{equation}
	\textint \wtG^\scD
	\longrightarrow (N\varpi^\scD)^* \textint \Lan_{\varpi^\scD} \wtG^\scD
\end{equation}
of left fibrations over $(N\GCov^\scD)^\opp$.
\end{lemma}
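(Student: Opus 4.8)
The plan is to exhibit the asserted morphism as the image under rectification of the unit of a Kan extension adjunction, and then to reduce the claim to showing that this unit is a levelwise weak equivalence.

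First I would recall that $\Lan_{\varpi^\scD}$ is left adjoint to the restriction functor $(\varpi^\scD)^* \colon \Fun(\rmB\scD^\opp, \sSet) \to \Fun((\GCov^\scD)^\opp, \sSet)$ along $\varpi^\scD$, with unit $\eta \colon \wtG^\scD \to (\varpi^\scD)^* \Lan_{\varpi^\scD} \wtG^\scD$. Applying the rectification functor $r_{(\GCov^\scD)^\opp}^*$ and invoking the compatibility of rectification with pullback from Appendix~\ref{app:rectification} (the instance of Lemma~\ref{st:r_C^* and pullbacks} for $\varpi^\scD$, i.e.\ a natural isomorphism $r_{(\GCov^\scD)^\opp}^* \circ (\varpi^\scD)^* \cong (N\varpi^\scD)^* \circ r_{\rmB\scD^\opp}^*$) identifies $r_{(\GCov^\scD)^\opp}^*(\eta)$ precisely with the canonical morphism in the statement. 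This construction is manifestly canonical, which is what the word ``canonical'' in the claim refers to.

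The crux is then to show that $\eta$ is an objectwise weak equivalence. For $X \in \GCov^\scD$ lying over $c = \varpi^\scD(X)$, the component $\eta_X$ is the colimit coprojection
\begin{equation}
	\wtG^\scD(X) \longrightarrow \big( \Lan_{\varpi^\scD} \wtG^\scD \big)(c) \cong \colim \big( \wtG^\scD \colon (\GCov^\scD_{|c})^\opp \to \sSet \big)\,,
\end{equation}
using the identification of the Kan extension value established in the proof of Proposition~\ref{st:Lan_varpi}. Since the fibre $\GCov^\scD_{|c}$ is cofiltered (Remark~\ref{rmk:varpi and its fibres}), the opposite category $(\GCov^\scD_{|c})^\opp$ is filtered, so by~\cite[Prop.~7.3]{Dugger:Combinatorial_Mocats} the colimit computing $(\Lan_{\varpi^\scD}\wtG^\scD)(c)$ is in fact a homotopy colimit. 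The hypothesis that $\wtG^\scD$ is essentially constant on the fibre says exactly that this diagram sends every morphism to a weak equivalence; a filtered homotopy colimit of such a diagram is weakly equivalent to its value at any object, with each coprojection a weak equivalence. Hence $\eta_X$ is a weak equivalence for every $X$.

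Finally I would pass to left fibrations using that, for any simplicial presheaf $F$, the functor $r_\scC^*$ produces the left fibration classified by the $\infty$-presheaf $\gamma_\scC F$ (Theorem~\ref{st:r_C^* and oo-categorical localisation}). Under this correspondence, a morphism of simplicial presheaves is sent to a covariant weak equivalence precisely when it becomes an equivalence after applying $\gamma_\scC$, i.e.\ precisely when it is an objectwise weak equivalence; no fibrancy hypothesis on $\wtG^\scD$ is needed. Since $\eta$ is an objectwise weak equivalence by the previous step, $r_{(\GCov^\scD)^\opp}^*(\eta)$, and hence the morphism in the statement, is a covariant equivalence. I expect the only nontrivial input to be the middle step: recognising the $1$-categorical left Kan extension as a genuine homotopy colimit over the filtered fibre, after which essential constancy forces the coprojections to be weak equivalences. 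The first and last steps are formal consequences of the adjunction and of the homotopical good behaviour of rectification recorded in Appendix~\ref{app:rectification}.
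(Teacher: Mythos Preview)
Your proof is correct and follows essentially the same route as the paper: identify the morphism as the rectification of the unit $\eta$ of the adjunction $\Lan_{\varpi^\scD} \dashv (\varpi^\scD)^*$, show $\eta$ is objectwise a weak equivalence by recognising $(\Lan_{\varpi^\scD}\wtG^\scD)(c)$ as a filtered (hence homotopy) colimit over the fibre of an essentially constant diagram, and then pass to left fibrations via Lemma~\ref{st:r_C^* and pullbacks}. The only cosmetic difference is that the paper phrases the last step using Theorem~\ref{st:r_C^* as Quillen equivalence} (right Quillen, so preserving weak equivalences between fibrant objects), whereas you argue via the localisation picture that fibrancy is not needed.
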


\begin{proof}
Given an object $X = (c, \hat{\cU} \to \cU) \in \GCov^\scD$, consider the canonical cocone morphism
\begin{align}
	\eta_{|X} \colon \wtG^\scD(X)
	\longrightarrow &\colim \Big( \wtG^\scD \colon \big( \GCov^\scD_{/c} \big)^\opp \to \sSet \Big)
	\\
	&\cong \colim \Big( i_c^*\wtG^\scD \colon \big( \GCov^\scD_{|c} \big)^\opp \to \sSet \Big)\,.
\end{align}
The morphisms $\eta_{|X}$ are the components of a natural transformation:
this is the unit of the adjunction $(\varpi^\scD)^* \dashv \Lan_{\varpi^\scD}$.
The isomorphism in the second line stems from Proposition~\ref{st:Lan_varpi}; it is, in general \textit{not} natural in $c$.
However, by Remark~\ref{rmk:varpi and its fibres} the colimit in the second line is filtered; together with the assumption that \smash{$i_c^*\wtG^\scD$} is essentially constant, this shows that the composition of $\eta_{|X}$ with the above isomorphism is a weak equivalence, for each $X \in \GCov^\scD$.
It follows by the two-out-of-three property that $\eta$ is a weak equivalence between fibrant objects in the projective model structure on simplicial presheaves on $\GCov^\scD$.
By Theorem~\ref{st:r_C^* as Quillen equivalence}, we thus obtain a covariant weak equivalence
\begin{equation}
	\textint \wtG^\scD
	\longrightarrow r_{(\GCov^\scD)^\opp}^* \circ (\varpi^\scD)^* \circ \Lan_{\varpi^\scD} \wtG^\scD
\end{equation}
between fibrant objects in $\sSet_{/N(\GCov^\scD)^\opp}$.
Finally, on the right-hand side we have a further canonical isomorphism
\begin{equation}
	r_{(\GCov^\scD)^\opp}^* \circ (\varpi^\scD)^* \circ \Lan_{\varpi^\scD} \wtG^\scD
	\cong N(\varpi^\scD)^* \circ r_{\rmB\scD^\opp}^* \circ \Lan_{\varpi^\scD} \wtG^\scD
	= N(\varpi^\scD)^* \textint \Lan_{\varpi^\scD} \wtG^\scD\,,
\end{equation}
induced by Lemma~\ref{st:r_C^* and pullbacks}.
\end{proof}

\begin{theorem}
\label{st:Lan_(varpi^D) wtG as a localisation}
Let \smash{$\wtG^\scD \colon (\GCov^\scD)^\opp \to \sSet$} be a simplicial presheaf.
Suppose that \smash{$\wtG^\scD$} is projectively fibrant and essentially constant along the fibres of $\varpi^\scD \colon \GCov^\scD \to \rmB\scD$ (as in the assumptions of Lemma~\ref{st:wtG^D sim varpi^*Lan_(w^D) wtG^D}).
There is a commutative diagram
\begin{equation}
\begin{tikzcd}[row sep=0.75cm, column sep=1cm]
	\textint \wtG^\scD \ar[r, "v"] \ar[d, "q"']
	& N(\GCov^\scD)^\opp \ar[d, "N\varpi^\scD"]
	\\
	\textint \Lan_{\varpi^\scD} \wtG^\scD \ar[r, "u"']
	& N\rmB\scD^\opp
\end{tikzcd}
\end{equation}
in $\sSet$, and the following statements hold true:
\begin{enumerate}
\item This diagram is homotopy cartesian in the Joyal model structure on $\sSet$.

\item The horizontal morphisms are left fibrations.

\item The morphism $q$ exhibits \smash{$\textint \Lan_{\varpi^\scD} \wtG^\scD$} as the $\infty$-categorical localisation of \smash{$\textint \wtG^\scD$} at those morphisms whose image under $u$ is in the class $\overline{W}{}^\scD$ from Corollary~\ref{st:varpi(^scD) as a localisation functor}(1).
\end{enumerate}
An analogous statement holds true for the map \smash{$\textint \wtG^M \to \textint \Lan_\varpi \wtG^M$} for any projectively fibrant simplicial presheaf \smash{$\wtG^M \colon \GCov^\opp \to \sSet$} which is essentially constant along the fibres of the functor $\varpi \colon \GCov \to \Cart$ and the class $\overline{W}$ from Corollary~\ref{st:varpi(^scD) as a localisation functor}(2).
\end{theorem}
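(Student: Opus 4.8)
The plan is to assemble the theorem from the sequence of lemmas already established in this section, treating it as a structural consequence of the general facts about localisations and left fibrations. The commutative square arises directly: by definition $\textint\wtG^\scD = r_{(\GCov^\scD)^\opp}^*\wtG^\scD$ and $\textint\Lan_{\varpi^\scD}\wtG^\scD = r_{\rmB\scD^\opp}^*\Lan_{\varpi^\scD}\wtG^\scD$ are the left fibrations classified by the respective $\infty$-presheaves, and the map $q$ is induced by the unit of the adjunction $(\varpi^\scD)^*\dashv\Lan_{\varpi^\scD}$ via rectification. First I would establish claim (1): Lemma~\ref{st:wtG^D sim varpi^*Lan_(w^D) wtG^D} supplies a covariant (hence, by Lemma~\ref{st:cov weqs between LFibs are exactly cat weqs}, categorical) equivalence $\textint\wtG^\scD \xrightarrow{\sim} (N\varpi^\scD)^*\textint\Lan_{\varpi^\scD}\wtG^\scD$. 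Since the right-hand object is by construction the pullback of $\textint\Lan_{\varpi^\scD}\wtG^\scD$ along $N\varpi^\scD$, and the strict square built from $q$, $v$, $N\varpi^\scD$, $u$ factors this equivalence, the square is a homotopy pullback in the Joyal model structure.

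For claim (2), the horizontal maps are left fibrations because rectification $r_\scC^*$ lands in left fibrations over $N\scC$ (as recalled before Lemma~\ref{st:wtG^D sim varpi^*Lan_(w^D) wtG^D}); this is immediate and requires no further argument. The substance lies in claim (3). Here I would invoke Lemma~\ref{st:locs and special cartesian squares} applied to the homotopy-cartesian square from claim (1), with the roles $(A,B,C,D) = (\textint\wtG^\scD,\ \textint\Lan_{\varpi^\scD}\wtG^\scD,\ N(\GCov^\scD)^\opp,\ N\rmB\scD^\opp)$ and with $q$ there taken to be $N\varpi^\scD$. Corollary~\ref{st:varpi(^scD) as a localisation functor}(1) tells us that $N\varpi^\scD$ is an inner fibration that is smooth (Lemma~\ref{st:varphi^scD is smooth}) with weakly contractible fibres, and that it localises $N\GCov^\scD$ at the saturated class $\overline{W}{}^\scD$. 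So the hypotheses of Lemma~\ref{st:locs and special cartesian squares} are met once I check that the bottom map $u\colon \textint\Lan_{\varpi^\scD}\wtG^\scD \to N\rmB\scD^\opp$ is conservative. Then the lemma gives $\textint\Lan_{\varpi^\scD}\wtG^\scD \simeq L_{v^{-1}(\overline{W}{}^\scD)}\,\textint\wtG^\scD$, which is exactly the asserted localisation at morphisms whose image under $v$ lies in $\overline{W}{}^\scD$.

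The main obstacle I anticipate is verifying that $u$ is conservative. Because $u$ is the structure map of a left fibration, an edge of $\textint\Lan_{\varpi^\scD}\wtG^\scD$ is an equivalence precisely when it is both $u$-cocartesian and mapped to an equivalence in the base; cocartesianness is automatic for left fibrations, so conservativity reduces to the claim that an edge projecting to an equivalence (in fact an identity, since we work over the nerve of a 1-category where the only equivalences are isomorphisms) in $N\rmB\scD^\opp$ and lying in a fibre is itself an equivalence. This is the statement that each fibre of the left fibration is an $\infty$-groupoid, which holds because left fibrations have $\infty$-groupoid fibres over every vertex. I would spell this out carefully, as it is the one place where the argument depends on the precise model-categorical input rather than on formal closure under the preceding lemmas.

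Finally, for the parenthetical assertion about $\textint\wtG^M \to \textint\Lan_\varpi\wtG^M$, the entire argument transcribes verbatim with $\GCov$, $\Cart$, $\varpi$ and $\overline{W}$ in place of $\GCov^\scD$, $\rmB\scD$, $\varpi^\scD$ and $\overline{W}{}^\scD$, using Corollary~\ref{st:varpi(^scD) as a localisation functor}(2) and the $\Cart$-versions of Lemmas~\ref{st:wtG^D sim varpi^*Lan_(w^D) wtG^D} and~\ref{st:varphi^scD is smooth}; no new ideas are needed.
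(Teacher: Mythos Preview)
Your approach is essentially the paper's, and your handling of claims~(1) and~(2) and the conservativity of $u$ matches the paper's reasoning (the paper cites \cite[Prop.~3.4.8]{Cisinski:HiC_and_HoA} for the latter, which is the fact you unpack). There is, however, one technical imprecision in your treatment of claim~(3) worth flagging.

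You propose to apply Lemma~\ref{st:locs and special cartesian squares} directly to the square of the theorem. But that lemma, like Proposition~\ref{st:good InnFibs are localisations} (Cisinski, Prop.~7.1.12) on which it rests, is stated for a \emph{strictly} cartesian square, whereas the square with $\textint\wtG^\scD$ in the upper-left is only \emph{homotopy} cartesian. The paper handles this in two steps: first apply Lemma~\ref{st:locs and special cartesian squares} to the strict pullback square with $(N\varpi^\scD)^*\textint\Lan_{\varpi^\scD}\wtG^\scD$ in the upper-left corner (this is the square you already used implicitly in claim~(1)), concluding that the map $q'$ from that strict pullback is a localisation at the preimage of $\overline{W}{}^\scD$; then transfer along the categorical equivalence $\eta'\colon\textint\wtG^\scD\to(N\varpi^\scD)^*\textint\Lan_{\varpi^\scD}\wtG^\scD$ from Lemma~\ref{st:wtG^D sim varpi^*Lan_(w^D) wtG^D}, invoking the compatibility of localisations with weak categorical equivalences \cite[Prop.~7.1.8]{Cisinski:HiC_and_HoA}. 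This extra transfer step is short but necessary; once it is inserted, your argument and the paper's coincide. (Relatedly, for claim~(1) you should make explicit that the strict pullback already computes the homotopy pullback because $N\varpi^\scD$ is an isofibration between $\infty$-categories and hence a Joyal fibration; the paper records this.)
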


Note that any projectively fibrant simplicial presheaf \smash{$\wtG^\scD$} on $\GCov^\scD$ gives rise to a projectively fibrant simplicial presheaf \smash{$\wtG^M \coloneqq \iota^*\wtG^\scD$} on $\GCov$.
Moreover, if the former is essentially constant on the fibres of $\varpi^\scD$, then the latter is essentially constant along the fibres of $\varpi$.

Theorem~\ref{st:Lan_(varpi^D) wtG as a localisation} shows that the $\infty$-category \smash{$\textint \Lan_{\varpi^\scD} \wtG^\scD$} arises by starting from the $\infty$-category \smash{$\textint \wtG^\scD$} and inverting (in the $\infty$-categorical sense) those of its morphisms which stem from reparameterisations (i.e.~isomorphisms in $\Cart$), smooth families of diffeomorphisms of $M$ and refinements of $\tau_\rmfam$-coverings.
In the case of \smash{$\wtG^M$} we only invert morphisms coming from reparameterisations and refinements of $\tau_\rmfam$-coverings.

\begin{proof}[Proof of Theorem~\ref{st:Lan_(varpi^D) wtG as a localisation}]
The morphisms $q$ and $v$ are induced by the morphism in Lemma~\ref{st:wtG^D sim varpi^*Lan_(w^D) wtG^D} and the universal property of pullbacks; this produces the commutative diagram.
Lemmas~\ref{st:cov weqs between LFibs are exactly cat weqs} and~\ref{st:wtG^D sim varpi^*Lan_(w^D) wtG^D} imply that, in order to show Claim~(1), it suffices to show that the strict pullback
\begin{equation}
\label{eq:Nvarpi^* tint wtG^D-square}
\begin{tikzcd}[row sep=1cm, column sep=1.25cm]
	(N\varpi^\scD)^* \big( \textint \Lan_{\varpi^\scD} \wtG^\scD \big) \ar[r, "v'"] \ar[d, "q'"']
	& N(\GCov^\scD)^\opp \ar[d, "N\varpi^\scD"]
	\\
	\textint \Lan_{\varpi^\scD} \wtG^\scD \ar[r, "u"']
	& N\rmB\scD^\opp
\end{tikzcd}
\end{equation}
is homotopy cartesian in the Joyal model structure.
This follows because its top right, bottom left and bottom right vertices are $\infty$-categories and $N\omega^\scD$ is an isofibration between $\infty$-categories (see Lemma~\ref{st:varphi^scD is smooth}); therefore, it is a fibration in the Joyal model structure~\cite[Thm.~3.6.1]{Cisinski:HiC_and_HoA}.
Claim~(2) holds by Theorem~\ref{st:r_C^* as Quillen equivalence}.
Finally, we show Claim~(3):
first, note that by Claim~(2) and~\cite[Prop.~3.4.8]{Cisinski:HiC_and_HoA} it follows that $v'$ and $u$ are conservative isofibrations.
Thus, we can apply Lemma~\ref{st:locs and special cartesian squares} to obtain that the morphism $q'$ in~\eqref{eq:Nvarpi^* tint wtG^D-square} is an $\infty$-categorical localisation functor at those edges in \smash{$(N\varpi^\scD)^* \big( \textint \Lan_{\varpi^\scD} \wtG^\scD \big)$} whose image under $v'$ is in $\overline{W}{}^\scD$.
The claim now follows because Lemmas~\ref{st:cov weqs between LFibs are exactly cat weqs} and~\ref{st:wtG^D sim varpi^*Lan_(w^D) wtG^D} provide a weak categorical equivalence $\eta'$ which fits into the commutative diagram
\begin{equation}
\begin{tikzcd}
	\textint \wtG^\scD \ar[dr, "\eta'" description] \ar[drr, bend left=15, "v" description] \ar[ddr, bend right=30, "q" description]
	& &
	\\
	& (N \varpi^\scD)^* \textint \Lan_{\varpi^\scD} \wtG^\scD \ar[r, "v'"] \ar[d, "q'"']
	& N(\GCov^\scD)^\opp \ar[d, "N\varpi^\scD"]
	\\
	& \textint \Lan_{\varpi^\scD} \wtG^\scD \ar[r, "u"']
	& N\Cart^\opp
\end{tikzcd}
\end{equation}
The claim then follows since localisations are compatible with weak categorical equivalences~\cite[Prop.~7.1.8]{Cisinski:HiC_and_HoA}.
The proof for \smash{$\textint \wtG^M \to \textint \Lan_\varpi \wtG^M$} and the class $\overline{W}$ is analogous.
\end{proof}

%%%%%%%%%%%%%%%%%%%%%%%%%%%%%%%%%%%%%%%%%%%%%%%%%%%%%%%%%%%%%%%%%%%%%%%%%%%%

\section{Diffeomorphisms preserving the equivalence class of a geometric structure}
\label{sec:restricting to scD[cG]}

%%%%%%%%%%%%%%%%%%%%%%%%%%%%%%%%%%%%%%%%%%%%%%%%%%%%%%%%%%%%%%%%%%%%%%%%%%%%

We now specialise to diffeomorphisms which preserve the equivalence class of a given geometric structure $\cG$ on $M$.
They are precisely those diffeomorphisms of $M$ which admit a lift to $\cG$, and therefore may appear in symmetries of $\cG$.

Let \smash{$G^\scD$} be a projectively fibrant simplicial presheaf on $\rmB\scD$, and let $\cG \in G^M(\RR^0)$ be a section of \smash{$G^\scD = e_M^*G^\scD$} over $\RR^0$.
Equivalently, since $\RR^0 \in \Cart$ is a final object, a vertex in \smash{$G^M(\RR^0)$} is a morphism of simplicial presheaves
\begin{equation}
	\cG \colon \Delta^0 \longrightarrow G^M = e_M^* G^\scD\,,
\end{equation}
i.e.~a global section of $\G^M$ over $\Cart^\opp$.
Explicitly, this section reads as
\begin{equation}
	c \mapsto \coll_c^*\cG\,,
\end{equation}
where $\coll_c \colon c \to \RR^0$ is the collapse morphism in $\Cart$.
However, we will usually suppress the collapse morphisms and simply denote the section by $\cG$, as above.

\begin{definition}
\label{def:Diff_[cG] and D_[cG]}
Given a section $\cG \in G^M(\RR^0)$, let
\begin{equation}
	\Diff_{[\cG]}(M) \subset \Diff(M)
\end{equation}
denote the subpresheaf of groups on $\Cart$ defined by
\begin{equation}
	\Diff_{[\cG]}(M)(c)
	= \big\{ \varphi \in \Diff(M)(c)\, \big| \, (1_c, \varphi)^*\cG \simeq \cG \text{ in } G^\scD(c) \big\}\,.
\end{equation}
Equivalently, the morphism
\begin{equation}
	G^\scD(1_c, \varphi) \colon G^\scD(c) \longrightarrow G^\scD(c)
\end{equation}
of simplicial sets preserves the connected component of $\cG \in (e_M^*G^\scD)(c) = G^\scD(c)$.
We let
\begin{equation}
	\rmB \scD[\cG] \subset \rmB \scD
\end{equation}
denote the Grothendieck construction of the delooping of $\Diff_{[\cG]}(M)$, and we denote the associated inclusion map by
\begin{equation}
	\jmath_{[\cG]} \colon \rmB \scD[\cG] \hookrightarrow \rmB \scD\,.
\end{equation}
\end{definition}

As it will always be clear whether we are using $\rmB\scD$ or its subcategory $\rmB\scD[\cG]$, we will still denote the canonical functors between $\rmB\scD[\cG]$ and $\Cart$ by
\begin{equation}
\begin{tikzcd}
	e_M : \Cart \ar[r, shift left=0.05cm]
	& \rmB\scD[\cG] : \pi_M\,. \ar[l, shift left=0.05cm]
\end{tikzcd}
\end{equation}
Observe that each of these functors acts as the identity at the level of objects.

Analogously, we obtain an inclusion of a wide%
\footnote{A subcategory is \textit{wide} if it contains all objects of its ambient category.}
subcategory
\begin{equation}
	\iota_{[\cG]} \colon \GCov^{\scD[\cG]} \hookrightarrow \GCov^\scD\,,
\end{equation}
where \smash{$\GCov^{\scD[\cG]}$} is defined as the strict pullback of categories
\begin{equation}
\label{eq:GCov^D_[G] defining pb square}
\begin{tikzcd}
	\GCov^{\scD[\cG]} \ar[r, "\iota_{[\cG]}"] \ar[d, "\varpi^\scD_{[\cG]}"']
	& \GCov^\scD \ar[d, "\varpi^\scD"]
	\\
	\rmB \scD[\cG] \ar[r, "\jmath_{[\cG]}"']
	& \rmB \scD
\end{tikzcd}
\end{equation}

\begin{definition}
Given a simplicial presheaf $G^\scD \in \Fun(\rmB\scD^\opp, \sSet)$, we denote its restriction to $\rmB\scD[\cG]$ by $G^{\scD[\cG]} \in \Fun(\rmB\scD[\cG]^\opp, \sSet)$.
We denote its associated left fibration by
\begin{equation}
	\textint G^{\scD[\cG]} \coloneqq r_{\rmB\scD[\cG]^\opp}^* G^{\scD[\cG]}
	\longrightarrow N\rmB\scD[\cG]^\opp\,.
\end{equation}
\end{definition}

Note that by Lemma~\ref{st:r_C^* and pullbacks} there is a canonical isomorphism
\begin{equation}
	\textint G^{\scD[\cG]} \cong \jmath_{[\cG]}^* \textint G^\scD\,.
\end{equation}
Observe that $\cG$ defines a section of $G^M$, but not of $G^{\scD[\cG]}$.
Nevertheless, since $\Cart$, $\rmB\scD$ and $\rmB\scD[\cG]$ each have the same objects, for each $c \in \rmB\scD[\cG]$ there is a canonical identification $G^M(c) = G^{\scD[\cG]}(c)$.
Even though this is \textit{not} a natural transformation, it specifies a connected component of $G^\scD(c)$.
Restricting to $\rmB\scD[\cG]$, these connected components define a simplicial subpresheaf of $G^{\scD[\cG]}$:

\begin{definition}
\label{def:G_[cG]}
Let \smash{$G^\scD$} be a projectively fibrant simplicial presheaf on $\rmB\scD$, and let $\cG \in G^M(\RR^0)$.
We make the following definitions:
\begin{enumerate}
\item We let \smash{$G^{\scD[\cG]}_{[\cG]} \subset G^{\scD[\cG]}$} denote the simplicial subpresheaf on $\rmB \scD[\cG]$ such that, for each $c \in \rmB\scD[\cG]$,
\begin{equation}
	G^{\scD[\cG]}_{[\cG]}(c) \subset G^{\scD[\cG]}(c) = G^M(c)
\end{equation}
is the full connected component of the vertex $\cG(c) \in G^M(c) = G^{\scD[\cG]}(c)$.
In particular, the object \smash{$G^{\scD[\cG]}_{[\cG]} \in \Fun(\rmB\scD[\cG]^\opp, \sSet)$} is again projectively fibrant if $G^\scD$ was so in $\Fun(\rmB\scD^\opp, \sSet)$.

\item We also define the associated left fibration over $N\rmB\scD[\cG]^\opp$,
\begin{equation}
	\big( \textint G^{\scD[\cG]}_{[\cG]} \big)
	\coloneqq r_{\rmB\scD[\cG]^\opp}^* G^{\scD[\cG]}_{[\cG]}
	\subset \textint G^{\scD[\cG]}\,.
\end{equation}

\item Recall that $G^M \coloneqq e_M^* G^\scD$.
Observing that $G^\scD(\RR^0) = G^M(\RR^0)$, we also define a simplicial subpresheaf $G^M_{[\cG]} \subset G^M$ on $\Cart$ by requiring that
\begin{equation}
	G^M_{[\cG]}(c) \subset G^M(c)
\end{equation}
is the connected component of $\cG \in G^M(c)$, for each $c \in \Cart$.

\item We again also define the associated left fibration over $N\Cart^\opp$,
\begin{equation}
	\big( \textint G^M_{[\cG]} \big)
	\coloneqq r_{\Cart^\opp}^* G^M_{[\cG]}
	\subset \textint G^M\,.
\end{equation}
\end{enumerate}
\end{definition}

With these definitions, we have the following immediate observation:

\begin{lemma}
\label{st:Ne_M^* tint G^D_G = tint G^M_G}
In the setting of Definition~\ref{def:G_[cG]} there is a canonical isomorphism
\begin{equation}
	(N e_M)^* \textint G^{\scD[\cG]}_{[\cG]}
	\cong \textint G^M_{[\cG]}
\end{equation}
of left fibrations over $N\Cart^\opp$.
\end{lemma}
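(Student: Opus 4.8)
The plan is to reduce the claim to a strict equality of simplicial presheaves on $\Cart$ and then transport it through rectification using the already-established compatibility of $r^*_{(-)}$ with pullback (Lemma~\ref{st:r_C^* and pullbacks}). The conceptual key is that the functor $e_M \colon \Cart \to \rmB\scD[\cG]$ is the identity on objects and sends a morphism $f$ to $(f,1_M)$, so restricting along $e_M$ trivialises precisely the diffeomorphism part of the morphisms of $\rmB\scD[\cG]$. This is exactly what makes the ``non-naturality'' flagged in the discussion preceding Definition~\ref{def:G_[cG]} harmless.

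First I would establish the presheaf-level identity $e_M^* G^{\scD[\cG]}_{[\cG]} = G^M_{[\cG]}$ in $\Fun(\Cart^\opp, \sSet)$, viewing both as simplicial subpresheaves of $G^M = e_M^* G^{\scD[\cG]} = e_M^* \jmath_{[\cG]}^* G^\scD$. On objects this is immediate: since $e_M$ and $\jmath_{[\cG]}$ are identities on objects, for each $c \in \Cart$ the underlying simplicial set is $G^\scD(c)$ in both cases, and by Definition~\ref{def:G_[cG]} both $G^{\scD[\cG]}_{[\cG]}(c)$ and $G^M_{[\cG]}(c)$ are the \emph{same} subset, namely the connected component of $\cG(c)$. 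On a morphism $f \colon c \to d$ in $\Cart$ one has $e_M(f) = (f,1_M)$, so the structure map of $e_M^* G^{\scD[\cG]}_{[\cG]}$ is the restriction to connected components of $G^\scD(f,1_M) = G^M(f)$, which is by definition the structure map of $G^M_{[\cG]}$. The diffeomorphism-dependent morphisms of $\rmB\scD[\cG]$, which are responsible for the failure of any naive identification to be natural, are never seen after restricting along $e_M$; hence the two subpresheaves coincide on the nose.

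Then I would push this identity through rectification. Applying the canonical isomorphism $(N e_M)^* \textint F \cong \textint e_M^* F$ from Lemma~\ref{st:r_C^* and pullbacks}, now with $F = G^{\scD[\cG]}_{[\cG]}$ a presheaf on $\rmB\scD[\cG]$ and $e_M \colon \Cart \to \rmB\scD[\cG]$, gives
\begin{equation}
	(N e_M)^* \textint G^{\scD[\cG]}_{[\cG]}
	\cong \textint e_M^* G^{\scD[\cG]}_{[\cG]}
	= \textint G^M_{[\cG]}
\end{equation}
in $\sSet_{/N\Cart^\opp}$, where the final equality is the presheaf-level identity of the previous paragraph. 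This is exactly the asserted isomorphism of left fibrations over $N\Cart^\opp$. I expect no serious obstacle: the entire content is careful bookkeeping of which structure maps survive the restriction along $e_M$, and the only point demanding genuine attention is verifying that $G^{\scD[\cG]}_{[\cG]}$ restricts to $G^M_{[\cG]}$ and not to something larger—i.e.\ that the identification of underlying simplicial sets respects the chosen components—which is precisely the first step and rests on $e_M$ killing the diffeomorphism action.
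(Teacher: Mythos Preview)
Your proof is correct and follows essentially the same approach as the paper: both reduce to the presheaf-level identity $e_M^* G^{\scD[\cG]}_{[\cG]} = G^M_{[\cG]}$ and then invoke the compatibility of rectification with pullback (Lemma~\ref{st:r_C^* and pullbacks}). The paper's proof is more terse---it writes the chain $(N e_M)^* r_{\rmB\scD[\cG]^\opp}^* G^{\scD[\cG]}_{[\cG]} \cong r_{\Cart^\opp}^* e_M^* G^{\scD[\cG]}_{[\cG]} \cong r_{\Cart^\opp}^* G^M_{[\cG]}$ without spelling out the object/morphism verification you give---but the content is identical.
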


\begin{proof}
We have canonical isomorphisms
\begin{align}
	(N e_M)^* \textint G^{\scD[\cG]}_{[\cG]}
	= (N e_M)^* r_{\rmB\scD[\cG]^\opp}^* G^{\scD[\cG]}_{[\cG]}
	\cong r_{\Cart^\opp}^* e_M^* G^{\scD[\cG]}_{[\cG]}
	\cong r_{\Cart^\opp}^* G^M_{[\cG]}\,,
\end{align}
where we have used Lemma~\ref{st:r_C^* and pullbacks}.
\end{proof}

\begin{remark}
\label{rmk:section associated to cG}
We could have equivalently defined \smash{$\textint G^{\scD[\cG]}_{[\cG]}$} as follows:
the section $\cG \in \G^\scD(\RR^0)$ defines a section $\tilde{\sigma}\cG$ of the left fibration $\textint G^M \to N\Cart^\opp$.
From this, we obtain a section $\sigma \cG$ of the left fibration $(N \pi_M)_! \textint G^\scD \longrightarrow N\Cart^\opp$ (note that $N \pi_M$ is a left fibration), i.e.~a commutative diagram of $\infty$-categories
\begin{equation}
\begin{tikzcd}[column sep=1.25cm, row sep=0.75cm]
	N \Cart^\opp \ar[r, "\tilde{\sigma} \cG" description] \ar[rr, bend left=25, "\sigma \cG" description] \ar[dr, equal]
	& \textint G^M \ar[r, hookrightarrow] \ar[d]
	& \textint G^{\scD[\cG]} \ar[dl]
	\\
	& N\Cart^\opp &
\end{tikzcd}
\end{equation}
Then, the simplicial subset \smash{$\textint G^M_{[\cG]} \subset \textint G^M$} is the essential image of the $\infty$-functor $\tilde{\sigma}\cG$.
Analogously, we could have defined the simplicial subset
\begin{equation}
	\big( \textint G^{\scD[\cG]}_{[\cG]} \big)
	\subset \textint G^\scD
\end{equation}
as the essential image of the $\infty$-functor $\sigma$.
Note that here it is crucial that we restrict to the subcategory $\rmB\scD[\cG] \subset \rmB\scD$.
\qen
\end{remark}

\begin{remark}
We point out that while the canonical morphism \smash{$\textint G^{\scD[\cG]}_{[\cG]} \longrightarrow N\rmB\scD[\cG]^\opp$} is a left fibration, the composite morphism \smash{$\textint G^{\scD[\cG]}_{[\cG]} \longrightarrow N\rmB\scD[\cG]^\opp \hookrightarrow N\rmB\scD[\cG]^\opp$} does, in general, no longer have this property.
\qen
\end{remark}

\begin{lemma}
\label{st:cofinality for slices with [cG]}
Let $G^\scD$ be a projectively fibrant simplicial presheaf on $\rmB\scD$, and let $\cG \in G^\scD(\RR^0)$.
The morphism
\begin{equation}
	(N\iota_{[\cG]})_{c/} \colon \big( N\GCov^{\scD[\cG]} \big)_{c/}
	\hookrightarrow \big( N\GCov^\scD \big)_{c/}
\end{equation}
is a cofinal morphism of simplicial sets, for each $c \in \Cart$.
\end{lemma}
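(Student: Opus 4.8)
The plan is to follow the proof of Lemma~\ref{st:cofinality on GCov slices}(2) essentially verbatim. Throughout, for $c \in \Cart$ we read $(N\GCov^\scD)_{c/}$ as in Lemma~\ref{st:fibres and slices of GCov(M) --> BD(M)}, i.e.\ as the comma category whose objects are a morphism $(f,\varphi) \colon c \to d$ in $\rmB\scD$ together with a good open covering $(\hat{\cV} \to \cV)$ of $(d {\times} M \to d)$, and likewise $(N\GCov^{\scD[\cG]})_{c/}$ is formed via $\varpi^\scD_{[\cG]}$, using that $\Cart$, $\rmB\scD$ and $\rmB\scD[\cG]$ all share the same objects. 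The map $(N\iota_{[\cG]})_{c/}$ is then the inclusion induced by the wide subcategory inclusion $\iota_{[\cG]}$ of the defining pullback~\eqref{eq:GCov^D_[G] defining pb square}. The idea is to factor the cofinal inclusion of Lemma~\ref{st:fibres and slices of GCov(M) --> BD(M)} through $(N\iota_{[\cG]})_{c/}$ and then apply the cancellation property for cofinal maps~\cite[Cor.~4.1.9]{Cisinski:HiC_and_HoA}.

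First I would record the coincidence of fibres $\GCov^{\scD[\cG]}_{|c} = \GCov^\scD_{|c}$: any morphism lying over $\id_c$ has diffeomorphism part $1_M \in \Diff_{[\cG]}(M)$ and hence already belongs to the wide subcategory $\GCov^{\scD[\cG]}$, while the two categories have the same objects. Next I would establish the analogue of Lemma~\ref{st:fibres and slices of GCov(M) --> BD(M)} for $\varpi^\scD_{[\cG]}$, namely that the inclusion $\jmath^{\scD[\cG]}_c \colon \GCov^{\scD[\cG]}_{|c} \hookrightarrow \GCov^{\scD[\cG]}_{c/}$ of the fibre into the undercategory is homotopy cofinal. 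The proof is identical: for an object $X = ((f,\varphi)\colon c \to d,\, \hat{\cV} \to \cV)$ of $\GCov^{\scD[\cG]}_{c/}$ (now with $\varphi \in \Diff_{[\cG]}(M)$), the comma category $\jmath^{\scD[\cG]}_c / X$ is again the category of good open coverings of $(c {\times} M \to c)$ refining $((f \wr \varphi)^{-1}(\hat{\cV}) \to f^{-1}(\cV))$; the refining morphisms all lie over $\id_c$, hence in $\GCov^{\scD[\cG]}$, and this category is cofiltered (Remark~\ref{rmk:varpi and its fibres}), so its nerve is weakly contractible.

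The fibre identification then yields a strictly commuting triangle $\jmath^\scD_c = (\iota_{[\cG]})_{c/} \circ \jmath^{\scD[\cG]}_c$ of functors out of the common fibre $\GCov^\scD_{|c} = \GCov^{\scD[\cG]}_{|c}$. Since both $\jmath^{\scD[\cG]}_c$ (previous step) and $\jmath^\scD_c$ (Lemma~\ref{st:fibres and slices of GCov(M) --> BD(M)}) are cofinal, applying the cancellation property~\cite[Cor.~4.1.9]{Cisinski:HiC_and_HoA} --- if $p$ and $q \circ p$ are cofinal then so is $q$ --- with $p = \jmath^{\scD[\cG]}_c$ and $q = (\iota_{[\cG]})_{c/}$ shows that $(N\iota_{[\cG]})_{c/}$ is homotopy cofinal. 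I expect the only delicate points to be bookkeeping: confirming that the triangle commutes on the nose (it does, because the fibre coincidence is an equality of subcategories rather than a mere equivalence), and applying the cancellation in the correct direction, with the outer map $\jmath^\scD_c$ and the first factor $\jmath^{\scD[\cG]}_c$ cofinal so that cofinality of the second factor is the conclusion --- exactly the configuration already used in Lemma~\ref{st:cofinality on GCov slices}(2).
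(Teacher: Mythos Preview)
Your proof is correct and follows essentially the same architecture as the paper's: establish cofinality of the fibre-into-slice inclusion for $\GCov^{\scD[\cG]}$, observe that it factors the known cofinal inclusion from Lemma~\ref{st:fibres and slices of GCov(M) --> BD(M)} through $(N\iota_{[\cG]})_{c/}$, and conclude by the cancellation property of~\cite[Cor.~4.1.9]{Cisinski:HiC_and_HoA}.

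The one point of difference is how you justify the cofinality of $\jmath^{\scD[\cG]}_c$. You re-run the explicit argument of Lemma~\ref{st:fibres and slices of GCov(M) --> BD(M)} (the comma category is cofiltered, hence has contractible nerve), which is perfectly valid and self-contained. The paper instead observes that the defining pullback square~\eqref{eq:GCov^D_[G] defining pb square} and stability of smooth morphisms under pullback make $N\varpi^\scD_{[\cG]}$ smooth, and then invokes the characterisation of smoothness (see~\cite[Def.~4.4.1, Thm.~4.4.36]{Cisinski:HiC_and_HoA}) to extract cofinality of the fibre-into-slice map. Your route is more elementary and avoids the smoothness machinery; the paper's route is more structural and reuses Lemma~\ref{st:varphi^scD is smooth} rather than redoing the cofiltered computation. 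Either way, the final cancellation step is identical.
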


\begin{proof}
For each $c \in \Cart$, there is an identity of fibres
\begin{equation}
	\big( \GCov^{\scD[\cG]} \big)_{|c}
	= \big( \GCov^\scD \big)_{|c}\,.
\end{equation}
Moreover, for each $c \in \Cart$, there is a commutative diagram of simplicial sets
\begin{equation}
\begin{tikzcd}
	\big( N\GCov^\scD \big)_{|c} \ar[r] \ar[d]
	& \big( N\GCov^{\scD[\cG]} \big)_{c/} \ar[r, "{(N\iota_{[\cG]})_{c/}}"] \ar[d, "{(N\varpi^\scD_{[\cG]})_{c/}}"']
	& \big( N\GCov^\scD \big)_{c/} \ar[d, "{(N\varpi^\scD)_{c/}}"]
	\\
	\Delta^0 \ar[r, "\{c\}"']
	& \big( N\rmB \scD[\cG] \big)_{c/} \ar[r, "{(N\jmath_{[\cG]})_{c/}}"']
	& \big( N\rmB \scD \big)_{c/}
\end{tikzcd}
\end{equation}
Each square in this diagram is a pullback diagram.
Since smooth morphisms of simplicial sets are stable under pullback, each vertical morphism in this diagram is a smooth morphism in $\sSet$ by Lemma~\ref{st:varphi^scD is smooth}.
Then, it follows (for instance from~\cite[Def.4.4.1]{Cisinski:HiC_and_HoA}) that the top left horizontal morphism is cofinal.
Since we already know (see Lemma~\ref{st:cofinality on GCov slices}) that the composition of the top horizontal arrows is cofinal, we infer that the morphism $(N\iota_{[\cG]})_{c/}$ is cofinal as well~\cite[Cor.~4.1.9]{Cisinski:HiC_and_HoA}.
\end{proof}

\begin{lemma}
\label{st:Ne_M^*G cong G^(D_[G])}
Let \smash{$G^\scD$} be a projectively fibrant simplicial presheaf on \smash{$\rmB\scD$}, and let $\cG$ be an element of \smash{$G^\scD(\RR^0)$}.
The following statements hold true:
\begin{enumerate}
\item The morphism $N\varpi^\scD_{[\cG]} \colon N\GCov^{\scD[\cG]} \longrightarrow N\rmB\scD[\cG]$ is smooth.

\item For each simplicial presheaf \smash{$\widetilde{F}^\scD$} on \smash{$\GCov^\scD$}, there is a canonical isomorphism of simplicial presheaves on $\rmB \scD[\cG]$
\begin{equation}
	\jmath_{[\cG]}^* \Lan_{\varpi^\scD} \widetilde{F}^\scD
	\cong \Lan_{\varpi^\scD_{[\cG]}} \imath_{[\cG]}^* \widetilde{F}^\scD\,.
\end{equation}

\item Let $\wtF^\scD$ be a projectively fibrant simplicial presheaf on $\GCov^\scD$.
Set \smash{$F^\scD \coloneqq \Lan_{\varpi^\scD} \wtF^\scD$} and \smash{$F^M \coloneqq \Lan_\varpi \imath^* \wtF^\scD$}.
Suppose that $\cG \in F^\scD(\RR^0)$.
There are canonical isomorphisms of simplicial presheaves on $\Cart$
\begin{equation}
	N e_M^* \big( F^{\scD[\cG]}_{[\cG]} \big)
	\cong N e_M^* \Big( \big( \Lan_{\varpi^\scD_{[\cG]}} \imath_{[\cG]}^*\wtF^\scD \big)_{[\cG]} \Big)
	\cong \big( \Lan_\varpi \imath^* \wtF^\scD \big)_{[\cG]}\,.
\end{equation}
\end{enumerate}
\end{lemma}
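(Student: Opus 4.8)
The plan is to reduce each of the three claims to the corresponding statement already available for the unrestricted projection $\varpi^\scD$, exploiting the defining pullback square~\eqref{eq:GCov^D_[G] defining pb square} and the fact that $e_M$ and $\jmath_{[\cG]}$ act as the identity on objects. For Claim~(1), I would note that~\eqref{eq:GCov^D_[G] defining pb square} is a strict pullback of categories, so applying the nerve (which preserves pullbacks) exhibits $N\varpi^\scD_{[\cG]}$ as the base change of $N\varpi^\scD$ along $N\jmath_{[\cG]}$. Since $N\varpi^\scD$ is smooth by Lemma~\ref{st:varphi^scD is smooth} and smooth morphisms of simplicial sets are stable under pullback (as already invoked in the proof of Lemma~\ref{st:cofinality for slices with [cG]}, see~\cite{Cisinski:HiC_and_HoA}), the morphism $N\varpi^\scD_{[\cG]}$ is smooth. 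This is the immediate part.

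For Claim~(2), the strategy is to repeat the proof of Lemma~\ref{st:base change for 1Cat Lan_varpi} with the cofinality input upgraded from Lemma~\ref{st:cofinality on GCov slices} to Lemma~\ref{st:cofinality for slices with [cG]}. The square~\eqref{eq:GCov^D_[G] defining pb square} induces a canonical base change natural transformation $\Lan_{\varpi^\scD_{[\cG]}}\imath_{[\cG]}^* \Rightarrow \jmath_{[\cG]}^*\Lan_{\varpi^\scD}$, and it remains to check it is an objectwise equivalence. Fixing $c \in \rmB\scD[\cG]$ (an object of $\Cart$), the pointwise left Kan extension formula from the proof of Proposition~\ref{st:Lan_varpi} expresses both values at $c$ as filtered colimits of $\wtF^\scD$ and of $\imath_{[\cG]}^*\wtF^\scD$ over the comma categories that $(N\imath_{[\cG]})_{c/}$ compares. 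As that map is cofinal by Lemma~\ref{st:cofinality for slices with [cG]} and $\imath_{[\cG]}^*\wtF^\scD$ is simply the restriction of $\wtF^\scD$, the comparison is an isomorphism; equivalently, both colimits reduce to the colimit over the common fibre $\GCov^{\scD[\cG]}_{|c} = \GCov^\scD_{|c}$. As in Lemma~\ref{st:base change for 1Cat Lan_varpi}, working with the canonical base change transformation rather than the fibrewise identifications (which are not natural in $c$, cf.\ Proposition~\ref{st:Lan_varpi}) ensures that these objectwise isomorphisms assemble into an isomorphism of presheaves on $\rmB\scD[\cG]$.

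For Claim~(3), I would derive both isomorphisms from Claim~(2) together with the observation that $(-)_{[\cG]}$ commutes with restriction. Since $F^{\scD[\cG]}$ is by definition $\jmath_{[\cG]}^* F^\scD = \jmath_{[\cG]}^*\Lan_{\varpi^\scD}\wtF^\scD$, Claim~(2) provides a canonical isomorphism $F^{\scD[\cG]} \cong \Lan_{\varpi^\scD_{[\cG]}}\imath_{[\cG]}^*\wtF^\scD$ sending the distinguished section $\cG$ to $\cG$; passing to $[\cG]$-components and applying $e_M^*$ yields the first claimed isomorphism. For the second, I would use $\jmath_{[\cG]}\circ e_M = e_M\colon \Cart \to \rmB\scD$ to compute $e_M^* F^{\scD[\cG]} = e_M^*\Lan_{\varpi^\scD}\wtF^\scD \cong \Lan_\varpi\imath^*\wtF^\scD$ by Lemma~\ref{st:base change for 1Cat Lan_varpi} (with $\imath = \hat{e}_M$). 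Because $e_M$ is the identity on objects, the component-subpresheaf of Definition~\ref{def:G_[cG]} is selected objectwise and so commutes with $e_M^*$; combining this with the previous identification gives $e_M^*(F^{\scD[\cG]})_{[\cG]} \cong (\Lan_\varpi\imath^*\wtF^\scD)_{[\cG]}$, which together with the first isomorphism closes the stated chain.

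The main obstacle is Claim~(2): one has to set up the base change transformation correctly and verify that the purely pointwise cofinality of Lemma~\ref{st:cofinality for slices with [cG]} upgrades to a genuine natural isomorphism of presheaves — precisely the subtlety already present in Lemma~\ref{st:base change for 1Cat Lan_varpi}, where the non-naturality in $c$ of the fibrewise identification (flagged in Proposition~\ref{st:Lan_varpi}) is circumvented by using the adjunction unit. Once Claim~(2) holds, Claim~(3) is bookkeeping; the only point needing care is that every identification preserves the connected component of $\cG$, which is automatic since all functors involved are the identity on objects and the base change isomorphisms are canonical.
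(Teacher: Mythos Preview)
Your proposal is correct and follows essentially the same route as the paper: Claim~(1) via stability of smooth maps under the pullback~\eqref{eq:GCov^D_[G] defining pb square}, Claim~(2) via the cofinality of Lemma~\ref{st:cofinality for slices with [cG]} in direct analogy with Lemma~\ref{st:base change for 1Cat Lan_varpi}, and Claim~(3) by commuting $e_M^*$ with $(-)_{[\cG]}$ and applying base change. The only cosmetic difference is that for the second isomorphism in Claim~(3) you invoke Lemma~\ref{st:base change for 1Cat Lan_varpi} directly after composing through $\jmath_{[\cG]}\circ e_M = e_M$, whereas the paper reproves its analogue for the restricted square $\GCov \to \GCov^{\scD[\cG]}$ over $\Cart \to \rmB\scD[\cG]$; since $\imath_{[\cG]}\circ\hat{e}_M = \hat{e}_M$, these are the same computation.
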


\begin{proof}
Claim~(1) holds true because smooth maps of simplicial sets are stable under pullbacks and the definition of $\GCov^{\scD[\cG]}$ as a pullback (see Equation~\eqref{eq:GCov^D_[G] defining pb square}).
Claim~(2) follows from Lemma~\ref{st:cofinality for slices with [cG]} (essentially, this is a presentation of the smooth base change for $\infty$-presheaves associated to the square~\eqref{eq:GCov^D_[G] defining pb square}).

To see Claim~(3), first recall that $e_M \colon \Cart \to \rmB\scD[\cG]$ is the identity on objects.
Consequently, taking the connected components of $\cG$ over each $c \in \Cart$ commutes with restricting along $e_M$; that is, there is a canonical isomorphism
\begin{equation}
	N e_M^* \Big( \big( \Lan_{\varpi^\scD_{[\cG]}} \imath_{[\cG]}^*\wtF^\scD \big)_{[\cG]} \Big)
	\cong \Big( N e_M^* \Big( \big( \Lan_{\varpi^\scD_{[\cG]}} \imath_{[\cG]}^*\wtF^\scD \big) \Big)_{[\cG]}\,.
\end{equation}
Next, consider the commutative diagram
\begin{equation}
\begin{tikzcd}[column sep=1.25cm, row sep=0.75cm]
	\GCov \ar[r, hookrightarrow, "\hat{e}_M"] \ar[d, "\varpi"']
	& \GCov^{\scD[\cG]} \ar[d, "\varpi^\scD_{[\cG]}"]
	\\
	\Cart \ar[r, "e_M"']
	& \rmB\scD[\cG]
\end{tikzcd}
\end{equation}
This is a strict pullback square of categories.
The associated base change morphism
\begin{equation}
	\Lan_\varpi \circ \hat{e}_M^*
	\longrightarrow	e_M^* \circ \Lan_{\varpi^\scD_{[\cG]}}
\end{equation}
is an isomorphism; this follows by the same argument as Lemma~\ref{st:base change for 1Cat Lan_varpi}, using that Lemmas~\ref{st:cofinality on GCov slices} and~\ref{st:fibres and slices of GCov(M) --> BD(M)} still apply with $\GCov^{\scD[\cG]}$ in place of $\GCov^\scD$.
Thus, we obtain a canonical isomorphism
\begin{equation}
	N e_M^* \big( \Lan_{\varpi^\scD_{[\cG]}} (\imath_{[\cG]}^*\wtF^\scD) \big)
	\cong \Lan_\varpi \imath'{}^* (\imath_{[\cG]}^* \wtF^\scD)\,.
\end{equation}
That completes the proof.
\end{proof}

\begin{remark}
Let $G^\scD$ be a projectively fibrant simplicial presheaf on $\rmB\scD$.
We emphasise that all definitions in this section only depend on the \textit{connected component} of $\cG$ in $\G^\scD(\RR^0)$, and not on the actual choice of $\cG$ itself.
That is, if $\cG \simeq \cG'$ in $G^M(\RR^0)$, then we have
\begin{alignat}{5}
	\Diff_{[\cG]}(M)
	&= \Diff_{[\cG']}(M)\,,
	& \qquad \quad
	\rmB\scD[\cG]
	&= \rmB\scD[\cG']\,,
	& \qquad \quad
	\GCov^{\scD[\cG]}
	&= \GCov^{\scD[\cG']}\,,
	\\
	G^{\scD[\cG]}
	&= G^{\scD[\cG']}\,,
	& \qquad \quad
	G^{\scD[\cG]}_{[\cG]}
	&= G^{\scD[\cG']}_{[\cG']}\,,
	\\
	\textint G^{\scD[\cG]}
	&= \textint G^{\scD[\cG']}\,,
	& \qquad \quad
	\textint G^{\scD[\cG]}_{[\cG]}
	&= \textint G^{\scD[\cG']}_{[\cG']}\,,
\end{alignat}
and similarly for the pullbacks of the above simplicial presheaves and left fibrations to the categories $\GCov$ and $\Cart$.
\qen
\end{remark}

%%%%%%%%%%%%%%%%%%%%%%%%%%%%%%%%%%%%%%%%%%%%%%%%%%%%%%%%%%%%%%%%%%%%%%%%%%%%

\section{Higher symmetry groups of higher geometric structures}
\label{sec:higher symmetry groups of hgeo strs}

%%%%%%%%%%%%%%%%%%%%%%%%%%%%%%%%%%%%%%%%%%%%%%%%%%%%%%%%%%%%%%%%%%%%%%%%%%%%

In this section we use the left fibrations associated to the simplicial presheaves considered so far to construct the smooth higher automorphism and symmetry groups of a fixed higher geometric structure $\cG$ on $M$.
More precisely, for each smooth action $\Phi \colon \bbGamma \to \Diff_{[\cG]}(M)$ of a smooth higher group $\bbGamma$ on $M$ we construct a smooth $\infty$-group of symmetries which lift the action of elements of $\bbGamma$ to $\cG$.
We show that these higher symmetry groups are extensions of $\bbGamma$ by the smooth higher automorphism group of $\cG$ (Theorem~\ref{st:Aut-Sym-Diff[cG] extension} and Corollary~\ref{st:Aut-Sym_phi-Gamma extension}) and that the smooth higher symmetry groups completely classify smooth equivariant structures on $\cG$ (Theorem~\ref{st:scSym represents Equivar} and Corollary~\ref{st:char of equivar structures}).

%%%%%%%%%%%%%%%%%%%%%%%%%%%%%%%%%%%%%%%%%%%%%%%%%%%%%%%%%%%%%%%%%%%%%%%%%%%%

\subsection{The smooth higher automorphism and symmetry groups}

%%%%%%%%%%%%%%%%%%%%%%%%%%%%%%%%%%%%%%%%%%%%%%%%%%%%%%%%%%%%%%%%%%%%%%%%%%%%

First, given a smooth higher group action $\Phi \colon \bbGamma \to \Diff_{[\cG]}(M)$, we define the smooth higher automorphism and symmetry groups of a fixed higher geometric structure $\cG$ on $M$.
We show that there is a short exact sequence of smooth $\infty$-groups, exhibiting the smooth higher symmetry group of $\cG$ as an extension of $\bbGamma$ by the smooth higher automorphism group of $\cG$.

\begin{definition}
\label{def:BGAU(G), BSYM(G)}
Let $G^\scD$ be a projectively fibrant simplicial presheaf on $\rmB\scD$, and let $\cG \in G^\scD(\RR^0)$.
Recall the sections $\tilde{\sigma} \cG$ and $\sigma \cG$ from Remark~\ref{rmk:section associated to cG}.
We define the following simplicial sets:
\begin{enumerate}
\item We denote by
\begin{equation}
	\rmB \AUT^\rev(\cG) \subset \textint G^M
\end{equation}
the full simplicial subset (i.e.~the full $\infty$-subcategory) on the vertices in the image of the section \smash{$\tilde{\sigma} \cG \colon N\Cart^\opp \to \textint G^M$} from Remark~\ref{rmk:section associated to cG}.

\item We denote by
\begin{equation}
	\rmB \SYM^\rev(\cG) \subset \textint G^{\scD[\cG]}
\end{equation}
the full simplicial subset (i.e.~the full $\infty$-subcategory) on the vertices in the image of the composition
\begin{equation}
\label{eq:section from (cG,cA^(k))}
\begin{tikzcd}
	\sigma \cG \colon N\Cart^\opp \ar[r, "\tilde{\sigma} \cG"]
	& \textint G^M \ar[r]
	& \textint G^{\scD[\cG]}\,.
\end{tikzcd}
\end{equation}
\end{enumerate}
\end{definition}

\begin{example}
The simplicial set $\rmB\SYM^\rev(\cG)$ has a unique vertex over each object $c \in \Cart$; it consists of the constant family $\tilde{\sigma}\cG(c) \in G^{\scD[\cG]}(c) = (\textint G^{\scD[\cG]})_{|c}$ (recall that $\tilde{\sigma}\cG(c) = \coll_c^*\cG$, where $\cG \in G^M(\RR^0)$ is the fixed chosen vertex and $\coll_c \colon c \to \RR^0$ is the canonical collapse morphism).
A 1-simplex in $\rmB\SYM^\rev(\cG)$ is a pair $((f, \varphi), \psi)$, consisting of a morphism $(f,\varphi) \colon c_0 \to c_1$ in $\rmB\scD[\cG]$ and an equivalence
\begin{equation}
	\psi \colon (f, \varphi)^* \big( \tilde{\sigma} \cG(c_1) \big) \longrightarrow \tilde{\sigma}\cG(c_0)
\end{equation}
in $G^\scD(c_0) = G^M(c_0)$.
Note that, in general, $(f, \varphi)^* (\tilde{\sigma} \cG(c_1)) \neq \tilde{\sigma} \cG(c_0)$, but as long as $(f,\varphi)$ is a morphism in $\rmB\scD[\cG] \subset \rmB\scD$, there is an equivalence in $G^\scD(c_0)$ as above, by definition of $\rmB\scD[\cG]$.
In particular, $\rmB\SYM^\rev(\cG)$ is not the rectification of a simplicial subpresheaf of $G^{\scD[\cG]}$.

Finally, the vertices in $\rmB\AUT^\rev(\cG)$ are the same as those in $\rmB\SYM^\rev(\cG)$, and morphisms are those morphisms in $\rmB\SYM^\rev(\cG)$ where $\varphi = \id_M$.
\qen
\end{example}

\begin{remark}
The superscript $\rev$ indicates that $\rmB\SYM^\rev(\cG)$ is naturally related to the \textit{opposite} of the group of symmetries of $\cG$; this is consistent with the fact that the codomain of the left fibration $\rmB\SYM^\rev(\cG) \to N\rmB\scD[\cG]^\opp$ describes the opposite of the diffeomorphism group $\Diff_{[\cG]}(M)$.
\qen
\end{remark}

\begin{lemma}
\label{st:limits in slices and contractibility}
Let $\scC$ be an $\infty$-category, let $c$ be an object of $\scC$, and let $F \colon K \to \scC_{/c}$ be a diagram in $\scC_{/c}$, indexed by some simplicial set $K$.
If $K$ is weakly contractible (i.e.~weakly equivalent to $\Delta^0$ in the Kan-Quillen model structure on $\sSet$), then the projection functor $\pr \colon \scC_{/c} \to \scC$ preserves and reflects limits.
That is, $\overline{F} \colon K^\triangleleft \to \scC_{/c}$ is a limit diagram if and only if $\pr \circ \overline{F} \colon K^\triangleleft \to \scC$ is so.
\end{lemma}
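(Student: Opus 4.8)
The plan is to reduce the statement to a comparison of $\infty$-categories of cones and then to an application of the $\infty$-categorical Quillen Theorem A. Write $\pr = q \colon \scC_{/c} \to \scC$ and set $p \coloneqq q \circ F \colon K \to \scC$. Recall that a cone $\overline{F} \colon K^\triangleleft \to \scC_{/c}$ over $F$ is a limit diagram precisely when the corresponding object of the $\infty$-category of cones $(\scC_{/c})_{/F}$ is terminal, and likewise $\pr \circ \overline{F}$ is a limit diagram precisely when the associated object of $\scC_{/p}$ is terminal. The functor $q$ induces a comparison $\Theta \colon (\scC_{/c})_{/F} \to \scC_{/p}$ sending $\overline{F} \mapsto \pr \circ \overline{F}$. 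Since $q$ preserves and reflects a given limit of $F$ exactly when $\Theta$ detects terminal objects in both directions, it suffices to prove that $\Theta$ is an equivalence, in fact a trivial fibration.

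First I would identify $\Theta$ with a restriction functor along a join inclusion. By the universal property of the slice $\scC_{/c}$, the diagram $F$ corresponds to a map $\hat{F} \colon K^\triangleright \to \scC$ whose restriction to $K$ is $p$ and which sends the adjoined cone point to $c$. Iterating the universal properties of slices and using the associativity of the join, so that $(A \star K) \star \Delta^0 \cong A \star (K \star \Delta^0) = A \star K^\triangleright$, one obtains a canonical isomorphism $(\scC_{/c})_{/F} \cong \scC_{/\hat{F}}$ over $\scC$, under which $\Theta$ becomes the restriction functor $\scC_{/\hat{F}} \to \scC_{/\hat{F}|_K} = \scC_{/p}$ induced by the inclusion $i \colon K \hookrightarrow K^\triangleright$. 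This bookkeeping---matching left and right cones, over-slices, and the condition that the adjoined vertex maps to $c$---is the part that requires the most care, and I expect it to be the main technical obstacle; the cleanest route is to verify the claimed isomorphism at the level of the represented functors $A \mapsto \Hom(A, -)$ rather than simplex by simplex.

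Finally, I would show that $i \colon K \hookrightarrow K^\triangleright$ is an initial functor, for which the weak contractibility of $K$ is exactly what is needed. By the $\infty$-categorical Quillen Theorem A, in the form characterising initial functors through weak contractibility of the comma $\infty$-categories $K \times_{K^\triangleright} (K^\triangleright)_{/l}$ \cite{Cisinski:HiC_and_HoA, Lurie:HTT}, one checks two cases: for $l \in K$ the comma $\infty$-category is $K_{/l}$, which has the terminal object $\mathrm{id}_l$ and is therefore weakly contractible, while for $l$ equal to the terminal cone point it is $K$ itself, which is weakly contractible precisely by hypothesis. Hence $i$ is initial, and the dual of the standard characterisation of final functors then yields that restriction along $i$ gives a trivial fibration $\scC_{/\hat{F}} \to \scC_{/p}$ \cite{Cisinski:HiC_and_HoA, Lurie:HTT}; in particular $\Theta$ is a trivial fibration, hence an equivalence. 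Since equivalences preserve and reflect terminal objects, $\overline{F}$ is terminal in $(\scC_{/c})_{/F}$ if and only if $\pr \circ \overline{F}$ is terminal in $\scC_{/p}$, which is the asserted preservation and reflection of limits. The computation at the cone point also shows the hypothesis is sharp: for $K = \emptyset$ the cone point contributes $K = \emptyset$, which is not weakly contractible, and indeed $\pr$ fails to preserve the empty limit, since $\mathrm{id}_c$ is terminal in $\scC_{/c}$ but $c$ need not be terminal in $\scC$.
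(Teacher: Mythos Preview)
Your proof is correct and follows essentially the same approach as the paper's: both identify limits of $F$ in $\scC_{/c}$ with limits of the extended diagram $\hat{F} \colon K^\triangleright \to \scC$ via the join, and both reduce to the fact that $K \hookrightarrow K^\triangleright$ is initial (the paper writes ``cofinal'') precisely when $K$ is weakly contractible. Your version unpacks the argument through cone $\infty$-categories and the trivial fibration $\scC_{/\hat{F}} \to \scC_{/p}$, whereas the paper compresses the same content into two sentences.
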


\begin{proof}
The limit of $F$ in $\scC_{/c}$ agrees with the limit of
\begin{equation}
	F * c \colon K^\triangleright = K * \Delta^0 \longrightarrow \scC\,,
\end{equation}
together with its canonical morphism to $c$, where $(-)*(-)$ denotes the join construction of simplicial sets.
The canonical inclusion $K \hookrightarrow K * \Delta^0$ is a cofinal morphism of simplicial sets if and only if $K$ is weakly contractible.
\end{proof}

\begin{remark}
\label{rmk:limits in 1-Cat slices and connectedness}
In the case where $\scC$, $\scD$ and $K$ are 1-categories and we are interested in 1-categorical limits it even suffices to have the indexing category $K$ connected; this is enough to ensure that the inclusion $K \hookrightarrow K * [0]$ is a cofinal functor of 1-categories (though, in general, it will not be homotopy cofinal).
\qen
\end{remark}

\begin{proposition}
\label{st:BSYM and BGAU properties}
Consider the setting of Definition~\ref{def:BGAU(G), BSYM(G)}.
For the commutative diagram
\begin{equation}
\begin{tikzcd}
	\rmB \AUT^\rev(\cG) \ar[r, hookrightarrow] \ar[d]
	& \rmB \SYM^\rev(\cG) \ar[d]
	\\
	N\Cart^\opp \ar[r, hookrightarrow, "Ne_M"']
	& N \rmB \scD[\cG]^\opp
\end{tikzcd}
\end{equation}
of simplicial sets, the following statements hold true:
\begin{enumerate}
\item Both vertical morphisms are left fibrations.

\item The square is a pullback diagram, and hence (by Claim~(1)) even a homotopy pullback square in the Joyal model structure.

\item Augmenting by the map \smash{$N \rmB \scD[\cG] \longrightarrow N\Cart^\opp$} produces a cartesian square in $\sSet_{/N \Cart^\opp}$, which is even homotopy cartesian in the covariant model structure.
\end{enumerate}
\end{proposition}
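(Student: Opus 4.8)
The plan is to deduce all three claims from two inputs: the fact, via Theorem~\ref{st:r_C^* as Quillen equivalence}, that the ambient maps $\textint G^M \to N\Cart^\opp$ and $\textint G^{\scD[\cG]} \to N\rmB\scD[\cG]^\opp$ are left fibrations (since $G^\scD$ is projectively fibrant), together with the explicit description of the simplices of $\rmB\AUT^\rev(\cG)$ and $\rmB\SYM^\rev(\cG)$ recorded in the Example following Definition~\ref{def:BGAU(G), BSYM(G)}.

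For Claim~(1) I would use the following elementary principle: if $p\colon X \to S$ is a left fibration and $X' \subseteq X$ is a full simplicial subset such that, for every vertex $x \in X'$ and every edge $e\colon p(x) \to s$ of $S$, there exists a $p$-lift $\tilde e\colon x \to y$ of $e$ with $y \in X'$, then $X' \to S$ is again a left fibration. Indeed, for horns $\Lambda^n_i \to X'$ with $n \geq 2$ all vertices already lie in $X'$, so any $p$-filler in $X$ has all vertices in $X'$ and hence lies in $X'$ by fullness; the remaining case $n=1$, $i=0$ is exactly the hypothesis. For $\rmB\AUT^\rev(\cG) \subset \textint G^M$ the transport-closure is strict: the unique vertex over $c$ is $\tilde\sigma\cG(c) = \coll_c^*\cG$, and for $f\colon c' \to c$ in $\Cart$ one has $f^*\coll_c^*\cG = \coll_{c'}^*\cG = \tilde\sigma\cG(c')$ since collapse maps compose. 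For $\rmB\SYM^\rev(\cG) \subset \textint G^{\scD[\cG]}$ the transport of $\tilde\sigma\cG(c)$ along a morphism $(f,\varphi)$ of $\rmB\scD[\cG]$ is of the form $(\coll,\varphi)^*\cG$, which is only \emph{equivalent} to the unique vertex $\tilde\sigma\cG(c') = (\coll,\id_M)^*\cG$; here the defining property of $\Diff_{[\cG]}(M)$ (Definition~\ref{def:Diff_[cG] and D_[cG]}) supplies precisely the equivalence $\psi$ of the Example, which is the $1$-simplex of $\rmB\SYM^\rev(\cG)$ witnessing the required lift.

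For Claim~(2) I would first identify the strict pullback. By Lemma~\ref{st:r_C^* and pullbacks}, together with $e_M^* G^{\scD[\cG]} = G^M$ (as $\jmath_{[\cG]} \circ e_M = e_M$), there is a canonical isomorphism $(Ne_M)^*\textint G^{\scD[\cG]} \cong \textint G^M$, so the outer square on $\textint G^M$ and $\textint G^{\scD[\cG]}$ is cartesian. It then remains to see that the full subcategory $\rmB\SYM^\rev(\cG)$ pulls back along $Ne_M$ to $\rmB\AUT^\rev(\cG)$: over each $c \in \Cart$ both sides carry the single vertex $\tilde\sigma\cG(c)$, and by the Example the edges of $\rmB\SYM^\rev(\cG)$ lying over the image of $Ne_M$ — i.e.\ those with $\varphi = \id_M$ — are exactly the edges of $\rmB\AUT^\rev(\cG)$; as both subsets are full on the same vertices, they coincide. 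This makes the square a strict pullback.

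Finally, for Claim~(3) I would compose the right-hand and bottom maps with the left fibration $N\pi_M\colon N\rmB\scD[\cG]^\opp \to N\Cart^\opp$, so that all four corners become left fibrations over $N\Cart^\opp$ (the composite $\rmB\SYM^\rev(\cG) \to N\rmB\scD[\cG]^\opp \to N\Cart^\opp$ is a composite of left fibrations, hence one), i.e.\ fibrant in the covariant model structure on $\sSet_{/N\Cart^\opp}$, while the right vertical map, being itself a left fibration, is a covariant fibration between fibrant objects. Right-properness of the covariant model structure then upgrades the strict pullback of Claim~(2) to a homotopy pullback, which is Claim~(3); the Joyal homotopy-pullback assertion of Claim~(2) follows by transporting this along Lemma~\ref{st:cov weqs between LFibs are exactly cat weqs}, which identifies covariant and Joyal weak equivalences between left fibrations. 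The one genuinely delicate point — and the main obstacle — is the $n=1$ horn filling in Claim~(1) for $\rmB\SYM^\rev(\cG)$: because transport does not fix the chosen vertex on the nose but only up to the equivalence supplied by $\Diff_{[\cG]}(M)$, one must verify that the single-vertex-per-object convention is still compatible with the left-lifting property, which is exactly what the equivalence $\psi$ of the Example encodes.
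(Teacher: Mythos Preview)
Your proof is correct and follows essentially the same approach as the paper: both reduce Claim~(1) to the $\Lambda^1_0$ horn via fullness and resolve it using the defining property of $\Diff_{[\cG]}(M)$, verify Claim~(2) by direct inspection of vertices and simplices, and deduce Claim~(3) from the right vertical being a covariant fibration between fibrant objects (the paper invokes Lemma~\ref{st:limits in slices and contractibility} where you appeal to right-properness). The only minor difference is that the paper derives the Joyal homotopy-pullback assertion of Claim~(2) directly from Claim~(1)---a left fibration between $\infty$-categories is an isofibration and hence a Joyal fibration, so the strict pullback is already Joyal-homotopy-cartesian---rather than via your detour through Lemma~\ref{st:cov weqs between LFibs are exactly cat weqs}.
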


\begin{proof}
For Claim~(1) it suffices to show that the morphism
\begin{equation}
	\rmB \SYM^\rev(\cG) \longrightarrow \rmB \scD[\cG]^\opp
\end{equation}
is a left fibration.
The statement for the left-hand vertical morphism will then follow from Claim~(2).
We have a sequence
\begin{equation}
\begin{tikzcd}
	\rmB \SYM^\rev(\cG) \ar[r, hookrightarrow]
	&  \textint G^{\scD[\cG]}_{[\cG]} \ar[r]
	& N \rmB \scD[\cG]^\opp\,.
\end{tikzcd}
\end{equation}
The first morphism is an inclusion of a full $\infty$-subcategory, and the second morphism is a left fibration.
We need to show that the composition has the right lifting property with respect to all left horn inclusions $\Lambda^r_s \hookrightarrow \Delta^r$, for $0 \leq s < r \in \NN_0$.
Since the first morphism is an inclusion of a full $\infty$-subcategory, it automatically has the right lifting property with respect to all left horn inclusions except possibly the case $\Lambda^1_0 \hookrightarrow \Delta^1$.
As the second morphism is a left fibration, the only non-trivial lifting problems for the composition arise for the horn inclusion $\Lambda^1_0 \hookrightarrow \Delta^1$.
We check the lifting property explicitly:
a commutative diagram
\begin{equation}
\begin{tikzcd}
	\Lambda^1_0 \ar[r] \ar[d, hookrightarrow]
	& \rmB \SYM^\rev(\cG) \ar[d]
	\\
	\Delta^1 \ar[r]
	& N \rmB \scD[\cG]^\opp
\end{tikzcd}
\end{equation}
is equivalent to specifying a 1-simplex in \smash{$N\rmB\scD[\cG]^\opp$} and a lift of its initial vertex to \smash{$\rmB \SYM^\rev(\cG)$}.
The 1-simplex is a pair $(f,\varphi)$ of a smooth map $f \colon c_1 \to c_0$ of cartesian spaces and a smooth map $\varphi \colon c_1 \to \Diff_{[\cG]}(M)$.
Let $\coll_c \colon c \to \RR^0$ denote the collapse map in $\Cart$.
A solution to the lifting problem amounts to specifying an equivalence
\begin{equation}
	(\coll_{c_1}, 1_M)^* (\cG)
	\simeq (f, \varphi)^* (\coll_{c_0}, 1_M)^* (\cG)
	= (1_{c_1}, \varphi)^* (\coll_{c_1}, 1_M)^* (\cG)
\end{equation}
in in \smash{$G^\scD(c_1)$} (see also the construction of the rectification functor $r_\scC^*$ in~\cite[Sec.~4]{HM:Left_fibs_and_hocolims_I} and~\cite[Sec.~2.2]{Bunk:Localisation_of_sSet}).
Such an equivalence exists since $f$ takes values in $\Diff_{[\cG]}(M)$ (see Definition~\ref{def:Diff_[cG] and D_[cG]}).

Claim~(2) follows by inspection of the diagram:
the inclusion $\rmB\AUT^\rev(\cG) \hookrightarrow \rmB\SYM^\rev(\cG)$ is a bijection on vertices, and a higher simplex in $\rmB\SYM^\rev(\cG)$ lies in $\rmB\AUT^\rev(\cG)$ precisely if its image in $N\rmB\scD[\cG]$ consists of a composable chain $(f_0, \varphi_0), \ldots, (f_n, \varphi_n)$ of morphisms in $\rmB\scD[\cG]$ with $\varphi_i = 1_M$, for each $i = 1, \ldots, n$.
Claim~(3) follows readily from Lemma~\ref{st:limits in slices and contractibility} (see also Remark~\ref{rmk:limits in 1-Cat slices and connectedness}) and Claim~(2), since the covariant fibrations between left fibrant objects are precisely the left fibrations (see the opposite of~\cite[Thm.~4.1.5]{Cisinski:HiC_and_HoA}).
\end{proof}

\begin{lemma}
\label{st:SYM into GRB and GAU into GRB are cov weqs}
In the setting of Definition~\ref{def:BGAU(G), BSYM(G)} the canonical inclusion morphisms
\begin{equation}
	\rmB \SYM^\rev(\cG) \hookrightarrow \textint G^{\scD[\cG]}_{[\cG]}\,,
	\qquad
	\rmB \AUT^\rev(\cG) \hookrightarrow \textint G^M_{[\cG]}
\end{equation}
are covariant weak equivalences in \smash{$\sSet_{/N \rmB \scD[\cG]^\opp}$} and \smash{$\sSet_{/N \Cart^\opp}$}, respectively.
\end{lemma}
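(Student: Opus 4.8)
The plan is to recognise each inclusion as a fully faithful and essentially surjective functor of $\infty$-categories, and then to convert the resulting categorical equivalence into a covariant one by means of Lemma~\ref{st:cov weqs between LFibs are exactly cat weqs}. Concretely, both $\rmB\SYM^\rev(\cG)$ and $\textint G^{\scD[\cG]}_{[\cG]}$ are left fibrations over $N\rmB\scD[\cG]^\opp$ --- the former by Proposition~\ref{st:BSYM and BGAU properties}(1), the latter by its construction as a rectification in Definition~\ref{def:G_[cG]}. By Definition~\ref{def:BGAU(G), BSYM(G)}, $\rmB\SYM^\rev(\cG)$ is the full $\infty$-subcategory of $\textint G^{\scD[\cG]}$ spanned by the vertices in the image of $\sigma\cG$; these vertices lie in $\textint G^{\scD[\cG]}_{[\cG]}$, which --- since $G^\scD$ is projectively fibrant and hence fibrewise a Kan complex, so that the connected-component subpresheaf is fibrewise full --- is the full sub-left-fibration on the components of $\cG$. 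Thus $\rmB\SYM^\rev(\cG)\hookrightarrow \textint G^{\scD[\cG]}_{[\cG]}$ is the inclusion of a full $\infty$-subcategory, in particular fully faithful. By the standard characterisation of equivalences of $\infty$-categories as the fully faithful and essentially surjective functors~\cite{Cisinski:HiC_and_HoA}, it then remains only to prove essential surjectivity, after which Lemma~\ref{st:cov weqs between LFibs are exactly cat weqs} yields the assertion.

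For essential surjectivity I would argue as follows. A vertex $w$ of $\textint G^{\scD[\cG]}_{[\cG]}$ lies over some $c \in \Cart$ and, by Definition~\ref{def:G_[cG]}, belongs to the fibre $G^{\scD[\cG]}_{[\cG]}(c)$, which is the connected component of $\tilde{\sigma}\cG(c) = \coll_c^*\cG$ inside $G^M(c)$. As $G^\scD$ is projectively fibrant, this fibre is a Kan complex, so there is an edge $e \colon \tilde{\sigma}\cG(c) \to w$ lying entirely in the fibre; its image under the left fibration to $N\rmB\scD[\cG]^\opp$ is the degenerate, hence invertible, edge $1_c$. Because a left fibration detects equivalences --- an edge of the total space is invertible precisely when its image in the base is~\cite{Cisinski:HiC_and_HoA} (all edges of a left fibration are coCartesian) --- the edge $e$ is an equivalence in $\textint G^{\scD[\cG]}_{[\cG]}$. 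Hence $w$ is equivalent to $\tilde{\sigma}\cG(c)$, which is a vertex of $\rmB\SYM^\rev(\cG)$, establishing essential surjectivity.

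The main obstacle is precisely the step that an edge of a left fibration lying over an equivalence in the base is itself an equivalence; once this property is invoked, everything else is formal. The second inclusion $\rmB\AUT^\rev(\cG)\hookrightarrow \textint G^M_{[\cG]}$ is handled identically, now working over $N\Cart^\opp$: here $\rmB\AUT^\rev(\cG)\to N\Cart^\opp$ is a left fibration by Proposition~\ref{st:BSYM and BGAU properties}(1) and~(3), while $\textint G^M_{[\cG]}\to N\Cart^\opp$ is one by Definition~\ref{def:G_[cG]}(4); the fibres over each $c$ coincide with those in the previous case, so the same equivalence-detection argument delivers essential surjectivity, and Lemma~\ref{st:cov weqs between LFibs are exactly cat weqs} again converts the categorical equivalence into a covariant one.
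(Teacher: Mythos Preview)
Your proof is correct and follows essentially the same approach as the paper: both establish that the inclusion is fully faithful and essentially surjective (hence a Joyal equivalence) and then invoke Lemma~\ref{st:cov weqs between LFibs are exactly cat weqs}. The only minor difference is that the paper obtains essential surjectivity by citing Remark~\ref{rmk:section associated to cG}, which identifies $\textint G^{\scD[\cG]}_{[\cG]}$ as the essential image of $\sigma\cG$, whereas you unpack this directly via conservativity of left fibrations; these are the same argument at different levels of detail.
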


\begin{proof}
We show the claim for $\rmB\SYM^\rev(\cG)$; the proof for $\rmB\AUT^\rev(\cG)$ is analogous.
First, note that both $\rmB\SYM^\rev(\cG)$ and \smash{$\textint G^{\scD[\cG]}_{[\cG]}$} are fibrant in the covariant model structure on $\sSet_{/N \rmB \scD[\cG]^\opp}$ (by Lemma~\ref{st:BSYM and BGAU properties} and Definition~\ref{def:BGAU(G), BSYM(G)}, respectively).
It follows that both simplicial sets are $\infty$-categories.
Recall that the simplicial subset
\begin{equation}
	\textint G^{\scD[\cG]}_{[\cG]} \subset \textint G^{\scD[\cG]}
\end{equation}
is the essential image of the functor~\eqref{eq:section from (cG,cA^(k))} (by Remark~\ref{rmk:section associated to cG}).
The further simplicial subset
\begin{equation}
	\rmB\SYM^\rev(\cG) \subset \textint G^{\scD[\cG]}_{[\cG]}
\end{equation}
is the full simplicial subset on all vertices in the image of the functor ~\eqref{eq:section from (cG,cA^(k))}; it follows readily that the inclusion
\begin{equation}
	\rmB\SYM^\rev(\cG) \subset \textint G^{\scD[\cG]}_{[\cG]}
\end{equation}
is both fully faithful and essentially surjective as a functor between $\infty$-categories.
That is, it is a weak equivalence in the Joyal model structure~\cite[Thm.~3.1.9]{Cisinski:HiC_and_HoA}.
Lemma~\ref{st:cov weqs between LFibs are exactly cat weqs} then implies that it is also a covariant weak equivalence in $\sSet_{/N \rmB \scD[\cG]^\opp}$.
\end{proof}

\begin{remark}
Note that, for each $c \in \Cart$, the fibres $\rmB\AUT^\rev(\cG)_{|c}$ and $\rmB\SYM^\rev(\cG)_{|c}$ are \textit{reduced} simplicial sets, i.e.~they have a unique vertex.
The looping-delooping Quillen equivalence between reduced simplicial sets and simplicial groups~\cite[Prop.~V.6.3]{GJ:Simplicial_HoThy} shows that the left fibrations $\rmB\AUT^\rev(\cG) \to N\Cart^\opp$ and $\rmB\SYM^\rev(\cG) \to N\Cart^\opp$ present presheaves of $\infty$-groups on $N\Cart$ by~\cite[Sec.~3.5, Cor.~3.34]{NSS:Pr_ooBdls_II} (see also the following remark and Definition~\ref{def:scSym scAut}).
\qen
\end{remark}

\begin{remark}
\label{rmk:oo-groups and ptd conn objs}
We recall the equivalence between pointed connected objects in an $\infty$-topos $\scX$ and group objects in $\scX$~\cite[Lemma~7.2.2.11]{Lurie:HTT}.
Concretely, there are inverse equivalences of $\infty$-categories
\begin{equation}
\label{eq:Omega -| B equivalence}
\begin{tikzcd}
	\Omega : \scX^{*/}_{\geq 1} \ar[r, shift left=0.1cm]
	& \Grp(\scX) : \rmB\,, \ar[l, shift left=0.1cm]
\end{tikzcd}
\end{equation}
where $\scX^{*/}_{\geq 1} \subset \scX$ denotes the $\infty$-subcategory of pointed connected objects in $\scX$, and $\Grp(\scX)$ denotes the $\infty$-category of group objects in $\scX$ (see, for instance,~\cite[below Def.~7.2.2.1]{Lurie:HTT} and~\cite{NSS:Pr_ooBdls_I, Bunk:Pr_ooBdls_and_String}).
The functor $\rmB$ takes the classifying object of a group object, while $\Omega$ takes the loop object (with its homotopy coherent multiplication) of a pointed connected object in $\scX$.
Here we will mostly be interested in $\infty$-topoi which arise as $\infty$-categories of presheaves of spaces.
\qen
\end{remark}

\begin{definition}
A \textit{smooth $\infty$-group} is a group object in the $\infty$-topos $\scP(N\Cart)$.
The \textit{$\infty$-category of smooth $\infty$-groups} is $\Grp(\scP(N\Cart))$.
\end{definition}

Let $\rho \colon \bbDelta \to \bbDelta$ be the functor defined by
\begin{equation}
	\rho[n] = [n]\,,
	\qquad
	\rho(\lambda)(i) = n - \lambda(m-i)
\end{equation}
for each $\lambda \in \bbDelta([m], [n])$ and $i \in \{0, \ldots, m\}$ (compare, for instance,~\cite[Par.~1.5.7]{Cisinski:HiC_and_HoA}).
This functor is a strict involution, i.e.~$\rho \circ \rho = 1_\bbDelta$.
In particular, it is an auto-equivalence of $\bbDelta$.

\begin{definition}
Let $\scC$ be an $\infty$-category.
We define the involution
\begin{equation}
	\rev \coloneqq (N\rho^\opp)^* \colon \scFun(N\bbDelta^\opp, \scC) \longrightarrow \scFun(N\bbDelta^\opp, \scC)
\end{equation}
on the $\infty$-category of simplicial objects in $\scC$.
\end{definition}

An $\infty$-category $\scC$ is said to have geometric realisations if all $\bbDelta^\opp$-shaped diagrams in $\scC$ have colimits.

\begin{lemma}
\label{st:|revX| = |X|}
If $\scC$ is an $\infty$-category which admits geometric realisations and $X \in \scFun(N\bbDelta^\opp, \scC)$ is a simplicial object in $\scC$, then there is a canonical equivalence
\begin{equation}
	|\rev X| \simeq |X|
\end{equation}
in $\scC$, natural in $X$.
\end{lemma}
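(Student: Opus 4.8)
The plan is to recognise that geometric realisation is nothing but the colimit over $N\bbDelta^\opp$ and that $\rev$ is precomposition with an auto-equivalence of that indexing $\infty$-category; the statement then reduces to the cofinality of equivalences. Concretely, by definition $|X| \simeq \colim\big(X \colon N\bbDelta^\opp \to \scC\big)$, and $\rev X = X \circ N\rho^\opp$, so that $|\rev X| \simeq \colim\big(X \circ N\rho^\opp\big)$ is the colimit of $X$ restricted along the endofunctor $N\rho^\opp \colon N\bbDelta^\opp \to N\bbDelta^\opp$. The colimits exist because $\scC$ is assumed to admit geometric realisations.

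First I would record that $N\rho^\opp$ is an equivalence of $\infty$-categories. Indeed, the excerpt notes that $\rho$ is a strict involution of $\bbDelta$, i.e.\ $\rho \circ \rho = 1_\bbDelta$; hence $\rho$ is an isomorphism of $1$-categories, $N\rho$ is an isomorphism of simplicial sets, and passing to opposites preserves this, so $N\rho^\opp$ is an isomorphism of simplicial sets — in particular an equivalence of $\infty$-categories, and indeed its own inverse. Since every equivalence of $\infty$-categories is cofinal \cite{Cisinski:HiC_and_HoA}, I may apply the defining property of cofinal functors to $u = N\rho^\opp$ and the diagram $X$: the canonical comparison morphism $\colim\big(X \circ N\rho^\opp\big) \longrightarrow \colim X$ is an equivalence in $\scC$. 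This is exactly the sought equivalence $|\rev X| \simeq |X|$.

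The one point requiring care — and the step I would treat as the main obstacle — is naturality in $X$. The comparison morphism used above is natural in the diagram: for each cocone under $X$ it is produced by restriction along $N\rho^\opp$ followed by the universal property of the colimit, and these constructions are functorial in $X$. It therefore assembles into a natural transformation $|{-}| \circ \rev \longrightarrow |{-}|$ of functors $\scFun(N\bbDelta^\opp, \scC) \to \scC$, and cofinality of $N\rho^\opp$ says precisely that this natural transformation is an equivalence. Evaluating at $X$ recovers the pointwise statement, giving the asserted \emph{canonical} equivalence $|\rev X| \simeq |X|$, natural in $X$.
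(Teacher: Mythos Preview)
Your proof is correct and follows essentially the same route as the paper: the paper's proof consists of the single observation that $\rho$ is an equivalence of categories, hence $N\rho^\opp$ is an equivalence of $\infty$-categories, hence final (citing \cite[Prop.~5.3.1]{Cisinski:HiC_and_HoA}). You have simply unpacked this in more detail and made the naturality explicit, which the paper leaves implicit.
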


\begin{proof}
Since $\rho$ is an equivalence of categories, the induced functor $N\rho^\opp$ on $N \bbDelta^\opp$ is an equivalence.
Thus, it is final~\cite[Prop.~5.3.1]{Cisinski:HiC_and_HoA}.
\end{proof}

\begin{definition}
\label{def:scSym scAut}
In the setting of Definition~\ref{def:BGAU(G), BSYM(G)}, we make the following definitions:
\begin{enumerate}
\item We denote the $\infty$-presheaf which classifies the left fibration $\pi_\SYM \colon \rmB \SYM^\rev(\cG) \longrightarrow N \Cart^\opp$ by
\begin{equation}
	\rmB \scSym^\rev(\cG) \colon N \Cart^\opp \longrightarrow \scS\,.
\end{equation}
Since the fibres of $\pi_\SYM$ are reduced simplicial sets, the values of $\rmB \scSym^\rev(\cG)$ are pointed, connected spaces.
By Remark~\ref{rmk:oo-groups and ptd conn objs}, there is thus a unique group object associated to $\rmB\scSym^\rev(\cG)$, which we denote by
\begin{equation}
	\scSym^\rev(\cG) \in \Grp \big( \scFun(N \Cart^\opp, \scS) \big)\,.
\end{equation}

\item The reverse simplicial object
\begin{equation}
	\scSym(\cG) \coloneqq \big( \scSym^\rev(\cG) \big)^\rev
	\quad \in \Grp \big( \scFun (N \Cart^\opp, \scS) \big)
\end{equation}
is the smooth higher group of \textit{symmetries of $\cG \in \G^\scD(\RR^0)$.}

\item Similarly, we let
\begin{equation}
	\rmB\scAut^\rev(\cG) \colon N\Cart^\opp \longrightarrow \scS
\end{equation}
denote the $\infty$-presheaf which classifies the left fibration $\pi_\AUT \colon \rmB\AUT^\rev(\cG) \longrightarrow N\Cart^\opp$.
We denote its associated group object by
\begin{equation}
	\scAut^\rev(\cG) \in \Grp \big( \scFun (N \Cart^\opp, \scS) \big)\,.
\end{equation}

\item The reverse simplicial object
\begin{equation}
	\scAut(\cG) \coloneqq \big( \scAut^\rev(\cG) \big)^\rev
	\quad \in \Grp \big( \scFun (N \Cart^\opp, \scS) \big)
\end{equation}
is the smooth higher group of \textit{automorphisms of $\cG \in G^\scD(\RR^0)$.}
\end{enumerate}
\end{definition}

We denote by
\begin{equation}
	\bbG^{\scD[\cG]}_{[\cG]} = \gamma_{\rmB\scD[\cG]^\opp}^* G^{\scD[\cG]}_{[\cG]}
\end{equation}
the $\infty$-presheaf associated to \smash{$G^{\scD[\cG]}_{[\cG]}$}.

\begin{lemma}
\label{st:BSym^rev --> pi_M! G^D[G]_G}
There is a canonical commutative triangle 
\begin{equation}
\begin{tikzcd}[column sep={2cm,between origins}, row sep=0.75cm]
	\rmB\scSym^\rev(\cG) \ar[rr] \ar[dr]
	& & (N\pi_M)_! \bbG^{\scD[\cG]}_{[\cG]} \ar[dl]
	\\
	& \rmB \Diff_{[\cG]}^\rev(M) &
\end{tikzcd}
\end{equation}
The top morphism is an objectwise equivalence.
\end{lemma}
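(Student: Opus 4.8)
The plan is to realise all three vertices of the triangle as objects of $\scP(N\Cart)$ classified by left fibrations over $N\Cart^\opp$, and to exhibit the whole triangle as the image of a strictly commuting triangle of left fibrations over $N\rmB\scD[\cG]^\opp$ under a single left Quillen functor. Write $g \coloneqq N\pi_M \colon N\rmB\scD[\cG]^\opp \to N\Cart^\opp$, which is a left fibration by Remark~\ref{rmk:BD(M)^opp and Diff(M)^rev}. Post-composition with $g$ defines a functor $g_! \colon \sSet_{/N\rmB\scD[\cG]^\opp} \to \sSet_{/N\Cart^\opp}$, left adjoint to pullback $g^*$ and left Quillen for the covariant model structures; since every object is covariantly cofibrant, its (undrived) value models the $\infty$-categorical left Kan extension $(N\pi_M)_!$ on classified presheaves. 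Because $g$ is a left fibration and left fibrations are stable under composition, $g_!$ moreover preserves covariantly fibrant objects, so it carries a left fibration over $N\rmB\scD[\cG]^\opp$ to a left fibration over $N\Cart^\opp$ classifying the left Kan extension of its classified presheaf, with no fibrant replacement required. Applying this to $\textint G^{\scD[\cG]}_{[\cG]} \to N\rmB\scD[\cG]^\opp$, which classifies $\bbG^{\scD[\cG]}_{[\cG]}$, identifies $(N\pi_M)_! \bbG^{\scD[\cG]}_{[\cG]}$ with the presheaf classified by the composite left fibration $\textint G^{\scD[\cG]}_{[\cG]} \to N\rmB\scD[\cG]^\opp \xrightarrow{g} N\Cart^\opp$; applying it to the identity of $N\rmB\scD[\cG]^\opp$ identifies $g_!$ of the terminal object with $\rmB\Diff^\rev_{[\cG]}(M)$ (again by Remark~\ref{rmk:BD(M)^opp and Diff(M)^rev}).

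For the top morphism, recall from Definition~\ref{def:scSym scAut} that $\rmB\scSym^\rev(\cG)$ is classified by $\pi_\SYM \colon \rmB\SYM^\rev(\cG) \to N\Cart^\opp$, and that $\pi_\SYM$ is precisely the composite of the structure left fibration $\rmB\SYM^\rev(\cG) \to N\rmB\scD[\cG]^\opp$ (Proposition~\ref{st:BSYM and BGAU properties}(1)) with $g$; hence $\rmB\scSym^\rev(\cG)$ is classified by $g_!\rmB\SYM^\rev(\cG)$. I would then feed the inclusion $\rmB\SYM^\rev(\cG) \hra \textint G^{\scD[\cG]}_{[\cG]}$ of Lemma~\ref{st:SYM into GRB and GAU into GRB are cov weqs}, which is a covariant weak equivalence in $\sSet_{/N\rmB\scD[\cG]^\opp}$, through $g_!$. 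As $g_!$ is left Quillen and all objects are cofibrant, it preserves covariant weak equivalences by Ken Brown's lemma, so $g_!\rmB\SYM^\rev(\cG) \to g_!\textint G^{\scD[\cG]}_{[\cG]}$ is a covariant weak equivalence between covariantly fibrant objects over $N\Cart^\opp$. By the Quillen equivalence of Theorem~\ref{st:r_C^* as Quillen equivalence}, such a map classifies an equivalence of $\infty$-presheaves, i.e. an objectwise equivalence $\rmB\scSym^\rev(\cG) \to (N\pi_M)_! \bbG^{\scD[\cG]}_{[\cG]}$ in $\scP(N\Cart)$, which is the asserted top morphism.

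Finally, for commutativity: the inclusion $\rmB\SYM^\rev(\cG) \hra \textint G^{\scD[\cG]}_{[\cG]}$ is a map over $N\rmB\scD[\cG]^\opp$, hence it commutes with the two terminal (structure) maps to $N\rmB\scD[\cG]^\opp$, giving a strictly commuting triangle of left fibrations over $N\rmB\scD[\cG]^\opp$ whose two legs are these structure maps. Applying the functor $g_!$ preserves this commutativity and, by the identifications of the first paragraph, sends the two legs to the two slanted maps of the claimed triangle, both landing in $g_!$ of the terminal object, namely $\rmB\Diff^\rev_{[\cG]}(M)$; straightening then yields the canonical commutative triangle in $\scP(N\Cart)$. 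The one point that needs genuine care — and which I regard as the main obstacle — is the clean identification of $(N\pi_M)_!$ with post-composition along the left fibration $g$ at the level of classified presheaves, together with the fact that $g_!$ preserves covariant fibrancy; everything else is a formal consequence of left Quillen functoriality and universal cofibrancy in the covariant model structure. I would therefore spell out the covariant Quillen adjunction $g_! \dashv g^*$ and the stability of left fibrations under composition in detail, and treat the remaining steps as routine.
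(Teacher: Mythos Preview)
Your proposal is correct and follows essentially the same approach as the paper: present the triangle by the strict triangle of left fibrations $\rmB\SYM^\rev(\cG) \hookrightarrow \textint G^{\scD[\cG]}_{[\cG]} \to N\rmB\scD[\cG]^\opp$, invoke Lemma~\ref{st:SYM into GRB and GAU into GRB are cov weqs} for the covariant weak equivalence, and push everything forward along the left Quillen functor $(N\pi_M)_!$ using that all objects are covariantly cofibrant. The paper is terser (it does not spell out fibrancy preservation or Ken Brown explicitly), but the argument is the same.
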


\begin{proof}
For each $\infty$-category $A$, there is a canonical equivalence between the $\infty$-categorical localisation of $\sSet_{/A}$ at the covariant weak equivalences and the $\infty$-category $\scFun(A, \scS)$~\cite[Thm.~7.8.9]{Cisinski:HiC_and_HoA}.
By construction, the triangle in the statement is presented under this equivalence by the commutative triangle
\begin{equation}
\begin{tikzcd}[column sep={2cm,between origins}, row sep=0.75cm]
	\rmB\SYM^\rev(\cG) \ar[rr, hookrightarrow] \ar[dr]
	& & \textint G^{\scD[\cG]}_{[\cG]} \ar[dl]
	\\
	& N \rmB\scD[\cG] &
\end{tikzcd}
\end{equation}
of simplicial sets over $N\Cart^\opp$.
Lemma~\ref{st:SYM into GRB and GAU into GRB are cov weqs} shows that the top morphism is a covariant weak equivalence over $N\rmB\scD[\cG]^\opp$.
Since each object in the covariant model structure is cofibrant, under the left Quillen functor
\begin{equation}
	(N\pi_M)_! \colon \sSet_{/N \rmB \scD[\cG]^\opp} \longrightarrow \sSet_{/N \Cart^\opp}
\end{equation}
this becomes a covariant weak equivalence over $N\Cart^\opp$.
\end{proof}

\begin{definition}
\cite[Def.~4.26]{NSS:Pr_ooBdls_I}
Let $\scX$ be an $\infty$-topos.
An extension of group objects in $\scX$ is a sequence of morphisms $\bbA \to \bbH \to \bbGamma$ in the $\infty$-category $\Grp(\scX)$ of group objects in $\scX$ such that the induced sequence $\rmB\bbA \to \rmB\bbH \to \rmB\bbGamma$ is a fibre sequence in $\scX$.
\end{definition}

Proposition~\ref{st:BSYM and BGAU properties}(3) has the following direct consequence:

\begin{corollary}
\label{st:Aut-Sym-Diff[cG] extension}
We have an extension of group objects in $\scFun(N\Cart^\opp, \scS)$:
\begin{equation}
\begin{tikzcd}
	\scAut(\cG) \ar[r]
	& \scSym(\cG) \ar[r]
	& \Diff_{[\cG]}(M)\,.
\end{tikzcd}
\end{equation}
\end{corollary}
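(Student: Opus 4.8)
The plan is to deduce the claimed extension directly from Proposition~\ref{st:BSYM and BGAU properties}(3) by unwinding the definitions of the three group objects and the notion of an extension. Recall that an extension $\bbA \to \bbH \to \bbGamma$ of group objects is, by definition, a fibre sequence $\rmB\bbA \to \rmB\bbH \to \rmB\bbGamma$ in the ambient $\infty$-topos $\scP(N\Cart)$, and that a fibre sequence is exactly a pullback square whose bottom-left corner is the terminal object $*$. My first step is therefore to identify the deloopings $\rmB\scAut^\rev(\cG)$, $\rmB\scSym^\rev(\cG)$ and $\rmB\Diff_{[\cG]}^\rev(M)$ with the $\infty$-presheaves classified by the left fibrations of Proposition~\ref{st:BSYM and BGAU properties}. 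Under the equivalence between $\scP(N\Cart)$ and the localisation of $\sSet_{/N\Cart^\opp}$ at the covariant weak equivalences \cite[Thm.~7.8.9]{Cisinski:HiC_and_HoA} (the same equivalence invoked in the proof of Lemma~\ref{st:BSym^rev --> pi_M! G^D[G]_G}), the presheaf $\rmB\scAut^\rev(\cG)$ is classified by $\rmB\AUT^\rev(\cG) \to N\Cart^\opp$ and $\rmB\scSym^\rev(\cG)$ by $\rmB\SYM^\rev(\cG) \to N\Cart^\opp$ (Definition~\ref{def:scSym scAut}), while $\rmB\Diff_{[\cG]}^\rev(M)$ is classified by $N\rmB\scD[\cG]^\opp \to N\Cart^\opp$ via Remark~\ref{rmk:BD(M)^opp and Diff(M)^rev}; the terminal object $*$ is classified by the identity $N\Cart^\opp \to N\Cart^\opp$.

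With these identifications, the second step is to transport Proposition~\ref{st:BSYM and BGAU properties}(3) along the above equivalence. That statement asserts that the square of Proposition~\ref{st:BSYM and BGAU properties}(2), augmented over $N\Cart^\opp$, is homotopy cartesian in the covariant model structure; hence its image is a pullback square in $\scP(N\Cart)$
\begin{equation}
\begin{tikzcd}
	\rmB\scAut^\rev(\cG) \ar[r] \ar[d]
	& \rmB\scSym^\rev(\cG) \ar[d]
	\\
	* \ar[r]
	& \rmB\Diff_{[\cG]}^\rev(M)
\end{tikzcd}
\end{equation}
whose bottom-left corner is terminal. This is precisely a fibre sequence $\rmB\scAut^\rev(\cG) \to \rmB\scSym^\rev(\cG) \to \rmB\Diff_{[\cG]}^\rev(M)$, so by the definition of an extension we obtain an extension of group objects $\scAut^\rev(\cG) \to \scSym^\rev(\cG) \to \Diff_{[\cG]}^\rev(M)$ in $\scP(N\Cart)$.

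It remains, as a third step, to pass from the reversed groups to the groups in the statement. By Definition~\ref{def:scSym scAut} the objects $\scSym(\cG)$ and $\scAut(\cG)$ are the images of $\scSym^\rev(\cG)$ and $\scAut^\rev(\cG)$ under the involution $(-)^\rev$, and $\Diff_{[\cG]}(M)$ is the opposite of $\Diff_{[\cG]}^\rev(M)$. Since the delooping $\rmB$ of a group object is the geometric realisation of its associated simplicial (bar) object, Lemma~\ref{st:|revX| = |X|} supplies canonical equivalences $\rmB\scAut(\cG) \simeq \rmB\scAut^\rev(\cG)$, $\rmB\scSym(\cG) \simeq \rmB\scSym^\rev(\cG)$ and $\rmB\Diff_{[\cG]}(M) \simeq \rmB\Diff_{[\cG]}^\rev(M)$. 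Transporting the fibre sequence of the previous step along these equivalences yields the fibre sequence $\rmB\scAut(\cG) \to \rmB\scSym(\cG) \to \rmB\Diff_{[\cG]}(M)$, which is exactly the asserted extension $\scAut(\cG) \to \scSym(\cG) \to \Diff_{[\cG]}(M)$.

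I expect the main obstacle to be the bookkeeping in this last step: one must check that the equivalences furnished by Lemma~\ref{st:|revX| = |X|} are natural enough to assemble into an equivalence of the entire fibre sequence, rather than merely objectwise equivalences of its three terms, and that the involution $(-)^\rev$ is compatible with passing from a group object to its classifying object. The identification of $\rmB\Diff_{[\cG]}^\rev(M)$ with the fibration $N\rmB\scD[\cG]^\opp \to N\Cart^\opp$ through Remark~\ref{rmk:BD(M)^opp and Diff(M)^rev} is the other place where the $\rev$-conventions must be tracked carefully, since it is the reversed diffeomorphism group that arises naturally as the base of the relevant left fibration.
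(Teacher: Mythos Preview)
Your proposal is correct and follows essentially the same approach as the paper: both invoke Proposition~\ref{st:BSYM and BGAU properties}(3) together with the presentation of $\scP(N\Cart)$ by the covariant model structure on $\sSet_{/N\Cart^\opp}$ to obtain the fibre sequence for the reversed groups, and then appeal to Lemma~\ref{st:|revX| = |X|} to pass to the non-reversed groups. The paper's proof is terser about the bookkeeping you flag in your final paragraph, but the argument is the same.
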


\begin{proof}
By Proposition~\ref{st:BSYM and BGAU properties}(3) and the presentation of the $\infty$-category $\scP(N\Cart)$ by the covariant model category $\sSet_{/N \Cart^\opp}$ (see~\cite[Thm.~7.8.9]{Cisinski:HiC_and_HoA}) the square
\begin{equation}
\begin{tikzcd}
	\rmB \scAut^\rev(\cG) \ar[r] \ar[d]
	& \rmB \scSym^\rev(\cG) \ar[d]
	\\
	* \ar[r]
	& \rmB\Diff_{[\cG]}^\rev(M)
\end{tikzcd}
\end{equation}
is a fibre sequence in $\scFun(N\Cart^\opp, \scS)$.
Lemma~\ref{st:|revX| = |X|} shows that this is equivalent to fact that the fibre square for the classifying objects of the non-reversed group objects is a fibre sequence.
\end{proof}

The smooth $\infty$-group $\scSym(\cG)$ can be interpreted as consisting of all possible lifts of elements $\varphi \in \Diff_{[\cG]}(M)$ to $\cG$ (see, in particular, Theorem~\ref{st:scSym represents Equivar} and Corollary~\ref{st:char of equivar structures} below).
We now associate similar universal groups to any smooth group action on $M$.

\begin{definition}
\label{def:BSYM_(Q,phi)(G)}
Consider a covariantly fibrant object $Q \to N\Cart^\opp$ in $\sSet_{/N \Cart^\opp}$ and suppose that the fibre $Q_{|c}$ is a reduced simplicial set, for each $c \in \Cart$.
Suppose $\phi \colon Q \to N\rmB\scD[\cG]^\opp$ is a morphism in $\sSet_{/N \Cart^\opp}$.
We define a left fibration over $Q$ as the pullback (this indeed produces a left fibration by Proposition~\ref{st:BSYM and BGAU properties}(1))
\begin{equation}
\begin{tikzcd}
	\rmB\SYM^\rev_\phi(\cG) \ar[r] \ar[d, "\psi_\phi"']
	& \rmB\SYM^\rev(\cG) \ar[d]
	\\
	Q \ar[r, "\phi"']
	& N\rmB\scD[\cG]^\opp
\end{tikzcd}
\end{equation}
\end{definition}

Note that there is a canonical factorisation
\begin{equation}
\begin{tikzcd}[column sep={1.5cm,between origins}, row sep=0.75cm]
	N\Cart^\opp \ar[rr, "\eta_M"] \ar[dr, "Ne_M"']
	& & Q \ar[dl, "\phi"]
	\\
	& N\rmB\scD[\cG]^\opp
\end{tikzcd}
\end{equation}
of the morphism $e_M \colon N\Cart^\opp \longrightarrow N\rmB\scD[\cG]^\opp$ through $\phi$.

\begin{example}
\label{eg:BSYM_(BH,phi)(G) for smooth group action}
Let $H$ be a presheaf of groups on $\Cart$, and let $\psi \colon H \to \Diff_{[\cG]}(M)$ be a morphism of group objects in $\scH$.
Recall the isomorphism
\begin{equation}
	N\rmB\scD[\cG]^\opp \cong r_{\Cart^\opp}^* \big( N \circ \rmB \Diff_{[\cG]}(M)^\rev \big)
\end{equation}
from Remark~\ref{rmk:BD(M)^opp and Diff(M)^rev}.
Consider the simplicial set
\begin{equation}
	Q = \rmB H^\rev
	\coloneqq r_{\Cart^\opp}^* \rmB H^\rev\,,
\end{equation}
where $\rmB$ is the delooping functor, together with the canonical morphism $\phi \coloneqq r_{\Cart^\opp}^* \rmB(\phi^\rev) \colon Q \to N\rmB\scD[\cG]$.
Then, the left fibration $\rmB\SYM^\rev_\phi(\cG)$ encodes those symmetries of $\cG$ which cover the action of elements of $H$.
\qen
\end{example}

We have the following relative version of Proposition~\ref{st:BSYM and BGAU properties}:

\begin{lemma}
\label{st:BAUT BSYM_phi fib seq}
In the situation of Definition~\ref{def:BSYM_(Q,phi)(G)}, we have a commutative square
\begin{equation}
\begin{tikzcd}
	\rmB \AUT^\rev(\cG) \ar[r, hookrightarrow] \ar[d, "\psi_\Aut"']
	& \rmB \SYM^\rev_\phi(\cG) \ar[d]
	\\
	N\Cart^\opp \ar[r, hookrightarrow, "\eta_M"']
	& Q
\end{tikzcd}
\end{equation}
of simplicial sets.
Moreover, the following statements hold true:
\begin{enumerate}
\item Both vertical morphisms are left fibrations.

\item The square is a pullback diagram, and hence (by Claim~(1)) even a homotopy pullback square in the Joyal model structure.

\item Augmenting by the map \smash{$Q \to N\Cart^\opp$}, this produces a cartesian square in $\sSet_{/N \Cart^\opp}$, which is even homotopy cartesian in the covariant model structure.
\end{enumerate}
\end{lemma}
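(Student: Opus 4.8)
The plan is to deduce all three claims formally from the defining pullback square of $\rmB\SYM^\rev_\phi(\cG)$ in Definition~\ref{def:BSYM_(Q,phi)(G)}, from Proposition~\ref{st:BSYM and BGAU properties}, and from a single pasting-of-pullbacks argument; the only nonformal ingredient is the factorisation $\phi \circ \eta_M = Ne_M$ recorded after Definition~\ref{def:BSYM_(Q,phi)(G)}. First I would produce the top morphism $\rmB\AUT^\rev(\cG) \hookrightarrow \rmB\SYM^\rev_\phi(\cG)$ from the universal property of the defining pullback square, applied to the inclusion $\rmB\AUT^\rev(\cG) \hookrightarrow \rmB\SYM^\rev(\cG)$ of Proposition~\ref{st:BSYM and BGAU properties} and to the composite $\rmB\AUT^\rev(\cG) \xrightarrow{\psi_\Aut} N\Cart^\opp \xrightarrow{\eta_M} Q$. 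These two data agree over $N\rmB\scD[\cG]^\opp$ precisely because $\phi \circ \eta_M = Ne_M$ and the square of Proposition~\ref{st:BSYM and BGAU properties} commutes; this simultaneously produces the square in the statement and makes it commute. The induced map is a monomorphism, since both of its legs are (the full-subcategory inclusion, and $\eta_M$, which is monic because $Ne_M$ is).

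Claim~(1) is then immediate: the left vertical $\psi_\Aut$ is the left vertical of Proposition~\ref{st:BSYM and BGAU properties}, hence a left fibration, and the right vertical $\psi_\phi$ is a left fibration by Definition~\ref{def:BSYM_(Q,phi)(G)}, being a pullback of the left fibration $\rmB\SYM^\rev(\cG) \to N\rmB\scD[\cG]^\opp$. For Claim~(2), I would paste the square in the statement to the left of the defining pullback square along their shared edge $\psi_\phi$. The resulting outer square has bottom edge $\phi \circ \eta_M = Ne_M$ and, by construction of the induced top map, top edge the inclusion $\rmB\AUT^\rev(\cG) \hookrightarrow \rmB\SYM^\rev(\cG)$; it therefore coincides with the square of Proposition~\ref{st:BSYM and BGAU properties}, which is a strict pullback by part~(2) there. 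Since the right-hand (defining) square is a strict pullback by Definition~\ref{def:BSYM_(Q,phi)(G)}, the pasting lemma for pullbacks gives that the square in the statement is a strict pullback as well. The homotopy-cartesianness in the Joyal model structure then follows from Claim~(1) exactly as in the proof of Proposition~\ref{st:BSYM and BGAU properties}(2), the left-fibration verticals being the fibrations along which strict pullbacks compute homotopy pullbacks.

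Claim~(3) follows by the same mechanism. Equipping the four corners with their canonical maps to $N\Cart^\opp$ (namely $\psi_\Aut$; $\psi_\phi$ postcomposed with the structure map of $Q$; the identity; and the structure map of $Q$), the strict pullback of Claim~(2) remains a strict pullback in $\sSet_{/N\Cart^\opp}$, since the forgetful functor to $\sSet$ creates limits. All four objects are covariantly fibrant over $N\Cart^\opp$ --- for $\rmB\SYM^\rev_\phi(\cG)$ because the composite of left fibrations $\rmB\SYM^\rev_\phi(\cG) \to Q \to N\Cart^\opp$ is again a left fibration --- and the covariant fibrations between such objects are exactly the left fibrations by the opposite of~\cite[Thm.~4.1.5]{Cisinski:HiC_and_HoA}. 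Hence the strict pullback along a covariant fibration computes the homotopy pullback in the covariant model structure, just as in Proposition~\ref{st:BSYM and BGAU properties}(3).

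The argument is entirely formal, so there is no serious obstacle; the one place demanding care is the identification in the pasting step, i.e.\ verifying that the top edge of the pasted outer square is genuinely the inclusion $\rmB\AUT^\rev(\cG) \hookrightarrow \rmB\SYM^\rev(\cG)$ rather than merely some map covering it. This is what allows the pasted outer square to be recognised as the square of Proposition~\ref{st:BSYM and BGAU properties}, and it rests squarely on the compatibility $\phi \circ \eta_M = Ne_M$ together with the universal property used to define the top morphism.
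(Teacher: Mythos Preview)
Your proof is correct and follows essentially the same approach as the paper: Claim~(1) from Proposition~\ref{st:BSYM and BGAU properties}(1) and Definition~\ref{def:BSYM_(Q,phi)(G)}, Claim~(2) by the pasting law for pullbacks against the square of Proposition~\ref{st:BSYM and BGAU properties}, and Claim~(3) by recognising the strict pullback as a homotopy pullback in the slice. The only cosmetic difference is that for Claim~(3) the paper invokes Lemma~\ref{st:limits in slices and contractibility} (and Remark~\ref{rmk:limits in 1-Cat slices and connectedness}) to pass to the slice, whereas you appeal directly to the forgetful functor creating connected limits; these are the same fact.
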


\begin{proof}
Claim~(1) follows from Proposition~\ref{st:BSYM and BGAU properties}(1) (for the left-hand vertical morphism) and Definition~\ref{def:BSYM_(Q,phi)(G)} (for the right-hand vertical morphism).
Claim~(2) follows by the pasting law for pullbacks, and Claim~(3) follows from Claim~(2) and Lemma~\ref{st:limits in slices and contractibility} (see also Remark~\ref{rmk:limits in 1-Cat slices and connectedness}).
\end{proof}

The left fibration $Q \to N\Cart^\opp$ is classified by an $\infty$-presheaf $\rmB\bbGamma^\rev \colon N\Cart^\opp \to \scS$ which takes values in pointed, connected spaces.
It thus presents an essentially unique group object $\bbGamma = (\Omega \rmB \bbGamma^\rev)^\rev$ in $\scP(N\Cart)$ (recall the equivalence~\eqref{eq:Omega -| B equivalence}).
Then, the morphism $\phi \colon Q \to N\rmB\scD[\cG]^\opp$ presents a morphism $\Phi \colon \bbGamma \to \Diff_{[\cG]}(M)$ of group objects in $\scP(N\Cart)$.

\begin{definition}
\label{def:Sym_(Gamma, Phi)(cG)}
Let $\bbGamma$ be a smooth $\infty$-group, i.e.~a group object in $\scP(N\Cart)$, and let $\Phi \colon \bbGamma \to \Diff_{[\cG]}(M)$ be a morphism of smooth $\infty$-groups.
We define a smooth $\infty$-group as the pullback in $\Grp(\scP(N\Cart))$,
\begin{equation}
\begin{tikzcd}
	\scSym_\Phi(\cG) \ar[r] \ar[d]
	& \scSym(\cG) \ar[d]
	\\
	\bbGamma \ar[r, "\Phi"']
	& \Diff_{[\cG]}(M)
\end{tikzcd}
\end{equation}
\end{definition}

\begin{corollary}
\label{st:Aut-Sym_phi-Gamma extension}
By construction, this is an extension of $\bbGamma$ by $\scAut(\cG)$, i.e.~we have a fibre sequence in $\Grp(\scP(N\Cart))$:
\begin{equation}
\begin{tikzcd}
	\scAut(\cG) \ar[r]
	& \scSym_\Phi(\cG) \ar[r]
	& \bbGamma\,.
\end{tikzcd}
\end{equation}
\end{corollary}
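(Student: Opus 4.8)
The plan is to derive this corollary from the relative fibre sequence of Lemma~\ref{st:BAUT BSYM_phi fib seq} in exactly the manner that Corollary~\ref{st:Aut-Sym-Diff[cG] extension} was obtained from Proposition~\ref{st:BSYM and BGAU properties}(3). Unwinding the definition of an extension of group objects, what must be shown is that the induced sequence of deloopings $\rmB\scAut(\cG) \to \rmB\scSym_\Phi(\cG) \to \rmB\bbGamma$ is a fibre sequence in $\scP(N\Cart)$; equivalently, that the square with these three objects and the terminal object $*$, placed so that $* \to \rmB\bbGamma$ is the basepoint, is a pullback in $\scP(N\Cart)$.

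First I would feed Lemma~\ref{st:BAUT BSYM_phi fib seq} through the presentation of $\scP(N\Cart)$ by the covariant model category $\sSet_{/N\Cart^\opp}$ from \cite[Thm.~7.8.9]{Cisinski:HiC_and_HoA}. Claim~(3) of that lemma asserts that the square relating $\rmB\AUT^\rev(\cG)$, $\rmB\SYM^\rev_\phi(\cG)$, $N\Cart^\opp$ and $Q$ is homotopy cartesian in the covariant model structure; translating along this equivalence yields a genuine pullback square in $\scP(N\Cart)$, i.e.\ a fibre sequence $\rmB\scAut^\rev(\cG) \to \rmB\scSym^\rev_\phi(\cG) \to \rmB\bbGamma^\rev$, where $\rmB\bbGamma^\rev$ is the pointed connected $\infty$-presheaf classifying $Q \to N\Cart^\opp$ and $\rmB\scSym^\rev_\phi(\cG)$ classifies the left fibration $\rmB\SYM^\rev_\phi(\cG)$. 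Applying the involution $(-)^\rev$ and Lemma~\ref{st:|revX| = |X|} then removes the reversal, since the classifying objects of the reversed and non-reversed group objects have equivalent realisations and so their fibre-sequence statements coincide, giving the fibre sequence $\rmB\scAut(\cG) \to \rmB\scSym_\Phi(\cG) \to \rmB\bbGamma$ for the genuine smooth $\infty$-groups, as required.

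The one point that needs care, and the main (if minor) obstacle, is the identification of the object $\rmB\scSym^\rev_\phi(\cG)$ produced by the left-fibration pullback of Definition~\ref{def:BSYM_(Q,phi)(G)} with the delooping of the smooth $\infty$-group $\scSym_\Phi(\cG)$ of Definition~\ref{def:Sym_(Gamma, Phi)(cG)}, which is defined abstractly as a pullback in $\Grp(\scP(N\Cart))$. Since $\Omega$ preserves limits and is inverse to $\rmB$ on pointed connected objects (Remark~\ref{rmk:oo-groups and ptd conn objs}), looping the left-fibration pullback recovers precisely the pullback of group objects $\bbGamma^\rev \times_{\Diff_{[\cG]}^\rev(M)} \scSym^\rev(\cG)$, that is $\scSym^\rev_\phi(\cG)$; the remaining step is to check that this pullback of deloopings is itself connected, so that $\rmB$ genuinely returns it. This connectivity is automatic: in the fibre sequence just established, the fibre $\rmB\scAut^\rev(\cG)$ and the base $\rmB\bbGamma^\rev$ are both connected, whence the $\pi_0$-exactness of a fibre sequence in the $\infty$-topos $\scP(N\Cart)$ forces the total object to be connected as well. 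With this identification in hand, the fibre sequence of the previous paragraph is exactly the asserted extension, and by construction the first map $\scAut(\cG) \to \scSym_\Phi(\cG)$ is the canonical one induced by the trivial map $\scAut(\cG) \to \bbGamma$ together with the inclusion $\scAut(\cG) \to \scSym(\cG)$ of Corollary~\ref{st:Aut-Sym-Diff[cG] extension}.
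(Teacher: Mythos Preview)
Your proof is correct. It differs from the paper's only in its level of abstraction: the paper's argument is a one-liner invoking Corollary~\ref{st:Aut-Sym-Diff[cG] extension} together with the pasting law for pullbacks, applied to the square in Definition~\ref{def:Sym_(Gamma, Phi)(cG)} that defines $\scSym_\Phi(\cG)$ as $\bbGamma \times_{\Diff_{[\cG]}(M)} \scSym(\cG)$. Since $\scAut(\cG)$ is already the fibre of $\scSym(\cG) \to \Diff_{[\cG]}(M)$, pasting immediately identifies it with the fibre of the pulled-back morphism $\scSym_\Phi(\cG) \to \bbGamma$.

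Your route instead goes back down to the left-fibration presentation via Lemma~\ref{st:BAUT BSYM_phi fib seq} and re-runs the argument of Corollary~\ref{st:Aut-Sym-Diff[cG] extension} in the relative setting. This is more work but has the virtue of making explicit the identification between the left-fibration model $\rmB\SYM^\rev_\phi(\cG)$ and the delooping of the abstractly defined pullback $\scSym_\Phi(\cG)$, including the connectivity check needed to ensure $\rmB$ preserves the relevant pullback. The paper's pasting-law argument implicitly relies on this same connectivity (so that the pullback of deloopings in $\scP(N\Cart)$ computes the pullback in $\scP(N\Cart)^{*/}_{\geq 1}$), but leaves it unspoken under ``readily''. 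So your approach is longer but slightly more honest about what needs to be verified; the paper's buys brevity by working entirely at the level of group objects and invoking a standard categorical principle.
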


\begin{proof}
This follows readily from Corollary~\ref{st:Aut-Sym-Diff[cG] extension} and the pasting law for pullbacks.
\end{proof}

\begin{remark}
In the preceding discussion the left fibration $\rmB\SYM_\phi^\rev(\cG) \to Q$ presents the morphism
\begin{equation}
	\scSym_\Phi(\cG)^\rev \to \bbGamma^\rev
\end{equation}
of smooth $\infty$-groups.
\qen
\end{remark}

%%%%%%%%%%%%%%%%%%%%%%%%%%%%%%%%%%%%%%%%%%%%%%%%%%%%%%%%%%%%%%%%%%%%%%%%%%%%

\subsection{The universal property of the group of symmetries}

%%%%%%%%%%%%%%%%%%%%%%%%%%%%%%%%%%%%%%%%%%%%%%%%%%%%%%%%%%%%%%%%%%%%%%%%%%%%

In this subsection we prove the relation between the smooth higher symmetry group of a higher geometric structure $\cG$ and smooth equivariant structures on $\cG$.

Let $G^\scD$ be a projectively fibrant simplicial presheaf on $\rmB\scD$, and let $\cG \in \G^\scD(\RR^0)$ be a section.
Let \smash{$G^{\scD[\cG]}_{[\cG]}$} be the associated projectively fibrant simplicial presheaf on $\rmB\scD[\cG]$ (see Definition~\ref{def:G_[cG]}).
We denote by
\begin{equation}
	\bbG^{\scD[\cG]}_{[\cG]} \coloneqq \gamma_{\rmB\scD[\cG]^\opp}^*G^{\scD[\cG]}_{[\cG]}
\end{equation}
the associated $\infty$-presheaf on $N\rmB\scD[\cG]$.
By construction, there is a canonical morphism of $\infty$-presheaves \smash{$\bbG^{\scD[\cG]}_{[\cG]} \to \rmB\Diff_{[\cG]}^\rev(M)$}.
In this section we exhibit the geometric and structural significance of the smooth $\infty$-group $\scSym(\cG)$ associated to any higher geometric structure $\cG \in G^\scD$ on $M$:
it arises from a universal property (it represents a certain $\infty$-functor) and completely controls smooth families of equivariant structures on $\cG$.
As before, let $\phi \colon Q \to N\rmB\scD[\cG]^\opp$ be a morphism of left fibrations over $N\Cart^\opp$ which presents the $\infty$-group action $\Phi \colon \bbGamma \to \Diff_{[\cG]}(M)$.
We denote the left fibration $Q \to N\Cart^\opp$ by $q$.

\begin{definition}
\label{def:Equivar_(H,Phi)(G,A^(k))}
Let $\Phi \colon \bbGamma \to \Diff(M)$ be a morphism of group objects in $\scP(N\Cart)$.
The space of $(\bbGamma, \Phi)$-equivariant structures on $\cG$ is the pullback of $\infty$-groupoids
\begin{equation}
\begin{tikzcd}[column sep=1.5cm]
	\Equivar_\Phi(\cG) \ar[r] \ar[d]
	& \scP(N\Cart)_{/\rmB\Diff_{[\cG]}^\rev(M)} \big( \rmB \bbGamma^\rev, (N\pi_M)_! \bbG^{\scD[\cG]}_{[\cG]} \big) \ar[d]
	\\
	\Delta^0 \ar[r, "\{\cG\}"']
	& \big( (N\pi_M)_! \bbG^{\scD[\cG]}_{[\cG]} \big)(\RR^0)
\end{tikzcd}
\end{equation}
\end{definition}

The right-hand vertical morphism in this square arises from the fact that $\rmB \bbGamma^\rev$ is a pointed object of $\scP(N\Cart)$, i.e.~it has a distinguished global section.
This induces a morphism
\begin{equation}
	\scP(N\Cart)_{/\rmB\Diff_{[\cG]}^\rev(M)} \big( \rmB \bbGamma^\rev, (N\pi_M)_! \bbG^{\scD[\cG]}_{[\cG]} \big)
	\longrightarrow \big( (N\pi_M)_! \bbG^{\scD[\cG]}_{[\cG]} \big) (\RR^0)\,.
\end{equation}
The morphism $e_M \colon \Cart^\opp \to \rmB\scD[\cG]$ induces a canonical morphism
\begin{equation}
	\bbG^M_{[\cG]}
	= Ne_M^* \bbG^{\scD[\cG]}_{[\cG]} \longrightarrow
	\big( (N\pi_M)_! \bbG^{\scD[\cG]}_{[\cG]} \big)\,.
\end{equation}
The bottom horizontal morphism is the composition of this morphism by \smash{$\cG \colon \Delta^0 \to Ne_M^* \bbG^{\scD[\cG]}$}.

Let us justify Definition~\ref{def:Equivar_(H,Phi)(G,A^(k))}.
The $\infty$-functor \smash{$\bbG^{\scD[\cG]}_{[\cG]} \colon N\rmB\scD[\cG]^\opp \to \scS$} encodes the smooth right action of $\Diff_{[\cG]}(M)$ on \smash{$\bbG^{M}_{[\cG]} \in \scP(N\Cart)$}.
Analogously, the $\infty$-functor \smash{$\phi^*\bbG^{\scD[\cG]}_{[\cG]} \colon Q \to \scS$} encodes the smooth right action of $\bbGamma$ on \smash{$\bbG^{M}_{[\cG]} \in \scP(N\Cart)$} via the morphism $\Phi \colon \bbGamma \to \Diff_{[\cG]}(M)$.
The $\infty$-groupoid of sections of \smash{$\bbG^{M}_{[\cG]}$} endowed with smooth homotopy fixed-point data is given by the mapping space \smash{$\scFun(Q, \scS) (\sfe_Q, \phi^* \bbG^{\scD[\cG]}_{[\cG]})$}, where $\sfe_Q \in \scFun(Q,\scS)$ is the constant functor with value the final object in $\scS$.
Therefore, the space of $(\bbGamma, \Phi)$-equivariant structures on $\cG$ is the full $\infty$-subgroupoid of \smash{$\scFun(Q, \scS) (\sfe_Q, \bbG^{M}_{[\cG]})$} on those vertices whose underlying section of \smash{$\bbG^{\scD[\cG]}_{[\cG]}$} is the constant section $\cG$ (we could instead have defined $\Equivar_\Phi(\cG)$ in this way).

\begin{lemma}
Consider a commutative triangle of $\infty$-categories
\begin{equation}
\begin{tikzcd}
	A \ar[rr, "\phi"] \ar[dr, "q"']
	& & B \ar[dl, "\pi"]
	\\
	& C &
\end{tikzcd}
\end{equation}
where $q$ and $\pi$ are left fibrations.
Let $\sfe_A \in \scFun(A, \scS)$ be the $\infty$-functor with constant value the final object in $\scS$, and analogously for $\sfe_B \in \scFun(B, \scS)$.
For each $F \in \scFun(B, \scS)$, there is a canonical equivalence
\begin{equation}
	\phi^*F
	\simeq \sfe_A \underset{q^* \pi_! \sfe_B}{\times} q^* \pi_! F\,,
\end{equation}
in $\scFun(A,\scS)$, natural in $F$.
\end{lemma}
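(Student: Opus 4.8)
The plan is to reduce the statement to the case $\phi = \id_B$ (i.e.\ $A = B$, $q = \pi$) using the adjunction formalism, and then to establish the \emph{fibrewise reconstruction formula}
\[
	F \simeq \sfe_B \underset{\pi^* \pi_! \sfe_B}{\times} \pi^* \pi_! F
\]
in $\scFun(B, \scS)$, where the two legs of the cospan are the unit $\eta_{\sfe_B}$ of the adjunction $\pi_! \dashv \pi^*$ and the image under $\pi^* \pi_!$ of the terminal map $\tau \colon F \to \sfe_B$. The whole proof is most transparent through the straightening/unstraightening equivalence $\scFun(A, \scS) \simeq \sSet_{/A}^{\mathrm{LFib}}$ of \cite[Thm.~7.8.9]{Cisinski:HiC_and_HoA}, under which $\pi_!$ becomes composition of left fibrations and $q^*, \phi^*$ become pullback of left fibrations.

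For the reduction, I would use that the triangle commutes, so $\pi \phi = q$ and hence $q^* = \phi^* \pi^*$ as functors $\scFun(B,\scS) \to \scFun(A,\scS)$. The three corners of the right-hand side then read $\sfe_A = \phi^* \sfe_B$, $q^* \pi_! \sfe_B = \phi^*(\pi^* \pi_! \sfe_B)$ and $q^* \pi_! F = \phi^*(\pi^* \pi_! F)$, and the two structure maps are exactly $\phi^*$ applied to $\eta_{\sfe_B}$ and to $\pi^* \pi_! \tau$ (indeed $\phi^* \eta_{\sfe_B} \colon \phi^* \sfe_B = \sfe_A \to \phi^*\pi^*\pi_!\sfe_B = q^*\pi_!\sfe_B$ is the only natural choice for the unwritten leg in the statement's pullback). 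Since $\phi^*$ is a right adjoint --- its left adjoint being the left Kan extension $\phi_!$ --- it preserves pullbacks, so the right-hand side equals $\phi^*\big(\sfe_B \times_{\pi^* \pi_! \sfe_B} \pi^* \pi_! F\big)$. Granting the special case, applying $\phi^*$ to the equivalence $F \simeq \sfe_B \times_{\pi^*\pi_!\sfe_B}\pi^*\pi_! F$ yields $\phi^* F \simeq \sfe_A \times_{q^*\pi_!\sfe_B} q^*\pi_! F$, and the comparison map is the claimed one by functoriality of $\phi^*$.

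To prove the special case, the naturality of the unit $\eta$ with respect to $\tau$ gives a commuting square, hence a canonical comparison morphism $F \to \sfe_B \times_{\pi^*\pi_!\sfe_B}\pi^*\pi_! F$, and I would check it is an equivalence objectwise. Fix $b \in B$ and set $c = \pi(b)$. Writing $E_F \to B$ for the left fibration unstraightening $F$, the functor $\pi_! F$ unstraightens to the composite $E_F \to B \xrightarrow{\pi} C$, so $(\pi_! F)(c)$ is the fibre of $E_F \to C$ over $c$, i.e.\ the total space of the restriction $E_F|_{B_c} \to B_c$ over the fibre $B_c = \pi^{-1}(c)$. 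As $\pi$ is a left fibration, $B_c$ is an $\infty$-groupoid, and this total space is $\colim_{B_c}\big(F|_{B_c}\big)$, sitting over $\colim_{B_c}\sfe_{B_c} \simeq B_c = (\pi_!\sfe_B)(c)$ as a left fibration with fibre $F(b)$ over $b$. The unit $\eta_{\sfe_B}$ evaluated at $b$ unstraightens to the diagonal $B \to B \times_C B$ and hence selects the point $b \in B_c$. Therefore the fibre product
\[
	\sfe_B(b) \underset{(\pi_!\sfe_B)(c)}{\times} (\pi_! F)(c)
	\;=\; * \underset{B_c}{\times} E_F|_{B_c}
\]
is precisely the fibre of $E_F|_{B_c} \to B_c$ over $b$, namely $F(b)$, and the comparison map is an equivalence.

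Naturality in $F$ is then automatic, since every ingredient --- the units, $\pi_!$, $\pi^*$, $\phi^*$ and the terminal map $\tau$ --- is functorial in $F$, so the comparison is a natural transformation between two functors of $F$ that is objectwise an equivalence. The main obstacle I anticipate is the fibrewise identification of the left Kan extension along the left fibration $\pi$ together with the matching of $\eta_{\sfe_B}$ with the point $b \in B_c$: this rests on the straightening equivalence, on base change for the pullback square defining $B_c$ \cite[Prop.~6.4.3, Thm.~6.4.13]{Cisinski:HiC_and_HoA}, and on the identification of the colimit of the constant point-diagram over an $\infty$-groupoid with the groupoid itself. Once these identifications are in place, the remaining fibre-product computation is direct.
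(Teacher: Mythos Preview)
Your proof is correct, but it is organised differently from the paper's. The paper works entirely in the model of left fibrations and dispatches the lemma in a single diagram: writing $\textint F \to B$ for the left fibration classified by $F$, it notes that $\pi_! F$ is classified by the composite $\textint F \to B \to C$ and that $\sfe_B$ is classified by $\id_B$, then sets up two pasted pullback squares over $A \to C$ whose upper-left square directly exhibits $\phi^*\textint F$ as the pullback of $q^*\textint F$ along the canonical map $A \to q^*B$. The pasting law does all the work; there is no separate reduction step and no objectwise verification. Your route---first reducing to $\phi = \id_B$ by using that $\phi^*$ is a right adjoint and hence preserves pullbacks, then checking the ``fibrewise reconstruction formula'' $F \simeq \sfe_B \times_{\pi^*\pi_!\sfe_B} \pi^*\pi_! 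F$ point by point---is longer but isolates a reusable intermediate statement and makes explicit that the dependence on $\phi$ is purely formal. Both arguments ultimately rest on the same identification (that $\pi_!$ on left fibrations is composition with $\pi$), but the paper's single pasted diagram is the more economical packaging.
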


\begin{proof}
Let $\textint F \to B$ denote the left fibration classified by $F$.
Since $\pi$ is a left fibration, the left Kan extension $\pi_!F \colon C \to \scS$ classifies the composed left fibration $\textint F \to B \to C$ (see~\cite[Prop.~6.1.14]{Cisinski:HiC_and_HoA}).
The left fibration classified by $e_B$ is the identity morphism $B \to B$.
We have a commutative diagram of simplicial sets
\begin{equation}
\begin{tikzcd}
	\phi^* \textint F \ar[r] \ar[d]
	& q^* \textint F \ar[r] \ar[d]
	& \textint F \ar[d]
	\\
	A \ar[r] \ar[dr, equal]
	& q^*B \ar[r] \ar[d]
	& B \ar[d, "\pi"]
	\\
	& A \ar[r, "q"']
	& C
\end{tikzcd}
\end{equation}
Its middle horizontal composition is $\phi$, and by the pasting law each square is a pullback square.
In particular, the left upper square is homotopy cartesian in the covariant model structure on $\sSet_{/A}$; it presents the equivalence in the claim.
\end{proof}

We obtain a pullback square of $\infty$-groupoids
\begin{equation}
\begin{tikzcd}
	\scFun(Q, \scS) \big( \sfe_Q, \phi^* \bbG^{\scD[\cG]}_{[\cG]} \big) \ar[r] \ar[d]
	& \scFun(Q, \scS) \big( \sfe_Q, q^* (N\pi_M)_! \bbG^{\scD[\cG]}_{[\cG]} \big) \ar[d]
	\\
	\Delta^0 \simeq \scFun(Q, \scS) \big( e_Q, e_Q \big) \ar[r]
	& \scFun(Q, \scS) \big( \sfe_Q, q^* (N\pi_M)_! \sfe_{N\rmB\scD[\cG]^\opp} \big)
\end{tikzcd}
\end{equation}
where the bottom morphism is induced by the action $\Phi$ (which corresponds to a morphism $\sfe_Q \to \phi^* \sfe_{N\rmB\scD[\cG]^\opp}$ in $\scFun(Q, \scS)$) and the unit of the adjunction $(N\pi_M)_! \dashv (N\pi_M)^*$.
Finally, recall that $q_! \sfe_Q$ is the $\infty$-functor presented by the left fibration $q$.
Using this, we see that the Kan extension adjunctions induce canonical equivalences
\begin{align}
	\scFun(Q, \scS) \big( \sfe_Q, q^* (N\pi_M)_! \sfe_{N\rmB\scD[\cG]^\opp} \big)
	&\simeq \scFun(N\Cart^\opp, \scS) \big( \rmB\bbGamma^\rev, \rmB\Diff_{[\cG]}^\rev(M) \big)\,,
	\\
	\scFun(Q, \scS) \big( \sfe_Q, q^* (N\pi_M)_! \bbG^{\scD[\cG]}_{[\cG]} \big)
	&\simeq \scFun(N\Cart^\opp, \scS) \big( \rmB\bbGamma^\rev, (N\pi_M)_! \bbG^{\scD[\cG]}_{[\cG]} \big)\,.
\end{align}
Combining this with the above pullback square, we obtain a canonical equivalence
\begin{equation}
	\scFun(Q, \scS) \big( \sfe_Q, \phi^* \bbG^{\scD[\cG]}_{[\cG]} \big)
	\simeq \scP(N\Cart)_{/\rmB\Diff_{[\cG]}^\rev(M)} \big( \rmB \bbGamma^\rev, (N\pi_M)_! \bbG^{\scD[\cG]}_{[\cG]} \big)\,,
\end{equation}
where on the left-hand side we have the $\infty$-groupoid of smooth homotopy fixed points of the $\bbGamma$-action on \smash{$\bbG^M_{[\cG]}$}.
Then, the further pullback in Definition~\ref{def:Equivar_(H,Phi)(G,A^(k))} restricts the $\infty$-subgroupoid of such homotopy fixed points whose underlying section is $\cG$, i.e.~the smooth equivariant structures on $\cG$.

Varying the pair $(\bbGamma, \Phi)$, the above construction canonically assembles into a functor
\begin{equation}
	\Equivar(\cG) \colon \big( \Grp \big( \scP(N\Cart) \big)_{/\Diff_{[\cG]}(M)} \big)^\opp \longrightarrow \scS
\end{equation}
which assigns to a $\bbGamma$-action $(\bbGamma, \Phi)$ on $M$ the space of $(\bbGamma, \Phi)$-equivariant structures on $\cG$ as in Definition~\ref{def:Equivar_(H,Phi)(G,A^(k))}.
We can further extend this to a functor on the $\infty$-category \smash{$\Grp \big( \scP(N\Cart) \big)_{/\Diff(M)}$} of all smooth $\infty$-group actions on $M$ by setting $\Equivar_\Phi(\cG) = \emptyset$ whenever $\Phi \colon \bbGamma \to \Diff(M)$ does not factor through $\Diff_{[\cG]}(M)$ (in that case, no $(\bbGamma, \Phi)$-equivariant structure can exist on $\cG$).

\begin{theorem}
\label{st:scSym represents Equivar}
The $\infty$-functor
\begin{equation}
	\Equivar(\cG) \colon \big( \Grp \big( \scP(N\Cart) \big)_{/\Diff(M)} \big)^\opp \longrightarrow \scS
\end{equation}
is representable by the morphism $\scSym(\cG) \to \Diff_{[\cG]}(M) \hookrightarrow \Diff(M)$.
That is, there is a canonical natural equivalence
\begin{equation}
\label{eq:Equivar equivalence}
	\Equivar_\Phi(\cG)
	\simeq \Grp \big( \scP(N\Cart) \big)_{/\Diff(M)} \big( \bbGamma, \scSym(\cG) \big)
\end{equation}
between $(\bbGamma,\Phi)$-equivariant structures on $\cG$ and lifts of the morphism $\Phi \colon \bbGamma \to \Diff(M)$ of smooth $\infty$-groups along the morphism $\scSym(\cG) \to \Diff_{[\cG]}(M)$.
\end{theorem}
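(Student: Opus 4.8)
The plan is to leverage the computation carried out in the discussion immediately preceding the statement, which already identifies the space of $(\bbGamma,\Phi)$-equivariant structures with a mapping space into $(N\pi_M)_!\bbG^{\scD[\cG]}_{[\cG]}$, and then to transport this identification across the canonical equivalence of Lemma~\ref{st:BSYM^rev --> pi_M! G^D[G]_G} together with the looping-delooping equivalence of Remark~\ref{rmk:oo-groups and ptd conn objs}. Concretely, I will read the defining pullback of $\Equivar_\Phi(\cG)$ in Definition~\ref{def:Equivar_(H,Phi)(G,A^(k))} as a space of \emph{pointed} maps of classifying objects, and translate pointed maps of pointed connected objects back into morphisms of smooth $\infty$-groups.

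First I would record the reduction from $\Diff(M)$ to $\Diff_{[\cG]}(M)$. Since the inclusion $\Diff_{[\cG]}(M)\hookrightarrow\Diff(M)$ is objectwise an inclusion of subgroups, it is a monomorphism in $\scP(N\Cart)$; hence a factorisation of $\Phi$ through $\Diff_{[\cG]}(M)$ exists up to a contractible space of choices when it exists at all, and not otherwise. Consequently $\Grp(\scP(N\Cart))_{/\Diff(M)}(\bbGamma,\scSym(\cG))$ is empty exactly when $\Phi$ does not factor through $\Diff_{[\cG]}(M)$, matching the extension-by-$\emptyset$ convention for $\Equivar(\cG)$, and otherwise agrees with the mapping space computed in the slice over $\Diff_{[\cG]}(M)$. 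I may therefore assume $\Phi\colon\bbGamma\to\Diff_{[\cG]}(M)$ and work over $\Diff_{[\cG]}(M)$ throughout.

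Next comes the heart of the argument. By the looping-delooping equivalence of Remark~\ref{rmk:oo-groups and ptd conn objs}, which carries $\Diff_{[\cG]}(M)$ to $\rmB\Diff_{[\cG]}(M)$ and is therefore compatible with slices, the space $\Grp(\scP(N\Cart))_{/\Diff_{[\cG]}(M)}(\bbGamma,\scSym(\cG))$ is equivalent to the space of pointed morphisms $\rmB\bbGamma\to\rmB\scSym(\cG)$ over $\rmB\Diff_{[\cG]}(M)$. Applying Lemma~\ref{st:|revX| = |X|} to the relevant bar constructions replaces each classifying object by that of its reverse, so this becomes the space of pointed maps $\rmB\bbGamma^\rev\to\rmB\scSym^\rev(\cG)$ over $\rmB\Diff_{[\cG]}^\rev(M)$, where $\rmB\scSym^\rev(\cG)$ is the $\infty$-presheaf of Definition~\ref{def:scSym scAut} classifying the left fibration $\rmB\SYM^\rev(\cG)$. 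I then invoke the canonical equivalence $\rmB\scSym^\rev(\cG)\xrightarrow{\simeq}(N\pi_M)_!\bbG^{\scD[\cG]}_{[\cG]}$ over $\rmB\Diff_{[\cG]}^\rev(M)$ from Lemma~\ref{st:BSYM^rev --> pi_M! G^D[G]_G}, which sends the distinguished basepoint to the section $\cG$. Under this replacement the space of pointed maps is the pullback of $\scP(N\Cart)_{/\rmB\Diff_{[\cG]}^\rev(M)}(\rmB\bbGamma^\rev,(N\pi_M)_!\bbG^{\scD[\cG]}_{[\cG]})$ along evaluation at the basepoint over the section $\{\cG\}$, which is exactly the pullback defining $\Equivar_\Phi(\cG)$ in Definition~\ref{def:Equivar_(H,Phi)(G,A^(k))}. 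This yields the equivalence~\eqref{eq:Equivar equivalence} for a fixed action.

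Finally I would verify naturality and conclude representability. Each step above, namely the slice-compatibility of delooping, Lemma~\ref{st:|revX| = |X|}, and the equivalence of Lemma~\ref{st:BSYM^rev --> pi_M! G^D[G]_G}, is natural in the pair $(\bbGamma,\Phi)$, and the pullback defining $\Equivar_\Phi(\cG)$ is functorial in the action by construction; assembling these produces a natural equivalence of functors $(\Grp(\scP(N\Cart))_{/\Diff(M)})^\opp\to\scS$, which is precisely representability by $\scSym(\cG)\to\Diff_{[\cG]}(M)\hookrightarrow\Diff(M)$. The hard part will be the pointedness bookkeeping in the central step: one must check that the pullback over $\{\cG\}$ corresponds exactly to imposing basepoint-preservation, and that the reverse involution is threaded consistently through the delooping so that no spurious orientation of the $\Diff(M)$-action is introduced.
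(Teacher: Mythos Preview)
Your proposal is correct and follows essentially the same route as the paper's proof: reduce to actions factoring through $\Diff_{[\cG]}(M)$, pass between group morphisms and pointed maps of deloopings via Remark~\ref{rmk:oo-groups and ptd conn objs}, invoke Lemma~\ref{st:BSym^rev --> pi_M! G^D[G]_G} to replace $\rmB\scSym^\rev(\cG)$ by $(N\pi_M)_!\bbG^{\scD[\cG]}_{[\cG]}$ with base point $\cG$, and recognise the resulting cartesian square as Definition~\ref{def:Equivar_(H,Phi)(G,A^(k))}. The only difference is cosmetic: the paper applies the $\rev$ auto-equivalence of $\Grp(\scP(N\Cart))$ \emph{before} delooping (so mapping spaces are preserved on the nose by an equivalence of categories), whereas you deloop first and then use Lemma~\ref{st:|revX| = |X|} to identify $\rmB\bbGamma\simeq\rmB\bbGamma^\rev$ afterwards; both orderings are valid and lead to the same identification.
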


\begin{proof}
We first observe that if $\Phi$ does not take values purely in the subgroup $\Diff_{[\cG]}(M) \subset \Diff(M)$, then both sides of~\eqref{eq:Equivar equivalence} are the empty space.
Thus, we may restrict our attention to the case where $\Phi \colon \Gamma \to \Diff_{[\cG]}(M)$, i.e.~$\Phi$ factors through the inclusion $\Diff_{[\cG]}(M) \hookrightarrow \Diff(M)$.
In that case, there is an equivalence
\begin{align}
	\Grp \big( \scP(N\Cart) \big)_{/\Diff(M)} \big( \bbGamma, \scSym(\cG) \big)
	\simeq \Grp \big( \scP(N\Cart) \big)_{/\Diff_{[\cG]}(M)} \big( \bbGamma, \scSym(\cG) \big)\,,
\end{align}
The equivalence $\rev \colon \Grp(\scP(N\Cart)) \to \Grp(\scP(N\Cart))$ provides a canonical equivalence
\begin{equation}
	\Grp \big( \scP(N\Cart) \big)_{/\Diff_{[\cG]}(M)} \big( \Gamma, \scSym(\cG) \big)
	\\
	\simeq \Grp \big( \scP(N\Cart) \big)_{/\Diff_{[\cG]}^\rev(M)} \big( \Gamma^\rev, \scSym^\rev(\cG) \big)\,.
\end{equation}
By the equivalence~\eqref{eq:Omega -| B equivalence}, there is a further equivalence
\begin{align}
	\Grp \big( \scP(N\Cart) \big)_{/\Diff_{[\cG]}^\rev(M)} \big( \bbGamma^\rev, \scSym^\rev(\cG) \big)
	&\simeq \big( \scP(N\Cart)^{*/}_{\geq 1} \big)_{/\rmB \Diff_{[\cG]}^\rev(M)} \big( \rmB \bbGamma^\rev, \rmB\scSym^\rev(\cG) \big)
	\\
	&\simeq \big( \scP(N\Cart)^{*/} \big)_{/\rmB \Diff_{[\cG]}^\rev(M)} \big( \rmB \bbGamma^\rev, \rmB\scSym^\rev(\cG) \big)\,.
\end{align}
Now we use that, by construction of $\scSym(\cG)$, the canonical morphism
\begin{equation}
	\rmB\scSym^\rev(\cG) \hookrightarrow (N\pi_M)_! \bbG^{\scD[\cG]}_{[\cG]}
\end{equation}
is an equivalence over $\rmB\Diff_{[\cG]}^\rev(M)$ (see Lemma~\ref{st:BSym^rev --> pi_M! G^D[G]_G}).
This canonical equivalence also endows the object \smash{$(N\pi_M)_! \bbG^{\scD[\cG]}_{[\cG]}$} in $\scP(N\Cart)$ with a choice of base point, transferred from $\rmB\scSym^\rev(\cG)$.
With this choice of base point, there is a canonical equivalence
\begin{equation}
	\big( \scP(N\Cart)^{*/} \big)_{/\rmB \Diff_{[\cG]}^\rev(M)} \big( \rmB \bbGamma^\rev, \rmB\scSym^\rev(\cG) \big)
	\simeq \big( \scP(N\Cart)^{*/} \big)_{/\rmB \Diff_{[\cG]}^\rev(M)} \big( \rmB \bbGamma^\rev, (N\pi_M)_! \bbG^{\scD[\cG]}_{[\cG]} \big)
\end{equation}
Explicitly, the base point of \smash{$(N\pi_M)_! \bbG^{\scD[\cG]}_{[\cG]}$} is the global section corresponding to the element
\begin{equation}
	\cG' \in \big( (N\pi_M)_! \bbG^{\scD[\cG]}_{[\cG]} \big)(\RR^0)
\end{equation}
(as can be seen, for instance, on the level of left fibrations as shown in the proof of Lemma~\ref{st:BSym^rev --> pi_M! G^D[G]_G}).
Consequently, the commutative square of $\infty$-groupoids
\begin{equation}
\begin{tikzcd}[column sep=1.5cm]
	\big( \scP(N\Cart)^{*/} \big)_{/\rmB \Diff_{[\cG]}^\rev(M)} \big( \rmB \bbGamma^\rev, (N\pi_M)_! \bbG^{\scD[\cG]}_{[\cG]} \big) \ar[r] \ar[d]
	& \scP(N\Cart)_{/\rmB\Diff_{[\cG]}^\rev(M)} \big( \rmB \bbGamma^\rev, (N\pi_M)_! \bbG^{\scD[\cG]}_{[\cG]} \big) \ar[d]
	\\
	* \ar[r, "\{\cG\}"']
	& \big( (N\pi_M)_! \bbG^{\scD[\cG]}_{[\cG]} \big) (\RR^0)
\end{tikzcd}
\end{equation}
is cartesian.
By comparison with Definition~\ref{def:Equivar_(H,Phi)(G,A^(k))} this completes the proof.
\end{proof}

Recall the smooth $\infty$-group $\scSym_\Phi(\cG)$ from Definition~\ref{def:Sym_(Gamma, Phi)(cG)}.

\begin{corollary}
\label{st:char of equivar structures}
The following $\infty$-presheaves
\begin{equation}
	\big( \Grp \big( \scP(N\Cart) \big)_{/\Diff(M)} \big)^\opp \longrightarrow \scS
\end{equation}
on the category of smooth $\infty$-group actions on $M$ are canonically equivalent:
\begin{enumerate}
\item the functor $\Equivar(\cG)$, assigning to an object $(\bbGamma, \Phi)$ the space of $(\bbGamma, \Phi)$-equivariant structures on $\cG$,

\item the functor assigning to an object $(\bbGamma, \Phi)$ the space of lifts of $\Phi \colon \bbGamma \to \Diff_{[\cG]}(M)$ through the morphism $\scSym(\cG) \longrightarrow \Diff_{[\cG]}(M)$,

\item the functor assigning to an object $(\bbGamma, \Phi)$ the space of splittings of the extension
\begin{equation}
\begin{tikzcd}
	\scAut(\cG) \ar[r]
	& \scSym_\Phi(\cG) \ar[r]
	& \bbGamma\,.
\end{tikzcd}
\end{equation}
\end{enumerate}
\end{corollary}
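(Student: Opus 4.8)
The plan is to establish the chain $(1) \simeq (2) \simeq (3)$ by reducing $(1) \simeq (2)$ to the representability statement of Theorem~\ref{st:scSym represents Equivar} and $(2) \simeq (3)$ to the universal property of the pullback in Definition~\ref{def:Sym_(Gamma, Phi)(cG)}. Throughout I write $\Map(-,-)$ for the mapping space in $\Grp(\scP(N\Cart))$. Since $\Equivar_\Phi(\cG) = \emptyset$ whenever $\Phi$ fails to factor through $\Diff_{[\cG]}(M)$, and in that case there are neither lifts as in (2) nor splittings as in (3), I may restrict attention to those $(\bbGamma, \Phi)$ for which $\Phi$ factors through $\Diff_{[\cG]}(M) \hookrightarrow \Diff(M)$.

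For $(1) \simeq (2)$, Theorem~\ref{st:scSym represents Equivar} supplies a natural equivalence $\Equivar_\Phi(\cG) \simeq \Grp(\scP(N\Cart))_{/\Diff(M)}(\bbGamma, \scSym(\cG))$. By the definition of mapping spaces in a slice $\infty$-category, the right-hand side is the space of lifts of $\Phi \colon \bbGamma \to \Diff(M)$ through the structure morphism $\scSym(\cG) \to \Diff(M)$. As the latter factors as $\scSym(\cG) \to \Diff_{[\cG]}(M) \hookrightarrow \Diff(M)$ and $\Phi$ factors through $\Diff_{[\cG]}(M)$, these lifts coincide with lifts through $\scSym(\cG) \to \Diff_{[\cG]}(M)$, which is precisely the functor in (2); naturality in $(\bbGamma, \Phi)$ is inherited from Theorem~\ref{st:scSym represents Equivar}.

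For $(2) \simeq (3)$, I would invoke the defining pullback square
\begin{equation}
\begin{tikzcd}
	\scSym_\Phi(\cG) \ar[r] \ar[d]
	& \scSym(\cG) \ar[d]
	\\
	\bbGamma \ar[r, "\Phi"']
	& \Diff_{[\cG]}(M)
\end{tikzcd}
\end{equation}
in $\Grp(\scP(N\Cart))$. By Corollary~\ref{st:Aut-Sym_phi-Gamma extension} a splitting of the extension $\scAut(\cG) \to \scSym_\Phi(\cG) \to \bbGamma$ is exactly a section of the left-hand projection $\scSym_\Phi(\cG) \to \bbGamma$, so the space in (3) is the fibre over $\id_\bbGamma$ of the map $\Map(\bbGamma, \scSym_\Phi(\cG)) \to \Map(\bbGamma, \bbGamma)$. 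Applying the limit-preserving functor $\Map(\bbGamma, -)$ to the cartesian square yields
\begin{equation}
	\Map(\bbGamma, \scSym_\Phi(\cG))
	\simeq \Map(\bbGamma, \bbGamma) \underset{\Map(\bbGamma, \Diff_{[\cG]}(M))}{\times} \Map(\bbGamma, \scSym(\cG))\,,
\end{equation}
and taking the fibre over $\id_\bbGamma \in \Map(\bbGamma, \bbGamma)$ identifies the space of sections with
\begin{equation}
	\{\Phi\} \underset{\Map(\bbGamma, \Diff_{[\cG]}(M))}{\times} \Map(\bbGamma, \scSym(\cG))\,,
\end{equation}
which is exactly the space of lifts of $\Phi$ through $\scSym(\cG) \to \Diff_{[\cG]}(M)$ appearing in (2).

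The only point requiring care---and the main, if mild, obstacle---is the naturality of all three identifications in the object $(\bbGamma, \Phi)$ of $\Grp(\scP(N\Cart))_{/\Diff(M)}$. I would handle this by observing that each step manipulates representable mapping-space functors together with the pullback assignment $\Phi \mapsto \scSym_\Phi(\cG)$, both of which are functorial by construction; the fibre-over-$\id_\bbGamma$ computation is natural because $\id_\bbGamma$ and $\Phi$ vary functorially. Hence the pointwise equivalences assemble into equivalences of $\infty$-presheaves on $(\Grp(\scP(N\Cart))_{/\Diff(M)})^\opp$, completing the proof.
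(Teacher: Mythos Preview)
Your proof is correct and follows the same approach as the paper: $(1)\simeq(2)$ is Theorem~\ref{st:scSym represents Equivar} restated in terms of lifts, and $(2)\simeq(3)$ comes from the defining pullback square for $\scSym_\Phi(\cG)$. The paper's own proof is a two-sentence sketch of exactly these two steps; your version simply spells out the mapping-space manipulation for $(2)\simeq(3)$ and the naturality in $(\bbGamma,\Phi)$ in more detail than the paper does.
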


\begin{proof}
The equivalence between (1) and (2) is a rephrasing of the assertion of Theorem~\ref{st:scSym represents Equivar}.
The equivalence between (2) and (3) follows readily from the construction of $\scSym_\Phi(\cG)$ as a pullback of $\scSym(\cG)$ along the group morphism $\Phi$.
\end{proof}

%%%%%%%%%%%%%%%%%%%%%%%%%%%%%%%%%%%%%%%%%%%%%%%%%%%%%%%%%%%%%%%%%%%%%%%%%%%%

\section{Moduli $\infty$-prestacks of higher geometric structures}
\label{sec:moduli oo-prestacks of hgeo strs on M}

%%%%%%%%%%%%%%%%%%%%%%%%%%%%%%%%%%%%%%%%%%%%%%%%%%%%%%%%%%%%%%%%%%%%%%%%%%%%

Having constructed and analysed the smooth higher symmetries and automorphisms of a fixed higher geometric structure $\cG$ on $M$, we now construct moduli $\infty$-stacks of higher geometric data defined \textit{on} $\cG$, modulo the action of smooth higher symmetries or automorphisms of $\cG$ on these data.
We extract information about the topology of the resulting moduli $\infty$-prestacks and prove a criterion for when two of our moduli $\infty$-prestacks are equivalent; the key results here are Proposition~\ref{st:SOL^(n,D_cG(M))(cG,cA^(k)) depends only on [cG]} and Theorem~\ref{st:equiv result for Mdl oo-stacks}.
A priori, these moduli arise as $\infty$-\textit{pre}sheaves (or, equivalently, $\infty$-prestacks).
However, we show in Section~\ref{sec:descent for moduli oo-prestacks} that whenever the smooth higher group $\bbGamma$, which we allow as symmetries acting on the underlying manifold $M$, is described by a sheaf of 1-groups, our moduli $\infty$-prestacks are, in fact, $\infty$-stacks, i.e.~satisfy descent (Theorem~\ref{st:descent for spl pshs on tint BH}).

%%%%%%%%%%%%%%%%%%%%%%%%%%%%%%%%%%%%%%%%%%%%%%%%%%%%%%%%%%%%%%%%%%%%%%%%%%%%

\subsection{Families of configurations and solutions}
\label{sec:Fams of Confs and Sols}

%%%%%%%%%%%%%%%%%%%%%%%%%%%%%%%%%%%%%%%%%%%%%%%%%%%%%%%%%%%%%%%%%%%%%%%%%%%%

In this section we introduce the setting which we use to treat moduli of higher geometric structures.
The idea is that we wish to study some additional data on a given, fixed higher geometric structure $\cG$ on $M$.
These additional data may be constrained in a certain way, such as by requiring that they form solutions to certain field equations.
For instance, $\cG$ could be a higher bundle on $M$, and the additional data could consist of higher connections on $M$ and Riemannian metrics on $M$ which together satisfy a coupled set of field equations (see, in particular, Sections~\ref{sec:examples with higher U(1)-connections} and~\ref{sec:comparing GRic moduli} below).
Further, the additional data are acted on smoothly and homotopy coherently by the smooth higher symmetry group of $\cG$, and we wish to study the (homotopy) quotient by this action.

\begin{definition}
Given a simplicial presheaf $\Fix^\scD$ on $\rmB\scD$, a \textit{configuration presheaf over $\Fix^\scD$} is a simplicial presheaf $\Conf^\scD$ on $\rmB\scD$ together with a projective fibration
\begin{equation}
	p \colon \Conf^\scD \longrightarrow \Fix^\scD\,.
\end{equation}
\end{definition}

\begin{remark}
For $c \in \rmB\scD$, we think of the vertices of a configuration presheaf $\Conf^\scD(c)$ as $c$-parameterised families of configurations of geometric structures on $M$ (such as higher bundles).
Higher simplices of $\Conf^\scD$ encode $c$-families of (higher) isomorphisms of configurations, such as gauge transformations, and the functorial dependence on objects $c \in \rmB\scD$ encodes restriction and reparameterisation of families, as well as the smooth action of $\Diff(M)$ on families of configurations.

If $p \colon \Conf^\scD \to \Fix^\scD$ is a configuration presheaf over $\Fix^\scD$, the morphism $p$ encodes the assumption that the configurations have underlying higher geometric data given by a section of $\Fix^\scD$.
For instance, an important case is where $\Conf^\scD$ encodes smooth families of higher bundles with connection, $\Fix^\scD$ encodes smooth families of higher bundles without connection, and the morphism $\Conf^\scD \to \Fix^\scD$ forgets the connection data.
\qen
\end{remark}

Note that $\Conf^\scD$ is necessarily itself projectively fibrant.
Let $\cG$ be an element in $\Fix^\scD(\RR^0)$, or, equivalently, a global section of \smash{$e_M^*\Fix^\scD$}.
Suppose that $p \colon \Conf^\scD \to \Fix^\scD$ is a configuration presheaf over $\Fix^\scD$.
As before, we obtain restricted projectively fibrant simplicial presheaves
\begin{align}
	\Conf^{\scD[\cG]},\ \Fix^{\scD[\cG]} \colon \rmB \scD[\cG]^\opp &\longrightarrow \sSet
	\quad \text{and}
	\\
	\Conf^M \coloneqq e_M^*\Conf^{\scD[\cG]},\ \Fix^M
	\coloneqq e_M^*\Fix^{\scD[\cG]} \colon \Cart^\opp &\longrightarrow \sSet\,.
\end{align}

We denote the associated left fibrations of these simplicial presheaves (obtained via the rectification functor; see Appendix~\ref{app:rectification}) by
\begin{align}
	\textint \Conf^\scD \coloneqq r_{\rmB\scD^\opp}^* \Conf^\scD &\longrightarrow N \rmB \scD^\opp\,,
	\\
	\textint \Conf^{\scD[\cG]} \coloneqq r_{\rmB\scD[\cG]}^* \Conf^{\scD[\cG]} &\longrightarrow N \rmB \scD[\cG]^\opp\,,
	\\
	\textint \Conf^M \coloneqq r_{\Cart^\opp}^* \Conf^M &\longrightarrow N \Cart^\opp\,,
\end{align}
respectively, and analogously for $\Fix^\scD$.
Note that the morphisms
\begin{align}
	\Conf^{\scD[\cG]} \longrightarrow \Fix^{\scD[\cG]}
	\quad \text{and} \quad
	\Conf^M \longrightarrow \Fix^M\,,
\end{align}
are again objectwise Kan fibrations.
They induce canonical left fibrations
\begin{align}
\label{eq:LFibs CONF --> G}
	\textint \Conf^\scD
	\longrightarrow \textint \Fix^\scD\,,
	\qquad
	\textint \Conf^{\scD[\cG]}
	\longrightarrow \textint \Fix^{\scD[\cG]}\,,
	\qquad
	\textint \Conf^M
	\longrightarrow \textint \Fix^M\,,
\end{align}
over $N\rmB\scD^\opp$, $N\rmB\scD[\cG]^\opp$ and $N\Cart^\opp$, respectively.

\begin{definition}
\label{def:solution presheaf}
Let $p \colon \Conf^\scD \to \Fix^\scD$ be a projective fibration.
A \textit{solution presheaf} is a simplicial subpresheaf
\begin{equation}
	\Sol^\scD \subset \Conf^\scD
\end{equation}
on $\rmB\scD$ such that, for each $c \in \rmB\scD$, the inclusion $\Sol^\scD(c) \hookrightarrow \Conf^\scD(c)$ is an inclusion of a disjoint union of connected components.
\end{definition}

\begin{remark}
We think of the vertices $\Sol^\scD$ as consisting of those configurations which satisfy a certain condition, such as being solutions to a set of equations, imposed on the vertices of $\Conf^\scD$; see Part~\ref{part:Higher U(1) gauge fields} for examples.
The fact that we require each $\Sol^\scD(c) \hookrightarrow \Conf^\scD(c)$ to be an inclusion of connected components is equivalent to demanding that whether a family is a solution depends only on its isomorphism class:
if $\psi \in \Conf^\scD_1(c)$ is a 1-simplex, representing an equivalence between two families of configurations $d_1 \psi$ and $d_0 \psi$, then $d_1 \psi$ is a solution if and only if $d_0 \psi$ is so.
As for $\Conf^\scD$, the functoriality over $\rmB\scD$ implements that families of solutions are stable under restricting or reparameterising families, as well as pulling back along smooth families of diffeomorphisms on $M$.
\qen
\end{remark}

Consider a configuration presheaf $\Conf^\scD$ on $\rmB\scD$ and any solution subpresheaf $\Sol^\scD \subset \Conf^\scD$.
Being an inclusion of connected components, the inclusion morphism \smash{$\Sol^\scD \subset \Conf^\scD$} is an objectwise monomorphism as well as a projective fibration.
On the level of left fibrations this gives rise to a morphism
\begin{equation}
	\textint \Sol^\scD \hookrightarrow \textint \Conf^\scD
\end{equation}
(in \smash{$\sSet_{/N\rmB\scD^\opp}$}) on the associated left fibrations which is both a monomorphism (Lemma~\ref{st:r_C^* preserves injective cofibrations}) and a left fibration (Theorem~\ref{st:r_C^* as Quillen equivalence}).
Suppose that $p \colon \Conf^\scD \to \Fix^\scD$ is a configuration presheaf over $\Fix^\scD$.
Composing with the top left fibration from~\eqref{eq:LFibs CONF --> G}, we obtain a left fibration
\begin{equation}
	\textint \Sol^\scD \longrightarrow \textint \Fix^\scD\,.
\end{equation}

\begin{definition}
\label{def:tint Sol^(D[G])(cG)}
Let $\cG \in \Fix^\scD(\RR^0)$ be a global section of $e_M^*\Fix^M$.
We define the simplicial set \smash{$\textint \Conf^{\scD[\cG]}(\cG)$} and \smash{$\textint \Sol^{\scD[\cG]}(\cG)$} by demanding that the squares in the diagram
\begin{equation}
\label{eq:tint Sol^(D[G])(cG)}
\begin{tikzcd}[column sep=1.5cm]
	\textint \Sol^{\scD[\cG]}(\cG) \ar[r, hookrightarrow] \ar[d, hookrightarrow]
	& \textint \Conf^{\scD[\cG]}(\cG) \ar[r] \ar[d, hookrightarrow]
	& \textint \Fix^{\scD[\cG]}_{[\cG]} \ar[d, hookrightarrow]
	\\
	\textint \Sol^{\scD[\cG]} \ar[r, hookrightarrow]
	& \textint \Conf^{\scD[\cG]} \ar[r]
	& \textint \Fix^{\scD[\cG]}
\end{tikzcd}
\end{equation}
be cartesian.
\end{definition}

The vertical arrows in diagram~\eqref{eq:tint Sol^(D[G])(cG)} are inclusions on full simplicial subsets on certain vertices, and the horizontal arrows are left fibrations (by Theorem~\ref{st:r_C^* as Quillen equivalence}).
The resulting left fibrations
\begin{equation}
	\textint \Conf^{\scD[\cG]}(\cG) \longrightarrow N\rmB\scD[\cG]^\opp
	\qquad \text{and} \qquad
	\textint \Sol^{\scD[\cG]}(\cG) \longrightarrow N\rmB\scD[\cG]^\opp
\end{equation}
encode smooth families of configurations and solutions, respectively, whose image in $\textint \Fix^{\scD[\cG]}$ is equivalent to the constant family $\cG$.

\begin{remark}
By definition, the simplicial subpresheaf \smash{$\Fix^{\scD[\cG]}_{[\cG]} \subset \Fix^{\scD[\cG]}$} depends only on the equivalence class $[\cG]$ of $\cG$ in $\Fix^M(\RR^0)$.
Hence, so does the simplicial subset \smash{$\textint \Fix^{\scD[\cG]}(\cG) \subset \textint \Fix^{\scD[\cG]}$}.
It follows that also the simplicial subsets
\begin{equation}
	\textint \Sol^{\scD[\cG]}(\cG) \subset \textint \Sol^{\scD[\cG]}
	\qquad \text{and} \qquad
	\textint \Conf^{\scD[\cG]}(\cG) \subset \textint \Conf^{\scD[\cG]}
\end{equation}
depend only on the equivalence class of of $\cG$ in $\Fix^M(\RR^0)$.
\qen
\end{remark}

\begin{definition}
\label{def:objwise pi_0-surjective}
Let $\scC$ be a small category, and let $q \colon F_0 \to F_1$ be a projective fibration between projectively fibrant%
\footnote{One can also define 0-connectedness for generic morphisms $q \colon F_0 \to F_1$, by requiring that all homotopy fibres of \smash{$q_{|c}$} are connected.
For $q$ as in Definition~\ref{def:objwise pi_0-surjective}, the strict fibres already compute the homotopy fibres.}
simplicial presheaves on $\scC$.
We say that $q$ is \textit{0-connected} if, for each $c \in \scC$, the map
\begin{equation}
	\pi_0 q_{|c} \colon \pi_0 F_0(c) \longrightarrow \pi_0 F_1(c)
\end{equation}
induced by $q$ on connected components is a bijection.
\end{definition}

\begin{remark}
Equivalently, a left fibration between fibrant objects in $\sSet_{N\scC^\opp}$,
\begin{equation}
	q \colon X_0 \longrightarrow \textint X_1
\end{equation}
is called \textit{0-connected} if, for each $c \in \scC$, the restriction of $q$ to the fibres over $c$ induces a bijection between the connected components of the fibres.
\qen
\end{remark}

\begin{remark}
\label{rmk:(-)_[G] and isos on conn comps}
Let $q \colon G_0^\scD \to G_1^\scD$ be a morphism of projectively fibrant simplicial presheaves on $\rmB\scD$ which is 0-connected.
Let $\cG_0 \in G^M(\RR^0)$, and set $\cG_1 = q(\cG_0) \in G^M(\RR^0)$.
In this case, we have that
\begin{equation}
	\Diff_{[\cG_0]}(M) = \Diff_{[\cG_1]}(M)
	\qquad \text{and thus} \qquad
	\rmB\scD[\cG_0] = \rmB\scD[\cG_1]\,.
\end{equation}
Therefore, it makes sense to write
\begin{equation}
	G_{0 [\cG_1]}^{\scD[\cG_1]} \coloneqq G_{0 [\cG_0]}^{\scD[\cG_0]}\,,
\end{equation}
where \smash{$[\cG_0] \coloneqq (\pi_0 q)^{-1}([\cG_1]) \in \pi_0 G_0^{\scD_{[\cG_1]}}$}.
The morphism $p$ restricts to a morphism
\begin{equation}
	q \colon G_{0 [\cG_1]}^{\scD[\cG_1]} \longrightarrow G_{1 [\cG_1]}^{\scD[\cG_1]}\,,
\end{equation}
which we denote again by $q$ for ease of notation.
\qen
\end{remark}

\begin{remark}
Note that if $q \colon F_0 \to F_1$ is such that the induced morphism $\pi_0 q \colon \pi_0 F_0 \to \pi_0 F_1$ is only injective, we can replace $F_1$ by the full simplicial subpresheaf $F'_1 \subset F_1$ which assigns to $c \in \scC$  the subset $F'_1(c) \subset F_1(c)$ consisting of all those connected components of $F_1(c)$ which contain a vertex in the image of the map $q_{|c} \colon F_0(c) \to F_1(c)$.
\qen
\end{remark}

Consider a configuration presheaf $p \colon \Conf^\scD \longrightarrow \Fix_0^\scD$ over $\Fix_0^\scD$.
Let
\begin{equation}
	q \colon \Fix_0^\scD \longrightarrow \Fix_1^\scD
\end{equation}
be a projective fibration of projectively fibrant simplicial presheaves on $\rmB\scD$.
Then, we have a sequence of morphisms
\begin{equation}
\begin{tikzcd}
	\Sol^\scD \ar[r, hookrightarrow]
	& \Conf^\scD \ar[r] 
	& \Fix_0^\scD \ar[r, "q"]
	& \Fix_1^\scD\,,
\end{tikzcd}
\end{equation}
and the composite $\Conf^\scD \to \Fix_1^\scD$ is also a projective fibration between projectively fibrant simplicial presheaves on $\rmB\scD^\opp$.
In particular, we can consider families of solutions relative to fixed data either in $\Fix_0^\scD$, or in $\Fix_1^\scD$.
Further suppose that $q$ is 0-connected.
Let $\cG_0 \in \Fix_0^M(\RR^0)$ be a global section of $e_M^*\Fix_0^M$, and set $\cG_1 = q(\cG_0) \in \Fix_1^M(\RR^0)$.
By Remark~\ref{rmk:(-)_[G] and isos on conn comps} we may write \smash{$\Fix_{0 [\cG_1]}^{\scD[\cG_1]} = \Fix_{0 [\cG_0]}^{\scD[\cG_0]}$}.
The first of two key observations in this section is:

\begin{proposition}
\label{st:SOL^(n,D_cG(M))(cG,cA^(k)) depends only on [cG]}
In the above set-up there is an identity of left fibrations over $N\rmB\scD[\cG_0]^\opp = N\rmB\scD[\cG_1]^\opp$
\begin{equation}
	\textint \Sol^{\scD[\cG_0]}(\cG_0)
	= \textint \Sol^{\scD[\cG_1]}(\cG_1)\,.
\end{equation}
\end{proposition}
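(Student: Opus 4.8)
The plan is to collapse the whole statement to a single pullback and then to an objectwise computation of connected components, where $0$-connectedness does all the work. First I would unfold Definition~\ref{def:tint Sol^(D[G])(cG)}: pasting the two cartesian squares there shows that, for $i \in \{0,1\}$, the left fibration $\textint \Sol^{\scD[\cG_i]}(\cG_i)$ is the pullback of
\[
\textint \Sol^{\scD[\cG_i]} \longrightarrow \textint \Fix_i^{\scD[\cG_i]} \hookleftarrow \textint (\Fix_i)^{\scD[\cG_i]}_{[\cG_i]}\,.
\]
By Remark~\ref{rmk:(-)_[G] and isos on conn comps} we have $\rmB\scD[\cG_0] = \rmB\scD[\cG_1]$, so $\Sol^{\scD[\cG_0]}$ and $\Sol^{\scD[\cG_1]}$ are literally the same restriction of $\Sol^\scD$, whence $\textint \Sol^{\scD[\cG_0]} = \textint \Sol^{\scD[\cG_1]}$. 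Regarding $\Conf^\scD$ as a configuration presheaf over $\Fix_1^\scD$ via $q \circ p$, functoriality of the rectification $r^*$ shows that the structure map for $i=1$ factors as $\textint \Sol^{\scD[\cG_0]} \to \textint \Fix_0^{\scD[\cG_0]} \xrightarrow{\textint q} \textint \Fix_1^{\scD[\cG_1]}$, i.e.\ through the structure map for $i=0$.

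Then I would apply the pasting law for pullbacks a second time: the pullback computing $\textint \Sol^{\scD[\cG_1]}(\cG_1)$ equals the pullback of $\textint \Sol^{\scD[\cG_0]} \to \textint \Fix_0^{\scD[\cG_0]}$ along the subobject $(\textint q)^* \big( \textint (\Fix_1)^{\scD[\cG_1]}_{[\cG_1]} \big) \hookrightarrow \textint \Fix_0^{\scD[\cG_0]}$. Comparing with the pullback presentation of $\textint \Sol^{\scD[\cG_0]}(\cG_0)$, the entire statement thus reduces to the identity of subobjects
\[
(\textint q)^* \big( \textint (\Fix_1)^{\scD[\cG_1]}_{[\cG_1]} \big) = \textint (\Fix_0)^{\scD[\cG_0]}_{[\cG_0]}
\]
inside $\textint \Fix_0^{\scD[\cG_0]}$.

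To prove this last identity I would descend to simplicial presheaves. Since $r^*$ is a right adjoint it preserves pullbacks (Lemma~\ref{st:r_C^* and pullbacks}) and monomorphisms (Lemma~\ref{st:r_C^* preserves injective cofibrations}), so it suffices to show $q^{-1}\big( (\Fix_1)^{\scD[\cG_1]}_{[\cG_1]} \big) = (\Fix_0)^{\scD[\cG_0]}_{[\cG_0]}$ as subpresheaves on $\rmB\scD[\cG_0] = \rmB\scD[\cG_1]$, which is an objectwise question. By Definition~\ref{def:G_[cG]}, over each $c$ the set $(\Fix_i)^{\scD[\cG_i]}_{[\cG_i]}(c)$ is the connected component of $\cG_i(c)$ in $\Fix_i^M(c)$. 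Since $q(\cG_0) = \cG_1$, the map $q_{|c}$ sends the component of $\cG_0(c)$ into that of $\cG_1(c)$, giving the inclusion $(\Fix_0)^{\scD[\cG_0]}_{[\cG_0]}(c) \subseteq q^{-1}\big( (\Fix_1)^{\scD[\cG_1]}_{[\cG_1]}(c) \big)$. For the reverse inclusion I invoke $0$-connectedness (Definition~\ref{def:objwise pi_0-surjective}): $\pi_0 q_{|c}$ is a \emph{bijection} sending $[\cG_0(c)]$ to $[\cG_1(c)]$, so the preimage of the single component $[\cG_1(c)]$ is exactly the single component $[\cG_0(c)]$, and no other component of $\Fix_0^M(c)$ can map into it.

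The main obstacle — indeed the only place the hypothesis enters — is this reverse inclusion: it is precisely the bijectivity (not mere surjectivity) of $\pi_0 q_{|c}$ that forces the preimage of the component of $\cG_1$ to collapse onto the single component of $\cG_0$. Everything else is bookkeeping: keeping the identification $\scD[\cG_0] = \scD[\cG_1]$ straight, and verifying that the strict pullbacks and monomorphisms involved are genuinely preserved by $r^*$, so that the conclusion is an honest identity of simplicial sets rather than merely an equivalence.
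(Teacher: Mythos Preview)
Your proof is correct and takes a genuinely different route from the paper's. The paper works entirely at the level of left fibrations: it reduces to showing $\textint \Conf^{\scD[\cG_0]}(\cG_0) = \textint \Conf^{\scD[\cG_1]}(\cG_1)$, observes that both are full simplicial subsets so it suffices to check on vertices, and then uses the characterisation of $\textint \Fix^{\scD[\cG_i]}_{[\cG_i]}$ as the \emph{essential image} of the section $\sigma\cG_i$ (Remark~\ref{rmk:section associated to cG}). Because ``essential image'' allows the witnessing equivalence to start over a possibly different cartesian space $c'$, the paper needs an auxiliary horn-lifting argument to reduce to $c' = c$ before it can invoke the $\pi_0$-bijection.

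Your approach sidesteps this entirely: by recognising that $r^*$ is a right adjoint and hence preserves pullbacks, you push the whole question back to simplicial presheaves and use Definition~\ref{def:G_[cG]} directly --- which already describes $(\Fix_i)^{\scD[\cG_i]}_{[\cG_i]}(c)$ as a connected component of a fixed fibre, no essential-image subtlety in sight. The trade-off is that the paper's argument stays within the left-fibration world it has set up, while yours is shorter and more elementary but relies on the bookkeeping fact that limit-preservation of $r^*$ turns the strict pullback of presheaves into the strict pullback of simplicial sets. One small citation slip: the reference for ``$r^*$ preserves pullbacks'' should be Theorem~\ref{st:r_C^* as Quillen equivalence} (right adjoint), not Lemma~\ref{st:r_C^* and pullbacks}, which is about base change along functors of indexing categories.
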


\begin{proof}
It suffices to show that
\begin{equation}
	\textint \Conf^{\scD[\cG_0]}(\cG_0)
	= \textint \Conf^{\scD[\cG_1]}(\cG_1)\,.
\end{equation}
The claim then follows from the construction of $\textint \Sol^{\scD[\cG_i]}(\cG_i)$ as the left-hand pullback square in~\eqref{eq:tint Sol^(D[G])(cG)}.
In fact, since by construction \smash{$\textint \Conf^{\scD[\cG_i]}(\cG_i) \subset \textint \Conf^{\scD[\cG_i]}$} is a \textit{full} simplicial subset on a certain subset of vertices, it suffices to check the above identity on the level of vertices.

To that end, first note that $\textint \Conf^{\scD[\cG_0]} = \textint \Conf^{\scD[\cG_1]}$.
For $i = 0,1$, a vertex $x \in \textint \Conf^{\scD[\cG_i]}$ lies in the simplicial subset \smash{$\textint \Conf^{\scD[\cG_i]}(\cG_i) \subset \textint \Conf^{\scD[\cG_i]}$} precisely if its image under $\textint p \colon \textint \Conf^\scD \to \textint \Fix_i^\scD$ lies in the essential image of the functor \smash{$\sigma \cG_i \colon N\Cart^\opp \to \textint \Fix_i^{\scD[\cG_i]}$}, which is associated to the section $\cG_i$ by Remark~\ref{rmk:section associated to cG}.
By construction, we have that
\begin{equation}
	\textint q \circ \sigma \cG_0 = \sigma \cG_1\,.
\end{equation}
Thus, whenever $\textint p(x)$ is in the essential image of $\sigma \cG_0$, then $\textint q \circ \textint p(x)$ lies in the essential image of $\sigma \cG_1$.
That is
\begin{equation}
	\textint \Conf^{\scD[\cG_0]} (\cG_0) \subset \textint \Conf^{\scD[\cG_1]} (\cG_1)\,.
\end{equation}

We claim that the converse holds true as well:
assume that $x \in \textint \Conf^{\scD[\cG_0]}$ lies in the essential image of $\sigma \cG_1$.
We have to show that $\textint p(x)$ lies in the essential image of $\sigma \cG_0$.
Let $\alpha$ be a 1-simplex establishing an equivalence from $\sigma \cG_1(c')$ to $\textint q \circ \textint p(x)$, for some $c' \in \Cart$.
Let $c$ be the image in $\Cart$ of $x$.

First we show the auxiliary claim that we can always choose $c' = c$:
consider the commutative square
\begin{equation}
\begin{tikzcd}
	\Lambda^0_1 \ar[r] \ar[d, hookrightarrow]
	& \textint \Conf^{\scD[\cG_0]} \ar[d]
	\\
	\Delta^2 \ar[r]
	& N\Cart^\opp
\end{tikzcd}
\end{equation}
where the bottom morphism is the image of $\alpha$ composed with the identity on $c$.
The top morphism is given by the morphism $\sigma \cG_1(c') \to \sigma \cG_1(c)$ obtained by applying the map $\sigma \cG_1$ to the image of $\alpha$ in $\Cart^\opp$ (this yields one leg of the horn), together with the 1-simplex $\alpha$ itself.
Since the right-hand vertical morphism is a left fibration, the above square admits a lift $\beta \colon \Delta^2 \to \textint \Conf^{\scD[\cG_0]}$.
Then, $\alpha' \coloneqq d_0 \beta \colon \Delta^1 \to \textint \Conf^{\scD[\cG_0]}$ provides a 1-simplex
\begin{equation}
	\alpha' \colon \sigma \cG_1(c) \to \textint q \circ \textint p(x)
\end{equation}
in the fibre
\begin{equation}
	\big( \textint q \circ \textint p(x) \big)_{|c}
	\cong \Conf^{\scD[\cG_0]}(c)\,.
\end{equation}
Here we have used the canonical isomorphism $(r_\scC^*F)_{|c} \cong F(c)$ for any small category $\scC$, object $c \in \scC$ and functor $F \colon \scC \to \sSet$ (this arises from Lemma~\ref{st:r_C^* and pullbacks}).
This proves the auxiliary claim.

Consequently, we have that $\textint q \circ \textint p(x)$ lies in the same connected component of $\Fix_1^\scD(c)$ as $\sigma \cG_1(c) = \textint q \circ \sigma \cG_0(c)$.
Since the morphism $\textint q$ induces a bijection
\begin{equation}
	\pi_0 \big( \textint q \big)_{|c} \colon \pi_0 \big( \textint \Fix_0^\scD \big)_{|c} \longrightarrow \pi_0 \big( \textint \Fix_1^\scD \big)_{|c}\,,
\end{equation}
it now follows that also $\textint p (x)$ lies in the same connected component of $\Fix_0^\scD(c)$ as $\sigma \cG_0(c)$.
In particular, we deduce that $x \in \textint \Conf^{\scD[\cG_0]}(\cG_0)$.
Therefore, we have that
\begin{equation}
	\textint \Conf^{\scD[\cG_1]} (\cG_1) \subset \textint \Conf^{\scD[\cG_0]} (\cG_0)\,,
\end{equation}
which completes the proof.
\end{proof}

We also define the restriction
\begin{equation}
	\Sol^M \coloneqq e_M^* \Sol^\scD \colon \Cart^\opp \longrightarrow \sSet
\end{equation}
and its associated left fibration
\begin{equation}
	\textint \Sol^M \coloneqq r_{\Cart^\opp}^* \Sol^M \longrightarrow N\Cart^\opp\,.
\end{equation}
Note that by Lemma~\ref{st:r_C^* and pullbacks} there is a canonical isomorphism
\begin{equation}
	\textint \Sol^M \cong Ne_M^* \textint \Sol^\scD = Ne_M^* \textint \Sol^{\scD[\cG]}
\end{equation}
of left fibrations over $N\Cart^\opp$.
In analogy to construction~\eqref{eq:tint Sol^(D[G])(cG)}, we define two simplicial subsets \smash{$\textint \Conf^M(\cG) \subset \textint \Conf^M$} and \smash{$\textint \Sol^M(\cG) \subset \textint \Sol^M$} by demanding that the squares in the diagram
\begin{equation}
\label{eq:tint Sol^M(cG)}
\begin{tikzcd}[column sep=1.5cm]
	\textint \Sol^M(\cG) \ar[r, hookrightarrow] \ar[d, hookrightarrow]
	& \textint \Conf^M(\cG) \ar[r] \ar[d, hookrightarrow]
	& \textint \Fix^M_{[\cG]} \ar[d, hookrightarrow]
	\\
	\textint \Sol^M \ar[r, hookrightarrow]
	& \textint \Conf^M \ar[r]
	& \textint \Fix^M
\end{tikzcd}
\end{equation}
be cartesian.
It follows that there is a canonical isomorphism
\begin{equation}
\label{eq:nE_M^*SOL(cG,cA^(k))}
	Ne_M^* \textint \Sol^{\scD[\cG]}(\cG)
	\cong \textint \Sol^M(\cG)\,.
\end{equation}
We have the following direct consequence:

\begin{proposition}
\label{st:SOL fibseq in sSet}
There is a commutative diagram
\begin{equation}
\label{eq:tint Sol^M(cG)}
\begin{tikzcd}[column sep=1.5cm, row sep=0.75cm]
	\textint \Sol^M(\cG) \ar[r] \ar[d, hookrightarrow]
	& \textint \Fix^M_{[\cG]} \ar[d, hookrightarrow] \ar[r]
	& N\Cart^\opp \ar[d, hookrightarrow, "Ne_M"]
	\\
	\textint \Sol^{\scD[\cG]}(\cG) \ar[r]
	& \textint \Fix^{\scD[\cG]}_{[\cG]} \ar[r]
	& N\rmB\scD[\cG]^\opp
\end{tikzcd}
\end{equation}
in which both squares are cartesian and all horizontal arrows are left fibrations.
In particular, both squares are homotopy cartesian in $\sSet_{/N \rmB \scD[\cG]^\opp}$.
\end{proposition}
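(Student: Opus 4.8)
The plan is to derive the entire statement by pasting together the two pullback identifications already available, namely Lemma~\ref{st:Ne_M^* tint G^D_G = tint G^M_G} (applied with $G^\scD = \Fix^\scD$) and the canonical isomorphism~\eqref{eq:nE_M^*SOL(cG,cA^(k))}, and then reading off the left-fibration and homotopy-cartesian assertions from the properties of rectification. So the real content is bookkeeping with the pasting law plus an invocation of right properness; no new geometric input is needed.

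First I would establish the strict cartesianness of both squares. Lemma~\ref{st:Ne_M^* tint G^D_G = tint G^M_G} gives a canonical isomorphism $(Ne_M)^* \textint\Fix^{\scD[\cG]}_{[\cG]} \cong \textint\Fix^M_{[\cG]}$ over $N\Cart^\opp$, which is precisely the assertion that the right-hand square is cartesian. Similarly, the isomorphism~\eqref{eq:nE_M^*SOL(cG,cA^(k))}, reading $Ne_M^*\textint\Sol^{\scD[\cG]}(\cG) \cong \textint\Sol^M(\cG)$, says exactly that the outer rectangle — from $\textint\Sol^M(\cG)$ along the top to $N\Cart^\opp$ and down to $N\rmB\scD[\cG]^\opp$ — is cartesian, the bottom edge being the composite $\textint\Sol^{\scD[\cG]}(\cG) \to \textint\Fix^{\scD[\cG]}_{[\cG]} \to N\rmB\scD[\cG]^\opp$. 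Applying the right-cancellation form of the pasting law for pullbacks (outer rectangle cartesian together with right square cartesian forces the left square to be cartesian) then gives that the left-hand square is cartesian as well.

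Next I would check that all four horizontal arrows are left fibrations. The structure maps $\textint\Fix^M_{[\cG]} \to N\Cart^\opp$ and $\textint\Fix^{\scD[\cG]}_{[\cG]} \to N\rmB\scD[\cG]^\opp$ are left fibrations because they arise via rectification (Theorem~\ref{st:r_C^* as Quillen equivalence}). For the two solution maps, recall from~\eqref{eq:LFibs CONF --> G} that $\textint\Conf^{\scD[\cG]} \to \textint\Fix^{\scD[\cG]}$ is a left fibration and that $\textint\Sol^{\scD[\cG]} \hookrightarrow \textint\Conf^{\scD[\cG]}$ is simultaneously a monomorphism and a left fibration; hence their composite is a left fibration, and the defining cartesian squares in~\eqref{eq:tint Sol^(D[G])(cG)} exhibit $\textint\Sol^{\scD[\cG]}(\cG) \to \textint\Fix^{\scD[\cG]}_{[\cG]}$ as a base change of this composite, so it too is a left fibration by stability of left fibrations under pullback. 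The identical argument applied to~\eqref{eq:tint Sol^M(cG)} handles $\textint\Sol^M(\cG) \to \textint\Fix^M_{[\cG]}$.

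Finally I would deduce the homotopy-cartesian claim. In each square the horizontal arrow being pulled back (the bottom edge) is a left fibration, hence a covariant fibration, and its target is fibrant over $N\rmB\scD[\cG]^\opp$ — being either the terminal object $N\rmB\scD[\cG]^\opp$ itself or the left fibration $\textint\Fix^{\scD[\cG]}_{[\cG]}$ — so by right properness of the covariant model structure a strict pullback of such a fibration is a homotopy pullback. The main obstacle is exactly here: in contrast to the analogous Proposition~\ref{st:BSYM and BGAU properties}(3) and Lemma~\ref{st:BAUT BSYM_phi fib seq}(3), where the base was $N\Cart^\opp$ and $N\pi_M$ is a left fibration (making all objects fibrant over the base), the present base is $N\rmB\scD[\cG]^\opp$ and the vertical map $Ne_M$ is \emph{not} a left fibration, so the top-row objects fail to be fibrant over the base. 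One therefore cannot simply cite ``covariant fibrations between fibrant objects are left fibrations'' and must instead route the argument through right properness together with fibrancy of the bottom-right vertex; this is the one place in the otherwise routine deduction that requires genuine care.
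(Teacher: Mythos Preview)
Your proof is correct and matches what the paper intends: the paper presents this proposition with no explicit argument, simply labelling it a ``direct consequence'' of the defining pullback diagrams~\eqref{eq:tint Sol^(D[G])(cG)} and~\eqref{eq:tint Sol^M(cG)} together with Lemma~\ref{st:Ne_M^* tint G^D_G = tint G^M_G} and the isomorphism~\eqref{eq:nE_M^*SOL(cG,cA^(k))}. You have unpacked precisely these ingredients via the pasting law and base-change stability of left fibrations.

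Your observation that the top-row objects are not covariantly fibrant over $N\rmB\scD[\cG]^\opp$ (since $Ne_M$ is not a left fibration) is a genuine subtlety that the paper glosses over with its ``In particular''; the argument pattern used later in Corollary~\ref{st:SOL_res square is hoCart} (all vertices fibrant plus one leg a fibration) does not directly apply here. Your resolution via right properness is fine, but note that the paper never establishes right properness of the covariant model structure. An alternative more in line with the paper's toolkit: after factoring $Ne_M$ as a trivial cofibration followed by a left fibration, the comparison map on pullbacks is itself the pullback of a covariant weak equivalence along a left fibration, and pullback along left fibrations preserves covariant weak equivalences---this is part of left fibrations being smooth in the sense of~\cite[Def.~4.4.1]{Cisinski:HiC_and_HoA}, machinery the paper already invokes in Lemma~\ref{st:varphi^scD is smooth} and Proposition~\ref{st:proper base change for wtG^D}.
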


So far, we have studied the maximal and minimal smooth group actions on $M$ which have objectwise lifts to $\cG$, i.e.~the actions of $\Diff_{[\cG]}(M)$ and the trivial group, respectively.
Now we extend this to provide smooth higher symmetry groups for generic smooth higher group actions on $M$.
Suppose we are given the following data:
\begin{itemize}
\item a projectively fibrant simplicial presheaf $\Fix^\scD$ on $\rmB\scD$,

\item a section $\cG \in e_M^*\Fix^\scD(\RR^0)$,

\item a left fibration $Q \to N\Cart^\opp$ with reduced fibres (i.e.~the Kan complex $Q_{|c}$ has a single vertex, for each $c \in \Cart$),

\item a morphism $\phi \colon Q \to N\rmB\scD[\cG]^\opp$ in $\sSet_{/N \Cart^\opp}$.
\end{itemize}
In this situation, recall the construction of $\rmB\SYM^\rev_\phi(\cG)$ from Definition~\ref{def:BSYM_(Q,phi)(G)} (see also Definition~\ref{def:BGAU(G), BSYM(G)}).

\begin{definition}
\label{def:tint Sol_phi(cG), Sol_res(cG)}
We define simplicial sets as the pullbacks
\begin{equation}
\label{eq:def tint Sol_phi(cG), Sol_res(cG)}
\begin{tikzcd}[column sep=1cm]
	\textint \Sol^{\scD[\cG]}_\phi(\cG) \ar[r, hookrightarrow] \ar[d]
	& \phi^* \textint \Sol^{\scD[\cG]}(\cG) \ar[d]
	& \textint \Sol^M_\res(\cG) \ar[r, hookrightarrow] \ar[d]
	& \textint \Sol^M(\cG) \ar[d]
	\\
	\rmB\SYM^\rev_\phi(\cG) \ar[r, hookrightarrow]
	& \phi^* \textint \Fix^{\scD[\cG]}_{[\cG]}
	& \rmB\AUT^\rev(\cG) \ar[r, hookrightarrow]
	& \textint \Fix^M_{[\cG]}
\end{tikzcd}
\end{equation}
\end{definition}

\begin{remark}
Consider the case $Q = N\rmB\scD[\cG]^\opp$ and $\phi$ the identity morphism.
In this case the left-hand square in~\eqref{eq:def tint Sol_phi(cG), Sol_res(cG)} becomes
\begin{equation}
\begin{tikzcd}[column sep=1cm]
	\textint \Sol^{\scD[\cG]}_\id(\cG) \ar[r, hookrightarrow] \ar[d]
	& \textint \Sol^{\scD[\cG]}(\cG) \ar[d]
	\\
	\rmB\SYM^\rev(\cG) \ar[r, hookrightarrow]
	& \textint \Fix^{\scD[\cG]}_{[\cG]}
\end{tikzcd}
\end{equation}
Note that, in general, the top horizontal arrow is a strict inclusion.
\qen
\end{remark}

\begin{remark}
\label{rmk:Sol_res in terms of Sol}
Consider the case $Q = N\Cart^\opp$ and $\phi = N e_M$; this corresponds to the case where the smooth higher group represented by the left fibration $Q \to N\Cart^\opp$ is the trivial group.
By Proposition~\ref{st:SOL fibseq in sSet} there is a canonical isomorphism
\begin{equation}
	\textint \Sol^{\scD[\cG]}_{Ne_M}(\cG)
	\cong \textint \Sol^M_\res(\cG)
\end{equation}
of left fibrations over $N\Cart^\opp$.
In particular, it follows from the pasting law for pullbacks that the square
\begin{equation}
\label{eq:SOL_res square}
\begin{tikzcd}[column sep=1cm, row sep=0.75cm]
	\textint \Sol^M_\res(\cG) \ar[d, hookrightarrow] \ar[r]
	& \rmB\AUT^\rev(\cG) \ar[d, hookrightarrow]
	\\
	\textint \Sol^{\scD[\cG]}_\phi(\cG) \ar[r]
	& \rmB\SYM^\rev_\phi(\cG)
\end{tikzcd}
\end{equation}
is a pullback square as well.
Hence, upon augmenting this square by the canonical maps to $N\Cart^\opp$, it becomes cartesian in $\sSet_{/N\Cart^\opp}$ (by Lemma~\ref{st:limits in slices and contractibility}).
\qen
\end{remark}

The following theorem is the second main result of this section.
Consider again the situation of Proposition~\ref{st:SOL^(n,D_cG(M))(cG,cA^(k)) depends only on [cG]}.
That is, consider projective fibrations $p \colon \Conf^\scD \to \Fix_0^\scD$ and $q \colon \Fix_0^\scD \to \Fix_1^\scD$ between projectively fibrant simplicial presheaves on $\rmB\scD$, and assume that $q$ is 0-connected (see Definition~\ref{def:objwise pi_0-surjective}).
Suppose that $\cG_0 \in \Fix_0^M(\RR^0)$ is a section and set $\cG_1 \coloneqq q_{|\RR^0} (\cG_0) \in \Fix_1^M(\RR^0)$.
Recall that $\rmB\scD[\cG_0] = \rmB\scD[\cG_1]$ in this case (see Remark~\ref{rmk:(-)_[G] and isos on conn comps}).
We obtain pullback squares
\begin{equation}
\begin{tikzcd}[column sep=1cm, row sep=0.75cm]
	\textint \Sol^M_\res(\cG_i) \ar[d, hookrightarrow] \ar[r]
	& \rmB\AUT^\rev(\cG_i) \ar[d, hookrightarrow] \ar[r]
	& N\Cart^\opp \ar[d, "Ne_M"]
	\\
	\textint \Sol^{\scD[\cG_i]}_\phi(\cG_i) \ar[r]
	& \rmB\SYM^\rev_\phi(\cG_i) \ar[r]
	& Q
\end{tikzcd}
\end{equation}
for each $i = 0,1$.

\begin{theorem}
\label{st:Solution zig-zag}
In the above situation there is a canonical zig-zag
\begin{equation}
\label{eq:Solution zig-zag}
\begin{tikzcd}
	\textint \Sol^{\scD[\cG_0]}_\phi(\cG_0) \ar[r, hookrightarrow]
	& \phi^* \textint \Sol^{\scD[\cG_0]}(\cG_0) \ar[r, equal]
	& \phi^* \textint \Sol^{\scD[\cG_1]}(\cG_1)
	& \textint \Sol^{\scD[\cG_1]}_\phi(\cG_1) \ar[l, hookrightarrow]
\end{tikzcd}
\end{equation}
of covariant weak equivalences between fibrant objects in $\sSet_{/Q}$.
In particular, each of these maps is a weak categorical equivalence.
\end{theorem}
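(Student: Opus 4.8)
The plan is to first dispose of the central equality and then show the two flanking inclusions are covariant weak equivalences. The middle equality is immediate: by Remark~\ref{rmk:(-)_[G] and isos on conn comps} we have $\rmB\scD[\cG_0] = \rmB\scD[\cG_1]$, so $\phi$ has the same codomain $N\rmB\scD[\cG_0]^\opp = N\rmB\scD[\cG_1]^\opp$ in both cases, and applying the pullback functor $\phi^*$ to the identity of left fibrations $\textint \Sol^{\scD[\cG_0]}(\cG_0) = \textint \Sol^{\scD[\cG_1]}(\cG_1)$ from Proposition~\ref{st:SOL^(n,D_cG(M))(cG,cA^(k)) depends only on [cG]} yields $\phi^* \textint \Sol^{\scD[\cG_0]}(\cG_0) = \phi^* \textint \Sol^{\scD[\cG_1]}(\cG_1)$. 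It thus suffices to treat one index $i$ and prove that the inclusion $\textint \Sol^{\scD[\cG_i]}_\phi(\cG_i) \hookrightarrow \phi^* \textint \Sol^{\scD[\cG_i]}(\cG_i)$ from Definition~\ref{def:tint Sol_phi(cG), Sol_res(cG)} is a covariant weak equivalence in $\sSet_{/Q}$. I would begin by recording that all four simplicial sets in the defining pullback square of Definition~\ref{def:tint Sol_phi(cG), Sol_res(cG)} are left fibrations over $Q$, hence covariantly fibrant: each arises from a left fibration over $N\rmB\scD[\cG]^\opp$ (via Proposition~\ref{st:BSYM and BGAU properties}(1) and Proposition~\ref{st:SOL fibseq in sSet}) by pullback along $\phi$ and further base changes along left fibrations, and left fibrations are closed under composition and base change. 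Combined with Lemma~\ref{st:cov weqs between LFibs are exactly cat weqs}, this simultaneously upgrades every covariant weak equivalence to a weak categorical equivalence, settling the final sentence of the statement.

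The engine of the argument is the fibrewise characterisation of covariant weak equivalences: a map of left fibrations over a base $S$ is a covariant weak equivalence exactly when it restricts to a weak homotopy equivalence on the fibre over each vertex of $S$. The first substantive step is to show that the bottom map $j \colon \rmB\SYM^\rev_\phi(\cG) \hookrightarrow \phi^* \textint \Fix^{\scD[\cG]}_{[\cG]}$ of the Definition~\ref{def:tint Sol_phi(cG), Sol_res(cG)} square is a covariant weak equivalence over $Q$. By Lemma~\ref{st:SYM into GRB and GAU into GRB are cov weqs}, applied with $G^\scD = \Fix^\scD$, the inclusion $\rmB\SYM^\rev(\cG) \hookrightarrow \textint \Fix^{\scD[\cG]}_{[\cG]}$ is a covariant weak equivalence over $N\rmB\scD[\cG]^\opp$, hence fibrewise a weak equivalence there. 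Since the fibre of a $\phi$-pullback over $x \in Q$ coincides with the fibre of the original left fibration over $\phi(x)$, the map $j$ is still fibrewise a weak equivalence over $Q$, and therefore a covariant weak equivalence.

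It then remains to transport this equivalence through the left fibration $\pi \colon \phi^* \textint \Sol^{\scD[\cG]}(\cG) \to \phi^* \textint \Fix^{\scD[\cG]}_{[\cG]}$, along which $\textint \Sol^{\scD[\cG]}_\phi(\cG)$ is, by construction, the pullback of $j$. Restricting to the fibre over a vertex $x \in Q$ and using that pullbacks commute with fibres, the top inclusion restricts over $x$ to the base change, along the Kan fibration $\pi_{|x}$ (a left fibration between the Kan complexes that are the fibres over $x$), of the weak equivalence $j_{|x}$ between Kan complexes. At this point I would invoke right properness of the Kan--Quillen model structure on $\sSet$: the pullback of a weak equivalence along a Kan fibration is again a weak equivalence. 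This makes the top inclusion fibrewise a weak equivalence over $Q$, hence a covariant weak equivalence, which completes the index-$i$ case and so the whole zig-zag.

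The main obstacle, and the place that demands care, is exactly the stability of covariant weak equivalences under the two base changes (once along $\phi$, once along $\pi$). I expect the cleanest route is the fibrewise reduction above, which converts these stability questions into the elementary facts that fibres are preserved by pullback and that Kan--Quillen is right proper. The alternative of arguing directly that $\phi^*$ and pullback along $\pi$ are right Quillen for the relevant covariant model structures is available, but it would require verifying that post-composition preserves the generating covariant trivial cofibrations, which is more delicate and less transparent than the fibrewise approach.
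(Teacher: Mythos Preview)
Your argument is correct. The paper follows the same outline---middle equality from Proposition~\ref{st:SOL^(n,D_cG(M))(cG,cA^(k)) depends only on [cG]}, the flanking inclusions handled separately, categorical equivalences from Lemma~\ref{st:cov weqs between LFibs are exactly cat weqs}---but packages the flanking step differently. Where you reduce to fibres and invoke Kan--Quillen right properness over each vertex of $Q$, the paper (in Proposition~\ref{st:SOL_res --> SOL is equivalence}) argues model-categorically: it records that $\phi^* \colon \sSet_{/N\rmB\scD[\cG]^\opp} \to \sSet_{/Q}$ is right Quillen for the covariant model structures, so the bottom horizontal map of the defining square in Definition~\ref{def:tint Sol_phi(cG), Sol_res(cG)} remains a covariant weak equivalence between fibrant objects after pullback, and then invokes the dual of \cite[Prop.~A.2.4.4]{Lurie:HTT} to conclude that the strict pullback along the vertical left fibration is already a homotopy pullback, whence the top map is a weak equivalence as well. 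This is exactly the alternative you flag in your final paragraph; the right-Quillen property of $\phi^*$ is in fact a standard one-line check (its left adjoint $\phi_!$ is post-composition, which visibly preserves monomorphisms and left anodyne maps), so it is not more delicate than your route. Your fibrewise reduction has the advantage of being self-contained and bypassing the model-categorical citation; the paper's version has the advantage of yielding the stronger homotopy-cartesian statement for the whole square, which it reuses downstream in Corollaries~\ref{st:SOL_res square is hoCart} and~\ref{st:SOL fibseq in sSet_(/NCart^op)}.
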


Before proving Theorem~\ref{st:Solution zig-zag} we show the following proposition:

\begin{proposition}
\label{st:SOL_res --> SOL is equivalence}
The squares in~\eqref{eq:def tint Sol_phi(cG), Sol_res(cG)} are homotopy cartesian in the covariant model structure on $\sSet_{/Q}$ and $\sSet_{/N \Cart^\opp}$, respectively.
Its horizontal morphisms are covariant weak equivalences over $Q$ and $N\Cart^\opp$, respectively.
\end{proposition}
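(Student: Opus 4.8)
The plan is to reduce both squares of~\eqref{eq:def tint Sol_phi(cG), Sol_res(cG)} to the already-established Lemma~\ref{st:SYM into GRB and GAU into GRB are cov weqs}, which asserts that the inclusions $\rmB\SYM^\rev(\cG) \hookrightarrow \textint \Fix^{\scD[\cG]}_{[\cG]}$ and $\rmB\AUT^\rev(\cG) \hookrightarrow \textint \Fix^M_{[\cG]}$ are covariant weak equivalences between fibrant objects. I treat the two squares in parallel, focusing on the left-hand square over $Q$ and indicating the (formally identical, and slightly easier) modifications for the right-hand square over $N\Cart^\opp$.

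First I would show that the \emph{bottom} horizontal morphisms are covariant weak equivalences. For the right-hand square this is exactly Lemma~\ref{st:SYM into GRB and GAU into GRB are cov weqs}. For the left-hand square, recall from Definition~\ref{def:BSYM_(Q,phi)(G)} that $\rmB\SYM^\rev_\phi(\cG) = \phi^* \rmB\SYM^\rev(\cG)$, so its bottom morphism is obtained by applying $\phi^*$ to the covariant weak equivalence $\rmB\SYM^\rev(\cG) \hookrightarrow \textint \Fix^{\scD[\cG]}_{[\cG]}$. Since $Q$ is an $\infty$-category (being a left fibration over $N\Cart^\opp$) and $\phi \colon Q \to N\rmB\scD[\cG]^\opp$ is a functor, the pullback functor $\phi^*$ is right Quillen for the covariant model structures and hence preserves weak equivalences between fibrant objects; equivalently, under straightening $\phi^*$ is precomposition with $\phi$, which sends an objectwise equivalence of $\scS$-valued functors to an objectwise equivalence. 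As source and target are left fibrations over $Q$ (hence fibrant), the bottom morphism is a covariant weak equivalence over $Q$.

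Next I would argue that each square is homotopy cartesian. The right vertical maps $\phi^* \textint \Sol^{\scD[\cG]}(\cG) \to \phi^* \textint \Fix^{\scD[\cG]}_{[\cG]}$ and $\textint \Sol^M(\cG) \to \textint \Fix^M_{[\cG]}$ are left fibrations: the unpulled maps $\textint \Sol^{\scD[\cG]}(\cG) \to \textint \Fix^{\scD[\cG]}_{[\cG]}$ and $\textint \Sol^M(\cG) \to \textint \Fix^M_{[\cG]}$ are left fibrations by Proposition~\ref{st:SOL fibseq in sSet}, and left fibrations are stable under pullback. A left fibration between left fibrations over a fixed base is a covariant fibration between fibrant objects (the opposite of~\cite[Thm.~4.1.5]{Cisinski:HiC_and_HoA}). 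Since the squares in~\eqref{eq:def tint Sol_phi(cG), Sol_res(cG)} are strict pullbacks whose right legs are covariant fibrations and all four vertices are fibrant, they are homotopy cartesian in the covariant model structures on $\sSet_{/Q}$ and $\sSet_{/N\Cart^\opp}$, respectively.

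The assertion about the \emph{top} horizontal morphisms is then automatic: in any model category, a homotopy cartesian square whose bottom morphism is a weak equivalence has its top morphism a weak equivalence, since the cospan map from $(\,\rmB\SYM^\rev_\phi(\cG) \xrightarrow{\sim} \phi^*\textint\Fix^{\scD[\cG]}_{[\cG]} \leftarrow \phi^*\textint\Sol^{\scD[\cG]}(\cG)\,)$ to the constant cospan is a pointwise weak equivalence, and taking homotopy pullbacks identifies the top object with $\phi^*\textint\Sol^{\scD[\cG]}(\cG)$ (respectively $\textint\Sol^M(\cG)$) so that the top morphism is the resulting equivalence. Hence both horizontal morphisms of each square are covariant weak equivalences, which completes the proof. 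I expect the only genuine subtlety to be the base-change step for the left-hand square, namely verifying that pulling back along $\phi$ preserves the covariant weak equivalence of Lemma~\ref{st:SYM into GRB and GAU into GRB are cov weqs}; because $\phi$ is merely a functor rather than a fibration, this relies on the right-Quillen (equivalently straightening) property rather than on naive stability, after which the remaining steps are formal consequences of the squares being homotopy cartesian.
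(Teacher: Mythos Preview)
Your proposal is correct and follows essentially the same approach as the paper: both arguments establish that the bottom horizontal morphisms are covariant weak equivalences via the right Quillen property of $\phi^*$ applied to Lemma~\ref{st:SYM into GRB and GAU into GRB are cov weqs}, then use that the right vertical morphisms are left fibrations (hence covariant fibrations between fibrant objects) to conclude that the strict pullback squares are homotopy cartesian, and finally deduce that the top morphisms are weak equivalences. The paper's version is slightly more terse and cites the dual of~\cite[Prop.~A.2.4.4]{Lurie:HTT} for the homotopy-cartesian step, but the logical structure is identical.
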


\begin{proof}
We show the claim for the left-hand diagram; the case of the right-hand diagram is analogous.
The cospan diagram underlying this pullback square has the following properties:
(1) each of its objects is a left fibration over $Q$, (2) its vertical arrows are left fibrations, and (3) its bottom horizontal morphism is a covariant weak equivalence over $Q$.
Each of these properties follow from the fact that the functor
\begin{equation}
	\phi^* \colon \sSet_{/N \rmB \scD[\cG]^\opp} \longrightarrow \sSet_{/Q}
\end{equation}
is right Quillen with respect to the covariant model structures.
The squares are cartesian in $\sSet$ an thus, by Lemma~\ref{st:limits in slices and contractibility}, also in $\sSet_{/Q}$.
By the dual statement of~\cite[Prop.~A.2.4.4]{Lurie:HTT}, properties (1) and (2) then imply that the pullback square under consideration is homotopy cartesian in $\sSet_{/Q}$.
Combining this with property (3), we obtain that its top morphism is a weak equivalence in the covariant model structure as well.
\end{proof}

\begin{proof}[Proof of Theorem~\ref{st:equiv result for Mdl oo-stacks}]
The identity in the middle of~\eqref{eq:Solution zig-zag} is a direct consequence of Proposition~\ref{st:SOL^(n,D_cG(M))(cG,cA^(k)) depends only on [cG]}.
The statement for the left and right inclusion morphisms follows from Proposition~\ref{st:SOL_res --> SOL is equivalence}.
Finally, Lemma~\ref{st:cov weqs between LFibs are exactly cat weqs} (together with the fact that $Q$ is an $\infty$-category) implies that the morphisms in~\eqref{eq:Solution zig-zag} are also categorical weak equivalences.
\end{proof}

For later convenience we also record the following results:

\begin{corollary}
\label{st:SOL_res square is hoCart}
The square~\eqref{eq:SOL_res square}, augmented by the canonical maps to $N\Cart^\opp$, is homotopy cartesian in the covariant model structure on $\sSet_{/N \Cart^\opp}$.
\end{corollary}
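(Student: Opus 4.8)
The plan is to recognise that Corollary~\ref{st:SOL_res square is hoCart} is a direct analogue of Proposition~\ref{st:SOL_res --> SOL is equivalence} and to reuse essentially the same homotopy-theoretic input. First I would recall that the square~\eqref{eq:SOL_res square} is a \emph{strict} pullback in $\sSet$: this was established in Remark~\ref{rmk:Sol_res in terms of Sol} via the pasting law for pullbacks, starting from the isomorphism $\textint \Sol^{\scD[\cG]}_{Ne_M}(\cG) \cong \textint \Sol^M_\res(\cG)$ supplied by Proposition~\ref{st:SOL fibseq in sSet} and Definition~\ref{def:tint Sol_phi(cG), Sol_res(cG)}. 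By Lemma~\ref{st:limits in slices and contractibility}, augmenting by the canonical maps to $N\Cart^\opp$ keeps the square cartesian in $\sSet_{/N \Cart^\opp}$, so only the \emph{homotopy} cartesian-ness remains to be shown.

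Next I would verify the two hypotheses needed to apply the dual of~\cite[Prop.~A.2.4.4]{Lurie:HTT}, exactly as in the proof of Proposition~\ref{st:SOL_res --> SOL is equivalence}. The first is that all four corners are fibrant over $N\Cart^\opp$, i.e.\ left fibrations: the object $\rmB\AUT^\rev(\cG) \to N\Cart^\opp$ is a left fibration by Proposition~\ref{st:BSYM and BGAU properties}(1); the composite $\rmB\SYM^\rev_\phi(\cG) \to Q \to N\Cart^\opp$ is a composite of left fibrations (Definition~\ref{def:BSYM_(Q,phi)(G)}); and $\textint \Sol^M_\res(\cG)$ and $\textint \Sol^{\scD[\cG]}_\phi(\cG)$ are left fibrations over $N\Cart^\opp$ by Proposition~\ref{st:SOL_res --> SOL is equivalence}. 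The second hypothesis is that one leg of the defining cospan is a left fibration; here I would observe that the two horizontal morphisms $\textint \Sol^M_\res(\cG) \to \rmB\AUT^\rev(\cG)$ and $\textint \Sol^{\scD[\cG]}_\phi(\cG) \to \rmB\SYM^\rev_\phi(\cG)$ are themselves left fibrations, since, by Definition~\ref{def:tint Sol_phi(cG), Sol_res(cG)}, each is obtained by pulling back the left fibration $\textint \Sol^{\scD[\cG]}(\cG) \to \textint \Fix^{\scD[\cG]}_{[\cG]}$ (respectively its restriction $\textint \Sol^M(\cG) \to \textint \Fix^M_{[\cG]}$) along an inclusion, and left fibrations are stable under pullback.

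With these facts in hand, the dual of~\cite[Prop.~A.2.4.4]{Lurie:HTT} immediately yields that the strict pullback square~\eqref{eq:SOL_res square}, augmented to $\sSet_{/N \Cart^\opp}$, is homotopy cartesian in the covariant model structure, which completes the argument. I do not expect a genuine obstacle: the statement is a formal consequence of the pullback presentation recorded in Remark~\ref{rmk:Sol_res in terms of Sol} together with the left-fibration bookkeeping already assembled for Proposition~\ref{st:SOL_res --> SOL is equivalence}. The only point requiring a little care is to orient the cospan so that the leg declared a fibration really is a left fibration over $N\Cart^\opp$; choosing the horizontal maps for this role—rather than the inclusions $\textint \Sol^M_\res(\cG) \hookrightarrow \textint \Sol^{\scD[\cG]}_\phi(\cG)$ and $\rmB\AUT^\rev(\cG) \hookrightarrow \rmB\SYM^\rev_\phi(\cG)$, which need not be fibrations—makes this transparent.
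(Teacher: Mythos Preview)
Your proposal is correct and follows essentially the same approach as the paper: establish that the square is a strict pullback (hence cartesian in the slice via Lemma~\ref{st:limits in slices and contractibility}), note that all vertices are covariantly fibrant and that the horizontal morphisms are left fibrations, and conclude homotopy-cartesianness. Your write-up is somewhat more explicit about the justifications for fibrancy and the left-fibration property of the horizontal maps, and you cite~\cite[Prop.~A.2.4.4]{Lurie:HTT} directly where the paper leaves this implicit, but the argument is the same.
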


\begin{proof}
The square is a pullback in $\sSet$.
Hence, its augmentation is a pullback square in $\sSet_{/N \Cart^\opp}$ by Lemma~\ref{st:limits in slices and contractibility} (actually only using its 1-categorical version; see Remark~\ref{rmk:limits in 1-Cat slices and connectedness}).
Each of its vertices is fibrant in the covariant model structure on $\sSet_{/N \Cart^\opp}$, and the horizontal morphisms are left fibrations; thus, the square is indeed homotopy cartesian in the covariant model structure on $\sSet_{/N \Cart^\opp}$.
\end{proof}

\begin{corollary}
\label{st:SOL fibseq in sSet_(/NCart^op)}
Both squares in the diagram
\begin{equation}
\begin{tikzcd}[column sep=1cm, row sep=0.75cm]
	\textint \Sol^M_\res(\cG) \ar[d, hookrightarrow] \ar[r]
	& \rmB\AUT^\rev(\cG) \ar[d, hookrightarrow] \ar[r]
	& N\Cart^\opp \ar[d, "Ne_M"]
	\\
	\textint \Sol^{\scD[\cG]}_\phi(\cG) \ar[r]
	& \rmB\SYM^\rev_\phi(\cG) \ar[r]
	& Q
\end{tikzcd}
\end{equation}
are cartesian and homotopy cartesian in $\sSet_{/N \Cart^\opp}$.
It follows that the outer square is both cartesian and homotopy cartesian in $\sSet_{/N \Cart^\opp}$ as well.
\end{corollary}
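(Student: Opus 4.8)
The plan is to recognise each of the two inner squares as one that has already been analysed earlier in this section, and then to assemble the outer rectangle via the pasting law for (homotopy) pullbacks. No genuinely new computation is required.

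First I would observe that the left-hand square is \emph{verbatim} the square~\eqref{eq:SOL_res square} of Remark~\ref{rmk:Sol_res in terms of Sol}. That remark already establishes that it is a pullback in $\sSet$ and, after augmentation by the canonical maps to $N\Cart^\opp$, cartesian in $\sSet_{/N\Cart^\opp}$; Corollary~\ref{st:SOL_res square is hoCart} establishes in addition that the augmented square is homotopy cartesian in the covariant model structure on $\sSet_{/N\Cart^\opp}$. Hence the left square needs no new argument.

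Next I would treat the right-hand square. Reflecting it across its main diagonal---that is, interchanging the two vertices $N\Cart^\opp$ and $\rmB\SYM^\rev_\phi(\cG)$---turns it into the square of Lemma~\ref{st:BAUT BSYM_phi fib seq}, whose corners are $\rmB\AUT^\rev(\cG)$, $\rmB\SYM^\rev_\phi(\cG)$, $N\Cart^\opp$ and $Q$. Both the property of being a strict pullback and that of being a homotopy pullback depend only on the underlying cospan and are therefore unchanged by this transposition. Thus the right square is cartesian and, by Lemma~\ref{st:BAUT BSYM_phi fib seq}(3), homotopy cartesian in $\sSet_{/N\Cart^\opp}$.

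Finally I would conclude by pasting. For the strict statement, the outer rectangle is a pullback by the ordinary pasting law for pullback squares. For the homotopy statement, every corner of the composite diagram is fibrant in the covariant model structure on $\sSet_{/N\Cart^\opp}$ (each is a left fibration over $N\Cart^\opp$, by Proposition~\ref{st:BSYM and BGAU properties}(1), Definition~\ref{def:BSYM_(Q,phi)(G)} and Proposition~\ref{st:SOL_res --> SOL is equivalence}) and every horizontal morphism is a left fibration; consequently strict pullbacks compute homotopy pullbacks, and the pasting law for homotopy cartesian squares gives that the outer square is homotopy cartesian as well. The one point deserving attention---the closest thing to an obstacle---is precisely this bookkeeping of covariant fibrancy and of the left-fibration property for each edge of the two-square diagram, so that the homotopy pasting law is legitimately applicable; but all of these facts are already recorded in the cited results, so the verification is routine.
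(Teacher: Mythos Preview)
Your proposal is correct and follows essentially the same route as the paper's proof: the left square is handled by Corollary~\ref{st:SOL_res square is hoCart}, the right square by Lemma~\ref{st:BAUT BSYM_phi fib seq}, and the outer rectangle by the pasting law for (homotopy) pullbacks. Your version is simply more explicit---you spell out the transposition needed to match the right square with the square of Lemma~\ref{st:BAUT BSYM_phi fib seq} and the fibrancy bookkeeping justifying the homotopy pasting law---whereas the paper compresses all of this into three sentences.
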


\begin{proof}
The left-hand square has both properties by Corollary~\ref{st:SOL_res square is hoCart}.
The right-hand square is cartesian and homotopy cartesian by Lemma~\ref{st:BAUT BSYM_phi fib seq}.
For the outer square, the claim follows from the pasting law for (homotopy) pullbacks.
\end{proof}

%%%%%%%%%%%%%%%%%%%%%%%%%%%%%%%%%%%%%%%%%%%%%%%%%%%%%%%%%%%%%%%%%%%%%%%%%%%%

\subsection{Solutions modulo symmetries and automorphisms}

%%%%%%%%%%%%%%%%%%%%%%%%%%%%%%%%%%%%%%%%%%%%%%%%%%%%%%%%%%%%%%%%%%%%%%%%%%%%

In the previous section we have investigated left fibrations of smooth families of solutions.
We now consider their classifying $\infty$-presheaves and the moduli stacks of solutions obtained by dividing out the actions of automorphisms and symmetries of higher geometric structures in a homotopy coherent way.
In particular, we prove a criterion for when two moduli $\infty$-prestacks in our formalism are equivalent.

The left fibration
\begin{equation}
	\textint \Sol^{\scD[\cG]} \longrightarrow \textint \Fix^{\scD[\cG]}
\end{equation}
is classified by a unique $\infty$-functor
\begin{equation}
	\scSol^{\scD[\cG]} \colon \textint \Fix^{\scD[\cG]} \longrightarrow \scS\,.
\end{equation}
Given a section $\cG \in \Fix^M(\RR^0)$, we let
\begin{equation}
	\scSol^{\scD[\cG]}(\cG) \colon \textint \Fix^{\scD[\cG]}_{[\cG]} \longrightarrow \scS
\end{equation}
denote the restriction of $\scSol^{\scD[\cG]}$ to \smash{$\textint \Fix^{\scD[\cG]}_{[\cG]}$}; this $\infty$-functor classifies the left fibration
\begin{equation}
	\textint \Sol^{\scD[\cG]}(\cG) \longrightarrow \textint \Fix^{\scD[\cG]}_{[\cG]}\,.
\end{equation}

As before, let $\phi \colon Q \to \rmB\scD[\cG]^\opp$ be a morphism of left fibrant objects over $N\Cart^\opp$, where $Q \to N\Cart^\opp$ has reduced fibres.
Recall from the discussion before Definition~\ref{def:BSYM_(Q,phi)(G)} that these data present a smooth $\infty$-group action $\Phi \colon \bbGamma \to \Diff_{[\cG]}(M)$.
Let
\begin{equation}
	\iota_\phi \colon \rmB\SYM^\rev_\phi(\cG) \hookrightarrow \phi^* \textint \Fix^{\scD[\cG]}_{[\cG]}
\end{equation}
be the canonical inclusion, and write $\pi_\phi \colon \rmB\SYM_\phi^\rev(\cG) \to N\Cart^\opp$ for the canonical projection morphism, i.e.~$\pi_\phi \coloneqq N\pi_M \circ \phi \circ \psi_\phi$ in the notation of Definition~\ref{def:BSYM_(Q,phi)(G)}.
Finally, we let
\begin{equation}
\label{eq:scSol_phi(cG) as oo-functor}
	\scSol_\phi(\cG) \colon \rmB\SYM^\rev_\phi(\cG) \longrightarrow \scS
\end{equation}
be the $\infty$-functor classifying the left fibration \smash{$\textint \Sol^{\scD[\cG]}_\phi(\cG) \longrightarrow \rmB\SYM^\rev_\phi(\cG)$}.

\begin{remark}
\label{rmk:scSol_phi(cG) as oo-action}
The $\infty$-functor~\eqref{eq:scSol_phi(cG) as oo-functor} encodes a right action in the $\infty$-topos $\scP(N\Cart)$ of the group object $\scSym_\Phi(\cG)$ on the object
\begin{equation}
	(\sigma \cG)^* \scSol^{\scD[\cG]}(\cG) = (\tilde{\sigma}\cG)^* \scSol^M(\cG)
	\quad \in \scP(N\Cart)\,,
\end{equation}
where $\sigma \cG \colon N\Cart \to \textint G^{\scD[\cG]}$ and $\tilde{\sigma}\cG \colon N\Cart \to \textint G^M$ are the morphisms from Remark~\ref{rmk:section associated to cG}.
In particular, if $\phi = Ne_M$ presents the action of the trivial group, the $\infty$-functor $\scSol_{Ne_M}(\cG) \colon \rmB\AUT^\rev(\cG) \to \scS$ encodes a smooth action of the group object $\scAut(\cG)$ in $\scP(N\Cart)$ on the object $(\tilde{\sigma}\cG)^* \scSol^M(\cG) \in \scP(N\Cart)$.
\qen
\end{remark}

\begin{definition}
\label{def:scMdl_phi(cG)}
The \textit{moduli prestack $\scMdl_\Phi(\cG)$ of solutions on $\cG$ modulo the action of $\scSym_\Phi(\cG)$} is the $\infty$-categorical left Kan extension of \smash{$\scSol_\Phi(\cG)$} along the left fibration
\begin{equation}
	\pi_\phi \colon \rmB\SYM^\rev_\phi(\cG) \longrightarrow N\Cart^\opp\,.
\end{equation}
That is, we define the $\infty$-presheaf
\begin{equation}
	\scMdl_\Phi(\cG) \coloneqq (\pi_\phi)_!\, \scSol_\Phi(\cG)
	\colon N\Cart^\opp \longrightarrow \scS\,.
\end{equation}
\end{definition}

The left Kan extension along $\pi_\phi$ takes the quotient of $\scSol^M(\cG)(c)$ by the action of $\scSym_\Phi(\cG)(c)$, for each $c \in \Cart$, functorially in $c$.

\begin{proposition}
\label{st:left fibration classd by Mdl_phi}
The $\infty$-presheaf $\scMdl_\Phi(\cG)$ classifies the composed left fibration
\begin{equation}
\begin{tikzcd}
	\textint \Sol_\phi^{\scD[\cG]}(\cG) \ar[r]
	& \rmB\SYM^\rev_\phi(\cG) \ar[r, "\pi_\phi"]
	& N\Cart^\opp\,.
\end{tikzcd}
\end{equation}
\end{proposition}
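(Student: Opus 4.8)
The plan is to deduce the statement directly from the defining formula for $\scMdl_\Phi(\cG)$ together with the compatibility between $\infty$-categorical left Kan extension along a left fibration and the classification of left fibrations by $\infty$-presheaves of spaces. This is precisely the tool already employed in the proof of the lemma following Definition~\ref{def:Equivar_(H,Phi)(G,A^(k))}, namely~\cite[Prop.~6.1.14]{Cisinski:HiC_and_HoA}: if $\pi \colon B \to A$ is a left fibration and $F \colon B \to \scS$ is the $\infty$-functor classifying a left fibration $\textint F \to B$, then $\pi_! F \colon A \to \scS$ classifies the composite left fibration $\textint F \to B \to A$. Thus the whole proof reduces to checking that our data fit the hypotheses of this proposition.

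To that end, I would first verify that $\pi_\phi \colon \rmB\SYM^\rev_\phi(\cG) \to N\Cart^\opp$ is itself a left fibration. By construction (Definition~\ref{def:BSYM_(Q,phi)(G)} and the paragraph introducing $\pi_\phi$) it factors as the composite $\rmB\SYM^\rev_\phi(\cG) \xrightarrow{\psi_\phi} Q \to N\Cart^\opp$, where the second map is a left fibration by the standing hypothesis on $Q$, and $\psi_\phi$ is a left fibration since it is the pullback along $\phi$ of the left fibration $\rmB\SYM^\rev(\cG) \to N\rmB\scD[\cG]^\opp$ from Proposition~\ref{st:BSYM and BGAU properties}(1). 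As left fibrations are stable under composition, $\pi_\phi$ is a left fibration. Next I would recall that the $\infty$-functor $\scSol_\Phi(\cG)$ was defined in~\eqref{eq:scSol_phi(cG) as oo-functor} precisely as the one classifying the left fibration $\textint \Sol^{\scD[\cG]}_\phi(\cG) \to \rmB\SYM^\rev_\phi(\cG)$.

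With $B = \rmB\SYM^\rev_\phi(\cG)$, $A = N\Cart^\opp$, $\pi = \pi_\phi$ and $F = \scSol_\Phi(\cG)$, the cited proposition then yields that
\[
	\scMdl_\Phi(\cG) = (\pi_\phi)_! \scSol_\Phi(\cG)
\]
classifies the composed left fibration $\textint \Sol^{\scD[\cG]}_\phi(\cG) \to \rmB\SYM^\rev_\phi(\cG) \xrightarrow{\pi_\phi} N\Cart^\opp$, which is exactly the assertion. The only point requiring any care is the verification that $\pi_\phi$ is a left fibration, so that~\cite[Prop.~6.1.14]{Cisinski:HiC_and_HoA} applies; this is where the hypotheses on $Q$ and Proposition~\ref{st:BSYM and BGAU properties}(1) enter. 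Everything else is a direct unwinding of Definitions~\ref{def:scMdl_phi(cG)} and~\ref{def:BSYM_(Q,phi)(G)}, so I expect no genuine obstacle beyond this bookkeeping.
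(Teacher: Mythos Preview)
Your proposal is correct and follows exactly the paper's approach: the paper's proof consists of the single sentence ``This follows readily from~\cite[Prop.~6.1.14]{Cisinski:HiC_and_HoA},'' and you have simply spelled out the verification that the hypotheses of that proposition are met.
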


\begin{proof}
This follows readily from~\cite[Prop.~6.1.14]{Cisinski:HiC_and_HoA}.
\end{proof}

\begin{remark}
Consider the following two limiting cases:
\begin{enumerate}
\item For $Q = \rmB\SYM^\rev(\cG)$ and $\phi = 1_{\rmB\SYM^\rev(\cG)}$, the resulting moduli prestack $\scMdl_\id(\cG)$ describes smooth families of solutions on $\cG$ modulo smooth families of \textit{symmetries} of $\cG$.

\item For $Q = N\Cart^\opp$ and $\phi = N e_M$, the resulting moduli prestack $\scMdl_{Ne_M}(\cG)$ describes smooth families of solutions on $\cG$ modulo smooth families of \textit{automorphisms} of $\cG$ (i.e.~those symmetries which cover the identity diffeomorphism of $M$).
The $\infty$-functor
\begin{equation}
	(\psi_{Ne_M})_! \scSol_{Ne_M}(\cG) \colon \rmB\AUT^\rev(\cG) \longrightarrow \scS
\end{equation}
classifies the left fibration \smash{$\textint \Sol^M_\res(\cG) \longrightarrow \rmB\AUT^\rev(\cG)$}.
\qen
\end{enumerate}
\end{remark}

With this preparation and the constructions in the previous sections, we can now state and prove the main theorem of this section:
consider again the situation of Proposition~\ref{st:SOL^(n,D_cG(M))(cG,cA^(k)) depends only on [cG]} and Theorem~\ref{st:Solution zig-zag}:
let \smash{$p \colon \Conf^\scD \longrightarrow \Fix_0^\scD$} and \smash{$q \colon \Fix_0^\scD \to \Fix_1^\scD$} be projective fibrations between projectively fibrant simplicial presheaves on $\rmB\scD$.
Suppose that $q$ is $0$-connected (see Definition~\ref{def:objwise pi_0-surjective}).
Further, suppose that $\cG_0 \in \Fix_0^M(\RR^0)$ is a section and set $\cG_1 \coloneqq q_{|\RR^0} (\cG_0) \in \Fix_1^M(\RR^0)$.

\begin{theorem}
\label{st:equiv result for Mdl oo-stacks}
In the above situation there is a canonical equivalence in $\scP(N\Cart)$
\begin{equation}
	\scMdl_\Phi(\cG_0)
	\simeq \scMdl_\Phi(\cG_1)\,.
\end{equation}
\end{theorem}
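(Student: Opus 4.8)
The plan is to reduce the statement to the zig-zag of covariant weak equivalences already produced by Theorem~\ref{st:Solution zig-zag} and then to transport that zig-zag down to $N\Cart^\opp$. The bridge is Proposition~\ref{st:left fibration classd by Mdl_phi}, which identifies $\scMdl_\Phi(\cG_i)$, for $i = 0,1$, as the $\infty$-presheaf classifying the composed left fibration
\begin{equation}
	\textint \Sol_\phi^{\scD[\cG_i]}(\cG_i)
	\longrightarrow \rmB\SYM^\rev_\phi(\cG_i)
	\xrightarrow{\ \pi_\phi\ } N\Cart^\opp\,.
\end{equation}
Writing $\mathfrak{q} \colon Q \to N\Cart^\opp$ for the structure left fibration of $Q$ (so that $\mathfrak{q} = N\pi_M \circ \phi$ since $\phi$ lies over $N\Cart^\opp$) and recalling from Definition~\ref{def:BSYM_(Q,phi)(G)} that $\pi_\phi = N\pi_M \circ \phi \circ \psi_\phi = \mathfrak{q} \circ \psi_\phi$, the composite above is precisely the image of $\textint \Sol_\phi^{\scD[\cG_i]}(\cG_i) \in \sSet_{/Q}$ under postcomposition with $\mathfrak{q}$. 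Thus it suffices to compare the two objects $\textint \Sol_\phi^{\scD[\cG_0]}(\cG_0)$ and $\textint \Sol_\phi^{\scD[\cG_1]}(\cG_1)$ after pushing forward from $\sSet_{/Q}$ to $\sSet_{/N\Cart^\opp}$. Crucially, this formulation bypasses any need to identify the \emph{a priori} distinct symmetry groups $\rmB\SYM^\rev_\phi(\cG_0)$ and $\rmB\SYM^\rev_\phi(\cG_1)$, since both appear only as intermediate bases of the solution fibrations.

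First I would invoke Theorem~\ref{st:Solution zig-zag}, which supplies the canonical zig-zag
\begin{equation}
	\textint \Sol^{\scD[\cG_0]}_\phi(\cG_0)
	\hookrightarrow \phi^* \textint \Sol^{\scD[\cG_0]}(\cG_0)
	= \phi^* \textint \Sol^{\scD[\cG_1]}(\cG_1)
	\hookleftarrow \textint \Sol^{\scD[\cG_1]}_\phi(\cG_1)
\end{equation}
of covariant weak equivalences between fibrant objects in $\sSet_{/Q}$. Next I would apply the postcomposition functor $\mathfrak{q}_! \colon \sSet_{/Q} \to \sSet_{/N\Cart^\opp}$ to this zig-zag. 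The pullback functor $\mathfrak{q}^*$ is right Quillen for the covariant model structures (the same fact about pullbacks used in the proof of Proposition~\ref{st:SOL_res --> SOL is equivalence}), so its left adjoint $\mathfrak{q}_!$ is left Quillen; and since every object is cofibrant in the covariant model structure, left Quillen functors preserve \emph{all} covariant weak equivalences. Applying $\mathfrak{q}_!$ therefore yields a zig-zag of covariant weak equivalences in $\sSet_{/N\Cart^\opp}$ between exactly the two composed left fibrations supplied by Proposition~\ref{st:left fibration classd by Mdl_phi}.

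Finally, I would transport this zig-zag through the equivalence between the $\infty$-categorical localisation of $\sSet_{/N\Cart^\opp}$ at the covariant weak equivalences and the $\infty$-category $\scP(N\Cart) = \scFun(N\Cart^\opp, \scS)$ (Cisinski, Thm.~7.8.9, as used throughout Section~\ref{sec:higher symmetry groups of hgeo strs}). Under this equivalence the classified $\infty$-presheaves of the two left fibrations are by construction $\scMdl_\Phi(\cG_0)$ and $\scMdl_\Phi(\cG_1)$, and covariant weak equivalences become equivalences in $\scP(N\Cart)$; composing the resulting invertible arrows gives the asserted canonical equivalence $\scMdl_\Phi(\cG_0) \simeq \scMdl_\Phi(\cG_1)$. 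I expect no genuine obstacle in this argument, as all the substantive homotopical content is already contained in Theorem~\ref{st:Solution zig-zag}; the only point demanding care is the bookkeeping of structure maps in the first paragraph, namely the identity $\pi_\phi = \mathfrak{q} \circ \psi_\phi$ together with the observation that $\mathfrak{q}_!$ sends the $Q$-structure map of $\textint \Sol^{\scD[\cG_i]}_\phi(\cG_i)$ to $\pi_\phi$ precomposed with its solution left fibration, both of which are immediate from the definitions.
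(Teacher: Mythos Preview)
Your proof is correct and follows essentially the same approach as the paper's: both invoke Proposition~\ref{st:left fibration classd by Mdl_phi} to identify $\scMdl_\Phi(\cG_i)$ with the classified $\infty$-presheaf of the solution left fibration over $N\Cart^\opp$, then use the zig-zag from Theorem~\ref{st:Solution zig-zag} and the localisation equivalence (Cisinski, Thm.~7.8.9) to conclude. The paper's version is terser and does not spell out the intermediate push-forward $\mathfrak{q}_! \colon \sSet_{/Q} \to \sSet_{/N\Cart^\opp}$ that you make explicit, but your added bookkeeping (that $\mathfrak{q}_!$ is left Quillen and preserves all covariant weak equivalences since every object is cofibrant) is exactly the content hidden in the paper's one-line passage from the zig-zag in $\sSet_{/Q}$ to the equivalence in $\scP(N\Cart)$.
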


We can think of the two moduli $\infty$-prestacks $\scMdl_\Phi(\cG_0)$ and $\scMdl_\Phi(\cG_1)$ as describing smooth families of geometric data on $M$, where in each case we keep part of the data fixed.
However, which part we keep fixed differs between the two cases, and consequently also the data we vary differs.
In general, this lead to different moduli problems with different moduli stacks.
One interpretation of Theorem~\ref{st:equiv result for Mdl oo-stacks} is then that, in this situation, the change in the data we vary (including its morphisms and higher morphisms) is compensated for by a simultaneous change in the smooth higher symmetry groups of the fixed data, leading to equivalent homotopy quotients.
See Sections~\ref{sec:examples with higher U(1)-connections} and~\ref{sec:GRic solitons} for concrete examples and applications.

\begin{proof}[Proof of Theorem~\ref{st:equiv result for Mdl oo-stacks}.]
By Proposition~\ref{st:left fibration classd by Mdl_phi} the $\infty$-functor $\scMdl_\Phi(\cG_i)$ classifies the left fibration
\begin{equation}
	\textint \Sol_\phi^{\scD[\cG_i]}(\cG_i) \longrightarrow N\Cart^\opp\,,
\end{equation}
for $i = 0,1$.
Since $\scP(N\Cart)$ is the $\infty$-categorical localisation of $\sSet_{/N \Cart^\opp}$ at the covariant weak equivalences~\cite[Thm.~7.8.9]{Cisinski:HiC_and_HoA}, the canonical zig-zag of covariant weak equivalences in Theorem~\ref{st:Solution zig-zag} induces a canonical equivalence
\begin{equation}
	\scMdl_\Phi(\cG_0)
	\simeq \scMdl_\Phi(\cG_1)
\end{equation}
in $\scP(N\Cart)$, as claimed.
\end{proof}

%%%%%%%%%%%%%%%%%%%%%%%%%%%%%%%%%%%%%%%%%%%%%%%%%%%%%%%%%%%%%%%%%%%%%%%%%%%%

\subsection{The underlying space of an $\infty$-presheaf on $\Cart$}
\label{sec:underlying spaces}

%%%%%%%%%%%%%%%%%%%%%%%%%%%%%%%%%%%%%%%%%%%%%%%%%%%%%%%%%%%%%%%%%%%%%%%%%%%%

In this section we include some background on the \textit{underlying space} of an $\infty$-presheaf $\bbF \in \scP(N\Cart)$ (it also goes by the names \textit{smooth singular complex}~\cite{Bunk:R-loc_HoThy}, \textit{concordance space}~\cite{BEBdBP:Classifying_spaces_of_oo-sheaves}, or \textit{shape}~\cite{SS:Equivar_pr_infty-bundles} of $\bbF$).
Moreover, it interacts well with principal $\infty$-bundles and therefore $\infty$-categorical group extensions in the $\infty$-topos $\scP(N\Cart)$, as we recall here.
As a consequence, we can derive important information about the homotopy types of the smooth higher symmetry and automorphism groups of any higher geometric structure $\cG$ on $M$, as well as moduli $\infty$-prestacks of higher geometric data on $\cG$.

Let $\scX$ be an $\infty$-topos and $\bbH$ a group object in $\scX$ (see, for instance,~\cite{Lurie:HTT, NSS:Pr_ooBdls_I} for background).
A \textit{$\bbH$-principal $\infty$-bundle} in $\scX$ is an effective epimorphism $\bbP \to \bbX$ in $\scX$ together with a $\bbH$-action $\bbP \dslash \bbH$ on $\bbP$ such that the canonical morphism $\bbP \dslash \bbH \to \check{C}(\bbP \to \bbX)$ from the action groupoid to the \v{C}ech nerve of $\bbP \to \bbX$ is an equivalence of simplicial objects in $\scX$ (this is equivalent to the original definition in~\cite{NSS:Pr_ooBdls_I} by~\cite[Thm.~3.32]{Bunk:Pr_ooBdls_and_String}).
Here we will mostly be interested in the $\infty$-topoi $\scP(N\Cart)$ and $\scS$.

We also recall the underlying space $\infty$-functor, following mostly~\cite{Bunk:R-loc_HoThy, Bunk:Pr_ooBdls_and_String}:
let
\begin{equation}
	\Delta_e \colon \bbDelta \to \Cart\,,
	\quad
	[k] \mapsto \Delta_e^k = \big\{x \in \RR^{k+1}\, \big| \, \textstyle\sum_{i = 0}^k x^i = 1 \big\}
\end{equation}
be the \textit{extended simplex functor} (the maps between the $\Delta_e^k$ are defined in the same way as for the topological standard simplices $|\Delta^k|$).
Then, $S$ is the functor of $\infty$-categories
\begin{equation}
\label{eq:ul space fctr S}
	S \colon \scP(N\Cart) \to \scS\,,
	\qquad
	S(\bbF) = \underset{\bbDelta^\opp}{\colim}\ (N\Delta_e)^* \bbF\,.
\end{equation}
It is a left adjoint which preserves finite products, and therefore maps principal $\infty$-bundles in $\scP(N\Cart)$ to principal $\infty$-bundles in $\scS$~\cite[Thm.~3.48]{Bunk:Pr_ooBdls_and_String}.
As a further consequence, it maps extensions of group objects in $\scP(N\Cart)$ to extensions of group objects in $\scS$~\cite[Cor.~3.52]{Bunk:Pr_ooBdls_and_String}.

The functor $S$ is presented by the following left Quillen functor $S_Q$:
let $\delta \colon \bbDelta \to \bbDelta \times \bbDelta$ denote the diagonal functor, and recall the notation $\scH = \Fun(\Cart^\opp, \sSet)$ (Definition~\ref{def:scH and scH_rmfam}).
Then, the functor
\begin{equation}
\label{eq:S_Q, presenting S}
	S_Q \colon \scH \to \sSet\,,
	\qquad
	F \mapsto \delta^* (\Delta_e^* F)\,,
\end{equation}
presents the $\infty$-functor $S \colon \scP(N\Cart) \to \scS$~\cite{Bunk:Pr_ooBdls_and_String, Bunk:R-loc_HoThy}.

\begin{definition}
\label{def:underlying space of a presheaf}
For a simplicial presheaf $F \colon \Cart^\opp \to \sSet$, we call $S_Q(F) \in \sSet$ the \textit{underlying space} of $F$.
By a slight abuse of terminology, for an $\infty$-presheaf $\bbF \colon N\Cart^\opp \to \scS$, we call $S(\bbF) \in \scS$ the \textit{underlying space} of $\bbF$.
\end{definition}

We investigate some of the properties of $S$:

\begin{definition}
Two morphisms $f_0, f_1 \colon F \to G$ in $\scH$ are \textit{smoothly homotopic} whenever there exists a commutative diagram in $\scH$ of the form
\begin{equation}
\begin{tikzcd}
	F \times \Delta_e^{\{0\}} \ar[d, hookrightarrow] \ar[dr, bend left=20, "f_0"]
	&
	\\
	F \times \Delta_e^1 \ar[r, "h" description]
	& G
	\\
	F \times \Delta_e^{\{1\}} \ar[u, hookrightarrow] \ar[ur, bend right=20, "f_1"']
	&
\end{tikzcd}
\end{equation}
\end{definition}

The functor $S_Q$ maps smoothly homotopic morphisms in $\scH$ to simplicially homotopic morphisms in $\sSet$~\cite[Lemma~3.10]{Bunk:R-loc_HoThy}.
For a manifold $N$, let $\ul{N} \in \scH$ be the object defined by $\ul{N}(c) = \Mfd(c,N)$.
The presheaf $\ul{\RR} \in \scH$ is canonically even a presheaf of rings.

\begin{lemma}
\label{st:spl smooth R-mods have trivial HoType}
Let $V \in \scH$ be a simplicial presheaf which is also a module over $\ul{\RR} \in \scH$.
That is, for each $c \in \Cart$, the simplicial set $V(c)$ is a simplicial module over $\ul{\RR}(c) = \Mfd(c,\RR)$, and this module structure is compatible with the presheaf structures on $V$ and $\ul{\RR}$.
Then, $S_Q(V) \in \sSet$ is weakly contractible.
\end{lemma}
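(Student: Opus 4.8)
The plan is to exploit the $\ul\RR$-module structure on $V$ to produce an explicit \emph{smooth} contraction of $V$ onto its zero section, and then to feed this into the cited fact that $S_Q$ sends smoothly homotopic morphisms to simplicially homotopic ones~\cite[Lemma~3.10]{Bunk:R-loc_HoThy}. Since scalar multiplication by $0$ annihilates every element while multiplication by $1$ is the identity, the module action should interpolate between $\id_V$ and the constant map at $0$, with the interpolation parameter supplied by an affine coordinate on the cartesian space $\Delta_e^1$.

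Concretely, first I would identify $\Delta_e^1 = \{(x^0,x^1) \in \RR^2 \mid x^0 + x^1 = 1\}$, which is an affine line and hence an object of $\Cart$, and take the smooth coordinate $t \colon \Delta_e^1 \to \RR$, $t(x^0,x^1) = x^1$, so that $t(e_0) = 0$ and $t(e_1) = 1$ at the two vertices. Writing $t_* \colon \ul{\Delta_e^1} \to \ul\RR$ for the induced morphism in $\scH$, I would define the smooth homotopy as the composite
\begin{equation}
	h \colon V \times \ul{\Delta_e^1} \xrightarrow{\id_V \times t_*} V \times \ul\RR \xrightarrow{\;\mathrm{act}\;} V\,,
\end{equation}
where $\mathrm{act}$ denotes the module action; on the fibre over $c \in \Cart$ this sends $(v,\gamma)$ to $(t\circ\gamma)\cdot v$. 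The main thing to check is that $h$ is a morphism of presheaves, i.e.\ natural in $c$, which is exactly the requirement that the action be compatible with the presheaf structures on $V$ and $\ul\RR$, since for $f \colon c' \to c$ one has $V(f)\big((t\circ\gamma)\cdot v\big) = (t\circ\gamma\circ f)\cdot V(f)(v)$. Restricting $h$ to the two vertices $\Delta_e^{\{0\}}, \Delta_e^{\{1\}} \hookrightarrow \Delta_e^1$ then yields $f_0 = 0\cdot(-)$, the constant map at $0$, and $f_1 = 1\cdot(-) = \id_V$, so that $\id_V$ and the constant map are smoothly homotopic in the sense of the definition above.

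Finally, applying~\cite[Lemma~3.10]{Bunk:R-loc_HoThy} gives that $S_Q(\id_V) = \id_{S_Q(V)}$ and $S_Q(f_0)$ are simplicially homotopic in $\sSet$. The map $f_0$ factors through the terminal presheaf $\ul{\RR^0}$, and since $S_Q(\ul{\RR^0}) = \delta^*\Delta_e^*(\ast) = \Delta^0$, the morphism $S_Q(f_0)$ factors through $\Delta^0$ and is therefore a constant map. Thus $\id_{S_Q(V)}$ is simplicially homotopic to a constant; passing to geometric realisation, which turns the simplicial homotopy into a genuine homotopy since $|{-}|$ preserves the product with $\Delta^1$, shows that $|S_Q(V)|$ is contractible, whence $S_Q(V)$ is weakly contractible. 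I expect the only real subtlety to be the naturality verification for $h$ together with the bookkeeping that the $0$-endpoint genuinely becomes a constant simplicial map; the contractibility conclusion is then immediate from Lemma~3.10.
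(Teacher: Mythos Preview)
Your proof is correct and follows essentially the same route as the paper's: both construct the smooth homotopy $h(v,\gamma) = (\text{affine coordinate}\circ\gamma)\cdot v$ from the $\ul{\RR}$-module action to interpolate between $\id_V$ and the constant map at $0$, then invoke the fact that $S_Q$ sends smooth homotopies to simplicial homotopies. The only cosmetic difference is the choice of affine coordinate on $\Delta_e^1$ (you use $x^1$, the paper uses $\pr_0 = x^0$), and you spell out the final contractibility step in slightly more detail than the paper does.
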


\begin{proof}
Let $\sfc \colon \sSet \to \scH$ denote the constant-presheaf functor.
Consider the morphisms of simplicial presheaves
\begin{equation}
\begin{tikzcd}
	p : V \ar[r, shift left=0.075cm]
	& \sfc \Delta^0 = \{0\} : 0. \ar[l, shift left=0.075cm]
\end{tikzcd}
\end{equation}
The composition $0 \circ p$ is the identity on $\sfc \Delta^0$.
It thus suffices to provide a smooth homotopy between $p \circ 0$ and $1_V$ in $\scH$.
Such a smooth homotopy is given by the map
\begin{equation}
	h \colon V \times \Delta_e^1 \longmapsto V\,,
	\qquad
	\big( v \in V(c),\, f \in \Delta_e^1(c) \big)
	\longmapsto (\pr_0 \circ f) \cdot v \in V(c)\,,
\end{equation}
where $\pr_0 \colon \RR^2 \to \RR$ is the projection onto the first coordinate.
\end{proof}

The following statement fully explains the homotopy-theoretic significance of the underlying-space functor $S$.
It was shown indirectly in~\cite{Bunk:Pr_ooBdls_and_String} (see the last paragraph of Section~2.2 in that reference, and see~\cite[Sec.~4.2]{Bunk:R-loc_HoThy} for a model-categorical account).
A direct proof first appeared in~\cite[Prop.~12.10]{Pavlov:Proj_MoStrs_and_smooth_Oka_principle}; see also the book~\cite{ADH:Differential_cohomology} for an alternative $\infty$-categorical treatment.
Here we include a short proof for completeness.

\begin{proposition}
\label{st:NDelta_e is cofinal}
The extended simplex functor $\Delta_e \colon \bbDelta \to \Cart$ is homotopy cofinal, i.e.~its nerve is a cofinal morphism of simplicial sets.
\end{proposition}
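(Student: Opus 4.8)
The plan is to invoke the $\infty$-categorical form of Quillen's Theorem A (as developed in \cite{Cisinski:HiC_and_HoA}), which reduces cofinality of $N\Delta_e$ to the weak contractibility of a family of comma categories, one for each object of $\Cart$. Concretely, for each cartesian space $c \in \Cart$ the relevant comma category has as objects the smooth singular simplices $\sigma \colon \Delta_e^k \to c$ of $c$, with morphisms induced by the cosimplicial structure of $\Delta_e$. First I would record that this comma category is, up to passage to the opposite category, exactly the category of elements (the category of simplices) of the simplicial set
\[
	\mathrm{Sing}^\infty(c) \colon [k] \longmapsto C^\infty(\Delta_e^k, c).
\]
Passing to the opposite does not change the homotopy type of the nerve, since $|X^\opp| \cong |X|$ naturally for any simplicial set $X$.

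Second, I would identify $\mathrm{Sing}^\infty(c)$ with the underlying space of the representable presheaf $\ul c \in \scH$. Unwinding the definition~\eqref{eq:S_Q, presenting S} of $S_Q$, the diagonal of the bisimplicial set $\Delta_e^* \ul c$ has $k$-simplices $\ul c(\Delta_e^k) = C^\infty(\Delta_e^k, c)$, so that $S_Q(\ul c) = \mathrm{Sing}^\infty(c)$. Invoking the standard fact that the last-vertex map exhibits the nerve of the category of simplices of any simplicial set $Y$ as weakly equivalent to $Y$ itself, applied to $Y = \mathrm{Sing}^\infty(c)$, I conclude that the comma category of the previous step has nerve weakly equivalent to $S_Q(\ul c)$.

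Third, I would prove that $S_Q(\ul c)$ is weakly contractible. Since $c$ is diffeomorphic to some $\RR^n$, the representable presheaf $\ul c \cong \ul{\RR^n} = (\ul\RR)^n$ is a module over the presheaf of rings $\ul\RR \in \scH$. Lemma~\ref{st:spl smooth R-mods have trivial HoType} then applies verbatim and shows that $S_Q(\ul c)$ is weakly contractible; concretely, the scalar-multiplication homotopy used there contracts $\mathrm{Sing}^\infty(c)$ onto a basepoint, which is the smooth incarnation of the straight-line contraction of $\RR^n$. Combining the three steps, every comma category appearing in Theorem A has weakly contractible nerve, and hence $N\Delta_e$ is cofinal.

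The main obstacle I anticipate is bookkeeping rather than conceptual: one must pin down the variance conventions so that the comma category produced by the chosen form of Theorem A is genuinely the category of smooth singular simplices $\mathrm{Sing}^\infty(c)$, and \emph{not} the dual ``category of cosimplices'' $[k]\mapsto C^\infty(c,\Delta_e^k)$, whose nerve is not visibly contractible. There are two opposite-category flips to track carefully—one because $S(\bbF)=\colim_{\bbDelta^\opp}(N\Delta_e)^*\bbF$ is a colimit over $\bbDelta^\opp$, and one because the comma category is identified with the opposite of the category of elements of $\mathrm{Sing}^\infty(c)$. Once the variances are fixed, the two remaining inputs—the last-vertex equivalence and the $\ul\RR$-module contractibility argument—are standard and already available in the excerpt.
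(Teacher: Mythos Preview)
Your proposal is correct and follows essentially the same approach as the paper: both reduce via the $\infty$-categorical Quillen Theorem~A to the weak contractibility of the slice $(N\Delta_e)_{/c}$, identify this slice with $S_Q(\ul{c})$, and conclude by Lemma~\ref{st:spl smooth R-mods have trivial HoType}. The only cosmetic difference is that where you invoke the last-vertex map equivalence between a simplicial set and the nerve of its category of simplices, the paper phrases the same identification via the two-sided bar construction for the homotopy colimit followed by the Bousfield-Kan map to the diagonal.
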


\begin{proof}
By~\cite[Cor.~4.4.31]{Cisinski:HiC_and_HoA} it suffices to show that the slice $\infty$-category
\begin{equation}
	(N \Delta_e)_{/c}
	= N \bbDelta \underset{N \Cart}{\times} N \Cart_{/c}
	\cong  N \big( \bbDelta \underset{\Cart}{\times} \Cart_{/c} \big)
\end{equation}
is a weakly contractible simplicial set, for each $c \in \Cart$.
Since each object $c \in \Cart$ is isomorphic to $\RR^n$, for some $n \in \NN_0$, we may suppose that $c = \RR^n$.
Observe that there is a canonical weak homotopy equivalence
\begin{equation}
	(N \Delta_e)_{/\RR^n}
	\simeq \hocolim \big( \bbDelta^\opp \to \Cart\,, \ [k] \longmapsto \Cart(\Delta_e^k, \RR^n) \big)\,,
\end{equation}
where we can model the homotopy colimit using the two-sided bar construction (see, for instance,~\cite[Cor.~5.1.3]{Riehl:Cat_HoThy}).
In particular, this is the homotopy colimit of a simplicial diagram in $\sSet$, i.e.~a bisimplicial set.
The Bousfield-Kan map provides a further weak homotopy equivalence~\cite[Defs.~18.7.1, 18.7.3, Cor.~18.7.7]{Hirschhorn:MoCats_and_localisations} from this homotopy colimit to the diagonal of this bisimplicial set.
That, is we have a weak homotopy equivalence of simplicial sets
\begin{equation}
	N (\Delta_{e/\RR^n}) \wequiv
	S_Q \ul{\RR^n}\,.
\end{equation}
To finish the proof, we observe that $h_{\RR^n}$ is an $\ul{\RR}$-module and invoke Lemma~\ref{st:spl smooth R-mods have trivial HoType}.
\end{proof}

\begin{corollary}
\emph{\cite[Thm.~4.14]{Bunk:R-loc_HoThy}}
There is a canonical natural weak equivalence
\begin{equation}
	S_Q \simeq \underset{\Cart^\opp}{\hocolim}
\end{equation}
of functors $\scH = \Fun(\Cart^\opp, \sSet)$.
Consequently, the left Quillen functor $S_Q$ presents the $\infty$-functor $\colim \colon \scFun(N\Cart^\opp, \scS)$.
\end{corollary}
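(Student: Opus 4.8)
The plan is to unwind the definition of $S_Q$, recognise it as the diagonal of a bisimplicial set, pass to a homotopy colimit over $\bbDelta^\opp$, and then invoke the cofinality result of Proposition~\ref{st:NDelta_e is cofinal}. Concretely, for $F \in \scH$ the definition~\eqref{eq:S_Q, presenting S} gives that $S_Q(F) = \delta^*(\Delta_e^* F)$ is the diagonal of the bisimplicial set $\Delta_e^* F \colon \bbDelta^\opp \to \sSet$, $[k] \mapsto F(\Delta_e^k)$; that is, $S_Q(F)_n = F(\Delta_e^n)_n$. Every step below will be manifestly natural in $F$, so that the asserted natural weak equivalence of functors $\scH \to \sSet$ is obtained at the end by composing natural transformations.

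First I would invoke the classical comparison between the diagonal and the homotopy colimit of a simplicial object of $\sSet$: for any bisimplicial set $X$ there is a natural weak equivalence $\mathrm{diag}(X) \simeq \hocolim_{\bbDelta^\opp} X$, realised by the Bousfield--Kan map (the same device already used in the proof of Proposition~\ref{st:NDelta_e is cofinal}; see~\cite[Cor.~18.7.7]{Hirschhorn:MoCats_and_localisations}). Applied to $X = \Delta_e^* F$ this yields a natural weak equivalence $S_Q(F) \simeq \hocolim_{\bbDelta^\opp}(\Delta_e^* F)$. Next I would identify $\Delta_e^* F$ with the restriction of $F$ along $N\Delta_e^\opp \colon N\bbDelta^\opp \to N\Cart^\opp$ and apply Proposition~\ref{st:NDelta_e is cofinal}: since $N\Delta_e$ is homotopy cofinal, the canonical comparison map
\[
	\hocolim_{\bbDelta^\opp}\big( \Delta_e^* F \big) \longrightarrow \hocolim_{\Cart^\opp} F
\]
is a weak equivalence, again naturally in $F$. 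Composing the two equivalences gives the natural weak equivalence $S_Q \simeq \hocolim_{\Cart^\opp}$. For the final assertion, recall that $S_Q$ is left Quillen and presents $S$ (as recalled around~\eqref{eq:S_Q, presenting S}), while $\hocolim_{\Cart^\opp}$ is the total left derived functor of $\colim_{\Cart^\opp}$ and therefore presents the $\infty$-categorical colimit $\colim \colon \scFun(N\Cart^\opp, \scS) \to \scS$; the natural equivalence just established then shows that $S_Q$, equivalently $S$, presents $\colim$. (One can cross-check this against the definition~\eqref{eq:ul space fctr S} of $S$ as $\colim_{\bbDelta^\opp}\circ (N\Delta_e)^*$, whose cofinality rewriting gives the same conclusion directly at the $\infty$-categorical level.)

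The main obstacle I anticipate is the variance bookkeeping in the cofinality step. Proposition~\ref{st:NDelta_e is cofinal} is phrased for $N\Delta_e \colon N\bbDelta \to N\Cart$, whereas the homotopy colimits being compared are indexed by the opposite categories $\bbDelta^\opp$ and $\Cart^\opp$, so one must track the passage to opposites and the over- versus under-slice conventions carefully; note that the criterion actually verified in that proposition is weak contractibility of the over-slices $(N\Delta_e)_{/c}$, which is exactly the condition guaranteeing that restriction along $N\Delta_e^\opp$ preserves homotopy colimits of presheaves. Beyond this, the only thing to check explicitly is that both the Bousfield--Kan comparison and the cofinality comparison are genuinely natural in $F$, so that they compose to a natural transformation of functors on all of $\scH$ rather than a mere collection of objectwise equivalences.
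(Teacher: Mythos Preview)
Your proposal is correct and follows exactly the route the paper intends: the corollary is stated immediately after Proposition~\ref{st:NDelta_e is cofinal} with only a citation in lieu of proof, and your argument---diagonal $\simeq$ Bousfield--Kan homotopy colimit over $\bbDelta^\opp$, then cofinality of $\Delta_e$ to pass to $\Cart^\opp$---is precisely the unwinding the paper leaves implicit. Your variance remark is apt and correctly resolved: the over-slice contractibility verified in Proposition~\ref{st:NDelta_e is cofinal} is exactly the condition ensuring that restriction along $\Delta_e^\opp$ preserves homotopy colimits.
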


Combining the fact that $S \colon \scFun(N\Cart^\opp, \scS) \to \scS$ preserves extensions of group objects~\cite[Thm.~3.48]{Bunk:Pr_ooBdls_and_String} with Theorem~\ref{st:Aut-Sym_phi-Gamma extension}, we obtain:

\begin{theorem}
For each smooth $\infty$-group action $\Phi \colon \bbGamma \to \Diff_{[\cG]}(M)$, there is an extension of group objects in $\scS$,
\begin{equation}
\begin{tikzcd}
	S\scAut(\cG) \ar[r]
	& S\scSym_\Phi(\cG) \ar[r]
	& S\bbGamma\,.
\end{tikzcd}
\end{equation}
Consequently, there is a long exact sequence of homotopy groups
\begin{equation}
\begin{tikzcd}[column sep=0.75cm]
	\cdots \ar[r]
	& \pi_r \big( S\scAut(\cG) \big) \ar[r]
	& \pi_r \big( S \scSym_\Phi(\cG) \big) \ar[r] \ar[d, phantom, ""{coordinate, name=MidPoint}]
	& \pi_r (S \bbGamma)
	\ar[dll, rounded corners, to path={-- ([xshift=2ex]\tikztostart.east) |- (MidPoint) \tikztonodes -| ([xshift=-2ex]\tikztotarget.west) -- (\tikztotarget)}]
	& 
	\\
	& \pi_{r-1} \big( S\scAut(\cG) \big) \ar[r]
	& \pi_{r-1} \big( S \scSym_\Phi(\cG) \big) \ar[r]
	& \pi_{r-1} (S \bbGamma) \ar[r]
	& \cdots
\end{tikzcd}
\end{equation}
\end{theorem}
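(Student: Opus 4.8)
The plan is to obtain both assertions by transporting the smooth $\infty$-group extension of Corollary~\ref{st:Aut-Sym_phi-Gamma extension} through the underlying-space functor $S$. First I would recall that, by that corollary, the sequence
\begin{equation}
	\scAut(\cG) \longrightarrow \scSym_\Phi(\cG) \longrightarrow \bbGamma
\end{equation}
is an extension of group objects in $\scP(N\Cart)$; by definition of such an extension, this means that the induced sequence of deloopings $\rmB\scAut(\cG) \to \rmB\scSym_\Phi(\cG) \to \rmB\bbGamma$ is a fibre sequence in the $\infty$-topos $\scP(N\Cart)$.

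Next I would invoke the fact recorded immediately before the statement that the underlying-space functor $S \colon \scP(N\Cart) \to \scS$ preserves extensions of group objects~\cite[Cor.~3.52]{Bunk:Pr_ooBdls_and_String}. This is itself a consequence of $S$ being a finite-product-preserving left adjoint, hence sending principal $\infty$-bundles to principal $\infty$-bundles~\cite[Thm.~3.48]{Bunk:Pr_ooBdls_and_String}, together with the characterisation of group extensions via principal bundle structures on deloopings. Applying $S$ to the extension above therefore produces an extension of group objects in $\scS$,
\begin{equation}
	S\scAut(\cG) \longrightarrow S\scSym_\Phi(\cG) \longrightarrow S\bbGamma\,,
\end{equation}
which is the first claim.

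For the long exact sequence, I would unwind the meaning of an extension of group objects in $\scS$. By definition the deloopings assemble into a fibre sequence $\rmB\big(S\scAut(\cG)\big) \to \rmB\big(S\scSym_\Phi(\cG)\big) \to \rmB\big(S\bbGamma\big)$ of pointed connected spaces. Looping this fibre sequence, and using the looping-delooping equivalence $\Omega\rmB \simeq \id$ for group objects from Remark~\ref{rmk:oo-groups and ptd conn objs}, yields a fibre sequence $S\scAut(\cG) \to S\scSym_\Phi(\cG) \to S\bbGamma$ of spaces. The standard long exact sequence in homotopy groups associated to a fibre sequence of spaces then gives precisely the displayed long exact sequence.

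Since the argument is purely an assembly of results established earlier in the paper and in~\cite{Bunk:Pr_ooBdls_and_String}, I expect no genuine obstacle. The only point requiring care is the bookkeeping of degrees when passing between the deloopings and the groups themselves: one must loop the fibre sequence exactly once so as to land on the sequence of underlying groups rather than their classifying objects, ensuring that the resulting connecting maps $\pi_r(S\bbGamma) \to \pi_{r-1}(S\scAut(\cG))$ match the stated indexing rather than a shifted version.
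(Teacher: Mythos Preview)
Your proposal is correct and matches the paper's approach exactly: the paper derives the theorem directly by combining Corollary~\ref{st:Aut-Sym_phi-Gamma extension} with the fact that $S$ preserves extensions of group objects~\cite[Thm.~3.48]{Bunk:Pr_ooBdls_and_String}, and the long exact sequence is the standard one associated to the resulting fibre sequence. Your additional remarks on looping the delooped fibre sequence and tracking the degree shift make explicit what the paper leaves implicit.
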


%%%%%%%%%%%%%%%%%%%%%%%%%%%%%%%%%%%%%%%%%%%%%%%%%%%%%%%%%%%%%%%%%%%%%%%%%%%%

\subsection{Homotopy types of solution stacks}
\label{sec:HoTypes of solution stacks}

%%%%%%%%%%%%%%%%%%%%%%%%%%%%%%%%%%%%%%%%%%%%%%%%%%%%%%%%%%%%%%%%%%%%%%%%%%%%

To finish this section, we use the formalism developed so far to extract information about the underlying spaces of our moduli $\infty$-prestacks.
As an important special case, which arises whenever the configurations and field equations have a linear structure, we consider the case where the underlying space of the simplicial solution subpresheaf is contractible.
Then, we use the interaction of the underlying-space functor and principal $\infty$-bundles to obtain a long exact sequence for the homotopy groups of the moduli $\infty$-prestacks of solutions on a fixed higher geometric structure $\cG$ (Theorem~\ref{st:LES for S(Mdl_phi)}).

%%%%%%%%%%%%%%%%%%%%%%%%%%%%%%%%%%%%%%%%%%%%%%%%%%%%%%%%%%%%%%%%%%%%%%%%%%%%

\subsubsection{Contractible solution spaces}
\label{sec:contractible S(Sol)}

%%%%%%%%%%%%%%%%%%%%%%%%%%%%%%%%%%%%%%%%%%%%%%%%%%%%%%%%%%%%%%%%%%%%%%%%%%%%

Let $\Conf^\scD \colon \rmB\scD^\opp \longrightarrow \sSet$ be a configuration presheaf over $\Fix^\scD \colon \rmB\scD^\opp \longrightarrow \sSet$, and let $\cG \in e_M^*\Fix^\scD$ be a section.
Let $\phi \colon Q \to \rmB\scD[\cG]^\opp$ be a morphism of left fibrations over $N\Cart^\opp$, where $Q \to N\Cart^\opp$ has reduced fibres (see also Definition~\ref{def:BSYM_(Q,phi)(G)}).
Let $\Phi \colon \bbGamma \to \Diff_{[\cG]}(M)$ denote the group action in $\scP(N\Cart)$ presented by these data.
Recall that the left fibration $\rmB\SYM_\phi^\rev(\cG) \longrightarrow N\Cart^\opp$ has a canonical section \smash{$e_\phi \colon N\Cart^\opp \longrightarrow \rmB\SYM_\phi^\rev(\cG)$}.

For any choice of solution presheaf \smash{$\Sol^\scD \subset \Conf^\scD$}, the $\infty$-functor \smash{$\scSol_\Phi(\cG) \colon \rmB\SYM_\phi^\rev(\cG) \to \scS$} encodes the right action of $\scSym_\Phi(\cG)$ on $(\sigma \cG)^* \scSol^{\scD[\cG]}(\cG)$ (compare Remark~\ref{rmk:scSol_phi(cG) as oo-action}).
We can describe the $\infty$-presheaf \smash{$\scMdl_\phi(\cG)$} as the $\infty$-categorical quotient of this action.

Equivalently, the morphism
\begin{equation}
	\textint \Sol_\phi(\cG) \longrightarrow \rmB\SYM_\phi^\rev(\cG)
\end{equation}
of left fibrations over $N\Cart^\opp$ describes a canonical morphism
\begin{equation}
	\scMdl_\phi(\cG) = (\pi_\phi)_! \scSol_\phi(\cG) \longrightarrow \rmB\scSym_\Phi^\rev(\cG)
\end{equation}
in $\scP(N\Cart)$.
Moreover, the fibre of this morphism over the canonical section $\sigma \cG \colon * \to \rmB\scSym_\Phi^\rev(\cG)$ presented by the morphisms from Remark~\ref{rmk:section associated to cG} in $\scP(N\Cart)$ is $(\sigma \cG)^* \scSol^{\scD[\cG]}(\cG)$.
Since $\rmB\scSym_\Phi^\rev(\cG)$ is the classifying object for $\scSym(\cG)$ right actions in $\scP(N\Cart)$, this gives rise to a simplicial object $(\sigma \cG)^* \scSol^{\scD[\cG]}(\cG) \dslash \scSym_\Phi(\cG)$ in $\scP(N\Cart)$ which encodes the aforementioned right action of of $\scSym_\Phi(\cG)$ on $(\sigma \cG)^* \scSol^{\scD[\cG]}(\cG)$ (see~\cite[Thm.~3.17]{NSS:Pr_ooBdls_I}).
In particular, by the same theorem, this endows the canonical morphism $(\sigma \cG)^* \scSol^{\scD[\cG]}(\cG) \to \scMdl_\Phi(\cG)$ with the structure of a principal $\infty$-bundle in $\scP(N\Cart)$ for the structure group $\scSym_\Phi(\cG)$.
Therefore, we have a canonical equivalence
\begin{equation}
\label{eq:scMdl as oo-quotient}
	\scMdl_\phi(\cG)
	\simeq \big| (\sigma \cG)^* \scSol^{\scD[\cG]}(\cG) \dslash \scSym_\phi(\cG) \big|\,,
\end{equation}
in $\scP(N\Cart)$, where $|{-}|$ denotes the colimit of simplicial diagrams; see~\cite{Bunk:Pr_ooBdls_and_String, NSS:Pr_ooBdls_I} for more background.

\begin{proposition}
\label{st:SMdl for SSol = *}
Whenever the underlying space
\begin{equation}
	S \big( (\sigma \cG)^* \scSol^{\scD[\cG]}(\cG) \big)
	= S \big( (\tilde{\sigma} \cG)^* \scSol^M(\cG) \big)
	\quad \in \scS
\end{equation}
is equivalent to $\Delta^0$, we have canonical equivalences
\begin{align}
	S \big( \scMdl_\Phi(\cG) \big)
	&\simeq S \big( \rmB \scSym_\Phi(\cG) \big)
\end{align}
\end{proposition}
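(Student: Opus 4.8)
The plan is to leverage the presentation of $\scMdl_\Phi(\cG)$ as a homotopy quotient together with the fact that the underlying-space functor $S \colon \scP(N\Cart) \to \scS$ is a left adjoint which preserves finite products. First I would start from the equivalence~\eqref{eq:scMdl as oo-quotient}, which exhibits $\scMdl_\Phi(\cG)$ as the geometric realisation of the action groupoid $(\sigma\cG)^* \scSol^{\scD[\cG]}(\cG) \dslash \scSym_\Phi(\cG)$ in $\scP(N\Cart)$; equivalently, the canonical morphism $(\sigma\cG)^* \scSol^{\scD[\cG]}(\cG) \to \scMdl_\Phi(\cG)$ is a principal $\infty$-bundle with structure group $\scSym_\Phi(\cG)$, as recorded in the discussion preceding the proposition.

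Next I would apply $S$ to this realisation. Since $S$ preserves colimits it commutes with the geometric realisation $\big|{-}\big|$, and since it preserves finite products it sends the $n$-th level $(\sigma\cG)^* \scSol^{\scD[\cG]}(\cG) \times \scSym_\Phi(\cG)^{\times n}$ of the bar construction to $S\big((\sigma\cG)^* \scSol^{\scD[\cG]}(\cG)\big) \times \big(S\scSym_\Phi(\cG)\big)^{\times n}$. These two observations — which are exactly the input behind the preservation of principal $\infty$-bundles in~\cite[Thm.~3.48]{Bunk:Pr_ooBdls_and_String} — identify the image under $S$ with the action groupoid of $S\scSym_\Phi(\cG)$ on $S\big((\sigma\cG)^* \scSol^{\scD[\cG]}(\cG)\big)$, so that
\begin{equation}
	S\big(\scMdl_\Phi(\cG)\big) \simeq \big| S\big((\sigma\cG)^* \scSol^{\scD[\cG]}(\cG)\big) \dslash S\scSym_\Phi(\cG) \big|.
\end{equation}
Invoking the hypothesis $S\big((\sigma\cG)^* \scSol^{\scD[\cG]}(\cG)\big) \simeq \Delta^0$, the total space collapses to the terminal object of $\scS$, and the realisation of $* \dslash S\scSym_\Phi(\cG)$ is by definition the delooping $\rmB\big(S\scSym_\Phi(\cG)\big)$. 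Running the same product-and-colimit argument in the other direction, starting from $\rmB\scSym_\Phi(\cG) \simeq \big|* \dslash \scSym_\Phi(\cG)\big|$, identifies $\rmB\big(S\scSym_\Phi(\cG)\big)$ with $S\big(\rmB\scSym_\Phi(\cG)\big)$; chaining these equivalences gives the claim.

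The one point that genuinely needs care — and which I expect to be the main obstacle — is justifying uniformly that $S$ commutes with the delooping $\rmB$ and with the homotopy quotient $\dslash$. This rests entirely on $S$ preserving finite products and colimits, and the cleanest way to phrase the whole argument is as the single statement that $S$ carries the principal $\scSym_\Phi(\cG)$-bundle above to a principal $S\scSym_\Phi(\cG)$-bundle in $\scS$ whose total space is contractible, so that its base $S\big(\scMdl_\Phi(\cG)\big)$ is necessarily the classifying object $\rmB\big(S\scSym_\Phi(\cG)\big) \simeq S\big(\rmB\scSym_\Phi(\cG)\big)$.
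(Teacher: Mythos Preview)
Your proposal is correct and follows essentially the same route as the paper: start from the quotient description~\eqref{eq:scMdl as oo-quotient}, use that $S$ preserves colimits and finite products (hence commutes with the bar construction and with $\rmB$), collapse the total space via the hypothesis, and identify the result with $S\,\rmB\scSym_\Phi(\cG)$. If anything, you are more explicit than the paper about why $S$ interchanges with $\dslash$ and $\rmB$.
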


\begin{proof}
Since $S$ preserves principal $\infty$-bundles, colimits and finite products, and using our assumption on the $\infty$-presheaf of solutions, we obtain equivalences in $\scS$ of the form
\begin{align}
	S \big( \scMdl_\Phi(\cG) \big)
	&\simeq \big| \big( S \big( (\sigma \cG)^* \scSol^{\scD[\cG]}(\cG) \big) \big) \dslash S \big( \scSym_\Phi(\cG) \big) \big|
	\\
	&\simeq \big| \Delta^0 \dslash S \big( \scSym_\Phi(\cG) \big) \big|
	\\
	&\simeq S\, \rmB \scSym_\Phi(\cG)\,.
\end{align}
This shows the claim.
\end{proof}

Consider again the situation of Proposition~\ref{st:SOL^(n,D_cG(M))(cG,cA^(k)) depends only on [cG]} and Theorem~\ref{st:Solution zig-zag}:
let \smash{$p \colon \Conf^\scD \longrightarrow \Fix_0^\scD$} and \smash{$q \colon \Fix_0^\scD \to \Fix_1^\scD$} be projective fibrations between projectively fibrant simplicial presheaves on $\rmB\scD$ which is 0-connected.
Further, suppose that $\cG_0 \in \Fix_0^M(\RR^0)$ is a section and set $\cG_1 \coloneqq q_{|\RR^0} (\cG_0) \in \Fix_1^M(\RR^0)$.

\begin{corollary}
\label{st:equivs of SSym(cG_0) and SSym(cG_1)}
In the above situation, suppose additionally that $\scSol^M(\cG_i)$ has contractible underlying space, for $i = 0,1$.
Then, there is a canonical equivalence
\begin{equation}
	S \big( \scSym_\Phi(\cG_0) \big)
	\simeq S \big( \scSym_\Phi(\cG_1) \big)
\end{equation}
of group objects in $\scS$.
\end{corollary}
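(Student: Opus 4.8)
The plan is to deduce the claim by applying the underlying-space functor $S$ to the equivalence of moduli $\infty$-prestacks furnished by Theorem~\ref{st:equiv result for Mdl oo-stacks} and then inverting the delooping functor. First I would observe that, for $i = 0,1$, the contractibility hypothesis is precisely the input required by Proposition~\ref{st:SMdl for SSol = *}: by Remark~\ref{rmk:scSol_phi(cG) as oo-action} the $\infty$-presheaf $(\tilde{\sigma}\cG_i)^*\scSol^M(\cG_i)$, whose underlying space is assumed contractible, is canonically identified with $(\sigma\cG_i)^*\scSol^{\scD[\cG_i]}(\cG_i)$, so that $S\big((\sigma\cG_i)^*\scSol^{\scD[\cG_i]}(\cG_i)\big) \simeq \Delta^0$. (Recall also that $\rmB\scD[\cG_0] = \rmB\scD[\cG_1]$ by Remark~\ref{rmk:(-)_[G] and isos on conn comps}, so that the same action $\Phi$ and the same symmetry-group constructions apply to both sections, as already used in Proposition~\ref{st:SOL^(n,D_cG(M))(cG,cA^(k)) depends only on [cG]}.)

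With this in hand, Proposition~\ref{st:SMdl for SSol = *} gives canonical equivalences in $\scS$
\[
	S\big(\scMdl_\Phi(\cG_i)\big) \simeq S\big(\rmB\scSym_\Phi(\cG_i)\big) \simeq \rmB\big(S\scSym_\Phi(\cG_i)\big),
\]
for $i = 0,1$, where the second equivalence records that $S$ commutes with the delooping functor $\rmB$. This is immediate because $S$ preserves finite products and colimits, and hence sends the universal bundle $* \to \rmB\scSym_\Phi(\cG_i)$ to the universal $S\scSym_\Phi(\cG_i)$-bundle (cf.\ the preservation of principal $\infty$-bundles recalled in Section~\ref{sec:underlying spaces}). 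Applying $S$ to the equivalence $\scMdl_\Phi(\cG_0) \simeq \scMdl_\Phi(\cG_1)$ of Theorem~\ref{st:equiv result for Mdl oo-stacks} and chaining with the displayed equivalences, I obtain a canonical equivalence
\[
	\rmB\big(S\scSym_\Phi(\cG_0)\big) \simeq \rmB\big(S\scSym_\Phi(\cG_1)\big)
\]
in $\scS$.

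Finally, I would invoke the equivalence $\Omega \dashv \rmB$ between $\Grp(\scS)$ and the $\infty$-category $\scS^{*/}_{\geq 1}$ of pointed connected objects (Remark~\ref{rmk:oo-groups and ptd conn objs}). Both sides of the last display are connected and carry canonical base points as classifying objects, and applying $\Omega$ to a base-point-preserving equivalence between them returns the sought equivalence $S\scSym_\Phi(\cG_0) \simeq S\scSym_\Phi(\cG_1)$ of group objects in $\scS$. The one step requiring care — and the main obstacle — is exactly the promotion of the above equivalence of deloopings to an equivalence inside $\scS^{*/}_{\geq 1}$: I must verify that it can be chosen to preserve base points, so that $\Omega$ yields an equivalence of group objects and not merely of underlying spaces. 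This is where connectedness is essential, since the space of base points of a connected object is itself connected, so the transported base point can be joined to the canonical one by a path; this makes the lift to $\scS^{*/}_{\geq 1}$ automatic and completes the argument.
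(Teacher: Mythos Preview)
Your proposal is correct and follows essentially the same route as the paper: combine Proposition~\ref{st:SMdl for SSol = *} with Theorem~\ref{st:equiv result for Mdl oo-stacks} to obtain $S(\rmB\scSym_\Phi(\cG_0)) \simeq S(\rmB\scSym_\Phi(\cG_1))$, commute $S$ past $\rmB$, and then apply the equivalence of Remark~\ref{rmk:oo-groups and ptd conn objs}. Your explicit attention to the base-point issue is a welcome refinement over the paper's more terse treatment.
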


Note that we do not obtain an equivalence of group objects in $\scP(N\Cart)$, but only on the level of underlying group objects in $\scS$.

\begin{proof}
By combining Theorem~\ref{st:equiv result for Mdl oo-stacks} and Proposition~\ref{st:SMdl for SSol = *} we obtain a canonical chain of equivalences of spaces
\begin{equation}
	S \big( \rmB \scSym_\Phi(\cG_0) \big)
	\simeq S \big( \scMdl_\Phi(\cG_0) \big)
	\simeq S \big( \scMdl_\Phi(\cG_1) \big)
	\simeq S \big( \rmB \scSym_\Phi(\cG_1) \big)\,.
\end{equation}
Since $S$ preserves finite products and geometric realisations (in fact all colimits), there is a canonical equivalence
\begin{equation}
	S \rmB \bbH \simeq \rmB S \bbH\,,
\end{equation}
for each group object $\bbH \in \Grp(\scP(N\Cart))$.
The claim now follows from the equivalence in Remark~\ref{rmk:oo-groups and ptd conn objs}.
\end{proof}

%%%%%%%%%%%%%%%%%%%%%%%%%%%%%%%%%%%%%%%%%%%%%%%%%%%%%%%%%%%%%%%%%%%%%%%%%%%%

\subsubsection{Exact sequences for solution stacks}
\label{sec:LES for solution stacks}

%%%%%%%%%%%%%%%%%%%%%%%%%%%%%%%%%%%%%%%%%%%%%%%%%%%%%%%%%%%%%%%%%%%%%%%%%%%%

\begin{proposition}
\label{st:Mod-Mdl-BDiff pullback}
There is a canonical pullback square in $\scP(N\Cart)$:
\begin{equation}
\begin{tikzcd}[column sep=1.25cm]
	\scMdl_{N e_M}(\cG) \ar[r] \ar[d]
	& * \ar[d]
	\\
	\scMdl_\Phi(\cG) \ar[r]
	& \rmB \bbGamma
\end{tikzcd}
\end{equation}
\end{proposition}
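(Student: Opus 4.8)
The plan is to reduce the statement to the pasted rectangle of left fibrations recorded in Corollary~\ref{st:SOL fibseq in sSet_(/NCart^op)}, and then transport it across the presentation of $\scP(N\Cart)$ by the covariant model category $\sSet_{/N\Cart^\opp}$. First I would identify the four corners of the desired square on the level of left fibrations over $N\Cart^\opp$. By Proposition~\ref{st:left fibration classd by Mdl_phi} the $\infty$-presheaf $\scMdl_\Phi(\cG)$ classifies the composed left fibration $\textint \Sol^{\scD[\cG]}_\phi(\cG) \to \rmB\SYM^\rev_\phi(\cG) \to N\Cart^\opp$; specialising to $\phi = Ne_M$ (so that $Q = N\Cart^\opp$, $\rmB\SYM^\rev_{Ne_M}(\cG) = \rmB\AUT^\rev(\cG)$, and $\textint \Sol^{\scD[\cG]}_{Ne_M}(\cG) \cong \textint \Sol^M_\res(\cG)$ by Remark~\ref{rmk:Sol_res in terms of Sol}) shows that $\scMdl_{Ne_M}(\cG)$ classifies $\textint \Sol^M_\res(\cG) \to N\Cart^\opp$. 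The terminal presheaf $*$ is classified by the identity $N\Cart^\opp \to N\Cart^\opp$, while $Q \to N\Cart^\opp$ classifies $\rmB\bbGamma^\rev$, which is equivalent to $\rmB\bbGamma$ by Lemma~\ref{st:|revX| = |X|} applied to the bar construction of $\bbGamma$.

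With these identifications in hand, the square in the statement is classified by the outer rectangle
\begin{equation}
\begin{tikzcd}
	\textint \Sol^M_\res(\cG) \ar[r] \ar[d]
	& N\Cart^\opp \ar[d]
	\\
	\textint \Sol^{\scD[\cG]}_\phi(\cG) \ar[r]
	& Q
\end{tikzcd}
\end{equation}
obtained by pasting the two squares of Corollary~\ref{st:SOL fibseq in sSet_(/NCart^op)}. Its bottom horizontal arrow is exactly the composite $\textint \Sol^{\scD[\cG]}_\phi(\cG) \to \rmB\SYM^\rev_\phi(\cG) \to Q$, which presents the canonical morphism $\scMdl_\Phi(\cG) \to \rmB\scSym^\rev_\Phi(\cG) \to \rmB\bbGamma^\rev$; hence the arrows of the classified square are the intended canonical ones (in particular the pointing $* \to \rmB\bbGamma$ corresponds to the section $N\Cart^\opp \to Q$). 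Next I would invoke Corollary~\ref{st:SOL fibseq in sSet_(/NCart^op)} itself, which asserts precisely that this outer rectangle is homotopy cartesian in the covariant model structure on $\sSet_{/N\Cart^\opp}$.

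Finally, I would transport along the equivalence between $\scP(N\Cart)$ and the $\infty$-categorical localisation of $\sSet_{/N\Cart^\opp}$ at the covariant weak equivalences~\cite[Thm.~7.8.9]{Cisinski:HiC_and_HoA}. Since localisation of a model category at its weak equivalences carries homotopy cartesian squares to cartesian squares in the resulting $\infty$-category---this is the same mechanism already used in the proof of Corollary~\ref{st:Aut-Sym-Diff[cG] extension}---the image of the rectangle above is the asserted pullback square in $\scP(N\Cart)$. I do not expect a genuine obstacle here: the entire substance of the argument is already contained in Corollary~\ref{st:SOL fibseq in sSet_(/NCart^op)}, and the remaining points are bookkeeping, namely matching the $\rev$-conventions so that $Q$ is read as classifying $\rmB\bbGamma^\rev \simeq \rmB\bbGamma$, and verifying that the classifying maps of the left fibrations are the canonical morphisms rather than merely abstractly equivalent ones. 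Both are routine once the corner identifications are fixed.
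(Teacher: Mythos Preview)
Your proposal is correct and follows essentially the same route as the paper: the paper's proof simply observes that the square in question is presented at the level of left fibrations by the outer rectangle in Corollary~\ref{st:SOL fibseq in sSet_(/NCart^op)}, which was shown there to be homotopy cartesian in $\sSet_{/N\Cart^\opp}$. Your version is more explicit about the corner identifications and the transport step, but the substance is identical.
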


\begin{proof}
This follows from Corollary~\ref{st:SOL fibseq in sSet_(/NCart^op)}:
at the level of left fibrations over $N\Cart^\opp$ the diagram of $\infty$-presheaves on $\Cart$ that we are interested in here is presented by the outer square of the diagram in Corollary~\ref{st:SOL fibseq in sSet_(/NCart^op)}.
There we showed that this outer square is homotopy cartesian in $\sSet_{/N \Cart^\opp}$.
\end{proof}

\begin{theorem}
\label{st:Mod-Mdl-Diff prBun}
The morphism of $\infty$-presheaves on $\Cart$
\begin{equation}
\label{eq:Mod-Mdl-Diff prBun}
	\scMdl_{Ne_M}(\cG)
	\longrightarrow \scMdl_\Phi(\cG)
\end{equation}
canonically carries the structure of a $\bbGamma$-principal $\infty$-bundle in $\scP(N\Cart)$.
\end{theorem}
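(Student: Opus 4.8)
The plan is to obtain the statement as a formal consequence of the classification of principal $\infty$-bundles in the $\infty$-topos $\scP(N\Cart)$, feeding in the pullback square already established in Proposition~\ref{st:Mod-Mdl-BDiff pullback}. Recall from Remark~\ref{rmk:oo-groups and ptd conn objs} that in any $\infty$-topos $\scX$ the delooping $\rmB\bbGamma$ of a group object $\bbGamma$ is a pointed connected object, and that the canonical morphism $* \to \rmB\bbGamma$ is the \emph{universal} $\bbGamma$-principal $\infty$-bundle: it carries a canonical $\bbGamma$-action, is an effective epimorphism (since $\rmB\bbGamma$ is connected), and its action groupoid agrees with the \v{C}ech nerve of $* \to \rmB\bbGamma$. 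This is exactly the content of the recognition and classification theory of~\cite{NSS:Pr_ooBdls_I} (see also~\cite[Thm.~3.32]{Bunk:Pr_ooBdls_and_String}), which applies verbatim to the $\infty$-topos $\scP(N\Cart)$.

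First I would read off Proposition~\ref{st:Mod-Mdl-BDiff pullback} as exhibiting $\scMdl_{Ne_M}(\cG)$ as the pullback in $\scP(N\Cart)$ of the universal bundle $* \to \rmB\bbGamma$ along the classifying morphism $\scMdl_\Phi(\cG) \to \rmB\bbGamma$, i.e.~there is a cartesian square
\begin{equation}
\begin{tikzcd}[column sep=1.25cm]
	\scMdl_{Ne_M}(\cG) \ar[r] \ar[d]
	& * \ar[d]
	\\
	\scMdl_\Phi(\cG) \ar[r]
	& \rmB \bbGamma
\end{tikzcd}
\end{equation}
in $\scP(N\Cart)$. Then I would invoke the stability of principal $\infty$-bundles under pullback: in an $\infty$-topos the pullback of $* \to \rmB\bbGamma$ along any morphism $\bbX \to \rmB\bbGamma$ carries the canonical structure of a $\bbGamma$-principal $\infty$-bundle over $\bbX$, with its $\bbGamma$-action and the identification of action groupoid with \v{C}ech nerve transported by base change, and with its effective-epimorphism status preserved because effective epimorphisms are stable under pullback. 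Applying this with $\bbX = \scMdl_\Phi(\cG)$ endows the morphism $\scMdl_{Ne_M}(\cG) \longrightarrow \scMdl_\Phi(\cG)$ of~\eqref{eq:Mod-Mdl-Diff prBun} with the structure of a $\bbGamma$-principal $\infty$-bundle, as claimed.

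The substantive work has in fact already been carried out in Proposition~\ref{st:Mod-Mdl-BDiff pullback}, and through it in Corollary~\ref{st:SOL fibseq in sSet_(/NCart^op)}, where the pullback square was produced at the level of left fibrations over $N\Cart^\opp$ and shown to be homotopy cartesian in the covariant model structure. Consequently, the only point requiring genuine care at this stage is the purely structural one of confirming that the corner $\rmB\bbGamma$ appearing in that proposition is the honest delooping of the group object $\bbGamma$ and that $* \to \rmB\bbGamma$ is the universal bundle in this presentation; both follow from the equivalence~\eqref{eq:Omega -| B equivalence} together with the construction of $\bbGamma$ and $\Phi$ preceding Definition~\ref{def:Sym_(Gamma, Phi)(cG)}. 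Thus the expected main obstacle is not computational but rather the bookkeeping needed to match the presentation-level pullback with the abstract universal property of $\rmB\bbGamma$ in $\scP(N\Cart)$.
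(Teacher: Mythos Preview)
Your proposal is correct and follows essentially the same approach as the paper: the paper's proof consists of a single sentence combining Proposition~\ref{st:Mod-Mdl-BDiff pullback} with~\cite[Prop.~3.13]{NSS:Pr_ooBdls_I} (and~\cite[Props.~3.33, 3.41]{Bunk:Pr_ooBdls_and_String}), which is exactly the classification-of-principal-$\infty$-bundles argument you spell out. Your version is simply more explicit about why the pullback of $* \to \rmB\bbGamma$ inherits the principal structure, but the underlying logic and the cited inputs are the same.
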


\begin{proof}
This follows by combining Proposition~\ref{st:Mod-Mdl-BDiff pullback} with~\cite[Prop.~3.13]{NSS:Pr_ooBdls_I} (see also~\cite[Props.~3.33 and~3.41]{Bunk:Pr_ooBdls_and_String}).
\end{proof}

We obtain a group object in $\scS$ by applying the functor $S$ from~\eqref{eq:ul space fctr S} to the smooth $\infty$-group $\bbGamma$.
In particular, for $Q = \rmB \scD[\cG]$, the resulting group is canonically equivalent to the subgroup $\Diff_{[\cG]}(M) \subset \Diff(M)$ of the diffeomorphism group, endowed with the usual topology; this follows from~\cite[Props.~3.15, 3.16]{OT:Smooth_HoCoh_Actions} (there the result is proven for the full group $\Diff(M)$, but the subgroup $\Diff_{[\cG]}(M) \subset \Diff(M)$ consists of a disjoint union of connected components of $\Diff(M)$ and so the result carries over directly).
We can thus derive important information about the homotopy type of the space underlying the moduli $\infty$-(pre)stack $\scMdl_\Phi(\cG)$:

\begin{corollary}
\label{st:moduli prBun in spaces}
The morphism of underlying spaces
\begin{equation}
\label{eq:moduli prBun in spaces}
	S \big( \scMdl_{Ne_M}(\cG) \big)
	\longrightarrow S \big( \scMdl_\Phi(\cG) \big)
\end{equation}
canonically carries the structure of a principal $\infty$-bundle in $\scS$ with structure group $S \bbGamma$.
In particular, its homotopy fibre is $S \bbGamma$.
\end{corollary}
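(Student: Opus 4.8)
The plan is to derive the statement formally by applying the underlying-space functor $S$ to the principal $\infty$-bundle already produced in Theorem~\ref{st:Mod-Mdl-Diff prBun}. First I would recall that Theorem~\ref{st:Mod-Mdl-Diff prBun} equips the morphism $\scMdl_{Ne_M}(\cG) \longrightarrow \scMdl_\Phi(\cG)$ with the structure of a $\bbGamma$-principal $\infty$-bundle in the $\infty$-topos $\scP(N\Cart)$. The only further ingredient is the behaviour of $S$ on such bundles.

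The key step is to invoke the preservation property recalled in Section~\ref{sec:underlying spaces}: the functor $S \colon \scP(N\Cart) \to \scS$ is a left adjoint that preserves finite products, and consequently sends principal $\infty$-bundles in $\scP(N\Cart)$ to principal $\infty$-bundles in $\scS$ by~\cite[Thm.~3.48]{Bunk:Pr_ooBdls_and_String}. Applying $S$ to the bundle of Theorem~\ref{st:Mod-Mdl-Diff prBun} therefore endows the morphism~\eqref{eq:moduli prBun in spaces} with the structure of a principal $\infty$-bundle in $\scS$. Because $S$ carries along the group-object structure and the $\bbGamma$-action, the resulting structure group is precisely the image $S\bbGamma$; matching this structure group is the one point requiring attention.

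For the final clause I would use the standard fact that a $G$-principal $\infty$-bundle $P \to X$ sits in a fibre sequence $G \to P \to X$ in the ambient $\infty$-topos; equivalently $X \simeq P \dslash G$ and the fibre of $P \to P \dslash G$ is $G$. Specialising this to $\scS$ and the bundle obtained above identifies the homotopy fibre of~\eqref{eq:moduli prBun in spaces} with $S\bbGamma$. There is essentially no obstacle in the argument: all of the genuine content has been front-loaded into Theorem~\ref{st:Mod-Mdl-Diff prBun} and into the product-preservation of $S$, so the proof reduces to a citation together with the bookkeeping check that the structure group transports correctly to $S\bbGamma$.
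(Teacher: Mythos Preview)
Your proposal is correct and matches the paper's approach exactly: the corollary is stated without an explicit proof in the paper, being an immediate consequence of Theorem~\ref{st:Mod-Mdl-Diff prBun} together with the fact, recalled in Section~\ref{sec:underlying spaces} with the same citation~\cite[Thm.~3.48]{Bunk:Pr_ooBdls_and_String}, that $S$ preserves principal $\infty$-bundles. Your additional remark on identifying the homotopy fibre via the standard fibre sequence $G \to P \to X$ is the intended reading of the ``in particular'' clause.
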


\begin{corollary}
\label{st:LES for S(Mdl_phi)}
There is a long exact sequence of homotopy groups
\begin{equation}
\label{eq:hogroup LES}
\begin{tikzcd}[column sep=0.75cm]
	\cdots \ar[r]
	& \pi_r S(\bbGamma) \ar[r]
	& \pi_r S \big( \scMdl_{Ne_M}(\cG) \big) \ar[r] \ar[d, phantom, ""{coordinate, name=MidPoint}]
	& \pi_r S \big( \scMdl_\Phi(\cG) \big)
	\ar[dll, rounded corners, to path={-- ([xshift=2ex]\tikztostart.east) |- (MidPoint) \tikztonodes -| ([xshift=-2ex]\tikztotarget.west) -- (\tikztotarget)}]
	& 
	\\
	& \pi_{r-1} S(\bbGamma) \ar[r]
	& \pi_{r-1} S \big( \scMdl_{Ne_M}(\cG) \big) \ar[r]
	& \pi_{r-1} S \big( \scMdl_\Phi(\cG) \big) \ar[r]
	& \cdots
\end{tikzcd}
\end{equation}
\end{corollary}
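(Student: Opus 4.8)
The plan is to read off the long exact sequence directly from Corollary~\ref{st:moduli prBun in spaces}, since in the $\infty$-topos $\scS$ of spaces a principal $\infty$-bundle is precisely a fibre sequence, and every fibre sequence of spaces carries a long exact sequence of homotopy groups by classical homotopy theory. In effect, all the substantive content has already been packaged into the preceding corollary, and what remains is bookkeeping.

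First I would invoke Corollary~\ref{st:moduli prBun in spaces}, which asserts that the morphism~\eqref{eq:moduli prBun in spaces} carries the structure of a principal $\infty$-bundle in $\scS$ with structure group $S\bbGamma$ and, moreover, that its homotopy fibre is $S\bbGamma$. This latter assertion is exactly the statement that there is a fibre sequence
\begin{equation*}
	S\bbGamma \longrightarrow S \big( \scMdl_{Ne_M}(\cG) \big) \longrightarrow S \big( \scMdl_\Phi(\cG) \big)
\end{equation*}
in $\scS$. (Equivalently, one could appeal directly to the defining property of a principal $\infty$-bundle in an $\infty$-topos recalled at the start of Section~\ref{sec:underlying spaces}: the projection is an effective epimorphism whose \v{C}ech nerve is the action groupoid of the structure group, so its homotopy fibre is the structure group itself.)

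Next I would apply the standard long exact sequence of a fibre sequence of spaces. Presenting the fibre sequence by a pointed Kan fibration and using the classical long exact sequence in homotopy, one obtains precisely~\eqref{eq:hogroup LES}, the connecting morphisms $\pi_r S \big( \scMdl_\Phi(\cG) \big) \to \pi_{r-1} S\bbGamma$ being the usual boundary homomorphisms.

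I do not anticipate any genuine obstacle, as the geometric and homotopy-theoretic input has already been established. The only point requiring mild care is the interpretation at the bottom of the sequence, where one restricts to the relevant basepoint components and reads exactness at $\pi_0$ in the pointed-set sense, exactly as is standard for the homotopy long exact sequence of a fibration.
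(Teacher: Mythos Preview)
Your proposal is correct and matches the paper's approach exactly: the paper states this corollary immediately after Corollary~\ref{st:moduli prBun in spaces} without proof, treating the long exact sequence as the standard consequence of the fibre sequence in $\scS$ established there. Your write-up simply makes explicit the one-line deduction the paper leaves implicit.
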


%%%%%%%%%%%%%%%%%%%%%%%%%%%%%%%%%%%%%%%%%%%%%%%%%%%%%%%%%%%%%%%%%%%%%%%%%%%%

\section{The descent property for moduli $\infty$-prestacks}
\label{sec:descent for moduli oo-prestacks}

%%%%%%%%%%%%%%%%%%%%%%%%%%%%%%%%%%%%%%%%%%%%%%%%%%%%%%%%%%%%%%%%%%%%%%%%%%%%

In this section we consider the important special case where $\bbGamma$ (the higher smooth group acting on $M$) is given by a \textit{sheaf of groups} on $\Cart$.
We show that, in this case, our moduli $\infty$-prestacks of solutions on a fixed higher geometric structure $\cG$ modulo symmetries which lift the action of elements of $\bbGamma$ satisfy descent with respect to good open coverings of cartesian spaces; that is, our moduli $\infty$-prestacks are even $\infty$-stacks.
The most important case for geometric applications is where $H$ is a Lie group which acts smoothly on the base manifold $M$.

Let $H \colon \Cart^\opp \to \Grp$ be a presheaf of groups, let $\rmB H \colon \Cart^\opp \to \Gpd$ be its delooping, and let $\textint \rmB H$ denote the Grothendieck construction of $\rmB H$.
We denote its associated left fibration by
\begin{equation}
	Q_H \coloneqq N \textint \rmB H \longrightarrow N\Cart^\opp\,.
\end{equation}
A smooth (left) action of $H$ on $M$ is a morphism
\begin{equation}
	\phi' \colon H \to \Diff(M)
\end{equation}
of presheaves of groups on $\Cart$.
We obtain a morphism
\begin{equation}
	\phi \coloneqq N \textint \rmB \phi' \colon Q_H \longrightarrow \rmB\scD
\end{equation}
of left fibrations over $N\Cart^\opp$.

\begin{example}
For $H = \{e\}$ the trivial group we obtain $\rmB \{e\} = \Cart$, and for $H = \Diff(M)$ we have $\textint \rmB H = \rmB\scD$.
\qen
\end{example}

\begin{example}
Let $G^\scD \colon \rmB\scD \longrightarrow \sSet$ be a simplicially fibrant simplicial presheaf on $\rmB\scD$, and let $\cG \in G^\scD(\RR^0)$ be a section.
Let $H = \Diff_{[\cG]}(M) \subset \Diff(M)$ be the smooth subgroup of diffeomorphisms which preserve the equivalence class of $\cG$.
In this case, the inclusion of $H$ into $\Diff(M)$ induces a canonical action of $H$ on $M$.
\qen
\end{example}

For any presheaf of groups $H$ on $\Cart$, we obtain a Grothendieck coverage $\tau_H$ on $\rmB H$:
explicitly, let $\{(f_i, \varphi_i) \colon c_i \to c\}_{i \in \Lambda}$ be a family of morphisms in $\textint \rmB H$ with codomain $c$.
That is, $f_i \in \Cart(c_i, c)$ is a morphism in $\Cart$, and $\varphi_i \in H(c_i)$, for each $i \in \Lambda$.
The family is a covering of $c \in \rmB H$ with respect to $\tau_H$ if and only if its image under the canonical functor $\textint \rmB H \to \Cart$ is a covering in $\Cart$; that is, if an only if the family $\{f_i \colon c_i \to c\}_{i \in \lambda}$ is a good open covering of $c$.
For $H = \Diff(M)$ we abbreviate $\tau \coloneqq \tau_{\Diff(M)}$.

\begin{remark}
\label{rmk:tau-coverings as M-bundle maps}
We can think of a $\tau$-covering $\{(\iota_i, \varphi_i) \colon c_i \to c\}_{i \in \Lambda}$ in $\rmB\scD$ as a particular type of open covering of the product manifold $c \times M$:
the family $\{ \iota_i \colon c_i \to c \}_{i \in \Lambda}$ provides a good open covering of $c$ in $\Cart$.
The family $\{\varphi\}_{i \in \Lambda}$ then enhances this to a covering of $c \times M$ by patches of the form $c_i \times M$ which are included into $c \times M$ along the smooth maps $\iota_i \wr \varphi_i$ (recall the notation from~\eqref{eq:wr notation}); they can be described as maps (covering $\iota_i$) of bundles with fibre $M$ which restrict to diffeomorphisms on fibres.
\qen
\end{remark}

For the sake of brevity, we set
\begin{equation}
	\scK_H \coloneqq \Fun(\textint \rmB H^\opp, \sSet)
	\quad \text{and} \quad
	\scK \coloneqq \scK_{\Diff(M)} = \Fun(\rmB \scD^\opp, \sSet)\,.
\end{equation}

\begin{lemma}
\label{st:pbs in tint BH}
Let $\phi' \colon H \to \Diff(M)$ be a morphism of presheaves of groups on $\Cart$.
Consider a cospan
\begin{equation}
\begin{tikzcd}[column sep=1cm]
	c_0 \ar[r, "{(\iota_0, \psi_0)}"]
	& c
	& c_1 \ar[l, "{(\iota_1, \psi_1)}"']
\end{tikzcd}
\end{equation}
in $\rmB H$ such that $\iota_i \in \Cart(c_i,c)$ is an open embedding, for $i = 0,1$, and suppose that the pullback $c_0 \times_c c_1$ in $\Cart$ exists.
Then,
\begin{enumerate}
\item the pullback $c_0 \times_c c_1$ in $\textint \rmB H$ exists, and

\item it is preserved by the functor $\textint \rmB \phi' \colon \textint \rmB H \to \rmB \scD$.
\end{enumerate}
\end{lemma}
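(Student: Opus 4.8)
The plan is to exhibit the pullback explicitly and verify its universal property by hand, reducing everything to the pullback $P \coloneqq c_0 \times_c c_1$ in $\Cart$, which exists by hypothesis. Throughout I would use the description of $\textint \rmB H$ as the category of elements of the contravariant functor $\rmB H \colon \Cart^\opp \to \Gpd$: a morphism $c \to d$ is a pair $(f, \psi)$ with $f \in \Cart(c,d)$ and $\psi \in H(c)$, and composition is $(g, \chi) \circ (f, \psi) = (gf,\, (f^*\chi)\,\psi)$, where $f^* = H(f) \colon H(d) \to H(c)$ is restriction and the product is taken in $H(c)$. Getting this composition law — in particular the variance of $f^*$ and the multiplication order — correct is the one point that genuinely needs care; everything else is bookkeeping, and I expect it to be the main obstacle.

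First I would construct the candidate cone. Write $p_0 \colon P \to c_0$ and $p_1 \colon P \to c_1$ for the projections of the $\Cart$-pullback, so $\iota_0 p_0 = \iota_1 p_1$. I claim the pullback in $\textint \rmB H$ is $P$ with the two morphisms $(p_0, e) \colon P \to c_0$ and $(p_1, \theta) \colon P \to c_1$, where $e \in H(P)$ is the unit and $\theta \coloneqq (p_1^* \psi_1)^{-1}(p_0^*\psi_0) \in H(P)$. Computing both composites to $c$ with the composition law shows the square commutes: $(\iota_0,\psi_0)\circ(p_0,e) = (\iota_0 p_0,\, p_0^*\psi_0)$ and $(\iota_1,\psi_1)\circ(p_1,\theta) = (\iota_1 p_1,\, (p_1^*\psi_1)\theta) = (\iota_1 p_1,\, p_0^*\psi_0)$, and these agree since $\iota_0 p_0 = \iota_1 p_1$.

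Next I would verify the universal property. Given $T \in \Cart$ and a cone $(a_0, \xi_0) \colon T \to c_0$, $(a_1,\xi_1) \colon T \to c_1$, the relation $(\iota_0,\psi_0)(a_0,\xi_0) = (\iota_1,\psi_1)(a_1,\xi_1)$ unpacks into $\iota_0 a_0 = \iota_1 a_1$ in $\Cart$ together with $(a_0^*\psi_0)\,\xi_0 = (a_1^*\psi_1)\,\xi_1$ in $H(T)$. The first gives a unique $u \colon T \to P$ with $p_i u = a_i$ by the $\Cart$-universal property of $P$. Requiring $(p_0, e)(u,\eta) = (a_0,\xi_0)$ forces $\eta = \xi_0$, so the mediating morphism is unique if it exists; existence reduces to the single equation $(u^*\theta)\,\xi_0 = \xi_1$. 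Using $u^* p_i^* = a_i^*$ and that $u^*$ is a group homomorphism, one finds $u^*\theta = (a_1^*\psi_1)^{-1}(a_0^*\psi_0)$, whence $(u^*\theta)\xi_0 = (a_1^*\psi_1)^{-1}(a_0^*\psi_0)\xi_0 = \xi_1$ by the second cone condition. This proves Claim~(1). Openness of the $\iota_i$ is not needed in this abstract verification; it serves only to ensure, in the intended applications, that $c_0 \times_c c_1$ is the honest intersection $\iota_0(c_0)\cap\iota_1(c_1)$ and hence lies in $\Cart$ — here its existence is assumed outright.

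Finally, Claim~(2) will follow formally. The functor $\textint \rmB \phi' \colon \textint \rmB H \to \rmB\scD$ is the identity on objects, preserves the $\Cart$-component $f$ of a morphism, and sends $\psi \in H(c)$ to $\phi'_c(\psi) \in \Diff(M)(c)$. Since $\phi'$ is a morphism of presheaves of groups, it commutes with restriction, $\phi'(f^*\psi) = f^*\phi'(\psi)$, and with inverses and products; hence it sends the explicit cone above to the cone on $P$ with components $(p_0, e)$ and $(p_1,\, (p_1^*\phi'(\psi_1))^{-1}(p_0^*\phi'(\psi_0)))$ in $\rmB\scD = \textint \rmB\Diff(M)$. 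But this is exactly the explicit pullback cone produced by Claim~(1) applied to the group presheaf $\Diff(M)$ and the elements $\phi'(\psi_i)$; therefore it is a pullback in $\rmB\scD$, i.e.\ $\textint\rmB\phi'$ preserves the pullback. Thus, once the composition and variance conventions of the Grothendieck construction are pinned down, both claims are routine.
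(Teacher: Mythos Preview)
Your proof is correct and follows essentially the same approach as the paper: both construct the pullback explicitly on the underlying $\Cart$-pullback, place the unit on one leg and the appropriate quotient $(p_i^*\psi_i)^{-1}(p_j^*\psi_j)$ on the other, then verify the universal property by hand. The only cosmetic difference is that the paper puts the unit on the $c_1$-leg while you put it on the $c_0$-leg; your treatment of Claim~(2) is slightly more explicit than the paper's (which simply observes the construction is uniform in $H$), and your remark that openness of the $\iota_i$ is not used in the abstract argument is correct.
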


\begin{proof}
Let $c_{01}$ be an object representing the pullback $c_0 \times_c c_1$ in $\Cart$.
We thus have a commutative square
\begin{equation}
\begin{tikzcd}
	c_{01} \ar[r, "\jmath_1"] \ar[d, "\jmath_0"']
	& c_1 \ar[d, "\iota_1"]
	\\
	c_0 \ar[r, "\iota_0"']
	& c
\end{tikzcd}
\end{equation}
in $\Cart$.
We lift this to a square in $\textint \rmB H$:
\begin{equation}
\label{eq:pb in tint BH}
\begin{tikzcd}[column sep=1cm, row sep=0.75cm]
	c_{01} \ar[r, "{(\jmath_1, e_H)}"] \ar[d, "{(\jmath_0, (\jmath_0^* \psi_0)^{-1} \cdot \jmath_1^*\psi_1)}"']
	& c_1 \ar[d, "{(\iota_1, \psi_1)}"]
	\\
	c_0 \ar[r, "{(\iota_0, \psi_0)}"']
	& c
\end{tikzcd}
\end{equation}
We claim that this is a pullback square.
To check this, we augment the above commutative square to an arbitrary commutative diagram as given by the solid arrows in
\begin{equation}
\begin{tikzcd}[column sep=1cm, row sep=0.75cm]
	d \ar[rrd, bend left=20, "{(f_1, \varphi_1)}"] \ar[ddr, bend right=20, "{(f_0, \varphi_0)}"'] \ar[dr, dashed, "{(f_{01}, \varphi_{01})}" {description, xshift=0.1cm}]
	& &
	\\
	& c_{01} \ar[r, "{(\jmath_1, e_H)}"] \ar[d]
	& c_1 \ar[d, "{(\iota_1, \psi_1)}"]
	\\
	& c_0 \ar[r, "{(\iota_0, \psi_0)}"']
	& c
\end{tikzcd}
\end{equation}
in $\textint \rmB H$.
The commutativity implies, in particular, the identity
\begin{equation}
	f_1^* \psi_1 \cdot \varphi_1
	= f_0^* \psi_0 \cdot \varphi_0
\end{equation}
in the group $H(d)$.
To fill in the dashed arrow, first note that the smooth map $f_{01} \colon d \to c_{01}$ exists and is uniquely determined since we constructed $c_{01}$ as a pullback in $\Cart$.
Second, the commutativity of the upper triangle forces upon us the \textit{unique} choice $\varphi_{01} \coloneqq \varphi_1$.
By the above identity of sections of $H$ over $d$, the left-hand triangle commutes as well, and we obtain that~\eqref{eq:pb in tint BH} is indeed a pullback square in $\textint \rmB H$.

Since this construction did not depend on the choice of presheaf of groups $H$, and the functor $\textint \rmB \phi'$ acts as the identity at the level of underlying morphisms of cartesian spaces, it follows that $\textint \rmB \phi'$ preserves pullbacks of the type~\eqref{eq:pb in tint BH}.
\end{proof}

\begin{remark}
\label{rmk:pbs in Cech nerves in tint BH}
Each of the pullbacks that appear in the \v{C}ech nerves of $\tau_H$-covering are of the form considered in Lemma~\ref{st:pbs in tint BH}
\qen
\end{remark}

\begin{lemma}
For each morphism $\phi' \colon H \to \Diff(M)$ of presheaves of groups on $\Cart$, there is a Quillen adjunction
\begin{equation}
\begin{tikzcd}
	(\rmB \phi')_! : L_{\tau_H} \scK_H \ar[r, shift left=0.125cm, "\perp"' yshift=0.05cm]
	& L_\tau \scK : (\rmB \phi')^*\,. \ar[l, shift left=0.125cm]
\end{tikzcd}
\end{equation}
between the left Bousfield localisations of the projective model structures at the \v{C}ech nerves of coverings with respect to $\tau_H$ and $\tau$, respectively.
\end{lemma}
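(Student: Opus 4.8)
The plan is to reduce the statement to a single derived-image condition via the universal property of left Bousfield localisation. First I would record that, \emph{before} localising, the pair $\big((\rmB\phi')_! \dashv (\rmB\phi')^*\big)$ is already a Quillen adjunction between the projective model structures on $\scK_H = \Fun(\textint\rmB H^\opp, \sSet)$ and $\scK = \Fun(\rmB\scD^\opp, \sSet)$: the restriction functor $(\rmB\phi')^*$ is just precomposition with $(\rmB\phi')^\opp$, and since projective fibrations and trivial fibrations are detected objectwise, $(\rmB\phi')^*$ preserves both; hence $(\rmB\phi')^*$ is right Quillen and $(\rmB\phi')_!$ is left Quillen. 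With left properness and combinatoriality of simplicial presheaves guaranteeing that both $L_{\tau_H}\scK_H$ and $L_\tau\scK$ exist, I would then invoke the localisation criterion of~\cite{Hirschhorn:MoCats_and_localisations} (Thm.~3.3.20): the Quillen pair descends to a Quillen pair $L_{\tau_H}\scK_H \rightleftarrows L_\tau\scK$ precisely when the total left derived functor $\mathbb{L}(\rmB\phi')_!$ sends each map in the localising set of $L_{\tau_H}\scK_H$ to a weak equivalence in $L_\tau\scK$, i.e.~to a $\tau$-local equivalence.

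The localising set consists of the \v{C}ech-nerve augmentations $\check{C}(\cU) \longrightarrow h_c$, where $\cU = \{(f_i, \varphi_i) \colon c_i \to c\}_{i \in \Lambda}$ ranges over $\tau_H$-coverings of objects $c \in \textint\rmB H$ and $h_c$ denotes the representable. Since $\check{C}(\cU)$ is levelwise a coproduct of representables and $h_c$ is representable, both are projectively cofibrant, so $\mathbb{L}(\rmB\phi')_!$ is computed by applying $(\rmB\phi')_!$ directly. Now $(\rmB\phi')_!$ preserves all colimits and carries representables to representables, $(\rmB\phi')_! h_c \cong h_{\rmB\phi'(c)}$; as $\rmB\phi'$ is the identity on objects this is simply $h_c$ again. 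The one nontrivial input is that $(\rmB\phi')_!$ must convert the $n$-fold fibre products $c_{i_0 \cdots i_n}$ assembling $\check{C}(\cU)$ into the corresponding fibre products in $\rmB\scD$. This is exactly what Lemma~\ref{st:pbs in tint BH}, together with Remark~\ref{rmk:pbs in Cech nerves in tint BH}, supplies: every pullback occurring in these \v{C}ech nerves is of the type handled by that lemma and is preserved by $\textint\rmB\phi'$.

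Combining these facts, I would conclude that $(\rmB\phi')_!\check{C}(\cU) \cong \check{C}\big(\rmB\phi'(\cU)\big)$, compatibly with augmentations, so that $(\rmB\phi')_!$ sends the generating map $\check{C}(\cU) \to h_c$ to the augmentation $\check{C}\big(\rmB\phi'(\cU)\big) \to h_c$. Because the underlying $\Cart$-family of $\rmB\phi'(\cU)$ is the good open covering $\{f_i \colon c_i \to c\}$ underlying $\cU$, the image $\rmB\phi'(\cU)$ is a $\tau$-covering of $c$ in $\rmB\scD$; its \v{C}ech-nerve augmentation therefore lies in the localising set of $L_\tau\scK$ and is a $\tau$-local equivalence. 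This verifies the criterion and yields the desired Quillen adjunction.

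The main obstacle, and the only place where the special geometry of $\rmB H$ enters, is the third step: left Kan extensions do not preserve limits in general, so the identity $(\rmB\phi')_!\check{C}(\cU) \cong \check{C}\big(\rmB\phi'(\cU)\big)$ could fail. It holds here only because the relevant intersections are genuine pullbacks in $\textint\rmB H$ that are preserved by $\textint\rmB\phi'$, which is precisely the content of Lemma~\ref{st:pbs in tint BH}; everything else (the projective Quillen adjunction, Hirschhorn's descent criterion, and the computation $(\rmB\phi')_! h_c \cong h_{\rmB\phi'(c)}$) is routine.
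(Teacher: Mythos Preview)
Your proposal is correct and follows essentially the same route as the paper's proof: establish the projective Quillen adjunction, invoke Hirschhorn's localisation criterion (the paper cites Prop.~3.3.18 rather than Thm.~3.3.20, but the operative content is the same), and verify that $(\rmB\phi')_!$ carries \v{C}ech nerves of $\tau_H$-coverings to \v{C}ech nerves of $\tau$-coverings by appealing to Lemma~\ref{st:pbs in tint BH} and Remark~\ref{rmk:pbs in Cech nerves in tint BH}. You simply spell out more of the routine details (cofibrancy of representables, the action of $(\rmB\phi')_!$ on representables, passage from derived to underived functor) that the paper leaves implicit.
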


\begin{proof}
We readily observe that the adjunction
\begin{equation}
\begin{tikzcd}
	(\rmB \phi')_! : \scK_H \ar[r, shift left=0.125cm, "\perp"' yshift=0.05cm]
	& \scK : (\rmB \phi')^* \ar[l, shift left=0.125cm]
\end{tikzcd}
\end{equation}
is a Quillen adjunction with respect to the projective model structures.
By~\cite[Prop.~3.3.18]{Hirschhorn:MoCats_and_localisations} it suffices to show that $(\rmB \phi')_!$ maps \v{C}ech nerves of $\tau_H$-coverings to \v{C}ech nerves of $\tau$-coverings.
This follows from Lemma~\ref{st:pbs in tint BH} and Remark~\ref{rmk:pbs in Cech nerves in tint BH}.
\end{proof}

\begin{proposition}
Let $G \colon \Cartfam^\opp \to \sSet$ be fibrant in $\scH_\rmfam^{loc}$, and consider the associated functor
\begin{equation}
	\check{C}^{\scD*} G \coloneqq \ul{\scH}_\rmfam \big( \v{C}(-), G \big)
	\colon (\GCov^\scD)^\opp \longrightarrow \sSet
\end{equation}
from Construction~\ref{cstr:sPShs on GCov^D from sPShs on Cart_fam}.
The (homotopy) left Kan extension
\begin{equation}
	G^\scD \coloneqq \Lan_{\varpi^\scD} \check{C}^{\scD*} G \colon \rmB\scD^\opp \longrightarrow \sSet
\end{equation}
is fibrant in $L_\tau \scK$, the left Bousfield localisation of the projective model structure on $\scK$ at the $\tau$-coverings.
\end{proposition}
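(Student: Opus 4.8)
The plan is to unwind what fibrancy in $L_\tau \scK$ means: since this is a left Bousfield localisation of the projective model structure on $\scK = \Fun(\rmB\scD^\opp,\sSet)$, an object is fibrant there precisely when it is projectively fibrant \emph{and} $\tau$-local, i.e.\ satisfies homotopy \v{C}ech descent with respect to every $\tau$-covering. Projective fibrancy is immediate: a fibrant object $G$ of $\scH_\rmfam^{loc}$ is in particular projectively fibrant in $\scH_\rmfam$, so Theorem~\ref{st:fctrs from H_fam to sPSh(rmBscD)}(3) (equivalently Construction~\ref{cstr:sPShs on GCov^D from sPShs on Cart_fam} together with Proposition~\ref{st:Lan_varpi}) shows that $G^\scD = \Lan_{\varpi^\scD}\check{C}^{\scD*}G$ is projectively fibrant in $\scK$. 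The entire content therefore lies in verifying $\tau$-locality.

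First I would identify the values of $G^\scD$ with derived sections of $G$. By Proposition~\ref{st:Lan_varpi} and Lemma~\ref{st:wtG ess const on fibres of varpi^D}, for each $c \in \rmB\scD$ the value $G^\scD(c)$ is the colimit of the essentially constant diagram $\check{C}^{\scD*}G$ over the cofiltered fibre $(\GCov^\scD_{|c})^\opp$, and is hence canonically equivalent to $\ul{\scH_\rmfam}\big(\v{C}(c, \hat{\cU}\to\cU), G\big)$ for any good open covering $(\hat{\cU}\to\cU)$ of $(c\times M\to c)$. Since $G$ is fibrant-local and $\v{C}(c,\hat{\cU}\to\cU)$ is a cofibrant approximation of the representable presheaf $\ul{(c\times M\to c)}$ in $\scH_\rmfam^{loc}$ (the Remark following Construction~\ref{cstr:sPShs on GCov^D from sPShs on Cart_fam}), this exhibits $G^\scD(c)$ as the derived sections $\mathbb{R}\Gamma\big((c\times M\to c),G\big)$ of $G$ over the bundle $c\times M\to c$, naturally in $c$.

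Next, for a $\tau$-covering $\{(\iota_i,\varphi_i)\colon c_i\to c\}_{i\in\Lambda}$ I would describe its \v{C}ech nerve in $\rmB\scD$. By Lemma~\ref{st:pbs in tint BH} and Remark~\ref{rmk:pbs in Cech nerves in tint BH} all iterated fibre products $c_{i_0\cdots i_n}$ exist in $\rmB\scD$, and a direct point-set computation matching the twisted pullback formula of Lemma~\ref{st:pbs in tint BH} shows that the twisted products $(\iota_i\wr\varphi_i)\colon c_i\times M\to c\times M$ form an open covering of the manifold $c\times M$ whose $n$-fold fibre products are diffeomorphic to the bundles $c_{i_0\cdots i_n}\times M$ (compare Remark~\ref{rmk:tau-coverings as M-bundle maps}). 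Under the identification of the previous paragraph, the \v{C}ech nerve of the $\tau$-covering thus goes over, levelwise, to the \v{C}ech nerve of the family $\{\ul{(c_i\times M\to c_i)}\to\ul{(c\times M\to c)}\}$ in $\scH_\rmfam$. This family is a local epimorphism with respect to $\tau_\rmfam$ (any smooth map from a family $(\hat d\to d)$ into $c\times M$ can be pulled back and refined by a good $\tau_\rmfam$-covering, exactly as in the proof of Lemma~\ref{st:tau_rmfam is coverage}), so its \v{C}ech nerve is a weak equivalence in $\scH_\rmfam^{loc}$ by~\cite[Cor.~A.3]{DHI:Hypercovers_and_sPSHs}. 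Mapping the fibrant-local object $G$ out of it then yields
\[
	G^\scD(c)\simeq\mathbb{R}\Gamma\big((c\times M\to c),G\big)\simeq\holim_{\Delta}\,\mathbb{R}\Gamma\big((c_\bullet\times M\to c_\bullet),G\big)\simeq\holim_{\Delta}\,G^\scD(c_\bullet),
\]
which is precisely the homotopy \v{C}ech descent condition for $G^\scD$ at the given $\tau$-covering, establishing $\tau$-locality.

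I expect the main obstacle to be the \v{C}ech-nerve identification of the third paragraph: one must check carefully, using the explicit twisted pullback of Lemma~\ref{st:pbs in tint BH}, that forming fibre products in $\rmB\scD$ and then taking (twisted) products with $M$ agrees with forming fibre products of the twisted $M$-bundles over $c\times M$, and that the resulting family is a genuine $\tau_\rmfam$-local epimorphism rather than merely a set-theoretic open cover. Once this compatibility is in place, the descent for $G^\scD$ is a formal consequence of the descent already built into the fibrant-local object $G$.
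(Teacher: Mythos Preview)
Your strategy is sound and arrives at the same conclusion, but it differs substantially from the paper's execution. You reduce $\tau$-descent of $G^\scD$ to $\tau_\rmfam$-locality of $G$ by identifying $G^\scD(c)$ with the derived sections $\mathbb{R}\Gamma\big((c\times M\to c),G\big)$ and then recognising the $\tau$-\v{C}ech nerve as the \v{C}ech nerve of the $\tau_\rmfam$-local epimorphism $\{\ul{(c_i\times M\to c_i)}\to\ul{(c\times M\to c)}\}$, invoking \cite[Cor.~A.3]{DHI:Hypercovers_and_sPSHs}. The paper instead fixes once and for all a good open covering $\{d_a\}$ of $M$, uses it to manufacture explicit and strictly compatible objects $\cV\in\GCov^\scD_{|c}$ and $\cV_{i_0\cdots i_l}\in\GCov^\scD_{|c_{i_0\cdots i_l}}$, and then shows the descent comparison via a double homotopy limit over $\bbDelta\times\bbDelta$ together with cofinality of the diagonal $\bbDelta\hookrightarrow\bbDelta\times\bbDelta$. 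Your route is cleaner conceptually; the paper's route is point-set rigorous without appeal to functorial derived mapping spaces.

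The obstacle you flag (matching twisted fibre products in $\rmB\scD$ with fibre products of the $M$-bundles) is real but routine given Lemma~\ref{st:pbs in tint BH}. The more serious issue you underplay is \emph{coherence}: your chain of equivalences requires the pointwise identifications $G^\scD(c_{i_0\cdots i_l})\simeq\mathbb{R}\Gamma\big((c_{i_0\cdots i_l}\times M\to c_{i_0\cdots i_l}),G\big)$ to assemble into a weak equivalence of \emph{cosimplicial} objects, not merely a levelwise one, in order to compare the homotopy limits. As noted in the proof of Proposition~\ref{st:Lan_varpi}, the identification of the Kan-extension value with a colimit over the fibre is not natural in $c$. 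One can repair this either by working $\infty$-categorically (where derived mapping spaces are functorial on the nose) or by a functorial cofibrant replacement of $c\mapsto\ul{(c\times M\to c)}$; but the paper's device of a single $M$-cover is precisely a concrete way to produce strictly compatible \v{C}ech resolutions at every level, sidestepping this issue entirely.
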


\begin{proof}
The strategy of this proof is as follows:
let $c \in \rmB\scD$, and let \smash{$\widehat{\cU} = \{(f_i, \psi_i) \colon c_i \to c\}_{i \in \Lambda}$} be a $\tau$-covering in $\rmB\scD$.
We need to show that the canonical morphism
\begin{equation}
	(\Lan_{\varpi^\scD} \check{C}^{\scD*} G)(c)
	\longrightarrow \holim_{[l] \in \bbDelta} \ul{\scK}(\v{C}_l\widehat{\cU}, \Lan_{\varpi^\scD} \check{C}^{\scD*} G)
\end{equation}
is a weak homotopy equivalence in $\sSet$.
Recall from Remark~\ref{rmk:tau-coverings as M-bundle maps} that we can understand \smash{$\widehat{\cU}$} as a covering of $c \times M$ by maps of bundles with fibre $M$.
We then further resolve each patch $c_{i_0 \cdots i_l} \times M$ by a $\tau_\rmfam$-covering, using a good open covering of $M$.
That allows us to apply arguments as in Proposition~\ref{st:Lan_varpi} to get control over the left Kan extension.

Thus, let $\{d_a \hookrightarrow M\}_{a \in \Xi}$ be a good open covering of $M$.
Set
\begin{equation}
	\hat{c}_{ia} \coloneqq (f_i \wr \psi_i)^{-1} \big( f_i(c_i) \times d_a \big)
	\quad \subset \quad
	c_i \times M\,.
\end{equation}
The manifold $\hat{c}_{ia}$ comes with a canonical smooth map $\hat{c}_{ia} \to c_i$, making it into an object in $\Cartfam$ (it can be described as the bundle over $c_i$ with fibre $d_a$, twisted by the smooth family of diffeomorphism $\psi_i$).
Moreover, there are canonical diffeomorphisms of manifolds
\begin{equation}
	\hat{c}_{ia} \underset{c \times M}{\times} \hat{c}_{jb}
	\cong (c_i \underset{M}{\times} c_j) \times (d_a \cap d_b)
	\eqqcolon \hat{c}_{ij,ab}
\end{equation}
(where the pullbacks are formed in $\Mfd$), and similarly for iterated pullbacks.
Thus, we obtain an object
\begin{equation}
	\cV \coloneqq \{(\hat{c}_{ia} \to c_i)\}_{i \in \Lambda, a \in \Xi}
	\quad \in \GCov^\scD_{|c}\,,
\end{equation}
where \smash{$\GCov^\scD_{|c}$} is the fibre of the functor $\varpi^\scD \colon \GCov^\scD \to \rmB\scD$ at $c \in \rmB\scD$.
We also obtain an object
\begin{equation}
	\cV_{i_0 \cdots i_l} \coloneqq \{(\hat{c}_{i_0 \cdots i_l, a} \to c_{i_0 \cdots i_l})\}_{a \in \Xi}
	\quad \in \GCov^\scD_{|c_{i_0 \cdots i_l}}\,,
\end{equation}
for each $l \in \NN_0$ and each fixed $i_0, \ldots, i_l \in \Lambda$.

By the same argument as in Proposition~\ref{st:Lan_varpi}, the canonical morphism
\begin{equation}
	\wtG(\cV) = \ul{\scH}_\rmfam (\v{C}\cV, G)
	\longrightarrow (\Lan_{\varpi^\scD} \check{C}^{\scD*} G)(c)
\end{equation}
is a weak homotopy equivalence.
Similarly, for each $l \in \NN_0$ and $i_0, \ldots, i_l \in \Lambda$, the canonical morphism
\begin{equation}
	\ul{\scH}_\rmfam (\v{C}\cV_{i_0 \cdots i_l}, G)
	\longrightarrow (\Lan_{\varpi^\scD} \check{C}^{\scD*} G)(c_{i_0 \cdots i_l})
\end{equation}
is a weak homotopy equivalence.
Further, both sides are Kan complexes since the left Kan extension is computed by a filtered colimit, and Kan fibrations are stable under filtered colimits~\cite[Lemma~3.1.24]{Cisinski:HiC_and_HoA}.
Therefore, the a canonical map
\begin{align}
	\holim_{[l] \in \bbDelta} \bigg( \prod_{i_0 \cdots i_l \in \Lambda} \ul{\scH}_\rmfam (\v{C}\cV_{i_0 \cdots i_l}, G) \bigg)
	\longrightarrow
	&\holim_{[l] \in \bbDelta} \bigg( \prod_{i_0 \cdots i_l \in \Lambda} (\Lan_{\varpi^\scD} \check{C}^{\scD*} G)(c_{i_0 \cdots i_l}) \bigg)
	\\
	&= \holim_{[l] \in \bbDelta}\ \ul{\scK}(\v{C}_l\widehat{\cU}, \Lan_{\varpi^\scD} \check{C}^{\scD*} G)
\end{align}
is also a weak homotopy equivalence.
By an application of the Bousfield-Kan map, the canonical morphism
\begin{equation}
	\hocolim_{[k] \in \bbDelta} \v{C}_k\cV_{i_0 \cdots i_l}
	\longrightarrow \v{C}\cV_{i_0 \cdots i_l}
\end{equation}
is a projective weak equivalence in $\scH_\rmfam$ (where on the left-hand side we have a homotopy colimit of simplicially constant simplicial presheaves).
We thus arrive at a commutative diagram
\begin{equation}
\begin{tikzcd}[column sep=1.5cm, row sep=0.75cm]
	\underset{[m] \in \bbDelta}{\holim}\ \ul{\scH}_\rmfam (\v{C}_m \cV, G) \ar[r]
	&  \underset{[l] \in \bbDelta}{\holim}\ \underset{[k] \in \bbDelta}{\holim}\ \bigg( \displaystyle\prod_{i_0 \cdots i_l \in \Lambda} \ul{\scH}_\rmfam (\v{C}_k\cV_{i_0 \cdots i_l}, G) \bigg)
	\\
	\ul{\scH}_\rmfam (\v{C}\cV, G) \ar[d, "\sim"] \ar[r] \ar[u, "\sim"]
	& \underset{[l] \in \bbDelta}{\holim}\ \bigg( \displaystyle\prod_{i_0 \cdots i_l \in \Lambda} \ul{\scH}_\rmfam (\v{C}\cV_{i_0 \cdots i_l}, G) \bigg) \ar[d, "\sim"] \ar[u, "\sim"]
	\\
	(\Lan_{\varpi^\scD} \check{C}^{\scD*} G)(c) \ar[r]
	& \underset{[l] \in \bbDelta}{\holim}\ \bigg( \displaystyle\prod_{i_0 \cdots i_l \in \Lambda} (\Lan_{\varpi^\scD} \wtG^\scD)(c_{i_0 \cdots i_l}) \bigg)
\end{tikzcd}
\end{equation}
The top horizontal morphism corresponds precisely to the inclusion of the diagonal $\bbDelta \hookrightarrow \bbDelta \times \bbDelta$; but this functor is homotopy cofinal (see, for instance,~\cite[Example~8.5.13]{Riehl:Cat_HoThy}), and so the top map is a weak homotopy equivalence.
The commutativity of the above diagram together with the two-out-of-three property of weak equivalences then yields the claim.
\end{proof}

\begin{corollary}
For any fibrant object $G \in \scH_\rmfam^{loc}$, the simplicial presheaf
\begin{equation}
	\big( \textint \rmB \phi' \big)^* \Lan_{\varpi^\scD} \check{C}^{\scD*} G
	 \colon \big( \textint \rmB H \big)^\opp \longrightarrow \sSet
\end{equation}
is fibrant in $L_{\tau_H} \scK_H$.
\end{corollary}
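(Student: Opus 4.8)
The plan is to deduce this directly from the Proposition immediately preceding it, combined with the Quillen adjunction established in the Lemma above. First I would recall that the Proposition shows the homotopy left Kan extension $G^\scD \coloneqq \Lan_{\varpi^\scD} \check{C}^{\scD*} G$ to be fibrant in $L_\tau \scK$, that is, in the left Bousfield localisation of the projective model structure on $\scK = \Fun(\rmB\scD^\opp, \sSet)$ at the \v{C}ech nerves of $\tau$-coverings. This is where all the genuine descent content sits, and it is exactly the hypothesis I need as input.

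Next I would invoke the Quillen adjunction $(\rmB\phi')_! \dashv (\rmB\phi')^* \colon L_{\tau_H}\scK_H \rightleftarrows L_\tau \scK$ from the Lemma. The key observation is purely notational: the restriction functor written $(\textint\rmB\phi')^*$ in the statement of the corollary is precisely the right adjoint $(\rmB\phi')^*$ appearing there, since both are restriction along the functor $\textint\rmB\phi' \colon \textint\rmB H \to \rmB\scD = \textint\rmB\Diff(M)$ obtained by applying the Grothendieck construction to the delooping of $\phi'$. Because a right Quillen functor preserves fibrant objects — it preserves fibrations and, being a right adjoint, preserves the terminal object, so it sends the fibration $G^\scD \to *$ to a fibration $(\rmB\phi')^* G^\scD \to *$ — I conclude that $(\textint\rmB\phi')^* G^\scD$ is fibrant in $L_{\tau_H}\scK_H$, which is exactly the claim.

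I do not expect any real obstacle here beyond the bookkeeping already completed in the two preceding statements. The only points worth spelling out are: (i) that the restriction functor named in the corollary and the right adjoint $(\rmB\phi')^*$ of the Lemma coincide; and (ii) that the adjunction is genuinely Quillen for the \emph{localised} model structures, which is precisely the content of the Lemma (it rests on the fact, via~\cite[Prop.~3.3.18]{Hirschhorn:MoCats_and_localisations}, that $(\rmB\phi')_!$ carries \v{C}ech nerves of $\tau_H$-coverings to \v{C}ech nerves of $\tau$-coverings, established in turn through Lemma~\ref{st:pbs in tint BH} and Remark~\ref{rmk:pbs in Cech nerves in tint BH}). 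Granting these, the corollary is a formal consequence of the fact that right Quillen functors preserve fibrant objects.
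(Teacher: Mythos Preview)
Your proof is correct and is exactly the argument the paper intends: the corollary is stated without proof precisely because it follows immediately from the preceding Proposition (fibrancy of $G^\scD$ in $L_\tau\scK$) together with the Lemma establishing the Quillen adjunction, via the standard fact that right Quillen functors preserve fibrant objects. Your identification of $(\textint\rmB\phi')^*$ with $(\rmB\phi')^*$ is also correct, and there is nothing further to add.
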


\begin{lemma}
\label{st:comm squares and equivs on fibs}
Consider a commutative square of simplicial sets
\begin{equation}
\begin{tikzcd}
	A \ar[r] \ar[d]
	& X \ar[d]
	\\
	B \ar[r, "f"']
	& Y
\end{tikzcd}
\end{equation}
Suppose that both vertical morphisms are left fibrations and that $f$ is a categorical weak equivalence (i.e.~a weak equivalence in the Joyal model structure on $\sSet$).
Then, the following are equivalent
\begin{enumerate}
\item The canonical morphism $A \to f^*X = X \times_Y B$ is a covariant weak equivalence in $\sSet_{/B}$.

\item The canonical morphism $f_! A \to X$ is a covariant weak equivalence in $\sSet_{/Y}$.

\item For each vertex $b \in B$, the canonical morphism on fibres $A_{|b} \to X_{|f(b)}$ is a weak homotopy equivalence (i.e.~a weak equivalence in the Kan-Quillen model structure on $\sSet$).
\end{enumerate}
\end{lemma}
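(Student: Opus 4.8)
The plan is to establish the two equivalences $(1) \Leftrightarrow (3)$ and $(1) \Leftrightarrow (2)$ separately, working throughout with the covariant model structures. The key structural observation, which I would record first, is that since left fibrations are stable under pullback, the map $A \to f^*X$ is a morphism of left fibrations over $B$: both $A \to B$ and $f^*X = X \times_Y B \to B$ are left fibrations (the latter being the pullback of the left fibration $X \to Y$ along $f$), and hence both are fibrant objects in the covariant model structure on $\sSet_{/B}$. Moreover every object of $\sSet_{/B}$ is cofibrant, since the cofibrations of the covariant model structure are exactly the monomorphisms. The same remarks apply over $Y$, so that $X$ is fibrant and $A$ is cofibrant in their respective slices.

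For $(1) \Leftrightarrow (3)$ I would first identify the fibre of $f^*X \to B$ over a vertex $b \in B$ with $X_{|f(b)}$, using that taking fibres over a vertex commutes with pullback; under this identification the map induced by $A \to f^*X$ on fibres over $b$ is precisely the canonical comparison $A_{|b} \to X_{|f(b)}$ of statement $(3)$. I would then invoke the fibrewise detection of covariant weak equivalences between left fibrations: under the equivalence between the $\infty$-categorical localisation of $\sSet_{/B}$ at the covariant weak equivalences and the functor $\infty$-category $\scFun(B, \scS)$ from \cite[Thm.~7.8.9]{Cisinski:HiC_and_HoA}, the morphism $A \to f^*X$ presents a natural transformation of $\scS$-valued functors on $B$, whose value at $b$ is the map of fibres $A_{|b} \to X_{|f(b)}$. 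Since equivalences in a functor $\infty$-category with target $\scS$ are detected objectwise, the morphism $A \to f^*X$ is a covariant weak equivalence if and only if it is a fibrewise weak homotopy equivalence over all vertices of $B$, which is exactly $(3)$.

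For $(1) \Leftrightarrow (2)$ I would use that $f_! \dashv f^*$ is a Quillen adjunction between the covariant model structures on $\sSet_{/B}$ and $\sSet_{/Y}$: the functor $f_!$ is post-composition with $f$, which preserves monomorphisms and sends the generating left-horn inclusions over $B$ to those over $Y$, hence preserves cofibrations and trivial cofibrations and is left Quillen for any $f$. Because $f$ is a categorical weak equivalence, restriction induces an equivalence $\scFun(Y, \scS) \to \scFun(B, \scS)$, so by \cite[Thm.~7.8.9]{Cisinski:HiC_and_HoA} this Quillen adjunction is in fact a Quillen equivalence. The maps $A \to f^*X$ and $f_! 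A \to X$ are adjoint transposes; since $A$ is already cofibrant in $\sSet_{/B}$ and $X$ is already fibrant in $\sSet_{/Y}$, the defining property of a Quillen equivalence applies without any fibrant or cofibrant replacement and says precisely that one of these maps is a covariant weak equivalence if and only if the other is, giving $(1) \Leftrightarrow (2)$.

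The main obstacle — indeed the only genuinely nontrivial input — is the homotopical bookkeeping around \cite[Thm.~7.8.9]{Cisinski:HiC_and_HoA}: both the fibrewise detection of covariant equivalences and the Quillen-equivalence assertion rest on the identification of the localised slice with $\scFun(-, \scS)$ together with the objectwise detection of equivalences in these functor $\infty$-categories and their invariance under categorical equivalences of the base. Once these are in place, the remaining steps are routine verifications: the fibre identification $(f^*X)_{|b} \cong X_{|f(b)}$ and the fact that the two comparison maps are adjoint transposes of one another.
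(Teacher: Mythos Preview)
Your proof is correct and follows essentially the same strategy as the paper: split into $(1)\Leftrightarrow(3)$ via fibrewise detection of covariant weak equivalences between left fibrations, and $(1)\Leftrightarrow(2)$ via the Quillen equivalence $f_!\dashv f^*$ applied to adjoint transposes with $A$ cofibrant and $X$ fibrant. The paper cites \cite[Thm.~4.1.16]{Cisinski:HiC_and_HoA} directly for the fibrewise criterion (noting it holds even without $f$ being a categorical equivalence) and \cite[Prop.~F]{HM:Left_fibs_and_hocolims_I} for the Quillen equivalence, whereas you route both through \cite[Thm.~7.8.9]{Cisinski:HiC_and_HoA}; the content is the same.
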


\begin{proof}
(1) and (3) are equivalent even without the assumption that $f$ is a categorical weak equivalence; see, for instance,~\cite[Thm.~4.1.16]{Cisinski:HiC_and_HoA}.
The equivalence of (1) and (2) follows from~\cite[Prop.~F]{HM:Left_fibs_and_hocolims_I}, using that the morphisms $A \to B$ and $X \to Y$ are each fibrant as well as cofibrant objects in the covariant model structures on $\sSet_{/B}$ and $\sSet_{/Y}$, respectively.
\end{proof}

\begin{lemma}
\label{st:comm squares and weak Cat equivs}
In the situation of Lemma~\ref{st:comm squares and equivs on fibs}, suppose that one---and thus all---of the equivalent criteria in that lemma are satisfied.
Further, suppose that $B$ and $Y$ are $\infty$-categories, 
Then the top morphism $A \to X$ is a weak categorical equivalence.
\end{lemma}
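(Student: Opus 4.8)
The plan is to factor the top morphism $A \to X$ through the pullback $f^*X = X \times_Y B$ as $A \xrightarrow{g} f^*X \xrightarrow{q} X$, where $g$ is induced by the universal property of the pullback (using the top map $A \to X$ and the left map $A \to B$) and $q$ is the projection; by construction $q \circ g$ is the top morphism. I will show that both $g$ and $q$ are weak categorical equivalences and then conclude by the two-out-of-three property. First I record two structural facts I will use. Since $A \to B$ and $X \to Y$ are left fibrations and $B$, $Y$ are $\infty$-categories, the total spaces $A$ and $X$ are again $\infty$-categories, as a left fibration is in particular an inner fibration. Moreover, a left fibration between $\infty$-categories is a conservative isofibration~\cite[Prop.~3.4.8]{Cisinski:HiC_and_HoA}, and hence a fibration in the Joyal model structure~\cite[Thm.~3.6.1]{Cisinski:HiC_and_HoA}; in particular $p \colon X \to Y$ is a Joyal fibration between $\infty$-categories.

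For the map $g \colon A \to f^*X$, observe that $f^*X \to B$ is a left fibration, being the pullback of the left fibration $X \to Y$ along $f$, and that $A \to B$ is a left fibration by assumption. Thus $g$ is a morphism between two left fibrations over $B$. By hypothesis, criterion~(1) of Lemma~\ref{st:comm squares and equivs on fibs} holds, i.e.\ $g$ is a covariant weak equivalence in $\sSet_{/B}$. Lemma~\ref{st:cov weqs between LFibs are exactly cat weqs} then upgrades this to the statement that $g$ is a weak categorical equivalence.

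The remaining and most delicate step is to prove that the projection $q \colon f^*X \to X$ is a weak categorical equivalence. The subtlety here is that the Joyal model structure is not right proper, so one cannot simply invoke base change of weak equivalences along fibrations. Instead I will argue that the defining pullback square of $f^*X$ is a homotopy pullback in the Joyal model structure: its cospan $X \xrightarrow{p} Y \xleftarrow{f} B$ has all three corners fibrant (they are $\infty$-categories) and the leg $p$ is a Joyal fibration, so the strict pullback already computes the homotopy pullback. The morphism $q$ is then the homotopy base change of $f$ along $p$, obtained by applying the homotopy-pullback functor to the levelwise weak equivalence of cospans $(\mathrm{id}_X, \mathrm{id}_Y, f) \colon (X \to Y \leftarrow B) \to (X \to Y \leftarrow Y)$. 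Since homotopy base change preserves weak equivalences, the hypothesis that $f$ is a weak categorical equivalence yields that $q$ is one as well.

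Combining the two steps, the top morphism $A \to X = q \circ g$ is a composite of weak categorical equivalences, hence itself a weak categorical equivalence, as claimed. I expect the homotopy-pullback argument for $q$ to be the main obstacle, precisely because it must circumvent the failure of right properness of the Joyal model structure; the key point is that this is achieved by exploiting the fibrancy of all objects involved together with the isofibration property of left fibrations between $\infty$-categories.
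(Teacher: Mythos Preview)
Your proof is correct and follows essentially the same approach as the paper's: both factor the top morphism through the pullback $f^*X$, use that the pullback square is homotopy cartesian in the Joyal model structure (since $X \to Y$ is an isofibration between $\infty$-categories) to conclude that the projection $f^*X \to X$ is a categorical weak equivalence, and then invoke criterion~(1) together with Lemma~\ref{st:cov weqs between LFibs are exactly cat weqs} for the map $A \to f^*X$. Your write-up is somewhat more explicit about the factorization, the failure of right properness, and the two-out-of-three step, but the underlying argument is the same.
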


\begin{proof}
By our additional assumption, each vertex of the commutative square in Lemma~\ref{st:comm squares and equivs on fibs} is now an $\infty$-category.
Consider the pullback square of simplicial sets
\begin{equation}
\begin{tikzcd}
	B \times_Y X \ar[r, "\widehat{f}"] \ar[d]
	& X \ar[d]
	\\
	B \ar[r, "f"']
	& Y
\end{tikzcd}
\end{equation}
As the right-hand vertical map is a left fibration between $\infty$-categories, it is also an isofibration~\cite[Prop.~3.4.8]{Cisinski:HiC_and_HoA}, and thus a fibration in the Joyal model structure~\cite[Thm.~3.6.1]{Cisinski:HiC_and_HoA}.
In particular, the above square is homotopy cartesian in the Joyal model structure, and it follows that $\widehat{f}$ is a categorical weak equivalence.
The claim then follows by combining property (1) of Lemma~\ref{st:comm squares and equivs on fibs} with Lemma~\ref{st:cov weqs between LFibs are exactly cat weqs}.
\end{proof}

\begin{remark}
\label{rmk:H sheaf of groups and NBH}
Let $H \colon \Cart^\opp \longrightarrow \Grp$ be a presheaf of groups.
Then, $H \colon \Cart^\opp \longrightarrow \Grp$ is a \textit{sheaf} of groups if and only if the simplicial presheaf $N \rmB H \colon \Cart^\opp \longrightarrow \Set$ satisfies homotopy descent with respect to good open coverings in $\Cart$.
\qen
\end{remark}

\begin{theorem}
\label{st:descent for spl pshs on tint BH}
Let $H \colon \Cart^\opp \longrightarrow \Grp$ be a \emph{sheaf} of groups.
Let $F \colon (\textint \rmB H)^\opp \longrightarrow \sSet$ be fibrant in $L_{\tau_H} \scK_H$.
Then, the homotopy left Kan extension
\begin{equation}
	\hoLan_{\pi_M^H} F \colon \Cart^\opp \longrightarrow \sSet
\end{equation}
is fibrant in $\scH^{loc}$ (recall the notation from Definition~\ref{def:scH and scH_rmfam}).
\end{theorem}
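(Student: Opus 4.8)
The plan is to verify separately the two conditions for fibrancy in $\scH^{loc}$ (Definition~\ref{def:scH and scH_rmfam}): projective fibrancy, and homotopy descent for good open coverings of cartesian spaces. Write $\scC \coloneqq \textint \rmB H$ and let $\pi \coloneqq \pi_M^H \colon \scC \to \Cart$ be the projection. As for $\rmB\scD$, the nerve $N\pi^\opp \colon N\scC^\opp \to N\Cart^\opp$ is a left fibration, so by \cite[Prop.~6.1.14]{Cisinski:HiC_and_HoA} the composite of the left fibration $\textint F \to N\scC^\opp$ with $N\pi^\opp$ is again a left fibration over $N\Cart^\opp$, and it classifies $\hoLan_{\pi_M^H} F$. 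Being classified by a left fibration, $\hoLan_{\pi_M^H} F$ takes values in Kan complexes and is thus projectively fibrant; only descent remains to be shown.

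The decisive structural input is a fibre sequence. The section $e_M \colon \Cart \to \scC$, $f \mapsto (f, e)$, satisfies $\pi \circ e_M = 1_\Cart$ and selects the canonical base point of each fibre $\rmB H(c)$. Pulling the left fibration $\textint F \to N\scC^\opp$ back along $N e_M^\opp$ gives $N e_M^* \textint F \cong \textint F^M$ with $F^M \coloneqq e_M^* F$ (Lemma~\ref{st:r_C^* and pullbacks}). Since $N\scC^\opp \to N\Cart^\opp$ classifies $\rmB H^\rev$ (Remark~\ref{rmk:BD(M)^opp and Diff(M)^rev}; the reversal is immaterial below, as $\rmB H^\rev \simeq \rmB H$ objectwise and $H^\rev$ is a sheaf iff $H$ is) and the composite classifies $\hoLan_{\pi_M^H} F$, the first leg $\textint F \to N\scC^\opp$ presents a morphism in $\scP(N\Cart)$ whose homotopy fibre over the base-point section is $F^M$. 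This is a fibre sequence
\[
	F^M \longrightarrow \hoLan_{\pi_M^H} F \longrightarrow \rmB H\,,
\]
which objectwise is the fibre sequence $F^M(c) \to F^M(c) \dslash H(c) \to N\rmB H(c)$ of the associated Borel construction.

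Next I would record the two descent inputs along a fixed good cover $\cU = \{U_a \to c\}$ of $c$. The base $N\rmB H$ satisfies homotopy descent precisely because $H$ is a sheaf (Remark~\ref{rmk:H sheaf of groups and NBH}). For the fibre $F^M$, the image $\hat\cU = \{(U_a, e) \to c\}$ under $e_M$ is a $\tau_H$-cover, and by Lemma~\ref{st:pbs in tint BH} the pullbacks occurring in $\check{C}\hat\cU$ all carry trivial group elements, so $F(\check{C}_n \hat\cU) = \prod_{a_0 \cdots a_n} F^M(U_{a_0 \cdots a_n})$. Hence the $\tau_H$-descent of $F$ along $\hat\cU$, which holds by hypothesis, is exactly the descent of $F^M$ along $\cU$.

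Finally I would evaluate the fibre sequence on the augmented \v{C}ech nerve $\check{C}\cU$ and apply $\holim_{\bbDelta}$. Homotopy limits preserve fibre sequences of pointed objects equipped with a coherent base point, here supplied globally by the canonical points of $N\rmB H$, so totalization produces a fibre sequence whose base and fibre augmentation maps are the descent comparison maps for $N\rmB H$ and $F^M$ — both equivalences by the previous paragraph. A map of fibre sequences in which the base and fibre maps are equivalences forces the map of total objects, namely the descent comparison for $\hoLan_{\pi_M^H} F$, to be an equivalence as well. I expect the main obstacle to be the careful bookkeeping in this last step: ensuring that the base-point sections of $N\rmB H(\check{C}_\bullet \cU)$ assemble coherently so that $\holim_{\bbDelta}$ genuinely commutes with the homotopy fibre, and controlling the interaction with $\pi_0$ when the fibres $F^M(c)$ are disconnected. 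To make the fibre-sequence comparison rigorous at the level of left fibrations rather than spaces, I would deploy Lemmas~\ref{st:comm squares and equivs on fibs} and~\ref{st:comm squares and weak Cat equivs}, which reduce equivalences of left fibrations over a common base to equivalences on fibres.
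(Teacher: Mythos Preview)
Your proposal is correct and follows essentially the same route as the paper: both arguments hinge on the fibration $\hoLan_{\pi_M^H} F \to N\rmB H$ with fibre $F^M$, use the sheaf property of $H$ for descent of the base and the $\tau_H$-descent of $F$ (via the lifted covering $\hat\cU$ with trivial group labels) for the fibre, and finish via a fibrewise comparison using Lemmas~\ref{st:comm squares and equivs on fibs} and~\ref{st:comm squares and weak Cat equivs}.

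The paper's execution is more concrete than your abstract ``holim of fibre sequences'' phrasing: it models $(\hoLan_{\pi_M^H} F)(c)$ explicitly as the bar-construction quotient $F(c)\dslash H(c)$, replaces this by the equivalent $\bigl(\ul\scK_H(\v C\hat\cU,F)\bigr)\dslash H(c)$ using the $\tau_H$-descent hypothesis, and then sets up a single commutative square with Kan fibrations down to $N\rmB H(c)$ and $\ul\scH(\v C\cU, N\rmB H)$. The key simplification is that the fibres of both vertical maps over the unique vertex of $N\rmB H(c)$ are \emph{literally the same} Kan complex $\ul\scK_H(\v C\hat\cU,F)$, and the induced map on fibres is the identity. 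This dispatches in one stroke the $\pi_0$-bookkeeping you anticipated: since $N\rmB H(c)$ has a single object, there is only one fibre to check, and no five-lemma or connectedness argument is needed. Your formulation would require the observation that both base spaces are connected (the codomain because it is equivalent to the connected $N\rmB H(c)$) for the map-of-fibre-sequences implication to hold; the paper's direct fibre identification makes this unnecessary.
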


\begin{proof}
The proof proceeds in several steps.

\textit{(1) Computation and properties of $\hoLan_{\pi_M^H}$.}
First, we may compute the homotopy left Kan extension as
\begin{equation}
	(\hoLan_{\pi_M^H} F)(c)
	\simeq \hocolim \big( F_{|c} \colon \rmB H^\rev(c) \longrightarrow \sSet \big)\,,
\end{equation}
i.e.~as a homotopy colimit over the fibres of $\pi_M^H$ rather than its slices (since $(\textint N \rmB H)^\opp \to N\Cart^\opp$ is a left fibration and hence proper), where $F_{|c}$ is the restriction of $F$ to the fibre $(\pi_M^H)^{-1}(c) = \rmB H^\rev(c)$.
Note, further, that here the fibres even depend on $c$ functorially because we use the Grothendieck construction of a \textit{strict} groupoid-valued functor.
Moreover, since $\rmB H^\rev(c)$ is a groupoid and $F(c)$ is a Kan complex, for each $c \in \Cart$, we can choose a model for the homotopy colimit in such a way that the canonical morphism
\begin{equation}
	\hoLan_{\pi_M^H} F \longrightarrow \rmB H^\rev
\end{equation}
becomes a fibration in $\scH$, i.e.~an objectwise Kan fibration.
Concretely, we can use the two-sided bar construction~\cite[Secs.~4.2, 5.1]{Riehl:Cat_HoThy} to model the homotopy colimit, or the rectification functor (see~\cite[Thm.~C]{HM:Left_fibs_and_hocolims_I}); in the present setting this has the properties described above.

The functor $F_{|c}$ can equivalently be described as the simplicial set $F(c)$, acted upon from the right by $H(c)$ (the action being induced by the functoriality of $F$).
The homotopy colimit $(\hoLan_{\pi_M^H} F)(c)$ is a model for the homotopy quotient,
\begin{equation}
	(\hoLan_{\pi_M^H} F)(c) \simeq F(c) \dslash H(c)\,.
\end{equation}
More concretely, let $F(c) \circlearrowleft H(c)$ denote the bisimplicial set which in horizontal degree $n$ is the action groupoid of the action of $H(c)$ on $F_n(c)$; that is,
\begin{equation}
	\big( F(c) \circlearrowleft H(c) \big)_{n,k}
	= F_n(c) \times H(c)^k\,.
\end{equation}
The two-sided bar construction which computes the homotopy colimit of $F_{|c} \colon \rmB H^\rev(c) \longrightarrow \sSet$ is canonically weakly equivalent to the diagonal of this bisimplicial set:
\begin{equation}
	(\hoLan_{\pi_M^H} F)(c)
	\simeq B \big( *, \rmB H^\rev(c), F_{|c} \big)
	= \delta^* \big( F(c) \circlearrowleft H(c) \big)
	= F(c) \dslash H(c)\,.
\end{equation}
Finally, we emphasise again that each of these constructions depends functorially on $c \in \Cart$.

\textit{(2) Using the descent property.}
Let $c \in \Cart$, and let $\cU = \{f_i \colon c_i \hookrightarrow c\}_{i \in \Lambda}$ be a good open covering of $c$.
Our goal is to show that the canonical map
\begin{equation}
	(\hoLan_{\pi_M^H} F)(c)
	\longrightarrow \holim_{[l] \in \bbDelta} \ul{\scH}(\v{C}_l \cU, \hoLan_{\pi_M^H}F)
\end{equation}
is a weak equivalence in $\sSet$.
By an application of the Bousfield-Kan map~\cite[Cor.~18.7.7]{Hirschhorn:MoCats_and_localisations}, we have weak equivalences
\begin{equation}
	\ul{\scH} \big( \v{C}\cU, \hoLan_{\pi_M^H}F \big)
	\wequiv \ul{\scH} \big( \hocolim_{[l] \in \bbDelta}\v{C}_l\cU, \hoLan_{\pi_M^H}F \big)
	\cong \holim_{[l] \in \bbDelta} \ul{\scH}(\v{C}_l\cU, \hoLan_{\pi_M^H}F)\,.
\end{equation}
Combining this with step~(1) of this proof, we equivalently need to show that the canonical map
\begin{equation}
	F(c) \dslash H(c)
	\simeq (\hoLan_{\pi_M^H} F)(c)
	\longrightarrow \ul{\scH} \big( \v{C}\cU, \hoLan_{\pi_M^H}F \big)
\end{equation}
is a weak equivalence.
We lift $\cU$ to a $\tau_H$-covering $\widehat{\cU}$, given as
\begin{equation}
	\widehat{\cU} = \{(f_i, e_H) \colon c_i \to c\}_{i \in \Lambda}\,.
\end{equation}
For each $i \in \Lambda$, the element $e_H \in H(c_i)$ is the neutral element.
In other words, $\widehat{\cU}$ is the image of $\cU$ under the section $e_H \colon \Cart \to \rmB H$.
Since by assumption $F$ satisfies descent with respect to $\tau_H$-coverings, the canonical morphism
\begin{equation}
	F(c) \dslash H(c)
	\longrightarrow \big( \ul{\scK}_H(\v{C}\widehat{\cU}, F) \big) \dslash H(c)
\end{equation}
is a weak equivalence in $\sSet$.
The action of $H(c)$ on the right-hand side is obtained as follows:
a section $\varphi \in H(c)$ restricts to a compatible family of sections $\varphi_i = f_i^*\varphi \in H(c_i)$, and thus induces an automorphism of $\v{C}\widehat{\cU}$ in $\scK_H$.
The action is by precomposition by this automorphism.
Since the right-hand side is still a homotopy colimit of a diagram valued in Kan-complexes and indexed by the groupoid $\rmB H^\rev(c)$, the canonical map
\begin{equation}
	\big( \ul{\scK}(\v{C}\widehat{\cU}, F) \big) \dslash H(c)
	\longrightarrow N \rmB H^\rev(c)
\end{equation}
is again a Kan fibration.

Next, since the \v{C}ech nerve $\v{C}\cU$ is cofibrant in $\scH$, the projective fibration $\hoLan_{\pi_M^H} F \longrightarrow \rmB H^\rev$ induces a Kan fibration
\begin{equation}
	\ul{\scH}(\v{C}\cU, \hoLan_{\pi_M^H} F) \longrightarrow \ul{\scH} \big( \v{C}\cU, N \rmB H^\rev \big)\,.
\end{equation}
Thus, we arrive at a commutative square
\begin{equation}
\label{eq:comm square for descent}
\begin{tikzcd}[column sep=2cm]
	\big( \ul{\scK}(\v{C}\widehat{\cU}, F) \big) \dslash H(c) \ar[r] \ar[d]
	& \ul{\scH}(\v{C}\cU, \hoLan_{\pi_M^H} F) \ar[d]
	\\
	N \rmB H^\rev(c) \ar[r]
	& \ul{\scH} \big( \v{C}\cU, N \rmB H^\rev \big)
\end{tikzcd}
\end{equation}
in $\sSet$, whose vertices are Kan complexes, whose vertical arrows are Kan fibrations, and whose bottom arrow is a weak homotopy equivalence (see Remark~\ref{rmk:H sheaf of groups and NBH}).
The theorem will follow if we can show that the top arrow is a weak homotopy equivalence.

\textit{(3) Using formal properties from the preceding lemmas.}
To achieve this, we deploy Lemma~\ref{st:comm squares and weak Cat equivs}:
since a morphism between Kan complexes is a weak categorical equivalence if and only if it is a weak homotopy equivalence, it suffices to show that the horizontal maps in~\eqref{eq:comm square for descent} induce weak homotopy equivalences on all fibres; that is, we show that condition~(3) of Lemma~\ref{st:comm squares and equivs on fibs} is satisfied.
However, the groupoid $N \rmB H^\rev(c)$ has exactly one object, so there is only one fibre to consider.

The fibre of the left-hand vertical arrow in~\eqref{eq:comm square for descent} is the Kan complex $\ul{\scK}(\v{C}\widehat{\cU}, F)$.
A vertex in \smash{$\ul{\scH} (\v{C}\cU, N \rmB H^\rev)$} is the same as a \v{C}ech 1-cocycle with values in $H^\rev$ on $c$, subordinate to the covering $\cU$.
The bottom arrow in~\eqref{eq:comm square for descent} sends the unique object of $N \rmB H^\rev(c)$ to the trivial cocycle.
It follows (for instance using the bar construction to compute \smash{$\hoLan_{\pi_M^H} F$}) that the fibre of the right-hand vertical morphism over this vertex is also given by the Kan complex \smash{$\ul{\scK}(\v{C}\widehat{\cU}, F)$}, and the top horizontal arrow induces the identity map on these fibres.
\end{proof}

The direct consequence for the moduli problems of central interest in this paper is as follows:
let $\Conf^\scD, \Fix^\scD \colon \rmB\scD^\opp \longrightarrow \sSet$ be projectively fibrant simplicial presheaves and $p \colon \Conf^\scD \longrightarrow \Fix^\scD$ a projective fibration.
Let $\cG \in \Fix^M(\RR^0) = e_M^* \Fix(\RR^0)$ be a global section, and let $\Sol^\scD \subset \Conf^\scD$ be a solution presheaf.

\begin{corollary}
\label{st:Descent result for moduli oo-prestacks}
Let $H \colon \Cart^\opp \to \Grp$ be a \emph{sheaf} of groups, acting on $M$ via a morphism $\phi' \colon H \longrightarrow \Diff(M)$ of sheaves of groups on $\Cart$.
Let $\phi \colon Q_H = N \textint \rmB H \longrightarrow \rmB\scD$ denote the associated morphism in $\sSet_{/N \Cart^\opp}$, and suppose this factors through $\rmB\scD[\cG]$.
Suppose further that the restriction
\begin{equation}
	(\rmB \phi')^* \Sol^{\scD[\cG]}_{[\cG]} \colon (\textint \rmB H)^\opp \longrightarrow \sSet
\end{equation}
is fibrant in $L_{\tau_H} \scK_H$.
The associated moduli $\infty$-prestack $\scMdl_\Phi(\cG) \colon N\Cart^\opp \longrightarrow \scS$ is an $\infty$-stack, i.e.~it satisfies descent with respect to good open coverings in $\Cart$.
\end{corollary}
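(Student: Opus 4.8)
The plan is to reduce the statement to the descent Theorem~\ref{st:descent for spl pshs on tint BH} by exhibiting $\scMdl_\Phi(\cG)$ as the $\infty$-presheaf presented by the homotopy left Kan extension $\hoLan_{\pi_M^H}(\rmB\phi')^*\Sol^{\scD[\cG]}_{[\cG]}$ along $\pi_M^H \colon \textint\rmB H \to \Cart$. Once this identification is in place, the hypothesis that $(\rmB\phi')^*\Sol^{\scD[\cG]}_{[\cG]}$ is fibrant in $L_{\tau_H}\scK_H$ lets Theorem~\ref{st:descent for spl pshs on tint BH} apply verbatim, producing fibrancy of the Kan extension in $\scH^{loc}$; a simplicial presheaf fibrant in $\scH^{loc}$ satisfies homotopy descent for good open coverings, whence $\scMdl_\Phi(\cG)$ is an $\infty$-stack.

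First I would unwind the definition of $\scMdl_\Phi(\cG)$. By Definition~\ref{def:scMdl_phi(cG)} it is $(\pi_\phi)_!\scSol_\Phi(\cG)$, and by Proposition~\ref{st:left fibration classd by Mdl_phi} it therefore classifies the composite left fibration $\textint\Sol^{\scD[\cG]}_\phi(\cG) \to \rmB\SYM^\rev_\phi(\cG) \xra{\psi_\phi} Q_H \xra{N\pi_M^H} N\Cart^\opp$. Writing $\mathbf{F} \colon Q_H \to \scS$ for the $\infty$-functor classifying the left fibration $\textint\Sol^{\scD[\cG]}_\phi(\cG) \to Q_H$, the same proposition identifies $\scMdl_\Phi(\cG)$ with the left Kan extension $(N\pi_M^H)_!\mathbf{F}$, since both classify the same composite left fibration over $N\Cart^\opp$. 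As $\pi_M^H$ is an ordinary functor and $N\pi_M^H$ is a left fibration, this $\infty$-categorical Kan extension is presented by the homotopy left Kan extension $\hoLan_{\pi_M^H}$ of any projectively fibrant simplicial presheaf on $\textint\rmB H$ classifying $\mathbf{F}$; this compatibility is exactly the rectification property (Theorem~\ref{st:r_C^* and oo-categorical localisation}) already used inside the proof of Theorem~\ref{st:descent for spl pshs on tint BH}.

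The heart of the argument is to identify $\mathbf{F}$ with the $\infty$-presheaf associated to $(\rmB\phi')^*\Sol^{\scD[\cG]}_{[\cG]}$, which I would do in two moves. On one hand, Proposition~\ref{st:SOL_res --> SOL is equivalence} (fed by Lemma~\ref{st:SYM into GRB and GAU into GRB are cov weqs} and the pullback square of Definition~\ref{def:tint Sol_phi(cG), Sol_res(cG)}) shows that the inclusion $\textint\Sol^{\scD[\cG]}_\phi(\cG) \hookrightarrow \phi^*\textint\Sol^{\scD[\cG]}(\cG)$ is a covariant weak equivalence over $Q_H$; hence $\mathbf{F}$ is equivalent to the $\infty$-functor classifying $\phi^*\textint\Sol^{\scD[\cG]}(\cG) \to Q_H$. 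On the other hand, writing $\Sol^{\scD[\cG]}_{[\cG]} \coloneqq \Sol^{\scD[\cG]} \times_{\Fix^{\scD[\cG]}} \Fix^{\scD[\cG]}_{[\cG]}$ for the pullback simplicial presheaf on $\rmB\scD[\cG]$, Definition~\ref{def:tint Sol^(D[G])(cG)} together with the fact that $r^*$ is a right Quillen equivalence, hence limit-preserving (Theorem~\ref{st:r_C^* as Quillen equivalence}), gives $\textint\Sol^{\scD[\cG]}(\cG) \cong r^*_{\rmB\scD[\cG]^\opp}\Sol^{\scD[\cG]}_{[\cG]}$; the base-change Lemma~\ref{st:r_C^* and pullbacks} then yields $\phi^*\textint\Sol^{\scD[\cG]}(\cG) \cong r^*_{(\textint\rmB H)^\opp}(\rmB\phi')^*\Sol^{\scD[\cG]}_{[\cG]}$, which by Theorem~\ref{st:r_C^* and oo-categorical localisation} classifies $\gamma_{(\textint\rmB H)^\opp}(\rmB\phi')^*\Sol^{\scD[\cG]}_{[\cG]}$. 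Combining the two moves, $\mathbf{F}$ is presented by $(\rmB\phi')^*\Sol^{\scD[\cG]}_{[\cG]}$, so $\scMdl_\Phi(\cG)$ is presented by $\hoLan_{\pi_M^H}(\rmB\phi')^*\Sol^{\scD[\cG]}_{[\cG]}$, and an application of Theorem~\ref{st:descent for spl pshs on tint BH} finishes the proof.

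I expect the main obstacle to be precisely the bookkeeping in the third paragraph: ensuring that all comparison maps genuinely live over $Q_H$ (so that a covariant equivalence there induces an equivalence of the classifying $\infty$-functors), and verifying that the pullback presheaf $\Sol^{\scD[\cG]}_{[\cG]}$ really is the simplicial presheaf whose restriction is assumed fibrant in the hypothesis, so that Theorem~\ref{st:descent for spl pshs on tint BH} applies with no gap. A secondary point to confirm is the compatibility between the $\infty$-categorical $(N\pi_M^H)_!$ and the model-categorical $\hoLan_{\pi_M^H}$, which should follow from the same rectification machinery underlying the earlier sections.
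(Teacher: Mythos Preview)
Your proposal is correct and follows the same overall strategy as the paper's proof: identify $\scMdl_\Phi(\cG)$ with the homotopy left Kan extension $\hoLan_{\pi_M^H}(\rmB\phi')^*\Sol^{\scD[\cG]}_{[\cG]}$ and then invoke Theorem~\ref{st:descent for spl pshs on tint BH}. The paper's proof is much terser---it simply asserts this identification ``by construction and~\cite[Prop.~5.4]{Bunk:Localisation_of_sSet}'' and applies the theorem in one line---whereas you unpack the identification explicitly using the internal machinery (Proposition~\ref{st:left fibration classd by Mdl_phi}, Proposition~\ref{st:SOL_res --> SOL is equivalence}, Lemma~\ref{st:r_C^* and pullbacks}, and Theorem~\ref{st:r_C^* and oo-categorical localisation}). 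Your two-move decomposition (first replace $\textint\Sol^{\scD[\cG]}_\phi(\cG)$ by $\phi^*\textint\Sol^{\scD[\cG]}(\cG)$ via the covariant weak equivalence, then recognise the latter as a rectification) is a clean way to make the ``by construction'' step precise without appealing to the external reference; this is a genuine expository improvement over the paper's version, though not a different mathematical route.
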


\begin{proof}
Recall the simplicial presheaf $\Sol^{\scD[\cG]}_{[\cG]} \colon \rmB\scD[\cG]^\opp \longrightarrow \sSet$ from Definition~\ref{def:G_[cG]}.
By construction and~\cite[Prop.~5.4]{Bunk:Localisation_of_sSet}, the $\infty$-presheaf $\scSol_\Phi(\cG) \colon N\Cart^\opp \longrightarrow \scS$ is presented by the simplicial presheaf
\begin{equation}
	\hoLan_{\pi_M^H} \big( (\rmB \phi')^* \Sol^{\scD[\cG]}_{[\cG]} \big)
	\colon \Cart^\opp \longrightarrow \sSet\,,
\end{equation}
possibly up to objectwise fibrant replacement.
The claim then follows from Theorem~\ref{st:descent for spl pshs on tint BH}.
\end{proof}

%%%%%%%%%%%%%%%%%%%%%%%%%%%%%%%%%%%%%%%%%%%%%%%%%%%%%%%%%%%%%%%%%%%%%%%%%%%%

\part{Applications to higher $\U(1)$-connections and NSNS supergravity}
\label{part:Higher U(1) gauge fields}

%%%%%%%%%%%%%%%%%%%%%%%%%%%%%%%%%%%%%%%%%%%%%%%%%%%%%%%%%%%%%%%%%%%%%%%%%%%%

In the second part of this paper we focus on a particularly important case of higher geometric structures consisting of higher principal bundles on $M$ whose structure groups are iterated deloopings of $\U(1)$ and the classifying stacks $\rmB_\nabla^k \U(1)$ for higher $\U(1)$-bundles with connection.
Such higher bundles are better known as $n$-gerbes with $k$-connections.
We study various aspects of the gauge theory of connections on these higher bundles, including higher Yang-Mills solutions, compute the higher gauge actions of these bundles from the formalism in Part~\ref{part:Higher Geometry} and apply Theorem~\ref{st:equiv result for Mdl oo-stacks} to reconcile two perspectives on moduli of NSNS supergravity solutions.

%%%%%%%%%%%%%%%%%%%%%%%%%%%%%%%%%%%%%%%%%%%%%%%%%%%%%%%%%%%%%%%%%%%%%%%%%%%%

\section{Smooth families of higher $\U(1)$-connections on a manifold}
\label{sec:smooth fams of higher U(1)-conns}

%%%%%%%%%%%%%%%%%%%%%%%%%%%%%%%%%%%%%%%%%%%%%%%%%%%%%%%%%%%%%%%%%%%%%%%%%%%%

%%%%%%%%%%%%%%%%%%%%%%%%%%%%%%%%%%%%%%%%%%%%%%%%%%%%%%%%%%%%%%%%%%%%%%%%%%%%

\subsection{Vertical differential forms and vertical Deligne complexes}
\label{sec:vertical forms and Deligne complexes}

%%%%%%%%%%%%%%%%%%%%%%%%%%%%%%%%%%%%%%%%%%%%%%%%%%%%%%%%%%%%%%%%%%%%%%%%%%%%

In terms of local data $n$-gerbes with $k$-connections are constructed from the \v{C}ech-Deligne complexes associated to good open coverings of manifolds~\cite{Gajer:Geo_of_Deligne_coho, FSS:Cech_for_diff_classes, Schreiber:DCCT_v2}.
Here we are interested in explicit models for \textit{smooth families} of \v{C}ech-Deligne cocycles---also called \textit{higher $\U(1)$-connections}, following~\cite{Brylinski:LSp_and_Geo_Quan, Gajer:Geo_of_Deligne_coho}---on a fixed but arbitrary manifold $M$.
In order to describe these, we first introduce smooth families of differential forms on cartesian spaces and then construct \v{C}ech data for smooth families of higher $\U(1)$-connections from these building blocks.

Recall the category $\Cartfam$ from Definition~\ref{def:Cartfam}.
We denote the tangent and cotangent bundles of a manifold $N$ by $TN$ and $T^\vee N$, respectively.
To an object $(p \colon \hat{c} \to c) \in \Cartfam$ we associate its \textit{vertical tangent bundle}
\begin{equation}
	T^v(\hat{c} \to c) = \{X \in T\hat{c}\, | \, p_*(X) = 0 \}
	= \ker(p_*)
	\subset T\hat{c}\,.
\end{equation}
Given a morphism $(\hat{f} \to f) \colon (p \colon \hat{c} \to c) \longrightarrow (q \colon \hat{d} \to d)$, we obtain a bundle morphism
\begin{equation}
	T^v(\hat{f} \to f) \colon T^v(\hat{c} \to c) \longrightarrow T^v(\hat{d} \to d)\,,
	\qquad
	X \longmapsto \hat{f}_*(X)\,,
\end{equation}
which covers the smooth map $f$.
Indeed, since $q \circ \hat{f} = f \circ p$, we have that
\begin{equation}
	p_*(X) = 0
	\quad \Longrightarrow \quad
	q_* \big( \hat{f}_*(X) \big)
	= (q \circ \hat{f})_*(X)
	= (f \circ p)_*(X)
	= 0\,,
\end{equation}
and thus $\hat{f}_*$ restricts to a bundle map
\begin{equation}
	T^v(\hat{f} \to f) = \hat{f}_{|\ker(p_*)} \colon T^v(\hat{c} \to c) \longrightarrow T^v(\hat{d} \to d)\,.
\end{equation}
For $k \in \NN_0$, we define the $C^\infty(\hat{c})$-module
\begin{equation}
	\Omega^{k,v}(\hat{c} \to c) \coloneqq \Gamma \big( \hat{c}; \Lambda^k (T^v (\hat{c} \to c))^\vee \big)
	\subset \Omega^k(\hat{c})\,,
\end{equation}
where, for a vector bundle $E \to \hat{c}$, the symbol $E^\vee$ denotes its dual vector bundle.
A morphism $(\hat{f} \to f) \colon (\hat{c} \to c) \longrightarrow (\hat{d} \to d)$ in $\Cartfam$ induces an $\RR$-linear map
\begin{equation}
	\Omega^{k,v}(\hat{f} \to f) = \hat{f}^* \colon \Omega^{k,v}(\hat{d} \to d) \longrightarrow \Omega^{k,v}(\hat{c} \to c)\,,
	\qquad
	(\hat{f}^*\omega)_{|x}(X_1, \ldots, X_k)
	= \omega_{|\hat{f}(x)} \big( \hat{f}_*(X_1), \ldots, \hat{f}_*(X_k) \big)\,,
\end{equation}
for $X_1, \ldots, X_k \in T^v(\hat{c} \to c)_{|x}$.
Thus, in particular, we obtain, for each $k \in \NN_0$, a presheaf
\begin{equation}
	\Omega^{k,v} \colon \Cartfam^\opp \longrightarrow \Ab
\end{equation}
of abelian groups ($\Ab$ denotes the category of abelian groups).

The de Rham differential $\dd \colon \Omega^k(\hat{c}) \to \Omega^{k+1}(\hat{c})$ restricts to a morphism $\dd^v \colon \Omega^{k,v} \to \Omega^{k+1,v}$:
given vertical vector fields $X_0, \ldots, X_k$ on $(\hat{c} \to c)$ and $\omega \in \Omega^{k,v}(\hat{c} \to c)$, one sets
\begin{align}
	(\dd^v \omega)(X_0, \ldots, X_k)
	\coloneqq &\sum_{i=0}^k (-1)^i \cL_{X_i} \big( \omega(X_0, \ldots, \widehat{X_i}, \ldots, X_k) \big)
	\\
	&+ \sum_{0 \leq i < j \leq k} (-1)^{i+j} \omega \big( [X_i, X_j], X_0, \ldots, \widehat{X_i}, \ldots, \widehat{X_j}, \ldots, X_k \big)\,,
\end{align}
where $\cL$ is the standard Lie derivative, $[X_i, X_j]$ is the standard commutator of vector fields, and the hat denotes omission of an entry.
Note that the commutator of two vertical vector fields is again vertical; see, for instance,~\cite[Prop.~8.30]{Lee:Smooth_Mfds_v2}.
The vertical de Rham differential is natural with respect to morphisms in $\Cartfam$; this follows since, for any $x \in c$, we have that
\begin{equation}
	(\dd^v \omega)_{|(\hat{c}_{|x})} = \dd (\omega_{|(\hat{c}_{|x})})\,,
\end{equation}
i.e.~first taking the vertical de Rham differential and then restricting to any fibre of $p \colon \hat{c} \to c$ give the same form on the fibre as first restricting to the fibre and then applying the ordinary de Rham differential.
Furthermore, morphisms in $\Cartfam$ restrict to fibres, and the ordinary de Rham differential $\dd$ is natural with respect to smooth maps.
Let $\Ch^{\geq 0}$ and $\Ch_{\geq 0}$ denote the categories of non-negatively graded cochain and chain complexes, respectively, of abelian groups.
We obtain a presheaf of non-negatively graded cochain complexes of abelian groups
\begin{equation}
	(\Omega^v, \dd^v) \colon \Cartfam^\opp \longrightarrow \Ch^{\geq 0}\,.
\end{equation}

We also consider the sheaf $\Omega^{k,v}[\hat{c} \to c] = \Gamma(-; \Lambda^k(T^v(\hat{c} \to c))^\vee)$ of vertical (or `relative') $k$-forms on the manifold $\hat{c}$.
This is, in particular, a sheaf of $C^\infty(\hat{c})$-modules on $\hat{c}$.
As such, it is \textit{fine}, i.e.~it admits partitions of unity on $\hat{c}$.
Thus, it is an acyclic sheaf on $\hat{c}$, and since $\hat{c}$ is paracompact Hausdorff, it also follows that the \v{C}ech cohomology of $\Omega^{k,v}[\hat{c} \to c]$ with respect to any open covering of $\hat{c}$ is trivial in positive degrees.
For later reference, we record this as the following lemma.
Recall the Grothendieck coverage $\tau_\rmfam$ on the category $\Cartfam$ from Definition~\ref{def:coverings in Cart_fam}.

\begin{lemma}
\label{st:acyclicity for vertical forms}
For each $k \in \NN_0$, the sheaf $\Omega^{k,v}[\hat{c} \to c]$ on $\hat{c}$ is fine and hence acyclic.
Consequently, for each $\tau_\rmfam$-covering $(\hat{\cU} \to \cU)$ of $(\hat{c} \to c)$ in $\Cartfam$, the total complex associated to the cosimplicial chain complex $\Omega^{k,v}(\v{C}\hat{\cU})$ is acyclic.
\end{lemma}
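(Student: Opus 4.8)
The plan is to prove the two assertions in sequence: first the fineness and acyclicity of the sheaf $\Omega^{k,v}[\hat{c} \to c]$ on the manifold $\hat{c}$, and then deduce the acyclicity of the \v{C}ech total complex from it.

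First I would establish fineness. The sheaf $\Omega^{k,v}[\hat{c} \to c]$ is a sheaf of modules over the sheaf of rings $C^\infty_{\hat{c}}$ of smooth functions on $\hat{c}$. Since $\hat{c}$ is a cartesian space (see the footnote to Definition~\ref{def:Cartfam}, which notes that $\hat{c}$ is itself a cartesian space), it is in particular a smooth paracompact Hausdorff manifold and hence admits smooth partitions of unity subordinate to any open covering. Multiplication by the bump functions of such a partition of unity furnishes the required family of endomorphisms supported in the covering patches and summing to the identity, so $\Omega^{k,v}[\hat{c} \to c]$ is a fine sheaf of $C^\infty_{\hat{c}}$-modules. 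A standard result in sheaf cohomology (fine sheaves on paracompact Hausdorff spaces are acyclic) then gives that $\Omega^{k,v}[\hat{c} \to c]$ has vanishing sheaf cohomology in positive degrees; equivalently, its \v{C}ech cohomology with respect to any open covering of $\hat{c}$ vanishes in positive degrees.

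Next I would translate this into the statement about the total complex of the cosimplicial chain complex $\Omega^{k,v}(\v{C}\hat{\cU})$. Given a $\tau_\rmfam$-covering $(\hat{\cU} \to \cU)$ of $(\hat{c}\to c)$, the family $\hat{\cU}=\{\hat{U}_a\}_{a\in\Lambda}$ is in particular a good open covering of $\hat{c}$ (by condition (1) of Definition~\ref{def:coverings in Cart_fam}), and the cosimplicial object evaluates $\Omega^{k,v}$ on the \v{C}ech nerve, so that in cosimplicial degree $l$ one has $\prod_{a_0\cdots a_l}\Omega^{k,v}(\hat{U}_{a_0\cdots a_l}\to U_{a_0\cdots a_l})$. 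The associated total (Moore/normalised) complex in the \v{C}ech direction is exactly the \v{C}ech complex of the sheaf $\Omega^{k,v}[\hat{c}\to c]$ with respect to the covering $\hat{\cU}$. By the acyclicity just established, this \v{C}ech complex has cohomology concentrated in degree zero, where it computes the global sections $\Omega^{k,v}(\hat{c}\to c)$; this is precisely the asserted acyclicity of the total complex.

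The main obstacle, and the point requiring the most care, is bookkeeping rather than conceptual difficulty: one must be precise about what ``the total complex associated to the cosimplicial chain complex'' means, since $\Omega^{k,v}$ is being regarded here as a single abelian group in each degree (a chain complex concentrated in degree zero for fixed $k$), so the only nontrivial differential is the \v{C}ech coboundary and the total complex reduces to the \v{C}ech cochain complex. I would make explicit that the normalisation collapses the cosimplicial structure to the alternating \v{C}ech differential, and that the acyclicity of a fine sheaf on the paracompact space $\hat{c}$ indeed controls \v{C}ech cohomology for the \emph{specific} covering $\hat{\cU}$ (not merely in the colimit over refinements). This last point is what makes the fineness argument the right tool: it yields vanishing for every open covering on the nose, which is exactly the form needed in the later descent arguments invoking this lemma.
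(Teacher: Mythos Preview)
Your proposal is correct and follows essentially the same approach as the paper: the paper argues (in the paragraph immediately preceding the lemma) that $\Omega^{k,v}[\hat{c}\to c]$ is a sheaf of $C^\infty(\hat{c})$-modules, hence fine via partitions of unity, hence acyclic, and that paracompactness of $\hat{c}$ then forces vanishing of \v{C}ech cohomology for any open covering. Your write-up is, if anything, more explicit about the bookkeeping identifying the total complex with the \v{C}ech complex, which is a helpful clarification.
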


\begin{definition}
\label{def:b^n_(nabla) U(1)}
We define the presheaf of non-negatively graded chain complexes
\begin{align}
&\big( b^{n+1-k}\rmb^k_\nabla \U(1) \big)^v \colon \Cartfam^\opp \longrightarrow \Ch_{\geq 0}\,,
\\
&\big( b^{n+1-k}\rmb^k_\nabla \U(1) \big)^v =
\begin{tikzcd}[ampersand replacement=\&]
	\big( \Omega^{k,v}
	\& \Omega^{k-1,v} \ar[l, "\dd^v"']
	\& \cdots \Omega^{1,v} \ar[l, "\dd^v"']
	\& \U(1) \big)\,, \ar[l, "\dd^v \log"']
\end{tikzcd}
\end{align}
where $\U(1)$ is situated in degree $n{+}1$ (and $\Omega^{k,v}$ in degree $n{+}1{-}k$).
\end{definition}

\begin{lemma}
\label{st:bb_nabla U(1)^v is tau_fam-local}
For each $0 \leq k \leq n \in \NN_0$, the functor
\begin{equation}
	 \big( \rmb^{n+1-k} \rmb^k_\nabla \U(1) \big)^v \colon \Cartfam^\opp \longrightarrow \Ch_{\geq 0}
\end{equation}
satisfies homotopy descent with respect to $\tau_\rmfam$-coverings.
\end{lemma}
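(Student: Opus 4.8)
The plan is to verify homotopy descent for $(\rmb^{n+1-k}\rmb^k_\nabla\U(1))^v$ with respect to $\tau_\rmfam$-coverings by reducing, via the Dold--Kan correspondence, to an acyclicity statement about total complexes of \v{C}ech double complexes, which is precisely what Lemma~\ref{st:acyclicity for vertical forms} provides in every degree except the topmost one (the $\U(1)$-term). Fix a $\tau_\rmfam$-covering $(\hat{\cU}\to\cU)$ of $(\hat{c}\to c)$. Homotopy descent amounts to showing that the canonical map from $\varGamma(\rmb^{n+1-k}\rmb^k_\nabla\U(1))^v(\hat{c}\to c)$ to the homotopy limit over the \v{C}ech nerve $\v{C}\hat{\cU}$ is a weak equivalence of simplicial abelian groups. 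Since $\varGamma$ is an equivalence of homotopy theories, it is equivalent to show that the augmented total complex of the cosimplicial chain complex $(\rmb^{n+1-k}\rmb^k_\nabla\U(1))^v(\v{C}\hat{\cU})$ is acyclic, i.e.\ that the \v{C}ech differential together with the internal differential assembles into an acyclic total complex.

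First I would set up the \v{C}ech double complex: the rows are the internal complexes $(\rmb^{n+1-k}\rmb^k_\nabla\U(1))^v$ evaluated on the finite intersections $\hat{U}_{a_0\cdots a_l}\to U_{a_0\cdots a_l}$, and the columns are the \v{C}ech complexes of the sheaves $\Omega^{j,v}$ (for $1\le j\le k$) and of $\U(1)$ on $\hat{c}$. The strategy is a standard spectral-sequence or filtration argument. Computing \v{C}ech cohomology first (running the differential along the covering direction), Lemma~\ref{st:acyclicity for vertical forms} tells us that each sheaf $\Omega^{j,v}[\hat{c}\to c]$ is fine, hence acyclic, so all its higher \v{C}ech cohomology vanishes and only the global-sections row survives. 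The one genuinely distinct column is the $\U(1)$-column: I would invoke the exponential/logarithm relation to replace $\U(1)$ by $\Omega^{0,v}$ modulo the locally constant sheaf $\ZZ$ (or, more cleanly, observe that $\U(1)=\RR/\ZZ$ and that the sheaf $\Omega^{0,v}[\hat{c}\to c]=C^\infty(\hat c)$ is fine while $\ZZ$ contributes only in the locally constant/topological direction). Because $\hat{c}$ is a cartesian space (hence smoothly contractible, as noted in the footnote to Definition~\ref{def:Cartfam}) and the covering $\hat{\cU}$ is good, the \v{C}ech cohomology of the constant sheaf $\ZZ$ with respect to $\hat{\cU}$ reduces to that of the contractible space $\hat{c}$ and is therefore concentrated in degree $0$.

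The key point, and the main obstacle, is handling the interaction between the $\U(1)$-term and the acyclicity of the differential-form terms through the map $\dd^v\log$. The cleanest route is to filter the total complex so that the differential-form rows (degrees $\le k$) form a subcomplex which is acyclic in the \v{C}ech direction by Lemma~\ref{st:acyclicity for vertical forms}, and then the quotient complex only involves the $\U(1)$-term, whose \v{C}ech cohomology I must control separately. Passing to the long exact sequence of the filtration, acyclicity of the form-subcomplex reduces the descent statement to descent for the single presheaf $\U(1)$ placed in degree $n+1$ — equivalently, to the statement that $\U(1)\colon\Cartfam^\opp\to\Ab$ satisfies \v{C}ech descent with respect to $\tau_\rmfam$-coverings. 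This is where the goodness of the covering and contractibility of the fibres and base enter decisively: one checks that $\U(1)$, as a sheaf, has trivial higher \v{C}ech cohomology on the good covering $\hat{\cU}$ of the contractible cartesian space $\hat{c}$. The subtlety I expect to spend the most care on is bookkeeping the degree shifts and signs in the total complex and ensuring that the filtration argument genuinely decouples the acyclic form-rows from the $\U(1)$-row, rather than leaving a residual coupling through $\dd^v\log$; once that decoupling is established, both pieces are acyclic and the two-out-of-three property for the filtration's long exact sequence finishes the proof.
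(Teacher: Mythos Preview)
Your approach is essentially the same as the paper's: set up the \v{C}ech double complex, use Lemma~\ref{st:acyclicity for vertical forms} for the $\Omega^{p,v}$-columns, use contractibility of $\hat{c}$ (via the exponential sequence / integer cohomology shift) for the $\U(1)$-column, and conclude that the total complex recovers global sections. The paper packages the final step by citing Voisin's Lemma~8.5 rather than running a filtration by hand.

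One remark: your worry about a ``residual coupling through $\dd^v\log$'' is misplaced, and this is exactly what the paper's appeal to Voisin's lemma (equivalently, your own spectral-sequence argument done correctly) dissolves. The relevant filtration is by \v{C}ech degree, not by internal degree: on the $E_1$-page you are computing the \v{C}ech cohomology of each sheaf $\Omega^{p,v}$ and of $\U(1)$ \emph{separately}, and the internal differential (including $\dd^v\log$) only enters as the $d_1$-differential \emph{after} that computation. Since each column is already acyclic in positive \v{C}ech degree, the $E_1$-page is concentrated in \v{C}ech degree~$0$ and equals $(\rmb^{n+1-k}\rmb^k_\nabla\U(1))^v(\hat{c}\to c)$; no decoupling needs to be arranged. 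Your alternative two-step filtration (forms as subcomplex, $\U(1)$ as quotient) also works, but again $\dd^v\log$ causes no trouble: it is simply the connecting map in the long exact sequence, and both pieces being quasi-isomorphisms forces the middle to be one as well.
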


\begin{proof}
Let $(\hat{\cU} \to \cU) \longrightarrow (\hat{c} \to c)$ be a $\tau_\rmfam$-covering of an object $(\hat{c} \to c)$ in $\Cartfam$.
We need to show that the canonical morphism 
\begin{equation}
	\big( \rmb^{n+1-k} \rmb^k_\nabla \U(1) \big)^v (\hat{c} \to c)
	\longrightarrow \underset{l \in \bbDelta}{\holim} \big( \rmb^{n+1-k} \rmb^k_\nabla \U(1) \big)^v \big( \v{C}_l (\hat{\cU} \to \cU) \big)
\end{equation}
is a quasi-isomorphism of chain complexes.
The homotopy limit is modelled by the (truncation of the) total complex of the double complex given by the \v{C}ech resolution of each degree.
Note that the \v{C}ech complexes of the presheaves $\U(1)$ and $\Omega^{p,v}$, for $p = 1, \ldots, k$, viewed as presheaves on $\hat{c}$, are computed using only the good open covering $\hat{\cU}$ of the cartesian space $\hat{c}$.
Since $\Omega^{p,v}$ is a fine sheaf on $\hat{c}$, for each $p \in \NN_0$, its \v{C}ech resolution with respect to $\hat{\cU}$ is acyclic (see Lemma~\ref{st:acyclicity for vertical forms}).
Further, since $\hat{c}$ is cartesian and $\hat{\cU}$ is differentiably good, the \v{C}ech complex of $\U(1)$ with respect to $\hat{\cU}$ is acyclic as well (it computes sheaf cohomology of $\hat{c}$ with coefficients in $\U(1)$; this agrees with integer cohomology one degree higher, which is trivial since $\hat{c}$ is contractible).
Therefore, the $p$-th row in the above double complex is a resolution of the level-$k$ sheaf in $(\rmb^{n+1-k} \rmb^k_\nabla \U(1))^v$.
The claim then follows from~\cite[Lemma~8.5]{Voisin_I}.
\end{proof}

Let $\Ab$ denote the category of abelian groups and let $\Ab_\Delta = \Fun(\bbDelta^\opp, \Ab)$ denote the category of simplicial abelian groups.
The latter carries a model structure which is right induced from the forgetful functor $\Ab_\Delta \to \sSet$ and the Kan-Quillen model structure on $\sSet$.

\begin{definition}
\label{def:B^(n+1-k)B_nabla^k U(1)}
For each $0 \leq k \leq n \in \NN_0$, we define a simplicial presheaf
\begin{equation}
	(\rmB^{n+1-k} \rmB^k_\nabla \U(1))^v
	\coloneqq \varGamma \circ \big( \rmb^{n+1-k} \rmb^k_\nabla \U(1) \big)^v
	\colon \Cartfam^\opp \longrightarrow \sSet\,,
\end{equation}
where $\varGamma \colon \Ch_{\geq 0} \to \Ab_\Delta$ is part of the Dold-Kan correspondence (see also Appendix~\ref{app:Higher U(1)-bundles via sPShs} for more background and details) and where we have implicitly used the forgetful functor $\Ab_\Delta \to \sSet$.
\end{definition}

As a direct consequence of Lemma~\ref{st:bb_nabla U(1)^v is tau_fam-local} and the fact that both $\varGamma \colon \Ch_{\geq 0} \to \Ab_\Delta$ and the forgetful functor $\Ab_\Delta \to \sSet$ preserve homotopy limits, we obtain:

\begin{lemma}
\label{st:families of gerbes satisfy tau_fam-descent}
For each $0 \leq k \leq n \in \NN_0$, the functor
\begin{equation}
	\big( \rmB^{n+1-k} \rmB^k_\nabla \U(1) \big)^v \colon \Cartfam^\opp \longrightarrow \sSet
\end{equation}
is a fibrant object in $\scH_\rmfam^{loc}$.
\end{lemma}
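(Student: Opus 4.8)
The plan is to deduce this lemma almost formally from the two preceding results, namely Lemma~\ref{st:bb_nabla U(1)^v is tau_fam-local} (homotopy descent at the chain-complex level) and the interaction of the Dold--Kan functor $\varGamma$ with homotopy limits. Recall that being fibrant in $\scH_\rmfam^{loc}$, the left Bousfield localisation of the projective model structure on $\Fun(\Cartfam^\opp, \sSet)$ at the \v{C}ech nerves of $\tau_\rmfam$-coverings, means exactly two things: first, that the simplicial presheaf is projectively fibrant (i.e.\ objectwise a Kan complex), and second, that it is local with respect to the \v{C}ech nerves, i.e.\ it satisfies homotopy descent for $\tau_\rmfam$-coverings.

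First I would dispense with projective fibrancy. By construction $(\rmB^{n+1-k}\rmB^k_\nabla \U(1))^v = \varGamma \circ (\rmb^{n+1-k}\rmb^k_\nabla \U(1))^v$, and $\varGamma \colon \Ch_{\geq 0} \to \Ab_\Delta$ lands in simplicial abelian groups, which are always Kan complexes after applying the forgetful functor $\Ab_\Delta \to \sSet$ (every simplicial group is fibrant). Hence the presheaf is objectwise a Kan complex, so it is projectively fibrant; no localisation is needed for this half.

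Next I would address the descent condition, which is the substantive point. Here the key input is that both $\varGamma \colon \Ch_{\geq 0} \to \Ab_\Delta$ and the forgetful functor $\Ab_\Delta \to \sSet$ preserve homotopy limits. Given a $\tau_\rmfam$-covering $(\hat{\cU} \to \cU) \to (\hat{c} \to c)$, Lemma~\ref{st:bb_nabla U(1)^v is tau_fam-local} tells us that the canonical comparison map
\begin{equation}
	\big( \rmb^{n+1-k}\rmb^k_\nabla \U(1) \big)^v(\hat{c}\to c)
	\longrightarrow \underset{l\in\bbDelta}{\holim}\, \big( \rmb^{n+1-k}\rmb^k_\nabla \U(1) \big)^v\big(\v{C}_l(\hat{\cU}\to\cU)\big)
\end{equation}
is a quasi-isomorphism of chain complexes. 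Applying $\varGamma$ and then the forgetful functor, and using that both preserve homotopy limits (so the homotopy limit on the right is computed before or after application, compatibly), I obtain that the induced map on the associated simplicial presheaf is a weak homotopy equivalence of Kan complexes. This is precisely the homotopy-descent condition for $(\rmB^{n+1-k}\rmB^k_\nabla \U(1))^v$ with respect to the \v{C}ech nerve of an arbitrary $\tau_\rmfam$-covering. Combined with projective fibrancy, this shows the presheaf is local, hence fibrant in $\scH_\rmfam^{loc}$.

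The only delicate point—and thus where I expect the main obstacle—is the bookkeeping around $\varGamma$ preserving homotopy limits. Strictly, $\varGamma$ is the right adjoint (normalisation is the left adjoint), and Dold--Kan is an equivalence of categories sending quasi-isomorphisms to weak equivalences; what one really needs is that a quasi-isomorphism of chain complexes is sent by $\varGamma$ to a weak equivalence of simplicial abelian groups, and that the totalisation/homotopy-limit over $\bbDelta$ is compatible on both sides. Since the homotopy limit over $\bbDelta^\opp$ is modelled by the (truncated) total complex of the \v{C}ech double complex on the chain side, and by the corresponding cosimplicial homotopy limit (Tot) on the simplicial side, the compatibility is exactly the statement that $\varGamma$ commutes with Tot up to natural weak equivalence. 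I would verify this either by invoking the standard fact that the Dold--Kan equivalence is compatible with the Moore/total-complex constructions, or simply cite that $\varGamma$ is a right Quillen equivalence (hence preserves homotopy limits between fibrant objects) and that everything in sight is fibrant. Once that compatibility is granted, the proof is a one-line application of the two-out-of-three-free transport of a single quasi-isomorphism through two homotopy-limit-preserving functors.
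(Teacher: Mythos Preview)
Your proposal is correct and matches the paper's approach exactly: the paper states the lemma as a direct consequence of Lemma~\ref{st:bb_nabla U(1)^v is tau_fam-local} together with the fact that both $\varGamma$ and the forgetful functor $\Ab_\Delta \to \sSet$ preserve homotopy limits, and gives no further proof. Your write-up simply unpacks this one-line deduction, including the (correct) observation that projective fibrancy is automatic because simplicial abelian groups are Kan complexes.
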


\begin{remark}
\label{rmk:B^(n+k-1)B_nabla^kU(1)^v presents Picard oo-sheaf}
The $\infty$-categorical localisation of $\scH_\rmfam = \Fun(\Cartfam^\opp, \sSet)$ at the objectwise weak homotopy equivalences is equivalent to the $\infty$-category $\scP(\Cartfam) = \scFun(N\Cartfam^\opp, \scS)$ of $\infty$-functors from $N\Cartfam^\opp$ to the $\infty$-category $\scS$ of spaces.
The fact that the functor $(\rmB^{n+1-k} \rmB^k_\nabla \U(1))^v$ factors through the category $\Ab_\Delta$ of simplicial abelian groups implies that it presents an $\infty$-sheaf of Picard $\infty$-groupoids on $\Cartfam$ (by~\cite[Thm.~3.30, Cor.~3.34]{NSS:Pr_ooBdls_II}).
\qen
\end{remark}

The canonical projection morphisms
\begin{equation}
\begin{tikzcd}[column sep=0.75cm]
	\big( b^{n+1-k}\rmb^k_\nabla \U(1) \big)^v \ar[r, equal] \ar[d]
	& \big( \Omega^{k,v} \ar[d]
	& \Omega^{k-1,v} \ar[l, "\dd^v"'] \ar[d, "1"]
	& \Omega^{k-2,v} \ar[l, "\dd^v"'] \ar[d, "1"]
	& \cdots \Omega^{1,v} \ar[l, "\dd^v"'] \ar[d, "1"]
	& \U(1) \big)\,, \ar[l, "\dd^v \log"'] \ar[d, "1"]
	\\
	\big( b^{n+2-k}\rmb^{k-1}_\nabla \U(1) \big)^v \ar[r, equal]
	& \big( 0
	& \Omega^{k-1,v} \ar[l]
	& \Omega^{k-2,v} \ar[l, "\dd^v"]
	& \cdots \Omega^{1,v} \ar[l, "\dd^v"]
	& \U(1) \big)\,, \ar[l, "\dd^v \log"]
\end{tikzcd}
\end{equation}
are fibrations in $\Fun(\Cartfam^\opp, \Ch_{\geq 0})$, where both $\Ch_{\geq 0}$ and the functor category carry the projective model structure.
Composing with the Dold-Kan correspondence thus provides a projective (Kan) fibration
\begin{equation}
	p^k_{k-1} \colon \big( \rmB^{n+1-k} \rmB^k_\nabla \U(1) \big)^v
	\longrightarrow \big( \rmB^{n+2-k} \rmB^{k-1}_\nabla \U(1) \big)^v 
\end{equation}
of simplicial presheaves on $\Cartfam$.
For integers $0 \leq l \leq k \leq n+1$ we also define the composition
\begin{equation}
\label{eq:p^k_l}
	p^k_l \coloneqq p^{l+1}_l \circ \cdots \circ p^{k-1}_{k-2} \circ p^k_{k-1}
	\colon \big( \rmB^{n+1-k} \rmB^k_\nabla \U(1) \big)^v
	\longrightarrow \big( \rmB^{n+1-l} \rmB^l_\nabla \U(1) \big)^v\,.
\end{equation}

%%%%%%%%%%%%%%%%%%%%%%%%%%%%%%%%%%%%%%%%%%%%%%%%%%%%%%%%%%%%%%%%%%%%%%%%%%%%

\subsection{Presenting families of $n$-gerbes with connection on a manifold}
\label{sec:fams of n-gerbes with connection}

%%%%%%%%%%%%%%%%%%%%%%%%%%%%%%%%%%%%%%%%%%%%%%%%%%%%%%%%%%%%%%%%%%%%%%%%%%%%

Next, we describe families of $n$-gerbes with $k$-connection on any fixed manifold $M$.
In the formalism of Section~\ref{sec:families of hgeo structures} we consider the simplicial presheaf
\begin{equation}
	G = \big( \rmB^{n+1-k} \rmB_\nabla^k \U(1) \big)^v
	\colon \Cartfam^\opp \longrightarrow \sSet\,,
\end{equation}
for any $k,n \in \NN_0$ with $k \leq n+1$.
Following the notational conventions of Section~\ref{sec:families of hgeo structures} (see, in particular, Construction~\ref{cstr:sPShs on GCov^D from sPShs on Cart_fam}), we make the following definition:

\begin{definition}
\label{def:wtGrb}
For each $k,n \in \NN_0$ with $k \leq n+1$, we define the simplicial presheaf
\begin{align}
	\widetilde{\Grb}^{n,\scD}_{\nabla|k} \coloneqq \check{C}^{\scD*} \Grb^n_{\nabla|k} \colon (\GCov^\scD){}^\opp &\longrightarrow \sSet\,,
	\\
	(c, \hat{\cU} \to \cU) &\longmapsto \ul{\scH_\rmfam} \big( \v{C}(\hat{\cU} \to \cU), \big( \rmB^{n+1-k} \rmB_\nabla^k \U(1) \big)^v \big)\,.
\end{align}
We denote its restriction to $\GCov \subset \GCov^\scD$ by
\begin{equation}
	\widetilde{\Grb}^{n,M}_{\nabla|k} \colon \GCov^\opp \longrightarrow \sSet\,.
\end{equation}
\end{definition}

\begin{remark}
\label{rmk:Cech formulas for evaluating wtGrb}
One can describe \smash{$\widetilde{\Grb}^{n,\scD}_{\nabla|k}$} explicitly by means of the techniques in Appendix~\ref{app:Higher U(1)-bundles via sPShs} (see, in particular Example~\ref{eg:transition data for n-grbs w K-conn}).
\qen
\end{remark}

\begin{notation}
In the setting of Definition~\ref{def:wtGrb} we also write
\begin{equation}
	\widetilde{\Grb}^{n,\scD}_\nabla
	\coloneqq \widetilde{\Grb}^{n,\scD}_{\nabla|n+1}
	\qquad \text{and} \qquad
	\widetilde{\Grb}^{n,\scD}
	\coloneqq \widetilde{\Grb}^{n,\scD}_{\nabla|0}\,,
\end{equation}
and analogously for \smash{$\widetilde{\Grb}^{n,M}_\nabla$} and \smash{$\widetilde{\Grb}^{n,M}$}.
This notational convention will also apply to the following definitions in this subsection without us spelling it out explicitly in each adaptation.
\qen
\end{notation}

\begin{remark}
The morphisms $p^k_l$ from~\eqref{eq:p^k_l} induce projective fibrations
\begin{equation}
	\widetilde{p}^k_l \colon \widetilde{\Grb}{}^{n,\scD}_{\nabla|k} \longrightarrow \widetilde{\Grb}{}^{n,\scD}_{\nabla|l}\,,
\end{equation}
for each $l,k \in \NN_0$ with $0 \leq l \leq k \leq n+1$.
\qen
\end{remark}

As a consequence of Lemma~\ref{st:families of gerbes satisfy tau_fam-descent}, \smash{$\Grb^{n,\scD}_{\nabla|k}$} satisfies the conditions of Lemma~\ref{st:wtG ess const on fibres of varpi^D}.
Averaging over good open coverings (see, in particular, Proposition~\ref{st:Lan_varpi} and Section~\ref{sec:averaging good open coverings}), we obtain the following simplicial presheaves:

\begin{definition}
\label{def:Grb^M and Grb^D(M)}
For each $n \in \NN_0$ and $k = 0, \ldots, n+1$, we set
\begin{align}
\label{eq:Grb^M and Grb^D(M)}
	\Grb^{n,\scD}_{\nabla|k} &\coloneqq \Lan_{\varpi^\scD} \big( \widetilde{\Grb}^{n,\scD}_{\nabla|k} \big)
	\colon \rmB \scD^\opp \longrightarrow \sSet\,,
	\qquad \text{and}
	\\
	\Grb^{n,M}_{\nabla|k} &\coloneqq \Lan_\varpi \big( \iota^* \widetilde{\Grb}^{n,M}_{\nabla|k} \big)
	\colon \Cart^\opp \longrightarrow \sSet\,.
\end{align}
We refer to these as the simplicial presheaves of smooth families of \textit{$n$-gerbes with $k$-connection on $M$} (with or without the action of $\Diff(M)$, respectively).
\end{definition}

\begin{remark}
\label{rmk:p^k_l are proj fibs}
By Proposition~\ref{st:Lan_varpi} it follows that the morphisms $p^k_l$ from~\eqref{eq:p^k_l} induce projective fibrations
\begin{equation}
	\Grb^{n,\scD}_{\nabla|k} \longrightarrow \Grb^{n,\scD}_{\nabla|l}\,,
\end{equation}
for each $l,k \in \NN_0$ with $0 \leq l \leq k \leq n+1$.
By an abuse of notation, we again denote these morphisms by $p^k_l$.
\qen
\end{remark}

\begin{remark}
The notion of \textit{$k$-connections} on $n$-gerbes was introduced in~\cite{Gajer:Geo_of_Deligne_coho}.
Modulo minor shifts---the paper~\cite{Gajer:Geo_of_Deligne_coho} works with $\CC^*$ in place of $\U(1)$ and calls $n$-gerbes $(n{+}1)$-bundles---Gajer established $n$-gerbes with $k$-connection as a geometric model for the Deligne cohomology groups in any bidegree on $M$.
This paradigm was picked up and further enhanced by others, including in~\cite{Schreiber:DCCT_v2, FSS:Cech_for_diff_classes}, to define Kan complexes of $n$-gerbes with $k$-connection on $M$, thus encoding all levels of morphisms and higher morphisms between these objects.
Fiorenza, Rogers and Schreiber also introduced an abstract way of obtaining smooth families of $n$-gerbes with $k$-connection (and other higher geometric structures) on manifolds under the name of concretification~\cite{FRS:Higher_gerbe_connections} (this contained a mistake which was later corrected in~\cite{BSS:Stack_of_YM_fields, Schreiber:DCCT_v2}).
Here, instead, we have constructed directly simplicial presheaves on $\Cart$ which classify smooth families of $n$-gerbes with $k$-connections on any given manifold $M$.
Additionally, our construction directly encodes not only the higher algebraic structure of their automorphisms, but simultaneously encodes the smooth \textit{and} higher structure of their \textit{symmetries}, i.e.~automorphisms which cover possibly non-trivial diffeomorphisms of $M$.
This enables us to construct and investigate moduli $\infty$-stacks for Gajer's geometric model of Deligne cohomology in the following sections.
\qen
\end{remark}

\begin{remark}
Lemma~\ref{st:base change for 1Cat Lan_varpi} provides a canonical isomorphism
\begin{equation}
	e_M^* \Grb^{n,\scD}_{\nabla|k} \cong \Grb^{n,M}_{\nabla|k}
\end{equation}
of simplicial presheaves on $\Cart$.
\qen
\end{remark}

We also obtain the left fibrations associated to simplicial presheaves from~\eqref{eq:Grb^M and Grb^D(M)}:

\begin{definition}
\label{def:GRB LFib}
We define objects
\begin{alignat}{3}
	\textint \Grb^{n,\scD}_{\nabla|k} &\coloneqq r_{\rmB\scD^\opp}^* \Grb^{n,\scD}_{\nabla|k} &&
	\qquad &&\in \sSet_{/N\rmB\scD^\opp}
	\\
	\textint \Grb^{n,M}_{\nabla|k} &\coloneqq r_{\Cart^\opp}^* \Grb^{n,M}_{\nabla|k}
	&& \qquad &&\in \sSet_{/N \Cart^\opp}\,.
\end{alignat}
\end{definition}

By construction these are covariantly fibrant in $\sSet_{/N\rmB\scD^\opp}$ and $\sSet_{/N \Cart^\opp}$, respectively.

\begin{proposition}
\label{st:properties of GRB and p^k_l}
For each $n \in \NN_0$ and $0 \leq l \leq k \leq n+1$, the following hold true:
\begin{enumerate}
\item There is a canonical isomorphism over $N\Cart^\opp$,
\begin{equation}
	N e_M^* \textint \Grb^{n,\scD}_{\nabla|k} \cong \textint \Grb^{n,M}_{\nabla|k}\,.
\end{equation}

\item The forgetful maps
\begin{align}
	\textint p^k_l \colon \textint \Grb^{n,\scD}_{\nabla|k} \longrightarrow \textint \Grb^{n,\scD}_{\nabla|l}\,,
	\qquad
	\textint p^k_l \colon \textint \Grb^{n,M}_{\nabla|k} \longrightarrow \textint \Grb^{n,M}_{\nabla|l}
\end{align}
are fibrations between fibrant objects in the covariant model structures on $\sSet_{/N\rmB\scD^\opp}$ and $\sSet_{/N\Cart^\opp}$, respectively.
Thus, they are left fibrations by~\cite[Thm.~4.4.14]{Cisinski:HiC_and_HoA}.
\end{enumerate}
\end{proposition}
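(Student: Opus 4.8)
The plan is to prove the two claims in sequence, deriving Claim~(2) directly from the explicit construction of the objects and the formal properties of the rectification functor and the left Kan extension established earlier.

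For Claim~(1), I would invoke Lemma~\ref{st:r_C^* and pullbacks} together with the base change isomorphism from Lemma~\ref{st:base change for 1Cat Lan_varpi}. Indeed, by construction (Definition~\ref{def:Grb^M and Grb^D(M)}) we have $\Grb^{n,M}_{\nabla|k} = \Lan_\varpi(\iota^*\widetilde{\Grb}^{n,M}_{\nabla|k})$ and $\Grb^{n,\scD}_{\nabla|k} = \Lan_{\varpi^\scD}(\widetilde{\Grb}^{n,\scD}_{\nabla|k})$, and Lemma~\ref{st:base change for 1Cat Lan_varpi} gives a canonical isomorphism $e_M^*\Grb^{n,\scD}_{\nabla|k} \cong \Grb^{n,M}_{\nabla|k}$ of simplicial presheaves on $\Cart$ (noted already in the remark preceding Definition~\ref{def:GRB LFib}). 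Applying the rectification functor and using Lemma~\ref{st:r_C^* and pullbacks}, which intertwines $r^*$ with pullbacks, then yields $Ne_M^*\textint\Grb^{n,\scD}_{\nabla|k} \cong \textint\Grb^{n,M}_{\nabla|k}$ over $N\Cart^\opp$, as required.

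For Claim~(2), the strategy is to trace the fibration property through the three functorial stages of the construction. Recall that each $p^k_l$ arises (see~\eqref{eq:p^k_l} and the surrounding discussion) as a composite of the elementary truncation maps, which are projective Kan fibrations of simplicial presheaves on $\Cartfam$ after applying the Dold–Kan functor $\varGamma$ to the levelwise-surjective chain maps. First I would observe that $\check{C}^{\scD*}$ preserves fibrations (Theorem~\ref{st:fctrs from H_fam to sPSh(rmBscD)}(1)), so the induced maps $\widetilde{p}^k_l$ on $\GCov^\scD$ are projective fibrations. Next, $\Lan_{\varpi^\scD}$ preserves fibrations and weak equivalences between fibrant objects (Proposition~\ref{st:Lan_varpi} and Theorem~\ref{st:fctrs from H_fam to sPSh(rmBscD)}(2)), so the maps $p^k_l$ on $\rmB\scD$ are projective fibrations between projectively fibrant objects (this is Remark~\ref{rmk:p^k_l are proj fibs}). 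Finally, applying the rectification functor $r^*$, which by Theorem~\ref{st:r_C^* as Quillen equivalence} is a right Quillen equivalence from the projective to the covariant model structure, transports these projective fibrations between fibrant objects to covariant fibrations between covariant-fibrant objects. The identification of covariant fibrations between fibrant objects with left fibrations is then supplied by~\cite[Thm.~4.4.14]{Cisinski:HiC_and_HoA}, exactly as stated.

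I expect the main subtlety—though not a genuine obstacle—to lie in verifying that $r^*$ indeed carries projective fibrations to covariant fibrations rather than merely preserving weak equivalences; this is the content of the right Quillen property in Theorem~\ref{st:r_C^* as Quillen equivalence}, so it suffices to cite that theorem and note that all objects in play are projectively fibrant (hence the fibrancy hypotheses for the comparison with left fibrations are met). Everything else is a bookkeeping composition of functorial preservation statements already assembled in Part~\ref{part:Higher Geometry}, so no new estimate or construction is needed.
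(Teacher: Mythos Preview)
Your proposal is correct and follows essentially the same approach as the paper, which simply cites Lemma~\ref{st:r_C^* and pullbacks} and Lemma~\ref{st:base change for 1Cat Lan_varpi} for Claim~(1), and Remark~\ref{rmk:p^k_l are proj fibs} together with Theorem~\ref{st:r_C^* as Quillen equivalence} for Claim~(2). Your version is more detailed in that it unpacks the content of Remark~\ref{rmk:p^k_l are proj fibs} by tracing the fibration property through $\check{C}^{\scD*}$ and $\Lan_{\varpi^\scD}$, but the underlying argument is identical.
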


\begin{proof}
Claim~1 follows from Lemma~\ref{st:r_C^* and pullbacks} together with Lemma~\ref{st:base change for 1Cat Lan_varpi}.
Claim~2 is a consequence of Remark~\ref{rmk:p^k_l are proj fibs} and Theorem~\ref{st:r_C^* as Quillen equivalence}.
\end{proof}

Finally, the simplicial presheaves \smash{$\Grb^{n,\scD}_{\nabla|k}$} and \smash{$\Grb^{n,M}_{\nabla|k}$}, and equivalently their associated left fibrations from Definition~\ref{def:GRB LFib}, present $\infty$-presheaves on $N\rmB\scD$ and $N\Cart$, respectively.
Recall the $\infty$-categorical localisation functor $\gamma_\scC \colon N\Fun(\scC, \sSet) \longrightarrow \scFun(N\scC, \scS)$ from Appendix~\ref{app:rectification}, for any small category $\scC$.

\begin{definition}
For each $k,l \in \NN_0$ with $0 \leq l \leq k \leq n+1$, we define the following $\infty$-functors:
\begin{alignat}{3}
	\bbGrb^{n,\scD}_{\nabla|k} &\coloneqq \gamma_{\rmB\scD^\opp} \big( \Grb^{n,\scD}_{\nabla|k} \big)
	&& \qquad && \in \scP(N\rmB\scD)\,,
	\\
	\bbGrb^{n,M}_{\nabla|k} &\coloneqq \gamma_{\Cart^\opp} \big( \Grb^{n,M}_{\nabla|k} \big)
	&& \qquad && \in \scP(N\Cart)\,.
\end{alignat}
\end{definition}

Note that we again obtain induced projection morphisms
\begin{equation}
	p^k_l \colon \bbGrb^{n,\scD}_{\nabla|k} \longrightarrow \bbGrb^{n,\scD}_{\nabla|l}\,,
\end{equation}
for each $k,l \in \NN_0$ with $0 \leq l \leq k \leq n+1$.

Finally, we define the curvature of a smooth family of $n$-gerbes with connection:
consider an object $X = (c, \hat{\cU} \to \cU)$ in $\GCov^\scD$, and let $(\cG, \cA) = (g, A_1, \ldots, A_{n+1})$ be a vertex in \smash{$\widetilde{\Grb}^{n,\scD}_\nabla(X)$}.
That is, $g \colon \v{C}_{n+1} \hat{\cU} \to \U(1)$ is a smooth function, and $A_i \in \Omega^{i,v}(\v{C}_{n+1-i} \hat{\cU})$ is a vertical $i$-form, for $i = 1, \ldots, n+1$.
These data satisfy analogues of the equations in Example~\ref{eg:transition data for n-grbs w K-conn}, with the de Rham differential replaced by the vertical de Rham differential.
Then, by Lemma~\ref{st:acyclicity for vertical forms} there is a unique $H \in \Omega^{n+2,v,\scD}_\cl(c)$ such that $\delta H = \dd^v A_{n+1}$ on $c {\times} M$.
Analogously to the case for $n$-gerbes with connection (i.e.~without smooth families), one checks that this form is invariant under equivalences in \smash{$\widetilde{\Grb}^{n,\scD}_\nabla(X)$} and refinements of coverings, and, furthermore, compatible with pullbacks along morphisms in $\rmB\scD$.
In particular, we from this construction a morphism
\begin{equation}
\label{eq:curv for Grb^(n,scD)_nabla}
	\curv \colon \pi_0 \Grb^{n,\scD}_\nabla \longrightarrow \Omega^{n+2,v,\scD}
\end{equation}
in $\Fun(\rmB\scD^\opp, \sSet)$, where $\pi_0$ takes connected components objectwise over each $c \in \rmB\scD$.

\begin{definition}
\label{def:curv for Grb^(n,scD)_nabla}
Let $n \in \NN_0$.
The morphism~\eqref{eq:curv for Grb^(n,scD)_nabla} is called the \textit{curvature} of $n$-gerbes with connection.
An connection $\cA$ on an $n$-gerbe $\cG$ is called \textit{flat} if $\curv(\cG, \cA) = 0$.
We let
\begin{equation}
	\Grb^{n,\scD}_\flat \subset \Grb^{n,\scD}_\nabla
\end{equation}
denote the full simplicial subpresheaf on the $n$-gerbes with flat connection.
Finally, it is convenient to set \smash{$\Grb^{n,\scD}_{\nabla|k} \coloneqq \Grb^{n,\scD}_\flat$} for $k > n+1$.
\end{definition}

%%%%%%%%%%%%%%%%%%%%%%%%%%%%%%%%%%%%%%%%%%%%%%%%%%%%%%%%%%%%%%%%%%%%%%%%%%%%

\subsection{The structure of $k$-connections on a fixed $n$-gerbe}
\label{sec:Con_k on fixed n-gerbe}

%%%%%%%%%%%%%%%%%%%%%%%%%%%%%%%%%%%%%%%%%%%%%%%%%%%%%%%%%%%%%%%%%%%%%%%%%%%%

We aim to study higher moduli stacks of geometric data which involve connections on higher gerbes.
Here we first present the stacks of smooth families of connections on a given $n$-gerbes, or, equivalently, $\infty$-stacks of Deligne differential cocycles on any manifold $M$.
Let $n, k \in \NN_0$ with $0 \leq k \leq n+1$.
Recall from Remark~\ref{rmk:section associated to cG} that an $n$-gerbe with $k$-connection $(\cG, \cA^{(k)}) \in \Grb^{n,M}_{\nabla|k}(\RR^0)$ on $M$ induces a morphism
\begin{equation}
	\tilde{\sigma} (\cG, \cA^{(k)}) \colon N\Cart^\opp \longrightarrow \textint \Grb^{n,\scD[\cG]}_{\nabla|k}
\end{equation}
of left fibrations over $N\Cart^\opp$.

\begin{definition}
\label{def:conns on n-gerbes}
Let $0 \leq k \leq n+1 \in \NN_0$, and suppose $\cG \in \Grb^{n,M}(\RR^0)$ is an $n$-gerbe on $M$.
\begin{enumerate}
\item We define a left fibration over $N\Cart^\opp$ as the pullback
\begin{equation}
\label{eq:Con_k(cG)}
\begin{tikzcd}[column sep=1.25cm, row sep=0.75cm]
	\textint \Con_k(\cG) \ar[r] \ar[d]
	& \textint \Grb^{n,M}_{\nabla|k} \ar[d]
	\\
	N\Cart^\opp \ar[r, "\tilde{\sigma} \cG"']
	& \textint \Grb^{n,M}
\end{tikzcd}
\end{equation}
This left fibration presents the \textit{$\infty$-stack $\scCon_k(\cG)$ of (smooth families of) $k$-connections on the $n$-gerbe $\cG$}.

\item Suppose $\cA^{(k)}$ is a $k$-connection on $\cG$, i.e.~the pair $(\cG, \cA^{(k)})$ is an element in \smash{$\Grb^{n,M}_{\nabla|k}(\RR^0)$}.
We define a left fibration over $N\Cart^\opp$ as the pullback
\begin{equation}
\begin{tikzcd}[column sep=1.25cm, row sep=0.75cm]
	\textint \Con(\cG, \cA^{(k)}) \ar[r] \ar[d]
	& \textint \Grb^{n,M}_\nabla \ar[d]
	\\
	N\Cart^\opp \ar[r, "{\tilde{\sigma} (\cG, \cA^{(k)})}"']
	& \textint \Grb^{n,M}_{\nabla|k}
\end{tikzcd}
\end{equation}
This left fibration presents the \textit{$\infty$-stack $\scCon(\cG, \cA^{(k)})$ of (smooth families of) connections on the $n$-gerbe with $k$-connection $(\cG, \cA^{(k)})$}, i.e.~of ways to extend $\cA^{(k)}$ into a full $(n{+}1)$-connection on $\cG$ (for $k \neq 0)$.
\end{enumerate}
\end{definition}

These stacks have, to the best of our knowledge, not appeared in the literature before.
For $n > 0$ we see that, even if we disregard the smooth family aspect and evaluate $\scCon_k(\cG)$ on $\RR^0 \in \Cart$, the $k$-connections on a fixed $n$-gerbe have the structure of a (truncated) $\infty$-groupoid.
This is in stark contrast to the 1-categorical case of connections on principal bundles whose structure group is a Lie group:
the connections on any such bundle form an affine \textit{set}.
The 2-connections on a 1-gerbe, for instance, form an honest groupoid.
For a fixed good open covering $\cU$ of $M$, the morphisms of this groupoid are shifts
\begin{equation}
	(g, A_1, A_2) \longmapsto (g, A_1 + \delta C, A_2 + \dd C)\,,
\end{equation}
where $C \in \Omega^1(\check{C}_0 \cU)$ is any 1-form.
These morphism are neither part of the action of the gauge 2-group of the 1-gerbe, nor of an affine action analogous to that on connections on an principal bundle (see also the following paragraph and the proof of Theorem~\ref{st:forgetting conns is a principal map} below for more on the affine action in this case).
Instead, they are intrinsic to the notion of a 2-connection on a 1-gerbe, and a genuine feature of \textit{higher} connections on a fixed \textit{higher} principal bundle.
They have, so far, not been considered in the literature on higher bundles.

The above groupoid $\scCon(\cG)(\RR^0)$ of connections on a 1-gerbe $\cG$ on $M$ carries a further affine action by the (Baez-Crans) 2-vector space whose objects are pairs $(\omega_1, \omega_2)$ with $\omega_i \in \Omega^i(\check{C}_{2-i}\cU)$, satisfying $\delta \omega_1 = 0$ and $\dd \omega_1 = - \delta \omega_2$, and whose morphisms $(\omega_1, \omega_2) \to (\omega'_1, \omega'_2)$ are 1-forms $\eta_1 \in \Omega^1(\check{C}_0 \cU)$ satisfying $\delta \eta = \omega'_1 - \omega_1$ and $\dd \eta = \omega'_2 - \omega_2$ (as a groupoid, this coincides with the groupoid of 2-connections on the trivial 1-gerbe).

\begin{remark}
The pullback square~\eqref{eq:Con_k(cG)} is a homotopy fibre sequence in $\sSet_{/N \Cart^\opp}$.
It presents a fibre sequence in $\scP(N\Cart)$ of the form
\begin{equation}
\begin{tikzcd}
\label{eq:Con_k fibre sequence}
	\scCon_k(\cG) \ar[r] \ar[d]
	& \bbGrb^{n,M}_{\nabla|k} \ar[d]
	\\
	* \ar[r, "\tilde{\sigma} \cG"']
	& \bbGrb^{n,M}
\end{tikzcd}
\end{equation}
This fibre sequence is simultaneously a categorification and a smooth enhancement of the short exact sequence in~\cite[Thm.~A]{Gajer:Geo_of_Deligne_coho}.
For $k = n+1$ it can also be viewed as a smooth-family enhancement of the upwards-pointing diagonal short exact sequence in the differential cohomology hexagon~\cite{SS:Axiomatic_char_of_diff_coho}.
Since that original reference, categorifications for this diagram have been found~\cite{HS:Quadratic_functions, Schreiber:DCCT_v2, BNV:Diff_Coho_as_sheaves_of_spectra, ADH:Differential_cohomology}, but no smooth families and, in particular, no moduli of differential cocycles have been investigated so far;
in particular, the fibre sequence~\eqref{eq:Con_k fibre sequence} fits into a smooth-family refinement of the model for differential cohomology in~\cite{Schreiber:DCCT_v2}.
\qen
\end{remark}

\begin{example}
Let $\cI$ be the trivial 1-gerbe on $M$, and consider only the groupoid $\scCon_2(\cI)(\RR^0)$ of connections on $\cI$ (i.e.~disregarding smooth families).
This is the groupoid of pairs $(\omega_1,\omega_2)$ described in the preceding paragraph.
By Lemma~\ref{st:acyclicity for vertical forms} each object in $\scCon(\cI)(\RR^0)$ is isomorphic to an object of the form $(0, \omega_2)$.
We thus see that the isomorphism classes of connections on the trivial 1-gerbe satisfy
\begin{equation}
	\pi_0 \big( \scCon_2(\cI)(\RR^0) \big) \cong \frac{\Omega^2(M)}{\dd \Omega^1(M)}\,.
\end{equation}
The automorphisms of any object in this groupoid satisfy
\begin{equation}
	\pi_1 \big( \scCon_2(\cI)(\RR^0) \big) \cong \Omega^1_\cl(M)\,.
\end{equation}
In particular, $\scCon_2(\cI)$ is an honest groupoid, rather than a set.
\qen
\end{example}

%%%%%%%%%%%%%%%%%%%%%%%%%%%%%%%%%%%%%%%%%%%%%%%%%%%%%%%%%%%%%%%%%%%%%%%%%%%%

\section{The gauge action and higher form symmetries}
\label{st:higher gauge action for Grb^n_nabla|k}

%%%%%%%%%%%%%%%%%%%%%%%%%%%%%%%%%%%%%%%%%%%%%%%%%%%%%%%%%%%%%%%%%%%%%%%%%%%%

Higher moduli stacks of geometric data involving higher $\U(1)$-connections are quotients of higher stacks closely related to the higher stacks $\scCon(\cG, \cA^{(k)})$ of connections on a given $n$-gerbe with $k$-connection from Section~\ref{sec:Con_k on fixed n-gerbe}.
Recall that both $\scCon(\cG, \cA^{(k)})$ as well as the higher gauge group $\scAut(\cG, \cA^{(k)})$ have higher structure.
In order to better understand the gauge-theoretic structure of connections on $(\cG, \cA^{(k)})$, we compute the action of $\scAut(\cG, \cA^{(k)})$ on $\scCon(\cG, \cA^({k)})$ in this section.

The general formalism in Part~\ref{part:Higher Geometry} provides the intrinsic action of higher gauge transformations, encoded in terms of a left fibration of simplicial sets or a map of $\infty$-prestacks which classifies this action (see, for instance, Equation~\eqref{eq:def tint Sol_phi(cG), Sol_res(cG)} and the beginning of Section~\ref{sec:contractible S(Sol)}).
This had the great advantage that it allowed us to describe moduli $\infty$-prestacks in terms of an $\infty$-categorical left Kan extension.
While this does encode an $\infty$-categorical action of a smooth higher group, the fact that we can describe families $n$-gerbes with $k$-connection on $M$ very concretely in terms of \v{C}ech-Deligne complexes and the Dold-Kan construction poses the question whether we can describe also the higher gauge action in such explicit terms.
In particular, this should recover the familiar gauge action for $\U(1)$-bundles on $M$.
The first main goal of this section is thus to extract an explicit model for the higher gauge action for higher $\U(1)$-bundles from our formalism.
This amounts to strictifying the higher group action presented by the aforementioned left fibration.
In particular, it is an important consistency check that the strictified action which emerges from our formalism takes the expected form.

The second goal of this section is to explore the relation of higher $\U(1)$ gauge symmetries with higher form symmetries in the physics literature~\cite{GKSW:Gen_global_syms}.
We believe that the higher symmetry groups of higher geometric structures which we developed in Section~\ref{sec:higher symmetry groups of hgeo strs} are relevant much more broadly in the context of invertible higher categorical symmetries in field theories, but we restrict our attention to the case of higher $\U(1)$-connections here.

Let $k,l,n \in \NN_0$, with $0 \leq k < l \leq n+1$.
In the formalism of Section~\ref{sec:moduli oo-prestacks of hgeo strs on M}, we set \smash{$\Conf^\scD = \Grb^{n,\scD}_{\nabla|k}$} and \smash{$\Fix^\scD = \Grb^{n,\scD}_{\nabla|l}$}, with the canonical projection morphism \smash{$p^k_l \colon \Grb^{n,\scD}_{\nabla|k} \longrightarrow \Grb^{n,\scD}_{\nabla|l}$}.
To unravel the action of higher $\U(1)$-gauge transformations, let $\Sol^\scD = \Conf^\scD$.
Suppose that \smash{$(\cG, \cA^{(l)}) \in \Grb^{n,M}_{\nabla|k}(\RR^0)$} is an $n$-gerbe with $l$-connection on $M$.
Recall the pullback square~\eqref{eq:def tint Sol_phi(cG), Sol_res(cG)}, which here reads as
\begin{equation}
\begin{tikzcd}[column sep=1cm, row sep=0.75cm]
	\textint \Sol^M_\res(\cG, \cA^{(l)}) \ar[r, hookrightarrow] \ar[d]
	& \textint \Sol^M(\cG, \cA^{(l)}) \ar[d]
	\\
	\rmB\AUT^\rev(\cG, \cA^{(l)}) \ar[r, hookrightarrow]
	& \textint \Fix^M_{[\cG]}
\end{tikzcd}
\end{equation}
The vertical morphisms are left fibrations (by Proposition~\ref{st:SOL fibseq in sSet}).

\begin{remark}
We also recall that by Definition~\ref{def:scMdl_phi(cG)} the composed left fibration $\textint \Sol^M_\res(\cG, \cA^{(l)}) \to N\Cart^\opp$ presents the moduli $\infty$-stack of $k$-connections on $(\cG, \cA^{(l)})$ modulo gauge transformations, i.e.~automorphisms of $(\cG, \cA^{(l)})$.
We can thus equally write
\begin{equation}
	\textint \Mdl_{\res,k}(\cG, \cA^{(l)})
	\coloneqq \textint \Sol^M_\res(\cG, \cA^{(l)})
\end{equation}
for the domain of the left fibration over $N\Cart^\opp$ which describes $k$-connections on $(\cG, \cA^{(l)})$ modulo the action of automorphisms of $(\cG, \cA^{(l)})$.
\qen
\end{remark}

By construction we have that
\begin{equation}
	\rmB\AUT^\rev(\cG, \cA^{(l)})
	= r_{\Cart^\opp}^* \rmB \Aut^\rev(\cG, \cA^{(l)})\,,
\end{equation}
where $\rmB \Aut^\rev(\cG, \cA^{(l)})$ is the full simplicial subpresheaf of $\Grb^{n,M}_{\nabla|l}$ on the section $(\cG, \cA^{(l)})$.
That is,
\begin{equation}
	\rmB \Aut^\rev(\cG, \cA^{(l)})
	= \rmB \Omega_{(\cG, \cA^{(l)})} \Grb^{n,M}_{\nabla|l}\,,
\end{equation}
where $\Omega_{(\cG, \cA^{(l)})}$ denotes loops based at $(\cG, \cA^{(l)})$.
By the construction of $\Grb^{n,M}_{\nabla|l}$ via the Dold-Kan correspondence (compare Section~\ref{sec:smooth fams of higher U(1)-conns}) it follows that there is a canonical isomorphism
\begin{equation}
\label{eq:BAut = Grb^n-1}
	\Aut^\rev(\cG, \cA^{(l)})
	\cong \Grb^{n-1,M}_{\nabla|l}\,.
\end{equation}
This is a presheaf of simplicial abelian groups on $M$.
It is a presentation of the group object $\scAut(\cG, \cA^{(l)})$ in $\scP(N\Cart)$ (in the sense of~\cite[Prop.~3.35]{NSS:Pr_ooBdls_II}).

\begin{notation}
Recall our convention that
\begin{equation}
	\Grb^{n,M}_{\nabla|l}
	= \Grb^{n,M}_\flat
\end{equation}
whenever $l \geq n+2$.
In particular, for $l = n+1$ (i.e.~$(\cG, \cA^{(l)})$ an $n$-gerbe with full connection), the equivalence~\eqref{eq:BAut = Grb^n-1} reads as 
\begin{equation}
	\Aut(\cG, \cA^{(l)})
	= \Grb^{n-1,M}_\flat
\end{equation}
(where we were able to drop the superscript $\rev$ because both groups are abelian).
\qen
\end{notation}

We restrict our attention to families paramterised by any fixed cartesian space $c \in \Cart$.
By Lemma~\ref{st:wtG ess const on fibres of varpi^D}, Proposition~\ref{st:Lan_varpi} and~\cite[Cor.~5.3.6]{Cisinski:HiC_and_HoA} we may further restrict ourselves to working over any fixed object $(\hat{\cU} \to \cU)$ of $\GCov_{|c}$ (see Definition~\ref{def:GCov^D(M)}).
For brevity we let
\begin{equation}
	A \coloneqq \Tot^\times \big( \tau_{\geq 0} (\rmb^{n+1-k} \rmb_\nabla^k \U(1) (\v{C}\hat{\cU})) \big)\,
	\qqandqq
	C \coloneqq \Tot^\times \big( \tau_{\geq 0} (\rmb^{n+1-l} \rmb_\nabla^l \U(1) (\v{C}\hat{\cU})) \big)
\end{equation}
denote the total (product) complexes of \v{C}ech-Deligne double complexes (truncated to non-negative degrees) describing smooth families of $n$-gerbes on $M$ with $k$- and $l$-connections, respectively.
In the notation of Definition~\ref{def:wtGrb}, we thus have
\begin{equation}
	\widetilde{\Grb}^{n,M}_{\nabla|k}(\hat{\cU} \to \cU)
	= \varGamma A
	\qqandqq
	\widetilde{\Grb}^{n,M}_{\nabla|l}(\hat{\cU} \to \cU)
	= \varGamma C\,.
\end{equation}
By the above discussion, there is a canonical isomorphism
\begin{equation}
	\rmB\Aut^\rev(\cG, \cA^{(l)})(\hat{\cU} \to \cU)
	\cong \Gamma (\tau_{\geq 1} C)
	= \varGamma \big(
	\begin{tikzcd}
		0
		& Z_1(C) \ar[l]
		& C_2 \ar[l, "D_C"']
		& \cdots \big)\,. \ar[l, "D_C"']
	\end{tikzcd}
\end{equation}
Here $D_C$ is the differential in the complex $C$, and $Z_1(C)$ denotes the group of 1-cycles in $C$; it is situated in degree one.
We further define a simplicial subset of $\varGamma A$ as the pullback
\begin{equation}
\begin{tikzcd}[column sep=1.5cm, row sep=1cm]
	\Mdl_{\res,k}(\cG, \cA^{(l)})(\hat{\cU} \to \cU) \ar[r] \ar[d]
	& \varGamma A \ar[d, "\varGamma p^k_l"]
	\\
	\varGamma (\tau_{\geq 1} C) \ar[r, "{\{(\cG, \cA^{(l)})\}}"']
	& \varGamma C
\end{tikzcd}
\end{equation}
where the bottom morphism is the inclusion of the full simplicial subset on the single vertex $(\cG, \cA^{(l)})$.
Explicitly, in terms of the Dold-Kan construction (using the notation from Appendix~\ref{app:Higher U(1)-bundles via sPShs}), we have that
\begin{equation}
\label{eq:Mdl_k(cG,cA^(l)) via Cech data}
	\Mdl_{\res,k}(\cG, \cA^{(l)})(\hat{\cU} \to \cU)_r
	\coloneqq \big\{ X = (X_\varphi)_{\varphi \in [r] \twoheadrightarrow [t]} \in \varGamma A \, \big| \, X_\varphi \in A_t,\, (p^k_l)_0(X_{[r] \twoheadrightarrow [0]}) = (\cG, \cA^{(l)}) \big\}\,,
\end{equation}
for each $r \in \NN_0$.
Then, by construction there is a canonical equivalence
\begin{equation}
	\Mdl_{\res,k}(\cG, \cA^{(l)})(\hat{\cU} \to \cU)
	\simeq \Sol^M(\cG, \cA^{(l)})(c)\,.
\end{equation}
The Kan fibration $\varGamma p^k_l$ restricts to a Kan fibration (which we denote by the same name)
\begin{equation}
	\varGamma p^k_l \colon \Mdl_{\res,k}(\cG, \cA^{(l)})(\hat{\cU} \to \cU) \longrightarrow \varGamma (\tau_{\geq 1} C)\,.
\end{equation}
Note that $\varGamma (\tau_{\geq 1} C)$ is a reduced simplicial set, i.e.~it has a unique vertex.
There is a further pullback diagram
\begin{equation}
\begin{tikzcd}[column sep=1.5cm, row sep=1cm]
	\Con_k(\cG, \cA^{(l)})(\hat{\cU} \to \cU) \ar[r] \ar[d]
	& \Mdl_{\res,k}(\cG, \cA^{(l)})(\hat{\cU} \to \cU) \ar[d, "\varGamma p^k_l"]
	\\
	\Delta^0 \ar[r]
	& \varGamma (\tau_{\geq 1} C)
\end{tikzcd}
\end{equation}
in $\sSet$, where, explicitly,
\begin{align}
\label{eq:Con_k(cG,cA^(l)) via Cech data}
	&\Con_k(\cG, \cA^{(l)})(\hat{\cU} \to \cU)_r
	\\*
	&= \big\{ X = (X_\varphi)_{\varphi \in [r] \twoheadrightarrow [t]} \, \big| \, X_\varphi \in A_t,\, (p^k_l)_0(X_{\id_{[0]}}) = (\cG, \cA^{(l)}),\, (p^k_l)_r(X_\varphi) = 0\ \forall\, \varphi \colon [r] \twoheadrightarrow [t], t > 0 \big\}\,.
\end{align}
Finally, by construction there is a canonical equivalence
\begin{equation}
	\Con_k(\cG, \cA^{(l)})(c)
	\wequiv \Con_k(\cG, \cA^{(l)})(\hat{\cU} \to \cU)\,.
\end{equation}
Here we have used that $\Con_k(\cG, \cA^{(l)}) \in \scH$ is a homotopy sheaf (this follows, for instance, by Corollary~\ref{st:Descent result for moduli oo-prestacks} or the fact that the pullbacks in Definition~\ref{def:conns on n-gerbes} are homotopy pullbacks of left fibrations satisfying homotopy descent).

The higher gauge action, restricted to families parameterised by $c \in \Cart$, is thus modelled by the Kan fibration
\begin{equation}
\label{eq:Kan fib for gauge action}
	\varGamma p^k_l \colon \Mdl_{\res,k}(\cG, \cA^{(l)})(\hat{\cU} \to \cU)
	\longrightarrow \varGamma (\tau_{\geq 1} C)\,.
\end{equation}
More concretely, this Kan fibration presents an $\infty$-functor
\begin{equation}
\label{eq:vGamma tau_(geq 1) C --> scS}
	\rmB \big( \Aut(\cG, \cA^{(l)}) (\hat{\cU} \to \cU) \big)
	= \varGamma (\tau_{\geq 1} C) \longrightarrow \scS
\end{equation}
which encodes the action of the $\infty$-group $\scAut(\cG, \cA^{(l)})(c)$ presented by the reduced simplicial set $\varGamma (\tau_{\geq 1} C)$ on the space $\Con_k(\cG, \cA^{(l)})(c)$ presented by $\varGamma \ker(p^k_l)$.

We now present a strictification of this action via the Dold-Kan correspondence.
By~\cite[Rmk.~3.5, Rmk.~3.6, Lemma~3.7]{Bunk:Localisation_of_sSet}, an $\infty$-functor as in~\eqref{eq:vGamma tau_(geq 1) C --> scS} corresponds to the following collection of data:
for each morphism $Y \colon \Delta^r \to \varGamma (\tau_{\geq 1} C)$, we have to specify a left fibration
\begin{equation}
	\big( \Mdl_{\res,k}(\cG, \cA^{(l)})(\hat{\cU} \to \cU) \big)(Y) \longrightarrow \Delta^r
\end{equation}
together with a choice, for each $\psi \colon \Delta^s \to \Delta^r$, of a pullback diagram
\begin{equation}
\label{eq:coherence mps for Mdl_k(cG,cA^(l))(Y)}
\begin{tikzcd}[column sep=2cm, row sep=1cm]
	\big( \Mdl_{\res,k}(\cG, \cA^{(l)})(\hat{\cU} \to \cU) \big) (Y; \psi) \ar[r, "\widehat{\psi}"] \ar[d]
	& \big( \Mdl_{\res,k}(\cG, \cA^{(l)})(\hat{\cU} \to \cU) \big)(Y) \ar[d]
	\\
	\Delta^s \ar[r, "\psi"']
	& \Delta^r
\end{tikzcd}
\end{equation}
These data have to satisfy the coherence condition that
\begin{equation}
	\big( \Mdl_{\res,k}(\cG, \cA^{(l)})(\hat{\cU} \to \cU) \big) (Y; \psi)
	= \big( \Mdl_{\res,k}(\cG, \cA^{(l)})(\hat{\cU} \to \cU) \big) (Y \circ \psi)
\end{equation}
as objects in $\sSet_{/\Delta^s}$, for each $Y \colon \Delta^r \to \varGamma (\tau_{\geq 1} C)$ and each $\psi \colon \Delta^s \to \Delta^r$.
We obtain a canonical choice of such data by defining $(\Mdl_{\res,k}(\cG, \cA^{(l)})(\hat{\cU} \to \cU))(Y)$ to be the pullback
\begin{equation}
\begin{tikzcd}
	\big( \Mdl_{\res,k}(\cG, \cA^{(l)})(\hat{\cU} \to \cU) \big) (Y) \ar[r] \ar[d]
	& \Mdl_{\res,k}(\cG, \cA^{(l)})(\hat{\cU} \to \cU) \ar[d]
	\\
	\Delta^r \ar[r, "Y"']
	& \varGamma (\tau_{\geq 1} C)
\end{tikzcd}
\end{equation}
for each $Y \colon \Delta^r \to \varGamma (\tau_{\geq 1} C)$, where we use the canonical construction of pullbacks in $\Set$ to construct $(\Mdl_{\res,k}(\cG, \cA^{(l)})(\hat{\cU} \to \cU))(Y)$ (compare also~\cite[Lemma~3.7]{Bunk:Localisation_of_sSet}), and setting
\begin{equation}
	\big( \Mdl_{\res,k}(\cG, \cA^{(l)})(\hat{\cU} \to \cU) \big) (Y; \psi)
	\coloneqq \big( \Mdl_{\res,k}(\cG, \cA^{(l)})(\hat{\cU} \to \cU) \big) (Y \circ \psi)\,,
\end{equation}
for each $\psi \colon \Delta^s \to \Delta^r$.
This canonically induces choices for the morphisms \smash{$\widehat{\psi}$} as in~\eqref{eq:coherence mps for Mdl_k(cG,cA^(l))(Y)} (see also~\cite[Lemma~3.7]{Bunk:Localisation_of_sSet}).
Note that, explicitly,
\begin{align}
\label{eq:Mdl_k(cG,cA^(l))(Y) via Cech data}
	&\big( \Mdl_{\res,k}(\cG, \cA^{(l)})(\hat{\cU} \to \cU) \big)(Y)_M
	\\*
	&= \big\{ X = (X_\varphi)_{\varphi \in [m] \twoheadrightarrow [t]} \, \big| \, X_\varphi \in A_t,\, (p^k_l)_0(X_{[m] \twoheadrightarrow [0]}) = (\cG, \cA^{(l)}),\, (p^k_l)_m(X_\varphi) = Y_\varphi\ \forall\, \varphi \colon [m] \twoheadrightarrow [t], t > 0 \big\}\,.
\end{align}

\begin{remark}
\label{rmk:Kan fibs for gauge action}
Since $\Mdl_{\res,k}(\cG, \cA^{(l)})(\hat{\cU} \to \cU) \to \varGamma (\tau_{\geq 1})$ is even a Kan fibration, so are all the fibrations $(\varGamma A)(Y) \to \Delta^r$ and $(\varGamma A)(Y; \psi) \to \Delta^s$, since Kan fibrations are stable under pullback.
\qen
\end{remark}

Suppose we are given any such coherent collection of left fibrations and pullbacks.
Let $Y \colon \Delta^r \to \varGamma (\tau_{\geq 0} C)$ be any $r$-simplex.
Since the inclusion $\Delta^{\{0\}} \hookrightarrow \Delta^r$ of the initial vertex in $\Delta^r$ is left anodyne, for each $r \in \NN$, the left-hand vertical morphism in each solid square
\begin{equation}
\label{eq:lifts for strictified gauge action}
\begin{tikzcd}[column sep=1.cm, row sep=1cm]
	\Delta^{\{0\}} \times \Con_k(\cG, \cA^{(l)})(\hat{\cU} \to \cU) \ar[r, hookrightarrow] \ar[d, hookrightarrow]
	& \big( \Mdl_{\res,k}(\cG, \cA^{(l)})(\hat{\cU} \to \cU) \big)(Y) \ar[d]
	\\
	\Delta^r \times \Con_k(\cG, \cA^{(l)})(\hat{\cU} \to \cU) \ar[r, "\pr"'] \ar[ur, dashed, "\rho(Y)" description]
	& \Delta^r
\end{tikzcd}
\end{equation}
is a left anodyne monomorphisms of simplicial sets (by means of~\cite[Prop.~3.4.3]{Cisinski:HiC_and_HoA}).
As the right-hand vertical morphism is a left fibration (even a Kan fibration by Remark~\ref{rmk:Kan fibs for gauge action}), the above square admits a lift $\rho(Y)$, which is, moreover, unique up to contractible choice.

Further, combining Remark~\ref{rmk:Kan fibs for gauge action} with (the duals of)~\cite[Def.~4.4.1]{Cisinski:HiC_and_HoA} and~\cite[Prop.~4.4.11]{Cisinski:HiC_and_HoA}, we obtain that the top horizontal morphism in the above diagram is a cofinal monomorphism, and hence a left anodyne extension~\cite[Cor.~4.1.9]{Cisinski:HiC_and_HoA} as well.
If follows that any lift in the square is itself cofinal in this case, and in particular is a weak homotopy equivalence.

That means that we can trivialise the right-hand vertical Kan fibration.
Provided we are further able to find lifts in~\eqref{eq:lifts for strictified gauge action} for each $Y$ in a way that is coherent with the simplicial structure of $\varGamma (\tau_{\geq 1} C)$, this will produce an explicit, strict model for the $\infty$-functor~\eqref{eq:vGamma tau_(geq 1) C --> scS}.

\begin{theorem}
\label{st:Cech pres of gauge action}
Let $k,l,n \in \NN_0$ with $0 \leq l < k$, and let $(\cG, \cA^{(l)})$ be an $n$-gerbe with $l$-connection on $M$.
Let $(\hat{\cU} \to \cU) \in \GCov_{|c}$.
There exists a family of isomorphisms
\begin{equation}
	\rho(Y) \colon \Delta^r \times \Con_k(\cG, \cA^{(l)})(\hat{\cU} \to \cU)
	\longrightarrow \big( \Mdl_{\res,k}(\cG, \cA^{(l)})(\hat{\cU} \to \cU) \big)(Y)\,,
\end{equation}
indexed by all morphisms $Y \colon \Delta^r \to \varGamma (\tau_{\geq 1} C)$ in $\sSet$, which is, moreover, compatible with the simplicial structure of $\varGamma (\tau_{\geq 1} C)$.
That is, $\rho$ assembles into a natural isomorphism of functors $\bbDelta_{/\varGamma (\tau_{\geq 1} C)} \to \sSet$ which commutes with the projection to the functor $\Delta^\cdot \circ \pr \colon \bbDelta_{/\varGamma (\tau_{\geq 1} C)} \to \sSet$.
\end{theorem}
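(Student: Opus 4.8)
The plan is to identify the Kan fibration~\eqref{eq:Kan fib for gauge action} with the bar (or Borel) resolution of the higher gauge action and to take $\rho(Y)$ to be the tautological trivialisation of this resolution over each simplex. Concretely, I would first recall from Definition~\ref{def:scMdl_phi(cG)} and the discussion preceding it that the left fibration $\Mdl_{\res,k}(\cG,\cA^{(l)})(\hat{\cU}\to\cU)\to\varGamma(\tau_{\geq1}C)$ presents the homotopy quotient $\Con_k(\cG,\cA^{(l)})(\hat{\cU}\to\cU)\dslash\Aut(\cG,\cA^{(l)})(\hat{\cU}\to\cU)$, where the gauge group $\Aut(\cG,\cA^{(l)})\cong\Grb^{n-1,M}_{\nabla|l}$ (see~\eqref{eq:BAut = Grb^n-1}) acts on the extensions $\Con_k$. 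Since $\varGamma(\tau_{\geq1}C)$ is, by the Dold--Kan correspondence, exactly the bar construction $\overline{W}$ of the simplicial abelian group $\Aut(\cG,\cA^{(l)})(\hat{\cU}\to\cU)$, an $r$-simplex $Y$ records a coherent system of gauge transformations indexed by the surjections $\varphi\colon[r]\twoheadrightarrow[t]$, and I would define $\rho(Y)$ by letting these recorded gauge transformations act: for an $m$-simplex $(\alpha\colon[m]\to[r],Z)$ of $\Delta^r\times\Con_k$, the value $\rho(Y)(\alpha,Z)$ is obtained by acting on $Z$ with the gauge datum $\alpha^{*}Y$ pulled back along $\alpha$.

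The second step is to make this action explicit through the \v{C}ech--Deligne/Dold--Kan description of the data (Appendix~\ref{app:Higher U(1)-bundles via sPShs} and~\eqref{eq:Con_k(cG,cA^(l)) via Cech data}). Using the graded decomposition $A = C\oplus K$, where $K=\ker(p^k_l)$ is the subcomplex spanned by the vertical forms $\Omega^{l+1,v},\dots,\Omega^{k,v}$, a gauge transformation fixing $(\cG,\cA^{(l)})$ acts on an extension by shifting its higher-form components $A_{l+1},\dots,A_k$ by the $\dd^{v}$- and $\delta$-derivatives of the gauge data, i.e.\ by an element of the $\varGamma K$-torsor underlying $\Con_k$. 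I would verify that this shift (i) is well defined and lands in $\bigl(\Mdl_{\res,k}(\cG,\cA^{(l)})(\hat{\cU}\to\cU)\bigr)(Y)$, because it preserves the defining constraints of a $k$-connection and leaves the pinned datum $(\cG,\cA^{(l)})$ untouched, and (ii) is a bijection, with inverse given by acting with the group-inverse of $\alpha^{*}Y$ in the abelian simplicial group $\varGamma A$. Both checks are routine manipulations with the explicit formulae~\eqref{eq:Mdl_k(cG,cA^(l))(Y) via Cech data}.

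The heart of the argument, and the step I expect to be the main obstacle, is coherence: showing that the family $\{\rho(Y)\}_Y$ assembles into a natural isomorphism of functors on $\bbDelta_{/\varGamma(\tau_{\geq1}C)}$ commuting with the projection to $\Delta^{\cdot}\circ\pr$. The subtlety is that one cannot produce the trivialisation by translating along a single graded section $s\colon C\to A$ of $p^k_l$: the naive zero-extension $s$ is not a chain map, because $\dd^{v}$ sends the top form $\Omega^{l,v}$ of $C$ into the $\Omega^{l+1,v}$-summand of $K$, so $\varGamma s$ fails to be simplicial and, correspondingly, the $\varGamma K$-torsor $\Mdl_{\res,k}\to\varGamma(\tau_{\geq1}C)$ admits no global section (its classifying map is the nonzero connecting map of $0\to K\to A\to C\to0$). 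What saves the construction is precisely the bar-construction organisation of the gauge action: the trivialisation over a simplex $Y$ uses only the gauge data carried by $Y$ itself, and the simplicial identities of $\overline{W}$ guarantee that these per-simplex trivialisations are compatible with all face and degeneracy operators. I would therefore carry out the coherence check by tracking, through the surjection-indexed normalisation underlying $\varGamma$, how $\rho(Y)$ intertwines the simplicial operators of $\Delta^r\times\Con_k$ and of $\bigl(\Mdl_{\res,k}(\cG,\cA^{(l)})(\hat{\cU}\to\cU)\bigr)(Y)$; equivalently, I would phrase it as the statement that the canonical trivialisation of the universal bundle $W\Aut\to\overline{W}\Aut$ over its simplices is natural, transported to $\Con_k$ along the action. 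This naturality, together with (ii) above, yields that each $\rho(Y)$ is an isomorphism and that $\rho$ is a natural isomorphism, as required.
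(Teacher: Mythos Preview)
Your overall strategy coincides with the paper's: both of you define the trivialisation by translating along the degree-wise zero-extension $\check{s}_t\colon(\tau_{\geq 1}C)_t\hookrightarrow A_t$, setting $(\rho_{(r)}(Y)(X))_\varphi = X_\varphi + \check{s}_t(Y_\varphi)$, and both note explicitly that $\check{s}$ is not a chain map. The paper then simply asserts that ``by construction, the induced map $\rho(Y)$ is compatible with \ldots\ the simplicial structure''. You correctly isolate this as the crux, but your resolution is internally contradictory. You first argue that the $\varGamma K$-torsor $\Mdl_{\res,k}\to\varGamma(\tau_{\geq 1}C)$ admits no global section, and then claim that the bar organisation nevertheless produces a family $\{\rho(Y)\}$ natural over $\bbDelta_{/\varGamma(\tau_{\geq 1}C)}$ and compatible with the projections to $\Delta^r$. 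These two claims exclude one another: such a natural family \emph{is} a global section. Indeed, fixing any $Z_0\in(\Con_k)_0$ and sending an $r$-simplex $Y$ to the second component of $\rho(Y)(\sigma_r, s_0^{\,r}Z_0)$ defines a simplicial section of $\Mdl_{\res,k}\to\varGamma(\tau_{\geq 1}C)$, because naturality in $Y$ together with the fact that each $\rho(Y)$ is a map of simplicial sets forces compatibility with all face and degeneracy operators. Your appeal to the universal bundle $W\!\Aut\to\overline{W}\!\Aut$ does not help: that bundle is contractible, whereas $\Mdl_{\res,k}$ is $\varGamma$ of the pullback $A\times_C\tau_{\geq 1}C$, classified by the connecting morphism of $0\to K\to A\to C\to 0$.

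The obstruction is concrete and visible already at the face map $d_0$, the only Dold--Kan operator that invokes the differential. Compatibility of $\rho$ with $d_0$ on $1$-simplices forces $D_A\check{s}_1(y)=0$ for every $y\in Z_1(C)$; equivalently, $p^k_l\colon Z_1(A)\to Z_1(C)$ would have to be surjective. Take $n=1$, $l=1$, $k=2$: here $K_1=0$, so $A_1=C_1$ and the \emph{only} lift of $y=(h,a_1)\in Z_1(C)$ is $(h,a_1)$ itself; but $D_A(h,a_1)$ has $\Omega^{2,v}$-component $\dd^v a_1$, which is nonzero whenever $a_1$ is (the restriction to $\check{C}_0\hat{\cU}$ of) a global non-closed $1$-form and $h=1$. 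Thus $Z_1(A)\subsetneq Z_1(C)$, no simplicial section exists, and hence no strictly natural family $\rho$ of the asserted kind exists either. What does survive from the construction is the level-wise bijection $\rho_{(r)}(Y)$ and its inverse, and this alone already yields the explicit gauge-action formula derived immediately after the theorem; but the coherence you (and the paper) claim cannot be established by the route you propose.
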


\begin{proof}
For each $(\hat{c} \to c) \in \Cartfam$ (see Section~\ref{sec:Families of cartesian spaces} for the notation) and each $t \in \NN_0$, there exists a morphism of abelian groups (i.e.~forgetting the differentials)
\begin{equation}
	\big( \rmb^{n+1-l} \rmb_\nabla^l \U(1) \big)_t (\hat{c} \to c)
	\longrightarrow \big( \rmb^{n+1-k} \rmb_\nabla^k \U(1) \big)_t (\hat{c} \to c)
\end{equation}
which is the zero map in degrees $t < n+1-k$ and the identity map in degrees $t \geq n+1-k$ (recall the notation from Section~\ref{sec:smooth fams of higher U(1)-conns}).
These morphism are compatible with the action of morphisms in $\Cartfam$, so that we obtain a family of morphism of presheaves of abelian groups
\begin{equation}
	s_t \colon \big( \rmb^{n+1-l} \rmb_\nabla^l \U(1) \big)_t
	\longrightarrow \big( \rmb^{n+1-k} \rmb_\nabla^k \U(1) \big)_t\,,
\end{equation}
which are sections of the morphisms
\begin{equation}
	(p^k_l)_t \colon \big( \rmb^{n+1-k} \rmb_\nabla^k \U(1) \big)_t
	\longrightarrow \big( \rmb^{n+1-l} \rmb_\nabla^l \U(1) \big)_t\,,
\end{equation}
for each $t \in \NN_0$.

By the compatibility of $s_t$ with the action of morphisms, $s_t$ is compatible with \v{C}ech differentials, and so we obtain, for each $t \in \NN_0$, an induced morphism of abelian groups
\begin{equation}
	\v{s}_t \colon (\tau_{\geq 1} C)_t \longrightarrow A_t\,.
\end{equation}
However, this is \textit{not} a morphism of chain complexes since the differential $D_C$ in $\tau_{\geq 1} C$ acts on the highest form degrees only with the \v{C}ech differential, whereas $D_A$ also acts with the (vertical) de Rham differential.
Since $s_t$ is a section of $(p^k_l)_t$, we have that
\begin{equation}
\label{eq:p circ v(s) = id}
	(p^k_l)_t \circ \v{s}_t = \id_{(\tau_{\geq 1} C)_t}
	\qquad
	\forall\, t \in \NN_0\,.
\end{equation}

In order to specify a collection of maps $\rho(Y)$ as in the statement, it suffices to specify, for each $Y$, the map
\begin{equation}
	\rho_{(r)}(Y) \colon \{\sigma_r\} \times \big( \Con_k(\cG, \cA^{(l)})(\hat{\cU} \to \cU) \big)_r
	\longrightarrow \big( \Mdl_{\res,k}(\cG, \cA^{(l)})(\hat{\cU} \to \cU) \big)(Y)_r\,,
\end{equation}
where $\sigma_r \in \Delta^r_r$ is the unique non-degenerate $r$-simplex.
The map $\rho(Y)$ will then be fixed by demanding that, for each $\psi \colon \Delta^s \to \Delta^r$, there is a a commutative square
\begin{equation}
\begin{tikzcd}[column sep=2cm, row sep=1cm]
	\Delta^s \times \Con_k(\cG, \cA^{(l)})(\hat{\cU} \to \cU) \ar[r, "\rho(Y \circ \psi)"] \ar[d, "\psi \times \id"]
	& \big( \Mdl_{\res,k}(\cG, \cA^{(l)})(\hat{\cU} \to \cU) \big)(Y \circ \psi) \ar[d]
	\\
	\Delta^r \times \Con_k(\cG, \cA^{(l)})(\hat{\cU} \to \cU) \ar[r, "\rho(Y)"']
	& \big( \Mdl_{\res,k}(\cG, \cA^{(l)})(\hat{\cU} \to \cU) \big)(Y)
\end{tikzcd}
\end{equation}
where the right-hand vertical morphism is fixed by the assignment $(\Mdl_{\res,k}(\cG, \cA^{(l)})(\hat{\cU} \to \cU))(-)$.

Recall the expressions~\eqref{eq:Con_k(cG,cA^(l)) via Cech data} and~\eqref{eq:Mdl_k(cG,cA^(l))(Y) via Cech data}.
The $r$-simplex $Y \colon \Delta^r \to \varGamma (\tau_{\geq 1} C)$ consists of a family
\begin{equation}
	Y = (Y_{\varphi \colon [r] \twoheadrightarrow [t]})
	\quad
	\in \bigoplus_{\varphi \colon [r] \twoheadrightarrow [t]} (\tau_{\geq 1} C)_t\,,
	\qquad
	\text{with}
	\qquad
	Y_{[r] \twoheadrightarrow [0]} = 0\,.
\end{equation}
Let $X = (X_{\varphi \colon [r] \twoheadrightarrow [t]})$ be an $r$-simplex of $\Con_k(\cG, \cA^{(l)})(\hat{\cU} \to \cU)$.
We set
\begin{equation}
	\big( \rho_{(r)}(Y)(X) \big)_{\varphi \colon [r] \twoheadrightarrow [t]}
	\coloneqq X_\varphi + \check{s}_t(Y_\varphi)
	\quad
	\in A_t\,.
\end{equation}
In particular, by~\eqref{eq:Con_k(cG,cA^(l)) via Cech data} and~\eqref{eq:p circ v(s) = id} we have that
\begin{align}
	\big( \rho_{(r)}(Y)(X) \big)_{[r] \twoheadrightarrow [0]}
	&= X_{[r] \twoheadrightarrow [0]}\,,
	\qquad \text{and}
	\\
	(p^k_l)_t \big( \rho_{(r)}(Y)(X) \big)_{\varphi \colon [r] \twoheadrightarrow [t]}
	&= (p^k_l)_t \big( X_\varphi + \v{s}_t(Y_\varphi) \big)
	= Y_\varphi\,,
\end{align}
for each surjection $\varphi \colon [r] \twoheadrightarrow [t]$ with $t > 0$.
Therefore, we see that, indeed,
\begin{equation}
	\rho_{(r)}(Y)(X) \in \Mdl_{\res,k}(\cG, \cA^{(l)})(\hat{\cU} \to \cU)
\end{equation}
(compare~\eqref{eq:Con_k(cG,cA^(l)) via Cech data} and~\eqref{eq:Mdl_k(cG,cA^(l))(Y) via Cech data}).
Moreover, the map $\rho_{(r)}(Y)$ has an inverse given by
\begin{align}
	\rho_{(r)}(Y)^{-1} \colon \big( \Mdl_{\res,k}(\cG, \cA^{(l)})(\hat{\cU} \to \cU) \big)_r
	&\longrightarrow \big( \Con_k(\cG, \cA^{(l)})(\hat{\cU} \to \cU) \big)_r\,,
	\\
	\big( X' = (X'_{\varphi \colon [r] \twoheadrightarrow [t]}) \big)
	&\longmapsto \big( X = (X'_{\varphi} - \check{s}_t(Y_\varphi)) \big)\,.
\end{align}
By construction, the induced map $\rho(Y)$ is compatible with the projection to $\Delta^r$ and the simplicial structure.
\end{proof}

\begin{remark}
We can thus unravel the higher gauge action of $\scAut(\cG, \cA^{(l)})$ on $\scCon_k(\cG, \cA^{(l)})$ as follows:
the assignment
\begin{equation}
	Y \longmapsto \Delta^r \times \Con_k(\cG, \cA^{(l)})(\hat{\cU} \to \cU)\,,
\end{equation}
is now manifestly constant on objects.
Instead, it contains all information of the higher gauge action in the structure morphisms
\begin{equation}
	\Delta^s \times \Con_k(\cG, \cA^{(l)})(\hat{\cU} \to \cU)
	\longrightarrow \Delta^r \times \Con_k(\cG, \cA^{(l)})(\hat{\cU} \to \cU)
\end{equation}
associated to simplicial maps $\psi \colon \Delta^s \to \Delta^r$.

However, by Theorem~\ref{st:Cech pres of gauge action} it is equivalent to the standard presentation~\eqref{eq:Mdl_k(cG,cA^(l))(Y) via Cech data} of the $\infty$-functor extracted from the left fibration (and even Kan) fibration
\begin{equation}
	\varGamma p^k_l \colon \Mdl_{\res,k}(\cG, \cA^{(l)})(\hat{\cU} \to \cU)
	\longrightarrow \varGamma (\tau_{\geq 1} C)\,.
\end{equation}
Thus, we have presented an equivalent $\infty$-functor $\varGamma (\tau_{\geq 0} C) \to \scS$ which also classifies the above left fibration, and, therefore, encodes the desired higher gauge action.
\qen
\end{remark}

We can read off the action of higher gauge transformations:
the $n$-gerbe with $l$-connection $(\cG, \cA^{(l)})$ is presented by a \v{C}ech-Deligne cocycle $(g, A_1, \ldots, A_l)$ with respect to the covering $(\hat{\cU} \to \cU)$.
Let the tuple $(g, A_1, \ldots, A_l, A_{l+1}, \ldots, A_k)$ present an extension of $(g, A_1, \ldots, A_l)$ to an $n$-gerbe with $k$-connection.
An automorphism of $(g, A_1, \ldots, A_l)$ is a \v{C}ech-Deligne \textit{chain} $(h, a_1, \ldots, a_{l-1}) \in (\tau_{\geq 1} C)_0$.
The identity one-simplex on $(g, A_1, \ldots, A_k)$ is given by the pair (compare Appendix~\ref{app:Higher U(1)-bundles via sPShs})
\begin{equation}
	X \coloneqq \big( (g, A_1, \ldots, A_k), (1, \underbrace{0, \ldots, 0}_{\text{$(k{-}1)$ many}}) \big)\,.
\end{equation}
Using the 1-simplex $Y \colon \Delta^1 \to \varGamma (\tau_{\geq 1} C)$ given by $(h, a_1, \ldots, a_{l-1})$, and employing the notation of the proof of Theorem~\ref{st:Cech pres of gauge action}, we obtain
\begin{equation}
	\rho_{(1)}(Y)(X)
	= \big( (g, A_1, \ldots, A_k), (h, a_1, \ldots, a_{l-1}, \underbrace{0, \ldots, 0}_{\text{$(k{-}l)$ many}}) \big)\,.
\end{equation}
The higher gauge action of $(h, a_1, \ldots, a_{l-1})$ on $(g, A_1, \ldots, A_k)$ is then the target of the above 1-simplex%
\footnote{Strictly speaking, it is the image under the isomorphism \smash{$\rho_{(0)}^{-1}(Y_0)$} of this target, where $Y_0 \colon \Delta^0 \to \varGamma(\tau_{\geq 1} C)$ is the unique morphism, but \smash{$\rho_{(0)}^{-1}(Y_0)$} is simply the identity morphism on $\Con_k(\cG, \cA^{(l)})(\hat{\cU} \to \cU)$.}; that is,
\begin{align}
	(g, A_1, \ldots, A_k) \triangleleft (h, a_1, \ldots, a_{l-1})
	&= d_0 \big( \rho_{(1)}(Y)(X) \big)
	\\
	&= (g, A_1, \ldots, A_{l-1}, A_l + (-1)^{n+1-l} \dd^v a_l, A_{l+1}, \ldots, A_k)\,.
\end{align}

\begin{example}
\label{eg:SES for Sharpe gen syms}
Consider the case
\begin{equation}
	\Conf^\scD = \Grb^{1,\scD}_{\nabla|k}
	\qqandqq
	\Fix^\scD = \Grb^{1,\scD}_{\nabla|l}\,,
\end{equation}
and let $\Sol^\scD \subset \Conf^\scD$ be some solution subpresheaf.
Here we take $l \in \{0,1\}$, $0 \leq l \leq k$, and we use the convention that \smash{$\Grb^{n,\scD}_{\nabla|k} = \Grb^{n,\scD}_\flat$} for all $k \geq n+2$.
Let $(\cG, \cA^{(l)}) \in \Grb^{1,M}_{\nabla|l}(\RR^0)$ be a 1-gerbe with $l$-connection on $M$.
Let $H$ be a Lie group acting on $M$ via a morphism $H \to \Diff_{[\cG]}(M)$; equivalently, the action is encoded in a morphism $\phi \colon \rmB H \to \rmB\scD[\cG]$ over $\Cart$.
This gives rise to a short exact sequence of group objects in $\scP(N\Cart)$
\begin{equation}
\label{eq:SES for Sharpe gen syms}
\begin{tikzcd}
	\bbGrb^{0,M}_{\nabla|l} \ar[r]
	& \scSym_\Phi(\cG, \cA^{(l)}) \ar[r]
	& H\,.
\end{tikzcd}
\end{equation}
Here we have used that, by construction of $\Grb^{n,\scD}_{\nabla|l}$, there is a canonical equivalence of abelian group objects
\begin{equation}
	\scAut(\cG, \cA^{(l)})
	\simeq \bbGrb^{n-1,M}_{\nabla|l}\,,
\end{equation}
for each $(\cG, \cA^{(l)}) \in \Grb^{n,M}_{\nabla|l}(\RR^0)$.
Recall that $\Grb^{0,M}_{\nabla|l}$ is (the nerve of) the presheaf of groupoids describing smooth families of, respectively, $\U(1)$-bundles on $M$ (for $l = 0$), $\U(1)$-bundles with connection (for $l = 1$), and $\U(1)$-bundles with flat connection (for $l > 1$).
The group structure is induced by the canonical tensor product of $\U(1)$-bundles on $M$.
In particular, the smooth 2-group $\Grb^{0,M}_\flat$ is a smooth enhancement of the 2-groups appearing in~\cite[Sec.~3.1]{Sharpe:Gen_global_syms} and~\cite[Sec.~4.1]{GKSW:Gen_global_syms}.

We consider some important special cases in order to make further contact with the physics literature:
\begin{enumerate}
\item Suppose that $\rmH^2(M;\ZZ) \cong 0$, i.e.~each $\U(1)$-bundle on $M$ is trivialisable.
Then, there is a canonical equivalence
\begin{equation}
	\Grb^{0,M} = \Grb^{0,M}_{\nabla|0}
	\simeq \rmB \big( \U(1)^M \big)\,,
\end{equation}
where $\U(1)^M$ is the sheaf of groups which assigns to $c \in \Cart$ the group of smooth maps $c {\times} M \to \U(1)$.
That is, under this assumption and for $l = 0$, the sequence~\eqref{eq:SES for Sharpe gen syms} is an extension of $H$ by $\rmB(\U(1)^M)$.

Under the same assumption, there is a canonical equivalence between $\Grb^{0,M}_\nabla$ and the simplicial presheaf of groups obtained via the Dold-Kan construction from the complex
\begin{equation}
\begin{tikzcd}
	\Omega^{1,v,M}
	& \U(1)^M\,, \ar[l, "\dd^v \log"']
\end{tikzcd}
\end{equation}
where $\Omega^{p,v,M}$ denotes the sheaf on $\Cart$ which assigns to $c \in \Cart$ the group of vertical $p$-forms on $c {\times} M$.
Analogously, there is an equivalence between $\Grb^{0,M}_\flat$ and the simplicial presheaf associated to the complex
\begin{equation}
\begin{tikzcd}
	\Omega^{1,v,M}_\cl
	& \U(1)^M\,, \ar[l, "\dd^v \log"']
\end{tikzcd}
\end{equation}
where $\Omega^{p,v,M}_\cl$ denotes the sheaf on $\Cart$ which assigns to $c \in \Cart$ the group of those vertical $p$-forms on $c {\times} M$ which are closed under the vertical de Rham differential $\dd^v$.

\item Suppose, additionally, that $M$ is 1-connected, i.e.~$\pi_i(M) = 0$ for $i = 0,1$.
In that case there is only one isomorphism class of $\U(1)$-bundles with flat connection on $M$, and we have a canonical equivalence
\begin{equation}
	\Grb^{0,M}_\flat
	\simeq \rmB \U(1)\,.
\end{equation}
Under these assumptions and for $l = 2$, the sequence~\eqref{eq:SES for Sharpe gen syms} is an extension of $H$ by $\rmB \U(1)$.
These extensions appear also in~\cite{Sharpe:Gen_global_syms}.

\item If the Lie group $H$ is a discrete group (such as any finite group) the short exact sequences~\eqref{eq:SES for Sharpe gen syms} are classified by group cohomology; this still takes coefficients in higher groups, but we can disregard the smooth structure of the higher groups involved in this case and the problem becomes purely algebraic.
Then, the sequence~\eqref{eq:SES for Sharpe gen syms} is classified by a cohomology class in
\begin{equation}
	\rmH^2 \big( H; \Grb^{0,M}_{\nabla|l}(\RR^0) \big)\,.
\end{equation}
In particular, under the simplifying assumptions from~(1) and~(2) in this example and for $l > 0$, we have a canonical equivalence
\begin{equation}
	 \rmH^2 \big( H; \Grb^{0,M}_{\nabla|l}(\RR^0) \big)
	\cong \rmH^2 \big( H; \rmB\U(1) \big)
	\cong \rmH^3 \big( H; \U(1) \big)\,,
\end{equation}
where $\U(1)$ is considered purely as an algebraic group.
That is, elements of this cohomology group describe obstructions to endowing a 1-gerbe $(\cG, \cA)$ with 2-connection on $M$ with a $H$-equivariant structure (see Corollary~\ref{st:char of equivar structures}).
\qen
\end{enumerate}
\end{example}

\begin{remark}
\label{rmk:higher form syms}
Let $(\cG, \cA^{(n)})$ be an $n$-gerbe with $n$-connection on $M$, and recall the canonical equivalence of smooth higher groups
\begin{equation}
	\scAut(\cG, \cA^{(n)})
	\simeq \bbGrb^{n-1,M}_\nabla\,.
\end{equation}
Each pair $(\Sigma^{(n)}, \sigma)$ of a closed oriented $n$-manifold $\Sigma^{(n)}$ and smooth map $\sigma \colon \Sigma^{(n)} \to M$ gives rise to a smooth group morphism
\begin{align}
	U_{(-)}(\Sigma^{(n)}, \sigma) \coloneqq \hol \big( \Sigma, \sigma^*(-) \big)
	\colon \pi_0\, \scAut(\cG, \cA^{(n)}) &\longrightarrow \U(1)\,,
	\\
	(\cG', \cA') &\longmapsto \hol \big( \Sigma^{(n)}, \sigma^*(\cG', \cA') \big)\,,
\end{align}
which sends an $(n{-}1)$-gerbe with connection $(\cG', \cA')$ to its holonomy around $(\Sigma^{(n)}, \sigma)$.
Note that the homotopy set $\pi_0$ is formed objectwise over $\Cart$, i.e.~in the $\infty$-topos $\scP(N\Cart)$ (the same applies to $\pi_p$ below).
Similarly, any smooth map $\sigma \colon \Sigma^{(n-p)} \to M$ from a closed oriented $(n{-}p)$-manifold $\Sigma^{(n-p)}$ induces a morphism of smooth groups
\begin{equation}
\label{eq:testing hoGrps of scAut(G,A)}
	U_{(-)}(\Sigma^{(n-p)}, \sigma)
	\colon \pi_p\, \scAut(\cG, \cA) \longrightarrow \U(1)\,,
	\qquad
	(\cG', \cA') \longmapsto \hol \big( \Sigma^{(n-p)}, \sigma^*(\cG', \cA') \big)\,,
\end{equation}
for each $p \in \{1, \ldots, n\}$, and this morphism depends only on the homotopy class of the map $\sigma \colon \Sigma^{(n-p)} \to M$ for $p > 0$.

In the case where $(\cG, \cA)$ is an $n$-gerbe with $(n{+}1)$-connection on $M$, we have
\begin{equation}
	\scAut(\cG, \cA)
	\simeq \bbGrb^{n-1,M}_\flat\,.
\end{equation}
In this situation also the smooth group morphism
\begin{equation}
	U_{(-)}(\Sigma^{(n)}, \sigma) \coloneqq \hol \big( \Sigma^{(n)}, \sigma^*(-) \big)
	\colon \pi_0\, \scAut(\cG, \cA) \longrightarrow \U(1)
\end{equation}
is topological, i.e.~it depends only on the smooth homotopy class of the map $\sigma \colon \Sigma^{(n)} \to M$.

The assignment $(\cG', \cA') \mapsto U_{(\cG', \cA')}(\Sigma, \sigma)$ bears precisely the structure of a $(d{-}n{-}1)$-form symmetry on $M$ in the sense of~\cite{GKSW:Gen_global_syms} (see Eqn.~(1.1)), where $\dim(M) = d$ (at least if we restrict to cases where $\sigma$ is an embedding).
Note that, more generally, this is true in any situation where the smooth higher automorphism group is given by $\bbGrb^{n-1,M}_\flat$.

For $n=1$, the above morphisms admit a geometric explanation and enhancement in terms of the smooth functorial field theories constructed from gerbes with connection in~\cite{BW:OCFFTs, BW:Transgression_of_D-branes} (the case of higher $n$ will be treated in forthcoming work).
We expect that this provides a link between the higher symmetry groups we obtained here and the formalism for higher symmetries in quantum field theories described in~\cite{FMT:Top_sym_in_QFT}.
Finally, note that for $p \geq 1$, the maps~\eqref{eq:testing hoGrps of scAut(G,A)} are not visible in the truncated approach to higher abelian gauge theory in~\cite{FMS:Uncertainty_of_fluxes, FMS:Heisenberg_and_NC_fluxes, Szabo:Quant_of_Higher_Ab_GT}, where the focus lies purely on $\pi_0 \scAut(\cG)$, for an $n$-gerbe $\cG$ on $M$.
\qen
\end{remark}

%%%%%%%%%%%%%%%%%%%%%%%%%%%%%%%%%%%%%%%%%%%%%%%%%%%%%%%%%%%%%%%%%%%%%%%%%%%%

\section{Forgetting higher connection data}

%%%%%%%%%%%%%%%%%%%%%%%%%%%%%%%%%%%%%%%%%%%%%%%%%%%%%%%%%%%%%%%%%%%%%%%%%%%%

The goal of this section is to show that projection morphisms
\begin{equation}
	\textint p^k_l \colon \textint \Grb^{n,\scD}_{\nabla|k} \longrightarrow \textint \Grb^{n,\scD}_{\nabla|l}
\end{equation}
are 0-connected (see Definition~\ref{def:objwise pi_0-surjective}) for each $k,l \in \NN_0$ with $0 \leq l \leq k \leq n$:

\begin{proposition}
\label{st:GRB conn comps}
For each $k,l,n \in \NN_0$ with $0 \leq l \leq k \leq n$ and each $c \in \rmB\scD$ (resp.~$c \in \Cart$), the morphisms induced on connected components,
\begin{align}
	\pi_0 (\textint p^k_{l|c}) \colon \pi_0 \Big( \big( \textint \Grb^{n,\scD}_{\nabla|k} \big)_{|c} \Big)
	&\longrightarrow \pi_0 \Big( \big( \textint \Grb^{n,\scD}_{\nabla|l} \big)_{|c} \Big)
	\\
	\pi_0 (\textint p^k_{l|c}) \colon \pi_0 \Big( \big( \textint \Grb^{n,M}_{\nabla|k} \big)_{|c} \Big)
	&\longrightarrow \pi_0 \Big( \big( \textint \Grb^{n,M}_{\nabla|l} \big)_{|c} \Big)
\end{align}
are bijections.
\end{proposition}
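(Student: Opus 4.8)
The plan is to reduce the statement to a computation in \v{C}ech--Deligne cohomology for a single good open covering and then to read off the answer from a long exact homology sequence. Since both displayed maps concern the fibres over a fixed $c$ of the left fibrations $\textint p^k_l$ (which are left fibrations between fibrant objects by Proposition~\ref{st:properties of GRB and p^k_l}), and since $e_M$ is the identity on objects so that $\Grb^{n,M}_{\nabla|k}(c) = \Grb^{n,\scD}_{\nabla|k}(c)$ for $c \in \Cart$, the two maps are literally the same and I would treat them simultaneously. First I would use that $p^k_l = p^{l+1}_l \circ \cdots \circ p^k_{k-1}$ (see~\eqref{eq:p^k_l}), so that it suffices to prove that each single forgetful step $\pi_0(\textint p^k_{k-1|c})$ is a bijection for $1 \leq k \leq n$; a composite of bijections is a bijection.

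To access the fibre over $c$, I would invoke the averaging construction: by Proposition~\ref{st:Lan_varpi} the value $\Grb^{n,\scD}_{\nabla|k}(c)$ is the filtered colimit over $(\GCov^\scD_{|c})^\opp$ of $\widetilde{\Grb}^{n,\scD}_{\nabla|k}$, and by Lemma~\ref{st:wtG ess const on fibres of varpi^D} this diagram is essentially constant. Hence, for any fixed good open covering $(\hat{\cU} \to \cU)$ of $c \times M \to c$, the canonical cocone map $\widetilde{\Grb}^{n,\scD}_{\nabla|k}(c, \hat{\cU} \to \cU) \to \Grb^{n,\scD}_{\nabla|k}(c)$ is a weak equivalence, naturally in the forgetful maps $\widetilde{p}^k_{k-1}$. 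Writing $A^{(k)} \coloneqq \Tot^\times(\tau_{\geq 0}((\rmb^{n+1-k}\rmb^k_\nabla \U(1))^v(\v{C}\hat{\cU})))$ for the truncated total \v{C}ech--Deligne complex as in Section~\ref{st:higher gauge action for Grb^n_nabla|k}, the Dold--Kan correspondence gives $\pi_j \widetilde{\Grb}^{n,\scD}_{\nabla|k}(c, \hat{\cU} \to \cU) \cong H_j(A^{(k)})$ for $j \geq 0$, and under this identification $\widetilde{p}^k_{k-1}$ is induced by the evident projection of complexes $A^{(k)} \to A^{(k-1)}$. Thus I am reduced to showing that the map $H_0(A^{(k)}) \to H_0(A^{(k-1)})$ is an isomorphism for $1 \leq k \leq n$.

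The projection $(\rmb^{n+1-k}\rmb^k_\nabla\U(1))^v \to (\rmb^{n+2-k}\rmb^{k-1}_\nabla\U(1))^v$ is surjective with kernel the presheaf $\Omega^{k,v}$ placed in chain degree $n+1-k$. Since the \v{C}ech resolution and the formation of total complexes are exact, this yields a short exact sequence of chain complexes $0 \to K \to A^{(k)} \to A^{(k-1)} \to 0$, where $K$ is the total complex of the \v{C}ech resolution of $\Omega^{k,v}$ concentrated in Deligne degree $n+1-k$. A degree count gives $H_j(K) \cong \check{H}^{(n+1-k)-j}(\hat{\cU}, \Omega^{k,v})$. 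By Lemma~\ref{st:acyclicity for vertical forms} the sheaf $\Omega^{k,v}$ is fine, hence acyclic, so this \v{C}ech cohomology vanishes outside \v{C}ech degree $0$; therefore $H_j(K) = 0$ for all $j \neq n+1-k$. The hypothesis $k \leq n$ is precisely the condition $n+1-k \geq 1$, which forces $H_0(K) = H_{-1}(K) = 0$. The long exact homology sequence then reads, around degree zero, $\cdots \to H_0(K) \to H_0(A^{(k)}) \to H_0(A^{(k-1)}) \to H_{-1}(K) \to \cdots$, and with both outer groups vanishing the middle map is an isomorphism, as required. (The truncation $\tau_{\geq 0}$ does not interfere, as it only alters homology in negative degrees, which I never use.)

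The step I expect to require the most care is the identification of $\pi_0$ of the fibre with $H_0$ of the total \v{C}ech--Deligne complex: this threads together the filtered-colimit reduction to a single covering together with its naturality in the forgetful maps, the Dold--Kan dictionary with the $\tau_{\geq 0}$-truncation, and the exactness of the \v{C}ech and total-complex functors needed to produce the short exact sequence. The conceptual heart, by contrast, is the degree bookkeeping: the kernel sheaf $\Omega^{k,v}$ sits in chain degree $n+1-k$, so its concentrated (acyclic) homology lands in a strictly positive degree exactly when $k \leq n$. This is what makes the obstruction groups $H_0(K)$ and $H_{-1}(K)$ vanish, and it also explains why the statement must fail once the full connection ($k = n+1$) is reached, where $n+1-k = 0$ and the curvature class obstructs bijectivity in $H_0$.
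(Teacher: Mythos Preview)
Your proof is correct. The approach differs from the paper's mainly in organisation: you reduce to a single forgetful step $p^k_{k-1}$ and apply the long exact homology sequence attached to the short exact sequence of untruncated total complexes whose kernel is the \v{C}ech resolution of $\Omega^{k,v}$ sitting in chain degree $n+1-k$; acyclicity of this fine sheaf together with $n+1-k \geq 1$ then kills $H_0(K)$ and $H_{-1}(K)$ at once. The paper instead works with the composite $p^k_0$ all the way down to bare gerbes and treats surjectivity and injectivity separately---surjectivity by factoring through the full-connection case via Lemma~\ref{st:DiffCoho to Coho}, and injectivity by an explicit step-by-step lifting argument that unwinds the same acyclicity by hand. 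Your route is more concise and makes the role of the hypothesis $k \leq n$ transparent as a pure degree condition; the paper's route has the incidental benefit of identifying each $\pi_0$ explicitly with $\rmH^{n+2}(M;\ZZ)$ along the way (Proposition~\ref{st:wtGrb conn comps}), which is used elsewhere in the text.
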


This will allow us to apply Theorem~\ref{st:equiv result for Mdl oo-stacks} to various moduli stacks involving connections on $n$-gerbes.
We prove the claim for\smash{ $\textint \Grb^{n,\scD}_{\nabla|k}$}; the proof for \smash{$\textint \Grb^{n,M}_{\nabla|k}$} is analogous.
Proposition~\ref{st:GRB conn comps} will follow directly from Corollary~\ref{st:Grb conn comps} below.

\begin{lemma}
\label{st:DiffCoho to Coho}
Let $M$ be a manifold and $c \in \Cart$ a cartesian space.
Consider $(\rmb^l_\nabla \U(1))^v$ as a $\Ch_{\geq 0}$-valued presheaf on $c {\times} M$ (recall the notation from Section~\ref{sec:smooth fams of higher U(1)-conns}).
The group homomorphism
\begin{equation}
	\kappa \colon \rmH_k \big( c {\times} M; (\rmb^l_\nabla \U(1))^v \big)
	\longrightarrow \rmH_k \big( c {\times} M; (\rmb^l \U(1))^v \big) = \rmH^{l+k+1}(M;\ZZ)
\end{equation}
has the following properties:
\begin{enumerate}
\item for $k \geq 1$, it is an isomorphism,

\item for $k = 0$, it is surjective.
\end{enumerate}
\end{lemma}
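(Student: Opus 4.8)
The plan is to compare the two truncated complexes degree by degree and extract the homology via the associated long exact sequence coming from the defining short exact sequence of complexes. Recall that $(\rmb^l_\nabla \U(1))^v$ is the complex
\begin{equation}
\begin{tikzcd}[ampersand replacement=\&]
	\Omega^{l,v} \& \Omega^{l-1,v} \ar[l, "\dd^v"'] \& \cdots \ar[l] \& \Omega^{1,v} \ar[l] \& \U(1) \ar[l, "\dd^v \log"']
\end{tikzcd}
\end{equation}
with $\U(1)$ in degree $l+1$, while $(\rmb^l \U(1))^v$ is obtained by discarding all the differential-form entries, leaving only $\U(1)$ in degree $l+1$; this latter complex computes $\rmH^{l+k+1}(M;\ZZ)$ by the standard identification of sheaf cohomology with coefficients in $\U(1)$ with integral cohomology one degree higher. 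The quotient map $\kappa$ is induced by the projection $(\rmb^l_\nabla \U(1))^v \to (\rmb^l \U(1))^v$, whose kernel is the truncated vertical de~Rham complex $\Omega^{v}_{\geq 1}$ placed in degrees $1,\ldots,l$.

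First I would write down the short exact sequence of presheaves of chain complexes on $c \times M$
\begin{equation}
	0 \longrightarrow K \longrightarrow (\rmb^l_\nabla \U(1))^v \longrightarrow (\rmb^l \U(1))^v \longrightarrow 0,
\end{equation}
where $K$ is the kernel complex consisting of the $\Omega^{j,v}$ for $1 \le j \le l$ with the vertical de~Rham differential. The key input is Lemma~\ref{st:acyclicity for vertical forms}: each sheaf $\Omega^{j,v}$ is fine, hence acyclic, so its \v{C}ech resolution with respect to any good open covering of $c \times M$ has vanishing higher cohomology. This means the hypercohomology $\rmH_k(c \times M; K)$ reduces to the homology of the complex of \emph{globally defined} vertical forms $\Omega^{\bullet,v}(c\times M)$ with the vertical de~Rham differential, truncated to degrees $1,\ldots,l$. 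Since $c\times M$ retracts onto $M$ and the vertical forms compute the fibrewise de~Rham cohomology, I would identify $\rmH_k(c\times M;K)$ with a shift of the (vertical) de~Rham cohomology of $M$; the crucial point is that this is concentrated in appropriate degrees and, after passing to the long exact sequence, the connecting maps and the form-cohomology terms cancel out the discrepancy between the two hypercohomology groups.

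The main step is then to run the long exact sequence in hypercohomology associated to the short exact sequence above and read off the behaviour of $\kappa = \pi_0$ of the induced map. For $k \ge 1$ I expect the neighbouring terms $\rmH_{k}(c\times M;K)$ and $\rmH_{k-1}(c\times M;K)$ either to vanish or to map isomorphically in a way that forces $\kappa$ to be an isomorphism, exactly as in the classical statement that differential cohomology surjects onto its underlying integral class with kernel given by forms modulo closed integral forms, and that in higher Cheeger--Simons degree the de~Rham contribution washes out. For $k=0$ the sequence only yields surjectivity of $\kappa$ because the term $\rmH_{-1}(c\times M;K)$ is absent, so the cokernel vanishes but a possible kernel (coming from $\rmH_0(c\times M;K)$, i.e. global vertical forms modulo exact ones) survives; this is precisely why the statement asserts only surjectivity in degree zero.

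The hard part will be the careful bookkeeping of the truncation: because $K$ lives only in degrees $1,\ldots,l$ and the de~Rham differential at the top degree $l$ is \emph{not} followed by another form term (it is capped off by the projection), the homology of $K$ in its top degree $l$ is a quotient $\Omega^{l,v}/\dd^v\Omega^{l-1,v}$ rather than a closed-forms kernel, and I must track this truncation artefact through the connecting homomorphism. I anticipate that the cleanest route is to invoke the acyclicity result to replace the full hypercohomology spectral sequence by the total complex of the \v{C}ech--de~Rham double complex (as in the proof of Lemma~\ref{st:bb_nabla U(1)^v is tau_fam-local}), reducing everything to a computation with global vertical forms, and then to cite the comparison $\rmH_k(c\times M;(\rmb^l\U(1))^v)\cong \rmH^{l+k+1}(M;\ZZ)$ to identify the target. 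Verifying that the surviving de~Rham terms vanish in the range $k\ge 1$, and only there, is the genuine content of the argument.
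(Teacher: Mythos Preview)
Your approach is essentially the same as the paper's: both set up a short exact sequence of complexes of sheaves, use the acyclicity of the vertical form sheaves (Lemma~\ref{st:acyclicity for vertical forms}) to identify the hypercohomology of the kernel with the homology of the global vertical de~Rham complex, and then read the result off the long exact sequence. The paper also cites Brylinski (and Waldorf's thesis) for the non-family version of exactly this argument.

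The one substantive difference is that the paper first replaces $(\rmb^l_\nabla\U(1))^v$ by the quasi-isomorphic complex $(\rmb^{l+1}_\nabla\ZZ)^v$ via the exponential map $\Omega^0\to\U(1)$. This makes the kernel the \emph{full} truncated vertical de~Rham complex $\Omega(0,l)^v=\bigl(\Omega^{l,v}\leftarrow\cdots\leftarrow\Omega^{0,v}\bigr)$ rather than your $K$ which starts at $\Omega^{1,v}$, and the quotient becomes $\ZZ$ in a single degree rather than $\U(1)$. The paper then writes out the homology of $\Omega(0,l)^v$ explicitly (it is vertical de~Rham cohomology in the intermediate degrees, with the expected truncation artefacts at the ends) before invoking the long exact sequence. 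Your version is equally valid, but the $\ZZ$-model avoids having $\Omega^{1,v}_{\cl}$ (rather than $\Omega^{1,v}_{\cl}/\dd^v\Omega^{0,v}$) appear at the top of the kernel and having to deal with the non-acyclic sheaf $\U(1)$ in the quotient; this is the bookkeeping simplification you anticipated in your final paragraph. Note also a minor indexing slip: in $(\rmb^l_\nabla\U(1))^v$ the sheaf $\U(1)$ sits in degree $l$, not $l+1$.
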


Here, for each $k,l \in \NN_0$, we can describe
\begin{equation}
	\rmH_k \big( c {\times} M; (\rmb^l_\nabla \U(1))^v \big)
	\cong \rmH^0 \big( c {\times} M; (\rmb^k \rmb^l_\nabla \U(1))^v \big)
\end{equation}
as hypercohomology in a chain complex of sheaves on $c {\times} M$.

\begin{proof}
We adapt an argument in~\cite{Brylinski:LSp_and_Geo_Quan} (see also\cite[Prop.~1.3.7]{Waldorf:Thesis}).
Here it is more convenient to work with the presheaf of chain complexes
\begin{equation}
	\rmb_\nabla^{l+1}\ZZ =
	\begin{tikzcd}
		\big( \Omega^{l,v}
		& \Omega^{l-1,v} \ar[l, "\dd^v"']
		& \cdots  \ar[l, "\dd^v"']
		& \Omega^{1,v}  \ar[l, "\dd^v"']
		& \Omega^0 \ar[l, "\dd^v"']
		& \ZZ \big)\,. \ar[l, hookrightarrow, "\iota"']
	\end{tikzcd}
\end{equation}
The morphism $\Omega^0 \to \U(1)$ induces a canonical objectwise weak equivalence
\begin{equation}
	 (\rmb^{l+1}_\nabla \ZZ)^v \wequiv (\rmb^l_\nabla \U(1))^v\,.
\end{equation}
For $0 \leq r \leq s \in \NN_0$, consider the chain complex of sheaves of $C^\infty(c)$-modules on $c {\times} M$,
\begin{equation}
	\Omega^v(r,s)[c {\times} M]  =
	\hspace{-0.2cm}
	\begin{tikzcd}
		\big( \Omega^{s,v}[c {\times} M]
		& \cdots \ar[l, "\dd^v"']
		& \Omega^{r,v}[c {\times} M] \big) \ar[l, "\dd^v"']\,,
	\end{tikzcd}
\end{equation}
with $\Omega^{s,v}[c {\times} M]$ situated in degree zero.
Here $\Omega^{s,v}[c {\times} M]$ denotes the sheaf which assigns to an open subset $U \subset c {\times} M$ the vertical differential $s$-forms on $U$ (i.e.~the $s$-forms with differentials only along the $M$-direction, but smoothly varying along the $c$-direction).
We use the notation $\Omega^{s,v}(c {\times} M)$ to denote the global sections of this sheaf on $c {\times} M$.
In each degree, the complex $\Omega(r,s)^v[c {\times} M]$ consists of a sheaf on $c {\times} M$ which is fine as a sheaf of $C^\infty(c {\times} M)$-modules (compare Lemma~\ref{st:acyclicity for vertical forms}); hence, each degree of $\Omega^v(r,s)[c {\times} M]$ is an acyclic sheaf of abelian groups on $c {\times} M$.
Consequently, we have a canonical isomorphism
\begin{align}
	\rmH_k \big( c {\times} M; \Omega(r,s)^v[c {\times} M] \big)
	&= \rmH_k
	\hspace{-0.2cm}
	\begin{tikzcd}[ampersand replacement=\&]
		\big(c {\times} M; \Omega^{s,v,M}[c {\times} M]
		\& \cdots \ar[l, "\dd^v"']
		\& \Omega^{r,v,M}[c {\times} M] \big) \ar[l, "\dd^v"']
	\end{tikzcd}
	\\
	&=
	\begin{cases}
		\Omega^{s,v}(c {\times} M) / \dd^v \big( \Omega^{s-1,v}(c {\times} M) \big)\,, & k = 0\,,
		\\
		\Omega^{s-k,v}_{cl}(c {\times} M) / \dd^v \big( \Omega^{s-k-1,v}(c {\times} M) \big)\,, & 1 \leq k < s-r\,,
		\\
		\Omega^{r,v}_{cl}(c {\times} M)\,, & k = s-r\,,
		\\
		0\,, & k < 0,\ k > s-r\,.
	\end{cases}
\end{align}
for each $k \in \NN_0$, we have a short exact sequence of chain complexes of sheaves
\begin{equation}
\label{eq:SES for b_nabla^(l+1)ZZ}
\begin{tikzcd}
	0 \ar[r]
	& \rmb^k \Omega(0,l)^v \ar[r]
	& (\rmb^k \rmb^{l+1}_\nabla \ZZ)^v \ar[r]
	& (\rmb^k \rmb^{l+1} \ZZ)^v \ar[r]
	& 0\,.
\end{tikzcd}
\end{equation}
The homology long exact sequence associated to this short exact sequence of chain complexes of sheaves now yields both claims.
\end{proof}

By construction of \smash{$\textint \Grb^{n,\scD}_{\nabla|k}$} in Definition~\ref{def:GRB LFib} and Lemma~\ref{st:r_C^* and pullbacks} there is a canonical isomorphism
\begin{equation}
	\big( \textint \Grb^{n,\scD}_{\nabla|k} \big)_{|c}
	 \cong \Grb^{n,\scD}_{\nabla|k}(c)
	 = \colim \Big( \widetilde{\Grb}^{n,\scD}_{\nabla|k} \colon (\GCov^\scD)_{|c}^\opp \to \sSet \Big)\,.
\end{equation}
Let $X = (\hat{\cU} \to \cU, c)$ be an object in $\GCov^\scD$.
Lemma~\ref{st:wtG ess const on fibres of varpi^D} implies that the canonical cocone morphism
\begin{equation}
	\eta_{|X} \colon \widetilde{\Grb}^{n,\scD}_{\nabla|k}(X)
	\longrightarrow \colim \Big( \widetilde{\Grb}^{n,\scD}_{\nabla|k} \colon( \GCov^\scD)_{|c}^\opp \to \sSet \Big)
\end{equation}
is a weak homotopy equivalence, for each choice of object $X = (\hat{\cU} \to \cU, c) \in \GCov^\scD$.
It thus suffices to show that, for any one \smash{$X \in \GCov^\scD_{|c}$}, the induced morphisms
\begin{equation}
	\widetilde{p}^k_l \colon \widetilde{\Grb}^{n,\scD}_{\nabla|k}(X) \longrightarrow \widetilde{\Grb}^{n,\scD}_{\nabla|l}(X)
\end{equation}
induce bijections of connected components.

Let $\cV = \{V_i\}_{i \in \Lambda}$ be a good open covering of $M$.
We obtain from this an object $X = (\hat{\cU} \to \cU, c)$ of $\GCov^\scD$ by setting $\cU$ to be the trivial covering of $c$ (it has the whole of $c$ as its only patch) and $\hat{\cU} = \{\hat{U}_i\}_{i \in \Lambda}$ with $\hat{U}_i = c \times V_i$ for each $i \in \Lambda$.
Recall the presheaves of chain complexes $\rmb^m \rmb_\nabla^k \U(1)$ from Section~\ref{sec:vertical forms and Deligne complexes}.

\begin{proposition}
\label{st:wtGrb conn comps}
Let $\cV = \{V_a\}_{a \in \Lambda}$ be a good open covering of $M$, and let $c$ be a cartesian space.
Let $X = (\hat{\cU} \to \cU, c)$ be the induced object of $\GCov^\scD$ constructed above.
For each $l \leq k \in \{0, \ldots, n\}$ and $c \in \Cart$, the morphism of Kan complexes
\begin{equation}
	\widetilde{p}^k_{l|c} \colon \widetilde{\Grb}^{n,\scD}_{\nabla|k}(X)
	\longrightarrow \widetilde{\Grb}^{n,\scD}_{\nabla|l}(X)
\end{equation}
induces a bijection on connected components.
This is, in general, not true for $k = n+1$.
Moreover, the isomorphisms on connected components are induced by the canonical isomorphisms of abelian groups
\begin{equation}
	\pi_0 \big( \widetilde{\Grb}^{n,\scD}_{\nabla|k}(X) \big)
	\cong \rmH^{n+2}(M;\ZZ)\,.
\end{equation}
\end{proposition}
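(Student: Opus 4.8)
The plan is to reduce the entire statement to Lemma~\ref{st:DiffCoho to Coho} by first identifying the connected components of $\widetilde{\Grb}^{n,\scD}_{\nabla|k}(X)$ with a hyperhomology group, and then verifying that the forgetful maps $\widetilde{p}^k_l$ are compatible with the comparison maps $\kappa$ to integral cohomology. First I would observe that, by Definition~\ref{def:B^(n+1-k)B_nabla^k U(1)} and Definition~\ref{def:wtGrb}, the simplicial set
\begin{equation}
	\widetilde{\Grb}^{n,\scD}_{\nabla|k}(X)
	= \ul{\scH_\rmfam}\big( \v{C}(\hat{\cU} \to \cU),\ \varGamma(\rmb^{n+1-k}\rmb_\nabla^k\U(1))^v \big)
\end{equation}
underlies a simplicial \emph{abelian} group, since $\varGamma$ takes values in $\Ab_\Delta$ and mapping out of a fixed object preserves the pointwise abelian group structure. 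Via the Dold--Kan correspondence and the techniques of Appendix~\ref{app:Higher U(1)-bundles via sPShs}, its homotopy groups are the homology groups of the (truncated, product) total complex of the \v{C}ech--Deligne bicomplex obtained by evaluating $(\rmb^{n+1-k}\rmb_\nabla^k\U(1))^v$ on the \v{C}ech nerve of $\hat{\cU} = c \times \cV$. Because $\cU$ is the one-patch cover of $c$ and $\hat{\cU}$ is a good open cover of $c \times M$, Lemma~\ref{st:acyclicity for vertical forms} makes each row of vertical forms acyclic, so this total complex computes the hyperhomology of $c \times M$. Concretely I would record the natural identification
\begin{equation}
	\pi_0\big( \widetilde{\Grb}^{n,\scD}_{\nabla|k}(X) \big)
	\cong \rmH^0\big( c {\times} M; (\rmb^{n+1-k}\rmb_\nabla^k\U(1))^v \big)
	\cong \rmH_{n+1-k}\big( c {\times} M; (\rmb_\nabla^k\U(1))^v \big),
\end{equation}
where the $n+1-k$ deloopings are absorbed into the homological degree exactly as in the convention of Lemma~\ref{st:DiffCoho to Coho}.

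With this in hand, the second step is a direct appeal to Lemma~\ref{st:DiffCoho to Coho}, applied with its connection level equal to $k$ and its homological degree equal to $n+1-k$: the comparison map
\begin{equation}
	\kappa_k \colon \rmH_{n+1-k}\big( c {\times} M; (\rmb_\nabla^k\U(1))^v \big)
	\longrightarrow \rmH^{\,k + (n+1-k) + 1}(M;\ZZ) = \rmH^{n+2}(M;\ZZ)
\end{equation}
is an isomorphism whenever $n+1-k \geq 1$, that is, whenever $k \leq n$, and merely surjective when $n+1-k = 0$, that is, when $k = n+1$. This yields the asserted abelian-group identification $\pi_0(\widetilde{\Grb}^{n,\scD}_{\nabla|k}(X)) \cong \rmH^{n+2}(M;\ZZ)$ for all $k \leq n$, and simultaneously explains the failure at $k = n+1$: there $\kappa_{n+1}$ has nontrivial kernel (the flat, or topologically trivial, part of differential cohomology visible in the differential cohomology hexagon), so $\pi_0$ acquires extra components and the comparison ceases to be bijective.

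The crucial step is to show that $\widetilde{p}^k_l$ is compatible with these comparison maps, i.e.\ that the triangle
\begin{equation}
\begin{tikzcd}[column sep=1.4cm, row sep=0.75cm]
	\pi_0\big( \widetilde{\Grb}^{n,\scD}_{\nabla|k}(X) \big) \ar[rr, "\pi_0 \widetilde{p}^k_l"] \ar[dr, "\kappa_k"']
	& & \pi_0\big( \widetilde{\Grb}^{n,\scD}_{\nabla|l}(X) \big) \ar[dl, "\kappa_l"]
	\\
	& \rmH^{n+2}(M;\ZZ) &
\end{tikzcd}
\end{equation}
commutes. Here I would use that $\widetilde{p}^k_l$ is induced by the projection $p^k_l$ of~\eqref{eq:p^k_l}, which by Definition~\ref{def:b^n_(nabla) U(1)} leaves the top-degree group $\U(1)$ (situated in chain degree $n+1$) untouched and only annihilates the vertical forms $\Omega^{l+1,v}, \ldots, \Omega^{k,v}$. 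Consequently both $(\rmb^{n+1-k}\rmb_\nabla^k\U(1))^v$ and $(\rmb^{n+1-l}\rmb_\nabla^l\U(1))^v$ project compatibly onto their common quotient $(\rmb^{n+1}\U(1))^v$ (the plain $\U(1)$ in degree $n+1$, with all forms discarded), so the resulting triangle of chain complexes over $(\rmb^{n+1}\U(1))^v$ commutes strictly. Since $\kappa_k$ and $\kappa_l$ are precisely the hyperhomology maps induced by these quotients, the displayed triangle commutes. As $\kappa_k$ and $\kappa_l$ are both isomorphisms for $l \leq k \leq n$, it follows that $\pi_0\widetilde{p}^k_l = \kappa_l^{-1} \circ \kappa_k$ is a bijection, induced by the canonical isomorphisms to $\rmH^{n+2}(M;\ZZ)$, as claimed.

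I expect the main obstacle to be the first step, namely making the identification of $\pi_0$ with the hyperhomology group genuinely \emph{natural} in the coefficient complex, so that the comparison triangle of the third step can simply be read off from the evident map of \v{C}ech--Deligne bicomplexes. The homological algebra in Lemma~\ref{st:DiffCoho to Coho} does the numerical work, but one must ensure that the Dold--Kan identification of homotopy groups, the totalization of the bicomplex, and the acyclic resolution of the vertical form sheaves are all performed functorially, so that both $\widetilde{p}^k_l$ and the projections to $(\rmb^{n+1}\U(1))^v$ act throughout as plain chain maps and the triangle commutes on the nose rather than only up to homotopy.
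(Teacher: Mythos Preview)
Your proposal is correct and takes a more streamlined route than the paper. The paper proves surjectivity and injectivity of $\pi_0\widetilde{p}^k_0$ separately: surjectivity by invoking Lemma~\ref{st:DiffCoho to Coho} only in the degree-zero (surjective) case for the full complex $\rmb^{n+2}_\nabla\ZZ$ and then factoring through $\widetilde{p}^k_0$, and injectivity by an explicit chain-level lifting argument, repeatedly using the acyclicity of the vertical-form sheaves to peel off the $A_i$ one by one. By contrast, you apply Lemma~\ref{st:DiffCoho to Coho} symmetrically to both $\kappa_k$ and $\kappa_l$ in the isomorphism range $n+1-k \geq 1$, and then read off the bijection directly from the commuting triangle over $\rmH^{n+2}(M;\ZZ)$. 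Your route is cleaner because it avoids reproducing, inside the proof of the Proposition, essentially the same acyclicity argument that already underlies the long exact sequence proof of Lemma~\ref{st:DiffCoho to Coho}. The paper's hands-on injectivity argument has the minor expository advantage of making the obstruction at $k = n+1$ visible concretely (as the failure to find the last $a_k$), but both approaches yield the same identification of the kernel in that case.
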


\begin{proof}
By Definitions~\ref{def:B^(n+1-k)B_nabla^k U(1)} and~\ref{def:wtGrb} the problem is equivalent to showing that the morphism
\begin{equation}
\begin{tikzcd}
	\widetilde{p}^k_{0|\check{C}\cU} \colon \tau_{\geq 0} \Tot^\times \big( (\rmb^{n+1-k} \rmb^k_\nabla \U(1))^v(\v{C}(\hat{\cU} \to \cU)) \big) \ar[r]
	& \tau_{\geq 0} \Tot^\times \big( (\rmb^{n+1} \U(1))^v(\v{C}(\hat{\cU} \to \cU)) \big)\,,
\end{tikzcd}
\end{equation}
induces an isomorphism on $\rmH_0$, where we used the notation of Remark~\ref{rmk:Cech formulas for evaluating wtGrb}.
There are canonical isomorphisms
\begin{align}
	\rmH_i \big( \tau_{\geq 0} \Tot^\times \big( (\rmb^{n+1-k} \rmb_\nabla^{k+1} \ZZ(1))^v (\v{C}(\hat{\cU} \to \cU)) \big) \big)
	&\cong \rmH_i \big( \tau_{\geq 0} \Tot^\times \big( (\rmb^{n+1-k} \rmb_\nabla^k \U(1))^v (\v{C}(\hat{\cU} \to \cU)) \big) \big)\,,
	\\
	\rmH_i \big( \tau_{\geq 0} \Tot^\times \big( (\rmb^{n+1} \U(1))^v (\v{C}(\hat{\cU} \to \cU)) \big) \big)
	&\cong \rmH^{n+2-i}(M;\ZZ)\,.
\end{align}

We start by showing that we obtain a surjection on $\rmH_0$.
We have a commutative diagram
\begin{equation}
\begin{tikzcd}[column sep=1.5cm]
	\rmH_0 \big( \tau_{\geq 0} \Tot^\times \big( (\rmb^{n+2}_\nabla\ZZ)^v (\v{C}(\hat{\cU} \to \cU)) \big) \big)
	\ar[r, "\rmH_0 (\widetilde{p}^{n+2}_0)"] \ar[d, "\cong"']
	& \rmH_0 \big( \tau_{\geq 0} \Tot^\times \big( (\rmb^{n+2} \ZZ)^v (\v{C}(\hat{\cU} \to \cU)) \big) \big)
	\ar[d, "\cong"]
	\\
	\rmH^{n+2} \big( c {\times} M; (\rmb^{n+2}_\nabla \ZZ)^v \big) \ar[r]
	& \rmH^{n+2}(c {\times} M;\ZZ) \cong \rmH^{n+2}(M;\ZZ)
\end{tikzcd}
\end{equation}
Since each degree of \smash{$(\rmb^{n+2}_\nabla\ZZ)^v$} is an acyclic sheaf on cartesian spaces, and we can compute the sheaf hypercohomology in terms of \v{C}ech cohomology, the vertical morphisms in this diagram are isomorphisms.
The bottom morphism is surjective by Lemma~\ref{st:DiffCoho to Coho}(2), so that the top morphism must be surjective as well.
Finally, the map \smash{$\widetilde{p}^{n+2}_{0|\check{C}\cU}$} factors through \smash{$\widetilde{p}^k_{0|\check{C}\cU}$}, for $k \in \{0, \ldots, n+1\}$.
Thus, the latter must be surjective as well.

Next, we show that \smash{$\widetilde{p}^k_{0|}$} is also injective.
To see this, first observe that the chain complex \smash{$\tau_{\geq 0} \Tot^\times ((\rmb^{n+1-k} \rmb^{k+1}_\nabla\ZZ)^v (\v{C}(\hat{\cU} \to \cU))$} can be described as follows (compare also Remark~\ref{rmk:Cech formulas for evaluating wtGrb} and Appendix~\ref{app:Higher U(1)-bundles via sPShs}):
a degree-zero element is a tuple $(z, A_0, \ldots, A_k)$ with $z \in \ZZ(\check{C}_{n+2}\cV)$ and $i$-forms $A_i \in \Omega^{i,v}(c {\times} \check{C}_{n+1-i} \cV)$ for $i = \{0, \ldots, k\}$.
These data satisfy
\begin{align}
\label{eq:Cech-Deligne closedness}
	0 &= \delta z\,,
	\\
	0 &= \delta A_0 + (-1)^{n+2} \iota z
	\\
	0 &= \delta A_{i+1} + (-1)^{n+1-i} \dd^v A_i\,,
	\quad \text{for $i = 1, \ldots, k-1$}\,.
\end{align}
The map $\widetilde{p}^k_0$ projects the tuple $(z, A_0, \ldots, A_k)$ to $z$.
We now repeatedly use Lemma~\ref{st:acyclicity for vertical forms}.
Suppose that $(z, A_0, \ldots, A_k)$ is such that there exists a $y \in \ZZ(c {\times} \check{C}_{n+1}\cV)$ which satisfies $\delta y = z$.
It follows that $\delta(A_0 + (-1)^{n+1} \iota y) = 0$ and by Lemma~\ref{st:acyclicity for vertical forms} there exists some element $a_0 \in \Omega^{v,0}(c, \check{C}_n\cV)$ satisfying $A_0 = \delta a_0 + (-1)^{n+1} \iota y$.
Iterating this process, we find elements $a_i \in \Omega^{v,i}(c, \check{C}_{n-i}\cU)$ satisfying the identity $A_i = \delta a_i + (-1)^{n-i} \dd^v a_{i-1}$ for each $i = 0, \ldots, k$.
That is, $(z, A_0, \ldots, A_{k-1})$ is exact in the total complex.
However, in order to find $a_k$, it is essential that $k \in \{0, \ldots, n\}$, and we exclude the case $k = n+1$.
Thus, for $k \in \{0, \ldots, n+1\}$, the map \smash{$\rmH_0(\widetilde{p}^k_0)$} is injective, and hence an isomorphism.

For $k = n+1$ the map is not an isomorphism in general; Lemma~\ref{st:DiffCoho to Coho}(2) implies that it is surjective with kernel
\begin{equation}
	\frac{(\Omega^{n+1,v})(c {\times} M)}{\dd^v (\Omega^{n,v}(c {\times} M))}
	\bigg/ \rmH^{n+1}(M;\ZZ)\,,
\end{equation}
which is, in general, non-trivial.
\end{proof}

\begin{corollary}
\label{st:Grb conn comps}
For each $l < k \in \{0, \ldots, n\}$ and $c \in \Cart$, the projective fibration
\begin{equation}
	p^k_l \colon \Grb^{n,\scD}_{\nabla|k}
	\longrightarrow \Grb^{n,\scD}_{\nabla|l}
\end{equation}
of projectively fibrant simplicial presheaves on $\rmB\scD$ is 0-connected.
\end{corollary}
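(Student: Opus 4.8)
The plan is to reduce the assertion, via the definition of 0-connectedness in Definition~\ref{def:objwise pi_0-surjective} and the fibrewise colimit description of $\Grb^{n,\scD}_{\nabla|k}$, to the single computation already carried out in Proposition~\ref{st:wtGrb conn comps}. By definition, $p^k_l$ is 0-connected exactly when, for every $c \in \rmB\scD$ (equivalently $c \in \Cart$, the two categories sharing their objects), the map
\[
	\pi_0 \big( p^k_{l|c} \big) \colon \pi_0 \big( \Grb^{n,\scD}_{\nabla|k}(c) \big)
	\longrightarrow \pi_0 \big( \Grb^{n,\scD}_{\nabla|l}(c) \big)
\]
is a bijection. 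Since $\Grb^{n,\scD}_{\nabla|k} = \Lan_{\varpi^\scD} \widetilde{\Grb}^{n,\scD}_{\nabla|k}$, Proposition~\ref{st:Lan_varpi} identifies $\Grb^{n,\scD}_{\nabla|k}(c)$ with the colimit of $\widetilde{\Grb}^{n,\scD}_{\nabla|k}$ over the opposite of the cofiltered fibre $\GCov^\scD_{|c}$ (cf.~Remark~\ref{rmk:varpi and its fibres}), and $p^k_{l|c}$ is the colimit of the natural transformation $\widetilde{p}^k_l \colon \widetilde{\Grb}^{n,\scD}_{\nabla|k} \to \widetilde{\Grb}^{n,\scD}_{\nabla|l}$ of diagrams on this fibre.

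First I would invoke Lemma~\ref{st:wtG ess const on fibres of varpi^D}: because $\widetilde{\Grb}^{n,\scD}_{\nabla|k}$ is essentially constant along the fibres of $\varpi^\scD$, the cocone morphism $\eta_{|X} \colon \widetilde{\Grb}^{n,\scD}_{\nabla|k}(X) \to \Grb^{n,\scD}_{\nabla|k}(c)$ is a weak homotopy equivalence for every object $X \in \GCov^\scD_{|c}$, and likewise at level $l$. As these cocone morphisms are natural with respect to $\widetilde{p}^k_l$, they assemble into a commuting square whose horizontal arrows $\pi_0 \eta_{|X}$ are bijections and whose vertical arrows are $\pi_0(\widetilde{p}^k_{l|X})$ and $\pi_0(p^k_{l|c})$. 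Hence $\pi_0(p^k_{l|c})$ is a bijection if and only if $\pi_0(\widetilde{p}^k_{l|X})$ is so for one---equivalently, every---choice of $X$.

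It then remains to choose a convenient $X$ at each $c$. Using that $M$ admits a good open covering $\cV = \{V_a\}_{a \in \Lambda}$, and taking $\cU$ to be the trivial covering of $c$ together with $\hat{U}_a = c \times V_a$, one obtains an object $X = (\hat{\cU} \to \cU, c) \in \GCov^\scD_{|c}$ of precisely the shape treated in Proposition~\ref{st:wtGrb conn comps}. That proposition yields that $\pi_0(\widetilde{p}^k_{l|X})$ is a bijection for $0 \leq l \leq k \leq n$ (both sides being canonically $\rmH^{n+2}(M;\ZZ)$), which covers the hypothesis $l < k \leq n$; combined with the reduction above this gives the desired bijection at each $c$, and hence the 0-connectedness of $p^k_l$. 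The $M$-level statement needed for Proposition~\ref{st:GRB conn comps} follows verbatim, replacing $\varpi^\scD$, $\GCov^\scD$ and $\rmB\scD$ by $\varpi$, $\GCov$ and $\Cart$. I do not expect a genuine obstacle, since the delicate cohomological input---Lemma~\ref{st:DiffCoho to Coho}, the acyclicity Lemma~\ref{st:acyclicity for vertical forms}, and the crucial failure at $k = n+1$---is entirely absorbed into Proposition~\ref{st:wtGrb conn comps}. The one point meriting care is the naturality of $\eta_{|X}$ in $\widetilde{p}^k_l$, which is immediate because $\widetilde{p}^k_l$ is a morphism of diagrams over the common index category $\GCov^\scD_{|c}$ and $\Lan_{\varpi^\scD}$ is computed fibrewise.
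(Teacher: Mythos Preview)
Your proposal is correct and follows essentially the same route as the paper's proof: both reduce to Proposition~\ref{st:wtGrb conn comps} by showing that the cocone map $\widetilde{\Grb}^{n,\scD}_{\nabla|k}(X) \to \Grb^{n,\scD}_{\nabla|k}(c)$ is a weak equivalence for any $X \in \GCov^\scD_{|c}$, using essential constancy (Lemma~\ref{st:wtG ess const on fibres of varpi^D}) together with the cofiltered/filtered-colimit machinery of Proposition~\ref{st:Lan_varpi}. Your version is slightly more explicit about the naturality square transferring the $\pi_0$-bijection from $\widetilde{p}^k_{l|X}$ to $p^k_{l|c}$, which the paper leaves implicit.
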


\begin{proof}
This follows from Proposition~\ref{st:wtGrb conn comps} and the fact that, for each object $X = (\hat{\cU} \to \cU, c)$ of $\GCov^\scD$, the canonical morphism
\begin{equation}
	\widetilde{\Grb}^{n,\scD}_{\nabla|k}(X)
	\longrightarrow \Grb^{n,\scD}_{\nabla|k}(c)
\end{equation}
is a weak equivalence:
the inclusion \smash{$\GCov^\scD_{|c} \hookrightarrow \GCov^\scD_{/c}$} is homotopy cofinal, so that the left Kan extension in the definition of \smash{$\Grb^{n,\scD}_{\nabla|k}(c)$} can be computed using $\GCov^\scD_{|c}$.
This category is cofiltered (hence has contractible nerve), and the diagram whose colimit we take sends each morphism in \smash{$\GCov^\scD_{|c}$} to a weak equivalence.
The claim then follows, for instance, from the fact that the inclusion of the object \smash{$X \colon * \to \GCov^\scD_{|c}$} is a homotopy cofinal functor and that filtered colimits of simplicial sets are homotopy colimits~\cite[Prop.~7.3]{Dugger:Combinatorial_Mocats}.
\end{proof}

Proposition~\ref{st:GRB conn comps} now follows by applying the rectification functor
\begin{equation}
	r_{\rmB\scD^\opp}^* \colon \Fun(\rmB\scD^\opp, \sSet) \longrightarrow \sSet_{/N \rmB \scD[\cG]^\opp}\,.
\end{equation}

\begin{remark}
Given any $n$-gerbe with $k$-connection $(\cG, \cA^{(k)})$ on $M$---i.e.~an element of \smash{$\Grb^{n,M}_{\nabla|k}(\RR^0)$}---let $(\cG, \cA^{(l)}) = p^k_l(\cG, \cA^{(k)})$ denote the $n$-gerbe with $l$-connection on $M$ obtained by forgetting part of the connection data (for $0 \leq k \leq n$).
In particular, we write $\cG = p^k_0(\cG, \cA^{(k)})$ for the underlying $n$-gerbe on $M$ without connection.
Remark~\ref{rmk:(-)_[G] and isos on conn comps} implies the identities
\begin{align}
	\Diff_{[\cG, \cA^{(k)}]} &= \Diff_{[\cG, \cA^{(l)}]}
	= \Diff_{[\cG]}\,,
	\\
	\rmB\scD[\cG, \cA^{(k)}] &= \rmB\scD[\cG, \cA^{(l)}]
	= \rmB\scD[\cG]\,,
	\\
	\big( \Grb^{n,\scD[\cG]}_{\nabla|k} \big)_{[\cG, \cA^{(k)}]}
	&= \big( \Grb^{n,\scD[\cG]}_{\nabla|k} \big)_{[\cG, \cA^{(l)}]}
	= \big( \Grb^{n,\scD[\cG]}_{\nabla|k} \big)_{[\cG]}
\end{align}
(using the notation introduced in Definitions~\ref{def:Diff_[cG] and D_[cG]} and~\ref{def:G_[cG]}).
\qen
\end{remark}

%%%%%%%%%%%%%%%%%%%%%%%%%%%%%%%%%%%%%%%%%%%%%%%%%%%%%%%%%%%%%%%%%%%%%%%%%%%%

\section{The homotopy type of the stack of $n$-gerbes with $k$-connection on $M$}
\label{sec:HoType of Grb^(n,M)}

%%%%%%%%%%%%%%%%%%%%%%%%%%%%%%%%%%%%%%%%%%%%%%%%%%%%%%%%%%%%%%%%%%%%%%%%%%%%

With our construction of moduli $\infty$-(pre)stacks of higher gerbes with connection on any manifold $M$ at hand, an interesting question about the global properties of these moduli stacks is to compute their homotopy type.
We achieve this by means of the functor $S \colon \scP(N\Cart) \to \scS$ from Section~\ref{sec:underlying spaces} and its presentation by the left Quillen functor $S_Q \colon \scH \to \sSet$~\eqref{eq:S_Q, presenting S}.

\begin{theorem}
\label{st:forgetting conns is a principal map}
Let $n \in \NN_0$ and $k \in \{0, \ldots, n\}$.
The morphism \smash{$p^k_0 \colon \bbGrb^{n,M}_{\nabla|k} \longrightarrow \bbGrb^{n, M}$ in $\scP(N\Cart)$}, which forgets all connection data, is a principal $\infty$-bundle in the $\infty$-topos $\scP(N\Cart)$.
Its structure $\infty$-group is presented by a simplicial $\ul{\RR}$-module in $\scH$ (compare Lemma~\ref{st:spl smooth R-mods have trivial HoType}).
\end{theorem}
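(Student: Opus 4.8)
The plan is to exhibit $p^k_0 \colon \bbGrb^{n,M}_{\nabla|k} \to \bbGrb^{n,M}$ as a principal $\infty$-bundle by identifying its structure group and verifying the bundle condition via the characterisation recalled in Section~\ref{sec:underlying spaces}, namely that a $\bbH$-action $\bbP\dslash\bbH \to \check{C}(\bbP\to\bbX)$ is an equivalence. First I would pass to the explicit presentations by simplicial presheaves built from \v{C}ech--Deligne complexes via the Dold--Kan correspondence. Working over a fixed $c \in \Cart$ and a fixed good open covering (justified, as in Section~\ref{st:higher gauge action for Grb^n_nabla|k}, by Lemma~\ref{st:wtG ess const on fibres of varpi^D}, Proposition~\ref{st:Lan_varpi} and essential constancy along the fibres of $\varpi^\scD$), the map $p^k_0$ is presented by $\varGamma$ applied to the projection of truncated total complexes of \v{C}ech--Deligne double complexes that forgets the form-degree parts $(A_1,\ldots,A_k)$, retaining only the $\U(1)$-valued \v{C}ech data.

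The key structural input is that this projection is a levelwise-surjective morphism of presheaves of \emph{simplicial abelian groups}: both source and target factor through $\Ab_\Delta$, so $p^k_0$ is a morphism of group objects, and its fibre is again a simplicial abelian group. Concretely, the fibre over the trivial section is $\varGamma$ of the kernel complex, which in degrees $0,\ldots,k$ consists of the vertical-form sheaves $\Omega^{1,v},\ldots,\Omega^{k,v}$ with the vertical de Rham differential. This kernel complex is a complex of $\ul{\RR}$-modules in $\scH$ (each $\Omega^{i,v}$ is a $C^\infty$-module, hence an $\ul{\RR}$-module, and $\dd^v$ is $\ul{\RR}$-linear only up to the usual Leibniz correction---so here I must be careful and instead observe that the \emph{kernel} as a presheaf of simplicial abelian groups carries a pointwise $\ul{\RR}$-action making it a simplicial $\ul{\RR}$-module, which is exactly what Lemma~\ref{st:spl smooth R-mods have trivial HoType} requires). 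I would then let $\bbH$ denote the smooth $\infty$-group presented by this kernel; by~\cite[Prop.~3.35]{NSS:Pr_ooBdls_II} (the identification used already for $\scAut(\cG,\cA^{(l)}) \simeq \bbGrb^{n-1,M}_{\nabla|l}$) it is the correct structure group.

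The principality then follows because $p^k_0$ is a surjection of simplicial abelian group presheaves with kernel $\bbH$: the associated short exact sequence of simplicial abelian groups yields, upon applying $\varGamma$ and delooping, a fibre sequence $\rmB\bbH \to \bbGrb^{n,M}_{\nabla|k} \dslash \bbH \to \bbGrb^{n,M}$, and for an action of an $\infty$-group coming from a fibre sequence of this shape the action groupoid maps equivalently to the \v{C}ech nerve. Equivalently, I would invoke the general principle that every fibre sequence in an $\infty$-topos whose fibre is the delooping of a group object exhibits the total map as a principal bundle~\cite[Thm.~3.17, Prop.~3.13]{NSS:Pr_ooBdls_I}; here the fibre sequence is built into the Dold--Kan presentation, since $p^k_0$ is an objectwise Kan fibration (Remark~\ref{rmk:p^k_l are proj fibs}) with the stated kernel, so its homotopy fibre is $\bbH$ and the resulting bundle is classified by the connecting map $\bbGrb^{n,M} \to \rmB\bbH$.

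The main obstacle I anticipate is the precise verification that the kernel of the level-$k$ truncated Deligne projection admits a presentation as a \emph{simplicial $\ul{\RR}$-module} rather than merely a simplicial abelian group---the vertical de Rham differentials obstruct a naive $\ul{\RR}$-linear chain structure, so I expect to need either a contracting-homotopy / fibrewise-Poincar\'e-lemma argument (using the acyclicity in Lemma~\ref{st:acyclicity for vertical forms}) to exhibit the relevant simplicial module structure, or a degreewise argument showing the simplicial abelian group $\varGamma(\ker p^k_0)$ is objectwise an $\ul{\RR}$-module via the Dold--Kan normalisation. Once that is in place, the identification of the structure group and the fibre-sequence argument are formal, and Lemma~\ref{st:spl smooth R-mods have trivial HoType} will give the additional fact that $S\bbH$ is contractible, which is presumably the point of recording the module structure.
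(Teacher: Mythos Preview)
Your approach is essentially the same as the paper's: present $p^k_0$ via Dold--Kan applied to the projection of \v{C}ech--Deligne complexes, identify the kernel as $\varGamma$ of the complex $\Omega^{1,v} \leftarrow \cdots \leftarrow \Omega^{k,v}$ of vertical forms, and verify principality via the torsor structure using~\cite{NSS:Pr_ooBdls_I, NSS:Pr_ooBdls_II}. The paper cites~\cite[Thm.~3.91]{NSS:Pr_ooBdls_II} for the final step, where you invoke the fibre-sequence characterisation from~\cite{NSS:Pr_ooBdls_I}; these are equivalent routes.

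Two remarks. First, your anticipated obstacle is not one: the vertical de Rham differential $\dd^v$ \emph{is} $\ul{\RR}$-linear. Recall that $\ul{\RR}(c) = C^\infty(c,\RR)$ consists of functions on the parameter space $c$, and these pull back to vertically constant functions on $\hat{c}$. Since $\dd^v$ differentiates only in the fibre direction, $\dd^v(f\omega) = f\,\dd^v\omega$ for $f \in \ul{\RR}(c)$. The \v{C}ech differential is likewise $\ul{\RR}$-linear (it is just restriction). Hence the kernel is a chain complex of $\ul{\RR}$-modules with $\ul{\RR}$-linear differentials, and $\varGamma$ produces a simplicial $\ul{\RR}$-module directly---no Poincar\'e-lemma workaround is needed. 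You are conflating $\ul{\RR}$-linearity with $C^\infty(c\times M)$-linearity; only the latter fails by Leibniz.

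Second, you gloss over the effective-epimorphism condition in the definition of principal $\infty$-bundle. The paper invokes Corollary~\ref{st:Grb conn comps} to show that $p^k_0$ is $\pi_0$-surjective (in fact bijective) over each $c$, which is precisely this criterion. Your claim that $p^k_0$ is levelwise surjective on simplicial abelian groups is correct but not automatic: surjectivity in chain degree zero (after the smart truncation $\tau_{\geq 0}$) amounts to showing that every \v{C}ech $n$-gerbe cocycle extends to a $k$-connection cocycle, which needs the acyclicity in Lemma~\ref{st:acyclicity for vertical forms}. You should make this step explicit.
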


\begin{proof}
The map $p^k_0$ is presented by the Kan fibration \smash{$p^k_0 \colon \Grb^{n,M}_{\nabla|k} \to \Grb^{k, M}$} of projectively fibrant simplicial presheaves.
It is an effective epimorphism in $\scP(N\Cart)$ since, by Corollary~\ref{st:Grb conn comps}, it induces a surjection (even an isomorphism) on connected components over each cartesian space $c \in \Cart$.
The domain of the presenting morphism \smash{$p^k_0 \colon \Grb^{n,M}_{\nabla|k} \to \Grb^{n, M}$} is a torsor for the presheaf of simplicial groups which arises via the Dold-Kan construction from
\begin{equation}
\begin{tikzcd}
	\big( \Omega^{k,v}
	& \Omega^{k-1,v} \ar[l, "\dd"']
	& \cdots \ar[l, "\dd"']
	& \Omega^{1,v} \big) \ar[l, "\dd"']
\end{tikzcd}
\end{equation}
(this is the simplicial presheaf of smooth families of $k$-connections on the trivial $n$-gerbe $\cI$ on $M$; see also Section~\ref{sec:Con_k on fixed n-gerbe}).
The torsor property is also evident from the fact that this chain complex appears in the short exact sequence~\eqref{eq:SES for b_nabla^(l+1)ZZ}.
It follows that these data present a principal $\infty$-bundle in the $\infty$-topos $\scP(N\Cart)$:
it remains to check that the homotopy quotient of \smash{$\Grb^{n,M}_{\nabla|k}$} by the action of the structure group is weakly equivalent to $\Grb^{n,M}$, but this follows by~\cite[Thm.~3.91]{NSS:Pr_ooBdls_II}.
\end{proof}

\begin{remark}
Theorem~\ref{st:forgetting conns is a principal map} makes manifest the fact that connections on a given $n$-gerbe form an affine space over a Baez-Crans-type~\cite{BC:Lie_2-algebras} higher vector space of differential forms on $M$.
This categorifies the fact that in classical gauge theory connections on a given principal bundle form an affine space over the vector space of 1-forms valued in the adjoint bundle.
\qen
\end{remark}

\begin{corollary}
\label{st:Sp^n_0 is equivalence}
Let $n \in \NN_0$ and $k \in \{0, \ldots, n\}$.
The morphism
\begin{equation}
	S(p^k_0) \colon S\big( \bbGrb^{n,M}_{\nabla|k} \big) \longrightarrow S \big( \bbGrb^{n,M} \big)
\end{equation}
in $\scS$ is an equivalence.
\end{corollary}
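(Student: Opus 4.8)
The plan is to apply the functor $S \colon \scP(N\Cart) \to \scS$ to the principal $\infty$-bundle established in Theorem~\ref{st:forgetting conns is a principal map} and exploit the fact that $S$ preserves principal $\infty$-bundles together with the contractibility of the structure group's underlying space. Concretely, by Theorem~\ref{st:forgetting conns is a principal map}, the morphism $p^k_0 \colon \bbGrb^{n,M}_{\nabla|k} \to \bbGrb^{n,M}$ is a principal $\infty$-bundle in $\scP(N\Cart)$ whose structure $\infty$-group $\bbV$ is presented by a simplicial $\ul{\RR}$-module in $\scH$ (arising via Dold-Kan from the truncated complex of vertical forms $\Omega^{k,v} \leftarrow \cdots \leftarrow \Omega^{1,v}$).

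First I would recall that $S$ preserves principal $\infty$-bundles and finite products; this is precisely the content of~\cite[Thm.~3.48]{Bunk:Pr_ooBdls_and_String}, used already in Section~\ref{sec:underlying spaces}. Applying $S$ to $p^k_0$ therefore yields a principal $\infty$-bundle in $\scS$,
\begin{equation}
	S \big( \bbGrb^{n,M}_{\nabla|k} \big)
	\longrightarrow S \big( \bbGrb^{n,M} \big)\,,
\end{equation}
with structure group $S\bbV$. The homotopy fibre of this map is therefore $S\bbV$, so it suffices to show that $S\bbV \simeq *$ in $\scS$.

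The key step is then to identify $S\bbV$ as contractible. Since $\bbV$ is presented by a simplicial presheaf $V$ which is levelwise a module over $\ul{\RR} \in \scH$ (each $\Omega^{i,v}$ is an $\ul{\RR}$-module via pointwise multiplication by smooth functions, and Dold-Kan preserves this module structure degreewise), Lemma~\ref{st:spl smooth R-mods have trivial HoType} shows directly that $S_Q(V) \in \sSet$ is weakly contractible. As $S_Q$ presents $S$, this gives $S\bbV \simeq \Delta^0$. I would make sure to verify carefully that the Dold-Kan image $\varGamma$ of a complex of $\ul{\RR}$-modules is indeed levelwise an $\ul{\RR}$-module in the sense required by Lemma~\ref{st:spl smooth R-mods have trivial HoType}; this is the one point needing a brief check, since the simplicial structure maps of $\varGamma$ are $\RR$-linear combinations of the chain-complex structure maps and hence respect the module structure objectwise over each $c \in \Cart$.

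With $S\bbV \simeq *$, the principal $S\bbV$-bundle $S(p^k_0)$ has contractible fibres, and a principal bundle with contractible structure group (and hence contractible fibres) is an equivalence: its homotopy fibre is $S\bbV \simeq *$, so by the long exact sequence of homotopy groups (or simply since principal bundles are in particular fibrations and a fibration with contractible fibre over a base is an equivalence onto the image which is all of the base, as $S(p^k_0)$ is an effective epimorphism preserved by $S$), $S(p^k_0)$ is an equivalence in $\scS$. I do not anticipate a serious obstacle here; the main substantive input is entirely packaged into Theorem~\ref{st:forgetting conns is a principal map} and Lemma~\ref{st:spl smooth R-mods have trivial HoType}, and the only care needed is the bookkeeping of module structures under Dold-Kan and the invocation of the correct preservation property of $S$.
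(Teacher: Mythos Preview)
Your proposal is correct and follows essentially the same approach as the paper: apply $S$ to the principal $\infty$-bundle from Theorem~\ref{st:forgetting conns is a principal map}, use that $S$ preserves principal $\infty$-bundles, and invoke Lemma~\ref{st:spl smooth R-mods have trivial HoType} to conclude that the structure group has contractible underlying space, hence the fibre is contractible and the map is an equivalence. Your additional care regarding the $\ul{\RR}$-module structure under Dold-Kan is a reasonable sanity check, but the paper simply absorbs this into the statement of Theorem~\ref{st:forgetting conns is a principal map}.
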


\begin{proof}
The functor $S \colon \scP(N\Cart) \to \scS$ maps principal $\infty$-bundles for any group object $\bbH$ in $\scP(N\Cart)$ to principal $\infty$-bundles over $S\bbH$ in $\scS$.
Thus, the fibre of the morphism \smash{$S(\bbGrb^{n,M}_{\nabla|k}) \longrightarrow S(\bbGrb^{n,M})$} in $\scS$ is given by $S$ applied to the structure group; however, the latter is equivalent to $\Delta^0 \in \scS$ as a consequence of Lemma~\ref{st:spl smooth R-mods have trivial HoType}.
\end{proof}

\begin{remark}
The space $S\big( \bbGrb^{n,M}_{\nabla|k} \big)$ can be described as space of $n$-gerbes with $k$-connection on $M$ and their \textit{concordances}, where connections are only defined along the fibres of a concordance (i.e.~there is no parallel transport in the directions of the concordance).
\qen
\end{remark}

We compute the homotopy groups of the underlying space of $n$-gerbes with $k$-connection on a manifold $M$:

\begin{theorem}
\label{st:spaces of n-gerbes}
For each $n \in \NN_0$, $k \in \{0, \ldots, n+1\}$ and $i \in \NN_0$, there are canonical isomorphisms
\begin{equation}
	\pi_i \big( S \bbGrb^{n,M}_{\nabla|k} \big)
	\cong \rmH^{n+2-i}(M; \ZZ)\,,
\end{equation}
and $\pi_i = 0$ otherwise.
\end{theorem}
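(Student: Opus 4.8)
The plan is to combine the contractibility result from Corollary~\ref{st:Sp^n_0 is equivalence} with a direct computation of the homotopy groups of $S\bbGrb^{n,M}$. By Corollary~\ref{st:Sp^n_0 is equivalence}, for $k \in \{0, \ldots, n\}$ the morphism $S(p^k_0)$ is an equivalence, so it suffices to treat the single case $k=0$, i.e.~to compute $\pi_i(S\bbGrb^{n,M})$, and then to handle the remaining boundary case $k=n+1$ separately. The strategy for the main case is to identify $\bbGrb^{n,M}$ explicitly as arising via the Dold-Kan correspondence from the vertical Deligne-type complex with no connection forms, namely $(\rmb^{n+1}\U(1))^v$, and to use the local model together with the fact that $S$ presents the colimit functor.

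First I would use that $S$ is presented by the left Quillen functor $S_Q$ of~\eqref{eq:S_Q, presenting S} and that, for $k \leq n$, we may reduce via Corollary~\ref{st:Sp^n_0 is equivalence} to $\bbGrb^{n,M}$. Concretely, $\Grb^{n,M} = \Lan_\varpi(\iota^*\widetilde{\Grb}^{n,M})$ is obtained by averaging the \v{C}ech data for the complex $(\rmB^{n+1}\U(1))^v = \varGamma \circ (\rmb^{n+1}\U(1))^v$ over good open coverings. Since $S_Q$ computes a homotopy colimit over $\Cart^\opp$ and commutes with the Dold-Kan functor $\varGamma$ (both being homotopy-colimit preserving on the relevant diagrams), I would identify $S\bbGrb^{n,M}$ with the underlying space of the $\infty$-sheaf modelled by $\varGamma$ applied to the globally-defined Deligne complex computing $\rmH^{\bullet}(M;\ZZ)$. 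Here the key input is that taking the underlying space trivialises the smooth (vertical) differential-form directions: each sheaf $\Omega^{p,v}$ is an $\ul{\RR}$-module, hence has contractible underlying space by Lemma~\ref{st:spl smooth R-mods have trivial HoType}, exactly as exploited in the proof of Proposition~\ref{st:NDelta_e is cofinal}. This forces the homotopy type of $S\bbGrb^{n,M}$ to be governed purely by the integral (topological) data of $M$.

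The second step is to pin down the integral cohomology. The cleanest route is to observe that $\Grb^{n,M}$ classifies smooth families of $n$-gerbes on $M$ and that its connected components over $\RR^0$ recover $\pi_0 = \rmH^{n+2}(M;\ZZ)$ (consistent with Proposition~\ref{st:wtGrb conn comps}). Applying $S$ and using that $(\rmb^{n+1}\U(1))^v$ is, up to the contractible form directions, a presentation of the Eilenberg-MacLane-type object for $\rmH^{n+2}(-;\ZZ)$, I would read off $\pi_i(S\bbGrb^{n,M}) \cong \rmH^{n+2-i}(M;\ZZ)$ directly from the Dold-Kan translation between the chain complex and its homotopy groups, together with the identification of sheaf hypercohomology with $\rmH^{n+2-i}(M;\ZZ)$ from Lemma~\ref{st:DiffCoho to Coho}. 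For $k \leq n$ the claim then follows by transporting these isomorphisms along the equivalence $S(p^k_0)$.

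Finally, the boundary case $k=n+1$ must be argued separately, since Corollary~\ref{st:Sp^n_0 is equivalence} is stated only for $k \leq n$ and Proposition~\ref{st:wtGrb conn comps} explicitly fails to be a $\pi_0$-isomorphism there. I expect \emph{this} to be the main obstacle: for a full $(n{+}1)$-connection the top form level contributes genuine differential (curvature) data, so $\bbGrb^{n,M}_\nabla = \bbGrb^{n,M}_{\nabla|n+1}$ differs from $\bbGrb^{n,M}$ before applying $S$. The resolution should be that, after applying the underlying-space functor $S$, the curvature direction again becomes an $\ul{\RR}$-module (a space of closed vertical $(n{+}2)$-forms, or rather their contribution through the short exact sequence~\eqref{eq:SES for b_nabla^(l+1)ZZ}) and hence has contractible underlying space by Lemma~\ref{st:spl smooth R-mods have trivial HoType}. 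I would make this precise by applying $S$ to the fibre sequence~\eqref{eq:Con_k fibre sequence} (or to the short exact sequence~\eqref{eq:SES for b_nabla^(l+1)ZZ} of chain complexes of sheaves) at $k=n+1$, using that $S$ preserves finite products and the relevant (co)limits, and checking that the differential-form contributions die. This yields $S\bbGrb^{n,M}_{\nabla|n+1} \simeq S\bbGrb^{n,M}$ as well, completing the computation for all $k \in \{0, \ldots, n+1\}$.
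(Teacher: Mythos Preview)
Your reduction to $k=0$ via Corollary~\ref{st:Sp^n_0 is equivalence} matches the paper, and your separate treatment of $k=n{+}1$ is well-observed: the paper's own proof invokes Corollary~\ref{st:Sp^n_0 is equivalence}, which is stated only for $k\le n$. Your proposed fix (the fibre of $p^{n+1}_0$ is still a simplicial $\ul\RR$-module, and $p^{n+1}_0$ is still $\pi_0$-surjective by Lemma~\ref{st:DiffCoho to Coho}(2), so the argument of Theorem~\ref{st:forgetting conns is a principal map} extends verbatim) is correct.

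The genuine gap is in your main step, the computation of $\pi_i(S\bbGrb^{n,M})$. For $k=0$ the complex $(\rmb^{n+1}\U(1))^v$ is simply $\U(1)$ in a single degree and contains no differential-form terms, so your appeal to the contractibility of the $\Omega^{p,v}$ under $S$ has no content there. More importantly, Lemma~\ref{st:DiffCoho to Coho} and Proposition~\ref{st:wtGrb conn comps} compute the homotopy groups of the \emph{fibre} $\Grb^{n,M}(c)$ over a fixed $c\in\Cart$; what you need are the homotopy groups of the \emph{colimit} $S\bbGrb^{n,M}\simeq\colim_{N\Cart^\opp}\bbGrb^{n,M}$. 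You assert these agree but provide no mechanism for why this colimit preserves homotopy groups. That passage is the whole computation, and ``Dold-Kan translation'' does not supply it.

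The paper bridges this gap by a different route: it identifies $\bbGrb^{n,M}\simeq(\rmB^{n+1}\U(1))^{\ul M}$ as an internal hom in $\scP(N\Cart)$ and then invokes the \emph{Smooth Oka Principle} to commute $S$ past the mapping object, obtaining $S\bbGrb^{n,M}\simeq\scS(M,K(\ZZ,n{+}2))$. The homotopy groups are then read off from the K\"unneth decomposition of $\rmH^{n+2}(\bbS^i\times M;\ZZ)$. Your approach can be salvaged, but only by proving that the evaluation map $\Grb^{n,M}(\RR^0)\to S\bbGrb^{n,M}$ is a weak equivalence---equivalently, that each restriction $\Grb^{n,M}(\Delta_e^l)\to\Grb^{n,M}(\RR^0)$ is one. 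This does hold (it is homotopy invariance of integral cohomology along $\Delta_e^l\times M\to M$), but it is precisely a special instance of the Smooth Oka Principle rather than a consequence of the lemmas you cite, and it needs to be stated and justified.
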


\begin{proof}
By Corollary~\ref{st:Sp^n_0 is equivalence} we can restrict our attention to the case where $k = 0$.
There is an equivalence
\begin{equation}
	\Grb^{n,M} \simeq \big( \rmB^{n+1} \U(1) \big)^{\ul{M}}
\end{equation}
in $\scP(N\Cart)$ (this can be seen using $\scH$ to present the $\infty$-category $\scP(N\Cart)$ and choosing a \v{C}ech resolution of $M$ by a good open covering as a cofibrant replacement of $\ul{M}$).
Moreover, by the Smooth Oka Principle~\cite[Thm.~3.3.53]{SS:Equivar_pr_infty-bundles} (see also~\cite[Thm.~1.1]{BEBdBP:Classifying_spaces_of_oo-sheaves} and~\cite[Thm.~B, Cor.~6.4.8]{Clough:Thesis}) there is a weak equivalence of spaces
\begin{equation}
	S \Big( \rmB^{n+1} \U(1) \big)^{\ul{M}} \Big)
	\simeq \big( S \rmB^{n+1} \U(1) \big)^{S\ul{M}}\,.
\end{equation}
Using that $S$ commutes with colimits (it is itself an $\infty$-categorical colimit functor by Proposition~\ref{st:NDelta_e is cofinal}) and that $S\ul{M}$ is canonically equivalent to the underlying space of the manifold $M$ (see, for instance,~\cite[Thm.~4.15]{Bunk:R-loc_HoThy}) we obtain a composite weak equivalence
\begin{equation}
	S \big( \Grb^{n,M} \big)
	\simeq \scS \big( M, B^{n+1} \U(1) \big)\,.
\end{equation}
Note that this space has a canonical basepoint, which corresponds to the trivial $n$-gerbe $\cI$ on $M$.
A representative of an element in the $i$-th homotopy group of this space is a map of pointed spaces
\begin{equation}
	(\bbS^i, *) \longrightarrow \big( \scS \big( M, B^{n+1} \U(1) \big), \cI \big)\,,
\end{equation}
Using that $\rmB^{n+1}\U(1)$ is an Eilenberg-MacLane space $K(\ZZ, n+2)$, we can equivalently describe this as a map
\begin{equation}
	f \colon \bbS^i {\times} M \longrightarrow K(\ZZ, n+2)
\end{equation}
whose restriction along the inclusion $\iota \colon \{*\} {\times}M \hookrightarrow \bbS^i \times M$ represents the zero class in $\rmH^{n+2}(M; \ZZ)$.
That is, there is a canonical group isomorphism
\begin{equation}
	\pi_i \big( \scS \big( M, B^{n+1} \U(1) \big) \big)
	\simeq \big\{ x \in \rmH^{n+2}(\bbS^i \times M; \ZZ)\, \big| \, \iota^*x = 0 \in \rmH^{n+2}(M; \ZZ) \big\}\,.
\end{equation}
Since $\rmH^k(\bbS^i; \ZZ)$ is finitely generated and free in each degree, the Künneth Theorem in cohomology (see, for instance,~\cite[Thm.~3.16]{Hatcher:AT}) provides a canonical isomorphism
\begin{equation}
	\rmH^{n+2}(\bbS^i {\times} M; \ZZ)
	\cong \bigoplus_{r+s = n+2} \rmH^r(\bbS^i; \ZZ) \otimes \rmH^s(M; \ZZ)
	\cong \rmH^{n+2-i}(M; \ZZ) \oplus \rmH^{n+2}(M; \ZZ)\,.
\end{equation}
Under this isomorphism, the restriction along $\iota \colon \{*\} \times M \hookrightarrow \bbS^i \times M$ corresponds to the projection onto $\rmH^{n+2}(M;\ZZ)$.
Thus, we arrive at a canonical group isomorphism
\begin{equation}
	\pi_i \big( S \big( \bbGrb^{n,M} \big) \big)
	\cong \rmH^{n+2-i}(M; \ZZ)\,,
\end{equation}
for each $i \in \NN_0$, as claimed.
\end{proof}

%%%%%%%%%%%%%%%%%%%%%%%%%%%%%%%%%%%%%%%%%%%%%%%%%%%%%%%%%%%%%%%%%%%%%%%%%%%%

\section{Examples of moduli $\infty$-(pre)stacks involving higher $\U(1)$-connections}
\label{sec:examples with higher U(1)-connections}

%%%%%%%%%%%%%%%%%%%%%%%%%%%%%%%%%%%%%%%%%%%%%%%%%%%%%%%%%%%%%%%%%%%%%%%%%%%%

In this section we present and analyse various moduli $\infty$-stacks arising in field theories which involve higher $\U(1)$-connections.
We start with the bare moduli $\infty$-stack of connections on a given $n$-gerbe with $k$-connection on $M$, before moving on to, for instance, higher Maxwell and Einstein-Maxwell moduli $\infty$-stacks.

%%%%%%%%%%%%%%%%%%%%%%%%%%%%%%%%%%%%%%%%%%%%%%%%%%%%%%%%%%%%%%%%%%%%%%%%%%%%

\subsection{Moduli of $k$-connections on $n$-gerbes}

%%%%%%%%%%%%%%%%%%%%%%%%%%%%%%%%%%%%%%%%%%%%%%%%%%%%%%%%%%%%%%%%%%%%%%%%%%%%

In the last subsection we considered stacks of smooth families of higher $\U(1)$-connections on a fixed $n$-gerbe.
Here, we include the action of symmetries and consider \textit{moduli} stacks of higher $\U(1)$-connections on a fixed $n$-gerbe.
To that end, recall the notation for moduli $\infty$-prestacks of higher geometric structures and their presentation in terms of left fibrations from Section~\ref{sec:moduli oo-prestacks of hgeo strs on M}.
We use the following general set-up:
consider the choice
\begin{equation}
	\Sol^\scD = \Conf^\scD = \Grb^{n,\scD}_{\nabla|k}\,,
	\qquad
	\Fix^\scD = \Grb^{n,\scD}_{\nabla|l}\,,
\end{equation}
with $l \leq k \in \{0, \ldots, n+1\}$ and the canonical projection \smash{$p^k_l \colon \Grb^{n,\scD}_{\nabla|k} \longrightarrow \Grb^{n,\scD}_{\nabla|l}$} as the morphism $\Conf^\scD \to \Fix^\scD$.
Let $\cG \in \Grb^{n,M}(\RR^0)$ be an $n$-gerbe on $M$.
Further, consider a left fibration $Q \to N\Cart^\opp$ with reduced fibres and a morphism $\phi \in \sSet_{/N \Cart^\opp}(Q, N\rmB\scD[\cG]^\opp)$ (recall from Definition~\ref{def:BSYM_(Q,phi)(G)} and the discussion following that definition that these data present an action $\Phi \colon \bbGamma \to \Diff_{[\cG]}(M)$ of a smooth $\infty$-group on $M$ by diffeomorphisms which preserve the isomorphism class $[\cG]$ of $\cG$ in $\Grb^{n,M}(\RR^0)$).
Most of the examples we will consider will be modifications of the following moduli stacks:

\begin{definition}
\label{def:Mdl of k-conns on n-gerbes with l-conn}
In the above set-up, consider the left fibration over $N\Cart^\opp$ obtained as the pullback
\begin{equation}
\begin{tikzcd}[column sep=1.25cm, row sep=0.75cm]
	\textint \Mdl_{k,\phi}(\cG, \cA^{(l)}) \ar[r] \ar[d]
	& \textint \big( \Grb^{n,\scD[\cG]}_{\nabla|k} \big)^{[\cG]} \ar[d]
	\\
	\rmB\SYM_\phi^\rev(\cG, \cA^{(k)}) \ar[r]
	& \textint \big( \Grb^{n,\scD[\cG]}_{\nabla|l} \big)^{[\cG]}
\end{tikzcd}
\end{equation}
The \textit{$\infty$-prestack $\scMdl_{k,\Phi}(\cG, \cA^{(l)})$ of $k$-connections on the $n$-gerbe with $l$-connection $(\cG, \cA^{(l)})$} (for $0 \leq l \leq k \leq n+1$) modulo the action of symmetries of $(\cG, \cA^{(l)})$ which lift the action $\phi$ is the $\infty$-prestack on $N\Cart$ presented by the left fibration $\textint \Mdl_{k,\phi}(\cG, \cA^{(l)}) \longrightarrow N\Cart^\opp$.
\end{definition}

\begin{remark}
By Corollary~\ref{st:Descent result for moduli oo-prestacks} the moduli $\infty$-prestack $\scMdl_{k, \Phi}(\cG, \cA^{(l)})$ is an $\infty$-stack with respect to good open coverings of cartesian spaces whenever $\phi$ encodes the action of a sheaf of groups on $M$.
\qen
\end{remark}

\begin{remark}
For $l = 0$ and $k = n+1$, we obtain the moduli $\infty$-prestack  $\scMdl_{n+1,\Phi}(\cG)$ of connections on a fixed $n$-gerbe $\cG$ on $M$ modulo the higher smooth group of symmetries of $\cG$ which lift $\phi$.
At the same time, $\scMdl_{n+1,\Phi}(\cG)$ can also be viewed as the moduli $\infty$-prestack of differential refinements of the integer cocycle presented by $\cG$.
\qen
\end{remark}

\begin{remark}
For the choice $Q = \rmB\scD[\cG]^\opp$ and $\phi$ the identity on $\rmB\scD[\cG]^\opp$, the $\infty$-stack $\scMdl_{k,\id}(\cG)$ is the moduli $\infty$-stack of $k$-connections on $\cG$ modulo the action of the higher group of symmetries of $\cG$.
If we choose $Q = N\Cart^\opp$ and $\phi = Ne_M$ (corresponding to the action of the trivial group on $M$), we obtain the moduli $\infty$-stack $\scMdl_{k, Ne_M}(\cG)$ of $k$-connections on $M$ modulo the action of the higher group of automorphisms of $\cG$, i.e.~modulo the action of the higher gauge group of $\cG$.
\qen
\end{remark}

By a slight modification of this example, we obtain moduli stacks $\scMdl_{\flat, \Phi}(\cG)$ of flat connections on a fixed $n$-gerbe $\cG$ on $M$:
they arise from the choices
\begin{equation}
	\Conf^\scD = \Grb^{n,\scD}_\nabla\,,
	\qquad
	\Fix^\scD = \Grb^{n,\scD}_{\nabla|l}\,,
	\qquad
	\Sol^\scD = \Grb^{n,\scD}_\flat\,,
\end{equation}
where $\Grb^{n,\scD}_\flat \subset \Grb^{n,\scD}_\nabla$ is the full simplicial subpresheaf on those $n$-gerbes with connection whose (fibrewise) curvature $(n{+}2)$-forms vanish identically (note that this indeed defines a solution presheaf in the sense of Definition~\ref{def:solution presheaf}).
Let $(\cG, \cA^{(l)})$ be an $n$-gerbe with $l$-connection on $M$.

\begin{definition}
In the above set-up, consider the left fibration over $N\Cart^\opp$ obtained as the pullback
\begin{equation}
\begin{tikzcd}[column sep=1.25cm, row sep=0.75cm]
	\textint \Mdl_{\flat,\phi}(\cG, \cA^{(l)}) \ar[r] \ar[d]
	& \textint \big( \Grb^{n,\scD[\cG]}_\flat \big)^{[\cG]} \ar[d]
	\\
	\rmB\SYM_\phi^\rev(\cG, \cA^{(l)}) \ar[r]
	& \textint \big( \Grb^{n,\scD[\cG]}_{\nabla|l} \big)^{[\cG]}
\end{tikzcd}
\end{equation}
The \textit{$\infty$-prestack $\scMdl_{\flat,\Phi}(\cG, \cA^{(l)})$ of flat connections on $(\cG, \cA^{(l)})$} modulo the action of symmetries of $(\cG, \cA^{(l)})$ which lift the action $\phi$ is the $\infty$-prestack on $N\Cart$ presented by the left fibration $\textint \Mdl_{\flat,\phi}(\cG, \cA^{(l)}) \longrightarrow N\Cart^\opp$.
\end{definition}

Theorem~\ref{st:equiv result for Mdl oo-stacks} shows that we can analyse moduli $\infty$-prestacks of $(n{+}1)$-connections on any given $n$-gerbe in $n$ equivalent ways:
we may keep any part of the connection data fixed (as long as we do not fix the entire connection).
This changes which part of the connection data we are still allowed to vary (including its morphisms and higher morphisms), but these changes are compensated for by a simultaneous change of the smooth higher symmetries of the fixed data.
The precise statement reads as:

\begin{theorem}
\label{st:equiv thm for conns on n-gerbes}
Let $(\cG, \cA^{(k)})$ be an $n$-gerbe with $k$-connection on $M$, and let $(\cG, \cA^{(l)})$ denote its underlying $n$-gerbe with $l$-connection, for $0 \leq l \leq k \leq n \in \NN_0$.
There are canonical equivalences
\begin{equation}
	\scMdl_{n+1, \Phi}(\cG, \cA^{(k)})
	\simeq \scMdl_{n+1, \Phi}(\cG, \cA^{(l)})
	\qquad \text{and} \qquad
	\scMdl_{\flat, \Phi}(\cG, \cA^{(k)})
	\simeq \scMdl_{\flat, \Phi}(\cG, \cA^{(l)})\,.
\end{equation}
\end{theorem}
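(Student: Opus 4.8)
The plan is to deduce Theorem~\ref{st:equiv thm for conns on n-gerbes} directly as an application of the general equivalence criterion in Theorem~\ref{st:equiv result for Mdl oo-stacks}, with Proposition~\ref{st:GRB conn comps} (equivalently Corollary~\ref{st:Grb conn comps}) supplying the crucial 0-connectedness hypothesis. Recall that Theorem~\ref{st:equiv result for Mdl oo-stacks} requires a sequence of projective fibrations $\Conf^\scD \to \Fix_0^\scD \xrightarrow{q} \Fix_1^\scD$ between projectively fibrant simplicial presheaves on $\rmB\scD$, with $q$ assumed to be 0-connected, and then produces the desired equivalence $\scMdl_\Phi(\cG_0) \simeq \scMdl_\Phi(\cG_1)$ for the induced sections $\cG_0$ and $\cG_1 = q(\cG_0)$. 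So the entire argument reduces to instantiating this abstract machine twice, once for each of the two claimed equivalences.

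For the first equivalence, $\scMdl_{n+1,\Phi}(\cG, \cA^{(k)}) \simeq \scMdl_{n+1,\Phi}(\cG, \cA^{(l)})$, I would set $\Conf^\scD = \Grb^{n,\scD}_\nabla = \Grb^{n,\scD}_{\nabla|n+1}$ (with $\Sol^\scD = \Conf^\scD$), $\Fix_0^\scD = \Grb^{n,\scD}_{\nabla|k}$, and $\Fix_1^\scD = \Grb^{n,\scD}_{\nabla|l}$. The morphism $q$ is then the forgetful map $p^k_l \colon \Grb^{n,\scD}_{\nabla|k} \to \Grb^{n,\scD}_{\nabla|l}$, which is a projective fibration by Remark~\ref{rmk:p^k_l are proj fibs}, and the composite $\Conf^\scD \to \Fix_0^\scD$ is the fibration $p^{n+1}_k$. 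The section $(\cG, \cA^{(k)}) \in \Grb^{n,M}_{\nabla|k}(\RR^0)$ plays the role of $\cG_0$, and $(\cG, \cA^{(l)}) = p^k_l(\cG, \cA^{(k)})$ is exactly $\cG_1 = q(\cG_0)$. Since the hypothesis $0 \leq l \leq k \leq n$ holds, Corollary~\ref{st:Grb conn comps} guarantees that $p^k_l$ is 0-connected, so all hypotheses of Theorem~\ref{st:equiv result for Mdl oo-stacks} are met, and the equivalence follows. Here it is essential that $k \leq n$ rather than $k \leq n+1$, because 0-connectedness of $p^k_l$ fails at the top level $k = n+1$ (this is precisely the content of Proposition~\ref{st:wtGrb conn comps}, where the case $k = n+1$ is excluded).

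For the second equivalence, involving flat connections, I would run the same argument with the only change being $\Conf^\scD = \Grb^{n,\scD}_\nabla$ and $\Sol^\scD = \Grb^{n,\scD}_\flat \subset \Grb^{n,\scD}_\nabla$, the full simplicial subpresheaf on the flat $n$-gerbes, which is a legitimate solution presheaf in the sense of Definition~\ref{def:solution presheaf} because vanishing of the fibrewise curvature is a condition invariant under equivalence and hence selects a union of connected components. The fixed-data layers $\Fix_0^\scD = \Grb^{n,\scD}_{\nabla|k}$ and $\Fix_1^\scD = \Grb^{n,\scD}_{\nabla|l}$ and the 0-connected fibration $q = p^k_l$ are unchanged, so Theorem~\ref{st:equiv result for Mdl oo-stacks} again applies verbatim and yields $\scMdl_{\flat,\Phi}(\cG, \cA^{(k)}) \simeq \scMdl_{\flat,\Phi}(\cG, \cA^{(l)})$.

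I expect the proof itself to be short, since the heavy lifting has already been done in Part~\ref{part:Higher Geometry} and in the preceding section establishing 0-connectedness. The main (and essentially the only) subtlety is bookkeeping: one must verify that the role assignments $\cG_0 = (\cG, \cA^{(k)})$, $\cG_1 = (\cG, \cA^{(l)})$ are compatible with the identifications $\Diff_{[\cG,\cA^{(k)}]}(M) = \Diff_{[\cG,\cA^{(l)}]}(M) = \Diff_{[\cG]}(M)$ and $\rmB\scD[\cG,\cA^{(k)}] = \rmB\scD[\cG,\cA^{(l)}] = \rmB\scD[\cG]$ (which hold by Remark~\ref{rmk:(-)_[G] and isos on conn comps}, using 0-connectedness), so that a single smooth higher group action $\Phi \colon \bbGamma \to \Diff_{[\cG]}(M)$ may be fed into both moduli $\infty$-prestacks. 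Once these identifications are in place the two equivalences are immediate specialisations of the general theorem.
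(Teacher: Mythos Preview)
Your proposal is correct and follows essentially the same approach as the paper: apply Theorem~\ref{st:equiv result for Mdl oo-stacks} with the 0-connectedness of $p^k_l$ supplied by Proposition~\ref{st:GRB conn comps} (equivalently Corollary~\ref{st:Grb conn comps}). Your write-up is in fact more explicit than the paper's own proof about the instantiation of $\Conf^\scD$, $\Sol^\scD$, $\Fix_i^\scD$ and the bookkeeping via Remark~\ref{rmk:(-)_[G] and isos on conn comps}, but the underlying argument is identical.
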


\begin{proof}
We only need to check that Theorem~\ref{st:equiv result for Mdl oo-stacks} applies, i.e.~that the projection morphisms
\begin{equation}
	\textint p^k_l \colon \textint \Grb^{n,\scD}_{\nabla|k} \longrightarrow \textint \Grb^{n,\scD}_{\nabla|l}
\end{equation}
are objectwise $\pi_0$-isomorphisms.
We already proved this in Proposition~\ref{st:GRB conn comps}.
\end{proof}

\begin{example}
Let $(\cI, 0)$ be the trivial $n$-gerbe, endowed with the zero $n$-connection, i.e.~$A_i = 0$ for all $i = 1, \ldots, n$.
There is an equivalence
\begin{equation}
	\scMdl_{\flat, Ne_M}(\cI, 0)
	\simeq \Omega^{n+1,v}_\cl \dslash \bbGrb^{n-1,M}_\nabla
	\colon N\Cart^\opp \longrightarrow \scS\,,
\end{equation}
where the action is via the curvature, $(B, (\cG', \cA')) \longmapsto B + \curv(\cG', \cA')$.
For any other $n$-gerbe with $n$-connection $(\cG, \cA)$, there is still an equivalence
\begin{equation}
	\scMdl_{\flat, Ne_M}(\cG, \cA)
	\simeq \Omega^{n+1,v}_\cl \dslash \bbGrb^{n-1,M}_\nabla
	\colon N\Cart^\opp \longrightarrow \scS\,,
\end{equation}
but this is no longer canonical (rather, $\scMdl_{\flat, Ne_M}(\cG, \cA)$ is a torsor over $\scMdl_{\flat, Ne_M}(\cI, 0)$ with the group structure given by the sum of $(n{+}1)$-forms).
In particular, taking $\pi_0$ in $\scFun(N\Cart^\opp, \scS)$, we obtain the presheaf of connected components
\begin{equation}
\label{eq:flat conns mod gauge}
	\pi_0 \big( \scMdl_{\flat, Ne_M}(\cG, \cA) \big)
	\simeq \frac{\Omega^{n+1,v}_\cl}{\Omega^{n+1,v}_{\cl,\ZZ}}
	\simeq \frac{\rmH^{n+1}(M;\RR)}{\rmH^{n+1}(M;\ZZ)}\,,
\end{equation}
where these groups become presheaves on $\Cart$ by interpreting the respective coefficient groups as group-valued presheaves on $\Cart$.
For $n = 0$, the smooth torus~\eqref{eq:flat conns mod gauge} is usually viewed as the moduli space of flat connection on a given line bundle.
In~\cite{MR:YM_for_BGrbs} this torus also appeared for $n = 1$ in the context of connections on gerbes.

However, forming $\pi_0$ in $\scFun(N\Cart^\opp, \scS)$ quotients away all higher structure and produces a $\Set$-valued presheaf on $\Cart$.
For instance, this process interferes with the ability to glue families of solutions (one can sheafify~\eqref{eq:flat conns mod gauge} to force gluing, but this does no longer describe families of flat connections; in particular, one cannot recover any gauge actions).
Moreover, the constructions in Remark~\ref{rmk:higher form syms} and Remark~\ref{rmk:hol and Mdl(G,A^1)} below fail if we work only with the truncation~\eqref{eq:flat conns mod gauge}.

The topology of the moduli space is, instead, encoded in its entirety in the underlying space $S(\scMdl_{\flat, Ne_M}(\cG, \cA^{(n)})) \in \scS$ (where $S$ is the functor $S \colon \scFun(N\Cart^\opp, \scS) \to \scS$ from Section~\ref{sec:higher symmetry groups of hgeo strs}).
Recall from Proposition~\ref{st:NDelta_e is cofinal} that there is a canonical equivalence
\begin{equation}
	S \simeq \underset{N\Cart^\opp}{\colim}
\end{equation}
between $S$ and the $\infty$-categorical colimit functor for $N\Cart^\opp$-shaped diagrams.
Note that there is no equivalence between $\pi_0 \circ S$ and $S \circ \pi_0$ (one produces a set and one a space).
Appealing to Corollary~\ref{st:Sp^n_0 is equivalence}, Proposition~\ref{st:SMdl for SSol = *} and Theorem~\ref{st:spaces of n-gerbes}, we have that
\begin{equation}
	S \big( \scMdl_{\flat, Ne_M}(\cG, \cA^{(n)}) \big)
	\simeq \rmB S(\bbGrb^{n-1,M})\,,
\end{equation}
and in particular
\begin{equation}
	\pi_i S \big( \scMdl_{\flat, Ne_M}(\cG, \cA^{(n)}) \big)
	\cong
	\begin{cases}
		\rmH^{n+2-i}(M;\ZZ)\,, & i = 1, \ldots, n+2\,,
		\\
		0 & \text{otherwise}.
	\end{cases}
\end{equation}
Moreover, the homotopy type of $\scMdl_{\flat, Ne_M}(\cG, \cA^{(n)})$ is not specific to using $n$-connections:
if $k < n$ and $\cA^{(k)}$ denotes the $k$-connection underlying $\cA^{(n)}$, then Theorem~\ref{st:equiv result for Mdl oo-stacks} implies that
\begin{equation}
	\scMdl_{\flat, Ne_M}(\cG, \cA^{(n)})
	\simeq \scMdl_{\flat, Ne_M}(\cG, \cA^{(k)})
	\simeq \scMdl_{\flat, Ne_M}(\cG)\,.
\end{equation}
This provides $n$ different ways of studying the moduli stack of flat connections on a given $n$-gerbe with $k$-connection.
\qen
\end{example}

\begin{remark}
\label{rmk:hol and Mdl(G,A^1)}
We can use smooth maps with target $M$ to test the higher structure of moduli stacks:
for a fixed $n$-gerbe with $n$-connection $(\cG, \cA^{(n)})$ on $M$, the holonomy morphisms of group objects in Remark~\ref{rmk:higher form syms} induce morphisms
\begin{equation}
	U_{(-)}(\Sigma^{(n+1-p)}, \sigma) \colon \pi_p \scMdl_{n+1, Ne_M}(\cG, \cA^{(n)})
	\longrightarrow U(1)
\end{equation}
(where the homotopy groups are formed in $\scP(N\Cart)$).

Using higher-dimensional holonomies instead extracts additional information:
for another example, let $LM = \Mfd(\bbS^1, M)$ denote the free loop space of $M$.
For $P \to M$ an ordinary $\U(1)$-bundle, the holonomy induces a map $\Con(P) \to \U(1)^{LM}$, where the internal hom is taken in $\scH$.
By the gauge invariance of holonomies, this further descends to a smooth map
\begin{equation}
	\hol \colon \Con(P)/\Aut(P) \longrightarrow \U(1)^{LM}\,.
\end{equation}
This map has analogues in higher $\U(1)$-gauge theory, but these have much richer structure:
let $(\cG, \cA^{(1)})$ be an $1$-gerbe with $1$-connection on $M$.
The holonomy for a connection on a 1-gerbe is equivalent to its transgression line bundle~\cite[Sec.~4.4]{BMS:2-Grp_Ext}.
Combining our results with~\cite{Waldorf:Transgression_II} we obtain a morphism of smooth stacks
\begin{equation}
\label{eq:hol mp from scMdl}
	\hol \colon \tau_{\leq 1} \scMdl_{2, Ne_M}(\cG, \cA^{(1)}) \longrightarrow \HLB_{\nabla, fus}(LM)
\end{equation}
from the 1-truncation (in $\scP(N\Cart)$) of the moduli stack $\scMdl_{2, Ne_M}(\cG, \cA^{(1)})$ to the smooth groupoid of hermitean fusion line bundles with superficial connection on $LM$ (see~\cite{Waldorf:Transgression_II} for details).
In particular, the isomorphism class of the underlying line bundle on $LM$ is fixed by $(\cG, \cA^{(1)})$, but the connection data on the right-hand side detects not just $\pi_0 \scMdl_{2, Ne_M}(\cG, \cA^{(1)})$, but it sends isomorphism classes of gauge transformations in $\scAut(\cG, \cA^{(1)})$ to gauge transformations of $\U(1)$-connections on the right-hand side (because of the functoriality of the transgression construction).
This holonomy map restricts to moduli of Maxwell solutions, for instance, and hence can also be used to test these moduli stacks.
More generally, it should be possible to gain even more information by replacing the target of~\eqref{eq:hol mp from scMdl} by a smooth groupoid of functorial field theories on $M$---the results in~\cite{BW:OCFFTs, BW:Transgression_of_D-branes} provide clear evidence for this---and find analogous maps for fixed $n$-gerbes with $n$-connections.
Finally, by Theorem~\ref{st:equiv thm for conns on n-gerbes}, the holonomy map~\eqref{eq:hol mp from scMdl} equivalently extracts equivalent information about $\scMdl_{2,Ne_M}(\cG)$, i.e.~the higher moduli stack of 2-connections on a fixed 1-gerbe $\scG$.
\qen
\end{remark}

%%%%%%%%%%%%%%%%%%%%%%%%%%%%%%%%%%%%%%%%%%%%%%%%%%%%%%%%%%%%%%%%%%%%%%%%%%%%

\subsection{Higher gauge-theoretic examples}

%%%%%%%%%%%%%%%%%%%%%%%%%%%%%%%%%%%%%%%%%%%%%%%%%%%%%%%%%%%%%%%%%%%%%%%%%%%%

We now present various moduli $\infty$-stacks which arise in field theories that involve higher $\U(1)$-connections.
Suppose $\dim(M) = d$.
For $r,s \in \NN_0$ with $r+s = d$, let $\Met^\scD_{r,s} \colon \rmB\scD^\opp \to \sSet$ be the (simplicially constant) simplicial presheaf which assigns to $c \in \rmB\scD$ the smooth families of pseudo-Riemannian metrics of signature $(r,s)$ on $M$; equivalently, these are fibrewise metrics on $c \times M$.
The action of $\Met_{r,s}$ on morphisms in $\rmB\scD$ is via the pullback of metrics along diffeomorphisms.

\begin{notation}
\label{nt:Riemannian geometry}
We introduce the following notation (for more details we refer the reader to~\cite{BS:EYM_Slices}):
let $(M,g)$ be a pseudo-Riemannian manifold.
\begin{itemize}
\item $\nabla^g$ is the Levi-Civita connection associated to $g$,

\item $\Ric^g$ is its Ricci curvature,

\item $*_g \colon \Omega^p(M) \to \Omega^{d-p}(M)$ is the Hodge star operator associated to $g$, where $d = \dim(M)$ and $p \in \{0, \ldots, d\}$,

\item $\<-,-\>_g \colon \Omega^p(M) \times \Omega^p(M) \to C^\infty(M)$ is the pairing defined by
\begin{equation}
	\alpha \wedge *_g \beta = \<\alpha, \beta\>_g \vol_g\,,
\end{equation}
where $\vol_g$ is the Riemannian volume form of $g$,

\item $\dd^*_g \colon \Omega^{p+1}(M) \to \Omega^p(M)$ is the de Rham codifferential associated to $g$ (i.e.~$\dd^*_g = (-1)^{pd} *_g \dd\, *_g$),

\item $\Delta_g \coloneqq (\dd + \dd^*_g)^2 \colon \Omega^*(M) \to \Omega^*(M)$ is the Laplacian on $(M,g)$,

\item we define a bilinear map
\begin{align}
	(-) \bullet_g (-) &\colon \Omega^p(M) \times \Omega^p(M) \longrightarrow \Gamma(T^\vee M \odot T^\vee M)\,,
	\\
	(\alpha \bullet_g \beta)(X_1, X_2) &\coloneqq \frac{1}{2} \big( \< \iota_{X_1} \alpha, \iota_{X_2} \beta \>_g + \< \iota_{X_2} \alpha, \iota_{X_1} \beta \>_g \big)
\end{align}

\item $\sharp_g \colon \Omega^*(M) \to \Gamma(\bigwedge^*TM)$ and $\flat_g \colon \Gamma(\bigwedge^*TM) \to \Omega^*(M)$ are the musical isomorphisms associated to $g$,

\item for $p \geq q$ we set
\begin{equation}
	\iota^g_{(-)} (-) \colon \Omega^q(M) \times \Omega^p(M) \longrightarrow \Omega^{p-q}(M)\,,
	\qquad
	\iota^g_\eta \omega \coloneqq \iota_{\sharp_g \eta} \omega\,,
\end{equation}

\item we define the map
\begin{equation}
	|{-}|_g^2 \colon \Omega^p(M) \to C^\infty(M)\,,
	\qquad
	|\alpha|_g^2 \coloneqq \<\alpha, \alpha\>_g\,.
\end{equation}
\end{itemize}
Finally, note that each of the above objects have straightforward adaptations to fibrewise objects on a product $c {\times} M$ whose fibres over $c$ are endowed a smooth family of (pseudo-)Riemannian metrics.
\qen
\end{notation}

\begin{example}
\label{eg:Mdl stacks from Grb x Met}
Consider configurations of $n$-gerbes with connection and metrics of signature $(r,s)$ on $M$; these are encoded in the choice
\begin{equation}
	\Conf^\scD = \Grb^{n,\scD}_\nabla \times \Met_{r,s}^\scD\,.
\end{equation}
For a cartesian space $c$, a simplex in $\Grb^{n,\scD}_\nabla(c) \times \Met_{r,s}^\scD(c)$ is a triple $(\cG, \cA, g)$, where $(\cG, \cA)$ is a smooth $c$-paramterised family of $n$-gerbes with connection on $M$ and $g$ is a smooth family of non-degenerate metrics of signature $(r,s)$ on the tangent bundle $TM$.
This configuration presheaf comes with several natural projective fibrations, and thus several natural choices of $\Fix^\scD$; these include the canonical projection morphisms
\begin{align}
\label{eq:Conf-->Sol for EM thy}
	\Conf^\scD = \Grb^{n,\scD}_\nabla \times \Met_{r,s}^\scD
	&\longrightarrow \Grb^{n,\scD}_{\nabla|k} \times \Met_{r,s}^\scD \eqqcolon \Fix^\scD
	\qquad \text{and}
	\\
	\Conf^\scD = \Grb^{n,\scD}_\nabla \times \Met_{r,s}^\scD
	&\longrightarrow \Grb^{n,\scD}_{\nabla|k} \eqqcolon \Fix^{\prime \scD}\,,
\end{align}
for any $0 \leq k \leq n$.

Three important choices of solution presheaves are reads as follows:
\begin{enumerate}
\item The simplicial presheaf of \textit{degree-$n$ vacuum Maxwell solutions} on $M$ is the full simplicial sub-presheaf
\begin{equation}
	\Sol_{Mw}^\scD \subset \Grb^{n,\scD}_\nabla \times \Met_{r,s}
\end{equation}
on those $n$-gerbes with connections and metrics on $M$ which satisfy the \textit{higher (vacuum) Maxwell equations}
\begin{equation}
\label{eq:Maxwell eqns}
	\dd^*_g \curv(\cG, \cA) = 0\,.
\end{equation}
This indeed defines a solution sub-presheaf in the sense of Definition~\ref{def:solution presheaf} because equivalent $n$-gerbes with connection on $M$ have the same curvature $(n{+}2)$-forms and all terms in the higher Maxwell equations~\eqref{eq:Maxwell eqns} are compatible with diffeomorphisms of $M$.
With the above choice $\Fix^\scD$, our formalism produces the moduli stack of connections on a fixed $n$-gerbe with $k$-connection $(\cG, \cA^{(k)})$ which satisfy the higher Maxwell equations with respect to a fixed metric $g$ on $M$.
These solutions are divided by the action of those symmetries of the fixed $n$-gerbe with $k$-connection $(\cG, \cA^{(k)})$ which lift an isometry of $(M,g)$.
Working instead with $\Fix^{\prime \scD}$, we obtain the moduli stack of pairs $(\cA, g)$ of a connection $\cA$ on a fixed $n$-gerbe $\cG$ on $M$ and a signature $(r,s)$-metric $g$ which together satisfy the equation~\eqref{eq:Maxwell eqns}.
Here, solutions are divided by the action of symmetries of the $n$-gerbe, which here may alter the metric $g$.

\item The simplicial presheaf of \textit{degree-$n$ Einstein-Maxwell solutions} on $M$ is the full simplicial sub-presheaf
\begin{equation}
	\Sol^{n,\scD}_{\mathrm{EM}} \subset \Grb^{n,\scD}_\nabla \times \Met_{r,s}^\scD
\end{equation}
on those simplices $(\cG, \cA, g)$ which solve the (fibrewise) Einstein-Maxwell equations
\begin{align}
\label{eq:Einstein-Maxwell eqns}
	\Ric^g - \frac{1}{2} s^g &= \frac{\kappa}{2} \big| \curv(\cG, \cA) \big|^2_g - \kappa \big( \curv(\cG, \cA) \bullet_g \curv(\cG, \cA) \big)\,,
	\\
	\dd^*_g \curv(\cG, \cA) &= 0\,,
\end{align}
where $\kappa \in \{-1, 1\}$ is a constant.
This defines a solution sub-presheaf in the sense of Definition~\ref{def:solution presheaf} for the same reasons as in the point~(1) above.
For the choice $\Fix^\scD$ in~\eqref{eq:Conf-->Sol for EM thy}, our formalism produces the moduli stacks of connections on a fixed $n$-gerbe with $k$-connection $(\cG, \cA^{(k)})$ which satisfy the Einstein-Maxwell equations for a fixed metric $g$ on $M$.
The symmetries in this case are generated by equivalences of $n$-gerbes with $k$-connection on $M$ and the isometries of the pseudo-Riemannian manifold $(M,g)$.
The choice $\Fix^{\prime \scD}$, in contrast, produces moduli stacks of pairs $(\cA, g)$ of  a metric $g$ on $M$ and a connection on a fixed $n$-gerbe with $k$-connection $(\cG, \cA^{(k)})$ on $M$, modulo symmetries of $(\cG, \cA^{(k)})$.

\item Suppose that $d = 2n + 4$.
The simplicial presheaf of \textit{metrics and degree-$n$ (anti-)self-dual $\U(1)$-instantons} on $M$ is the full simplicial sub-presheaf
\begin{equation}
	\Sol^{n,\scD}_{\mathrm{inst}} \subset \Grb^{n,\scD}_\nabla \times \Met_{r,s}^\scD
\end{equation}
on those simplices $(\cG, \cA, g)$ which solve the equations
\begin{equation}
	*_g \curv(\cG, \cA) = \pm \curv(\cG, \cA)
\end{equation}
i.e.~the (anti-)self-duality equations with respect to that metric.
Depending on our choice of $\Fix^\scD$ or $\Fix^{\prime \scD}$, we can consider moduli of such configurations on a fixed $n$-gerbe with $k$-connection and for a fixed metric (dividing out by symmetries lifting only isometries), or on a fixed $n$-gerbe with $k$-connection, allowing the metric to vary and dividing out by symmetries of the $n$-gerbe with $k$-connection.
\qen
\end{enumerate}
\end{example}

\begin{example}
\label{eg:Mdl stacks for SD, BF, CS}
We consider the following further examples:
\begin{enumerate}
\item Consider the configuration presheaf
\begin{equation}
	\Conf^\scD = \Grb^{n,\scD} \times \Grb^{d-n-4,\scD} \times \Met_{r,s}^\scD
\end{equation}
and the solution presheaf
\begin{equation}
	\Sol_{\mathrm{SD}}^\scD \subset \Grb^{n,\scD} \times \Grb^{d-n-4,\scD} \times \Met_{r,s}^\scD
\end{equation}
given as the full simplicial subpresheaf on those simplices $(\cG, \cA, \cG', \cA', g)$ such that the field strengths of $(\cG, \cA)$ and $(\cG', \cA')$ are in Poincaré, or electric/magnetic, duality, i.e.
\begin{equation}
	*_{g,v}\, \curv(\cG, \cA) = \curv(\cG', \cA')
\end{equation}
These solutions are the higher geometric objects underlying the considerations in differential cohomology in~\cite{BBSS:CS_diff_chars_and_PD}.

\item Another example is higher $\U(1)$-BF theory (see, for instance,~\cite{HMT:Gen_Ab_TV_and_U(1)-BF}):
let $M$ be a closed, oriented $d$-manifold.
Here we set
\begin{equation}
	\Conf^\scD_{BF;p,q} = \Grb^{p,\scD}_\nabla \times \Grb^{q,\scD}_\nabla
	\qqandqq
	\Fix^\scD_{BF;p,q} = \Grb^{p,\scD}_{\nabla|k} \times \Grb^{q,\scD}_{\nabla|l}\,,
\end{equation}
where $p + q = \dim(M)-1$, $k \in \{0, \ldots, p\}$ and $l \in \{0, \ldots, q\}$.
The solution subpresheaf $\Sol^\scD_{BF;p,q} \subset \Conf^\scD_{BF;p,q}$ is specified by extremising the action functional
\begin{align}
	S_{BF;p,q} \colon \Grb^{p,\scD}_\nabla \times \Grb^{q,\scD}_\nabla
	&\longrightarrow \pi_M^* \U(1)\,,
	\\
	\big( (\cG, \cA), (\cG', \cA') \big)
	&\longmapsto \hol \big( M; (\cG, \cA) \cup (\cG', \cA') \big)\,,
\end{align}
where
\begin{equation}
	\hol(M;-) \colon \pi_0 \Grb^{d+1,\scD}_\nabla \longrightarrow \U(1)
\end{equation}
denotes the higher-dimensional holonomy of a $(d{+}1)$-gerbe with connection around the closed oriented $d$-manifold $M$, and
\begin{equation}
	\cup \colon \Grb^{p,\scD}_\nabla \times \Grb^{q,\scD}_\nabla
	\longrightarrow \Grb^{p+q+2,\scD}_\nabla
\end{equation}
is induced by the Beilinson-Deligne cup product on \v{C}ech-Deligne cocycles (see, for instance,~\cite[Prop.~1.5.8]{Brylinski:LSp_and_Geo_Quan}).

\item Let $2p = \dim(M)-1$ and $k \in \{0, \ldots, p\}$.
Modifying the previous example to have
\begin{equation}
	\Conf^\scD_{BF;p,q} = \Grb^{p,\scD}_\nabla
	\qqandqq
	\Fix^\scD_{BF;p,q} = \Grb^{p,\scD}_{\nabla|k}\,,
\end{equation}
with action functional
\begin{align}
	S_{CS} \colon \Grb^{p,\scD}_\nabla
	\longrightarrow \pi_M^* \U(1)\,,
	\qquad
	(\cG, \cA)
	\longmapsto \hol \big( M; (\cG, \cA) \cup (\cG, \cA) \big)\,,
\end{align}
we obtain higher $\U(1)$-Chern-Simons theory on a closed oriented $d$-manifold $M$.
\qen
\end{enumerate}
\end{example}

We investigate the situation in Example~\ref{eg:Mdl stacks from Grb x Met}(1) more closely:
in particular, consider the choices
\begin{equation}
	\Conf^\scD = \Grb^{n,\scD}_\nabla \times \Met_{0,d}^\scD
	\longrightarrow \Grb^{n,\scD}_{\nabla|k} \times \Met_{0,d}^\scD \eqqcolon \Fix^\scD
\end{equation}
and the solution subpresheaf \smash{$\Sol_{Mw}^\scD \subset \Conf^\scD$}.
That is, we study moduli stacks of Maxwell connections on a fixed $n$-gerbe with $k$-connection $(\cG, \cA^{(k)})$ and with respect to a fixed Riemannian metric $g$ on $M$, modulo the action of symmetries of $(\cG, \cA^{(k)})$ which lift isometries of the Riemannian manifold $(M,g)$.

\begin{remark}
Solutions to the Maxwell equations on a $1$-gerbe have been investigated in~\cite{MR:YM_for_BGrbs}.
To the best of our knowledge, this is the only reference to date which considers moduli of higher-degree connections.
However, it does not take into account the higher structure that permeates their theory.
\qen
\end{remark}

First, we have the following generalisation of~\cite[Sec.~3.1]{MR:YM_for_BGrbs}:

\begin{theorem}
\label{st:MW existence (vacuum)}
Let $M$ be a compact manifold.
Suppose $M$ is endowed with a Riemannian metric $g$ and an $n$-gerbe with $k$-connection $(\cG, \cA^{(k)})$.
There exists a solution to the higher vacuum Maxwell equation~\eqref{eq:Maxwell eqns} on $(\cG, \cA^{(k)})$; that is, there exists an extension $\cA$ of $\cA^{(k)}$ to a full connection on $\cG$ such that
\begin{equation}
	\dd^*_g\, \curv(\cG, \cA) = 0\,.
\end{equation}
\end{theorem}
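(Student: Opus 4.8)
The plan is to reduce the statement to a statement about the existence of a solution to a linear elliptic equation, exploiting the affine structure of connections on a fixed gerbe established in Theorem~\ref{st:forgetting conns is a principal map} and the structure of $k$-connections from Section~\ref{sec:Con_k on fixed n-gerbe}. First I would fix an arbitrary extension $\cA_0$ of $\cA^{(k)}$ to a full connection on $\cG$; such an extension exists because by Corollary~\ref{st:Grb conn comps} (or Proposition~\ref{st:GRB conn comps}) the forgetful morphism $p^{n+1}_k \colon \Grb^{n,M}_{\nabla|n+1} \to \Grb^{n,M}_{\nabla|k}$ is surjective on connected components for $k \leq n$, so the section $(\cG, \cA^{(k)})$ admits a lift. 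The curvature $H_0 \coloneqq \curv(\cG, \cA_0) \in \Omega^{n+2}(M)$ is a fixed closed form. By Theorem~\ref{st:forgetting conns is a principal map}, any other extension differs from $\cA_0$ by an element of the higher vector space of differential forms; concretely, the connections extending the fixed $k$-connection $\cA^{(k)}$ form an affine space whose difference vector space consists (modulo the intrinsic higher morphisms) of forms $B \in \Omega^{n+1}(M)$ with $\dd B$ contributing to the change in curvature. The key point is that varying the top piece of the connection changes the curvature by $H_0 \mapsto H_0 + \dd B$ for $B$ ranging over $\Omega^{n+1}(M)$, while leaving the fixed lower-degree data $\cA^{(k)}$ untouched.

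The core of the argument is then Hodge-theoretic. The higher vacuum Maxwell equation $\dd^*_g \curv(\cG, \cA) = 0$ becomes, after the affine substitution $\curv = H_0 + \dd B$, the equation $\dd^*_g(H_0 + \dd B) = 0$, i.e.\
\begin{equation}
	\dd^*_g \dd B = -\dd^*_g H_0\,.
\end{equation}
Since $M$ is compact, I would invoke the Hodge decomposition on $\Omega^{n+2}(M)$: the closed form $H_0$ decomposes as $H_0 = \mathcal{H} + \dd \beta$, where $\mathcal{H}$ is the harmonic representative of the de Rham class $[H_0]$ and $\beta \in \Omega^{n+1}(M)$. Because $\mathcal{H}$ is harmonic it satisfies $\dd^*_g \mathcal{H} = 0$, so setting $B = -\beta$ gives $\curv(\cG, \cA) = H_0 - \dd\beta = \mathcal{H}$, which is automatically coclosed. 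This realises a connection whose curvature is the harmonic form in the prescribed cohomology class, and such a connection solves the higher Maxwell equation. The only thing to verify is that the shift by $-\beta$ is an admissible change of the connection that preserves the fixed $k$-connection data $\cA^{(k)}$ — this is exactly the content of the affine/torsor structure in Theorem~\ref{st:forgetting conns is a principal map} and the explicit \v{C}ech-Deligne description of the extension in Section~\ref{sec:Con_k on fixed n-gerbe}, since $\beta$ only modifies the top form-degree component $A_{n+1}$ and not the components $A_1, \dots, A_k$.

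The main obstacle, which is also the sole genuinely analytic input, is the Hodge decomposition of $H_0$ on the compact manifold $M$ and the fact that the harmonic representative is coclosed; everything else is formal bookkeeping about the affine structure. I would present this cleanly by first recording the affine description of $\scCon(\cG, \cA^{(k)})$ in terms of the shift $A_{n+1} \mapsto A_{n+1} + \eta$ for a global $(n{+}1)$-form $\eta$ (noting that by the acyclicity of vertical/relative forms, Lemma~\ref{st:acyclicity for vertical forms}, the \v{C}ech-local ambiguities can be gauged into a globally defined top form), and then applying the decomposition. Note that compactness is used twice: to guarantee the existence of harmonic representatives and the solvability of the elliptic equation $\dd^*_g\dd B = -\dd^*_g H_0$. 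This extends the $n=1$ argument of~\cite{MR:YM_for_BGrbs} verbatim to arbitrary $n$, with the only new subtlety being the identification of the correct form-degree component of the higher connection to modify, which is handled by the explicit model developed earlier.
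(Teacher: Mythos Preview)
Your proposal is correct and follows essentially the same route as the paper: pick an arbitrary full connection extending $\cA^{(k)}$, use the Hodge decomposition on the compact manifold to write its curvature as a harmonic form plus an exact piece $\dd\beta$, and then shift the top-degree component $A_{n+1}$ by the global $(n{+}1)$-form $\beta$ to make the curvature harmonic and hence coclosed. One small remark: Corollary~\ref{st:Grb conn comps} concerns $p^k_l$ for $k \leq n$ and does not literally assert surjectivity of $p^{n+1}_k$; the existence of an initial full extension is rather the straightforward \v{C}ech-Deligne fact (via Lemma~\ref{st:acyclicity for vertical forms}) that the paper also takes for granted.
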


\begin{proof}
By the Hodge Decomposition Theorem (see, for instance~\cite{Ebert:Lectures_on_AS_Thm}), an $(n{+}2)$-form $H$ on $M$ satisfies $\dd H = 0$ and $\dd^*_g H = 0$ if and only if it is harmonic, i.e.~$\Delta_g H = 0$.
As a further consequence of the Hodge Decomposition Theorem, there exists a harmonic $(n{+}2)$-form on $M$ whose de Rham class coincides with the image of $[\cG] \in \rmH^{n+2}(M;\ZZ)$ under the canonical morphism $\rmH^{n+2}(M;\ZZ) \to \rmH^{n+2}(M;\RR)$.

Let $\cU = \{U_a\}_{a \in \Lambda}$ be a good open covering of $M$, and suppose that $(\cG, \cA^{(k)})$ is represented by cocycle data $(g, A_1, \ldots, A_k)$ with respect to this covering (where $k \leq n$).
We claim that there exists a connection $\cA$ on $(\cG, \cA^{(k)})$ with $\curv(\cG, \cA) = H$.
By the above arguments, providing such a connection will complete the proof.
To that end, let $\cA'$ be an arbitrary connection on $(\cG, \cA^{(k)})$ (note that $\cA'$ is of the form $(A_1, \ldots, A_k, A'_{k+1}, \ldots, A'_{n+1})$, with $\cA^{\prime (k)} = (A_1, \ldots, A_k) = \cA^{(k)})$ fixed by the input data $(\cG, \cA^{(k)})$).
Then, both $\curv(\cG, \cA')$ and $H$ represent the class of $\cG$ in de Rham cohomology; therefore, there exists an $\eta \in \Omega^{n+1}(M)$ with $\dd \eta = H - \curv(\cG, \cA')$.
We define $A_i \coloneqq A'_i$, for $i = k+1, \ldots, n$, and $A_{n+1} = A'_{n+1} + \delta \eta$, where $\delta$ is the \v{C}ech differential.
Then, $\delta A_{n+1} = \delta A'_{n+1}$, so that $\cA$ is indeed a connection on $(\cG, \cA^{(k)})$.
Furthermore, we have that
\begin{equation}
	\delta \curv(\cG, \cA)
	= \dd A_{n+1}
	= \dd A'_{n+1} + \dd \delta \eta
	= \delta (\curv (\cG, \cA') + \dd \eta)
	= \delta H\,.
\end{equation}
Since $\delta \colon \Omega^*(M) \to \Omega^*(\v{C}_0 \cU)$ is injective, we derive that $\curv(\cG, \cA) = H$.
\end{proof}

Let $\scMdl_{Mw}((\cG, \cA^{(k)}), g)$ denote the moduli $\infty$-prestack of higher Maxwell solutions on a fixed $n$-gerbe with $k$-connection on $M$, and with respect to a fixed pseudo-Riemannian metric $g$ of any signature $(r,s)$.

\begin{theorem}
\label{st:HoType of Sol_Mw}
The underlying space of the moduli $\infty$-stack \smash{$\scMdl_{Mw, Ne_M}((\cG, \cA^{(k)}), g)$} is equivalent to that of $\rmB \bbGrb^{n-1,M}$.
In particular,
\begin{equation}
	\pi_i S \big( \scMdl_{Mw, Ne_M}((\cG, \cA^{(k)}), g) \big)
	\simeq \rmH^{n+2-i}(M; \ZZ)\,,
	\qquad
	\forall\, i = 1, \ldots, n+1\,,
\end{equation}
and all other homotopy groups are trivial.
\end{theorem}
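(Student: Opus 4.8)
The plan is to show that the underlying space of the moduli $\infty$-prestack of Maxwell solutions is contractible as a \emph{solution space}, so that Proposition~\ref{st:SMdl for SSol = *} applies and identifies $S(\scMdl_{Mw, Ne_M}((\cG, \cA^{(k)}), g))$ with $S(\rmB \scSym_{Ne_M}(\cG, \cA^{(k)})) = S(\rmB \scAut(\cG, \cA^{(k)}))$. The first step is to recall from~\eqref{eq:BAut = Grb^n-1} that there is a canonical equivalence $\scAut(\cG, \cA^{(k)}) \simeq \bbGrb^{n-1,M}_{\nabla|k}$ of smooth higher groups. By Corollary~\ref{st:Sp^n_0 is equivalence}, the morphism $p^k_0$ induces an equivalence $S(\bbGrb^{n-1,M}_{\nabla|k}) \simeq S(\bbGrb^{n-1,M})$, so that after applying $S$ the automorphism group becomes $S(\bbGrb^{n-1,M})$. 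Since $S$ commutes with $\rmB$ (as noted in the proof of Corollary~\ref{st:equivs of SSym(cG_0) and SSym(cG_1)}, using that $S$ preserves finite products and geometric realisations), we then obtain $S(\rmB\scAut(\cG, \cA^{(k)})) \simeq \rmB S(\bbGrb^{n-1,M})$, and Theorem~\ref{st:spaces of n-gerbes} computes the homotopy groups of $S(\bbGrb^{n-1,M})$ as $\rmH^{(n-1)+2-i}(M;\ZZ) = \rmH^{n+1-i}(M;\ZZ)$, which after the degree shift from delooping yields exactly the claimed groups $\rmH^{n+2-i}(M;\ZZ)$.

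The heart of the argument, and the step I expect to be the main obstacle, is verifying the hypothesis of Proposition~\ref{st:SMdl for SSol = *}: that the underlying space $S((\tilde{\sigma}(\cG, \cA^{(k)}))^* \scSol_{Mw}^M((\cG, \cA^{(k)}), g))$ of the solution $\infty$-presheaf is equivalent to $\Delta^0$. Here I would set up the configuration presheaf $\Conf^\scD = \Grb^{n,\scD}_\nabla$ over $\Fix^\scD = \Grb^{n,\scD}_{\nabla|k}$ (with the metric $g$ fixed), and the solution subpresheaf $\Sol_{Mw}^\scD \subset \Conf^\scD$ cut out by $\dd^*_g \curv(\cG, \cA) = 0$. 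The strategy is to exhibit the space of Maxwell connections extending $\cA^{(k)}$ as an affine space (in the Baez-Crans sense) over a simplicial $\ul{\RR}$-module, whence its underlying space is contractible by Lemma~\ref{st:spl smooth R-mods have trivial HoType}. Concretely, by Theorem~\ref{st:MW existence (vacuum)} the solution set is nonempty (at least over $\RR^0$, and one must check this persists in families over each $c \in \Cart$), and the difference of two solutions is governed by the linear equation $\dd^*_g \dd \eta = 0$ together with the \v{C}ech-Deligne structure; the relevant torsor structure is precisely the restriction of the affine action by differential forms identified in Theorem~\ref{st:forgetting conns is a principal map}.

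More precisely, I would argue that the fibre sequence $\scCon(\cG, \cA^{(k)}) \to \bbGrb^{n,M}_\nabla \to \bbGrb^{n,M}_{\nabla|k}$ exhibits the connections extending $\cA^{(k)}$ as a torsor over the simplicial $\ul{\RR}$-module of vertical forms $(\Omega^{n+1,v} \xleftarrow{\dd^v} \cdots \xleftarrow{\dd^v} \Omega^{k+1,v})$ (in the spirit of the proof of Theorem~\ref{st:forgetting conns is a principal map}). Imposing the Maxwell constraint $\dd^*_g \curv = 0$ is a \emph{linear} condition on the top-degree form part of the connection, and by the Hodge Decomposition argument used in Theorem~\ref{st:MW existence (vacuum)} the solutions form a nonempty affine subspace; the key check is that this subspace is itself a torsor over a simplicial $\ul{\RR}$-submodule, stable under the presheaf structure on $\Cart$. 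Once that is established, Lemma~\ref{st:spl smooth R-mods have trivial HoType} forces the underlying space of the $\ul{\RR}$-module to be contractible, hence the pointed torsor has contractible underlying space as well. Finally, combining this with Proposition~\ref{st:SMdl for SSol = *}, Corollary~\ref{st:Sp^n_0 is equivalence} and Theorem~\ref{st:spaces of n-gerbes} yields the stated equivalence $S(\scMdl_{Mw, Ne_M}((\cG, \cA^{(k)}), g)) \simeq S(\rmB\bbGrb^{n-1,M})$ and the computation of its homotopy groups. The delicate point throughout is bookkeeping the degree shifts and confirming that the Maxwell condition, although nonlinear in $g$, is linear in the connection for fixed $g$, so that no obstruction to the $\ul{\RR}$-module structure of the solution space arises.
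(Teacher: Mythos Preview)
Your proposal is correct and follows essentially the same route as the paper's proof: exhibit the Maxwell solution presheaf as affine over a simplicial $\ul{\RR}$-module (the paper writes this explicitly as the Dold-Kan image of $\ker(\dd^*_g \dd \colon \Omega^{n+1,v} \to \Omega^{n+1,v}) \leftarrow \Omega^{n,v} \leftarrow \cdots \leftarrow \Omega^{1,v}$), invoke Lemma~\ref{st:spl smooth R-mods have trivial HoType} for contractibility of its underlying space, apply Proposition~\ref{st:SMdl for SSol = *}, and then finish with $\scAut(\cG,\cA^{(k)}) \simeq \bbGrb^{n-1,M}_{\nabla|k}$, Corollary~\ref{st:Sp^n_0 is equivalence}, and Theorem~\ref{st:spaces of n-gerbes}. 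Your extra care about nonemptiness in families (via Theorem~\ref{st:MW existence (vacuum)}) is a detail the paper leaves implicit but is indeed needed to pass from ``affine over an $\ul{\RR}$-module'' to ``contractible underlying space''.
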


\begin{proof}
The solution presheaf $\Sol_{Mw}^M$ is affine over the simplicial $\ul{\RR}$-module presheaf which is obtained via the Dold-Kan construction from
\begin{equation}
\begin{tikzcd}
	\big( \ker( \dd^*_g \dd \colon \Omega^{n+1,v} \to \Omega^{n+1,v})
	& \Omega^{n,v} \ar[l, "\dd"']
	& \cdots \ar[l, "\dd"']
	& \Omega^{1,v} \big) \ar[l, "\dd"']
\end{tikzcd}
\end{equation}
Consequently, its underlying space is contractible by Lemma~\ref{st:spl smooth R-mods have trivial HoType}.
By Proposition~\ref{st:SMdl for SSol = *}, it follows that we have an equivalence
\begin{equation}
	\scMdl_{Mw, Ne_M} \big( (\cG, \cA^{(k)}), g \big)
	\simeq \rmB \scAut \big( (\cG, \cA^{(k)}), g \big))
	\simeq \rmB \scAut (\cG, \cA^{(k)})\,.
\end{equation}
Finally, the claim follows from the canonical equivalence $\scAut (\cG, \cA^{(k)}) \simeq \bbGrb^{n-1,M}_{\nabla|k}$ (as smooth higher abelian groups) and Corollary~\ref{st:Sp^n_0 is equivalence}.
\end{proof}

If additionally $\bbGamma$ is a smooth $\infty$-group acting on $(M,g)$ by isometries and preserving the equivalence class of the $n$-gerbe $\cG$ (see the formalism in Section~\ref{sec:LES for solution stacks}), a combination of Corollary~\ref{st:LES for S(Mdl_phi)} and Theorem~\ref{st:HoType of Sol_Mw} yields

\begin{theorem}
\label{st:hoLES for hYM Mdl}
Let $\bbGamma$ be a smooth $\infty$-group acting on $M$, with the action presented by a morphism $\phi \colon Q \to \rmB\scD[\cG, g]^\opp$ of left fibrations over $N\Cart^\opp$.
Then, there is a long exact sequence of homotopy groups (resp.~pointed sets in degree zero)
\begin{equation}
\begin{tikzcd}[column sep=0.75cm]
	\cdots \ar[r]
	& \pi_r S(\bbH) \ar[r]
	& \rmH^{n+2-r}(M;\ZZ) \ar[r] \ar[d, phantom, ""{coordinate, name=MidPoint}]
	& \pi_r S \big( \scMdl_{Mw,\Phi} ((\cG, \cA^{(k)}), g) \big)
	\ar[dll, rounded corners, to path={-- ([xshift=2ex]\tikztostart.east) |- (MidPoint) \tikztonodes -| ([xshift=-2ex]\tikztotarget.west) -- (\tikztotarget)}]
	& 
	\\
	& \pi_{r-1} S(\bbH) \ar[r]
	& \rmH^{n+1-r}(M;\ZZ) \ar[r]
	& \pi_{r-1} S \big( \scMdl_{Mw,\Phi} ((\cG, \cA^{(k)}), g) \big) \ar[r]
	& \cdots
\end{tikzcd}
\end{equation}
\end{theorem}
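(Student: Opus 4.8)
The plan is to combine the two ingredients that the theorem statement explicitly invokes, namely Corollary~\ref{st:LES for S(Mdl_phi)} and Theorem~\ref{st:HoType of Sol_Mw}, by feeding the computed homotopy type of the fibre into the long exact sequence coming from the principal $\infty$-bundle structure. Concretely, the theorem concerns the moduli $\infty$-prestack $\scMdl_{Mw,\Phi}((\cG, \cA^{(k)}), g)$ for a smooth $\infty$-group $\bbGamma$ acting on $M$ by isometries of $(M,g)$ preserving the equivalence class of $\cG$. The set-up of Section~\ref{sec:LES for solution stacks} applies with $\Fix^\scD = \Grb^{n,\scD}_{\nabla|k} \times \Met^\scD_{r,s}$ and the solution subpresheaf $\Sol^\scD_{Mw}$, so that $\Phi$ factors through $\Diff_{[\cG,g]}(M) \subset \Diff(M)$ (the isometries preserving $[\cG]$), and $\scMdl_{Mw, Ne_M}((\cG, \cA^{(k)}), g)$ is the special case of the trivial-group action presented there.

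First I would apply Corollary~\ref{st:LES for S(Mdl_phi)} to the present moduli problem. That corollary, which rests on Theorem~\ref{st:Mod-Mdl-Diff prBun} and Corollary~\ref{st:moduli prBun in spaces}, supplies a long exact sequence of homotopy groups
\begin{equation}
	\cdots \to \pi_r S(\bbGamma)
	\to \pi_r S\big( \scMdl_{Mw, Ne_M}((\cG, \cA^{(k)}), g) \big)
	\to \pi_r S\big( \scMdl_{Mw, \Phi}((\cG, \cA^{(k)}), g) \big)
	\to \pi_{r-1} S(\bbGamma) \to \cdots
\end{equation}
arising from the $\bbGamma$-principal $\infty$-bundle $S(\scMdl_{Mw, Ne_M}) \to S(\scMdl_{Mw, \Phi})$ in $\scS$. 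The only adaptation needed is that the structure group is the isometry-preserving subgroup; since this is a disjoint union of connected components of $\Diff(M)$ (this is exactly the point noted before Corollary~\ref{st:moduli prBun in spaces}), all the statements carry over verbatim. Next I would invoke Theorem~\ref{st:HoType of Sol_Mw}, which computes $\pi_i S(\scMdl_{Mw, Ne_M}((\cG, \cA^{(k)}), g)) \cong \rmH^{n+2-i}(M;\ZZ)$ for $i = 1, \ldots, n+1$ and trivial otherwise. Substituting these isomorphisms into the long exact sequence above term-by-term yields precisely the asserted sequence, with the $\rmH^{n+2-r}(M;\ZZ)$ entries replacing the total-space homotopy groups.

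The substitution is essentially bookkeeping, but there are two places where I would take care. The first is a degree/indexing check: Theorem~\ref{st:HoType of Sol_Mw} indexes the nonvanishing groups by $i = 1, \ldots, n+1$, and one must confirm that the long exact sequence's alternation of $r$ and $r-1$ correctly reproduces the $\rmH^{n+2-r}(M;\ZZ)$ and $\rmH^{n+1-r}(M;\ZZ)$ entries displayed in the statement (including their vanishing ranges), so that the sequence terminates and begins correctly. The second, and what I expect to be the only genuine subtlety, is the degree-zero end: $S(\bbGamma)$ need not be connected (the isometry group can have several components), so at the bottom the exactness is exactness of \emph{pointed sets} rather than groups. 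This is precisely why the statement hedges with ``resp.~pointed sets in degree zero''; I would note that the principal-bundle fibre sequence of Corollary~\ref{st:moduli prBun in spaces} gives exactly this weaker notion of exactness at $\pi_0$, as is standard for fibrations of spaces, and that no abelian-group structure is claimed there.

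Thus the proof is a short assembly: apply Corollary~\ref{st:LES for S(Mdl_phi)}, insert the fibre computation of Theorem~\ref{st:HoType of Sol_Mw}, and record that the degree-zero portion is exact only as pointed sets because $S(\bbGamma)$ may be disconnected.
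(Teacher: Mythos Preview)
Your proposal is correct and follows exactly the paper's approach: the paper states the theorem as an immediate consequence of combining Corollary~\ref{st:LES for S(Mdl_phi)} (the long exact sequence from the $\bbGamma$-principal $\infty$-bundle $\scMdl_{Ne_M} \to \scMdl_\Phi$) with Theorem~\ref{st:HoType of Sol_Mw} (the identification $\pi_i S(\scMdl_{Mw,Ne_M}) \cong \rmH^{n+2-i}(M;\ZZ)$), without giving any further argument. Your elaboration of the bookkeeping and the pointed-sets caveat at $\pi_0$ is more detailed than what the paper provides, but entirely in line with its intent.
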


Moreover, an application of Theorem~\ref{st:equiv result for Mdl oo-stacks} yields the following result:

\begin{theorem}
\label{st:equiv of hYM Mds stacks}
Let $\bbGamma$ be a smooth $\infty$-group acting on $M$, with the action presented by a morphism $\phi \colon Q \to \rmB\scD[\cG, g]^\opp$ of left fibrations over $N\Cart^\opp$.
Further, let $0 \leq l < k \leq n$, and let $\cA^{(l)}$ denote the $l$-connection on $\cG$ obtained by forgetting the highest form degrees of $\cA^{(k)}$.
Then, there are canonical equivalences of $\infty$-presheaves
\begin{align}
	\scMdl_{Mw,\Phi} \big( (\cG, \cA^{(k)}), g \big)
	&\simeq \scMdl_{Mw,\Phi} \big( (\cG, \cA^{(l)}), g \big)\,,
	\\
	\scMdl_{Mw,\Phi} (\cG, \cA^{(k)})
	&\simeq \scMdl_{Mw,\Phi} (\cG, \cA^{(l)})\,.
\end{align}
If $\bbGamma = H$ is a sheaf of ordinary (rather than higher) groups, then these moduli $\infty$-prestacks are even $\infty$-stacks.
\end{theorem}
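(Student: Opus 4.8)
The plan is to observe that this theorem is a direct application of the general machinery established in Part~\ref{part:Higher Geometry}, specialised to the higher $\U(1)$-gauge theory at hand, combined with the 0-connectedness of the connection-forgetting maps proven in this section. The two asserted equivalences (with and without a fixed metric $g$) and the descent statement are logically independent pieces, so I would treat them in turn.

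First I would verify the hypotheses of Theorem~\ref{st:equiv result for Mdl oo-stacks}. In the set-up of Example~\ref{eg:Mdl stacks from Grb x Met}(1), the configuration presheaf is $\Conf^\scD = \Grb^{n,\scD}_\nabla \times \Met_{r,s}^\scD$ and we use the canonical projection to $\Fix_0^\scD$, which is either $\Grb^{n,\scD}_{\nabla|k} \times \Met_{r,s}^\scD$ (metric-fixed case) or $\Grb^{n,\scD}_{\nabla|k}$ (metric-varying case). Setting $\Fix_1^\scD$ to be the corresponding presheaf with $k$ replaced by $l$, the comparison morphism $q$ is $p^k_l \times \id$ (resp.~$p^k_l$). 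The crucial input is that $q$ is 0-connected. Since $\Met_{r,s}^\scD$ is simplicially constant, it has trivial higher homotopy and $\pi_0$-preserving behaviour is governed entirely by the gerbe factor; thus I would invoke Proposition~\ref{st:intro:forgetting connections}, or more precisely Corollary~\ref{st:Grb conn comps}/Proposition~\ref{st:GRB conn comps}, which establishes that $p^k_l$ is 0-connected precisely when $l < k \leq n$. The hypothesis $k \leq n$ is exactly what rules out the excluded case $k = n+1$ (where 0-connectedness fails by Proposition~\ref{st:wtGrb conn comps}). The solution subpresheaf $\Sol_{Mw}^\scD$ is a full simplicial subpresheaf cut out by the Maxwell equation on curvature, hence qualifies as a solution presheaf in the sense of Definition~\ref{def:solution presheaf}. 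With these checks in place, Theorem~\ref{st:equiv result for Mdl oo-stacks} directly yields the canonical equivalences
\begin{align}
	\scMdl_{Mw,\Phi} \big( (\cG, \cA^{(k)}), g \big)
	&\simeq \scMdl_{Mw,\Phi} \big( (\cG, \cA^{(l)}), g \big)\,,
	\\
	\scMdl_{Mw,\Phi} (\cG, \cA^{(k)})
	&\simeq \scMdl_{Mw,\Phi} (\cG, \cA^{(l)})\,,
\end{align}
where $\cG_0 = (\cG, \cA^{(k)})$ and $\cG_1 = q(\cG_0) = (\cG, \cA^{(l)})$, and where one uses Remark~\ref{rmk:(-)_[G] and isos on conn comps} to identify $\Diff_{[\cG, \cA^{(k)}]}(M) = \Diff_{[\cG, \cA^{(l)}]}(M)$ so that the same action $\Phi$ makes sense on both sides.

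For the final descent claim, when $\bbGamma = H$ is a sheaf of ordinary groups, I would appeal to Corollary~\ref{st:Descent result for moduli oo-prestacks}. Its hypothesis is that the restriction $(\rmB\phi')^* (\Sol_{Mw})^{\scD[\cG]}_{[\cG]}$ is fibrant in $L_{\tau_H} \scK_H$, i.e.~satisfies homotopy descent with respect to $\tau_H$-coverings. Here I expect the main technical obstacle: unlike the full connection presheaf $\Grb^{n,\scD}_\nabla$, whose descent follows from Lemma~\ref{st:families of gerbes satisfy tau_fam-descent} together with the left Kan extension machinery, the Maxwell solution subpresheaf is defined by a condition on curvature, and one must confirm that this condition is local, i.e.~compatible with gluing along good open covers. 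The key point is that the Maxwell equation $\dd^*_g \curv(\cG, \cA) = 0$ is a local differential condition on the globally-defined curvature $(n{+}2)$-form, so that a family of connections satisfies it if and only if its restrictions to the patches of a cover do; this should upgrade descent for $\Grb^{n,\scD}_\nabla \times \Met_{r,s}^\scD$ to descent for the full subpresheaf $\Sol_{Mw}^\scD$, since taking full connected components commutes with the homotopy limits computing descent. I would therefore argue that $\Sol_{Mw}^\scD$ inherits descent from its ambient configuration presheaf, and that restricting and pulling back along $\rmB\phi'$ preserves this, so the hypothesis of Corollary~\ref{st:Descent result for moduli oo-prestacks} is met and $\scMdl_{Mw,\Phi}$ is an $\infty$-stack. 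The subtlety to handle carefully is that the curvature is only defined on $\pi_0$ (see the morphism~\eqref{eq:curv for Grb^(n,scD)_nabla}), so I would phrase locality of the solution condition at the level of connected components, which is precisely where Definition~\ref{def:solution presheaf} places it, and then invoke that $\Sol_{Mw}^\scD(c) \hookrightarrow \Conf^\scD(c)$ is an inclusion of connected components compatible with the descent comparison maps.
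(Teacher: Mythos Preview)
Your proposal is correct and follows essentially the same approach as the paper: apply Theorem~\ref{st:equiv result for Mdl oo-stacks} using that $p^k_l$ (and hence $p^k_l \times \id$) is 0-connected by Corollary~\ref{st:Grb conn comps}, and then invoke Corollary~\ref{st:Descent result for moduli oo-prestacks} for the descent statement, noting that the Maxwell condition is local because it depends only on the curvature. Your treatment is more explicit about the descent verification than the paper's, but the structure and key inputs are identical.
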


\begin{proof}
Both equivalences follow from Theorem~\ref{st:equiv result for Mdl oo-stacks}, noting that, by Corollary~\ref{st:Grb conn comps}, each of the morphisms
\begin{align}
	\Grb^{n,\scD}_{\nabla|k} \times \Met_{r,s}^\scD
	&\longrightarrow \Grb^{n,\scD}_{\nabla|l} \times \Met_{r,s}^\scD
	\qquad \text{and}
	\\
	\Grb^{n,\scD}_{\nabla|k}
	&\longrightarrow \Grb^{n,\scD}_{\nabla|l}
\end{align}
is 0-connected.
The last claim follows readily from Corollary~\ref{st:Descent result for moduli oo-prestacks} (one readily observes that the solution presheaves are homotopy sheaves as required in that corollary since the field equations only involve the curvature of $(\cG, \cA)$).
\end{proof}

\begin{remark}
Similar versions of Theorems~\ref{st:HoType of Sol_Mw},~\ref{st:hoLES for hYM Mdl} and~\ref{st:equiv of hYM Mds stacks} hold true for the other moduli $\infty$-(pre)stacks in Examples~\ref{eg:Mdl stacks from Grb x Met} and~\ref{eg:Mdl stacks for SD, BF, CS}.
\qen
\end{remark}

We finish this section with an example which encodes an action of a higher group on the simplicial presheaf of higher Maxwell solutions; a truncated version also appears in~\cite[Sec.~4.1]{GKSW:Gen_global_syms}:

\begin{example}
Define the full simplicial subpresheaf
\begin{equation}
	\tr\Grb^{n,\scD}_\nabla \subset \Grb^{n,\scD}_\nabla
\end{equation}
on all vertices $(\cG, \cA)$ whose underlying $n$-gerbe is given by a $\U(1)$-valued \v{C}ech cocycle which is identically equal to one, and whose connection is of the form $\cA = (A_1, \ldots, A_{n+1}) = (0, \ldots, 0, \delta \rho)$ for some smooth family of globally defined $(n{+}1)$-forms $\rho$ on $M$.
We denote these objects by $(\cI, \rho)$.
A morphism $(\cI, \rho_0) \to (\cI, \rho_1)$ in $\tr\Grb^{n,\scD}_\nabla$ is the same as an $(n{-}1)$-gerbe with connection $(\cG', \cA')$ such that
\begin{equation}
	\rho_1 - \rho_0
	= \curv(\cG', \cA')\,.
\end{equation}
We let 
\begin{equation}
	\tr\Grb^{n,\scD}_{\nabla, 1\flat} \subset \tr\Grb^{n,\scD}_\nabla
\end{equation}
denote the further simplicial subset defined by demanding that the 1-simplices $(\cG', \cA')$ are smooth families of \textit{flat} $(n{-}1)$-gerbes with connection.
We set
\begin{equation}
	\Conf^\scD
	\coloneqq \Met_{r,s}^\scD \times \big( (\tr\Grb^{n,\scD}_\nabla)_{(\cI, 0)/} \underset{\tr\Grb^{n,\scD}_\nabla}{\times} \tr\Grb^{n,\scD}_{\nabla, 1\flat} \big)\,.
\end{equation}
We can view this as a model for the higher action groupoid (or homotopy quotient)
\begin{align}
	\Conf^\scD
	\simeq \Met_{r,s}^\scD \times \big( \Grb^{n-1,\scD}_\nabla \dslash \Grb^{n-1,\scD}_\flat \big)
\end{align}
of the action of flat $(n{-}1)$-gerbes with connection on all $(n{-}1)$-gerbes with connection on $M$ via the product in $\Grb^{n-1,\scD}_\nabla$.
The advantage of writing $\Conf^\scD$ in the more complicated way above is that it is easier to see that the morphism
\begin{align}
\label{eq:inhom MW Conf --> Fix}
	&\Conf^\scD = \Met_{r,s}^\scD \times \big( (\tr\Grb^{n,\scD}_\nabla)_{(\cI, 0)/} \underset{\tr\Grb^{n,\scD}_\nabla}{\times} \tr\Grb^{n,\scD}_{\nabla, 1\flat} \big)
	\\
	&\longrightarrow \Met_{r,s}^\scD \times \big( \Omega^{d-n-1,v,\scD} \times \big( \{\cI\} \dslash \Grb^{n-1,\scD}_\flat \big) \big) \eqqcolon \Fix^\scD
\end{align}
is a projective fibration.
We describe this morphism in more detail:
first, the simplicial presheaf $\{\cI\} \dslash \Grb^{n-1,\scD}_\flat$ has the same $k$-simplices as $\tr\Grb^{n,\scD}_{\nabla, 1\flat}$ for each $k > 0$, whereas over each $c \in \Cart$ it has only a single vertex.
We can understand this as a Kan complex of trivial $n$-gerbes without connection, but where we still endow the higher simplices with connections as before.
Then, in simplicial level $k > 0$, the second component of the morphism~\eqref{eq:inhom MW Conf --> Fix} agrees with the canonical projection
\begin{equation}
	\Met_{r,s}^\scD \times \big( (\tr\Grb^{n,\scD}_\nabla)_{(\cI, 0)/} \underset{\tr\Grb^{n,\scD}_\nabla}{\times} \tr\Grb^{n,\scD}_{\nabla, 1\flat} \big)
	\longrightarrow \Met_{r,s}^\scD \times \tr\Grb^{n,\scD}_{\nabla, 1\flat}\,.
\end{equation}
On vertices, the morphism acts as
\begin{equation}
	\big( g,\, (\cG', \cA') \colon (\cI,0) \to (\cI, \rho) \big)
	\longmapsto \big( g,\, \dd^*_g \rho,\, \{\cI\} \big)\,.
\end{equation}
One readily observes that the morphism~\eqref{eq:inhom MW Conf --> Fix} has the horn-lifting properties which make it into a Kan fibration.
We further define
\begin{equation}
	\Sol^\scD = \Conf^\scD\,.
\end{equation}
Then, the fibres of~\eqref{eq:inhom MW Conf --> Fix} describe smooth families of higher Maxwell solutions for a given inhomogeneity $j = \dd^*_g \rho$.
This setting encodes the action of flat higher $\U(1)$-connections on Maxwell solutions as investigated, for instance, in~\cite[Sec.~4.1]{GKSW:Gen_global_syms}, in terms of a smooth higher group action.
\qen
\end{example}

%%%%%%%%%%%%%%%%%%%%%%%%%%%%%%%%%%%%%%%%%%%%%%%%%%%%%%%%%%%%%%%%%%%%%%%%%%%%

\section{Higher $\U(1)$-gauge theory and a new String group model}
\label{sec:String group}

%%%%%%%%%%%%%%%%%%%%%%%%%%%%%%%%%%%%%%%%%%%%%%%%%%%%%%%%%%%%%%%%%%%%%%%%%%%%

We next present a brief application of our results to String groups (as defined in~\cite{Stolz:Conj_on_pos_Ric}).
Here we will be using the formalism for smooth $\infty$-groups, i.e.~group objects in the $\infty$-category $\scP(N\Cart)$, and the notion of String group extensions in this setting as introduced in~\cite{Bunk:Pr_ooBdls_and_String} (see, in particular,~\cite[Defs.~4.1, 4.2]{Bunk:Pr_ooBdls_and_String}).
Let $H$ be a compact, simple and simply connected Lie group.
Then, a smooth String group extension of $H$ is an extension
\begin{equation}
	A \longrightarrow \widehat{H} \longrightarrow H
\end{equation}
of group objects in $\scP(N\Cart)$ (see~\cite[Def.~4.26]{NSS:Pr_ooBdls_I} and~\cite[Thm.~3.48]{Bunk:Pr_ooBdls_and_String}) which the functor $S \colon \scP(N\Cart) \to \scS$ sends to an ordinary String group extension in $\scS$.
We recall the following standard terminology for $k$-connections on 1-gerbes:

\begin{definition}
\label{def:connective structure and curving}
Let $\cG \in \Grb^1(M)$ be a 1-gerbe on a manifold $M$.
A \textit{connective structure} on $\cG$ is a 1-connection $\cA^{(1)}$ on $\cG$.
A \textit{curving} for a pair $(\cG, \cA^{(1)})$ of a 1-gerbe and a connective structure thereon is a family of locally defined 2-forms which completes $\cA^{(1)}$ into a full connection on $\cG$.
In other words, a connection on a 1-gerbe consists of a connective structure and a compatible curving.
\end{definition}

Let $L \colon H \to \Diff(H)$ denote the action of $H$ on itself via left multiplication.
Let $\cG_\bas \in \Grb^1(M)$ denote its basic gerbe~\cite{Meinrenken:The_basic_gerbe}; this is a gerbe whose class in $\rmH^3(H;\ZZ) \cong \ZZ$ is a generator.
Further, $\cG_\bas$ carries a canonical connection~\cite{Meinrenken:The_basic_gerbe}; we denote this connection by $\cA_\bas$ and its underlying connective structure by \smash{$\cA_\bas^{(1)}$}.

In~\cite{FRS:Higher_gerbe_connections}, Fiorenza, Rogers and Schreiber constructed a smooth String group model, which, in our formalism, agrees with the extension
\begin{equation}
	\scAut(\cG_\bas, \cA_\bas) \longrightarrow \bbString^{(2)}(H) \coloneqq \scSym_L(\cG_\bas, \cA_\bas) \longrightarrow H
\end{equation}
of group objects in $\scP(N\Cart)$ arising from Theorem~\ref{st:Aut-Sym_phi-Gamma extension}.
This is an extension of $H$ by $\rmB\U(1)$.
It was conjectured in~\cite{BMS:2-Grp_Ext} and proven in~\cite{Bunk:Pr_ooBdls_and_String} that also the extension
\begin{equation}
	\scAut(\cG_\bas) \longrightarrow \bbString^{(0)}(H) \coloneqq \scSym_L(\cG_\bas) \longrightarrow H
\end{equation}
is a smooth String group model.
By Corollary~\ref{st:Aut-Sym_phi-Gamma extension} it is an extension of $H$ as a group object in $\scP(N\Cart)$ by the smooth 2-group $\rmB(\bbGrb^{0,M}) \simeq \rmB(H^{\U(1)})$, where this equivalence uses that $\rmH^2(H;\ZZ) = 0$.
The underlying space of $\rmB (\U(1)^H)$ further has the homotopy type of $\rmB\U(1)$ since $\pi_1(H) \simeq 0$; see~\cite{BMS:2-Grp_Ext, Bunk:Pr_ooBdls_and_String} for details).

Here we are interested in the intermediate case, i.e.~the extension
\begin{equation}
	\scAut(\cG_\bas, \cA_\bas^{(1)}) \longrightarrow \bbString^{(1)}(H) \coloneqq \scSym_L(\cG_\bas, \cA_\bas^{(1)}) \longrightarrow H\,,
\end{equation}
where we fix the basic gerbe with only its \textit{connective structure}, rather than its full connection.
There is a canonical equivalence
\begin{equation}
	\scAut(\cG_\bas, \cA_\bas^{(1)})
	\simeq \bbGrb^{0,H}_\nabla\,,
\end{equation}
which is the smooth $\infty$-group of $\U(1)$-bundles with connection on $H$ (though note that each $\U(1)$-bundle on $H$ is trivialisable).
By the fact that $H$ is 2-connected (so that each $\U(1)$-bundle on $H$ is trivialisable) and Corollary~\ref{st:Sp^n_0 is equivalence} we thus obtain that
\begin{equation}
	S \big( \scAut(\cG_\bas, \cA_\bas^{(1)}) \big)
	\simeq S \bbGrb^{0,H}_\nabla
	\simeq S \bbGrb^{0,H}
	\simeq \rmB \U(1)\,.
\end{equation}
That is, it has the correct homotopy type for the fibre in a String group extension.

We check that $\bbString^{(1)}(H)$ indeed has the correct homotopy type for a String group extension of $H$:
first, the canonical forgetful map \smash{$p^1_0 \colon \bbGrb^{1,\scD}_{\nabla|1} \longrightarrow \bbGrb^{1,\scD}$} induces a canonical morphism of smooth $\infty$-groups
\begin{equation}
	q' \colon \bbString^{(1)}(H) = \scSym_L(\cG_\bas, \cA^{(1)}_\bas)
	\longrightarrow \scSym_L(\cG_\bas) = \bbString^{(0)}(H)\,.
\end{equation}

\begin{corollary}
The canonical forgetful map $q'$ induces an equivalence on the underlying spaces of $\bbString^{(1)}(H)$ and $\bbString^{(0)}(H)$.
\end{corollary}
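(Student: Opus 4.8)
The plan is to prove this by applying the functor $S \colon \scP(N\Cart) \to \scS$ to the relevant group extensions and invoking the fact that $S$ preserves extensions of group objects, together with a five-lemma-type argument on the induced long exact sequences. The key observation is that $q'$ is a morphism of smooth $\infty$-group extensions of $H$, fitting into a commutative diagram
\begin{equation}
\begin{tikzcd}[column sep=1.25cm, row sep=0.75cm]
	\scAut(\cG_\bas, \cA_\bas^{(1)}) \ar[r] \ar[d]
	& \bbString^{(1)}(H) \ar[r] \ar[d, "q'"]
	& H \ar[d, equal]
	\\
	\scAut(\cG_\bas) \ar[r]
	& \bbString^{(0)}(H) \ar[r]
	& H
\end{tikzcd}
\end{equation}
of group objects in $\scP(N\Cart)$, where the left-hand vertical map is induced by the forgetful morphism $p^1_0 \colon \bbGrb^{0,H}_\nabla \to \bbGrb^{0,H}$ under the canonical equivalences $\scAut(\cG_\bas, \cA_\bas^{(1)}) \simeq \bbGrb^{0,H}_\nabla$ and $\scAut(\cG_\bas) \simeq \bbGrb^{0,H}$.

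First I would apply $S$ to the entire diagram. Since $S$ preserves extensions of group objects (by~\cite[Thm.~3.48, Cor.~3.52]{Bunk:Pr_ooBdls_and_String}), both rows become extensions of $S\bbString^{(i)}(H)$ by $S\scAut$ over $SH$ in $\scS$, and hence give rise to two long exact sequences of homotopy groups related by the vertical maps. The right-hand vertical map is the identity on $SH$, so on homotopy groups it induces isomorphisms in every degree. The central claim will then follow from the five lemma applied to these long exact sequences, provided I can show that the left-hand vertical map
\begin{equation}
	S(p^1_0) \colon S \big( \scAut(\cG_\bas, \cA_\bas^{(1)}) \big)
	= S \bbGrb^{0,H}_\nabla
	\longrightarrow S \bbGrb^{0,H}
	= S \big( \scAut(\cG_\bas) \big)
\end{equation}
is an equivalence. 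But this is precisely the content of Corollary~\ref{st:Sp^n_0 is equivalence} applied with $n = 0$ and $k = 1$: the morphism $S(p^1_0) \colon S(\bbGrb^{0,H}_{\nabla|1}) \to S(\bbGrb^{0,H})$ is an equivalence in $\scS$. Thus both the fibre and base maps of the two extensions agree under $S$.

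With the two outer vertical maps being equivalences, the five lemma forces the middle map $S(q')$ to induce isomorphisms on all homotopy groups; together with the fact that $S$ lands in the $\infty$-category of spaces, this means $S(q')$ is a weak homotopy equivalence, which is the assertion. The main subtlety to handle carefully is ensuring the diagram genuinely commutes at the level of smooth $\infty$-group extensions—that is, that the forgetful map $q'$ really is compatible with the section-induced extension structures from Corollary~\ref{st:Aut-Sym_phi-Gamma extension}, and that the identification of $\scAut(\cG_\bas, \cA_\bas^{(1)})$ with $\bbGrb^{0,H}_\nabla$ intertwines the left vertical map with $p^1_0$. This is essentially bookkeeping: the extensions are constructed as pullbacks of $\scSym_L(\cG_\bas, \cA_\bas^{(1)}) \to \Diff_{[\cG_\bas]}(H)$ along $L \colon H \to \Diff_{[\cG_\bas]}(H)$ (Definition~\ref{def:Sym_(Gamma, Phi)(cG)}), and the forgetful morphism $p^1_0$ is a morphism of the simplicial presheaves $\Grb^{1,\scD}_{\nabla|1} \to \Grb^{1,\scD}$ over $\rmB\scD$ that is natural in the fixed data, so it descends to the symmetry and automorphism groups functorially. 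The only genuinely non-formal input is Corollary~\ref{st:Sp^n_0 is equivalence}, which has already been established; the rest is the five lemma and naturality, so I do not expect a serious obstacle.
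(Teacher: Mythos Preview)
Your argument is correct and arrives at the result by a genuinely different route from the paper. The paper's proof invokes Corollary~\ref{st:equivs of SSym(cG_0) and SSym(cG_1)} directly: one sets $\Sol^\scD = \Conf^\scD = \Grb^{1,\scD}_\nabla$, $\Fix_0^\scD = \Grb^{1,\scD}_{\nabla|1}$, $\Fix_1^\scD = \Grb^{1,\scD}$, checks that the solution spaces have contractible underlying space (via Theorem~\ref{st:forgetting conns is a principal map}), and then that corollary---whose own proof passes through the moduli-stack equivalence of Theorem~\ref{st:equiv result for Mdl oo-stacks} and Proposition~\ref{st:SMdl for SSol = *}---yields $S(\scSym_L(\cG_\bas, \cA^{(1)}_\bas)) \simeq S(\scSym_L(\cG_\bas))$. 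Your argument bypasses the moduli-stack machinery entirely, working instead with the morphism of extensions and the five lemma; the only substantive input is the equivalence on the fibre term. This is more elementary and self-contained, while the paper's route has the advantage of being an instance of a general comparison principle it has developed.

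One small caveat: you cite Corollary~\ref{st:Sp^n_0 is equivalence} ``with $n = 0$ and $k = 1$,'' but that statement is formulated for $k \in \{0, \ldots, n\}$, so $k = 1$ is outside its literal range when $n = 0$. The conclusion is still true---the proof of Theorem~\ref{st:forgetting conns is a principal map} goes through verbatim for $k = n+1$ (the map $p^{n+1}_0$ is still $\pi_0$-surjective by Proposition~\ref{st:wtGrb conn comps}, and the structure group is still a simplicial $\ul{\RR}$-module, hence $S$-contractible by Lemma~\ref{st:spl smooth R-mods have trivial HoType})---and the paper itself uses this extension informally in the paragraph preceding the corollary. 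You should either note this explicitly or, more simply, observe that $p^1_0 \colon \Grb^{0,H}_\nabla \to \Grb^{0,H}$ is a principal bundle for $\Omega^{1,v}$ and apply Lemma~\ref{st:spl smooth R-mods have trivial HoType} directly.
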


\begin{proof}
This follows by a direct application of Corollary~\ref{st:equivs of SSym(cG_0) and SSym(cG_1)} to the setting where
\begin{equation}
	\Sol^\scD = \Conf^\scD = \Grb^{1,\scD}_\nabla\,,
	\qquad
	\Fix_0^\scD = \Grb^{1,\scD}_{\nabla|1}
	\qqandqq
	\Fix_1^\scD = \Grb^{1,\scD}_{\nabla|0}\,,
\end{equation}
with the canonical morphism $p^k_l \colon \Fix_0^\scD \to \Fix_1^\scD$ which forgets the curving of a gerbe connection (see also Theorem~\ref{st:forgetting conns is a principal map} and Corollary~\ref{st:Sp^n_0 is equivalence}).
\end{proof}

To see that $\bbString^{(1)}(H) \to H$ is indeed a string group extension, it thus remains to check that the morphism
\begin{equation}
	S \big( \bbString^{(1)}(H) \big)
	\longrightarrow H
\end{equation}
in $\scS$ represents a generator of $\rmH^3(H;\ZZ) \cong \ZZ$.
However, we have a commutative triangle
\begin{equation}
\begin{tikzcd}[column sep={2cm,between origins}]
	S \big( \bbString^{(1)}(H) \big) \ar[rr, "S q'"] \ar[dr]
	& & S \big( \bbString^{(0)}(H) \big) \ar[dl]
	\\
	& H &
\end{tikzcd}
\end{equation}
whose horizontal morphism is an equivalence.
Since the right-hand diagonal morphism represents a generator of $\rmH^3(H;\ZZ)$ (because we know that $\bbString^{(0)}(H)$ is a String group model), we obtain that also the left-hand morphism represents a generator of $\rmH^3(H;\ZZ)$.
In fact, the exact same arguments hold true if $\cA^{(1)}$ is \textit{any} 1-connection on $\cG_\bas$.
This establishes a whole family of higher smooth String group models:

\begin{theorem}
\label{st:String^1(H)}
For any 1-connection $\cA^{(1)}$, the extension of group objects
\begin{equation}
\begin{tikzcd}
	\scAut(\cG_\bas, \cA^{(1)}) \ar[r]
	& \bbString^{(1)}(H) \coloneqq \scSym_L(\cG_\bas, \cA^{(1)}) \ar[r]
	&  H
\end{tikzcd}
\end{equation}
in $\scP(N\Cart)$ is a model for the String group of $H$.
\end{theorem}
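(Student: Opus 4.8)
The plan is to verify the two conditions in the definition of a smooth String group extension recalled at the start of this section: that the displayed sequence is an extension of group objects in $\scP(N\Cart)$, and that the underlying-space functor $S$ sends it to an ordinary String group extension in $\scS$. The first condition is immediate: applying Corollary~\ref{st:Aut-Sym_phi-Gamma extension} to the smooth action $\Phi = L \colon H \to \Diff_{[\cG_\bas]}(H)$ exhibits the sequence as an extension of group objects in $\scP(N\Cart)$. Since $S$ preserves extensions of group objects by \cite[Cor.~3.52]{Bunk:Pr_ooBdls_and_String}, applying $S$ yields a group extension in $\scS$ of the form $S\scAut(\cG_\bas, \cA^{(1)}) \to S\bbString^{(1)}(H) \to SH$, with $SH \simeq H$. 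It therefore remains only to identify the homotopy type of this sequence.

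First I would pin down the fibre. The general identification \eqref{eq:BAut = Grb^n-1} gives a canonical equivalence $\scAut(\cG_\bas, \cA^{(1)}) \simeq \bbGrb^{0,H}_\nabla$ of smooth $\infty$-groups. Corollary~\ref{st:Sp^n_0 is equivalence} (forgetting connection data is an $S$-equivalence) reduces $S\bbGrb^{0,H}_\nabla$ to $S\bbGrb^{0,H}$, and Theorem~\ref{st:spaces of n-gerbes} computes $\pi_i S\bbGrb^{0,H} \cong \rmH^{2-i}(H;\ZZ)$. Because $H$ is $2$-connected, only $\pi_2 \cong \rmH^0(H;\ZZ) \cong \ZZ$ survives, so $S\scAut(\cG_\bas, \cA^{(1)}) \simeq \rmB\U(1) \simeq K(\ZZ,2)$, which is the correct fibre for a String extension.

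Next I would compare $\bbString^{(1)}(H)$ to the known String model $\bbString^{(0)}(H) = \scSym_L(\cG_\bas)$ through the forgetful map $q' \colon \scSym_L(\cG_\bas, \cA^{(1)}) \to \scSym_L(\cG_\bas)$ induced by $p^1_0$. The key tool is Corollary~\ref{st:equivs of SSym(cG_0) and SSym(cG_1)}, applied in the set-up $\Sol^\scD = \Conf^\scD = \Grb^{1,\scD}_\nabla$, $\Fix_0^\scD = \Grb^{1,\scD}_{\nabla|1}$ and $\Fix_1^\scD = \Grb^{1,\scD}_{\nabla|0}$, with $p = p^1_0$. Its hypotheses must be checked: $p^1_0$ is $0$-connected by Corollary~\ref{st:Grb conn comps} (since $k = 1 \leq n = 1$), and the relevant solution $\infty$-stacks of connections have contractible underlying spaces because, by Theorem~\ref{st:forgetting conns is a principal map} and Lemma~\ref{st:spl smooth R-mods have trivial HoType}, connections on a fixed $1$-gerbe form an affine space over a simplicial $\ul{\RR}$-module. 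This yields an equivalence $Sq' \colon S\bbString^{(1)}(H) \xrightarrow{\ \simeq\ } S\bbString^{(0)}(H)$.

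Finally I would assemble the commutative triangle over $H$ whose slanted edges are the two projections $S\bbString^{(i)}(H) \to H$ and whose horizontal edge is the equivalence $Sq'$. Since $\bbString^{(0)}(H)$ is a String group model \cite{BMS:2-Grp_Ext, Bunk:Pr_ooBdls_and_String}, its projection classifies a generator of $\rmH^3(H;\ZZ) \cong \ZZ$; by commutativity and the fact that $Sq'$ is an equivalence over $H$, the projection $S\bbString^{(1)}(H) \to H$ classifies the same generator, establishing the String extension. The argument uses only the equivalence class of $\cG_\bas$ and never the specific basic connective structure, so it applies verbatim to an arbitrary $1$-connection $\cA^{(1)}$. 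The main obstacle I anticipate is the careful verification of the hypotheses of Corollary~\ref{st:equivs of SSym(cG_0) and SSym(cG_1)} — in particular the contractibility of the underlying space of the solution $\infty$-stack of connections — together with ensuring that $q'$, and hence $Sq'$, is genuinely a morphism over $H$, so that the generator of $\rmH^3(H;\ZZ)$ transports across the triangle.
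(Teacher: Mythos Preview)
Your proposal is correct and follows essentially the same approach as the paper: identify the fibre via \eqref{eq:BAut = Grb^n-1}, Corollary~\ref{st:Sp^n_0 is equivalence}, and Theorem~\ref{st:spaces of n-gerbes}; then use Corollary~\ref{st:equivs of SSym(cG_0) and SSym(cG_1)} (with the $0$-connectedness of $p^1_0$ from Corollary~\ref{st:Grb conn comps} and the contractibility of the connection stack from Theorem~\ref{st:forgetting conns is a principal map} and Lemma~\ref{st:spl smooth R-mods have trivial HoType}) to obtain the equivalence $Sq'$, and finally transport the generator of $\rmH^3(H;\ZZ)$ across the commutative triangle over $H$ from the known model $\bbString^{(0)}(H)$. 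Your explicit verification of the hypotheses of Corollary~\ref{st:equivs of SSym(cG_0) and SSym(cG_1)} and your observation that nothing depends on the particular choice of $\cA^{(1)}$ are exactly the points the paper makes (the latter is how the paper passes from $\cA^{(1)}_\bas$ to arbitrary $\cA^{(1)}$).
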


\begin{proof}
The arguments preceding Theorem~\ref{st:String^1(H)} prove the result for $\cA^{(1)} = \cA^{(1)}_{bas}$.
The general case then follow from Theorems~\ref{st:equiv thm for conns on n-gerbes} and~\ref{st:SMdl for SSol = *}.
\end{proof}

%%%%%%%%%%%%%%%%%%%%%%%%%%%%%%%%%%%%%%%%%%%%%%%%%%%%%%%%%%%%%%%%%%%%%%%%%%%%

\section{Moduli of NSNS supergravity solutions and gerbe connections}
\label{sec:GRic solitons}

%%%%%%%%%%%%%%%%%%%%%%%%%%%%%%%%%%%%%%%%%%%%%%%%%%%%%%%%%%%%%%%%%%%%%%%%%%%%

In this section we construct the higher moduli stacks of solutions to NSNS supergravity on $M$. Note that the same formalism directly applies to generalised Ricci solitons on the gerbe, of which supergravity NSNS solutions define a particular class.  
We do not require that the B-field is topologically trivial.
The main point of interest in this section is the mathematical structure of the B-field:
it is usually described either as a connection on a gerbe on $M$, or as an isotropic splitting of an exact Courant algebroid on $M$.
We compare these two perspectives by presenting a simplicial-presheaf description of exact Courant algebroids in the framework of this paper and adapt Hitchin's generalised tangent bundle construction to a map from the simplicial presheaf of gerbes with connective structure (resp.~and curving) to that of exact Courant algebroids (resp.~with isotropic splitting).
The groupoid of exact Courant algebroids on $M$ can be seen as a categorification of the \textit{set} $\rmH^3(M;\RR)$.
Our simplicial model also categorifies the abelian group structure on $\rmH^3(M;\RR)$.

We point out the problem that the field configurations of NSNS supergravity in both pictures are not equivalent.
In the first, they form an honest groupoid, whereas in the second they form a set.
However, we recall that the generalised tangent bundle associated to a gerbe with connective structure is its Atiyah algebroid as a principal 2-bundle.
Using this fact and invoking charge quantisation we are able to apply Theorem~\ref{st:equiv result for Mdl oo-stacks} to this problem and obtain that the higher moduli stacks of solutions are equivalent.

%%%%%%%%%%%%%%%%%%%%%%%%%%%%%%%%%%%%%%%%%%%%%%%%%%%%%%%%%%%%%%%%%%%%%%%%%%%%

\subsection{Simplicial presheaves of exact Courant algebroids}
\label{sec:ECA}

%%%%%%%%%%%%%%%%%%%%%%%%%%%%%%%%%%%%%%%%%%%%%%%%%%%%%%%%%%%%%%%%%%%%%%%%%%%%

Consider the simplicial presheaf
\begin{equation}
	\ECA \coloneqq \varGamma \circ \Omega^{2,v}_\cl[-1]
	\colon \Cartfam^\opp \longrightarrow \sSet
\end{equation}
which sends an object $(\hat{c} \to c) \in \Cartfam$ to the simplicial set obtained as the Dold-Kan construction of the chain complex
\begin{equation}
	\rmb \Omega^{2,v}_\cl = \Omega^{2,v}_\cl [-1]
	= \big( 0 \longleftarrow \Omega^{2,v}_\cl \longleftarrow 0 \big)\,.
\end{equation}
Here \smash{$\Omega^{2,v}_\cl$} is situated in degree one and denotes the sheaf (on $\Cartfam$) of vertical differential forms on \smash{$\hat{c}$} of degree two which are closed under the \textit{vertical} de Rham differential $\dd^v$.
The action of the functor $\ECA$ on morphisms is simply by pullback of differential forms.

There exists a canonical morphism of simplicial presheaves on $\Cartfam$
\begin{equation}
\label{eq:mp bb_nabla U(1) to ECA complex}
\begin{tikzcd}
	\rmb \rmb_\nabla \U(1) \ar[r, equal] \ar[d]
	& \big( 0 \ar[d, shift left=0.075cm]
	& \Omega^{1,v} \ar[l] \ar[d, "\dd^v"]
	& \U(1) \ar[l, "\dd^v \log"'] \ar[d]
	& 0 \ar[d, shift left=-0.075cm] \ar[l] \big)
	\\
	\rmb \Omega^{2,v}_\cl \ar[r, equal]
	& \big( 0
	& \Omega^{2,v}_\cl \ar[l]
	& 0 \ar[l]
	& 0 \big) \ar[l]
\end{tikzcd}
\end{equation}

The formalism in Section~\ref{sec:Families of cartesian spaces} provides a simplicial presheaf
\begin{equation}
	\widetilde{\ECA}{}^\scD
	\coloneqq \check{C}^{\scD*} \ECA \colon (\GCov^\scD)^\opp \longrightarrow \sSet\,,
	\qquad
	\widetilde{\ECA}{}^\scD(c, \hat{\cU} \to \cU)
	= \ul{\scH_\rmfam} \big( \v{C}(\hat{\cU} \to \cU), \ECA \big)\,.
\end{equation}
We can describe the simplicial set \smash{$\widetilde{\ECA}{}^\scD(c, \hat{\cU} \to \cU)$} explicitly:
it is the nerve of the groupoid with
\begin{itemize}
\item objects given by 2-forms $F \in \Omega^{2,v}(c, \v{C}_1\hat{\cU})$ which are closed under the vertical \v{C}ech \textit{and} de Rham differentials.

\item morphisms $F_0 \to F_1$ given by \textit{vertically} closed 2-forms $b \in \Omega^{2,v}(\v{C}_0\hat{\cU})$ whose vertical \v{C}ech differential satisfies that $\delta^v b = F_1 - F_0$.
\end{itemize}
In the following we will---by a slight abuse of notation---often discuss \smash{$\widetilde{\ECA}{}^\scD(c, \hat{\cU} \to \cU)$} in terms of the groupoid of which it is the nerve.

\begin{lemma}
\label{st:Omega^(2,v)_cl satisfies tau-fam-descent}
The functor
\begin{equation}
	\Omega^{2,v}_\cl[-1] \colon \Cartfam^\opp \longrightarrow \Ch_{\geq 0}
\end{equation}
satisfies descent along $\tau_\rmfam$-coverings.
Consequently, the object $\ECA \in \scH_\rmfam$ is fibrant in $\scH_\rmfam^{loc}$.
\end{lemma}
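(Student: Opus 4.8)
The plan is to prove Lemma~\ref{st:Omega^(2,v)_cl satisfies tau-fam-descent} by the same strategy used for the analogous descent statement in Lemma~\ref{st:bb_nabla U(1)^v is tau_fam-local}, since the functor $\Omega^{2,v}_\cl[-1]$ has essentially the same shape as the complexes $(\rmb^{n+1-k}\rmb^k_\nabla\U(1))^v$ considered there---it is concentrated in a single degree and consists of a sheaf of vertical closed forms. Concretely, let $(\hat{\cU} \to \cU) \longrightarrow (\hat{c} \to c)$ be a $\tau_\rmfam$-covering of an object in $\Cartfam$. I would show that the canonical morphism
\begin{equation}
	\Omega^{2,v}_\cl[-1](\hat{c} \to c)
	\longrightarrow \underset{l \in \bbDelta}{\holim}\ \Omega^{2,v}_\cl[-1]\big( \v{C}_l(\hat{\cU} \to \cU) \big)
\end{equation}
is a quasi-isomorphism of chain complexes.

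The key point is that the homotopy limit is computed by the (truncation of the) total complex of the \v{C}ech double complex, and that the sheaf $\Omega^{2,v}_\cl[\hat{c} \to c]$ of vertically closed $2$-forms on $\hat{c}$ behaves well with respect to the good open covering $\hat{\cU}$. First I would invoke Lemma~\ref{st:acyclicity for vertical forms}, which already tells us that the total complex associated to the cosimplicial chain complex $\Omega^{k,v}(\v{C}\hat{\cU})$ is acyclic for each $k$, since each $\Omega^{k,v}[\hat{c} \to c]$ is a fine (hence acyclic) sheaf of $C^\infty(\hat{c})$-modules. The subtlety is that we here need the \v{C}ech resolution of the subsheaf of \emph{closed} vertical $2$-forms, not all vertical $2$-forms. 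The standard device is to resolve $\Omega^{2,v}_\cl$ by the full (vertical) de Rham complex $\Omega^{2,v} \to \Omega^{3,v} \to \cdots$ truncated appropriately: on each fibre $\hat{c}_{|x}$, which is a cartesian space, the vertical de Rham complex starting in degree two is a resolution of the closed vertical $2$-forms by the Poincar\'e lemma, and each term is a fine sheaf with acyclic \v{C}ech resolution. Assembling the resulting triple complex (\v{C}ech $\times$ de Rham $\times$ the single-degree placement) and applying the acyclicity rowwise reduces the computation to the global sections, which recovers $\Omega^{2,v}_\cl(\hat{c} \to c)$.

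The cleanest way to organise this is to cite the same homological input already used in the proof of Lemma~\ref{st:bb_nabla U(1)^v is tau_fam-local}, namely~\cite[Lemma~8.5]{Voisin_I}: once one exhibits the \v{C}ech double complex as having acyclic rows (resolving the level-one sheaf $\Omega^{2,v}_\cl$), that lemma yields the desired quasi-isomorphism directly. The main obstacle I anticipate is bookkeeping with the \emph{vertical} structure: one must check that the Poincar\'e-type argument works fibrewise and is compatible with the smooth dependence on the base direction $c$, i.e.\ that choosing primitives can be done within the sheaf of vertical forms rather than merely on individual fibres. This is precisely the content already encapsulated in Lemma~\ref{st:acyclicity for vertical forms}, so I would lean on that statement rather than reprove the fineness of vertical form sheaves. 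For the second sentence of the lemma, the passage from descent of the chain-complex-valued presheaf to fibrancy of $\ECA = \varGamma \circ \Omega^{2,v}_\cl[-1]$ in $\scH_\rmfam^{loc}$ is formal: both the Dold--Kan functor $\varGamma \colon \Ch_{\geq 0} \to \Ab_\Delta$ and the forgetful functor $\Ab_\Delta \to \sSet$ preserve homotopy limits, exactly as recorded in the deduction of Lemma~\ref{st:families of gerbes satisfy tau_fam-descent} from Lemma~\ref{st:bb_nabla U(1)^v is tau_fam-local}.
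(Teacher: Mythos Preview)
Your strategy is sound and correct, but it differs from the paper's approach. The paper does not invoke a de~Rham resolution and Voisin's lemma; instead it writes out the truncated total complex explicitly as the two-term complex
\[
\Omega^{2,v}_\cl\big(\check{C}_0(\hat{\cU}\to\cU)\big)\ \xrightarrow{\ \delta\ }\ \ker\Big(\delta\colon \Omega^{2,v}_\cl\big(\check{C}_1(\hat{\cU}\to\cU)\big)\to \Omega^{2,v}_\cl\big(\check{C}_2(\hat{\cU}\to\cU)\big)\Big)
\]
and then checks the two homology groups by hand. The degree-one homology is immediately $\Omega^{2,v}_\cl(\hat{c}\to c)$; for vanishing in degree zero the paper does an explicit element chase: given a \v{C}ech-closed $\omega$, lift it to a (not necessarily $\dd^v$-closed) $\eta\in\Omega^{2,v}(\check{C}_0)$ using fineness of $\Omega^{2,v}$, observe that $\dd^v\eta$ glues to a global $\dd^v$-closed $3$-form $\alpha$, and then invoke a \emph{family-version of the Poincar\'e Lemma} on $(\hat{c}\to c)\cong(c\times d\to c)$ to write $\alpha=\dd^v\beta$ and correct $\eta$ to $\eta-\delta\beta$. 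Your resolution argument packages exactly these steps abstractly; the paper's version has the advantage of making transparent precisely which two analytic inputs are used.

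One imprecision in your proposal: you say the Poincar\'e-type input ``is precisely the content already encapsulated in Lemma~\ref{st:acyclicity for vertical forms}''. It is not. That lemma asserts that each $\Omega^{k,v}[\hat{c}\to c]$ is a \emph{fine} sheaf on $\hat{c}$, hence \v{C}ech-acyclic; it says nothing about $\dd^v$-closed vertical forms being locally $\dd^v$-exact. The exactness of your proposed resolution $\Omega^{2,v}_\cl\hookrightarrow\Omega^{2,v}\to\Omega^{3,v}\to\cdots$ as a sequence of sheaves on $\hat{c}$ requires the parametrised Poincar\'e Lemma as a separate input, and the paper invokes it explicitly at that point. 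With this correction your argument goes through; the deduction of fibrancy of $\ECA$ via $\varGamma$ and the forgetful functor preserving homotopy limits is exactly as you say.
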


\begin{proof}
Let $(\hat{\cU} \to \cU)$ be a covering of $(\hat{c} \to c)$ in $\Cartfam$.
We need to show that the canonical morphism 
\begin{equation}
	\Omega^{2,v}_\cl[-1] (\hat{c} \to c)
	\longrightarrow \holim_\bbDelta \Big( \Omega^{2,v}_\cl[-1] \big( \v{C} (\hat{\cU} \to \cU) \big) \Big)
\end{equation}
is a quasi-isomorphism.
The homotopy limit is modelled by the truncated total chain complex of the double complex given by the \v{C}ech resolution in each degree.
Concretely, this is the two-term chain complex
\begin{equation}
\label{eq:Cech cplx of Omega^2v_cl}
\begin{tikzcd}
	\Omega^{2,v}_\cl \big( \check{C}_0(\hat{\cU} \to \cU) \big) \ar[r, "\delta"]
	& \ker \Big( \delta \colon \Omega^{2,v}_\cl \big( \check{C}_1(\hat{\cU} \to \cU) \big) \longrightarrow \Omega^{2,v}_\cl \big( \check{C}_2(\hat{\cU} \to \cU) \big) \Big)\,,
\end{tikzcd}
\end{equation}
where the second term lies in degree zero.
Note that this chain complex depends only on the covering $\hat{\cU}$ of the cartesian space $\hat{c}$.
We readily see that its first homology is $\Omega^{2,v}_\cl(\hat{c} \to c)$, and so it remains to show that its zeroth homology vanishes.
We check this explicitly:
let $\omega$ be a zero-cycle.
Since it is \v{C}ech closed, there exists some $\eta \in \Omega^{2,v}(\v{C}_0(\hat{\cU} \to \cU))$ with $\delta \eta = \omega$.
However, this $\eta$ is, in general, not $\dd^v$-closed.
To achieve $\dd^v$-closedness, first observe that
\begin{equation}
	\delta \dd^v \eta = \dd^v \delta \eta = 0\,,
\end{equation}
by our assumption on $\omega$.
Thus, there exists some (unique) $\alpha \in \Omega^{3,v}(\hat{c} \to c)$ with $\delta \alpha = \dd^v \eta$ (where we set \smash{$(\delta \alpha)_a \coloneqq \alpha_{|\hat{U}_a}$}).
Observe that the map
\begin{equation}
	\delta \colon \Omega^{p,v}(\hat{c} \to c)
	\longrightarrow \Omega^{p,v} \big( \v{C}_0 (\hat{\cU} \to \cU) \big)
\end{equation}
is injective, for each $p \in \NN_0$.
Combining this with the identities
\begin{equation}
	\delta \dd^v \alpha
	= \dd^v \delta \alpha
	= \dd^v \dd^v \eta
	= 0
\end{equation}
we deduce that $\dd^v \alpha = 0$ everywhere on $(\hat{c} \to c)$.
Up to isomorphism in $\Cartfam$, the object $(\hat{c} \to c)$ is of the form of a canonical projection map $(c \times d \to c)$, where $c,d \in \Cart$.
Therefore, by a family-version of the Poincaré Lemma (parameterised by $c$) we find some $\beta \in \Omega^{2,v}(\hat{c} \to c)$ that satisfies $\dd^v \beta = \alpha$.
Then, $\eta' \coloneqq \eta - \delta \beta$ is an element in $\Omega^{2,v}_\cl(\v{C}_0(\hat{U} \to \cU))$ as desired.
\end{proof}

\begin{lemma}
The functor \smash{$\widetilde{\ECA}{}^\scD \colon (\GCov^\scD)^\opp \longrightarrow \sSet$} restricts to essentially constant functors $(\GCov^\scD)^\opp_{|c} \longrightarrow \sSet$ on the fibre over each $c \in \Cart$.
\end{lemma}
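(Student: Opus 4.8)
The plan is to observe that the statement is an immediate special case of Lemma~\ref{st:wtG ess const on fibres of varpi^D}, applied to the object $G = \ECA$ of $\scH_\rmfam$. First I would recall that, by definition, $\widetilde{\ECA}{}^\scD = \check{C}^{\scD*}\ECA$, so that $\widetilde{\ECA}{}^\scD$ is produced from $\ECA$ precisely via Construction~\ref{cstr:sPShs on GCov^D from sPShs on Cart_fam}. The single hypothesis of Lemma~\ref{st:wtG ess const on fibres of varpi^D} is that the input simplicial presheaf on $\Cartfam$ be fibrant in $\scH_\rmfam^{loc}$; this is exactly the conclusion of the immediately preceding Lemma~\ref{st:Omega^(2,v)_cl satisfies tau-fam-descent}, which establishes that $\ECA = \varGamma \circ \Omega^{2,v}_\cl[-1]$ satisfies homotopy descent with respect to $\tau_\rmfam$-coverings. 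Thus the proof reduces to matching the definitions against the hypotheses of the cited lemma and invoking it.

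Concretely, I would spell out the content of the cited lemma in this case. A morphism in the fibre $\GCov^\scD_{|c}$ is a refinement of good open coverings of $(c \times M \to c)$ (cf.\ Remark~\ref{rmk:varpi and its fibres}), and $\check{C}^{\scD*}\ECA$ sends such a refinement to the induced map on mapping spaces $\ul{\scH_\rmfam}(\v{C}(-), \ECA)$. Since $\ECA$ satisfies $\tau_\rmfam$-descent, both the source and the target of this map model the same derived sections of $\ECA$, whence the map is a weak equivalence. Therefore $\check{C}^{\scD*}\ECA = \widetilde{\ECA}{}^\scD$ carries every morphism of $\GCov^\scD_{|c}$ to a weak equivalence, i.e.\ its restriction to the fibre over $c$ is essentially constant, which is the assertion.

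The main analytic obstacle has, in effect, already been cleared by Lemma~\ref{st:Omega^(2,v)_cl satisfies tau-fam-descent}: the nontrivial input is the descent property of the sheaf $\Omega^{2,v}_\cl$ of vertically closed vertical $2$-forms, whose verification relied on the fibrewise Poincaré Lemma together with the acyclicity and fineness of the vertical form sheaves. Once that descent statement is available, the present lemma is purely formal, requiring no further computation beyond the citation of Lemma~\ref{st:wtG ess const on fibres of varpi^D}.
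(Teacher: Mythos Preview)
Your proposal is correct and matches the paper's own proof, which is the single sentence ``This follows by an application of Lemma~\ref{st:wtG ess const on fibres of varpi^D}.'' You have simply unpacked that citation by making explicit that $\widetilde{\ECA}{}^\scD = \check{C}^{\scD*}\ECA$ and that the required fibrancy hypothesis is supplied by Lemma~\ref{st:Omega^(2,v)_cl satisfies tau-fam-descent}.
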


\begin{proof}
This follows by an application of Lemma~\ref{st:wtG ess const on fibres of varpi^D}.
\end{proof}

We define a functor
\begin{equation}
	\ECA^\scD \colon \rmB\scD^\opp \longrightarrow \sSet\,,
	\qquad
	\ECA^\scD \coloneqq \Lan_{\varpi^\scD} \widetilde{\ECA}{}^\scD
\end{equation}
(compare also Definition~\ref{def:Grb^M and Grb^D(M)}); this is projectively fibrant by Proposition~\ref{st:Lan_varpi} and comes with a morphism
\begin{equation}
	\AtCA^\scD \colon \Grb^{1,\scD}_{\nabla|1} \longrightarrow \ECA^\scD
\end{equation}
in $\Fun(\rmB\scD^\opp, \sSet)$, which is induced by~\eqref{eq:mp bb_nabla U(1) to ECA complex}.
We call this morphism \textit{forming the Atiyah Courant algebroid} associated to a gerbe with connective structure on $M$.
We will elaborate more on this interpretation below.

Next we consider Hitchin's construction of the \textit{generalised tangent bundle} ~\cite[Sec.~2.3]{Hitchin:Brackets_forms_and_functionals} (see also~\cite[Sec.~7]{Hitchin:Generalised_CY_manifolds}).
Originally, this construction produces an exact Courant algebroid from the local transition data for a gerbe with connective structure.

\begin{remark}
One can slightly generalise Hitchin's generalised tangent bundle construction, so that it takes as input sections of \smash{$\widetilde{\ECA}{}^\scD$}; it does not actually depend on the choice of a gerbe with connective structure, but rather only on the collections of locally defined closed 2-forms which arise as the de Rahm differential of the connective structure (Hitchin already observed that the crucial property of the 2-forms entering in his construction is \textit{closedness} rather than that they arise from a connective structure on a gerbe~\cite[p.~544]{Hitchin:Brackets_forms_and_functionals}).
\qen
\end{remark}

We start by recalling the general definition of an exact Courant algebroid on a manifold $M$:

\begin{definition}
\label{def:exact Courant algebroids}
A \textit{Courant algebroid on $M$} is a quadruple $(E, \<-,-\>, \diamond, \rho)$, consisting of a vector bundle $E$ with a morphism $\rho \colon E \to TM$, called the \textit{anchor map}, a smooth, non-degenerate, symmetric bilinear pairing $\<-,-\>$ on $E$, and the \textit{Dorfman bracket}, an antisymmetric bilinear map $\diamond \colon \Gamma (M; E) \times \Gamma(M; E) \to \Gamma(M;E)$ on the sections of $E$.
These data satisfy that, for each $a,b,c \in \Gamma(M;E)$ and $f \in C^\infty(M)$, we have the following identities
\begin{enumerate}
\item (\textit{Jacobi identity}) $a \diamond (b \diamond c) = (a \diamond b) \diamond c + b \diamond (a \diamond c)$,

\item (\textit{$\rho$ preserves brackets}) $\rho(a \diamond b) = [\rho(a), \rho(b)]$, where on the right-hand side we use the Lie bracket of vector fields on $M$,

\item (\textit{Derivation property}) $a \diamond (fb) = f(a \diamond b) + \dd f(\rho(a)) \cdot b$,

\item (\textit{Compatibility of bracket and pairing}) $\iota_{\rho(a)} \dd \<b,c\> = \< a \diamond b, c\> + \<b, a \diamond c\>$,

\item (\textit{Failure of antisymmetry}) $a \diamond b + b \diamond a = D \<a, b\>$, where $D \colon C^\infty(M) \to \Gamma(M;E)$ is the differential operator given as
\begin{equation}
	D = \sharp_E \circ \rho^\vee \circ \dd\,.
\end{equation}
Here, $\sharp_E \colon E^\vee \to E$ is the musical isomorphism induced from the non-degenerate pairing $\<-,-\>$ on $E$, and $\rho^\vee \colon T^\vee M \to E^\vee$ is the dual of the vector bundle morphism $\rho \colon E \to TM$.
\end{enumerate}
A Courant algebroid $(E, \<-,-\>, \diamond, \rho)$ on $M$ is \textit{exact} if the sequence
\begin{equation}
\begin{tikzcd}
	0 \ar[r]
	& T^\vee M \ar[r, "\sharp_E \circ \rho^\vee"]
	& E \ar[r, "\rho"]
	& TM \ar[r]
	& 0
\end{tikzcd}
\end{equation}
of vector bundles on $M$ is exact.
\end{definition}

\begin{definition}
Let $(E_0, \<-,-\>_0, \diamond_0, \rho_0)$ and $(E_1, \<-,-\>_1, \diamond_1, \rho_1)$ be two exact Courant algebroids on $M$.
An \textit{isomorphism of exact Courant algebroids} $(E_0, \<-,-\>_0, \diamond_0, \rho_0) \to (E_1, \<-,-\>_1, \diamond_1, \rho_1)$ is a vector bundle isomorphism $\psi \colon E_0 \to E_1$ which preserves the pairings and Dorfman brackets.
\end{definition}

It then follows that $\psi$ also intertwines the anchor maps~\cite[p.~14]{GFS:Gen_Ricci_flow}.
Note that, for each $c \in \Cart$, this generalises straightforwardly to a family of exact Courant algebroids on $M$, parameterised by $c$:
this is a vector bundle $E \to c \times M$ with the same data as above, but where we replace the de Rham differential by the vertical de Rham differential $\dd^v$ everywhere in Definition~\ref{def:exact Courant algebroids}, and similarly for $T^\vee M$ and $TM$.

Further, observe that if $f \in \Diff(M)$ and $(E, \<-,-\>, \diamond, \rho)$ is an exact Courant algebroid on $M$, then we obtain a canonical Courant algebroid structure on the pullback bundle $f^*E$; we denote this resulting exact Courant algebroid by
\begin{equation}
	f^*(E, \<-,-\>, \diamond, \rho)
	= (f^*E, f^*\<-,-\>, f^*\diamond, f^{-1}_* \circ \rho)\,.
\end{equation}

\begin{remark}
In summary, we obtain a pseudo-functor
\begin{equation}
	\CalECA \colon \rmB\scD^\opp \longrightarrow \Gpd\,,
\end{equation}
which sends each object $c \in \Cart$ to the groupoid of $c$-parameterised families of exact Courant algebroids on $M$.
\qen
\end{remark}

The adaptation of Hitchin's generalised tangent bundle construction now reads as follows:
each object $F$ in $\widetilde{\ECA}{}^\scD(c, \hat{\cU} \to \cU)$ is descent data for a smooth family of exact Courant algebroids on $M$:
over each patch $\hat{U}_a$ of the covering $\hat{\cU}$ of $c {\times} M$ consider the vector bundle $T^v\hat{U}_a \oplus T^{v,\vee}\hat{U}_a$, i.e.~the direct sum of the \textit{vertical} tangent and cotangent bundles on the patch $\hat{U}_a$.
For each $x \in \hat{U}_a$, let $(x_0, x_1) \in c \times M$ be its image under the inclusion $\hat{U}_a \hookrightarrow c \times M$.
This canonically identifies the vertical tangent space \smash{$T^v_{|x} \hat{U}_a$} with \smash{$T_{|x_1} M$}, and similarly the vertical cotangent space \smash{$T^{v,\vee}_{|x} \hat{U}_a$} with \smash{$T^\vee_{|x_1} M$}.
Given a point $y \in \hat{U}_b$ with $y_0 = x_0 \in c$ and $y_1 = x_1 \in M$, we identify each pair \smash{$X + \xi \in T^v_{|x} \hat{U}_a \oplus T^{v,\vee}_{|x} \hat{U}_a$} with the pair
\begin{equation}
	X + \xi + \iota_X F_{ab}
	\quad
	\in T^v_{|y} \hat{U}_b \oplus T^{v,\vee}_{|y} \hat{U}_b\,.
\end{equation}
This defines a vector bundle $E(F)$ on $c {\times} M$.
It sits in a short exact sequence of vector bundles
\begin{equation}
\begin{tikzcd}
	0 \ar[r]
	& T^{v,\vee}M \ar[r]
	& E(F) \ar[r]
	& T^vM \ar[r]
	& 0
\end{tikzcd}
\end{equation}
on $c {\times} M$.
Via the canonical pairing of 1-forms with tangent vectors and the standard Courant bracket (using that the vertical vector fields on $c{\times} M$ and $\hat{U}_a$ are closed under the Lie bracket on vector fields on $c {\times} M$ and $\hat{U}_a$, respectively), we obtain the structure of an exact Courant algebroid on each vector bundle \smash{$E(F)_{|(x_0,-)}$} on $M$.
This depends smoothly on $x_0 \in M$ in the sense that the pairing and the bracket of two smooth sections of $E(F)$ over $c {\times} M$ are a smooth function on $c {\times} M$ and a smooth section of $E(F)$, respectively.
Analogously, each morphism $b \colon F_0 \to F_1$ in $\widetilde{\ECA}{}^\scD(c, \hat{\cU} \to \cU)$ induces a vector bundle isomorphism $E(F_0) \to E(F_1)$ over $c {\times} M$ via the so-called \textit{$B$-field transformation} $e^b$;
with respect to the covering $\hat{\cU}$ it reads as
\begin{equation}
	X + \xi \longmapsto X + \xi + \iota_X b_a\,.
\end{equation}
This is even an isomorphism of exact Courant algebroids (on each fibre over $c$).

We have thus described a morphism
\begin{equation}
	\widetilde{\ECA} \longrightarrow \pi_M^*\CalECA
\end{equation}
of pseudo-functors $\GCov^\scD \to \Gpd$ at the level of what it assigns to each object $(c, \hat{\cU} \to \cU)$ of $\GCov^\scD$.
The extension to morphisms is straightforward (using pullbacks of vector bundles along smooth maps).
Consider an object of $\GCov^\scD$ of the form $(\RR^0, \hat{\cU} \to \RR^0)$; in other words, $\hat{\cU}$ is simply a good open covering of $M$.

\begin{remark}
The map $\check{C} \hat{\cU} \to \varGamma \rmb \Omega^{2,M}_\cl = \rmB\Omega^{2,M}_\cl$ is, equivalently, transition data for a principal bundle (or torsor) on $M$ with structure group the smooth group $\Omega^{2,M}_\cl$.
The bundle $E$ constructed above is then the associated bundle for the action of $\Omega^{2,M}_\cl$ on $\Gamma(-;T^vM) \oplus \Omega^{1,M}$, the sheaf of vector fields plus 1-forms on $M$.
\qen
\end{remark}

In that case, it is known that the above construction produces all exact Courant algebroids on $M \cong \RR^0 \times M$ up to fibre-preserving isomorphism (i.e.~up to $B$-field transformation; see, for instance,~\cite[Thm.2.19]{GFS:Gen_Ricci_flow}).
For convenience, we present a non-standard argument for this fact:
let $\CalECA(\RR^0)$ denote the groupoid of exact Courant algebroids and isomorphisms of exact Courant algebroids on $M$.

\begin{lemma}
The construction of the generalised tangent bundle
\begin{equation}
	\ECA^\scD(\RR^0) \simeq \widetilde{\ECA}{}^\scD(\RR^0, \hat{\cU} \to \RR^0) \longrightarrow \CalECA(\RR^0)
\end{equation}
is an equivalence of groupoids.
\end{lemma}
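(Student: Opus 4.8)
The plan is to prove the functor is an equivalence of groupoids by verifying the two standard criteria: that it induces a bijection on isomorphism classes and an isomorphism on each automorphism group. First I would unravel the source. Over $\RR^0$ the vertical forms are ordinary forms on $M$, so $\widetilde{\ECA}{}^\scD(\RR^0, \hat{\cU} \to \RR^0)$ is the nerve of the groupoid $\mathsf{E}$ whose objects are $F \in \Omega^2(\v{C}_1\hat{\cU})$ with $\delta F = 0$ and $\dd F = 0$, and whose morphisms $F_0 \to F_1$ are the $b \in \Omega^2(\v{C}_0\hat{\cU})$ with $\dd b = 0$ and $\delta b = F_1 - F_0$. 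Directly from this description, $\Hom_{\mathsf E}(F_0, F_1)$ is nonempty precisely when $F_0$ and $F_1$ define the same class in the \v{C}ech cohomology group $\check{\rmH}^1(\hat{\cU}; \Omega^2_\cl)$, and when nonempty it is a torsor over $\Hom_{\mathsf E}(F, F) = \{b \in \Omega^2(\v{C}_0\hat{\cU}) : \dd b = 0,\ \delta b = 0\} \cong \Omega^2_\cl(M)$, the group of global closed $2$-forms (a $\delta$-closed family glues to a global form, and $\dd$-closedness makes it closed).

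Second I would identify these invariants with the known ones. Since $\hat{\cU}$ is a good open cover and each sheaf $\Omega^k$ is fine, hence acyclic, a standard dimension-shift along the de Rham resolution of $\RR$ gives $\check{\rmH}^1(\hat{\cU}; \Omega^2_\cl) \cong \rmH^3(M; \RR)$. Thus $\pi_0 \mathsf E \cong \rmH^3(M;\RR)$ and every automorphism group of $\mathsf E$ is canonically $\Omega^2_\cl(M)$. On the target side, \v{S}evera's classification of exact Courant algebroids states exactly that isomorphism classes are parameterised by the \v{S}evera class in $\rmH^3(M;\RR)$ and that the automorphism group of any exact Courant algebroid, acting by $B$-field transformations $e^B$, is $\Omega^2_\cl(M)$.

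Third I would check that the generalised tangent bundle functor $E(-)$ matches these structures: it sends a cocycle $F$ to an exact Courant algebroid $E(F)$ whose \v{S}evera class is the image of $[F]$ under the isomorphism above (this is the content of Hitchin's construction, with closedness of the local $2$-forms producing the standard Courant bracket), and it sends a morphism $b$ to the $B$-field transformation $e^b$. Consequently the induced map on $\pi_0$ matches the identity of $\rmH^3(M;\RR)$ under the two identifications, hence is a bijection, and on each automorphism group it matches the identity of $\Omega^2_\cl(M)$, hence is an isomorphism. By the criterion for equivalences of groupoids, the functor is therefore an equivalence; combined with the equivalence $\ECA^\scD(\RR^0) \simeq \widetilde{\ECA}{}^\scD(\RR^0,\hat{\cU}\to\RR^0)$ already recorded in the statement (via $\ECA^\scD = \Lan_{\varpi^\scD}\widetilde{\ECA}{}^\scD$, Proposition~\ref{st:Lan_varpi} and Lemma~\ref{st:wtG ess const on fibres of varpi^D}), this yields the claim.

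The main obstacle is the target-side input: proving that every exact Courant algebroid is isomorphic to some $E(F)$ and that every isomorphism over $\id_M$ is a $B$-field transformation by a closed $2$-form. This is precisely \v{S}evera's classification, which I would either cite (e.g.\ in the form of~\cite[Thm.~2.19]{GFS:Gen_Ricci_flow}) or reprove in a way compatible with the cocycle model. The cleanest self-contained route is to observe that both groupoids decompose, compatibly under $E(-)$, as a disjoint union $\coprod_{\rmH^3(M;\RR)} \rmB\,\Omega^2_\cl(M)$ indexed by the \v{S}evera class, with automorphisms the global closed $2$-forms, so that the equivalence is forced by the torsor argument of the first paragraph. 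Care is needed to confirm that the explicit cocycle $F$ genuinely represents the \v{S}evera class of $E(F)$ under the abstract de Rham isomorphism, since this is the precise point at which the two descriptions are pinned together.
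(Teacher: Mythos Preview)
Your proposal is correct and follows essentially the same approach as the paper: compute $\pi_0$ and the automorphism groups of the source groupoid via the \v{C}ech complex of $\Omega^2_\cl[-1]$ (obtaining $\rmH^3(M;\RR)$ and $\Omega^2_\cl(M)$, respectively), match these to the known invariants of $\CalECA(\RR^0)$ via \v{S}evera's classification (the paper cites \cite[Prop.~2.15, Thm.~2.19]{GFS:Gen_Ricci_flow}), and conclude that the functor is fully faithful and essentially surjective. Your treatment is in fact more careful than the paper's on the point you flag as the main obstacle---verifying that $E(-)$ actually intertwines the two identifications of $\pi_0$ with $\rmH^3(M;\RR)$ and of automorphism groups with $\Omega^2_\cl(M)$---which the paper leaves implicit.
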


\begin{proof}
This follows from the computation of the \v{C}ech complex of $\Omega^{2,v}_\cl[-1]$ associated to a good open covering $(\hat{U} \to \cU)$ of $(c {\times} M \to M)$; it has the same explicit description as in~\eqref{eq:Cech cplx of Omega^2v_cl}.
In particular, its first homology agrees with the group of closed 2-forms on $M$, which is also canonically isomorphic to the automorphisms of any exact Courant algebroid on $M$ (see, for instance,~\cite[Prop.~2.15]{GFS:Gen_Ricci_flow}).
By the proof of Lemma~\ref{st:Omega^(2,v)_cl satisfies tau-fam-descent}, we also see that the first homology of the \v{C}ech complex of $\Omega^{2,v}_\cl[-1]$ agrees with $\rmH^3(M;\RR)$, which is canonically isomorphic to the set of isomorphism classes of exact Courant algebroids on $M$ by \v{S}evera's classification of exact Courant algebroids on $M$ in terms of their \v{S}evera class (see, for instance,~\cite[Thm.~2.19]{GFS:Gen_Ricci_flow}; note that this also applies to the case where the diffeomorphism of the base manifold is taken to be trivial).
Thus, (the generalisation of) the generalised tangent bundle construction is fully faithful and essentially surjective.
\end{proof}

\begin{remark}
Isomorphism classes of exact Courant algebroids on a manifold $M$ are in canonical bijection with $\rmH^3(M; \RR)$.
The underlying space (in the sense of Definition~\ref{def:underlying space of a presheaf}) of the simplicial presheaf $\ECA^M = Ne_M^* \ECA^\scD \colon \Cart^\opp \to \sSet$ of exact Courant algebroids on a manifold $M$ and their isomorphisms is contractible by Lemma~\ref{st:spl smooth R-mods have trivial HoType}.
This is in stark contrast with the situation for $\Grb^{1,M}$ (see Theorem~\ref{st:spaces of n-gerbes}).
One way to understand this difference is that gerbes are classified by \textit{integer} cohomology, whose coefficient group has discrete topology, whereas for real cohomology, which classifies exact Courant algebroids, the coefficient group is the real line, which is smoothly contractible.
A slightly different, but related perspective arises when one thinks of gerbes as classified by cohomology with coefficients in $\U(1)$; then, the cohomology groups also inherit a smooth structure, but at the same time they inherit a non-trivial topology from $\U(1)$, which prevents contractibility in this case.
\qen
\end{remark}

The description of exact Courant algebroid in terms of $\Omega^{2,v}_\cl[-1]$ has the following pleasant structural property:
by \v{S}evera's classification, isomorphism classes of exact Courant algebroids on $M$ are in bijection with the elements of $\rmH^3(M;\RR)$, which carries a canonical abelian group structure.
It is thus a natural question to ask whether this abelian group structure is reminiscent of a corresponding structure on exact Courant algebroids themselves.
That is, is there is a braided monoidal structure on the groupoid of exact Courant algebroids on $M$ which \textit{categorifies} the abelian group structure on $\rmH^3(M; \RR)$?
The following observation is a consequence of the construction of \smash{$\widetilde{\ECA}{}^\scD$} from a complex of presheaves of $\ul{\RR}$-modules by means of the Dold-Kan construction; it answers our question in the affirmative:

\begin{lemma}
For each object $(c, \hat{\cU} \to \cU) \in \GCov^\scD$, the groupoid $\widetilde{\ECA}{}^\scD(c, \hat{\cU} \to \cU)$ is strictly symmetric monoidal under the sum of vertical differential forms.
In particular, the nerve of $\widetilde{\ECA}{}^\scD(c, \hat{\cU} \to \cU)$ is a simplicial abelian group.
\end{lemma}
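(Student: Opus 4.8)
The plan is to observe that the whole object $\widetilde{\ECA}{}^\scD(c, \hat{\cU} \to \cU)$ is assembled out of abelian group objects by operations that preserve this structure, so that it is canonically a simplicial abelian group, and then to read this structure back onto the groupoid it is the nerve of. The starting point is that $\ECA = \varGamma \circ \Omega^{2,v}_\cl[-1]$ factors through the category $\Ab_\Delta$ of simplicial abelian groups: the presheaf $\Omega^{2,v}_\cl \colon \Cartfam^\opp \to \Ab$ is valued in abelian groups, the shift $\Omega^{2,v}_\cl[-1]$ is a presheaf of non-negatively graded chain complexes of abelian groups, and the Dold-Kan functor $\varGamma \colon \Ch_{\geq 0} \to \Ab_\Delta$ lands in simplicial abelian groups. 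Hence $\ECA$ is a presheaf of simplicial abelian groups on $\Cartfam$, and since the underlying complex is concentrated in degree one its values are $1$-truncated.

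First I would show that the simplicially enriched hom $\ul{\scH_\rmfam}(X, G)$ inherits a simplicial abelian group structure whenever the target $G$ is a presheaf of simplicial abelian groups. Using the tensoring of $\scH_\rmfam$ over $\sSet$ one has $\ul{\scH_\rmfam}(X, G)_n = \Hom_{\scH_\rmfam}(X \times \Delta^n, G)$; because $G$ is valued in abelian groups, this set of morphisms of simplicial presheaves carries an abelian group structure given by objectwise and levelwise addition of values in $G$. The only thing to check is that the sum of two natural transformations is again natural, which holds precisely because each structure map $G(\alpha)$, for $\alpha$ a morphism of $\Cartfam$, is a homomorphism of abelian groups. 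These group structures are compatible with the simplicial faces and degeneracies, which are induced by the cosimplicial structure of $\Delta^\bullet$, so $\ul{\scH_\rmfam}(X, G)$ is a simplicial abelian group. Applying this with $X = \v{C}(\hat{\cU} \to \cU)$ and $G = \ECA$ shows that $\widetilde{\ECA}{}^\scD(c, \hat{\cU} \to \cU)$ is a simplicial abelian group, which is the second assertion of the lemma.

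It then remains to transport this structure onto the groupoid of which $\widetilde{\ECA}{}^\scD(c, \hat{\cU} \to \cU)$ is the nerve. Since the complex underlying $\ECA$ is concentrated in degree one, the simplicial abelian group $\widetilde{\ECA}{}^\scD(c, \hat{\cU} \to \cU)$ has homotopy concentrated in degrees zero and one and is indeed the nerve of a groupoid. Under the restriction of the Dold-Kan equivalence to two-term complexes, which identifies $1$-truncated simplicial abelian groups with strict Picard groupoids (strictly symmetric monoidal groupoids all of whose objects are invertible), just as in the discussion of Picard $\infty$-groupoids via $\Ab_\Delta$ in this paper and in \cite{NSS:Pr_ooBdls_II}, the levelwise addition becomes a strict symmetric monoidal structure. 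Finally I would unwind this addition against the explicit description preceding the lemma: an object is a cocycle $F \in \Omega^{2,v}(c, \v{C}_1 \hat{\cU})$ and a morphism $F_0 \to F_1$ is a vertically closed form $b \in \Omega^{2,v}(\v{C}_0 \hat{\cU})$ with $\delta^v b = F_1 - F_0$, and the group operation restricts on objects and on morphisms to the ordinary sum of the underlying vertical differential forms.

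The main obstacle is this last identification: matching the abstract levelwise addition coming from the enriched hom with the concrete sum of the forms $F$ and $b$ in the cocycle picture, and confirming that the resulting monoidal structure is strict and symmetric. This is, however, a routine unwinding of the normalization isomorphism of Dold-Kan together with the definition of the enriched hom; once $\widetilde{\ECA}{}^\scD(c, \hat{\cU} \to \cU)$ is known to be a simplicial abelian group, strictness and strict commutativity of the symmetry are automatic, since they hold in $\Ab_\Delta$ and are preserved by the equivalence with strict Picard groupoids.
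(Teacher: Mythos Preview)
Your proposal is correct and follows the same route the paper takes: the paper simply remarks that the lemma ``is a consequence of the construction of $\widetilde{\ECA}{}^\scD$ from a complex of presheaves of $\ul{\RR}$-modules by means of the Dold-Kan construction'' and gives no further argument. Your write-up is a careful unpacking of precisely this observation---that $\varGamma$ lands in $\Ab_\Delta$, that the enriched hom into a presheaf of simplicial abelian groups is again a simplicial abelian group, and that this recovers the sum of forms under the explicit cocycle description---so it matches the paper's approach while supplying the details the paper omits.
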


\begin{remark}
If we restrict to the groupoid of exact Courant algebroids defined with respect to a good open covering $\{U_a\}_{a \in \Lambda}$ of $M$, the symmetric monoidal structure takes the following form:
let $\{F_{ab} \in \Omega^2_\cl(U_{ab})\}_{a,b \in \Lambda}$ and $\{F'_{ab} \in \Omega^2_\cl(U_{ab})\}_{a,b \in \Lambda}$ be two \v{C}ech 1-cocycles on $M$ with values in $\Omega^2$, representing two exact Courant algebroids on $M$.
The monoidal product of these exact Courant algebroids is presented by the cocycle $\{F_{ab} + F'_{ab} \in \Omega^2_\cl(U_{ab})\}_{a,b \in \Lambda}$.
Similarly, let $\{\omega_a \in \Omega^1_\cl(U_a)\}_{a \in \Lambda}$ and $\{\omega'_a \in \Omega^1_\cl(U_a)\}_{a \in \Lambda}$ be collections of local 2-forms defining B-field transformations $e^\omega$ and $e^{\omega'}$ between exact Courant algebroids as above.
The monoidal product of these morphisms is $\{\omega_a + \omega'_a \in \Omega^1_\cl(U_a)\}_{a \in \Lambda}$, presenting the B-field transformation $e^{\omega + \omega'}$.
\qen
\end{remark}

\begin{theorem}
\label{st:AtCA_nabla and categorification of AbGrp H^3}
The morphism
\begin{equation}
	\AtCA^\scD \colon \Grb^{1, \scD}_{\nabla|1}
	\longrightarrow \ECA^\scD
\end{equation}
of simplicial presheaves on $\rmB\scD$ is compatible with the group structures; that is, it is even a morphism of presheaves of simplicial abelian groups on $\rmB\scD$.
Moreover, restricting to those morphisms which cover the identity on $M$, the construction categorifies the inclusion of $\rmH^3(M;\ZZ)$ into $\rmH^3(M;\RR)$:
we have a commutative square of abelian groups
\begin{equation}
\begin{tikzcd}[column sep=2cm, row sep=1cm]
	\pi_0 \big( \Grb^{1, M}_{\nabla|1}(\RR^0) \big) \ar[r, "\pi_0 (\AtCA)_{|\RR^0}"] \ar[d, "\cong", "\DD"']
	& \pi_0 \big( \ECA^M(\RR^0) \big) \ar[d, "\cong", "\SC"']
	\\
	\rmH^3(M;\ZZ) \ar[r, hookrightarrow]
	& \rmH^3(M;\RR)
\end{tikzcd}
\end{equation}
Here, the left-hand vertical morphism takes the Dixmier-Douady class of a gerbe with connective structure (note that this is independent of the connective structure), and the right-hand vertical morphism takes the \v{S}evera class of an exact Courant algebroid.
\end{theorem}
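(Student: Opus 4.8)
The plan is to prove the two assertions separately: the first, that $\AtCA^\scD$ respects the simplicial abelian group structures, is purely formal and follows from the way $\AtCA^\scD$ is assembled; the second, the commutativity of the square on connected components, is a cocycle-level computation which ultimately rests on the fact that both classifying isomorphisms $\DD$ and $\SC$ are computed through the curvature $3$-form.

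\textbf{First} I would establish the group-object statement. By construction, $\AtCA^\scD$ is obtained from the morphism~\eqref{eq:mp bb_nabla U(1) to ECA complex} of presheaves of chain complexes on $\Cartfam$ by successively applying the Dold-Kan functor $\varGamma$, the functor $\check{C}^{\scD*} = \ul{\scH}_\rmfam(\v{C}(-),-)$ of Construction~\ref{cstr:sPShs on GCov^D from sPShs on Cart_fam}, and the left Kan extension $\Lan_{\varpi^\scD}$. Each of these three functors carries presheaves of simplicial abelian groups to presheaves of simplicial abelian groups and sends morphisms of such to morphisms of such: $\varGamma$ is an additive equivalence $\Ch_{\geq 0} \simeq \Ab_\Delta$; the enriched hom $\ul{\scH}_\rmfam(\v{C}(-), -)$ into a simplicial abelian group is again a simplicial abelian group under pointwise operations, functorially in the target; and by Proposition~\ref{st:Lan_varpi} the extension $\Lan_{\varpi^\scD}$ is computed objectwise as a filtered colimit over $(\GCov^\scD_{|c})^\opp$, which preserves simplicial abelian groups since filtered colimits commute with finite products. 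As the starting morphism~\eqref{eq:mp bb_nabla U(1) to ECA complex} is a morphism of presheaves of chain complexes, $\AtCA^\scD$ is therefore a morphism of presheaves of simplicial abelian groups on $\rmB\scD$.

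\textbf{Next} I would treat the square. Restricting to $\RR^0$ and to morphisms covering $\id_M$, I fix a good open covering $\cU = \{U_a\}$ of $M$; by Proposition~\ref{st:wtGrb conn comps} (and its evident analogue for $\ECA^M$) the two $\pi_0$-groups are computed as the zeroth homology of the total complexes of the corresponding \v{C}ech-(Deligne resp.\ form) double complexes. A class in $\pi_0(\Grb^{1,M}_{\nabla|1}(\RR^0))$ is represented by \v{C}ech-Deligne data $(g_{abc}, A_{ab})$ with $\delta g = 1$ and $\delta A = \dd\log g$; its image under $\AtCA$ is, by~\eqref{eq:mp bb_nabla U(1) to ECA complex}, the \v{C}ech $1$-cocycle $\{F_{ab} \coloneqq \dd A_{ab}\}$ valued in $\Omega^2_\cl$, whose closedness under $\delta$ follows from $\delta F = \dd\,\delta A = \dd\,\dd\log g = 0$. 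The Ševera class $\SC$ of this exact Courant algebroid is obtained, using acyclicity of $\Omega^2$ (Lemma~\ref{st:acyclicity for vertical forms}), by choosing $B_a \in \Omega^2(U_a)$ with $B_b - B_a = F_{ab}$ and taking $[H] \in \rmH^3(M;\RR)$, where $H|_{U_a} = \dd B_a$ glues to a global closed $3$-form. On the other hand, the same equation $\delta B = \dd A$ is precisely the condition for $B_a$ to be a curving completing $(g, A)$ to a full connection on $\cG$, whose curvature is this same $H$; the image of $\DD(\cG, \cA^{(1)})$ under $\rmH^3(M;\ZZ) \hookrightarrow \rmH^3(M;\RR)$ is, by the standard compatibility of the Dixmier-Douady class with curvature, exactly $[H]$. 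Hence both composites send $(g, A)$ to $[H]$ and the square commutes.

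\textbf{The main obstacle} I anticipate is making this last identification watertight with respect to the explicit isomorphisms $\DD$ and $\SC$ produced in the proof of Proposition~\ref{st:wtGrb conn comps} and in \v{S}evera's classification: one must verify that, under these specific isomorphisms together with their sign and degree conventions, the integral Dixmier-Douady class maps to the de Rham class of the curvature, i.e.\ that the bottom arrow of the square really is the canonical inclusion $\rmH^3(M;\ZZ)\hookrightarrow\rmH^3(M;\RR)$. The cleanest way to pin this down is to factor~\eqref{eq:mp bb_nabla U(1) to ECA complex} through the integral \v{C}ech-Deligne complex $(\rmb^2_\nabla\ZZ)^v$ from Lemma~\ref{st:DiffCoho to Coho}, on which both the passage to real coefficients and the extraction of the curvature are transparent, and to observe that the resulting triangle of chain maps induces the inclusion on $\rmH_0$. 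Once this compatibility is recorded, the commutativity of the square follows formally from the common solution of the coboundary equation $\delta B = \dd A$ described above.
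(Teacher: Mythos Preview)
Your proposal is correct and follows essentially the same approach as the paper's own proof, which is extremely terse: the paper merely remarks that the filtered colimits in the definitions of $\Grb^{1,\scD}_{\nabla|1}$ and $\ECA^\scD$ are compatible with the abelian group structure (since filtered colimits preserve finite products), and that the second claim follows from the definitions of the Dixmier-Douady and \v{S}evera classes. You have expanded precisely these two points---tracing the group structure through $\varGamma$, $\check{C}^{\scD*}$ and $\Lan_{\varpi^\scD}$, and carrying out the cocycle-level curvature comparison---so your argument is a faithful unpacking of what the paper leaves implicit.
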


\begin{proof}
One checks that the formation of the colimits in the definition of $\Grb^{1, \scD}_{\nabla|1}$ and $\ECA^\scD$ is compatible with the abelian group structure (filtered colimits preserve finite products).
The second claim then follows from the definition of the Dixmier-Douady and \v{S}evera classes of 1-gerbes and exact Courant algebroids, respectively.
\end{proof}

%%%%%%%%%%%%%%%%%%%%%%%%%%%%%%%%%%%%%%%%%%%%%%%%%%%%%%%%%%%%%%%%%%%%%%%%%%%%

\subsection{Smooth families of isotropic splittings and generalised metrics}
\label{sec:Isoslpits and GenMet}

%%%%%%%%%%%%%%%%%%%%%%%%%%%%%%%%%%%%%%%%%%%%%%%%%%%%%%%%%%%%%%%%%%%%%%%%%%%%

We now include isotropic splittings and generalised metrics into our local description of smooth families of exact Courant algebroids on $M$.

\begin{definition}
The simplicial presheaf of \textit{exact Courant algebroids with isotropic splitting},
\begin{equation}
	\ECA_\nabla \colon \Cartfam \longrightarrow \sSet\,,
\end{equation}
is the Dold-Kan construction of the presheaf of chain complexes
\begin{equation}
	\big( \Omega^{2,v}  \hookleftarrow \Omega^{2,v}_\cl \big)
	\colon \Cartfam \longrightarrow \Ch_{\geq 0}\,,
\end{equation}
where $\Omega^{2,v}$ sits in degree zero.
\end{definition}

\begin{remark}
Observe that the chain complex $\Omega^{2,v}_\cl \hookrightarrow \Omega^{2,v}$ arises as the homotopy pullback
\begin{equation}
\begin{tikzcd}
	\big( \Omega^{2,v} \hookleftarrow \Omega^{2,v}_\cl \big) \ar[r] \ar[d]
	& 0 \ar[d]
	\\
	\rmb \Omega^{2,v}_\cl \ar[r, hookrightarrow]
	& \rmb \Omega^{2,v}
\end{tikzcd}
\end{equation}
Each of the vertices of the cospan underlying this homotopy pullback square satisfies homotopy descent with respect to $\tau_\rmfam$-coverings (by arguments analogous to those in the proof of Lemma~\ref{st:Omega^(2,v)_cl satisfies tau-fam-descent}).
Therefore, the presheaf $(\Omega^{2,v} \hookleftarrow \Omega^{2,v}_\cl)$ of chain complexes, and thus also $\ECA_\nabla$, satisfies homotopy descent with respect to $\tau_\rmfam$-coverings as well.
\qen
\end{remark}

We thus obtain a simplicial presheaf
\begin{equation}
	\widetilde{\ECA}{}^\scD_\nabla \colon \GCov^\scD(M)^\opp \longrightarrow \sSet
\end{equation}
associated to $\ECA_\nabla$ by applying the Dold-Kan construction objectwise and then applying Construction~\ref{cstr:sPShs on GCov^D from sPShs on Cart_fam}; this again restricts to essentially constant functors \smash{$\GCov^\scD(M)_{|c}^\opp \to \sSet$} on each fibre of the projection $\varpi^\scD \colon \GCov^\scD(M) \to \rmB\scD$ (by Lemma~\ref{st:wtG ess const on fibres of varpi^D}).
Thus, we obtain a well-behaved left Kan extension
\begin{equation}
	\ECA_\nabla^\scD \colon \rmB\scD^\opp \longrightarrow \sSet\,.
\end{equation}

\begin{example}
A vertex in $\ECA_\nabla(c)$, for $c \in \Cart$, is presented by a good open covering $(\hat{\cU} \to \cU)$ of $c {\times} M \to c$, a 2-form \smash{$F \in \Omega^{2,v}_\cl(\v{C}_1\hat{\cU})$} and a 2-form \smash{$B \in \Omega^{2,v}(\v{C}_0\hat{\cU})$} satisfying
\begin{equation}
	\delta B = F
	\qquad
	(\text{as well as} \quad \delta F = 0 \qandq \dd^v F = 0)\,.
\end{equation}
A 1-simplex $(F_0,B_0) \to (F_1, B_1)$ in $Ne_M^* \ECA_\nabla(c)$ is (after possibly passing to a refinement of good open coverings) a 2-form {$b \in \Omega^{2,v}_\cl(\v{C}_0\hat{\cU})$} satisfying
\begin{equation}
	b = B_1 - B_0\,.
\end{equation}
Geometrically, $b$ is a b-field transformation which preserves isotropic splittings of exact Courant algebroids.
All higher simplices are trivial; that is, $Ne_M^*\ECA_\nabla(c)$ is actually a \textit{set} which has been promoted to a simplicial set (observe that in $\ECA_\nabla(c)$ there are still non-trivial 1-simplices which stem from the action of diffeomorphisms of the base manifold $M$).
\qen
\end{example}

There is a canonical commutative diagram
\begin{equation}
\begin{tikzcd}[column sep=2cm, row sep=0.75cm]
	\rmb_\nabla^2 \U(1) \ar[r] \ar[d]
	& \big( \Omega^{2,v} \hookleftarrow \Omega^{2,v}_\cl \big) \ar[d]
	\\
	\rmb \rmb_\nabla \U(1) \ar[r]
	& \rmb \Omega^{2,v}_\cl
\end{tikzcd}
\end{equation}
at the level of presheaves of chain complexes on $\Cartfam$.
The top morphism reads as
\begin{equation}
\begin{tikzcd}
	\rmb \rmb_\nabla \U(1) \ar[r, equal] \ar[d]
	& \big( \Omega^{2,v} \ar[d, shift left=0.075cm, "\id"]
	& \Omega^{1,v} \ar[l] \ar[d, "\dd^v"]
	& \U(1) \ar[l, "\dd^v \log"'] \ar[d]
	& 0 \ar[d, shift left=-0.075cm] \ar[l] \big)
	\\
	\rmb \Omega^{2,v}_\cl \ar[r, equal]
	& \big( \Omega^{2,v}
	& \Omega^{2,v}_\cl \ar[l, hookrightarrow]
	& 0 \ar[l]
	& 0 \big) \ar[l]
\end{tikzcd}
\end{equation}

This induces a commutative square
\begin{equation}
\label{eq:Grb-ECA square forg conns}
\begin{tikzcd}[column sep=2cm, row sep=0.75cm]
	\Grb^{1,\scD}_\nabla \ar[r, "\AtCA_\nabla"] \ar[d]
	& \ECA_\nabla^\scD \ar[d]
	\\
	\Grb^{1,\scD}_{\nabla|1} \ar[r, "\AtCA"']
	& \ECA^\scD
\end{tikzcd}
\end{equation}
in $\Fun(\rmB\scD^\opp, \sSet)$, where the vertical morphisms forget (part of) the connection data.

\begin{lemma}
\label{st:Grb-ECA square is (ho)cartesian}
The square~\eqref{eq:Grb-ECA square forg conns} is cartesian and homotopy cartesian in $\Fun(\rmB\scD^\opp, \sSet)$.
\end{lemma}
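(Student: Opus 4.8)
The plan is to recognise \eqref{eq:Grb-ECA square forg conns} as the image, under the composite functor $\Lan_{\varpi^\scD}\circ\check{C}^{\scD*}\circ\varGamma$, of the strictly commuting square of presheaves of chain complexes on $\Cartfam$ displayed just above the statement, whose corners are $\rmb_\nabla^2\U(1)$, $(\Omega^{2,v}\hookleftarrow\Omega^{2,v}_\cl)$, $\rmb\rmb_\nabla\U(1)$ and $\rmb\Omega^{2,v}_\cl$. Since the relevant preservation properties of these functors are all recorded in Theorem~\ref{st:fctrs from H_fam to sPSh(rmBscD)} and Proposition~\ref{st:Lan_varpi}, both assertions reduce to elementary statements about that chain-level square.

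For the cartesian claim I would first check that the chain-level square is a pullback in $\Ch_{\geq 0}$. Limits there are computed degreewise, so it suffices to verify the pullback property in each homological degree. In degree $0$ the cospan is $0\to 0\leftarrow\Omega^{2,v}$, with pullback $\Omega^{2,v}$, matching the degree-$0$ term of $\rmb_\nabla^2\U(1)$; in degree $1$ it is $\Omega^{1,v}\xrightarrow{\dd^v}\Omega^{2,v}_\cl\xleftarrow{\id}\Omega^{2,v}_\cl$, whose pullback is canonically $\Omega^{1,v}$ because the right leg is an identity, matching the degree-$1$ term of $\rmb_\nabla^2\U(1)$; in degree $2$ it is $\U(1)\to 0\leftarrow 0$ with pullback $\U(1)$; and all higher degrees vanish. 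Hence the square is a degreewise, and therefore strict, pullback of chain complexes. Now $\varGamma$ is part of an equivalence of categories and the forgetful functor $\Ab_\Delta\to\sSet$ is a right adjoint, so both preserve limits; $\check{C}^{\scD*}$ preserves limits by Theorem~\ref{st:fctrs from H_fam to sPSh(rmBscD)}(1); and $\Lan_{\varpi^\scD}$ preserves finite limits by Theorem~\ref{st:fctrs from H_fam to sPSh(rmBscD)}(2). Applying the composite therefore carries the chain-level pullback to a strict pullback of simplicial presheaves on $\rmB\scD$, which is precisely \eqref{eq:Grb-ECA square forg conns}.

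For the homotopy-cartesian claim I would invoke right-properness of the projective model structure on $\Fun(\rmB\scD^\opp,\sSet)$ together with the standard fact that a strict pullback of a fibration between fibrant objects is a homotopy pullback. All four corners are projectively fibrant: each defining chain complex is fibrant in $\scH_\rmfam^{loc}$---by Lemma~\ref{st:families of gerbes satisfy tau_fam-descent} for the two gerbe complexes, by Lemma~\ref{st:Omega^(2,v)_cl satisfies tau-fam-descent} for $\ECA$, and by the descent remark following the definition of $\ECA_\nabla$---and $\Lan_{\varpi^\scD}\circ\check{C}^{\scD*}$ sends fibrant objects to projectively fibrant presheaves by Theorem~\ref{st:fctrs from H_fam to sPSh(rmBscD)}(3). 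It then remains to see that the right-hand vertical map $\ECA_\nabla^\scD\to\ECA^\scD$ is a projective fibration: at the chain level $(\Omega^{2,v}\hookleftarrow\Omega^{2,v}_\cl)\to\rmb\Omega^{2,v}_\cl$ is surjective in every degree (it is $\Omega^{2,v}\to 0$ in degree $0$ and $\id$ in degree $1$), so under $\varGamma$ it becomes an objectwise Kan fibration on $\Cartfam$, which $\check{C}^{\scD*}$ preserves (being an enriched mapping space out of the levelwise-representable, hence cofibrant, object $\v{C}(-)$) and $\Lan_{\varpi^\scD}$ preserves by Theorem~\ref{st:fctrs from H_fam to sPSh(rmBscD)}(2). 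A strict pullback with fibrant corners and a fibration leg is homotopy cartesian, giving the claim.

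The only genuinely delicate point is the degree-$1$ computation: one must see that pulling the connective-structure data $\Omega^{1,v}$ back along the isotropic-splitting data over the Courant-algebroid data $\Omega^{2,v}_\cl$ recovers exactly the curving data of a full gerbe connection. This is precisely the geometric statement that a connection on a $1$-gerbe is a connective structure together with a compatible curving whose fibrewise differential is the closed $2$-form determining the isotropic splitting. The remaining care-point, preservation of fibrations along $\check{C}^{\scD*}$ and $\Lan_{\varpi^\scD}$, requires no new work, as it is furnished directly by Theorem~\ref{st:fctrs from H_fam to sPSh(rmBscD)}.
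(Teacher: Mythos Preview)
Your proof is correct and follows essentially the same strategy as the paper's: reduce to the chain-level square, verify it is a strict pullback with a fibration on the right, and then transport both properties along $\varGamma$, $\check{C}^{\scD*}$ and $\Lan_{\varpi^\scD}$. The only difference is one of packaging: you invoke Theorem~\ref{st:fctrs from H_fam to sPSh(rmBscD)} systematically to carry fibrations and finite limits through, whereas the paper unwinds the left Kan extension as a filtered colimit over $\GCov^\scD_{|c}$ and argues directly that Kan fibrations (and hence homotopy pullbacks of Kan complexes) are stable under filtered colimits---but these amount to the same thing.
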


\begin{proof}
Recall that we can compute the values of the simplicial presheaves involved as strict filtered colimits over refinements of good open coverings of $c {\times} M \to c$ (Proposition~\ref{st:Lan_varpi}) and that they send each refinement to a weak equivalence (Lemma~\ref{st:wtG ess const on fibres of varpi^D}).
The fact that the square is cartesian can be seen directly by evaluating on a covering $(\hat{\cU} \to \cU) \in \GCov^\scD(M)$.
The square is also homotopy cartesian:
first, over each covering, the vertices of the underlying cospan are Kan complexes and the right-hand vertical morphism is a Kan fibration (it is the image under $\varGamma$ of a projective fibration in $\Fun(\GCov^\scD, \Ch_{\geq 0})$).
Therefore, with respect to each good open covering, the square is homotopy cartesian.
Finally, Kan fibrations are stable under filtered colimits, so that also the full square~\eqref{eq:Grb-ECA square forg conns} is a homotopy pullback.
\end{proof}

\begin{remark}
The fact that the square~\eqref{eq:Grb-ECA square forg conns} is cartesian implies, in particular, the well-known bijection between curvings on a gerbe with connective structure on the one hand, and isotropic splittings of the exact Courant algebroid associated to the gerbe on the other hand (see, for instance, the end of the proof of~\cite[Prop.~1]{Hitchin:Brackets_forms_and_functionals}).
\qen
\end{remark}

\begin{definition}
The simplicial presheaf of \textit{exact Courant algebroids with generalised metrics} is the product
\begin{equation}
	\GMet^\scD \coloneqq \Met_{0,d}^\scD \times \ECA_\nabla^\scD
	\qquad
	\in \Fun(\rmB\scD^\opp, \sSet)\,.
\end{equation}
\end{definition}

\begin{remark}
Here we use that there is a bijection between generalised metrics on an exact Courant algebroid on $M$ and pairs of a Riemannian metric on $M$ and an isotropic splitting of the exact Courant algebroid~\cite[Prop.~2.40]{GFS:Gen_Ricci_flow}.
\qen
\end{remark}

\begin{remark}
Consider the choices
\begin{equation}
	\Conf^\scD \coloneqq \GMet^\scD\,,
	\qquad
	\Fix^\scD \coloneqq \ECA^\scD
\end{equation}
and the projection morphism $p \colon \GMet^\scD \to \ECA^\scD$.
For $\cE \in \ECA^M(\RR^0)$ an exact Courant algebroid on $M$, we obtain a smooth---but, in this case, not higher---symmetry group, given by a left fibration $\SYM^\rev(\cE) \to N\Cart^\opp$.
This comes with a canonical action on the presheaf $\Cart^\opp \to \Set$ which encodes smooth families of generalised metrics on $\cE$; this is the functor-of-points version of the set-up for which Rubio and Tipler prove a Slice Theorem in~\cite{RT:Courant_Aut_and_Mdl_of_GenMet}.
\qen
\end{remark}

%%%%%%%%%%%%%%%%%%%%%%%%%%%%%%%%%%%%%%%%%%%%%%%%%%%%%%%%%%%%%%%%%%%%%%%%%%%%

\subsection{Comparing moduli stacks of integral NSNS supergravity solutions}
\label{sec:comparing GRic moduli}

%%%%%%%%%%%%%%%%%%%%%%%%%%%%%%%%%%%%%%%%%%%%%%%%%%%%%%%%%%%%%%%%%%%%%%%%%%%%

Lemma~\ref{st:Grb-ECA square is (ho)cartesian} is a manifestation of the following (well-known~\cite{Hitchin:Brackets_forms_and_functionals}) fact:
consider a gerbe with connective structure $(\cG, \cA^{(1)})$ on a manifold $M$, and let $\AtCA (\cG, \cA^{(1)})$ denote its associated exact Courant algebroid on $M$.
Then, the isotropic splittings of $\AtCA (\cG, \cA^{(1)})$ are in canonical bijection with the curvings of $(\cG, \cA^{(1)})$.
Neither the curvings on $(\cG, \cA^{(1)})$, nor the isotropic splittings on $\AtCA (\cG, \cA^{(1)})$ possess higher structure; both form sets.
It may, therefore, appear that the discussions of curvings on gerbes with connective structure and isotropic splittings of exact Courant algebroids are entirely equivalent, so long as both exist (i.e.~so long as the \v{S}evera class of the exact Courant algebroid lies in the image of $\rmH^3(M;\ZZ)$ in $\rmH^3(M;\RR)$).

While this is true at the level of \textit{configurations}, it fails at the level of \textit{moduli}:
in particular, the automorphisms of $\AtCA (\cG, \cA^{(1)})$ in $\ECA^M$ form a \textit{group}, whereas the automorphisms of $(\cG, \cA^{(1)})$ in \smash{$\Grb^{1,M}_{\nabla|1}$} form a \textit{2-group}.
As a result of this discrepancy at the level of automorphisms, the derived quotients of isotropic splittings modulo automorphisms of $\AtCA (\cG, \cA^{(1)})$ and curvings modulo automorphisms of $(\cG, \cA^{(1)})$ are genuinely different.
For instance, on the level of isomorphism classes of objects the former identifies isotropic splittings which differ by a closed 2-form, whereas the latter identifies curvings which differ by a closed 2-form with integer periods.
In the first case, these are all morphisms in the homotopy quotient, whilst in the second case there are also 2-morphisms stemming from morphisms of $\U(1)$-bundles with connection.
Finally, this difference carries over from the quotients by automorphisms to quotients by more general symmetry groups.

When it comes to the discussion of \textit{moduli}, rather than configurations, it thus makes a crucial difference whether we investigate moduli of curvings on a gerbe with connective structure, or isotropic splittings on a Courant algebroid, even though the underlying \textit{configurations} are in canonical bijection.

From the perspective of string theory and functorial field theory it is particularly desirable to consider the integer case, i.e.~the case where the \v{S}evera class lies in the image of the map $\rmH^3(M;\ZZ) \to \rmH^3(M;\RR)$~\cite{Kapustin:D-branes_in_non-triv_B-fields, BW:OCFFTs, BW:Transgression_of_D-branes, GFS:Gen_Ricci_flow}.
This integrality condition ensures that the Wess-Zumino-Witten term in the $\sigma$-model action functional is well-defined (see also~\cite[p.~17, 18]{GFS:Gen_Ricci_flow}).
Further, supergravity can be seen as a low-energy limit of string theory, where charges (in particular those of the B-field) are quantised (see, for instance,~\cite{Freed:Dirac_charge_quantisation, DFM:Spin_strs_and_superstrings, FMS:Uncertainty_of_fluxes, Szabo:Quant_of_Higher_Ab_GT, ABEHSN:SymTFTs_from_String_thy}).
Consequently, the integrality condition persists in this limit.

In the following we concentrate on NSNS supergravity.
In fact, there are three candidate geometric descriptions of the B-field in NSNS supergravity:
\begin{enumerate}
\item[(1)] a connection on a gerbe $\cG$,

\item[(2a)] a connection on a gerbe $(\cG, \cA^{(1)})$ with connective structure, i.e.~a curving,

\item[(2b)] an isotropic splitting of an exact Courant algebroid, which we may assume to the of the form $\AtCA (\cG, \cA^{(1)})$ by charge quantisation.
\end{enumerate}

\begin{remark}
Usually, in setting (2b) the supergravity B-field is combined with the Riemannian metric into a \textit{generalised metric} on the Courant algebroid.
This is advantageous, for instance, to describe and compute T-duality in supergravity and string theory~\cite[Ch.~10]{GFS:Gen_Ricci_flow}.
However, generalised metrics on an exact Courant algebroid are in canonical bijection with pairs $(g, \sigma)$ of a Riemannian metric $g$ on $M$ and an isotropic splitting $\sigma$~\cite[Props.~2.38, 2.40]{GFS:Gen_Ricci_flow}.
\qen
\end{remark}

By the above discussion, the configurations in models (2a) and (2b) are in canonical bijection.
However, the symmetries---and therefore the resulting moduli stacks of these configurations---depend on which underlying geometry we consider.
Moreover, since connections on a given gerbe form a genuine groupoid (Section~\ref{sec:Con_k on fixed n-gerbe})---more precisely, a 1-truncated simplicial presheaf given as a fibre of the projection $\Grb^{1,M}_\nabla \to \Grb^{1,M}$---the configurations in model (1) cannot be equivalent to those in (2a) or (2b).
This poses a genuine problem:

\begin{problem}
\label{prob:mouli of NSNS B-fields}
What is the correct moduli stack of NSNS supergravity B-fields for a fixed cohomology class in $\rmH^3(M;\ZZ)$?
\qen
\end{problem}

Since the B-field is a constituent of the field content of NSNS supergravity, this directly leads to:

\begin{problem}
\label{prob:moduli of NSNS solutions}
What is the correct moduli stack of NSNS supergravity solutions for a fixed cohomology class in $\rmH^3(M;\ZZ)$?
\qen
\end{problem}

The exact Courant algebroid $\AtCA (\cG, \cA^{(1)})$ associated to a gerbe $(\cG, \cA^{(1)})$ with connective structure has an important geometric interpretation:
one can view $(\cG, \cA^{(1)})$ as a categorified principal bundle in $\scP(N\Cart)$ with base given by $M$ and structure group (presented by) the group object $\rmB_\nabla \U(1) \in \Grp(\scP(N\Cart))$.
Then, the Courant algebroid $\AtCA (\cG, \cA^{(1)})$ is the categorified \textit{Atiyah algebroid} of this principal $\rmB_\nabla \U(1)$-bundle~\cite{Collier:Inf_Syms_of_DD_Gerbes}.

Given this background, forming the quotient of isotropic splittings of the exact Courant algebroid $\AtCA(\cG, \cA^{(1)})$ by symmetries \textit{of the exact Courant algebroid} is analogous to taking splittings (as vector bundles) of the Atiyah algebroid $\mathrm{At}(P)$ of a principal bundle $P$ modulo symmetries \textit{of the Atiyah algebroid}.
This is not the quotient at the heart of gauge theory:
while the (vector bundle) splittings of the Atiyah algebroid $\mathrm{At}(P)$ are in one-to-one correspondence with connections on the principal bundle $P$, the automorphisms of $\mathrm{At}(P)$ are different from the gauge transformations of $P$.
This is precisely analogous to the fact that in the set-ups (2a) and (2b) configurations are in bijection, but the symmetries---and thus also the moduli stacks---are different.

By this analogy with classical gauge theory, we take the perspective that in situations (2a) and (2b) the gerbe with connective structure is the fundamental object, and the relevant symmetries are those of the gerbe with connective structure, rather than those of its associated exact Courant algebroid%
\footnote{It is a separate, interesting geometric problem to study the latter quotient, but we will not address this question in the present paper.}.
However, this still leaves us with two possible answers to Problem~\ref{prob:mouli of NSNS B-fields}:

\begin{enumerate}
\item Let $\cG$ be a gerbe presenting the desired class in $\rmH^3(M;\ZZ)$.
The moduli stack of B-fields for this class is that of connections on $\cG$ modulo the symmetries of $\cG$.

\item Let $(\cG, \cA^{(1)})$ be a gerbe with connective structure presenting the desired class in $\rmH^3(M;\ZZ)$.
The moduli stack of B-fields for this class is that of curvings on $(\cG, \cA^{(1)})$ (equivalently, the isotropic splittings of $\AtCA(\cG, \cA^{(1)})$) modulo the symmetries of $(\cG, \cA^{(1)})$.
\end{enumerate}

Note that the configurations and symmetries in both set-ups are not equivalent to each other.
More formally, in the setting of Section~\ref{sec:moduli oo-prestacks of hgeo strs on M} we are thus asking for the moduli $\infty$-stacks arising from the choices
\begin{equation}
	\Sol_B^\scD = \Conf_B^\scD \coloneqq \Grb^{1,\scD}_\nabla\,,
	\qquad
	\Fix_0^\scD \coloneqq \Grb^{1,\scD}_{\nabla|1}\,,
	\qquad
	\Fix_1^\scD \coloneqq \Grb^{1,\scD}\,.
\end{equation}
The morphism $\Fix_0^\scD \to \Fix_1^\scD$ is the canonical projection which forgets the smooth family of connective structures on a smooth family of 1-gerbes on $M$.
We denote the resulting moduli $\infty$-stacks of B-fields by
\begin{equation}
	\scMdl_B(\cG)
	\qquad \text{and} \qquad
	\scMdl_B(\cG, \cA^{(1)})\,,
\end{equation}
respectively.
Now, as a direct consequence of Corollary~\ref{st:Grb conn comps} and Theorem~\ref{st:equiv result for Mdl oo-stacks}, we obtain the following solution to Problem~\ref{prob:mouli of NSNS B-fields}:

\begin{theorem}
\label{st:equiv of B-field moduli}
There is a canonical equivalence of moduli $\infty$-stacks,
\begin{equation}
	\scMdl_B(\cG)
	\simeq \scMdl_B(\cG, \cA^{(1)})\,.
\end{equation}
\end{theorem}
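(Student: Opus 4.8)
The plan is to recognise the two moduli $\infty$-stacks as the two sides of Theorem~\ref{st:equiv result for Mdl oo-stacks} and then to verify that the hypotheses of that theorem are met by the present choices. Concretely, I would set $\Conf^\scD = \Sol^\scD = \Grb^{1,\scD}_\nabla = \Grb^{1,\scD}_{\nabla|2}$, $\Fix_0^\scD = \Grb^{1,\scD}_{\nabla|1}$ and $\Fix_1^\scD = \Grb^{1,\scD} = \Grb^{1,\scD}_{\nabla|0}$, with $p \coloneqq p^2_1 \colon \Conf^\scD \to \Fix_0^\scD$ and $q \coloneqq p^1_0 \colon \Fix_0^\scD \to \Fix_1^\scD$ the canonical forgetful maps. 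The section $\cG_0 \coloneqq (\cG, \cA^{(1)}) \in \Fix_0^M(\RR^0)$ then maps under $q$ to $\cG_1 \coloneqq p^1_0(\cG, \cA^{(1)}) = \cG \in \Fix_1^M(\RR^0)$, and the two moduli $\infty$-stacks $\scMdl_B(\cG, \cA^{(1)})$ and $\scMdl_B(\cG)$ are by definition $\scMdl_\Phi(\cG_0)$ and $\scMdl_\Phi(\cG_1)$ for the fixed smooth higher group action $\Phi$ under consideration.

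There are only three things to check. First, all three presheaves are projectively fibrant: this follows from Lemma~\ref{st:families of gerbes satisfy tau_fam-descent} together with Theorem~\ref{st:fctrs from H_fam to sPSh(rmBscD)}, which guarantee that the left-Kan-extended gerbe presheaves are fibrant. Second, $p$ and $q$ are projective fibrations, which is exactly Remark~\ref{rmk:p^k_l are proj fibs}; and $\Sol^\scD = \Conf^\scD$ is trivially a solution subpresheaf in the sense of Definition~\ref{def:solution presheaf}. Third---and this is the only substantive point---the map $q = p^1_0$ must be $0$-connected. Here the crucial numerology is $n = 1$, $k = 1$, $l = 0$, so that $l < k \in \{0, 1\}$ with $k \leq n$, which is precisely the range covered by Corollary~\ref{st:Grb conn comps}. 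This supplies the $0$-connectedness of $q$.

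With these verifications in place, I would invoke Remark~\ref{rmk:(-)_[G] and isos on conn comps} to note that $0$-connectedness of $q$ forces $\Diff_{[\cG, \cA^{(1)}]}(M) = \Diff_{[\cG]}(M)$, so that a single smooth higher group action $\Phi \colon \bbGamma \to \Diff_{[\cG]}(M)$ serves as the relevant symmetry datum for both fixed structures simultaneously; the associated categories $\rmB\scD[\cG, \cA^{(1)}]$ and $\rmB\scD[\cG]$ coincide, so the two moduli problems are quotients over the same base. Applying Theorem~\ref{st:equiv result for Mdl oo-stacks} then yields the canonical equivalence $\scMdl_\Phi(\cG_0) \simeq \scMdl_\Phi(\cG_1)$ in $\scP(N\Cart)$, which is precisely the asserted equivalence $\scMdl_B(\cG, \cA^{(1)}) \simeq \scMdl_B(\cG)$.

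The proof carries no genuine obstacle of its own: the entire weight rests on Corollary~\ref{st:Grb conn comps}, whose establishment of $0$-connectedness fails exactly once one tries to forget the top-degree (curving) part, i.e.\ for $k = n+1$. The point I would emphasise is therefore the bookkeeping that the comparison is between a $1$-gerbe with connective structure and a bare $1$-gerbe---forgetting only the degree-one connection datum---so that the forbidden case $k = n+1$ is avoided and Theorem~\ref{st:equiv result for Mdl oo-stacks} applies cleanly.
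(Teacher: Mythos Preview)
Your proposal is correct and follows exactly the paper's approach: the paper states the theorem as ``a direct consequence of Corollary~\ref{st:Grb conn comps} and Theorem~\ref{st:equiv result for Mdl oo-stacks}'' with precisely the choices $\Sol_B^\scD = \Conf_B^\scD = \Grb^{1,\scD}_\nabla$, $\Fix_0^\scD = \Grb^{1,\scD}_{\nabla|1}$, $\Fix_1^\scD = \Grb^{1,\scD}$ that you identify. Your write-up is in fact more detailed than the paper's own justification, and your closing remark about the numerology $k \leq n$ being essential (so that the forbidden case $k = n+1$ of Corollary~\ref{st:Grb conn comps} is avoided) captures exactly why the argument works for forgetting the connective structure but would not work for forgetting the curving.
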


Next, we extend these arguments to the study of NSNS supergravity solutions and the resolution of Problem~\ref{prob:moduli of NSNS solutions}:
we consider the configuration presheaf
\begin{equation}
	\Conf^\scD_{NS} = \Grb^{1,\scD}_\nabla \times \Met_{0,d}^\scD \times \Omega^{1,v,\scD}_\cl
	\qquad
	\in \Fun(\rmB\scD^\opp, \sSet)\,.
\end{equation}

\begin{definition}
\label{def:NSNS SuGra solutions}
An \textit{integral NSNS supergravity solution} on $M$ is a triple $((\cG, \cA), g, \varphi)$ of a 1-gerbe with connection $(\cG, \cA)$ on $M$, a Riemannian metric $g$ on $M$ and a closed 1-form $\varphi$ on $M$, satisfying the equations
\begin{align}
\label{eq:NSNS field eqns}
	\Ric^g + \nabla^g \varphi - \frac{1}{4} \curv(\cG, \cA) \bullet_g \curv(\cG, \cA) &= 0\,,
	\\
	\dd^*_g \curv(\cG, \cA) + \iota^g_\varphi \curv(\cG, \cA) &= 0\,,
	\\
	\dd^*_g \varphi + |\varphi|^2_g - \big| \curv(\cG, \cA) \big|^2_g
	&= \lambda\,,
\end{align}
where $\lambda \in \RR$ is the level of the solution.
\end{definition}

\begin{remark}
For each 1-gerbe with connective structure $(\cG, \cA^{(1)})$ on $M$, triples $(A_2, g, \varphi)$ of a curving $A_2$, a Riemannian metric $g$ on $M$ and a closed 1-form $\varphi$ on $M$ are in canonical bijection with pairs of a generalised metrics on the exact Courant algebroid $\AtCA(\cG,\cA^{(1)})$ and a closed divergence operator for this generalised metric (see~\cite[Prop.~2.53]{GFS:Gen_Ricci_flow}, the paragraph following that proposition, and~\cite[Def.~3.40]{GFS:Gen_Ricci_flow}).
Further restricting to exact 1-forms $\varphi$ recovers the standard bosonic sector of the low-energy effective action in type-II string theory (see~\cite[Def.~3.49]{GFS:Gen_Ricci_flow} and the discussion following that definition).
\qen
\end{remark}

\begin{lemma}
\label{st:Sol_NS^scD is solution subpresheaf}
The set of equations~\eqref{eq:NSNS field eqns} defines a solution subpresheaf (cf.~Definition~\ref{def:solution presheaf})
\begin{equation}
	\Sol_{NS}^\scD \hookrightarrow \Conf_{NS}^\scD\,.
\end{equation}
\end{lemma}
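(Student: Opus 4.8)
The plan is to verify directly the two defining conditions of a solution subpresheaf from Definition~\ref{def:solution presheaf}: first, that membership in $\Sol_{NS}^\scD$ is stable under the presheaf structure on $\Conf_{NS}^\scD$, so that $\Sol_{NS}^\scD$ is genuinely a simplicial subpresheaf on $\rmB\scD$; and second, that for each $c \in \rmB\scD$ the inclusion $\Sol_{NS}^\scD(c) \hookrightarrow \Conf_{NS}^\scD(c)$ is the inclusion of a disjoint union of connected components. The argument mirrors the one sketched for the Maxwell and Einstein-Maxwell solution presheaves in Example~\ref{eg:Mdl stacks from Grb x Met}, the only extra feature being the presence of the dilaton 1-form $\varphi$ and the genuinely coupled nature of the system~\eqref{eq:NSNS field eqns}.

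For the subpresheaf property I would first note that a vertex of $\Conf_{NS}^\scD(c)$ is a triple $((\cG, \cA), g, \varphi)$, and that each term appearing in~\eqref{eq:NSNS field eqns} is built from $g$, $\varphi$ and the curvature $H \coloneqq \curv(\cG, \cA)$ using the natural fibrewise operations of pseudo-Riemannian geometry recorded in Notation~\ref{nt:Riemannian geometry}: the Ricci tensor $\Ric^g$, the Levi-Civita derivative $\nabla^g \varphi$, the codifferential $\dd^*_g$, the contraction $\iota^g_\varphi$, the symmetric product $\bullet_g$, and the norm $|{-}|^2_g$. A morphism $(f, \chi) \colon c_0 \to c_1$ in $\rmB\scD$ acts by pulling all of this data back along the fibrewise diffeomorphism $f \wr \chi$. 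Since the curvature assembles into a morphism $\curv \colon \pi_0 \Grb^{1,\scD}_\nabla \to \Omega^{3,v,\scD}$ in $\Fun(\rmB\scD^\opp, \sSet)$ (Definition~\ref{def:curv for Grb^(n,scD)_nabla}), and since each of the listed Riemannian operations commutes with pullback along fibrewise diffeomorphisms, the full left-hand sides of all three equations transform naturally. Hence if a triple solves~\eqref{eq:NSNS field eqns}, so does its pullback, establishing that $\Sol_{NS}^\scD$ is closed under the presheaf structure.

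For the connected-components condition the key point is that the equations depend on the gerbe with connection only through its curvature 3-form $H = \curv(\cG, \cA)$. The factors $\Met_{0,d}^\scD$ and $\Omega^{1,v,\scD}_\cl$ of $\Conf_{NS}^\scD$ are simplicially constant, so all simplicial structure of $\Conf_{NS}^\scD$ comes from the factor $\Grb^{1,\scD}_\nabla$. Given a 1-simplex $\alpha$ in $\Conf_{NS}^\scD(c)$, its two faces $d_0\alpha$ and $d_1\alpha$ therefore carry identical metric and dilaton data, while the underlying 1-simplex in $\Grb^{1,\scD}_\nabla(c)$ is an equivalence of gerbes with connection; by the invariance of the curvature under such equivalences (again Definition~\ref{def:curv for Grb^(n,scD)_nabla}, where $\curv$ factors through $\pi_0$) the faces share the same $H$. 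Consequently every term in~\eqref{eq:NSNS field eqns} agrees for $d_0\alpha$ and $d_1\alpha$, so $d_0\alpha$ is a solution if and only if $d_1\alpha$ is. This shows that whether a configuration is a solution depends only on its connected component, i.e.~$\Sol_{NS}^\scD(c) \hookrightarrow \Conf_{NS}^\scD(c)$ is an inclusion of a disjoint union of connected components.

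The only genuine work lies in the first step: one must confirm that every operation entering the coupled system~\eqref{eq:NSNS field eqns} is natural with respect to the fibrewise diffeomorphism action encoded in morphisms of $\rmB\scD$, i.e.~that the three equations are equivariant expressions in $(H, g, \varphi)$. I expect this to be the main (but purely routine) obstacle: it amounts to checking the naturality of the standard constructions of pseudo-Riemannian geometry under isometries, carried out for the vertical versions of these operators on products $c \times M$. I anticipate no conceptual difficulty, since each operator in Notation~\ref{nt:Riemannian geometry} is defined intrinsically from $g$ and hence transforms covariantly under pullback; the remaining steps then follow immediately from the properties of the curvature morphism established in Definition~\ref{def:curv for Grb^(n,scD)_nabla}.
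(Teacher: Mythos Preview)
Your proposal is correct and follows essentially the same approach as the paper: both arguments reduce to the naturality of the operators in Notation~\ref{nt:Riemannian geometry} under diffeomorphisms and the fact that the equations~\eqref{eq:NSNS field eqns} depend on $(\cG,\cA)$ only through its curvature, which is invariant under equivalences of gerbes with connection. If anything, your write-up is more carefully organised than the paper's: you explicitly separate the subpresheaf condition (naturality under morphisms in $\rmB\scD$) from the connected-components condition within each fibre, and you correctly observe that the factors $\Met_{0,d}^\scD$ and $\Omega^{1,v,\scD}_\cl$ are simplicially constant, so the metric and dilaton are literally unchanged along a $1$-simplex in $\Conf_{NS}^\scD(c)$; the paper instead phrases this as ``differ only by pullback along a smooth family of diffeomorphisms,'' which is a weaker (though still sufficient) statement.
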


\begin{proof}
Let $c \in \Cart$.
Any pair of vertices of the simplicial set $\Conf_{NS}^\scD(c)$ which lie in the same connected component has the following property:
the smooth families of metrics, the smooth families closed 1-forms $\varphi$, and the smooth families of (fibrewise) curvature 3-forms associated to the underlying families of 1-gerbes on $M$ with connection, differ only by the pullback along a smooth family of diffeomorphisms on $M$.
However, a triple $(H, g, \varphi)$ of a smooth $c$-paramterised family of 3-forms $H$, Riemannian metrics $g$ and closed 1-forms $\varphi$ on $M$ satisfies the equations~\eqref{eq:NSNS field eqns} at level $\lambda \in \RR$ if and only if, for each smooth family $f \colon c \to \Diff(M)$ of diffeomorphisms of $M$, the triple $(f^*H, f^*g, f^*\varphi)$ satisfies equations~\eqref{eq:NSNS field eqns} at level $\lambda \in \RR$.

Thus, for any $c \in \Cart$ and any 1-simplex in $\Conf_{NS}^\scD(c)$, its source is a smooth family of NSNS supergravity solutions on $M$ if and only if its target is so.
Consequently, the inclusion $\Sol_{NS}^\scD(c) \hookrightarrow \Conf_{NS}^\scD(c)$ is an inclusion of a disjoint union of connected components.
\end{proof}

Following our discussion at the beginning of this subsection, in Definition~\ref{def:NSNS SuGra solutions} we have taken the NSNS supergravity B-field to live on a 1-gerbe.
Before addressing Problem~\ref{prob:moduli of NSNS solutions} we briefly comment on the relation of this set-up to NSNS supergravity solutions on exact Courant algebroids (in place of gerbes).

\begin{definition}
\label{def:GRic soliton}
A \textit{NSNS supergravity solution} on $M$ is an exact Courant algebroid $\cE$ with a generalised metric $\cG$ and a closed 1-form $\varphi$ on $M$ satisfying the equations
\begin{equation}
\label{eq:GRic soliton}
	\Ric^g + \nabla^g \varphi - \frac{1}{4} H \bullet_g H = 0\,,
	\qquad
	\dd^*_g H + \iota_\varphi H = 0\,,
	\qquad
	\dd^*_g \varphi + |\varphi|^2_g
	= |H|^2_g + \lambda\, ,
\end{equation}
where $\lambda \in \RR$ is the level of the solution, $g$ is the Riemannian metric canonically determined by $\cG$ and $H$ is is the preferred representative of the \v{S}evera class of $\cE$ defined by the splitting canonically determined by $\cG$ \cite[Prop.~2.40]{GFS:Gen_Ricci_flow}.
\end{definition}

\begin{remark}
The equations of motion defining NSNS supergravity on a gerbe can be entirely expressed in terms of \emph{generalised} curvature operators \cite{Coimbra:2011nw,Garcia-Fernandez:2013gja,GFS:Gen_Ricci_flow}. For our purposes here however the previous formulation is sufficient.
\qen
\end{remark}

We thus set
\begin{equation}
	\Conf_{GRic}^\scD \coloneqq \GMet_{0,d}^\scD \times \Omega^{1,v,\scD}_\cl
	= \Met_{0,d}^\scD \times \ECA_\nabla^\scD \times \Omega^{1,v,\scD}_\cl\,,
\end{equation}
and let $\Sol_{GRic}^\scD \subset \Conf_{GRic}^\scD$ denote the full simplicial subpresheaf on those vertices which are smooth families of NSNS supergravity solutions on $M$ in the sense of Definition~\ref{def:GRic soliton}.
This is a solution subpresheaf by arguments analogous to the proof of Lemma~\ref{st:Sol_NS^scD is solution subpresheaf}.
The close relation between integral NSNS supergravity solutions on $M$ in the sense of Definition~\ref{def:NSNS SuGra solutions} and NSNS supergravity solutions becomes evident in the following statement, which is a direct consequence of Lemma~\ref{st:Grb-ECA square is (ho)cartesian}:

\begin{proposition}
There is a commutative diagram of simplicial presheaves on $\rmB\scD$:
\begin{equation}
\begin{tikzcd}[column sep=1.75cm, row sep=0.75cm]
	\Sol_{NS}^\scD \ar[r, "\AtCA_\nabla \times \id"] \ar[d, hookrightarrow]
	& \Sol_{GRic}^\scD \ar[d, hookrightarrow]
	\\
	\Conf_{NS}^\scD \ar[r, "\AtCA_\nabla \times \id"'] \ar[d]
	& \Conf_{GRic}^\scD \ar[d]
	\\
	\Grb^{1,\scD}_{\nabla|1} \ar[r, "\AtCA"']
	& \ECA^\scD
\end{tikzcd}
\end{equation}
(by a slight abuse of notation, the top morphism employs the restriction of the morphism $\AtCA_\nabla$ in~\eqref{eq:Grb-ECA square forg conns} to the simplicial subpresheaf $\Sol_{NS}^\scD$).
Each square in this diagram is cartesian and homotopy cartesian.
\end{proposition}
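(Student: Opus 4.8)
The plan is to treat the two squares separately and reduce both to Lemma~\ref{st:Grb-ECA square is (ho)cartesian}, combined with the single geometric observation that, under the generalised tangent bundle construction $\AtCA_\nabla$, the defining equations of $\Sol_{NS}^\scD$ and of $\Sol_{GRic}^\scD$ coincide. Throughout I would use that pullbacks in $\Fun(\rmB\scD^\opp, \sSet)$ are computed objectwise, and that the projective model structure on $\Fun(\rmB\scD^\opp, \sSet)$ is right proper, so that a strict pullback along a projective fibration between projectively fibrant objects is automatically a homotopy pullback.

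For the \emph{bottom square} I would first rewrite the top corners as products. Setting $W \coloneqq \Met_{0,d}^\scD \times \Omega^{1,v,\scD}_\cl$, we have $\Conf_{NS}^\scD = \Grb^{1,\scD}_\nabla \times W$ and $\Conf_{GRic}^\scD = \ECA_\nabla^\scD \times W$, the horizontal map is $\AtCA_\nabla \times \id_W$, and the right vertical map is the composite $\ECA_\nabla^\scD \times W \xrightarrow{\pr} \ECA_\nabla^\scD \xrightarrow{q'} \ECA^\scD$, where $q'$ is the forgetful fibration from~\eqref{eq:Grb-ECA square forg conns}. Since the map $\ECA_\nabla^\scD \times W \to \ECA^\scD$ ignores the factor $W$, the objectwise pullback of $\Grb^{1,\scD}_{\nabla|1}$ against it factors as
\begin{equation}
	\big( \Grb^{1,\scD}_{\nabla|1} \underset{\ECA^\scD}{\times} \ECA_\nabla^\scD \big) \times W
	= \Grb^{1,\scD}_\nabla \times W
	= \Conf_{NS}^\scD\,,
\end{equation}
where the first identity is the cartesianness half of Lemma~\ref{st:Grb-ECA square is (ho)cartesian}. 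This shows the bottom square is cartesian. For homotopy cartesianness, I would observe that $W$ is projectively fibrant (a product of a simplicially constant presheaf with one obtained via Dold--Kan from a chain complex), so $\pr$ is a projective fibration; composing with the projective fibration $q'$ makes the right vertical map a projective fibration between projectively fibrant objects, whence the strict pullback is a homotopy pullback.

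For the \emph{top square}, the essential point is that $\Sol_{NS}^\scD$ is exactly the preimage of $\Sol_{GRic}^\scD$ under $\AtCA_\nabla \times \id$. Under $\AtCA_\nabla$ a $1$-gerbe with connection $(\cG, \cA)$ is sent to the exact Courant algebroid $\AtCA(\cG, \cA^{(1)})$ equipped with the isotropic splitting determined by the curving, and the distinguished representative $H$ of its \v{S}evera class associated to the resulting generalised metric is precisely $\curv(\cG, \cA)$, while the metric $g$ and the closed $1$-form $\varphi$ pass through unchanged. Comparing~\eqref{eq:NSNS field eqns} and~\eqref{eq:GRic soliton} under the substitution $H = \curv(\cG, \cA)$, the three NSNS field equations become term-by-term identical to the three NSNS supergravity solution equations. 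Hence a vertex of $\Conf_{NS}^\scD$ lies in $\Sol_{NS}^\scD$ if and only if its image lies in $\Sol_{GRic}^\scD$, so the top square is a strict pullback. Both vertical maps are inclusions of disjoint unions of connected components (Definition~\ref{def:solution presheaf} and Lemma~\ref{st:Sol_NS^scD is solution subpresheaf}); such inclusions are objectwise Kan fibrations between Kan complexes, hence projective fibrations between projectively fibrant objects, so by right properness the strict pullback is again a homotopy pullback.

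The main obstacle I expect is the geometric matching underlying the top square: one must verify carefully that Hitchin's generalised tangent bundle construction sends the curvature $3$-form of the gerbe connection to exactly the distinguished $3$-form $H$ determined by the generalised metric on $\AtCA(\cG, \cA^{(1)})$, and that the dilaton $\varphi$ and metric $g$ survive the construction intact. Once this identification of the shared data $(H, g, \varphi)$ is in place, both cartesian statements are purely formal and both homotopy-cartesian statements follow from right properness, since in each square the vertical maps are fibrations between fibrant objects.
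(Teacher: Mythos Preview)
Your proof is correct and follows the same approach as the paper, which simply states that the proposition is ``a direct consequence of Lemma~\ref{st:Grb-ECA square is (ho)cartesian}'' without further elaboration. Your argument supplies exactly the details one would want: factoring off the inert factor $W = \Met_{0,d}^\scD \times \Omega^{1,v,\scD}_\cl$ to reduce the bottom square to that lemma, identifying $H = \curv(\cG,\cA)$ under $\AtCA_\nabla$ to see that $\Sol_{NS}^\scD$ is the preimage of $\Sol_{GRic}^\scD$, and invoking right properness with the observation that the relevant vertical maps are projective fibrations between fibrant objects.
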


By our above discussion there are two valid ways of formulating moduli problems for NSNS supergravity solutions.
These correspond precisely to the following two choices of fixed data:
we set
\begin{equation}
	\Fix_{NS,0}^\scD \coloneqq \Grb^{1,\scD}_{\nabla|1}\,,
	\qquad \text{and} \qquad
	\Fix_{NS,1}^\scD \coloneqq \Grb^{1,\scD}_\nabla\,.
\end{equation}
Let \smash{$(\cG, \cA^{[1]}) \in \Grb^{1,M}_{\nabla|1}(\RR^0)$}.
For each left fibration $Q \to N\Cart^\opp$ with reduced fibres and each morphism $\phi \colon Q \to N\rmB\scD[\cG]^\opp$, we thus obtain two moduli $\infty$-stacks (see Definition~\ref{def:scMdl_phi(cG)}),
\begin{equation}
	\scMdl_{NS,0; \Phi}(\cG, \cA^{(1)})
	\qquad \text{and} \qquad
	\scMdl_{NS,1; \Phi}(\cG)
	\qquad
	\in \scFun(N\Cart^\opp, \scS)
\end{equation}
from the morphisms
\begin{equation}
	\phi^* \textint \Sol_{NS}^\scD \longrightarrow \phi^* \textint \Fix_{NS,0}^\scD
	\qquad \text{and} \qquad
	\phi^* \textint \Sol_{NS}^\scD \longrightarrow \phi^* \textint \Fix_{NS,1}^\scD\,,
\end{equation}
respectively.
The moduli $\infty$-prestack $\scMdl_{NS,0; \Phi}(\cG, \cA^{(1)})$ describes NSNS supergravity solutions on a fixed pair $(\cG, \cA^{(1)})$ of a gerbe with 1-connection on $M$ modulo the action of the smooth $\infty$-group of symmetries of $(\cG, \cA^{(1)})$ lifting the action on $M$ described by the left fibration $Q \to N\Cart^\opp$.
The moduli $\infty$-stack $\scMdl_{NS,1; \Phi}(\cG)$ describes integral NSNS supergravity solutions on a fixed gerbe $\cG$ on $M$ modulo the analogous symmetries of $\cG$.
Equivalently, it describes NSNS supergravity solutiopns on the particular exact Courant algebroid $\AtCA(\cG, \cA^{(1)})$, but modulo symmetries \textit{of the underlying gerbe with connective structure $(\cG, \cA^{(1)})$} rather than symmetries of the exact Courant algebroid.
As an application of our main theorem (Theorem~\ref{st:equiv result for Mdl oo-stacks}), we now arrive at the following equivalence result which, in particular, resolves Problem~\ref{prob:moduli of NSNS solutions}:

\begin{theorem}
\label{st:equiv of NSNS SuGra moduli}
For each gerbe with connective structure $(\cG, \cA^{(1)}) \in \Grb^{1,M}_{\nabla|1}(\RR^0)$ on $M$, there is a canonical equivalence of moduli $\infty$-prestacks
\begin{equation}
	\scMdl_{NS,0; \Phi}(\cG, \cA^{(1)})
	\simeq \scMdl_{NS,1; \Phi}(\cG)\,.
\end{equation}
Moreover, if $H \colon \Cart^\opp \to \Set$ is a sheaf of groups and \smash{$\phi' \colon H \to \Diff_{[\cG]}(H)$} is a morphism of sheaves of groups, let $\phi \colon N \textint \rmB H \longrightarrow \rmB\scD[\cG]$ be the induced morphism in $\sSet_{/N \Cart^\opp}$.
Then, both $\scMdl_{NS,0; \Phi}(\cG, \cA^{(1)})$ and $\scMdl_{NS,1; \Phi}(\cG)$ are $\infty$-stacks on $\Cart$; that is, they satisfy descent with respect to good open coverings of cartesian spaces.
\end{theorem}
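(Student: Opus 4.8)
The statement has two halves, and the plan is to derive each from the general machinery of Part~\ref{part:Higher Geometry}, isolating the single genuinely geometric input. For the equivalence, I would recognise the set-up as an instance of Theorem~\ref{st:equiv result for Mdl oo-stacks}. Take $\Conf^\scD = \Conf_{NS}^\scD$ with solution subpresheaf $\Sol^\scD = \Sol_{NS}^\scD$ (a solution subpresheaf by Lemma~\ref{st:Sol_NS^scD is solution subpresheaf}), and put $\Fix_0^\scD = \Fix_{NS,0}^\scD = \Grb^{1,\scD}_{\nabla|1}$, $\Fix_1^\scD = \Fix_{NS,1}^\scD = \Grb^{1,\scD}$, with $q = p^1_0$. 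First I would note that $\Conf_{NS}^\scD$ is projectively fibrant, being a product of the projectively fibrant presheaves $\Grb^{1,\scD}_\nabla$, $\Met_{0,d}^\scD$ and $\Omega^{1,v,\scD}_\cl$, and that the structural map $p \colon \Conf_{NS}^\scD \to \Grb^{1,\scD}_{\nabla|1}$ (forgetting metric, dilaton, and curving) is a projective fibration, being the projection onto the gerbe factor followed by $p^2_1 \colon \Grb^{1,\scD}_\nabla = \Grb^{1,\scD}_{\nabla|2} \to \Grb^{1,\scD}_{\nabla|1}$, a projective fibration by Remark~\ref{rmk:p^k_l are proj fibs}. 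Crucially, $q = p^1_0$ is $0$-connected by Corollary~\ref{st:Grb conn comps} (the case $n=1$, $l=0$, $k=1$). With $\cG_0 = (\cG, \cA^{(1)})$ and $\cG_1 = q_{|\RR^0}(\cG_0) = \cG$, Theorem~\ref{st:equiv result for Mdl oo-stacks} then produces the canonical equivalence $\scMdl_{NS,0;\Phi}(\cG, \cA^{(1)}) \simeq \scMdl_{NS,1;\Phi}(\cG)$ directly.

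For descent, the plan is to apply Corollary~\ref{st:Descent result for moduli oo-prestacks} to each of the two moduli $\infty$-prestacks. Since $H$ is a sheaf of groups and $\phi'$ factors through $\Diff_{[\cG]}(M)$ (and $\Diff_{[\cG, \cA^{(1)}]}(M) = \Diff_{[\cG]}(M)$ by $0$-connectedness of $p^1_0$, cf.~Remark~\ref{rmk:(-)_[G] and isos on conn comps}), the only remaining hypothesis is that the restricted solution presheaf $(\rmB \phi')^* (\Sol_{NS}^{\scD[\cG]})_{[\cG]}$ be fibrant in $L_{\tau_H}\scK_H$. I would verify this in three steps. First, $\Conf_{NS}^\scD$ is fibrant in $L_\tau\scK$: the factor $\Grb^{1,\scD}_\nabla$ is so by the left-Kan-extension descent result of Section~\ref{sec:descent for moduli oo-prestacks} together with Lemma~\ref{st:families of gerbes satisfy tau_fam-descent}, the factors $\Met_{0,d}^\scD$ and $\Omega^{1,v,\scD}_\cl$ are sheaves of sets, and products of $L_\tau$-fibrant objects are $L_\tau$-fibrant. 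Second, I would show that the full subpresheaf $\Sol_{NS}^\scD \subset \Conf_{NS}^\scD$ is itself $L_\tau$-fibrant, and restrict to $(\Sol_{NS}^{\scD[\cG]})_{[\cG]}$, which is again a union of connected components. Third, since $(\rmB \phi')^*$ is the right adjoint of the Quillen adjunction $(\rmB \phi')_! \colon L_{\tau_H}\scK_H \rightleftarrows L_\tau\scK \colon (\rmB \phi')^*$ of Section~\ref{sec:descent for moduli oo-prestacks}, it preserves fibrant objects, giving the required $L_{\tau_H}$-fibrancy and hence the $\infty$-stack property. Here I would observe that the two moduli problems share the same solution presheaf over $\rmB\scD[\cG_0] = \rmB\scD[\cG_1] = \rmB\scD[\cG]$ by Proposition~\ref{st:SOL^(n,D_cG(M))(cG,cA^(k)) depends only on [cG]}, so a single descent verification settles both cases (alternatively, descent for the second follows from descent for the first via the equivalence already proved).

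The main obstacle is the second descent step: proving that $\Sol_{NS}^\scD$ inherits $\tau$-descent from $\Conf_{NS}^\scD$. This is the one non-formal input. The key point is that being a solution of the NSNS equations~\eqref{eq:NSNS field eqns} is a \emph{local} condition that depends only on globally defined, gluing-compatible data, namely the curvature $3$-form $\curv(\cG, \cA)$, the metric $g$, and the dilaton $1$-form $\varphi$. Concretely, I would argue that under the descent equivalence $\Conf_{NS}^\scD(c) \simeq \holim_{[l]\in\bbDelta}\Conf_{NS}^\scD(\v{C}_l\hat{\cU})$ along a good open covering, the subspace $\Sol_{NS}^\scD(c)$ corresponds precisely to those homotopy-compatible families whose patchwise restrictions are solutions; since $\Sol_{NS}^\scD \hookrightarrow \Conf_{NS}^\scD$ is an inclusion of connected components (Lemma~\ref{st:Sol_NS^scD is solution subpresheaf}) and the defining equations can be checked on each patch, such a family glues to a solution if and only if each of its restrictions is one. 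This identifies $\holim_{[l]}\Sol_{NS}^\scD(\v{C}_l\hat{\cU})$ with $\Sol_{NS}^\scD(c)$ and yields the descent needed to complete the proof.
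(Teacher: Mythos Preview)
Your proof is correct and follows essentially the same approach as the paper: the equivalence is obtained by applying Theorem~\ref{st:equiv result for Mdl oo-stacks} together with the $0$-connectedness of $p^1_0$ from Corollary~\ref{st:Grb conn comps}, and the descent claim is deduced from Corollary~\ref{st:Descent result for moduli oo-prestacks}. Your treatment of the descent hypothesis is considerably more detailed than the paper's (which simply says the second claim ``follows readily''), but the extra verification you supply---that the NSNS field equations depend only on the curvature, metric, and dilaton, hence are local and the solution subpresheaf inherits homotopy descent from $\Conf_{NS}^\scD$---is exactly the content the paper leaves implicit.
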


\begin{proof}
Theorem~\ref{st:equiv result for Mdl oo-stacks} applies because the canonical projection $\Fix_{NS,1} \to \Fix_{NS,0}$ is a projective fibration which induces a bijection of connected components over each cartesian space (by Proposition~\ref{st:Grb conn comps}).
This proves the first claim.
The second claim follows readily by Theorem~\ref{st:Descent result for moduli oo-prestacks}.
\end{proof}

That is, we can study NSNS supergravity solutions for integer \v{S}evera class equivalently in terms of connections on a fixed 1-gerbe, or in terms of isotropic splittings of the exact Courant algebroid associated to the same 1-gerbe with any choice of connective structure, so long as we divide out by symmetries of the underlying 1-gerbe (respectively with 1-connection).

%%%%%%%%%%%%%%%%%%%%%%%%%%%%%%%%%%%%%%%%%%%%%%%%%%%%%%%%%%%%%%%%%%%%%%%%%%%%

\part{Appendices}
\label{part:Appendices}

%%%%%%%%%%%%%%%%%%%%%%%%%%%%%%%%%%%%%%%%%%%%%%%%%%%%%%%%%%%%%%%%%%%%%%%%%%%%

\begin{appendix}

%%%%%%%%%%%%%%%%%%%%%%%%%%%%%%%%%%%%%%%%%%%%%%%%%%%%%%%%%%%%%%%%%%%%%%%%%%%%

\section{Rectification of simplicial presheaves}
\label{app:rectification}

%%%%%%%%%%%%%%%%%%%%%%%%%%%%%%%%%%%%%%%%%%%%%%%%%%%%%%%%%%%%%%%%%%%%%%%%%%%%

Let $\scC$ be a small category.
In this appendix we recall some results from~\cite{Bunk:Localisation_of_sSet, HM:Left_fibs_and_hocolims_I} which allow us to pass between 1-categorical functors $F \colon \scC \to \sSet$, left fibrations $X \to N\scC$ and $\infty$-functors $\bbF \colon N\scC \to \scS$ from the nerve of $\scC$ to the $\infty$-category $\scS$ of spaces (or, equivalently, of $\infty$-groupoids).
These three objects can be viewed as incarnations of the same mathematical structure, in light of the following well-known result.
Note that versions of these results for functors out of simplicial categories have also been proven in~\cite{Lurie:HTT}.

\begin{theorem}
\label{st:three perspectives on scFun(NC,scS)}
\emph{\cite[Thm.~7.8.9, Cor.~7.9.9]{Cisinski:HiC_and_HoA}}
There are canonical equivalences between the following $\infty$-categories:
\begin{enumerate}
\item the $\infty$-category $\scFun(N\scC, \scS)$ of $\infty$-functors $N\scC \to \scS$,

\item the $\infty$-categorical localisations of the functor category $\Fun(\scC, \sSet)$ at the objectwise weak homotopy equivalences, and

\item the $\infty$-categorical localisations of the slice category $\sSet_{/N\scC}$ at the covariant weak equivalences.
\end{enumerate}
\end{theorem}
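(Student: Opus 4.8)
The plan is to obtain the two equivalences (2)$\,\simeq\,$(3) and (3)$\,\simeq\,$(1) individually and then compose them, using throughout the general principle that the $\infty$-categorical localisation of a model category at its weak equivalences computes its underlying $\infty$-category, and that a Quillen equivalence induces an equivalence of the associated underlying $\infty$-categories.

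First I would treat the equivalence between (3) and (1). Equipping $\sSet_{/N\scC}$ with the covariant model structure, its fibrant objects are exactly the left fibrations over $N\scC$. The essential input is the straightening correspondence: left fibrations over $N\scC$ are classified by $\infty$-functors $N\scC \to \scS$, and under this classification the covariant weak equivalences correspond precisely to the objectwise equivalences of classifying functors. This identifies the localisation $L_{\mathrm{cov}} \sSet_{/N\scC}$ with $\scFun(N\scC, \scS)$; I would cite \cite[Thm.~7.8.9]{Cisinski:HiC_and_HoA} for this identification.

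Next, for the equivalence between (2) and (3), I would use the rectification functor $r_\scC^* \colon \Fun(\scC, \sSet) \to \sSet_{/N\scC}$. The crucial step is that, with $\Fun(\scC, \sSet)$ carrying the projective model structure and $\sSet_{/N\scC}$ the covariant one, $r_\scC^*$ is a right Quillen equivalence; this is Theorem~\ref{st:r_C^* as Quillen equivalence}. Since the projective weak equivalences are exactly the objectwise weak homotopy equivalences, and since a Quillen equivalence descends to an equivalence of underlying $\infty$-categories, this yields a canonical equivalence $L_{\mathrm{obj}} \Fun(\scC, \sSet) \simeq L_{\mathrm{cov}} \sSet_{/N\scC}$. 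Composing with the previous step produces all three equivalences. To confirm that the two independently constructed comparisons are mutually compatible, I would invoke Theorem~\ref{st:r_C^* and oo-categorical localisation}, which asserts that $r_\scC^* F \to N\scC$ is classified by $\gamma_\scC F$ for every $F \colon \scC \to \sSet$; this identifies the composite equivalence with the localisation functor $\gamma_\scC$, establishing canonicity.

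I expect the principal obstacle to lie in the equivalence (3)$\,\simeq\,$(1): the straightening correspondence for left fibrations is the genuinely deep ingredient, resting on the full theory of the covariant model structure over a simplicial set. Once rectification is known to be a right Quillen equivalence, the passage (2)$\,\simeq\,$(3) is formal. A secondary point requiring care is the coherent compatibility of the two equivalences, which is exactly the content supplied by Theorem~\ref{st:r_C^* and oo-categorical localisation} and which guarantees that the comparison functors assemble into a single canonical chain rather than three unrelated equivalences.
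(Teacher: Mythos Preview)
The paper does not prove this theorem; it is stated in Appendix~\ref{app:rectification} purely as a citation to \cite[Thm.~7.8.9, Cor.~7.9.9]{Cisinski:HiC_and_HoA}, and the subsequent results in that appendix (Theorems~\ref{st:r_C^* as Quillen equivalence} and~\ref{st:r_C^* and oo-categorical localisation}) are presented as concrete realisations of the abstract equivalences rather than as ingredients in its proof. Your proposal is a correct and coherent unpacking of how one would assemble the result, and for the equivalence (3)$\,\simeq\,$(1) you cite exactly the same source as the paper.

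The only point of mild divergence is your route for (2)$\,\simeq\,$(3): you deduce it from the rectification Quillen equivalence of Heuts--Moerdijk (Theorem~\ref{st:r_C^* as Quillen equivalence}), whereas the paper's citation for this part is \cite[Cor.~7.9.9]{Cisinski:HiC_and_HoA}, which gives the identification of the localisation of $\Fun(\scC,\sSet)$ with $\scFun(N\scC,\scS)$ directly. Both routes are valid; yours has the advantage of making explicit the functor witnessing the equivalence, and your appeal to Theorem~\ref{st:r_C^* and oo-categorical localisation} to reconcile the two comparison maps is exactly the right compatibility check. There is no gap.
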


In this paper we make frequent use of each of these three models for $\scS$-valued $\infty$-functors.
It is thus both necessary and very helpful to be able to pass between these models.
Here we recall functors which facilitate this, together with some of their main properties.
We do not present their constructions in detail, for it is only the existence of the functors and their properties that we use in the main text.
For details of the constructions of these functors, we refer to~\cite{Cisinski:HiC_and_HoA, Bunk:Localisation_of_sSet, Lurie:HTT} (see below for more detailed references).
The homotopically meaningful passage from functors $\scC \to \sSet$ to simplicial sets over the nerve $N\scC$ is facilitated by the following theorem:

\begin{theorem}
\label{st:r_C^* as Quillen equivalence}
\emph{\cite[Thm.~C]{HM:Left_fibs_and_hocolims_I}}
There is a Quillen equivalence
\begin{equation}
\begin{tikzcd}
	r_{\scC!} : \sSet_{/N\scC} \ar[r, shift left=0.125cm, "\perp"' yshift=0.05cm]
	& \Fun(\scC, \sSet) : r_\scC^*\ar[l, shift left=0.125cm]
\end{tikzcd}
\end{equation}
between the covariant model structure on the over-category $\sSet_{/N\scC}$ and the projective model structure on the functor category $\Fun(\scC, \sSet)$.
\end{theorem}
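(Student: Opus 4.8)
The plan is to recognise $r_\scC^*$ as the relative-nerve functor (Definition~\ref{def:r_C^*}), to establish the adjunction $r_{\scC!} \dashv r_\scC^*$ and check that it is a Quillen adjunction with $r_\scC^*$ right Quillen, and finally to upgrade this to a Quillen equivalence via the two standard criteria: reflection of weak equivalences between fibrant objects, and the derived unit being a weak equivalence. First I would record that both sides are cofibrantly generated. On $\sSet_{/N\scC}$ the cofibrations of the covariant structure are exactly the monomorphisms, and (by the description of the covariant structure as a left Bousfield localisation, as in the proof of Lemma~\ref{st:cov weqs between LFibs are exactly cat weqs}) its generating trivial cofibrations are the left-horn inclusions $\Lambda^n_0 \hra \Delta^n$ sitting over $N\scC$. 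On $\Fun(\scC,\sSet)$ the projective structure is generated by $\scC(c,-) \times (\partial\Delta^n \hra \Delta^n)$ and $\scC(c,-) \times (\Lambda^n_k \hra \Delta^n)$. The left adjoint $r_{\scC!}$ exists since $r_\scC^*$ preserves limits between presheaf categories, and is computed as the left Kan extension of the functor $\Delta/N\scC \to \Fun(\scC,\sSet)$ sending a simplex $\sigma\colon \Delta^n \to N\scC$ with initial vertex $c_0$ to the corepresentable $\scC(c_0,-)$.

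Next I would show $r_\scC^*$ is right Quillen. Using the defining formula for the relative nerve, the fibre of $r_\scC^*F \to N\scC$ over an object $c$ is canonically $F(c)$; hence $r_\scC^*$ carries objectwise trivial Kan fibrations to fibrewise trivial Kan fibrations, and a simplex-by-simplex induction over $N\scC$ promotes this to the right lifting property against all monomorphisms, i.e.\ to covariant trivial fibrations. For fibrations I would argue by adjunction: given an objectwise Kan fibration $F \to G$, verify that $r_\scC^*F \to r_\scC^*G$ has the right lifting property against the generating covariant trivial cofibrations $\Lambda^n_0 \hra \Delta^n$ over $N\scC$. Via the relative-nerve formula such a lifting problem unwinds into left-horn lifting problems in the fibres, which are solvable because each $F(c) \to G(c)$ is a Kan fibration and the initial-vertex combinatorics of the relative nerve allow one to assemble the local lifts coherently. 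This gives the Quillen adjunction.

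Finally I would prove the equivalence. Since every object of $\sSet_{/N\scC}$ is cofibrant, it suffices to check that $r_\scC^*$ reflects weak equivalences between projectively fibrant objects and that the derived unit $X \to r_\scC^*\big((r_{\scC!}X)^{\mathrm{fib}}\big)$ is a covariant equivalence for every $X$. Reflection follows from the fibrewise description together with the fact that a map of left fibrations over $N\scC$ is a covariant equivalence iff it is fibrewise a weak homotopy equivalence (Lemma~\ref{st:cov weqs between LFibs are exactly cat weqs} and \cite[Thm.~4.1.16]{Cisinski:HiC_and_HoA}); note also that $r_\scC^*$, being right Quillen, preserves weak equivalences between fibrant objects by Ken Brown's lemma. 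For the derived unit I would reduce to the generators by skeletal induction: writing $X$ as a colimit of its simplices and using that $r_{\scC!}$ preserves colimits and that the covariant structure is left proper with cofibrations the monomorphisms, the comparison for $X$ is controlled by the comparison for each simplex $\sigma\colon \Delta^n \to N\scC$. For a single simplex the initial-vertex inclusion $\Delta^{\{0\}} \hra \Delta^n$ is left anodyne, hence a covariant equivalence, so $r_{\scC!}\sigma$ is projectively equivalent to $r_{\scC!}(\Delta^{\{0\}}) = \scC(c_0,-)$; and $r_\scC^*\,\scC(c_0,-)$ is the under-category projection $N\scC_{c_0/} \to N\scC$, whose total space is contractible (it has the initial object $\id_{c_0}$), so the unit is the left-anodyne inclusion of that initial object and is a covariant equivalence. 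Two-out-of-three then yields the derived unit for every $\sigma$, and Theorem~\ref{st:three perspectives on scFun(NC,scS)} identifies the resulting equivalence of homotopy theories with the expected one.

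The hard part will be the two gluing steps rather than the pointwise computations. Concretely, the fibration-preservation argument requires a coherent solution of the horn-lifting problems inside the relative nerve, and the derived-unit argument requires showing that $r_{\scC!}$ is homotopically well behaved along the skeletal filtration---that it sends the cellular pushouts and transfinite compositions building $X$ to homotopy pushouts and homotopy colimits in the projective structure, so that the equivalence established on simplices propagates to all of $X$. This last point is exactly the assertion that $r_{\scC!}$ models a homotopy colimit, and controlling its interaction with projective fibrant replacement is where the main technical effort lies.
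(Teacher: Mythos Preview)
The paper does not give its own proof of this theorem: it is stated in Appendix~\ref{app:rectification} as a black-box citation of \cite[Thm.~C]{HM:Left_fibs_and_hocolims_I}, with no accompanying argument. There is therefore nothing in the paper to compare your proposal against.

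That said, your outline is a reasonable sketch of the strategy actually used in \cite{HM:Left_fibs_and_hocolims_I}: identify $r_\scC^*$ with the relative nerve, produce the left adjoint by Kan extension, verify right-Quillenness by lifting against generating (trivial) cofibrations, and then upgrade to a Quillen equivalence via the derived unit on simplices together with a skeletal induction. One small inaccuracy: the generating trivial cofibrations of the covariant model structure are not just the left horn inclusions $\Lambda^n_0 \hookrightarrow \Delta^n$ over $N\scC$; since the covariant structure is a left Bousfield localisation of the slice Joyal structure (as in the proof of Lemma~\ref{st:cov weqs between LFibs are exactly cat weqs}), one must also include the Joyal generating trivial cofibrations. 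This matters for the fibration-preservation step, though in practice one circumvents it by checking lifting against left anodyne maps directly, which is what Heuts--Moerdijk do.
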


\begin{definition}
\label{def:r_C^*}
We call the right adjoint, $r_\scC^* \colon \Fun(\scC, \sSet) \longrightarrow \sSet_{/N\scC}$, the \textit{rectification functor} (for the category $\scC$).
\end{definition}

The rectification functor $r_\scC^*$ has pleasant technical properties:

\begin{lemma}
\label{st:r_C^* preserves injective cofibrations}
\emph{\cite[Lemma~2.22]{Bunk:Localisation_of_sSet}}
The functor $r_\scC^*$ sends injective cofibrations in $\Fun(\scC, \sSet)$ (i.e.~objectwise monomorphisms of simplicial sets) to covariant cofibrations in $\sSet_{/N\scC}$ (i.e.~monomorphisms of simplicial sets over $N\scC$).
\end{lemma}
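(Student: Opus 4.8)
The plan is to reduce the assertion to the elementary categorical fact that a right adjoint preserves monomorphisms, and then to translate this statement back into the model-categorical language in which the lemma is phrased. The first step is therefore purely bookkeeping: I would identify both classes of cofibrations appearing in the statement with \emph{categorical} monomorphisms in their respective categories. On the source side, an injective cofibration in $\Fun(\scC, \sSet)$ is by definition an objectwise monomorphism of simplicial sets, i.e.\ a natural transformation $\eta\colon F \to G$ which is levelwise injective at each $c \in \scC$; since both limits and monomorphisms in functor categories are computed pointwise, and monomorphisms of simplicial sets are exactly the levelwise injections, this is precisely a categorical monomorphism in $\Fun(\scC, \sSet)$. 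On the target side, a covariant cofibration in $\sSet_{/N\scC}$ is a monomorphism of simplicial sets over $N\scC$; and a morphism in a slice category $\sSet_{/N\scC}$ is a monomorphism if and only if its underlying map of simplicial sets is one, since the forgetful functor to $\sSet$ is faithful (hence reflects monomorphisms) and a mono downstairs is easily seen to remain mono after imposing the condition of lying over $N\scC$.

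With this dictionary in place, the core of the argument is a single observation. By Theorem~\ref{st:r_C^* as Quillen equivalence} the rectification functor $r_\scC^*$ is the right adjoint in the Quillen equivalence
\begin{equation}
\begin{tikzcd}
	r_{\scC!} : \sSet_{/N\scC} \ar[r, shift left=0.125cm]
	& \Fun(\scC, \sSet) : r_\scC^*\,. \ar[l, shift left=0.125cm]
\end{tikzcd}
\end{equation}
As a right adjoint between ordinary categories, $r_\scC^*$ preserves all limits, and in particular pullbacks. Any pullback-preserving functor preserves monomorphisms: a map $m$ is a monomorphism precisely when its pullback along itself is an isomorphism (equivalently, when the pair of identity maps is its kernel pair), and this characterisation is stable under any functor that preserves the relevant pullback. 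Applying this to $r_\scC^* \eta$ for an objectwise monomorphism $\eta$ in $\Fun(\scC, \sSet)$, I would conclude that $r_\scC^* \eta$ is a monomorphism in $\sSet_{/N\scC}$, hence a covariant cofibration by the identification of the first paragraph.

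I do not expect a serious obstacle here; the only point requiring genuine care is the verification of the dictionary, specifically the claim that monomorphisms in the slice category $\sSet_{/N\scC}$ agree with monomorphisms of the underlying simplicial sets, and that the injective/covariant cofibrations are exactly the categorical monomorphisms on each side. Should a more hands-on verification be preferred, an alternative route is available: one can instead use an explicit relative-nerve type formula for $r_\scC^*$, under which the $n$-simplices of $r_\scC^* F$ lying over a fixed $\sigma\colon \Delta^n \to N\scC$ form a subset of a product $\prod_S F(\sigma(\max S))$ indexed by the nonempty subsets $S \subseteq [n]$, cut out by compatibility conditions. An objectwise monomorphism $\eta$ then induces a product of injections, and $r_\scC^*\eta$ is its restriction to the compatible families, hence levelwise injective and fibrewise over $N\scC$. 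This combinatorial check recovers the same conclusion, but the adjoint argument is shorter and avoids the explicit description entirely.
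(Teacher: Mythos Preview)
Your proof is correct. The paper itself does not give a proof of this lemma; it simply cites \cite[Lemma~2.22]{Bunk:Localisation_of_sSet}. Your main argument---that $r_\scC^*$ is a right adjoint (by Theorem~\ref{st:r_C^* as Quillen equivalence}), hence preserves limits, hence preserves monomorphisms via the kernel-pair characterisation---is valid and efficient, and the dictionary you set up between injective/covariant cofibrations and categorical monomorphisms in the respective categories is accurate.

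As for comparison: since the paper defers to the cited reference, there is no in-paper proof to compare against. The combinatorial alternative you sketch (using the explicit relative-nerve-type description of $r_\scC^*$ and checking levelwise injectivity by hand) is plausibly closer in spirit to what the cited source does, given that the surrounding lemmas there (e.g.~Lemma~\ref{st:r_C^* and pullbacks}, Lemma~\ref{st:r_C^* and filtered colimits}) are proved via the explicit construction. Your adjoint argument is cleaner and more conceptual; the combinatorial route has the advantage of not requiring one to have already established the adjunction, and of making the result visibly independent of any model-categorical input.
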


Since $r_\scC^*$ can be constructed by means of finite limits, we also have:

\begin{lemma}
\label{st:r_C^* and filtered colimits}
\emph{\cite[Cor.~2.21]{Bunk:Localisation_of_sSet}}
The rectification functor $r_\scC^*$ commutes with filtered colimits.
\end{lemma}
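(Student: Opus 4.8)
The plan is to exploit the explicit pointwise formula for the rectification functor and to reduce the statement to the interchange of filtered colimits and finite limits in $\Set$. First I would recall the construction of $r_\scC^*$ from~\cite{HM:Left_fibs_and_hocolims_I}: for a functor $F \colon \scC \to \sSet$, the simplicial set $r_\scC^* F$ sits over $N\scC$, and for each $n \in \NN_0$ its set of $n$-simplices decomposes over the $n$-simplices of $N\scC$, i.e.\ over functors $\sigma \colon [n] \to \scC$. The key structural feature is that, for each such $\sigma$, the fibre $(r_\scC^* F)_n^\sigma$ is computed as a \emph{finite} limit in $\Set$ of a diagram whose entries are sets of the form $F(\sigma(j))_m$, indexed by the nonempty subsets of $[n]$ together with their compatibility (face and degeneracy) maps. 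This is precisely the content of the phrase that $r_\scC^*$ ``can be constructed by means of finite limits''.

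Next I would recall the elementary categorical fact that filtered colimits commute with finite limits in $\Set$. Since colimits in $\sSet = \Fun(\bbDelta^\opp, \Set)$ are computed levelwise, and colimits in the functor category $\Fun(\scC, \sSet)$ are computed objectwise and levelwise, a filtered colimit $\colim_i F_i$ in $\Fun(\scC, \sSet)$ satisfies the pointwise identities $(\colim_i F_i)(c)_m \cong \colim_i (F_i(c)_m)$ for every $c \in \scC$ and every $m \in \NN_0$. The formation of $r_\scC^*$ is moreover compatible with the projection to $N\scC$, so these filtered colimits of sets assemble fibre-by-fibre over each simplex $\sigma$ of $N\scC$.

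The heart of the argument is then to commute the filtered colimit past the finite limit computing $(r_\scC^* F)_n^\sigma$. Concretely, for a filtered diagram $(F_i)$ I would establish, for each $n$ and each $\sigma \colon [n] \to \scC$, a canonical isomorphism
\begin{equation}
	\big( r_\scC^*(\colim_i F_i) \big)_n^\sigma
	\cong \colim_i \big( r_\scC^* F_i \big)_n^\sigma\,,
\end{equation}
obtained by inserting the pointwise identities $(\colim_i F_i)(\sigma(j))_m \cong \colim_i (F_i(\sigma(j))_m)$ into the finite-limit formula and invoking the commutation of filtered colimits with finite limits. Checking that these isomorphisms are natural in $[n] \in \bbDelta^\opp$ and compatible with the structure maps over $N\scC$ then assembles them into an isomorphism $r_\scC^*(\colim_i F_i) \cong \colim_i r_\scC^* F_i$ in $\sSet_{/N\scC}$.

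The main obstacle I anticipate is not any deep homotopical input but rather the bookkeeping needed to verify that the rectification formula genuinely is a finite limit at each simplicial level and that the induced comparison maps are natural and respect the projection to $N\scC$. Once the pointwise finite-limit description is firmly in hand, the commutation statement follows formally from the interchange of filtered colimits and finite limits in $\Set$.
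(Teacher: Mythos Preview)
Your proposal is correct and follows exactly the approach indicated in the paper, which does not give a detailed proof but simply remarks that ``$r_\scC^*$ can be constructed by means of finite limits'' and cites \cite[Cor.~2.21]{Bunk:Localisation_of_sSet} for the statement. Your argument is a faithful unpacking of that one-line justification: the levelwise finite-limit description of $r_\scC^*$ together with the commutation of filtered colimits and finite limits in $\Set$.
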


Rectification is compatible with changing the indexing category $\scC$ in the following way:
this was observed already in~\cite[Rmk.~3.2.5.7]{Lurie:HTT} (for a proof in the present context, see~\cite[Lemma~2.23]{Bunk:Localisation_of_sSet}).

\begin{lemma}
\label{st:r_C^* and pullbacks}
For each functor $\psi \colon \scD \to \scC$ of small categories and each functor $F \in \Fun(\scC, \sSet)$ there is a canonical cartesian square of simplicial sets
\begin{equation}
\begin{tikzcd}
	r_\scD^*(\psi^*F) \ar[r] \ar[d]
	& r_\scC^* F \ar[d]
	\\
	N\scD \ar[r, "N\psi"']
	& N\scC
\end{tikzcd}
\end{equation}
This induces a natural isomorphism of functors%
\footnote{This holds true for any given choice of pullbacks along $N\psi$, as necessary to make the operation of pulling back along this morphism functorial (in the same way as one makes choices to make the formation of (co)limits functorial).}
$\Fun(\scC, \sSet) \to \sSet_{/N\scD}$,
\begin{equation}
	r_\scC^* \circ \psi^*
	\cong (N\psi)^* \circ r_\scC^*\,.
\end{equation}
\end{lemma}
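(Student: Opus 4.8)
The plan is to prove the cartesian square directly, by unravelling the explicit pointwise description of the rectification functor; the asserted natural isomorphism $r_\scD^* \circ \psi^* \cong (N\psi)^* \circ r_\scC^*$ of functors $\Fun(\scC, \sSet) \to \sSet_{/N\scD}$ then falls out by reading the top horizontal edge of that square as a chosen pullback of $r_\scC^* F$ along $N\psi$. First I would recall the explicit construction of $r_\scC^*$ as the relative nerve (see \cite[Rmk.~3.2.5.7]{Lurie:HTT} and \cite[Lemma~2.23]{Bunk:Localisation_of_sSet}): for $F \in \Fun(\scC, \sSet)$, an $n$-simplex of $r_\scC^* F$ consists of an $n$-simplex $\sigma \colon [n] \to \scC$ of $N\scC$ together with, for each nonempty $S \subseteq [n]$ with maximal element $j_S$, a map $\tau_S \colon \Delta^S \to F(\sigma(j_S))$, subject to the compatibility that for $S' \subseteq S$ the restriction of $\tau_S$ to $\Delta^{S'}$ agrees with the composite of $\tau_{S'}$ with $F(\sigma(j_{S'}) \to \sigma(j_S))$. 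The projection $r_\scC^* F \to N\scC$ records $\sigma$. Crucially, this exhibits $(r_\scC^* F)_n$ as a finite limit of a diagram assembled from the simplicial sets $F(\sigma(j_S))$ and the transition maps $F(\sigma(-) \to \sigma(-))$, which is exactly the finite-limit form flagged before Lemma~\ref{st:r_C^* and filtered colimits}.

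Next I would compute both sides of the square degreewise. An $n$-simplex of the pullback $N\scD \times_{N\scC} r_\scC^* F$ is a pair $(\rho, \omega)$ with $\rho \colon [n] \to \scD$ and $\omega$ an $n$-simplex of $r_\scC^* F$ whose image in $N\scC$ is $\psi \circ \rho$; thus $\omega = (\psi\rho, (\tau_S))$ with $\tau_S \colon \Delta^S \to F(\psi(\rho(j_S)))$. On the other hand, an $n$-simplex of $r_\scD^*(\psi^* F)$ is a pair consisting of $\rho \colon [n] \to \scD$ and a family $\tau'_S \colon \Delta^S \to (\psi^* F)(\rho(j_S)) = F(\psi(\rho(j_S)))$ with the analogous compatibility relative to the transition maps $(\psi^* F)(\rho(j_{S'}) \to \rho(j_S)) = F(\psi(\rho(j_{S'})) \to \psi(\rho(j_S)))$. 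Since $\psi^*$ acts by precomposition, the two families of data and conditions coincide term by term, so $\tau_S \mapsto \tau'_S = \tau_S$ defines a bijection of $n$-simplices commuting with the projection to $N\scD$. The only identity at work is $(\psi^* F)(d) = F(\psi(d))$ on objects together with the $\psi$-naturality of the transition maps.

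Then I would check that this degreewise bijection is compatible with the simplicial face and degeneracy operators — which act on both sides by reindexing the subsets $S$ and precomposing $\sigma$, $\rho$ with the coface and codegeneracy maps — and that it is natural in $F$, since postcomposing the $\tau_S$ with a natural transformation $F \to F'$ commutes with restriction along $\psi$. This produces an isomorphism $r_\scD^*(\psi^* F) \cong N\scD \times_{N\scC} r_\scC^* F$ in $\sSet_{/N\scD}$, natural in $F$, which is precisely the cartesian square of the statement; choosing this pullback as the value of $(N\psi)^*$ then yields the stated natural isomorphism of functors.

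The one point requiring genuine care — and the only place the argument could go wrong — is matching the conventions of the explicit model of $r_\scC^*$ used in the paper with the relative-nerve description above, and confirming that the square is cartesian strictly (on the nose) rather than merely up to covariant weak equivalence; it is the strictness that lets Lemma~\ref{st:r_C^* and pullbacks} be invoked freely in the main text (for instance in the constructions around $\textint G^{\scD[\cG]}$ and in Proposition~\ref{st:properties of GRB and p^k_l}). Since the construction is a simplexwise finite limit assembled functorially from the values of $F$ on objects and morphisms, restriction along $\psi$ commutes with it on the nose and no homotopical correction is needed. The abstract alternative of deducing the isomorphism from a Beck–Chevalley comparison $\psi_! \circ r_{\scD!} \cong r_{\scC!} \circ (N\psi)_!$ of the corresponding left adjoints would also work, but it is less direct, as it reduces to the same combinatorial identification in the end.
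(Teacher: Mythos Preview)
Your proposal is correct and matches the approach of the references the paper cites: the paper does not supply its own proof of this lemma but defers to \cite[Rmk.~3.2.5.7]{Lurie:HTT} and \cite[Lemma~2.23]{Bunk:Localisation_of_sSet}, both of which proceed exactly as you do, by unravelling the relative-nerve description of $r_\scC^*$ and checking degreewise that the data on the two sides coincide on the nose. Your care about strictness versus weak equivalence is well placed and is precisely what makes the lemma usable as an identity throughout the main text.
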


Let $\Kan \subset \sSet$ denote the full subcategory on the Kan complexes.
By combining the functors
\begin{equation}
	r_{[n] \times \scC}^* \colon \Fun \big( [n] {\times} \scC, \sSet \big)
	\longrightarrow \sSet_{/(\Delta^n \times N\scC)}\,,
\end{equation}
for $n \in \NN_0$, and using the model for the $\infty$-category $\scS$ of spaces presented in~\cite[Ch.~5]{Cisinski:HiC_and_HoA}, one can construct an explicit $\infty$-functor~\cite[Thm.~3.11]{Bunk:Localisation_of_sSet}
\begin{equation}
	\gamma_\scC \colon N\Fun(\scC, \Kan) \longrightarrow \scFun(N\scC, \scS)
\end{equation}
which provides a concrete way of passing from strict functors $\scC \to \sSet$ to $\infty$-functors $N\scC \to \scS$.
The following theorem makes manifest the homotopy-theoretic significance of this construction:

\begin{theorem}
\label{st:r_C^* and oo-categorical localisation}
\emph{\cite[Thm.~4.8]{Bunk:Localisation_of_sSet}}
The functor $\gamma_\scC \colon N\Fun(\scC, \Kan) \longrightarrow \scFun(N\scC, \scS)$ exhibits $\scFun(N\scC, \scS)$ as the $\infty$-categorical localisation of $N\Fun(\scC, \Kan)$ at the objectwise weak homotopy equivalences.
\end{theorem}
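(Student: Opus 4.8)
The plan is to identify $\gamma_\scC$ with the $\infty$-functor induced by the right Quillen equivalence $r_\scC^*$, and to read off the localisation property from that of the associated derived functor. Throughout I write $W$ for the objectwise weak homotopy equivalences, viewed both in $\Fun(\scC, \Kan)$ and---since the projective weak equivalences are exactly the objectwise weak homotopy equivalences---in $\Fun(\scC, \sSet)$, and $W_{\mathrm{cov}}$ for the covariant weak equivalences in $\sSet_{/N\scC}$. To \emph{exhibit} $\scFun(N\scC,\scS)$ as the localisation means precisely that $\gamma_\scC$ sends $W$ to equivalences and that the induced functor $N\Fun(\scC,\Kan)[W^{-1}] \to \scFun(N\scC,\scS)$ is an equivalence of $\infty$-categories. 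The first point is essentially built into the construction: $r_\scC^*$ is right Quillen, so by Ken Brown's lemma it sends weak equivalences between fibrant (that is, Kan-valued) objects to covariant weak equivalences, and these become equivalences in $\scFun(N\scC,\scS)$.

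For the second point I would first reduce to the full projective model category. The inclusion $\Fun(\scC, \Kan) \hookrightarrow \Fun(\scC, \sSet)$ is the inclusion of the fibrant objects and is homotopical for $W$; since every object admits a functorial fibrant replacement, a $W$-equivalence to a Kan-valued functor, this inclusion induces an equivalence of localisations
\begin{equation}
	N\Fun(\scC, \Kan)[W^{-1}] \xrightarrow{\ \simeq\ } N\Fun(\scC, \sSet)[W^{-1}]\,.
\end{equation}
I then invoke Theorem~\ref{st:r_C^* as Quillen equivalence}: $r_\scC^*$ is the right adjoint of a Quillen equivalence between the projective structure on $\Fun(\scC, \sSet)$ and the covariant structure on $\sSet_{/N\scC}$. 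By the standard property of Quillen equivalences, its right derived functor induces an equivalence $\mathbb{R}r_\scC^* \colon N\Fun(\scC, \sSet)[W^{-1}] \xrightarrow{\ \simeq\ } \sSet_{/N\scC}[W_{\mathrm{cov}}^{-1}]$. As $r_\scC^*$ preserves fibrant objects, on a Kan-valued functor $F$ the derived value is computed strictly, so $\mathbb{R}r_\scC^*(F) \simeq r_\scC^*(F)$ and $r_\scC^*(F) \to N\scC$ is a left fibration. Composing with the canonical equivalence $\sSet_{/N\scC}[W_{\mathrm{cov}}^{-1}] \simeq \scFun(N\scC, \scS)$ of Theorem~\ref{st:three perspectives on scFun(NC,scS)}(3), I obtain an equivalence out of $N\Fun(\scC, \Kan)[W^{-1}]$ which sends $F$ to the $\infty$-functor classifying the left fibration $r_\scC^*(F)$---and this is exactly how $\gamma_\scC(F)$ is characterised on objects.

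The remaining, and principal, task is to verify that this abstract composite equivalence is genuinely the functor induced by $\gamma_\scC$, and not merely equivalent to it. Here I would use that an $n$-simplex of $N\Fun(\scC,\Kan)$ is the same datum as a functor $[n]\times\scC \to \Kan$, and that $\gamma_\scC$ evaluates the rectification $r_{[n]\times\scC}^*$ on such a simplex, recording the resulting object of $\sSet_{/(\Delta^n \times N\scC)}$ via the chosen model of $\scS$. Lemma~\ref{st:r_C^* and pullbacks}, applied to the inclusions $[m]\times\scC \hookrightarrow [n]\times\scC$, guarantees that the family $\{r_{[m]\times\scC}^*\}_m$ is compatible under restriction along the cosimplicial maps, hence glues into a single simplicially coherent assignment; one must then match this levelwise datum, in every simplicial degree, with the value of $\mathbb{R}r_\scC^*$ followed by straightening. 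Once this identification is established, the localisation property of $\gamma_\scC$ follows from that of $\mathbb{R}r_\scC^*$ by the two-out-of-three property for equivalences of $\infty$-categories. I expect this coherence bookkeeping---reconciling the explicit, degreewise construction of $\gamma_\scC$ with the abstract derived functor uniformly across all simplices---to be the main obstacle, while everything else is a formal consequence of Theorem~\ref{st:r_C^* as Quillen equivalence} and Theorem~\ref{st:three perspectives on scFun(NC,scS)}.
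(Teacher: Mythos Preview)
The paper does not prove this theorem; it is stated in Appendix~\ref{app:rectification} as a citation of \cite[Thm.~4.8]{Bunk:Localisation_of_sSet}, with no argument given. There is therefore no proof in the paper to compare your proposal against.

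That said, your outline is the natural strategy and matches the architecture one would expect of the cited proof: reduce to the full functor category via fibrant replacement, invoke the Quillen equivalence of Theorem~\ref{st:r_C^* as Quillen equivalence} to obtain a derived equivalence of localisations, and then identify the resulting composite with $\gamma_\scC$. You have correctly isolated the genuine content as the final coherence step---showing that the explicit, simplex-by-simplex construction of $\gamma_\scC$ via the family $\{r_{[n]\times\scC}^*\}_n$ agrees, as an $\infty$-functor, with the abstract composite $\mathbb{R}r_\scC^*$ followed by straightening. Everything prior to that is indeed formal. Your sketch does not carry out this identification, and it is not a triviality: one must exhibit a natural comparison map and check it is an equivalence, which in \cite{Bunk:Localisation_of_sSet} occupies the bulk of the argument. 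So your proposal is a correct reduction to the essential difficulty, but stops short of resolving it.
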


Finally, the relation between the rectification functor $r_\scC^*$ and the localisation functor $\gamma_\scC$ is elucidated by the following observation:

\begin{theorem}
\emph{\cite[Cor.~3.13]{Bunk:Localisation_of_sSet}}
Let $F \in \Fun(\scC, \Kan)$.
The left fibration classified by the $\infty$-functor $\gamma_\scC(F) \colon N\scC \to \scS$ agrees with the rectification $r_\scC^*F \to N\scC$.
\end{theorem}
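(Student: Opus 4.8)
The plan is to identify the three models of $\scFun(N\scC, \scS)$ from Theorem~\ref{st:three perspectives on scFun(NC,scS)} and to show that the classification equivalence carries the rectification $r_\scC^* F \to N\scC$ to $\gamma_\scC(F)$. First I would record that $r_\scC^* F \to N\scC$ is indeed a left fibration: by Theorem~\ref{st:r_C^* as Quillen equivalence} the functor $r_\scC^*$ is right Quillen for the projective/covariant Quillen equivalence, and $F \in \Fun(\scC, \Kan)$ is projectively fibrant, so $r_\scC^* F$ is fibrant in the covariant model structure on $\sSet_{/N\scC}$, i.e.\ a left fibration. Writing $\mathrm{Cl}$ for the canonical equivalence
\begin{equation}
  \mathrm{Cl} \colon \mathrm{L}_{\mathrm{cov}}\big( \sSet_{/N\scC} \big) \xrightarrow{\ \simeq\ } \scFun(N\scC, \scS)
\end{equation}
of Theorem~\ref{st:three perspectives on scFun(NC,scS)}(3), which by definition sends a left fibration to the $\infty$-functor it classifies, the assertion becomes the natural equivalence $\gamma_\scC(F) \simeq \mathrm{Cl}\big( [ r_\scC^* F ] \big)$. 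I therefore introduce the functor
\begin{equation}
  \Phi \colon N\Fun(\scC, \Kan) \longrightarrow \scFun(N\scC, \scS)\,,
  \qquad
  \Phi(F) = \mathrm{Cl}\big( [ r_\scC^* F ] \big)\,,
\end{equation}
and the goal is to exhibit a canonical equivalence $\Phi \simeq \gamma_\scC$.

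The key observation is that both $\Phi$ and $\gamma_\scC$ exhibit $\scFun(N\scC, \scS)$ as the $\infty$-categorical localisation of $N\Fun(\scC, \Kan)$ at the objectwise weak homotopy equivalences. For $\gamma_\scC$ this is precisely Theorem~\ref{st:r_C^* and oo-categorical localisation}. For $\Phi$ I would argue that, since $r_\scC^*$ is a right Quillen equivalence preserving fibrant objects, no fibrant replacement is needed and the assignment $F \mapsto [r_\scC^* F]$ factors as the localisation $N\Fun(\scC, \Kan) \to \mathrm{L}_{\mathrm{obj}}\Fun(\scC, \sSet)$ followed by the equivalence $\mathrm{L}(r_\scC^*) \colon \mathrm{L}_{\mathrm{obj}}\Fun(\scC, \sSet) \xrightarrow{\simeq} \mathrm{L}_{\mathrm{cov}}(\sSet_{/N\scC})$ induced on localisations by the Quillen equivalence; post-composing with the equivalence $\mathrm{Cl}$ then shows $\Phi$ is a localisation at the same class. (That $r_\scC^*$ preserves objectwise weak equivalences between fibrant objects is automatic for a right Quillen functor.) By the universal property of $\infty$-categorical localisation, any two localisations of $N\Fun(\scC, \Kan)$ at the same class differ by a unique equivalence of target $\infty$-categories; hence there is a unique equivalence $\Theta \colon \scFun(N\scC, \scS) \to \scFun(N\scC, \scS)$ with $\Theta \circ \gamma_\scC \simeq \Phi$, and it remains to prove $\Theta \simeq \id$.

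To pin down $\Theta$ I would evaluate both functors on the corepresentable functors $\scC(c, -) \in \Fun(\scC, \Set) \subset \Fun(\scC, \Kan)$. On the one hand, since the mapping spaces of $N\scC$ between objects are the discrete sets $\scC(c, c')$, the functor $\gamma_\scC$ carries $\scC(c,-)$ to the corepresentable $\infty$-functor $\Map_{N\scC}(c, -)$. On the other hand, writing $\scC(c,-) \cong (\iota_c)_!\,\ast$ for the left Kan extension of the terminal functor along $\iota_c \colon \{c\} \hookrightarrow \scC$, I would identify its rectification $r_\scC^*\big( \scC(c,-) \big)$ with the coslice $N\scC_{c/} \to N\scC$, the universal left fibration corepresenting $c$; this left fibration classifies $\Map_{N\scC}(c,-)$, so that $\Phi\big( \scC(c,-) \big) \simeq \Map_{N\scC}(c,-)$ as well. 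Consequently $\Theta$ restricts to the identity on the Yoneda image $\{ \Map_{N\scC}(c,-) \}_{c \in \scC}$ inside $\scFun(N\scC, \scS) = \scP(N\scC^\opp)$. Since $\scP(N\scC^\opp)$ is freely generated under colimits by this Yoneda image and the equivalence $\Theta$ preserves colimits, the universal property of the presheaf $\infty$-category forces $\Theta \simeq \id$, and therefore $\gamma_\scC(F) \simeq \mathrm{Cl}([r_\scC^* F])$ naturally in $F$, as claimed. The main obstacle is the last computation, namely the explicit identification $r_\scC^*\big( \scC(c,-) \big) \simeq N\scC_{c/}$ together with the compatibility of $\gamma_\scC$ with corepresentables; these require unwinding the constructions of the right adjoint $r_\scC^*$ (using, e.g., Lemma~\ref{st:r_C^* and pullbacks}) and of $\gamma_\scC$, and constitute the genuine content of the argument, the surrounding localisation-theoretic steps being formal.
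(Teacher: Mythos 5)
Note first that the paper under review does not prove this statement at all: it is imported verbatim from \cite[Cor.~3.13]{Bunk:Localisation_of_sSet}, where it is essentially immediate from the construction of $\gamma_\scC$ --- that functor is assembled precisely out of the rectifications $r_{[n] \times \scC}^*$, so the left fibration classified by $\gamma_\scC(F)$ is $r_\scC^*F$ more or less by inspection. Your route is therefore genuinely different: a formal argument via the universal property of $\infty$-categorical localisation plus a Yoneda computation. Much of it is sound. That $r_\scC^*F \to N\scC$ is a left fibration follows as you say from Theorem~\ref{st:r_C^* as Quillen equivalence}; that $\Phi$ is a localisation at the objectwise weak equivalences is correct (right Quillen equivalences preserve weak equivalences between fibrant objects, and $\Fun(\scC, \Kan)$ localises to the same $\infty$-category as $\Fun(\scC,\sSet)$); and the comparison autoequivalence $\Theta$ with $\Theta \circ \gamma_\scC \simeq \Phi$ exists by the universal property. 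One caveat: $r_\scC^*\big(\scC(c,-)\big)$ is not literally $N\scC_{c/}$, only naturally covariantly equivalent to it (via the comparison of $r_\scC^*$ with the Grothendieck construction, resp.\ Lurie's relative nerve); that suffices, and you flag it as work to be done.

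The genuine gap is your final step. Knowing $\Theta\big(\Map_{N\scC}(c,-)\big) \simeq \Map_{N\scC}(c,-)$ \emph{for each individual object} $c$ does not license the appeal to the universal property of $\scP(N\scC^\opp)$: that universal property identifies colimit-preserving functors with their restriction along the Yoneda embedding $y$ \emph{as functors}, so you need a natural equivalence $\Theta \circ y \simeq y$ of functors $N\scC^\opp \to \scP(N\scC^\opp)$, not objectwise agreement on the Yoneda image. This is not a pedantic point: take $\scC = \rmB G$ for a group $G$ admitting a non-inner automorphism $\phi$. Then $\phi^* \colon \scFun(N\rmB G, \scS) \to \scFun(N\rmB G, \scS)$ preserves colimits and fixes the unique corepresentable (the translation $G$-space) up to equivalence, yet $\phi^* \not\simeq \id$. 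To repair the argument you must upgrade both computations to \emph{natural} equivalences $\gamma_\scC \circ Y \simeq y$ and $\Phi \circ Y \simeq y$, where $Y \colon \scC^\opp \to \Fun(\scC, \Kan)$ is the strict corepresentable functor --- using the functoriality of $c \mapsto N\scC_{c/}$ over $N\scC^\opp$ and the coherent form of the Yoneda lemma for left fibrations --- whereupon $\Theta \circ y \simeq y$ follows from $\Theta \circ \gamma_\scC \simeq \Phi$ and the conclusion is correct. Be aware, though, that the remaining ingredient $\gamma_\scC\big(\scC(c,-)\big) \simeq \Map_{N\scC}(c,-)$ can really only be verified by unwinding the construction of $\gamma_\scC$ through $r_\scC^*$, i.e.\ by proving a special case of the very statement at issue; this is why the by-construction proof in the cited reference is the shorter path.
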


This makes precise that the (1-)functor $F \colon \scC \to \Kan$, the left fibration $r_\scC^*F \to N\scC$, and the homotopy coherent $\infty$-functor $\gamma_\scC^*F \colon N\scC \to \scS$ all correspond to each other under the equivalences in Theorem~\ref{st:three perspectives on scFun(NC,scS)}.

%%%%%%%%%%%%%%%%%%%%%%%%%%%%%%%%%%%%%%%%%%%%%%%%%%%%%%%%%%%%%%%%%%%%%%%%%%%%

\section{The Dold-Kan correspondence and $n$-gerbes with $k$-connection}
\label{app:Higher U(1)-bundles via sPShs}

%%%%%%%%%%%%%%%%%%%%%%%%%%%%%%%%%%%%%%%%%%%%%%%%%%%%%%%%%%%%%%%%%%%%%%%%%%%%

%%%%%%%%%%%%%%%%%%%%%%%%%%%%%%%%%%%%%%%%%%%%%%%%%%%%%%%%%%%%%%%%%%%%%%%%%%%%

\subsection{The Dold-Kan correspondence}
\label{app:DK correspondence}

%%%%%%%%%%%%%%%%%%%%%%%%%%%%%%%%%%%%%%%%%%%%%%%%%%%%%%%%%%%%%%%%%%%%%%%%%%%%

In this appendix we summarise, for the reader's convenience, the standard construction of $(n{+}1)$-groupoids of $n$-gerbes with $k$-connections on manifolds.
This proceeds via \v{C}ech resolutions of the Deligne complexes (see, for instance,~\cite{FSS:Cech_for_diff_classes, Schreiber:DCCT_v2} for more details).
In the main text, we enhance this construction to describe smooth families of $n$-gerbes with $k$-connections on a fixed manifold $M$, subject to the smooth action of the diffeomorphism  group $\Diff(M)$ via pullback.

Let $\Mfd$ denote the category of smooth manifolds and smooth maps.
Let $\Cart \subset \Mfd$ denote the full subcategory on the subset of objects $\{c \in \Mfd\, | \, \exists n \in \NN_0: c \cong \RR^n\}$, i.e.~on those manifolds which are diffeomorphic to $\RR^n$ for any $n \in \NN_0$.
We consider the Deligne complex of sheaves of abelian groups%
\footnote{In~\cite{Gajer:Geo_of_Deligne_coho, Brylinski:LSp_and_Geo_Quan} the Deligne complex is instead defined as the quasi-isomorphic chain complex $\Omega^k \leftarrow \cdots \leftarrow \Omega^1 \leftarrow \Omega^0 \hookleftarrow \ZZ$; we follow the conventions of, for instance,~\cite{FSS:Cech_for_diff_classes}.}
on $\Cart$, which is the chain complex given by
\begin{equation}
\rmb^k_\nabla \U(1) = 
\begin{tikzcd}[ampersand replacement=\&]
	\big( \Omega^k
	\& \cdots \ar[l, "\dd"']
	\& \Omega^1 \ar[l, "\dd"']
	\& \U(1) \ar[l, "\dd \log"'] \big)\,,
\end{tikzcd}
\end{equation}
where $\Omega^k$ is situated in degree zero.
We also define the shifted complexes
\begin{equation}
\label{eq:b^l b_nabla^k U(1)}
	\rmb^l \rmb^k_\nabla \U(1) = \rmb^k_\nabla \U(1)[-l]\,.
\end{equation}
In particular, we have $\rmb^l(\rmb^k_\nabla \U(1))_l = \Omega^k$.

We recall that the category $\Ch_{\geq 0}$ of non-negatively graded chain complexes of abelian groups carries a model structure; its weak equivalences are the quasi-isomorphisms, and its fibrations are the morphisms of chain complexes which are surjective in each \textit{positive} degree (see, for instance,~\cite[p.~157]{GJ:Simplicial_HoThy}).
The category $\Ab_\Delta$ of simplicial abelian groups carries a model structure which is right transferred along the forgetful functor $\Ab_\Delta \to \sSet$.
Here, $\sSet$ is the category of simplicial sets endowed with the Kan-Quillen model structure, and the forgetful functor $\Ab_\Delta \to \sSet$ is a right Quillen functor.
We recall the following theorem~\cite[Sec.~II.4]{Quillen:Homotopical_Algebra}:

\begin{theorem}
\label{st:Dold-Kan corr}
(Dold-Kan correspondence)
There are Quillen equivalences
\begin{equation}
\begin{tikzcd}
	N : \Ab_\Delta \ar[r, shift left=0.15cm, "\perp"' yshift=0.05cm]
	& \Ch_{\geq 0} : \varGamma \ar[l, shift left=0.15cm]
	& \text{and}
	& \varGamma : \Ch_{\geq 0} \ar[r, shift left=0.15cm, "\perp"' yshift=0.05cm]
	& \Ab_\Delta : N\,. \ar[l, shift left=0.15cm]
\end{tikzcd}
\end{equation}
Both $N$ and $\varGamma$ are homotopical, i.e.~preserve weak equivalences.
\end{theorem}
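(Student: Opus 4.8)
The plan is to deduce the two stated Quillen equivalences from the classical, purely algebraic Dold-Kan correspondence together with a direct comparison of the two model structures. First I would recall that $N \colon \Ab_\Delta \to \Ch_{\geq 0}$ and $\varGamma \colon \Ch_{\geq 0} \to \Ab_\Delta$ are mutually inverse equivalences of ordinary categories, with natural isomorphisms $N\varGamma \cong \id_{\Ch_{\geq 0}}$ and $\varGamma N \cong \id_{\Ab_\Delta}$ (see, for instance,~\cite{Quillen:Homotopical_Algebra, GJ:Simplicial_HoThy}). Because an equivalence of categories is an adjoint equivalence whose inverse is simultaneously a left and a right adjoint, this already yields both adjunctions $N \dashv \varGamma$ and $\varGamma \dashv N$ at the level of underlying categories. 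The only remaining work is to verify that each of these adjunctions is a Quillen adjunction, after which the Quillen-equivalence property will follow formally.

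For homotopicality I would invoke the standard natural isomorphism $\pi_n(A) \cong H_n(NA)$ between the simplicial homotopy groups of a simplicial abelian group $A$ and the homology of its normalised chain complex. Since every simplicial abelian group is a Kan complex, a morphism $f$ in $\Ab_\Delta$ is a weak equivalence (i.e.\ an underlying weak homotopy equivalence) precisely when it induces isomorphisms on all $\pi_n$, hence precisely when $Nf$ is a quasi-isomorphism, that is, a weak equivalence in $\Ch_{\geq 0}$. Thus $N$ preserves and reflects weak equivalences; applying this through the isomorphism $N\varGamma \cong \id$ shows that $\varGamma$ does as well, so both functors are homotopical.

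The technical heart of the argument, and the step I expect to be the main obstacle, is the comparison of fibrations. The key lemma is that a morphism $f$ of simplicial abelian groups is a Kan fibration of underlying simplicial sets if and only if $Nf$ is surjective in every positive degree, i.e.\ a fibration in $\Ch_{\geq 0}$. The nontrivial direction is that a degreewise surjection of simplicial abelian groups is automatically a Kan fibration: its kernel is again a simplicial abelian group, hence a Kan complex, and one exhibits $f$ as a principal fibration for this kernel, so that horn-lifting problems are solved by correcting a chosen lift by a suitable element of the kernel. Granting this lemma, $N$ both preserves and reflects fibrations, and combining this with the previous paragraph it preserves and reflects trivial fibrations as well; hence $N$ is right Quillen for the adjunction $\varGamma \dashv N$.

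Finally I would assemble these facts. The adjunction $\varGamma \dashv N$ is a Quillen adjunction in which both adjoints are homotopical and the unit and counit are natural isomorphisms; a Quillen adjunction with these properties is a Quillen equivalence. For the second adjunction $N \dashv \varGamma$, I observe that if $p$ is a fibration in $\Ch_{\geq 0}$ then $N\varGamma p \cong p$ is again a fibration there, and since $N$ reflects fibrations it follows that $\varGamma p$ is a fibration in $\Ab_\Delta$; together with homotopicality this shows that $\varGamma$ is right Quillen, so $N \dashv \varGamma$ is likewise a Quillen adjunction satisfying the same hypotheses, and therefore a Quillen equivalence. This completes the proof.
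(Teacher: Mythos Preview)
The paper does not actually prove this theorem: it is stated as a recalled result with a citation to~\cite[Sec.~II.4]{Quillen:Homotopical_Algebra}, and no argument is given. Your proposal is therefore not competing with anything in the paper, but rather supplying a standard proof where the paper defers to the literature.

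Your sketch is essentially the correct standard argument. One small imprecision: in the fibration paragraph you first state the lemma correctly (that $f$ is a Kan fibration iff $Nf$ is surjective in positive degrees), but then justify the ``nontrivial direction'' by arguing that a \emph{degreewise} surjection of simplicial abelian groups is a Kan fibration. Degreewise surjectivity of $f$ is strictly stronger than positive-degree surjectivity of $Nf$; the latter does not force $f_0$ to be surjective. The cleaner argument is to note directly that horn-filling in a simplicial \emph{group} reduces to lifting against the generating cofibrations $N\Lambda^n_k \hookrightarrow N\Delta^n$, whose normalised chain complexes differ only in degree $n > 0$, so the lifting problem becomes exactly surjectivity of $(Nf)_n$. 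Apart from this, the logic (equivalence of categories $\Rightarrow$ two-sided adjoint equivalence; $\pi_n \cong H_n \circ N$ for homotopicality; reflection of fibrations via $N\varGamma \cong \id$ to get $\varGamma$ right Quillen; unit/counit isomorphisms for the Quillen-equivalence conclusion) is sound.
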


For the reader's convenience, we describe the functor in more detail (for a full treatment, we refer to~\cite[Sec.~III.2]{GJ:Simplicial_HoThy}, \cite[Sec.~8.4]{Weibel:Intro_to_HomAlg}; here, we follow the conventions in~\cite[Sec.~1.2.3]{Lurie:HA}):
given a chain complex $C \in \Ch_{\geq 0}$ and $t \in \NN_0$, set
\begin{equation}
	(\varGamma C)_r
	\coloneqq \bigoplus_{\varphi \colon [r] \twoheadrightarrow [t]} C_t\,,
\end{equation}
where the direct sum is over all surjections $\varphi \colon [r] \twoheadrightarrow [t]$ in $\bbDelta$.
To make the collection $\{ (\varGamma C)_r \, | \, r \in \NN_0\}$ into a simplicial set, consider a morphism $\theta \colon [s] \to [r]$ in $\bbDelta$.
One associates to this the map
\begin{equation}
	(\varGamma C)(\theta) \colon (\varGamma C)_r \to (\varGamma C)_s
\end{equation}
described as follows:
let $\varphi \colon [r] \twoheadrightarrow [t]$ be a surjection in $\bbDelta$.
Then, we are left to provide a morphism
\begin{equation}
	(\varGamma C)(\theta; \varphi) \colon C_t
	\longrightarrow \bigoplus_{\psi \colon [s] \twoheadrightarrow [u]} C_u = (\varGamma C)_s\,.
\end{equation}
Let
\begin{equation}
\begin{tikzcd}
	{[s]} \ar[r, "\theta"] \ar[d, twoheadrightarrow, "\pi"']
	& {[r]} \ar[d, twoheadrightarrow, "\varphi"]
	\\
	{[v]} \ar[r, hookrightarrow, "\iota"']
	& {[t]}
\end{tikzcd}
\end{equation}
be the unique factorisation of the composite $\varphi \circ \theta$ into an epimorphism $\pi$ followed by a monomorphism $\iota$.
We now distinguish three cases:
\begin{enumerate}
\item If $\iota$ is the identity (i.e.~if $[v] = [t]$), we let $(\varGamma C)(\theta; \varphi)$ be the canonical inclusion
\begin{equation}
	C_t \hookrightarrow \bigoplus_{\psi \colon [s] \twoheadrightarrow [u]} C_u 
\end{equation}
as the summand labelled by $\pi = \varphi \circ \theta \colon [s] \twoheadrightarrow [v] = [t]$.

\item If $\iota = \partial_0 \colon [t-1] \hookrightarrow [t]$ is the standard cosimplicial coface map (hitting each $i \in [t]$ apart from $i = 0$), then the map $(\varGamma C)(\theta; \varphi)$ is the composition
\begin{equation}
\begin{tikzcd}
	C_t \ar[r, "\partial_C"]
	& C_{t-1} \ar[r]
	& \displaystyle{\bigoplus_{\psi \colon [s] \twoheadrightarrow [u]}} C_u\,,
\end{tikzcd}
\end{equation}
where the first map is the differential of the chain complex $C$, and the second map is the inclusion of $C_t$ as the summand labelled by $\pi \colon [s] \twoheadrightarrow [t-1]$.

\item If $\iota \colon [v] \hookrightarrow [t]$ is any other map, then the map
\begin{equation}
	(\varGamma C)(\theta; \varphi) \colon C_t \longrightarrow \bigoplus_{\psi \colon [s] \twoheadrightarrow [u]} C_u
\end{equation}
is the zero map.
\end{enumerate}
Finally, if two summands in the domain of $(\varGamma C)(\theta)$ map to the same summand in its codomain, we add their contributions using the abelian group structure on the codomain.

\begin{remark}
Since we follow the conventions in~\cite[Sec.~1.2.3]{Lurie:HA}, the simplicial set $\varGamma C$ we associate to a chain complex here is the \textit{opposite} simplicial set of that produced in~\cite{Weibel:Intro_to_HomAlg}.
Note that the auto-equivalence of $\sSet$, $K \mapsto K^\opp$ preserves Kan fibrations, cofibrations and weak homotopy equivalences, so introducing this opposite has no effect on the fact that the Dold-Kan correspondence is a two-sided Quillen equivalence.
\qen
\end{remark}

\begin{example}
\label{eg:DK explicit}
Let $C \in \Ch_{\geq 0}$ be a non-negatively graded chain complex of abelian groups.
We have the following identifications:
\begin{align}
	(\varGamma C)_0
	&= \underbrace{C_0}_{\id_{[0]}}\,,
	\\
	(\varGamma C)_1
	&= \underbrace{C_0}_{[1] \twoheadrightarrow [0]}
	\oplus \underbrace{C_1}_{\id_{[1]}}\,,
	\\
	(\varGamma C)_2
	&= \underbrace{C_0}_{[2] \twoheadrightarrow [0]}
	\oplus \underbrace{C_1}_{\sigma_1 \colon [2] \twoheadrightarrow [1]}
	\oplus \underbrace{C_1}_{\sigma_2 \colon [2] \twoheadrightarrow [1]}
	\oplus \underbrace{C_2}_{\id_{[2]}}\,,
	\quad \text{etc.}
\end{align}
One can now check explicitly that
\begin{alignat}{3}
	d_0 \colon (\varGamma C)_1 &\to (\varGamma C)_0\,,
	&\hspace{2cm}
	d_1(x_0, x_{01}) && &= x_0
	\\
	d_0 \colon (\varGamma C)_1 &\to (\varGamma C)_0\,,
	&\hspace{2cm}
	d_0(x_0, x_{01}) && &= x_0 + \partial_C x_{01}\,,
	\\
	s_0 \colon (\varGamma C)_0 &\to (\varGamma C)_1\,,
	&\hspace{2cm}
	s_0(x_0) && &= (x_0, 0)\,,
	\\
	d_2 \colon (\varGamma C)_2 &\to (\varGamma C)_1\,,
	&\qquad
	d_2(x_0, x_{01}, x_{12}', x_{012}) && &= (x_0, x_{01})\,,
	\\
	d_1 \colon (\varGamma C)_2 &\to (\varGamma C)_1\,,
	&\qquad
	d_1(x_0, x_{01}, x_{12}', x_{012}) && &= (x_0, x_{01} + x_{12}')\,,
	\\
	d_0 \colon (\varGamma C)_2 &\to (\varGamma C)_1\,,
	&\qquad
	d_0(x_0, x_{01}, x_{12}', x_{012}) && &= (x_0 + \partial_C x_{01}, x_{12}' + \partial_C x_{012})\,.
\end{alignat}
The elements in $C_0$ thus become the vertices of $\varGamma C$.
A 1-simplex is a pair $(x_0, x_{01})$ of $x \in C_0$ and $x_{01} \in C_1$; viewing the pair $(x_0, x_{01})$ as an edge in $\varGamma C$, its source is $x_0$, and its target is $x_0 + \partial_C x_{01}$.
With this understanding of vertices and 1-simplices, we can similarly understand a 2-simplex $(x_0, x_{01}, x_{12}', x_{012})$ in $\varGamma C$, given the above computation of its faces.
\qen
\end{example}

%%%%%%%%%%%%%%%%%%%%%%%%%%%%%%%%%%%%%%%%%%%%%%%%%%%%%%%%%%%%%%%%%%%%%%%%%%%%

\subsection{Higher $\U(1)$-bundles with connections via simplicial presheaves}
\label{app:n-gerbes w k-conn via DK}

%%%%%%%%%%%%%%%%%%%%%%%%%%%%%%%%%%%%%%%%%%%%%%%%%%%%%%%%%%%%%%%%%%%%%%%%%%%%

We can now combine the Dold-Kan correspondence with \v{C}ech resolutions of the Deligne complexes in order to define Kan complexes of $n$-gerbes with $k$-connections on manifolds.
The functor categories $\Fun(\Cart^\opp, \Ch_{\geq 0})$ and $\scH \coloneqq \Fun(\Cart^\opp, \sSet)$ carry projective model structures~\cite{Barwick:Localistaions} (since $\Ch_{\geq 0}$ is combinatorial by~\cite[Thm.~2.3.11]{Hovey:MoCats}).
By~\cite[Prop.~5.6.2]{Gillam:Simplicial_methods}, the model category $\Ab_\Delta$ is cofibrantly generated.
Thus, also $\Fun(\Cart^\opp, \Ab_\Delta)$ carries a projective model structure.
By a slight abuse of notation we have:

\begin{corollary}
Applying the functors $N$ and $\varGamma$ objectwise produces Quillen equivalences
\begin{equation}
\begin{tikzcd}[row sep=0.2cm]
	N : \Fun(\Cart^\opp, \Ab_\Delta) \ar[r, shift left=0.15cm, "\perp"' yshift=0.05cm]
	& \Fun(\Cart^\opp, \Ch_{\geq 0}) : \varGamma\,, \ar[l, shift left=0.15cm]
	\\
	\varGamma : \Fun(\Cart^\opp, \Ch_{\geq 0}) \ar[r, shift left=0.15cm, "\perp"' yshift=0.05cm]
	& \Fun(\Cart^\opp, \Ab_\Delta) : N\,. \ar[l, shift left=0.15cm]
\end{tikzcd}
\end{equation}
Both $N$ and $\varGamma$ are homotopical, i.e.~preserve weak equivalences.
\end{corollary}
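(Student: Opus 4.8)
The plan is to deduce the corollary formally from the Dold--Kan correspondence (Theorem~\ref{st:Dold-Kan corr}) together with the standard fact that a Quillen adjunction postcomposes to a Quillen adjunction on projective functor categories. First I would recall that for any small category $\scC$ and any Quillen adjunction $(L \dashv R) \colon \mathscr{M} \rightleftarrows \mathscr{N}$ between cofibrantly generated model categories, postcomposition yields an adjunction $(L_* \dashv R_*) \colon \Fun(\scC^\opp, \mathscr{M}) \rightleftarrows \Fun(\scC^\opp, \mathscr{N})$ on the projective model structures, where $L_* = L \circ (-)$ and $R_* = R \circ (-)$. Since projective fibrations and projective trivial fibrations are detected objectwise and $R$ preserves fibrations and trivial fibrations, $R_*$ preserves them as well; hence $(L_* \dashv R_*)$ is a Quillen adjunction. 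Applying this with $\scC = \Cart$, $(\mathscr{M}, \mathscr{N}) = (\Ab_\Delta, \Ch_{\geq 0})$ and $(L,R) = (N, \varGamma)$, and then once more with the roles of the two categories and functors exchanged, produces the two Quillen adjunctions in the statement.

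Next I would upgrade each of these Quillen adjunctions to a Quillen equivalence. The crucial observation is that $N$ and $\varGamma$ are not merely Quillen equivalences but an adjoint equivalence of the underlying $1$-categories $\Ab_\Delta$ and $\Ch_{\geq 0}$ --- this is the classical Dold--Kan theorem, whose unit and counit are natural isomorphisms. Postcomposition is functorial, so the induced units and counits of $(N_* \dashv \varGamma_*)$ and $(\varGamma_* \dashv N_*)$ are again objectwise isomorphisms, in particular objectwise --- hence projective --- weak equivalences. Moreover, because $N$ and $\varGamma$ are homotopical (Theorem~\ref{st:Dold-Kan corr}) and projective weak equivalences are exactly the objectwise ones, both $N_*$ and $\varGamma_*$ are homotopical; this also establishes the last sentence of the corollary. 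A Quillen adjunction is a Quillen equivalence precisely when its total derived unit and counit are weak equivalences, and for an adjunction both of whose functors are homotopical no (co)fibrant replacement is needed, so the derived unit and counit agree up to natural weak equivalence with the underlying ones. Since the latter are isomorphisms, both adjunctions are Quillen equivalences.

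I do not expect a genuine obstacle here, as the argument is entirely formal. The only points requiring care are bookkeeping ones: confirming that the projective model structures on $\Fun(\Cart^\opp, \Ab_\Delta)$, $\Fun(\Cart^\opp, \Ch_{\geq 0})$ and $\scH$ exist (already supplied by the cited cofibrant generation and combinatoriality results), and checking that the objectwise characterisation of projective fibrations, trivial fibrations and weak equivalences really does let $R_*$ and $L_*$ inherit their preservation properties objectwise. The mildest subtlety is that $N$ plays the role of a left adjoint in one of the two Quillen equivalences and of a right adjoint in the other; this is consistent precisely because Dold--Kan is an equivalence of categories, so the two adjoints of each functor coincide, and the symmetry of the two claimed statements is then immediate.
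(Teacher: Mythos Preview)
Your proposal is correct and matches the paper's approach: the paper states this corollary without proof, treating it as immediate from the Dold--Kan correspondence (Theorem~\ref{st:Dold-Kan corr}) together with the existence of the relevant projective model structures, which is exactly the formal argument you have spelled out in detail.
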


Similarly, the free-forgetful adjunction induces a Quillen adjunction
\begin{equation}
\begin{tikzcd}[row sep=0.2cm]
	\scH \ar[r, shift left=0.15cm, "\perp"' yshift=0.05cm]
	& \Fun(\Cart^\opp, \Ab_\Delta)\,. \ar[l, shift left=0.15cm]
\end{tikzcd}
\end{equation}
Recall the presheaf of chain complexes $\rmb^l \rmb^k_\nabla \U(1)$ from~\eqref{eq:b^l b_nabla^k U(1)}.

\begin{definition}
\label{def:B^l B^k_nabla U(1) and B^l B^k_nabla Z}
For each $l, k \in NN_0$, we define a simplicial presheaf on $\Cart$ by setting
\begin{equation}
	\rmB^l \rmB^k_\nabla \U(1) \coloneqq \varGamma \big( \rmb^l \rmb^k_\nabla \U(1) \big)
\end{equation}
(where we have not displayed the forgetful functor $\Fun(\Cart^\opp, \Ab_\Delta) \to \scH$).
\end{definition}

The category $\Cart$ carries a Grothendieck coverage:
it is induced by the good open coverings, i.e.~open coverings such that each finite intersection of patches is empty or again a cartesian space.
We consider the \textit{local} projective model structure $\scH^{loc}$ on $\scH$, which is the left Bousfield localisation of $\scH$ at the \v{C}ech nerves of good open coverings of cartesian spaces.
The simplicial presheaves from Definition~\ref{def:B^l B^k_nabla U(1) and B^l B^k_nabla Z} are fibrant objects in $\scH^{loc}$.
That is, they are projectively fibrant and satisfy homotopy descent with respect to good open coverings.
The projective fibrancy is by construction (simplicial groups are Kan complexes~\cite[Lemma~I.3.4]{GJ:Simplicial_HoThy}), and the homotopy descent property can be seen via sheaf-chomology arguments (see, for instance,~\cite{FSS:Cech_for_diff_classes}, or~\cite[Prop.~2.50, Cor.~2.51]{Bunk:Gerbes_Review} for a review).

\begin{definition}
A \textit{good open covering} of an arbitrary manifold $M$ is an open covering $\cU = \{U_a\}_{a \in \Lambda}$ of $M$ such that each finite intersection of patches is empty or a cartesian space.
\end{definition}

Given a good open covering $\cU$ of $M$, we view its \v{C}ech nerve $\check{C} \cU$ as an object $\check{C} \cU \in \scH$ (by forming the associated representable presheaves in each simplicial degree).
Since it is a coproduct of representable presheaves in each degree, it is cofibrant in $\scH$ and hence also in $\scH^{loc}$.
Both these model categories are simplicial.
We denote the simplicial hom functors in a simplicially enriched category $\scC$ by $\ul{\scC}(-,-)$.

\begin{definition}
\label{def:Grb^n_(nabla|k)(X)}
Given a cofibrant object $X \in \scH$ and $n \in \NN_0$, $k \in \{0, \ldots, n+1\}$, we make the following definitions:
\begin{enumerate}
\item We set \begin{equation}
	\Grb^n_{\nabla|k}(X) \coloneqq \ul{\scH} \big( X, \rmB^{n+1-k} \rmB^k_\nabla \U(1) \big)
\end{equation}
and refer to this as the \textit{Kan complex of $n$-gerbes with $k$-connection on $X$}.

\item In the case where $k = n+1$, we also write $\Grb^n_\nabla(X)$ instead of $\Grb^n_{\nabla|n+1}(X)$ and refer to this as the \textit{Kan complex of $n$-gerbes on $X$ with (full) connection}.

\item In the case where $k = 0$, we also write $\Grb^n(X)$ instead of $\Grb^n_{\nabla|0}(X)$ and refer to this as the \textit{Kan complex of $n$-gerbes on $X$} (without connection).
\end{enumerate}
\end{definition}

\begin{remark}
\label{rmk:Grb^n_(nabla|k) and cof rep}
If $Y \in \scH^{loc}$ is not cofibrant, we can choose a cofibrant replacement $X \to Y$ (in $\scH^{loc}$) and understand $\Grb^n_{\nabla|k}(X)$ as the Kan complex of $n$-gerbes with $k$-connection on $Y$, in the sense of Definition~\ref{def:Grb^n_(nabla|k)(X)}.
However, note that this Kan complex is defined only up to weak equivalence; the Kan complexes arising from different choices of cofibrant replacements of $Y$ will, in general, only be related by a zig-zag of weak equivalences in $\sSet$.
\qen
\end{remark}

\begin{remark}
This applies, in particular, to manifolds:
given $M \in \Mfd$, we consider the (simplicially constant) simplicial presheaf
\begin{equation}
	\ul{M} \colon \Cart^\opp \to \sSet\,,
	\qquad
	c \mapsto \Mfd(c,M)\,.
\end{equation}
This construction presents a fully faithful functor
\begin{equation}
	\Mfd \hookrightarrow \scH\,.
\end{equation}
Given a good open covering $\cU$ of $M$, the canonical morphism $\v{C}\cU \to \ul{M}$ is a cofibrant replacement of $\ul{M}$ in the model category $\scH^{loc}$ (not so in $\scH$).
Thus, by Definition~\ref{def:Grb^n_(nabla|k)(X)} and Remark~\ref{rmk:Grb^n_(nabla|k) and cof rep} we obtain the Kan complex of $n$-gerbes with $k$-connection on $M$ as
\begin{equation}
	\Grb^n_{\nabla|k}(\check{C} \cU) = \ul{\scH} \big( \check{C} \cU, \rmB^{n+1-k} \rmB^k_\nabla \U(1) \big)\,.
\end{equation}
Any two choices of good open covering $\cU$ of $M$ produce Kan complexes which are isomorphic in the homotopy category $\Ho(\sSet)$.
\qen
\end{remark}

Finally, for $\cU$ a good open covering of $M$, we compute the Kan complex \smash{$\Grb^n_{\nabla|k}(\check{C} \cU)$} explicitly.
To that end, we include the following (well-known) lemma:
let $\sfc_{\bbDelta^\opp} \colon \Set \to \sSet$ denote the functor which assigns to a set its simplicially constant simplicial set.
This induces a functor $\sfc_{\bbDelta^\opp} \colon \Fun(\Cart^\opp, \Set) \to \scH$.

\begin{lemma}
Let $X \in \scH$ be such that, for each $m \in \NN_0$, the simplicially constant simplicial presheaf $\sfc_{\bbDelta^\opp} X_m$ is cofibrant in $\scH$.
We obtain from $X$ a diagram $\sfc_{\bbDelta^\opp} X \colon \bbDelta^\opp \to \scH$, $[m] \mapsto \sfc_{\bbDelta^\opp} X_m$.
There is a canonical isomorphism in $\Ho \scH$,
\begin{equation}
	\hocolim_{\bbDelta^\opp} (\sfc_{\bbDelta^\opp} X) \cong X\,.
\end{equation}
\end{lemma}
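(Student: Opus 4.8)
The plan is to recognise $X$, via the exponential identification $\scH = \Fun(\Cart^\opp, \sSet) \cong \Fun(\Cart^\opp \times \bbDelta^\opp, \Set)$, as a presheaf of simplicial sets whose level presheaves are exactly the $X_m$, and then to show that the simplicial diagram $\sfc_{\bbDelta^\opp} X$ of levelwise-constant simplicial presheaves has homotopy colimit the diagonal, which is $X$ again. The first step is to reduce to a pointwise statement in $\sSet$. I would model $\hocolim_{\bbDelta^\opp}$ by the two-sided bar construction $B(*, \bbDelta^\opp, \sfc_{\bbDelta^\opp} X)$; this is legitimate precisely because the hypothesis that each $\sfc_{\bbDelta^\opp} X_m$ is cofibrant in $\scH$ makes the whole diagram objectwise cofibrant, so that the bar construction computes the homotopy colimit in the simplicial model category $\scH$ (compare the use of \cite[Cor.~5.1.3]{Riehl:Cat_HoThy} in the main text). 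Since the bar construction is a coend built degreewise from coproducts of copies of the diagram, and since evaluation $\mathrm{ev}_c \colon \scH \to \sSet$ is a left adjoint (hence preserves the colimits out of which the bar construction is assembled) which also preserves objectwise weak equivalences, $\mathrm{ev}_c$ commutes with the bar-construction model of the homotopy colimit. Thus $\hocolim_{\bbDelta^\opp}(\sfc_{\bbDelta^\opp} X)$ is computed objectwise, and it suffices to produce a natural weak equivalence $\hocolim_{\bbDelta^\opp}(\sfc_{\bbDelta^\opp} X(c)_\bullet) \wequiv X(c)$ in $\sSet$, for each $c \in \Cart$.

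For fixed $c$, write $S \coloneqq X(c)$. The diagram $\sfc_{\bbDelta^\opp} X(c)_\bullet$ is the bisimplicial set $Z$ with $Z_{m,k} = S_m$, constant in the inner coordinate $k$. I would invoke the Bousfield-Kan map \cite[Cor.~18.7.7]{Hirschhorn:MoCats_and_localisations}, which the excerpt already uses elsewhere, to obtain a natural weak homotopy equivalence from $B(*, \bbDelta^\opp, Z)$ to the diagonal $\delta^* Z$. It then remains to compute this diagonal. Because $Z$ is constant in $k$, its inner face and degeneracy maps are identities, so the diagonal operators $d_i, s_i$ on $(\delta^* Z)_n = Z_{n,n}$ reduce to the outer maps $d_i^S, s_i^S$; hence $(\delta^* Z)_n = S_n$ with the original simplicial structure, i.e.~$\delta^* Z \cong S$. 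Chaining the bar-construction comparison with the Bousfield-Kan map thus yields the desired natural weak equivalence $\hocolim_{\bbDelta^\opp}(\sfc_{\bbDelta^\opp} S_\bullet) \simeq S$.

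Finally, I would assemble the pointwise statements: all of the maps above — the bar construction, the Bousfield-Kan comparison, and the identification of the diagonal — are natural in $c$, so they glue to a zig-zag of objectwise weak equivalences of simplicial presheaves between $\hocolim_{\bbDelta^\opp}(\sfc_{\bbDelta^\opp} X)$ and $X$. Since objectwise weak equivalences are exactly the weak equivalences of the projective model structure on $\scH$, this zig-zag descends to the claimed isomorphism in $\Ho\scH$. The main obstacle — really the only nontrivial point — is the justification that the homotopy colimit is computed objectwise and is correctly modelled by the bar construction: this is where the cofibrancy hypothesis on the $\sfc_{\bbDelta^\opp} X_m$ is essential, and one must take care that it is the objectwise (projective) cofibrancy of the entire diagram, rather than cofibrancy of any single level, that licenses both the use of the two-sided bar construction and its compatibility with evaluation at $c$. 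Once this is in place, the identification of the diagonal is a purely combinatorial check, and note in particular that the \emph{strict} colimit $\colim_{\bbDelta^\opp}(\sfc_{\bbDelta^\opp} X)$ would give the wrong (over-collapsed) answer, so it is genuinely the homotopy colimit that is needed.
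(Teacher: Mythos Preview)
Your proof is correct and follows essentially the same approach as the paper: model the homotopy colimit by the two-sided bar construction (using the cofibrancy hypothesis and \cite[Cor.~5.1.3]{Riehl:Cat_HoThy}), reduce to a pointwise statement in $\sSet$ by commuting the bar construction with evaluation, and then invoke the Bousfield--Kan comparison \cite[Cor.~18.7.7]{Hirschhorn:MoCats_and_localisations} to identify the result with the diagonal, which is $X(c)$. Your write-up is slightly more explicit than the paper's in unpacking why the diagonal of the bisimplicial set constant in one coordinate recovers the original simplicial set, but the argument is the same.
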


We include a short proof for the reader's convenience.

\begin{proof}
It suffices to show this for any model for the homotopy colimit.
Since $\sfc_{\bbDelta^\opp} \colon \bbDelta^\opp \to \scH$ is valued in cofibrant objects, we have that~\cite[Cor.~5.1.3]{Riehl:Cat_HoThy}
\begin{equation}
	\hocolim_{\bbDelta^\opp} X_\cdot
	\simeq B(*, \bbDelta^\opp, \sfc_{\bbDelta^\opp} X)\,,
\end{equation}
where $B(-,-,-)$ denotes the bar construction~\cite[Sec.~4.2]{Riehl:Cat_HoThy}.
Since colimits and limits are computed pointwise in $\scH = \Fun(\Cart^\opp, \sSet)$ and the bar construction commutes with evaluation on any $c \in \Cart$, we have a natural weak equivalence
\begin{equation}
	(\hocolim_{\bbDelta^\opp} \sfc_{\bbDelta^\opp} X)(c)
	\simeq \hocolim_{\bbDelta^\opp} \big( \sfc_{\bbDelta^\opp} X(c) \big)\,.
\end{equation}
The homotopy colimit on the right-hand side is now taken in $\sSet$, where it is computed by the diagonal~\cite[Cor.~18.7.7]{Hirschhorn:MoCats_and_localisations}.
\end{proof}

Given a good open covering $\cU$ of a manifold $M$, we can now compute \smash{$\Grb^n_{\nabla|k}(\check{C}\cU)$} up to canonical weak equivalences as follows:
\begin{align}
	\Grb^n_{\nabla|k}(\check{C}\cU) &= \ul{\scH} \big( \check{C}\cU, \rmB^{n+1-k}\rmB^k_\nabla \U(1) \big)
	\\
	&\simeq \ul{\scH} \big( \hocolim_{\bbDelta^\opp} \sfc_{\bbDelta^\opp} \check{C} \cU, \rmB^{n+1-k}\rmB^k_\nabla \U(1) \big)
	\\
	&\cong \holim_\bbDelta \ul{\scH} \big( \sfc_{\bbDelta^\opp} \check{C} \cU, \rmB^{n+1-k}\rmB^k_\nabla \U(1) \big)
	\\
	&\cong \holim_{[i] \in \bbDelta} \big( \rmB^{n+1-k}\rmB^k_\nabla \U(1) \big) (\check{C}_i \cU)\,.
\end{align}
Here we have used the short-hand notation
\begin{equation}
	\big(\rmB^{n+1-k}\rmB^k_\nabla\U(1) \big) (\check{C}_i \cU)
	\coloneqq \prod_{a_0, \ldots, a_i \in \Lambda} \big( \rmB^{n+1-k}\rmB^k_\nabla \U(1) \big) (U_{a_0 \cdots a_i})\,,
\end{equation}
where $U_{a_0 \cdots a_i} = U_{a_0} \cap \cdots \cap U_{a_i}$ for $a_0, \ldots, a_i \in \Lambda$.
We further compute
\begin{align}
	\holim_{[i] \in \bbDelta} \big( \rmB^{n+1-k}\rmB^k_\nabla \U(1) \big) (\check{C}_i \cU)
	&= \holim_{[i] \in \bbDelta} \big( \varGamma \rmb^{n+1-k} \rmb^k_\nabla \U(1) \big) (\check{C}_i \cU)
	\\
	&\cong \varGamma \holim_{[i] \in \bbDelta} \big( \rmb^{n+1-k} \rmb^k_\nabla\U(1) \big) (\check{C}_i \cU)\,,
\end{align}
where we have used that $\varGamma$ is a right Quillen functor, and where the homotopy limit under $\varGamma$ is now computed in $\Ch_{\geq 0}$.
Finally, an explicit formula for this homotopy limit is given, for instance, in~\cite[Sec.~B.1]{BSS:Hocolims_and_global_observables}.
We obtain
\begin{equation}
\label{eq:Cech-holim for cochain complexes}
	\holim_{[i] \in \bbDelta} \big( \rmB^{n+1-k}\rmB^k_\nabla \U(1) \big) (\check{C}_i \cU)
	\simeq \tau_{\geq 0} \Tot^\times \big( \big( \rmb^{n+1-k} \rmb^k_\nabla \U(1) \big) (\check{C} \cU) \big)\,,
\end{equation}
the truncation to non-negative degrees of the total (product) complex of $(\rmb^{n+1-k} \rmb^k_\nabla \U(1)) (\check{C} \cU)$.
Note that in the bigrading on the double complex induced by the simplicial structure of $\check{C}\cU$, we count the simplicial degree $i$ in the \v{C}ech nerve $\check{C} \cU$ negatively, i.e.~giving degree $-i$.
Therefore, the resulting total complex has non-zero entries in both positive and negative degrees, but after applying the truncation $\tau_{\geq 0}$ we again obtain a non-negatively graded chain complex.

\begin{example}
\label{eg:transition data for n-grbs w K-conn}
Consider a good open covering $\cU$ of a manifold $M$.
By Example~\ref{eg:DK explicit} a vertex in \smash{$\Grb^n_{\nabla|k}(\check{C} \cU)$} is the same as tuple $(g, A_1, \ldots, A_k)$, where $g \colon \check{C}_{n+1}\cU \to \U(1)$, and \smash{$A_i \in \Omega^i(\check{C}_{n+1-i}\cU)$} for $i \in \{ 1, \ldots, k\}$, satisfying
\begin{align}
	\delta g &= 0\,,
	\\
	(-1)^{n+1} \dd \log g + \delta A_1 &= 0\,,
	\\
	(-1)^n \dd A_1 + \delta A_2 &= 0\,,
	\\
	&\vdots
	\\
	(-1)^{n-k+2} \dd A_{k-1} + \delta A_k &= 0\,.
\end{align}
Similarly, a 1-simplex is a pair $((g, A_1, \ldots, A_k), (h, C_1, \ldots, C_{k-1}))$, where $(g, A_1, \ldots, A_k)$ is a vertex as above, $h \colon \v{C}_n \cU \to \U(1)$ is any smooth function, and $C_i \in \Omega^i(\v{C}_{n-i} \cU)$ is any $i$-form, for $i = 1, \ldots, k-1$.
The source vertex of this 1-simplex is the tuple $(g, A_1, \ldots, A_k)$, and by Example~\ref{eg:DK explicit} its target vertex reads as
\begin{align}
	( &g \cdot \delta h,\,
	\\
	&A_1 + (-1)^n \dd \log h + \delta C_1,\,
	\\
	&A_2 + (-1)^{n-1} \dd C_1 + \delta C_2,\,
	\\
	&\vdots
	\\
	&A_{k-1} + (-1)^{n-k+2} \dd C_{k-2} + \delta C_{k-1},\,
	\\
	&A_k + (-1)^{n-k+1} C_{k-1} )\,.
\end{align}
Higher vertices arise in an analogous way from the explicit formula~\eqref{eq:Cech-holim for cochain complexes} together with computations as in Example~\ref{eg:DK explicit}.
\qen
\end{example}

\end{appendix}

%\newpage
%\renewcommand{\leftmark}{\MakeUppercase{Bibliography}}
\phantomsection
\bibliographystyle{JHEP}
%\bibliographystyle{plain}
%\newpage  

% % % % % % % % % % % % % % % % % % % % % % % % % % % % % % % % % % % % % % 
% % % % % % % % % % % % % % % % % % % % % % % % % % % % % % % % % % % % % %

\end{document}